\title{On Bredon (Co-)Homolo\-gical Dimensions of Groups}
\author{Martin Fluch}
\date{\today}
\newlength{\bskipabstract}
\newlength{\bskipmaintext}
\newenvironment{myabstract}[1]%
{\vspace*{-1cm}\thispagestyle{empty}
\begin{center}\setlength{\baselineskip}{1.4\baselineskip}
#1
\end{center}\vspace{1.3cm}}%
{\cleardoublepage}
\newlength{\theorempreskipamount}
\newlength{\theorempostskipamount}
\newtheoremstyle{plain}%
 {\theorempreskipamount}%
 {\theorempostskipamount}%
 {\itshape}%
 {}%
 {\bfseries}%
 {.}%
 {0.5em}%
 {}%
\newtheoremstyle{definition}%
 {\theorempreskipamount}%
 {\theorempostskipamount}%
 {\normalfont}%
 {}%
 {\bfseries}%
 {.}%
 {0.5em}%
 {}%
\theoremstyle{plain}
\newtheorem*{FJC}{Farrell--Jones Conjecture}
\newtheorem*{EGC}{Eilenberg--Ganea Conjecture}
\newtheorem{theorem}{Theorem}
\newtheorem{proposition}[theorem]{Proposition}
\newtheorem{lemma}[theorem]{Lemma}
\newtheorem{corollary}[theorem]{Corollary}
\newtheorem*{conjecture*}{Conjecture}
\newtheorem{plain-number}[theorem]{}
\theoremstyle{definition}
\newtheorem{DEF}[theorem]{Definition}
\newtheorem{example}[theorem]{Example}
\newtheorem{examples}[theorem]{Examples}
\newtheorem{remark}[theorem]{Remark}
\newtheorem{claim}{Claim}
\newtheorem*{claim*}{Claim}
\newcommand{\OfG}{\OC{G_{1}}{\frakF}}
\newcommand{\OgG}{\OC{G_{2}}{\frakG}}
\newcommand{\ModOfG}{\text{$\Mod$-$\OfG$}}
\newcommand{\ModOgG}{\text{$\Mod$-$\OgG$}}
\newcommand{\cll}{{\scriptstyle\bf L}}
\newcommand{\defeq}{\mathrel{\vcentcolon =}}
\numberwithin{equation}{chapter}
\numberwithin{theorem}{chapter}
\begin{document}

\nocite{schubert-70, brown-82, serre-80, segal-83, dicks-89, 
massey-91, bieri-81, bredon-67, robinson-96, rotman-79,
magnus-76, kawakubo-91}
\nocite{juan-pineda-06, kochloukova-10, martinez-perez-02}
\nocite{luck-00, luck-05, luck-05a, luck-12, luck-09}
\nocite{baumslag-62, farrell-93, flores-05, gildenhuys-79, 
kropholler-09, symonds-05, connolly-06, farley-10, lafont-07,
manion-08, eilenberg-57}


\begin{titlepage}
    \centering
    
    \vspace*{-2cm}
    
    \includegraphics[scale=0.46]{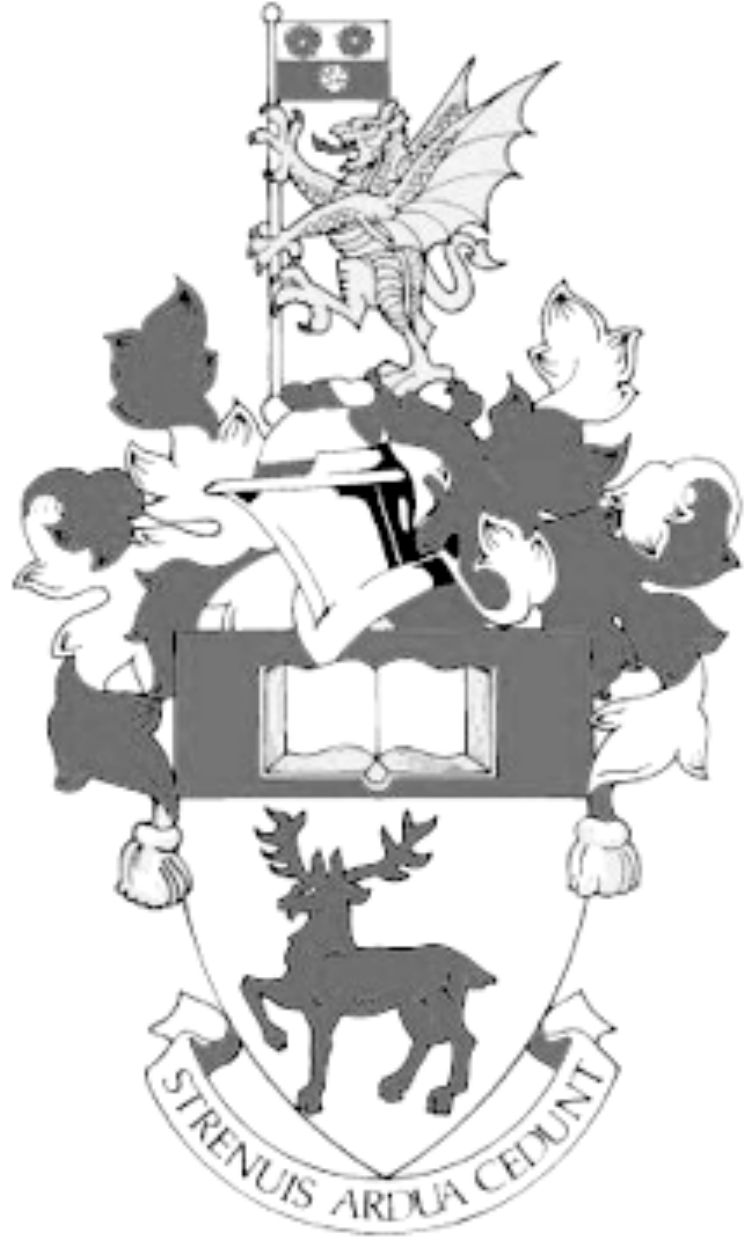}
    
    \vspace{1cm}
    
    UNIVERSITY OF SOUTHAMPTON
    
    \vspace{0.33cm}
    
    FACULTY OF SOCIAL AND HUMAN SCIENCES
    
    \vspace{0.33cm}
    
    School of Mathematics
    
    \vspace{2cm}
    
    {\huge 
    \textsc{\textbf{On Bredon (Co-)Homological\\[1ex] 
    Dimensions of Groups}}}
    
    \vspace{2.2cm}
    
    {\Large Martin Georg Fluch}
    
    \vspace{\fill}
    
    Supervisor: Dr.~Brita Nucinkis
    
    \vspace{4.3cm}
    
    Thesis for the degree of Doctor of Philosophy
    
    \vspace{1.2cm}
    
    January 2011
        
    \vspace*{-1cm}
\end{titlepage}


\setcounter{page}{0}


\cleardoublepage

\addtolength{\textheight}{-0.5cm}
\addtolength{\footskip}{0.75cm}

\pagenumbering{roman}


\begin{myabstract}{%
    UNIVERSITY OF SOUTHAMPTON
    \\
    \underline{ABSTRACT}
    \\
    FACULTY OF SOCIAL AND HUMAN SCIENCES
    \\
    SCHOOL OF MATHEMATICS
    \\[\medskipamount]
    \underline{Doctor of Philosophy}
    \\
    ON BREDON (CO-)HOMOLOGICAL DIMENSIONS OF GROUPS
    \\
    by Martin Georg Fluch}
    
    \setlength{\baselineskip}{\bskipabstract}
    
    The objects of interest in this thesis are classifying spaces
    $E_{\frakF}G$ for discrete groups $G$ with stabilisers in a given
    family $\frakF$ of subgroups of $G$. The main focus of this 
    thesis lies in the family $\Fvc(G)$ of virtually cyclic subgroups 
    of~$G$. A classifying space for this specific family is denoted 
    by $\uu EG$. It has a prominent appearance in the Farrell--Jones 
    Conjecture. Understanding the finiteness properties of $\uu EG$ 
    is important for solving the conjecture.
    
    This thesis aims to contribute to answering the following question
    for a group $G$: what is the minimal dimension a model for $\uu
    EG$ can have?  One way to attack this question is using methods in
    homological algebra.  The natural choice for a cohomology theory
    to study $G$-CW-complexes with stabilisers in a given family
    $\frakF$ is known as Bredon cohomology.  It is the study of
    cohomology in the category of $\OFG$-modules.  This category
    relates to models for $E_{\frakF}G$ in the same way as the
    category of $G$-modules relates to the study of universal covers
    of Eilenberg--Mac~Lane spaces $K(G,1)$.
    
    In this thesis we study Bredon (co-)homological dimensions of
    groups.  A major part of this thesis is devoted to collect
    existing homological machinery needed to study these dimensions
    for arbitrary families $\frakF$.  We contribute to this
    collection.
    
    After this we turn our attention to the specific case of $\frakF =
    \Fvc(G)$.  We derive a geometric method for obtaining a lower
    bound for the Bredon (co\=/)homological dimension of a group
    $G$ for a general family $\frakF$, and subsequently show how to
    exploit this method in various cases for $\frakF=\Fvc(G)$.
    
    Furthermore we construct model for $\uu EG$ in the case that $G$
    belongs to a certain class of infinite cyclic extensions of a
    group $B$ and that a model for~$\uu EB$ is known.  We give bounds
    on the dimensions of these models.  Moreover, we use this
    construction to give a concrete model for $\uu EG$, where $G$ is a
    soluble Baumslag--Solitar group.  Using this model we are able to
    determine the exact Bredon (co-)homological dimensions of these
    groups.
    
    The thesis concludes with the study of groups $G$ of low Bredon 
    dimension for the family $\Fvc(G)$ and we give a 
    classification of countable, torsion-free, soluble groups which 
    admit a tree as a model for $\uu EG$.    
\end{myabstract}

\cleardoublepage

\setlength{\baselineskip}{\bskipmaintext}


\tableofcontents  

\cleardoublepage


\mymakeschapterhead{Declaration of Authorship}

I, Martin Fluch, declare that the thesis entitled
\emph{``\myinserttitle''} and the work presented in the thesis are
both my own, and have been generated by me as the result of my own
original research.  I confirm that:
\begin{itemize}
    \item  this work was done wholly while in candidature for a 
    research degree at this university;
    
    \item  no part of this thesis has previously been submitted for a 
    degree or any other qualification at this university or any other 
    institution;

    \item  where I have consulted the published work of others, this 
    is always clearly attributed;

    \item  where I have quoted from the work of others, the source is 
    always given. With the exception of such quotations, this thesis 
    is entirely my own work;

    \item  I have acknowledged all main sources of help;

    \item  where the thesis is based on work done by myself jointly 
    with others, I have made clear exactly what was done by others 
    and what I have contributed myself;

    \item parts of this work have been published as: ``Classifying
    Spaces with Virtually Cyclic Stabilisers for Certain Infinite
    Cyclic Extensions'', to appear in J.~Pure Appl.~Algebra.
\end{itemize}

\vspace{1.7cm} 
\begin{tabbing}
\hspace*{0.40\textwidth} \= Signed:
\= 
\raisebox{-2.3mm}{\includegraphics[width=4.6cm]{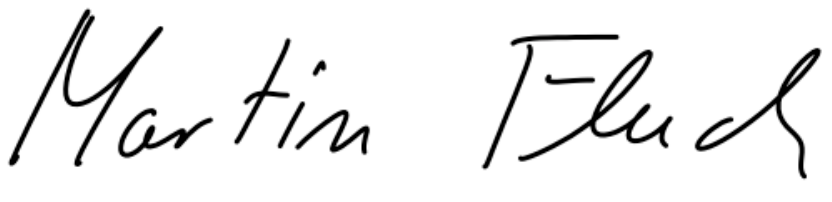}}
\\[0.4cm]
\> Date: \> January 10, 2011
\end{tabbing}

\addtolength{\textheight}{0.5cm}
\cleardoublepage
\addtolength{\footskip}{-0.5cm}


\mymakeschapterhead{Acknowledgements}

First of all, I want to thank my supervisor Brita Nucinkis and my
advisor Bernhard Köck for the ongoing and patient support and guidance
during my three years postgraduate studies.  It has been a pleasure to
work with them and I am grateful for the opportunity this has given to
me.

I would like to thank Armando Martino and Ashot Minasyan for many
inspiring conversations from which I learned so much!  Some important
ideas in this thesis grew out of the seeds planted this way in my
head.  I also want to thank Daniel Juan-Pineda, Ian Leary and Holger
Reich for their helpful comments.  Furthermore, my thanks belong to
Aditi Kar, Daniel Nucinkis, Vesna Perisic, David Singerman, Alexander
Stasinski and Christopher Voll from the School of Mathematics for all
the help and support I have got from them.

I wish to express my gratitude to Martin Dunwoody for reading the
first draft of my thesis thoroughly.  I also thank Tom Norton who
patiently read so many texts of mine and improved their linguistic 
style.

Furthermore, I wish to thank my fellow postgraduate students for
making my three years at the University of Southampton so enjoyable as
they have been.  I remember so many interesting conversations about
mathematics and anything under the sun which have taken place at the
office, the nearby pubs and elsewhere.  In particular, my gratitude
goes to Alex Bailey, Helena Fischbacher-Weitz, Giovanni Gandini, Ana
Khukhro, Ramesh Satkurunath, Richard Slessor, Martin Sell, Robert
Snocken and Glenn Ubly.

I wish to thank the School of Mathematics at the University of
Southampton for the generous financial support I received through the
three years of studies.

Finally I would like to thank my family for their unquestioned support
in good and difficult times.  Without their support I would have never
accomplished what I did!


\addtolength{\textheight}{-0.5cm}

\cleardoublepage

\addtolength{\footskip}{0.5cm}

\pagenumbering{arabic}

%
%

\chapter*{Introduction}

%
%

\section{Classifying Spaces and Bredon (Co-)Homology of Groups}

Classifying spaces and their finiteness conditions form an important 
part of various areas in pure mathematics such as group theory, 
algebraic topology and geometric topology.

Given a group $G$ and a non-empty family~$\frakF$ of subgroups of $G$
which is closed under conjugation and finite intersections, one can
consider the homotopy category of $G$-CW-complexes with stabilisers in
$\frakF$.  This category is known to have terminal objects, see for
example~\cite[p.~275]{luck-05}.  A terminal object in this
category is called a \emph{classifying space} of $G$ for the family
$\frakF$ or alternatively, a \emph{model for $E_{\frakF}G$.}

If $\frakF = \{1\}$ is the trivial family of subgroups, then the
universal cover $EG$ of an Eilenberg--Mac~Lane space $K(G,1)$ is a
model for $E_{\frakF}G$.  If $\frakF = \Ffin(G)$ is the family of
finite subgroups of $G$, then a model for $E_{\frakF}G$ is also known
as the \emph{universal space for proper actions}.  This space is
commonly denoted by $\underline EG$ and it has a prominent appearance
as the geometric object in the Baum--Connes Conjecture.

Recently the classifying space $\uu EG$ of $G$ for the family
$\Fvc(G)$ of virtually cyclic subgroups of $G$ has caught the interest
of the mathematical community (recall that a group is called
\emph{virtually cyclic} if it contains a cyclic subgroup of finite
index).  The reason for this is that the classifying space $\uu EG$
appears on the geometric side of the \emph{Farrell--Jones Conjecture}
for Algebraic $K$- and $L$-Theory.  This conjecture has originally
been stated by Thomas Farrell and Lowell Jones in 1993 in their famous
paper~\cite{farrell-93}.

Let $R$ be a ring with unit and involution.  There exists $G$-homology
theories
\begin{equation*}
    H^{G}_{n}(\?; \mathbf{K}_{R}) \qquad \text{and} \qquad
    H^{G}_{n}(\?; \mathbf{L}^{\langle-\infty\rangle}_{R})
\end{equation*}
in the sense of~\cite[pp.~738f.]{luck-05a} such that, \pagebreak[2]
if evaluated at a singleton space~$\{ * \}$, we recover the algebraic
$K$- and $L$-groups of the group ring $RG$.  That is
\begin{equation*}
    H^{G}_{n}(\{*\};
    \mathbf{K}_{R}) \isom K_{n}(RG)
    \qquad \text{and} \qquad
    H^{G}_{n}(\{*\};
    \mathbf{L}^{\langle-\infty\rangle}_{R}) \isom L^{\langle-\infty
    \rangle}_{n}(RG)
\end{equation*}
for all $n\in \Z$~\cite[p.~735]{luck-05a}.  Now the Farrell--Jones
Conjecture makes the following prediction.

\begin{FJC}
    \cite[p.~736]{luck-05a}
    The \emph{assembly maps}
    \begin{align*}
	A_{\text{vc}}\: & H^{G}_{n}(\uu EG; \mathbf{K}_{R}) \to
	H^{G}_{n}(\{*\}; \mathbf{K}_{R}) 
	\\
	A_{\text{vc}}\: & H^{G}_{n}(\uu EG;
	\mathbf{L}^{\langle-\infty\rangle}_{R}) \to H^{G}_{n}(\{*\};
	\mathbf{L}^{\langle-\infty\rangle}_{R})
    \end{align*}
    induced by the projection $\uu EG\to \{*\}$ are isomorphisms for 
    all $n\in \Z$.
\end{FJC}

The codomains of the assembly maps are the groups which we want to 
compute but whose computation is known to be difficult. On the other 
hand, the domains of the assembly maps are easier to calculate as one 
can apply methods from Algebraic Topology such as spectral sequences 
and Chern characters to it~\cite{luck-05a}.

The Farrell--Jones Conjecture is known to imply numerous other famous
conjectures from different fields of pure mathematics, including the
\emph{Bass Conjecture} in Algebraic $K$-Theory, the \emph{Borel
Conjecture} in Geometric Topology, the \emph{Kaplansky Conjecture} in
Group Theory and the \emph{Novikov Conjecture} in 
Topology~\cite{luck-05a}.

Progress in studying the Farrell--Jones Conjecture relies much on
understanding finiteness conditions of the classifying space $\uu EG$.
Models for~$EG$ and~$\underline EG$ have been studied extensively, see
for example~\cite{luck-05}.  However, there is not much known yet
about the classifying space for the family of virtually cyclic
subgroups.  Classes of groups that are understood are word hyperbolic
groups~\cite{juan-pineda-06}, virtually polycyclic
groups~\cite{luck-12}, relatively hyperbolic groups~\cite{lafont-07}
and
$\operatorname{CAT}(0)$-groups~\cite{luck-09, farley-10}.
Furthermore, there exist general constructions for finite index
extensions~\cite{luck-00a} and direct limits of groups~\cite{luck-12}.
Some more specific constructions can also be found
in~\cite{connolly-06} and~\cite{manion-08}.

The focus in this thesis lies on groups $G$ which admit a finite
dimensional model for $\uu EG$.  This leads to the study of the Bredon
geometric dimension of a group $G$ with respect to the family
$\Fvc(G)$, which by definition is the least integer $n$ (or~$\infty$)
such that there exists an $n$-dimensional model for $\uu EG$.

Homological methods provide suitable tools to study finiteness
conditions of classifying spaces.  The natural choice of a homology
theory for $G$-CW-complexes with stabilisers in a given family
$\frakF$ is the Bredon cohomology of groups.  This homology theory has
been introduced for finite groups by Glen Bredon in~\cite{bredon-67} 
and it has been generalised to arbitrary groups and arbitrary 
families of subgroups by Lück~\cite{luck-89}. Related to the Bredon 
geometric dimension of a group is the concept of the Bredon
homological 
and cohomological dimension of a group which is defined in a purely 
algebraic way.

We aim in this thesis to utilise the algebraic Bredon machinery as
far as possible in order to study the Bredon geometric dimensions of
groups~$G$ with respect to the family $\Fvc(G)$. 


\section{Structure of this Thesis}

The first three chapters in this thesis do not specialise to the 
family of virtually cyclic subgroups but introduce the theory in a 
more general setting.

In Chapter~\ref{ch:basics} the category of Bredon modules over the
orbit category~$\OFG$ is introduced.  Free and projective Bredon
modules are constructed.  It is explained how the categorical tensor
product gives rise to a tensor product over the orbit category $\OFG$
which is the Bredon analogue to the tensor product over the group ring
$\Z G$ in the category of $G$-modules.  This tensor product is used to
define flat Bredon modules.  The chapter is finished with the
definition of the restriction, induction and coinduction functors and
a summary of their basic properties.

In Chapter~\ref{ch:classifying-spaces}, $G$-CW-complexes and
classifying spaces with stabilisers in a given family $\frakF$ are
defined.  It is explained how one derives from the categorical
definition the homotopy characterisation of a classifying space. 
Geometric finiteness conditions are discussed and their relationship 
to algebraic properties in the corresponding category of Bredon 
modules.

Chapter~\ref{ch:bredon-dimensions} introduces the notion of Bredon
(co\=/)homological dimension.  The relationship between the algebraic
and geometric Bredon dimensions is studied as well as how the
algebraic Bredon dimensions depend on the family of subgroups.  We
deduce the algebraic analogue to a result from Lück and
Weiermann~\cite{luck-12} which gives a lower bound for the dimension
when passing to a larger group, see Theorem~\ref{thrm:cdF-G-vs-cdG-G}
and Theorem~\ref{thrm:hdF-G-vs-hdG-G}.  In the same chapter we
construct a standard resolution and derive an algebraic analogue to
a result in~\cite{luck-12} which gives upper bounds on the Bredon
dimensions of direct unions of groups, see
Proposition~\ref{prop:standard-resolution} and
Theorem~\ref{thrm:direct-union-result}; these results are a
generalisation of work by Nucinkis~\cite{nucinkis-04} which she has
carried out for the family of finite subgroups.  In
Section~\ref{sec:tensor-product-resolutions} we study the tensor
product of projective resolutions which gives us the possibility to
derive an upper bound for the Bredon cohomological dimension of direct
products of groups, see Theorem~\ref{thrm:cd-for-direct-products}. 
Finally we derive a Künneth formula for Bredon homology, see 
Theorem~\ref{thrm:kunneth-formula-for-homology}.

In Chapter~\ref{ch:dimensions-for-Fvc} we begin to specialise to the
family of virtually cyclic subgroups.  Using geometric methods we
derive a lower bound for the Bredon (co-)homological dimension of a
group $G$ (still with respect to a general family of subgroups).
Using this tool we use known classifying spaces for the family of
virtually cyclic groups in order to calculate the Bredon
(co\=/)homological dimensions $\uuhd G$ and $\uucd G$ for various
groups.  The results include the dimensions for $\Z^{2}$, free groups
and the fundamental groups of finite graphs of finite groups.  We also
study of the Bredon cohomological dimension $\uucd G$ for nilpotent
groups.  The chapter concludes by investigating under which conditions
an elementary amenable group $G$ admits a finite dimensional model for
$\uu EG$.

In the next chapter we turn our attention to the construction of a
concrete model for $\uu EG$ where $G = B\rtimes \Z$ is an infinite
cyclic extension of a group $B$.  Under certain conditions on the
action of $\Z$ on $B$, we can make a classifying space of $G$ from a
model for $\uu EB$.  The construction relies on a generalisation of a
result by Juan-Pineda and Leary~\cite{juan-pineda-06}, see
Proposition~\ref{prop:JPL}.  The class of groups for which this result
is applicable include certain HNN-extensions with abelian or free base
group and standard wreath products by~$\Z$, see
Section~\ref{sec:examples}.  We calculate the algebraic and geometric
Bredon dimensions of the soluble Baumslag--Solitar groups $BS(1,m)$,
$m\in\Z\setminus \{0\}$, with respect to the family of virtually
cyclic subgroups, see Theorem~\ref{thrm:dim-soluble-BS}. We end 
this chapter by showing that some of the key ideas of this 
chapter can be applied successfully in other settings than infinite 
cyclic extensions. Namely, we use them to calculate the least 
dimension a model for $\uu EG$ can have when $G$ is a free product.

The final chapter of this thesis is an attempt to study and classify
groups with low Bredon dimension with respect to the family of
virtually cyclic subgroups.  Using the result of
Theorem~\ref{thrm:dim-soluble-BS} and a classification result by
Gildenhuys~\cite{gildenhuys-79} we classify countable, torsion-free,
soluble groups $G$ which have Bredon geometric dimension $1$ with
respect to the family of virtually cyclic subgroups, see
Theorem~\ref{thrm:classification}.


\section{Notation, Conventions and Preliminaries}

The set of natural numbers is denoted by $\N$ and $0$ is considered
to be a natural number.  The group of integers is $\Z$, the field of
rational numbers is denoted by $\Q$, the field of real numbers is
denoted by $\R$ and the field of complex numbers is denoted by $\C$.
Rings are always assumed to have a unit. If $G$ is a group and $R$ a 
ring, then $RG$ denotes the \emph{group ring} which consists of all 
formal $R$-linear combinations of elements in $G$.

If $a,b\in \R\cup \{\pm \infty\}$, then $[a,b]$ denotes the closed
interval
\begin{equation*}
    [a,b] \defeq \{ x\in \R : a\leq x\leq b\}.
\end{equation*}

As a topological space $\R$ is considered to have the standard
topology obtained from the Euclidian metric. Similarly $\C$ has the 
topology of the underlying Euclidian space $\R^{2}$.

If $n\in\N$, then the $(n-1)$-sphere $S^{n-1}$ and the $n$-disc 
$D^{n}$ are the subspaces
\begin{align*}
    S^{n-1} & \defeq \{ x\in \R^{n} : |x|=1\},  \\
    D^{n}   & \defeq \{ x\in \R^{n} : |x|\leq 1\}.
\end{align*}
We set $S^{-1}\defeq\emptyset$.  The $1$-sphere $S^{1}$ can be
identified with multiplicative group of complex numbers $z$ with
$|z|=1$ and this multiplicative structure make~$S^{1}$ into a
topological group.

Throughout this thesis we are working in the convenient category of
\emph{compactly generated topological spaces} in the sense
of~\cite{steenrod-67}.  By definition a subset $A\subset X$ of a
compactly generated space $X$ is closed in $X$ if and only if $A\cap
K$ is closed in $X$ for every compact subset $K$ of $X$.  Locally
compact spaces are compactly generated.

We use the following notation for categories: $\Set$ denotes the
category of sets and $\Ab$ denotes the category of abelian groups.  If
$R$ is a ring, then $\ModR$ ($\RMod$) denotes the category of right
(left) $R$-modules.  In the special case that $R$ is the group ring
$\Z G$ we denote this category by $\ModG$ ($\GMod$); the objects in
this category are called $G$-modules.

We use the symbols $\prod$ and $\coprod$ to denote the product and 
coproduct in a category. In particular $\prod$ denotes the cartesian 
product and $\coprod$ denotes the disjoint union in the category of 
sets.

We assume in this thesis that the reader is familiar with the basic
concepts of transformation groups~\cite{kawakubo-91}, category
theory~\cite{mac-lane-98}, and homological algebra~\cite{weibel-94}.

Furthermore we assume that the reader is familiar with the classical
cohomology of groups and classical and cohomological finiteness
conditions of groups~\cite{brown-82, bieri-81}.  In particular, we
denote by $\hd G$ the \emph{homological dimension} of a group $G$, by
$\cd G$ its \emph{cohomological dimension} and by $\gd G$ its
\emph{geometric dimension.} For virtually torsion-free groups $G$ we
have the notion of \emph{virtual cohomological dimension} and this 
dimension is denoted by~$\vcd G$, see~\cite[pp.~225f.]{brown-82}.

\vspace*{3em}

\centerline{\textbf{Note}}

\smallskip

This is a revised version of the authors thesis.  It differs slightly
from the version as it has been accepted by the University of
Southampton in~2011.  In this revised version minor mistakes have been
corrected and small adjustments have been made to increase the
readability.

\hfill Bielefeld, August~31, 2012

%
%

\chapter{The Category of Bredon Modules}
\label{ch:basics}

%
%

\section{Families of Subgroups}
\label{sec:families-of-subgroups}

\begin{DEF}
    Let $G$ be a group.  A set $\frakF$ of subgroups of $G$ is called
    a \emph{family} if it is non-empty and closed under conjugation.
    We say that $\frakF$ is a \emph{semi-full} family if $H\cap K\in
    \frakF$ for any $H,K\in \frakF$.  We say that $\frakF$ is a
    \emph{full} family if $\frakF$ is closed under taking subgroups.
\end{DEF}

\begin{example}
    Commonly used families are the following:
    \begin{enumerate}
        \item
        \label{enum:families-triv}
        the \emph{trivial} family $\{ 1\}$ which consists 
        of the trivial subgroup only;
    
        \item  the family $\Ffin(G)$ of finite subgroups of $G$;
    
        \item  the family $\Fvc(G)$ of virtually cyclic subgroups of 
	$G$;
	
	\item
	\label{enum:families-all}
	the family $\Fall(G)$ of all subgroups of $G$;
	
	\item given a non-empty $G$-set $X$ we have the family
	\begin{equation*}
	   \frakF(X) \defeq \{ G_x : x\in X\}
	\end{equation*}
	 of stabilisers of~$X$.
    \end{enumerate}
    Note that the examples~(\ref{enum:families-triv})
    to~(\ref{enum:families-all}) are full families of subgroups of
    $G$.  However, the family $\frakF(X)$ is in general neither
    subgroup closed or even intersection closed.
\end{example}

There are different common constructions how to obtain a new family 
of subgroups from a given one. In what follows we list those which 
appear in this thesis.

If $\frakF$ is a family of subgroups of $G$ and $K$ a subgroup of $G$ 
then
\begin{equation*}
    \frakF \cap K \defeq \{ H\cap K : H\in \frakF\}
\end{equation*}
is a family of subgroups of $K$ provided. If $\frakF$ is a 
(semi-)full family of subgroups of $G$, then $\frakF\cap K$ is a 
(semi-)full family of subgroups of $K$.

Given two groups $G_{1}$ and $G_{2}$ and families $\frakF_{1}$ and
$\frakF_{2}$ of subgroups of $G_{1}$ and $G_{2}$ respectively we
define their \emph{cartesian product} $\frakF_{1}\times \frakF_{2}$ to
be the set
\begin{equation*}
    \frakF_{1}\times \frakF_{2} \defeq  \{ H_{1}\times H_{2} : H_{1}\in 
    \frakF_{1} \text{ and } H_{2}\in \frakF_{2}\}.
\end{equation*}
This is a family of subgroups of the group $G_{1}\times G_{2}$.  If
$\frakF_{1}$ and $\frakF_{2}$ are semi-full families of subgroups, 
then so is $\frakF_{1}\times \frakF_{2}$. But in general it is not 
true that the cartesian product of two full families is again a full 
family: not every subgroup $K$ of $H_1\times H_2\in \frakF_1	\times
\frakF_2$ is equal to $K_1\times K_2$ for some $K_i\in \frakF_i$.

Given an arbitrary family $\frakF$ of a group $G$ we can always 
\emph{complete} it to a full family of subgroups of $G$. This 
\emph{completion} is denoted by $\bar \frakF$ and is by definition
\begin{equation*}
    \bar{\frakF} \defeq  \{ H\leq G : H\leq K \text{ for some $K\in 
    \frakF$}\}.
\end{equation*}
This is by construction the smallest full family of subgroups of $G$
which contains the family $\frakF$.

\begin{DEF}
    A \emph{pair} $(\frakG, \frakF)$ of families of subgroups of $G$ 
    consists of two families $\frakF$ and $\frakG$ of subgroups of 
    $G$ with $\frakF\subset\frakG$. A pair $(\frakG, \frakF)$ of 
    families of subgroups is called \emph{semi-full} (\emph{full}) if 
    both $\frakF$ and $\frakG$ are semi-full (full).
\end{DEF}

%
%

\section{The Orbit Category}
\label{sec:orbit-category}

\begin{DEF}
    Let $\frakF$ be a family of subgroups of $G$.  Then the
    \emph{orbit category} $\OFG$ is the following small category.  The
    objects of $\OFG$ are homogeneous $G$-spaces $G/H$ with $H\in
    \frakF$ and the morphisms of $\OFG$ are $G$-maps.  In the case
    that $\frakF=\Fall(G)$ we write $\calO G$ for the orbit category.
\end{DEF}

Given two subgroups $H$ and $K$ of $G$ we denote the set of all
$G$-maps from $G/H$ to $G/K$ by $[G/H, G/K]_{G}$.  The set $[G/H,
G/H]_{G}$ is a monoid in general and we denote its identity element 
either by $\id$ or $1$.

A $G$-map $f\: G/H \to G/K$ is characterised by its value on the 
coset $H$. If $f(H) = xK$ for some $x\in G$, then the condition 
that $f$ is a $G$-map implies
\begin{align*}
    xK\in (G/K)^{H} 
    & =
    \{ xK\in G/K : hxK = xK \text{ for all $h\in H$}\}
    \\
    & = \{xK \in G/K : H^{x} \leq K\}.
\end{align*}
Conversely, given any $xK\in (G/K)^{H}$, there exists a unique 
$G$-map $f\: G/H\to G/K$ with $f(H) = xK$. Therefore we have a 
bijective correspondence
\begin{equation}
    \label{eq:G-map-identification}
    [G/H,G/K]_{G} \isom (G/K)^{H}
\end{equation}
given by $f\mapsto f(H)$.

Therefore we can label any $G$-map $f$ between homogeneous $G$-spaces 
as follows: we denote by $f_{x,H,K}$ the unique $G$-map $f\: G/H \to 
G/K$ which maps $H$ to $xK$. With this notation two $G$-maps 
$f_{x,H,K}$ and $f_{x',H',K'}$ are the same if and only if $H=H'$, 
$K=K'$ and $x^{-1}x' \in K$. In particular $f_{x,H,H}$ is the 
identity map on $G/H$ if and only if $x\in H$.

If we are given two $G$-maps $f_{x,H,K}$ and $f'_{y,K,L}$, then the
composite map $f_{y,K,L} \circ f_{x,H,K}$ is a $G$-map $G/H\to G/K$
and we have
\begin{equation*}
    (f_{y,K,L}\circ f_{x,H,K})(H) = f_{y,K,L}(xK) = x f_{y,K,L}(K) =
    xy L.
\end{equation*}
In other words we have the following simple rule to calculate the
composite of two $G$-maps between homogeneous $G$-spaces:
\begin{equation*}
    f_{y,K,L} \circ f_{x,H,K} = f_{xy,H,L}
\end{equation*}

The structure of the orbit category $\OFG$ depends not only on the
group~$G$ but also very much on the family $\frakF$ of subgroups
of~$G$.  We list a few standard facts from the theory of topological
transformation groups which illustrate this situation.
\begin{enumerate}
    \item If $\frakF = \{1\}$, then the orbit category has only one
    object $G/1$.  Clearly every element of $[G/1, G/1]_G$ is
    invertible, that is $[G/1,G/1]_G = \Aut(G/1)$.  We have an
    isomorphism of groups $G \to \Aut(G/1)$ which sends an element $g$
    to the automorphism
    \begin{align*}
       l_g\: & G/1 \mapsto G/1,
       \\
       & x\mapsto gx.
    \end{align*}
    In particular, every morphism in the orbit category $\OFG$ is
    invertible.
   
    \item If $\frakF \subset \Ffin(G)$, then still every endomorphism
    in $\OFG$ is invertible, that is $[G/H,G/H]_G = \Aut(G/H)$ for
    every $H\in\frakF$.  This is because if $f_{g,H,H}$ is an
    endomorphism of $\OFG$, then $H^g\leq H$ and since $H$ is finite
    it follows that $H^g= H$.  Therefore also $H^{g^{-1}}\leq H$
    and~$f_{g^{-1},H,H}$ is a morphism of the orbit category $\OFG$.
    Necessarily $f_{g^{-1},H,H}$ is the inverse to $\varphi$.

    \item In general one has that $\Aut(G/H)$ is isomorphic to the
    Weyl-group~$W_{G}(H) \defeq N_{G}(H)/H$ of $H$ in $G$.  This is,
    because an endomorphism~$f_{g,H,H}$ of $\OFG$ is invertible if and
    only if $g\in N_G(H)$ and two endomorphism $f_{g,H,H}$ and
    $f_{g',H,H}$ are the same if $g'g^{-1}\in H$.  However, if $H$ is
    not finite then there may exists elements in $\mor(G/H,G/H)$ which
    are not invertible and therefore do not belong to the automorphism
    group $\Aut(G/H)$.
\end{enumerate}
Thus broadly speaking, the larger the family $\frakF$ becomes the
more 
the orbit category $\OFG$ loses structure.

%
%

\section{The Category of Bredon Modules}

\begin{DEF}
    Let $\frakF$ be a family of subgroups of a group $G$. A functor
    \begin{equation*}
        M\: \OFG \to \Ab
    \end{equation*}
    from the orbit category $\OFG$ to the category $\Ab$ of abelian
    groups is called a \emph{Bredon module $M$ over the orbit category
    $\OFG$} (or an \emph{$\OFG$-module}). If the functor $M$ is 
    contravariant (covariant) then we call $M$ a \emph{right} 
    (\emph{left})~$\OFG$-module.
 
    Let $M$ and $N$ be two $\OFG$-modules of the same variance. 
    A \emph{morphism} $f\: M\to N$ of $\OFG$-modules is a natural 
    transformation from the functor $M$ to the functor $N$.
\end{DEF}

Let $M$ be a right (left) $\OFG$-module and $\varphi$ a morphism of
the orbit category~$\OFG$.  If there is no danger of confusion, then
we may abbreviate the homomorphism $M(\varphi)$ by $\varphi^{*}$
($\varphi_{*}$~respectively).  In order to avoid complicating the
language we shall understand a statement about Bredon modules without
specified variance to be true for both left and right Bredon modules.

\begin{examples}
    \label{ex:Bredon-modules}
    The following are simple but yet important standard examples of 
    some Bredon modules:
    
    \begin{enumerate}
	\item Let $A$ be an abelian group.  Then $\underline A$
	denotes the \emph{constant} $\OFG$-module given by $\underline
	A (G/H) \defeq A$ and $\underline A(\varphi) \defeq \id$ for
	any object $G/H$ and any morphism $\varphi$ of the orbit
	category $\OFG$.  It is both a left and a right $\OFG$-module.
	If we want to emphasise the dependency on the family $\frakF$
	then we may write $\underline A_{\frakF}$ for the constant
	$\OFG$-module~$\underline A$.
    
	\item A important special case of the above example is the
	\emph{trivial} $\OFG$-module which is the constant
	$\OFG$-module $\underline \Z_{\frakF}$.
	
	\item 
	\label{ex:freeR}
	Let $K$ be a fixed subgroup of $G$. We construct a 
	right $\OFG$-module $\Z[\?,G/K]_{G}$ as follows: Given an 
	object $G/H$ of the orbit category~$\OFG$ we let $\Z[G/H, 
	G/K]_{G}$ be the free abelian group with basis the set $[G/H, 
	G/K]_{G}$. If $\varphi\: G/H\to G/L$ is a morphism in $\OFG$, 
	then $\varphi^{*}\: \Z[G/L, G/K]_{G} \to \Z[G/H,G/K]_{G}$ is 
	the unique homomorphism of abelian groups which maps the basis 
	element $f\in [G/L, G/H]_{G}$ to $f\circ \varphi \in [G/H, 
	G/K]_{G}$.
	
	\item
	\label{ex:freeL}
	In a similar way as above we can construct a left
	$\OFG$-module $\Z[G/K, \?]_{G}$.  Given a morphism $\varphi$
	of the orbit category~$\OFG$ the homomorphism $\varphi_{*}$ is
	defined by pre-composition instead of post-composition.
    \end{enumerate}
\end{examples}

The class of all right $\OFG$-modules together with the morphisms of
$\OFG$-modules form a category which we denote by $\OFGMod$.  Similar
we have the category $\ModOFG$ of all left $\OFG$-modules.  By
construction these categories are the functor categories
$[\OFG^{\op},\Ab]$ and $[\OFG, \Ab]$
respectively~\cite[pp.~63ff.]{mitchell-65}.  It follows from standard
arguments in category theory that the functor categories $\ModOFG$ and
$\OFGMod$ inherit many properties from the abelian category
$\Ab$~\cite{freyd-64, mac-lane-98, weibel-94}.  In what follows we
collect some of those results for $\ModOFG$.
 
The category $\ModOFG$ is abelian, complete and cocomplete (that is
arbitrary limits and colimits exist) since the category $\Ab$ is.
Limits and colimits are calculated componentwise.  This includes:
products, coproducts, direct limits, kernels, images and
intersections.  In particular, filtered limits (which include direct
limits) are exact in $\ModOFG$ as they are exact
in~$\Ab$~\cite[p.~57]{weibel-94}.  Furthermore, since kernels and
images are calculated component wise we have that a sequence
\begin{equation*}
    M'\to M\to M''
\end{equation*}
of right $\OFG$-modules is exact at $M$ if and only if the 
corresponding sequences\pagebreak[1]
\begin{equation*}
    M''(G/H) \to M(G/H) \to M'(G/H)
\end{equation*}
of abelian groups are exact at $M(G/H)$ for every $H\in \frakF$.

Finally, we remark that the category $\ModOFG$ has enough projectives
because the category $\Ab$ is cocomplete and has enough
projectives~\cite[p.~43]{weibel-94}.  Since $\Ab$ is complete and has
enough injectives, it follows by a similar argument that~$\ModOFG$ has
enough injectives, too.  Therefore we can define left and right
derived functors and take advantage of homological methods in the
study of the category of Bredon modules over the orbit
category~$\OFG$.

\begin{DEF}
    Let $G_{1}$ and $G_{2}$ be two groups and $\frakF$ and 
    $\frakG$ families of subgroups of $G_{1}$ and $G_{2}$ 
    respectively. A 
    \emph{$\OfG$-$\OgG$-bimodule} 
    $M$ is a bifunctor
    \begin{equation*}
        M\: \OfG\times \OgG \to \Ab
    \end{equation*}
    that is covariant in the first variable and contravariant in the
    second variable.
\end{DEF}

\begin{example}
    \label{ex:bi-Bredon-module}
    Given a group $G$ and family $\frakF$ of subgroups of $G$, then we
    have a $\OFG$-$\OFG$-bifunctor
    \begin{equation*}
	\Z[\?,\q?]_{G}\: \OFG\times \OFG \to \Ab.
    \end{equation*}
    which is is defined as follows.  Given a pair $G/K$ and $G/H$ of
    objects in~$\OFG$, its value is defined to be the free abelian
    group $\Z[G/H,G/K]_{G}$.  Given any pair $\psi\: G/K\to G/K'$ and
    $\varphi\: G/H'\to G/H$ of morphisms in $\OFG$, the group
    homomorphism
    \begin{equation*}
	\Z[\varphi, \psi]_{G}\: \Z[G/H, G/K]_{G} \to \Z[G/H',G/K']_{G}
    \end{equation*}
    is defined to be the unique group homomorphism which sends a basis
    element $f\in [G/H, G/K]_{G}$ to $\psi\mathop{\circ}
    f\mathop{\circ} \varphi\in [G/H',G/K']_{G}$.  
    \begin{figure}
    \begin{equation*}
    \dgARROWLENGTH=3cm
    \begin{diagram}
    \node{\Z[G/H, G/K]_{G}}
    \arrow{e,t}{\Z[G/H,\varphi]_{G}}
    \arrow{s,l}{\Z[\psi,G/K]_{G}}
    \arrow{se,t}{\Z[\varphi, \psi]_{G}}
    \node{\Z[G/H, G/K']_{G}}
    \arrow{s,r}{\Z[\psi,G/K']_{G}}
    \\
    \node{\Z[G/H', G/K]_{G}}
    \arrow{e,t}{\Z[G/H,\varphi]_G}
    \node{\Z[G/H', G/K']_{G}}
    \end{diagram}
    \end{equation*}
    \caption{}
    \label{fig:free-Bredon-bi-module}
    \end{figure}
    This is precisely
    the necessary definition needed in order to combine the
    constructions~(\ref{ex:freeR}) and~(\ref{ex:freeL}) in
    Example~\ref{ex:Bredon-modules} into a $\OFG$-$\OFG$-bimodule,
    see the diagram in Figure~\ref{fig:free-Bredon-bi-module}
    and~\cite[p.~37]{mac-lane-98}.
\end{example}

%
%

\section{Bredon Modules and $G$-Modules}

Recall that a right $G$-module $M$ is an abelian group $M$ with an
action of $G$ on the right.  The action of $G$ is extended linearly to
a homomorphism from the group ring $\Z G$ into the endomorphism ring
of $M$.  The category of all right $G$-modules is denoted by $\ModG$.

In what follows we consider the special case that $\frakF = \{1\}$ is
the trivial family of subgroups.  In Section~\ref{sec:orbit-category}
we have already noted that $\mor(G/1,G/1) = \Aut(G/1)$ is isomorphic
to the group $G$.  This isomorphism is given by $\varphi_{g,1,1}
\mapsto g^{-1}$.  Its inverse is given by~$g\mapsto
\varphi_{g^{-1},1,1}$.

Now a functor from $\OFG$ to $\Ab$ determines an abelian group $M' = 
M(G/1)$ and a homomorphism $\mor(G/1,G/1) \to \End(M')$. Since all 
endomorphisms of $G/1$ are invertible it follows that this 
homomorphism is actually a homomorphism $G\to \Aut(M')$. It is given 
by $g\mapsto M(\varphi_{g^{-1},1,1})$. If $M$ is a contravariant 
functor, that is a right $\OFG$-module, then we have
\begin{equation*}
    \varphi^{*}_{(gh)^{-1},1,1} = \varphi^{*}_{h^{-1}g^{-1},1,1} = 
    (\varphi_{g^{-1},1,1} \circ \varphi_{h^{-1},1,1})^{*}
    = \varphi^{*}_{h^{-1},1,1} \circ \varphi^{*}_{g^{-1},1,1}
\end{equation*}
for all $g,h\in G$. Therefore $xg \defeq  \varphi^{*}_{g^{-1},1,1}$ 
defines an right action of $G$ on $M'$ and this makes $M'$ into a 
right $G$-module.

In the case that $\frakF=\{1\}$ we can reverse this construction.
Given any right $G$-module $M'$ we can construct a right 
$\OFG$-module in the obvious way as follows. We set $M(G/1) \defeq  M'$ 
and if $\varphi_{g,1,1}$ is a morphism of the orbit category $\OFG$, 
then we let $\varphi^{*}_{g,1,1}$ be the morphism given by $x\mapsto 
xg^{-1}$ for all~$x\in M'$. Then
\begin{align*}
    (\varphi_{g,1,1}\circ \varphi_{h,1,1})^{*} 
    = 
    \varphi^{*}_{hg,1,1} & = x\mapsto x(hg)^{-1}
    \\
    & = 
    x\mapsto (xg^{-1})h^{-1}
    = 
    \varphi^{*}_{h,1,1} \circ \varphi^{*}_{g,1,1}
\end{align*}
which shows that $M$ is indeed a contravariant functor.

Thus in the case that $\frakF=\{1\}$ we have a one-to-one
correspondence between right $\OFG$ modules and right $G$-modules
given by the above construction.  Furthermore a morphism $f\: M\to N$
between two right $\OFG$-modules is given by a single homomorphism
$f'\: M'\to N'$ of abelian groups.  It follows from the fact that $f$
is a natrual transformation that $f'$ is a homomorphism
of~$G$-modules.  It follows that the assignment $M\mapsto M'$ and
$f\mapsto f'$ is functorial.

Therefore one has the known result that the categories $\ModOFG$ and
$\ModG$ are naturally isomorphic if $\frakF = \{1\}$ is the trivial
family of subgroups of~$G$.  Of course one has the dual result that
the category~$\OFGMod$ of left Bredon modules over the orbit category
$\OFG$ and the category $\GMod$ of left~$G$-modules are naturally
isomorphic in the case that $\frakF=\{1\}$.  In other words the theory
of Bredon modules is a generalisation of the theory of modules over
group rings.

%
%

\section{$\frakF$-Sets and Free Bredon Modules}
\label{sec:free-Bredon-modules}

Free objects are usually defined as left adjoint to a suitable
forgetful functor.  In the case of Bredon modules, the target category
of this forgetful functor is not the category $\Set$ of sets but the
category of $\frakF$-sets, which we denote by $\FSet$.  There are
several ways to see and describe this category.

\begin{DEF}
    An \emph{$\frakF$-set} $\Delta=(\Delta,\varphi)$ is a pair
    consisting of a set $\Delta$ and a function $\varphi\: \Delta\to
    \frakF$.  For $H\in \frakF$ we denote by $\Delta_{H}$ the
    pre-image $\varphi^{-1}(\{H\})$ and call it the
    \emph{$H$-component} of the $\frakF$-set $\Delta$.  A \emph{map}
    $f\: (\Delta,\varphi) \to (\Delta',\varphi')$ of~$\frakF$-sets is
    a function $f\: \Delta \to \Delta'$ of sets such that the diagram
    \begin{equation*}
    \dgARROWLENGTH=4em
        \begin{diagram}
            \node{\Delta}
	    \arrow[2]{e,t}{f}
	    \arrow{se,r}{\varphi}
	    \node[2]{\Delta'}
	    \arrow{sw,r}{\varphi'}
	    \\
	    \node[2]{\frakF}
        \end{diagram}
    \end{equation*}
    commutes.  
\end{DEF}

Note that by definition the class of all $\frakF$-sets, together with
maps of~$\frakF$-sets, forms a \emph{comma category over $\frakF$} in
the sense of~\cite[p.~45]{mac-lane-98}.  We denote this category by
$\FSet$.

\begin{lemma}
    Consider the set $\frakF$ as a discrete category.  Then the
    functor category $[\frakF, \Set]$ is isomorphic to $\FSet$.
\end{lemma}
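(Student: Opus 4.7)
The plan is to construct an explicit isomorphism of categories by exploiting the fact that a discrete category has only identity morphisms, so a functor out of it is just a family of sets indexed by the objects.

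First I would define the two functors implementing the isomorphism. In one direction, given a functor $F\colon \frakF \to \Set$, I form the $\frakF$-set $\Phi(F) \defeq (\Delta_F, \varphi_F)$ where
\begin{equation*}
    \Delta_F \defeq \coprod_{H\in \frakF} F(H)
\end{equation*}
and $\varphi_F$ sends each element of $F(H)$ to $H \in \frakF$. A natural transformation $\eta = (\eta_H)_{H\in\frakF}$ between two such functors has only the trivial naturality requirements (coming from identity morphisms), hence is literally a family of set maps $\eta_H\colon F(H) \to F'(H)$; these assemble into a single map of $\frakF$-sets $\Phi(\eta)\colon \Delta_F \to \Delta_{F'}$ which commutes with the fibration over $\frakF$ by construction.

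In the other direction, given an $\frakF$-set $(\Delta,\varphi)$, I define $\Psi(\Delta,\varphi)\colon \frakF \to \Set$ by $H \mapsto \Delta_H = \varphi^{-1}(\{H\})$, and I send a map $f\colon (\Delta,\varphi)\to (\Delta',\varphi')$ of $\frakF$-sets to the natural transformation whose $H$-component is the restriction $f|_{\Delta_H}\colon \Delta_H \to \Delta'_H$. The commutativity of the triangle in the definition of a map of $\frakF$-sets guarantees that $f$ does send $\Delta_H$ into $\Delta'_H$, so this is well-defined.

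Then I would verify that $\Phi$ and $\Psi$ are strict inverses. On objects, $\Psi\circ\Phi$ sends $F$ to the functor $H\mapsto (\coprod_{K}F(K))_H = F(H)$, which is $F$ on the nose, and $\Phi\circ\Psi$ sends $(\Delta,\varphi)$ to $(\coprod_{H}\Delta_H, \varphi')$, which is equal to $(\Delta,\varphi)$ because every element of $\Delta$ lies in a unique fibre $\Delta_H$. On morphisms the check is equally immediate. I would also note that both assignments clearly respect identities and composition, so they are functors. No step here is really an obstacle; the only thing to be careful about is the distinction between equality and canonical bijection when identifying $\Delta$ with $\coprod_H \Delta_H$, which is genuine equality because $\varphi$ partitions $\Delta$ into the fibres $\Delta_H$.
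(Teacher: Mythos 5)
Your proposal is correct and follows essentially the same route as the paper: both identify an $\frakF$-set with the family of its fibres $\Delta_H$ and a map of $\frakF$-sets with the family of its restrictions to fibres, which is automatically a natural transformation since $\frakF$ is discrete. You merely make the inverse functor (via the disjoint union of the values $F(H)$) explicit where the paper simply asserts the bijections on objects and morphism sets.
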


\begin{proof}
    Note, that since $\frakF$ is considered as a discrete category a
    functor~$\frakF\to \Set$ is characterised by its values on the
    objects of $\frakF$.  Given a~$\frakF$-set~$\Delta$, there exists
    precisely one functor $\Delta\: \frakF \to \Set$ that maps~$H$
    to~$\Delta_{H}$ for every~$H\in \frakF$.  This gives a bijection
    between the objects of $\FSet$ and the objects of~$[\frakF,
    \Set]$.  Moreover, any morphism $f\: \Delta\to \Delta'$ in $\FSet$
    induces a collection of functions $f_H\: \Delta_H\to \Delta'_H$
    indexed by the elements $\frakF$.  Since $\frakF$ is a discrete
    category this gives rise to a natural transformation between the
    corresponding functors $\Delta\:\frakF\to \Set$ and
    $\Delta'\:\frakF \to \Set$ and thus a morphism in $[\frakF,\Set]$.
    It follows that we get a bijection between the corresponding
    morphism sets in~$\FSet$ and $[\frakF, \Set]$.  Thus the two
    categories are isomorphic.
\end{proof}

There exists the obvious forgetful functor from the category $\FSet$
to the category $\Set$ which sends a $\frakF$-set $\Delta$ to the
underlying set $\Delta$.  Using this functor we can pull back much of
the terminology for sets to the category of~$\frakF$-sets.  In
particular we speak of a \emph{finite} (\emph{countable}) $\frakF$-set
if the underlying set is finite (countable).  Only with categorical
statements we have to be careful: for example the $\frakF$-set
$\Delta'$ is a \emph{subset} of the $\frakF$-set $\Delta$ if
$\Delta'_{H}\subset \Delta_{H}$ for every $H\in \frakF$.  As in
functor categories limits and colimits are calculated component wise.
In particular this is true for the product (cartesian product) and
coproduct (disjoint union) of $\frakF$-sets.  In detail, if
$\Delta_{i}$ are $\frakF$-sets indexed by some index set~$I$ then
their product and coproduct are given by
\begin{equation*}
    \Bigl(\prod_{i\in I}\Delta_{i}\Bigr)_{H} = \prod_{i\in
    I}\Delta_{i,H}
    \qquad \text{and} \qquad
    \Bigl(\coprod_{i\in I}\Delta_{i}\Bigr)_{H} = \coprod_{i\in
    I}\Delta_{i,H}
\end{equation*}
for every $H\in\frakF$.

Given a $\OFG$-module $M$ we denote the \emph{underlying} 
$\frakF$-set also by $M$, which is given by
\begin{equation*}
    M_{H} \defeq  M(G/H)
\end{equation*}
for all $H\in \frakF$.  A morphism of $\OFG$-modules gives in an
obvious way rise to a map of the underlying $\frakF$-sets.  In this
way we get a forgetful functor
\begin{equation*}
    U\: \ModOFG \to \FSet
\end{equation*}
(and likewise we have a forgetful functor from $\OFGMod$ to $\FSet$).
We say that a $\frakF$-set $X$ is a \emph{subset} of an $\OFG$-module
$M$ if $X$ is a subset of the~$\frakF$-set~$UM$. Any subset of an
$\OFG$-module is implicitly considered as a~$\frakF$-set.

\begin{DEF}
    Let $M$ be an $\OFG$-module and $X$ a subset of $M$.  Then the
    smallest submodule of $M$ containing $X$ is denoted by $\langle
    X\rangle$ and is called the submodule of $M$ \emph{generated} by
    the $\frakF$-set $X$.  If $M=\langle X\rangle$ then we say that
    $M$ is \emph{generated} by $X$ and that $X$ is a
    \emph{$\frakF$-set of generators} of $M$. We say that $M$ is a 
    \emph{finitely generated} $\OFG$-module if there exists a finite 
    $\frakF$-set of generators of $M$.
\end{DEF}

\begin{lemma}
    Let $K\in \frakF$ and consider the right $\OFG$-module $\Z[\?,
    G/K]_{G}$ of Example~\ref{ex:Bredon-modules}.  Then
    the subset $\Delta$ of $\Z[\?, G/K]_{G}$ given by
    \begin{equation}
	\label{eq:singelton-FSet}
        \Delta_{H} \defeq 
	\begin{cases}
	     \{\id\}& \text{if $H=K$,}\\[.5ex]
	     \emptyset & \text{otherwise}
	\end{cases}
    \end{equation}
    is a generating set of $\Z[\?, G/K]_{G}$.
\end{lemma}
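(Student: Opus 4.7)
The plan is straightforward: show that any submodule of $\Z[\?,G/K]_G$ containing the subset $\Delta$ must equal the whole module, by applying the right $\OFG$-module action to the single generator $\id \in \Delta_K$ to reach every basis element in every component.

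Let $N = \langle \Delta \rangle$ denote the submodule generated by $\Delta$. Since $\Delta \subset N$ as $\frakF$-sets, we have $\id \in N(G/K)$. Now fix any $H \in \frakF$ and any morphism $\varphi \in [G/H, G/K]_G$ of the orbit category. Because $N$ is a submodule of the right $\OFG$-module $\Z[\?,G/K]_G$, it is closed under the induced homomorphism
\begin{equation*}
    \varphi^{*}\: \Z[G/K, G/K]_{G} \to \Z[G/H, G/K]_{G}.
\end{equation*}
By the construction of $\Z[\?,G/K]_G$ recalled in Example~\ref{ex:Bredon-modules}\,(\ref{ex:freeR}), this map sends the basis element $\id \in [G/K,G/K]_G$ to $\id \circ \varphi = \varphi \in [G/H,G/K]_G$. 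Hence $\varphi \in N(G/H)$.

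Since $\Z[G/H, G/K]_G$ is by definition the free abelian group on the basis $[G/H,G/K]_G$, and $N(G/H)$ is a subgroup of $\Z[G/H,G/K]_G$ containing every such basis element, we conclude $N(G/H) = \Z[G/H,G/K]_G$. As $H \in \frakF$ was arbitrary, this shows $N = \Z[\?, G/K]_G$, so $\Delta$ generates $\Z[\?, G/K]_G$.

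There is no real obstacle here — the argument is essentially an unpacking of the definitions of the generated submodule and of the action of $\OFG$-morphisms on $\Z[\?,G/K]_G$; the only point requiring a modicum of care is matching the composition convention $\varphi^{*}(f) = f \circ \varphi$ for a right $\OFG$-module, so that a single morphism applied to the identity yields every basis element of every component.
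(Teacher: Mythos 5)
Your proof is correct and is essentially the same argument as the paper's: both take the submodule generated by $\Delta$, observe that $\id \in N(G/K)$, and then apply $\varphi^*$ (using the convention $\varphi^*(\id) = \id\circ\varphi = \varphi$ for the right module structure) to show that every basis element of every component lies in $N$.
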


\begin{proof}
    Denote by $M$ the submodule of $\Z[\?, G/K]_{G}$ generated by
    $\Delta$.  We know that $M(G/H)$ is a subgroup of
    $\Z[G/H,G/K]_{G}$ for any $H\in \frakF$ and we want to show that
    actually equality holds in every case.  
    
    Therefore let $\varphi\in [G/H,G/K]_{G}$ be a generator of
    $\Z[G/H,G/K]_{G}$.  Since~$\Delta$ generates $M$ we know that
    $\id\in M(G/K)$.  Then $\varphi^{*}(\id) = \id \mathop{\circ}
    \varphi = \varphi\in M(G/H)$.  Since this is true for any
    generator $\varphi$ of the group $\Z[G/H,G/K]_{G}$ we must have
    that $M(G/H)=\Z[G/H,G/K]_{G}$ and the claim follows.
\end{proof}

\begin{proposition}
    \label{prop:free-functor}
    The forgetful functor $U\:\ModOFG\to \FSet$ has a left adjoint
    $F\:\FSet\to \ModOFG$.
\end{proposition}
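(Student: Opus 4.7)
The plan is to exhibit the left adjoint explicitly, using the right $\OFG$-modules $\Z[\?, G/K]_G$ from Example~\ref{ex:Bredon-modules}\,(\ref{ex:freeR}) as building blocks. For an $\frakF$-set $\Delta = (\Delta, \varphi)$, I would define
\begin{equation*}
    F\Delta \defeq \coprod_{H\in \frakF}\,\coprod_{\delta \in \Delta_{H}} \Z[\?, G/H]_{G},
\end{equation*}
where the inner and outer coproducts are taken in the cocomplete category $\ModOFG$. The obvious candidate for the unit of the adjunction is the map of $\frakF$-sets $\eta_{\Delta}\colon \Delta \to UF\Delta$ which sends $\delta \in \Delta_{H}$ to the identity element $\id \in [G/H, G/H]_{G}$ sitting in the copy of $\Z[\?, G/H]_{G}$ indexed by $\delta$; this is a map of $\frakF$-sets by construction because it respects the component structure.

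Next I would establish the adjunction bijection
\begin{equation*}
    \Hom_{\ModOFG}(F\Delta, M) \; \isom \; \Hom_{\FSet}(\Delta, UM)
\end{equation*}
natural in $\Delta$ and $M$. The universal property of the coproduct reduces the left-hand side to a product over the elements of $\Delta$ of the groups $\Hom_{\ModOFG}(\Z[\?, G/H]_{G}, M)$ indexed appropriately by the component of $\delta$. The key computational input is a Yoneda-type identification
\begin{equation*}
    \Hom_{\ModOFG}(\Z[\?, G/H]_{G}, M) \; \isom \; M(G/H),
\end{equation*}
obtained by sending a natural transformation $f$ to $f_{G/H}(\id) \in M(G/H)$; the previous lemma (that $\id$ generates $\Z[\?, G/H]_{G}$) ensures this map is injective, and one defines the inverse by assigning $m \in M(G/H)$ to the natural transformation whose component at $G/K$ sends a basis morphism $\psi \in [G/K, G/H]_{G}$ to $\psi^{*}(m) \in M(G/K)$, extended linearly. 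Combining these two steps one obtains
\begin{equation*}
    \Hom_{\ModOFG}(F\Delta, M) \;\isom\; \prod_{H\in\frakF}\prod_{\delta\in \Delta_{H}} M(G/H),
\end{equation*}
and the right-hand side is visibly in bijection with $\Hom_{\FSet}(\Delta, UM)$, since a map of $\frakF$-sets into $UM$ is exactly a choice of an element of $M(G/H)$ for each $\delta\in \Delta_{H}$.

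Finally I would verify that this bijection is natural in both variables. Naturality in $M$ is immediate from the fact that post-composition commutes with the Yoneda identification; naturality in $\Delta$ follows because a map $g\colon \Delta\to \Delta'$ of $\frakF$-sets induces the corresponding coproduct map $Fg\colon F\Delta \to F\Delta'$, and both sides of the bijection transform equivariantly under it. The main point requiring care, and the only place where I expect any real work, is the Yoneda identification for contravariant functors into $\Ab$ and the check that the inverse construction $m \mapsto (\psi \mapsto \psi^{*}(m))$ genuinely defines a natural transformation; the functoriality of $M$ together with the composition rule $f_{y,K,L}\circ f_{x,H,K} = f_{xy,H,L}$ make this a routine but necessary computation.
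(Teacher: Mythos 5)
Your proposal is correct and follows essentially the same route as the paper: you build $F\Delta$ as the coproduct of the representable modules $\Z[\?, G/K]_G$ indexed by the elements of $\Delta$ (your double coproduct over $H\in\frakF$ and $\delta\in\Delta_H$ is the same object as the paper's coproduct over $x\in\Delta$), and you reduce the adjunction bijection to the Yoneda-type identification $\mor_{\frakF}(\Z[\?, G/H]_G, M)\isom M(G/H)$, which is precisely the paper's Lemma~\ref{lem:yoneda-type-formula}. The only cosmetic differences are that you spell out the unit of the adjunction and sketch the inverse of the Yoneda map, whereas the paper first treats singleton $\frakF$-sets and then passes to the general case by coproducts.
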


\begin{proof}
    First we define the functor $F$ for singleton $\frakF$-sets.  Let
    $K\in\frakF$ and consider the singleton $\frakF$-set $\Delta$ with
    $\Delta_{K} \defeq  \{*\}$ and $\Delta_{H}\defeq \emptyset$ for $H\neq K$.
    We set
    \begin{equation*}
        F\Delta \defeq  \Z[\?, G/K]_{G}
    \end{equation*}
    and identify $\Delta$ with the singleton subset of $\Z[\?,
    G/K]_{K}$ as given in~\eqref{eq:singelton-FSet} in the previous
    lemma.  We have to show that for any (right) $\OFG$-module $M$ the
    adjoint relation
    \begin{equation}
	\label{eq:adjont-realation-free}
	\mor_{\frakF}(F\Delta, M) \isom \mor(\Delta, UM)
    \end{equation}
    is satisfied, where the morphism set on the left is in $\ModOFG$
    and the morphism set on the right is in $\FSet$.  But this follows
    from the Yoneda type formula in the next lemma.
    
    A general $\frakF$-set $\Delta$ can always be written as the
    coproduct
    \begin{equation*}
        \Delta = \coprod_{x\in \Delta} \Delta_{x}
    \end{equation*}
    of its singleton subsets $\Delta_{x}$.  The natural way to extend
    the definition of the functor $F$ to arbitrary $\frakF$-sets is to
    set
    \begin{equation*}
	F \Delta \defeq  \coprod_{x\in\Delta} F\Delta_{x}.
    \end{equation*}
    Then we have isomorphisms
    \begin{align*}
	\mor_{\frakF}(F\Delta, M) 
	& \isom
	\prod_{x\in\Delta} \mor_{\frakF}(F\Delta_{x}, M)
	\\
	& \isom
	\prod_{x\in\Delta}\mor(\Delta_{x}, UM)\isom \mor(\Delta, UM)
    \end{align*}
    which are natural, both in $\Delta$ and $M$. Thus $F$ is a left
    adjoint functor to the forgetful functor $U$.
\end{proof}

Note that there is a canonical inclusion of the 
$\frakF$-set $\Delta = (\Delta, \varphi)$ into $F\Delta$ given by
\begin{equation*}
    x\mapsto \id\in (F\Delta_{x})(G/\varphi(x)).
\end{equation*}
Using this inclusion we have a canonical way to identify the
$\frakF$-set $\Delta$ as a subset of the right $\OFG$-module
$F\Delta$. Note that under this identification $\Delta$ becomes a 
generating $\frakF$-set of $F\Delta$.

\begin{lemma}[Yoneda Type Formula]
    \label{lem:yoneda-type-formula}
    Let $K\in\frakF$ and let $M$ be a right $\OFG$-module.  Then there
    exists an isomorphism
    \begin{equation*}
        e_{K}\: \mor_{\frakF}(\Z[\?,G/K]_{G}, M) \isom M(G/K)
    \end{equation*}
    of abelian groups given by the evaluation map 
    $e_{K}(f)\defeq f_{K}(\id)$. This isomorphism is natural in 
    $M$.
\end{lemma}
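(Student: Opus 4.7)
The plan is to invoke (essentially verify) the classical Yoneda lemma in the functor category $[\OFG^{\op}, \Ab]$. First I would observe that $e_K$ is a homomorphism of abelian groups for free: the group structure on $\mor_\frakF(\Z[\?, G/K]_G, M)$ is defined componentwise, so evaluating the $G/K$-component at the fixed element $\id\in [G/K,G/K]_G$ is additive.

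The core task is to exhibit an inverse. For $m\in M(G/K)$ I would define a natural transformation $\eta^m\:\Z[\?,G/K]_G\to M$ by declaring its $H$-component $\eta^m_H\:\Z[G/H,G/K]_G\to M(G/H)$ to be the unique group homomorphism sending a basis element $\varphi\in[G/H,G/K]_G$ to $\varphi^*(m)\in M(G/H)$. Naturality of $\eta^m$ amounts to checking, for any morphism $\psi\:G/H\to G/H'$ in $\OFG$ and any basis element $\varphi\in[G/H',G/K]_G$, the identity
\begin{equation*}
    \eta^m_H(\psi^*\varphi) = \eta^m_H(\varphi\circ\psi) = (\varphi\circ\psi)^*(m) = \psi^*(\varphi^*(m)) = \psi^*(\eta^m_{H'}(\varphi)),
\end{equation*}
which is just the functoriality of $M$. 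Thus $m\mapsto \eta^m$ defines a map $M(G/K)\to \mor_\frakF(\Z[\?,G/K]_G, M)$.

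Next I would verify that $e_K$ and $m\mapsto \eta^m$ are mutually inverse. One direction is immediate: $e_K(\eta^m)=\eta^m_K(\id)=\id^*(m)=m$ by functoriality of $M$. For the other direction, given a natural transformation $f\:\Z[\?,G/K]_G\to M$ and an arbitrary basis element $\varphi\in [G/H,G/K]_G$, the naturality square of $f$ applied to $\varphi\:G/H\to G/K$ and evaluated at $\id\in\Z[G/K,G/K]_G$ yields
\begin{equation*}
    f_H(\varphi) = f_H(\varphi^*\id) = \varphi^*(f_K(\id)) = \eta^{e_K(f)}_H(\varphi),
\end{equation*}
so $\eta^{e_K(f)}=f$. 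This shows $e_K$ is a bijection, and since both maps respect addition, an isomorphism of abelian groups.

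Finally, for naturality in $M$: given a morphism $g\:M\to N$ of right $\OFG$-modules, the induced map on morphism sets is post-composition with $g$, and for $f\in\mor_\frakF(\Z[\?,G/K]_G, M)$ one has
\begin{equation*}
    e_K(g\circ f) = (g\circ f)_K(\id) = g_K(f_K(\id)) = g_K(e_K(f)),
\end{equation*}
which is precisely naturality. There is no genuine obstacle here; the only point demanding care is bookkeeping of the contravariance (the identifications $\varphi^*$ on both sides refer to different functors $\Z[\?,G/K]_G$ and $M$). Conceptually, the statement is the classical Yoneda lemma specialised to the representable right $\OFG$-module $\Z[\?,G/K]_G$.
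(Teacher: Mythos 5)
Your proof is correct and complete. The paper simply cites the literature (Mislin) for the existence of the isomorphism and only writes out the naturality-in-$M$ computation, which coincides with your final display; your self-contained construction of the inverse $m\mapsto\eta^m$ is exactly the standard Yoneda argument that the cited source carries out, so there is no substantive difference in method, only in how much is spelled out.
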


\begin{proof}
    For the proof of the first part see for
    example~\cite[p.~9]{mislin-03}. The naturality claim states that 
    for any morphism $\eta\: M\to N$ of right~$\OFG$-modules, the 
    diagram
    \begin{equation*}
        \begin{diagram}
            \node{\mor_{\frakF}(\Z[\?,G/K], M)}
	    \arrow{e,t}{e_{K}}
	    \arrow{s,l}{\eta_{*}}
	    \node{M(G/K)}
	    \arrow{s,r}{\eta_{K}}
	    \\
	    \node{\mor_{\frakF}(\Z[\?,G/K], N)}
	    \arrow{e,t}{e_{K}}
	    \node{N(G/K)}
        \end{diagram}
    \end{equation*}
    commutes, where $\eta_{*}$ is the homomorphism which maps any
    morphism $f\in \mor_{\frakF}(\Z[\?,G/K],M)$ to $\eta\circ f\in
    \mor_{\frakF}(\Z[\?,G/K],N)$. But this follows immediately from
    \begin{equation*}
	(e_{K}\circ \eta_{*})(f) = e_{K}(\eta\circ f) = (\eta \circ 
	f)_{K}(\id) = 
	\eta_{K}(f_{K}(\id)) = (\eta_{K} \circ e_{K})(f).\qedhere
    \end{equation*}
\end{proof}

\pagebreak[3]

\begin{DEF}
    Let $M$ be a right $\OFG$-module and let $B$ a subset of $M$.  We
    say that $M$ is \emph{free with basis $B$} if there exists an
    isomorphism
    \begin{equation*}
        FB \isom  M
    \end{equation*}
    that maps $B$ seen as a subset of $FB$ to $B$ as a subset of $M$.

\end{DEF}

In the terms of the adjoint relation~\eqref{eq:adjont-realation-free}
the above definition can be interpreted in the following familiar way:
a right $\OFG$-module $M$ is free with basis $B$ if $B$ is a subset of
$M$ such that for any right $\OFG$-module $N$ and any morphism
$f_{0}\: B\to N$ of $\frakF$-sets there exists a unique extension of
$f_{0}$ to a morphism $f\: M\to N$ of $\OFG$-modules.

Note that from the proof of Proposition~\ref{prop:free-functor} 
follows that the $\OFG$-modules of the form $\Z[\?, G/K]_{G}$, $K\in 
\frakF$, are the building blocks for free right Bredon modules and if 
$\Delta = (\Delta, \varphi)$ is a $\frakF$-set, then
\begin{equation}
    \label{eq:general-free-module}
    F\Delta = \coprod_{\delta\in \Delta} \Z[\?,
G/\varphi(\delta)]_{G}.
\end{equation}

\begin{lemma}
    A $\OFG$-module $M$ is finitely generated if and only if there 
    exists a short exact sequence of $\OFG$-modules
    \begin{equation*}
        0\to K\to F \to M\to 0
    \end{equation*}
    where $F$ is a finitely generated free $\OFG$-module.
\end{lemma}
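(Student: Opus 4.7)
The plan is to prove both implications by exploiting the adjunction of Proposition~\ref{prop:free-functor} between the free functor $F$ and the forgetful functor $U$, together with the explicit description of free $\OFG$-modules in~\eqref{eq:general-free-module}.

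For the forward implication, suppose $M$ is finitely generated by a finite $\frakF$-set $\Delta\subset UM$, with classifying map $\varphi\:\Delta\to\frakF$. I would form the free $\OFG$-module $F\Delta = \coprod_{\delta\in \Delta} \Z[\?, G/\varphi(\delta)]_{G}$, which is finitely generated (by the copy of $\Delta$ sitting canonically inside it) because $\Delta$ is finite. The inclusion $\Delta\hookrightarrow UM$ of $\frakF$-sets corresponds under the adjunction~\eqref{eq:adjont-realation-free} to a morphism $\pi\:F\Delta\to M$ of $\OFG$-modules whose image contains $\Delta$. Because $\Delta$ generates $M$, the submodule $\operatorname{im}\pi$ must equal $M$, so $\pi$ is surjective. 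Setting $K\defeq \ker\pi$ yields the desired short exact sequence
\begin{equation*}
    0\to K\to F\Delta \to M\to 0.
\end{equation*}

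For the reverse implication, assume such a short exact sequence exists with $F$ a finitely generated free $\OFG$-module, say $F=F\Delta$ for some finite $\frakF$-set $\Delta$ (one could alternatively start from an arbitrary finite generating $\frakF$-set of $F$ and replace it with its image in the surjection $F\Delta\twoheadrightarrow F$). Let $\pi\:F\to M$ be the surjection in the sequence, and let $\Delta'\defeq \pi(\Delta)\subset UM$, which is again a finite $\frakF$-set because $\pi$ is a morphism of $\OFG$-modules and $\Delta$ is finite. I claim $\Delta'$ generates $M$. Indeed, the submodule $\langle\Delta'\rangle\leq M$ contains $\pi(\Delta)$; since $\Delta$ generates $F$ and $\pi$ is a morphism of $\OFG$-modules, the preimage $\pi^{-1}\langle\Delta'\rangle$ is a submodule of $F$ containing the generating set $\Delta$, hence equals $F$. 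Thus $\langle\Delta'\rangle = \pi(F)=M$, and $M$ is finitely generated.

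No step presents a genuine obstacle: the main point is to handle the two subtleties that generating in $\ModOFG$ is defined via $\frakF$-sets rather than plain sets, and that surjectivity of a morphism of $\OFG$-modules is tested componentwise. Both are already in place from Section~\ref{sec:free-Bredon-modules}, so the argument reduces to the adjunction plus the observation that morphisms of $\OFG$-modules send generators of the domain to generators of the image.
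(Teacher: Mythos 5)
Your proposal is correct and follows essentially the same route as the paper: in the forward direction you build $F\Delta$ on a finite generating $\frakF$-set and take the kernel of the induced surjection, and in the reverse direction you observe that the image of a finite generating $\frakF$-set of $F$ under the surjection generates $M$. The extra detail you supply (the adjunction and the preimage-of-a-submodule argument) only makes explicit what the paper leaves as routine.
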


\begin{proof}
    If $M$ is finitely generated, then there exists a finite
    generating $\frakF$-set $X$ of $M$.  Set $F \defeq  FX$.  Then $F$ is
    finitely generated and surjects onto~$M$.  If one lets $K$ be the
    kernel of this surjection one obtains the above short exact
    sequence.
    
    On the other hand, if $F$ is free with a finite basis $X$, then 
    the image of~$X$ under the surjection $F\to M$ is a finite 
    $\frakF$-subset of $M$ which generates~$M$. Therefore $M$ is 
    finitely generated.
\end{proof}

\begin{DEF}
    A $\OFG$-module $M$ is called \emph{finitely presented} if there 
    exists a short exact sequence
    \begin{equation*}
        0 \to K \to F \to M \to 0
    \end{equation*}
    where $F$ is a finitely generated free $\OFG$-module and $K$ is a
    finitely generated $\OFG$-module.
\end{DEF}

Note that the definitions, results and their proofs in this section
carry word for word over to left $\OFG$-modules with right
$\OFG$-modules of the form $\Z[\?, G/K]_{G}$ replaced by corresponding
left $\OFG$-modules of the form $\Z[G/K, \?]_{G}$.

%
%

\section{From $G$-Sets to Bredon Modules}

There is an alternative way to see the construction of the previous 
section, namely as a functor from the category of $G$-sets to the 
category of right Bredon modules.

Recall that given subgroups $H$ and $K$ of $G$ there exists the 
identification
\begin{equation*}
    \eta\: [G/H, G/K]_{G} \isom (G/K)^{H}
\end{equation*}
which sends a $G$-map $\psi$ to the image $\psi(H)$. Now 
$[\?,\q?]_{G}$ is a bifunctor
\begin{equation*}
    [\?, \q?]_{G}\: \calO G \times \calO G \to \Set
\end{equation*}
contravariant in the first and covariant in the second variable.
Restricting this functor to $\OFG\times \calO G$ and composed with the
functor $\Z[\?]\: \Set \to \Ab$ which sends a set $X$ to the free
abelian group $\Z[X]$ with basis $X$ this gives the functor of
Example~\ref{ex:bi-Bredon-module}.

The functor $[\?,\??]_{G}$ extends to a bi-functor
\begin{equation*}
    [\?, \??]_{G}\: \calO G\times \GSet \to \Set,
\end{equation*}
contravariant in the first and contravariant in the second variable, 
which sends a transitive $G$-set $G/H$ and a $G$-set $X$ to the set 
$[G/H, X]_{G}$ of all~$G$-maps from $G/H$ to $X$. Note that as before 
there is an identification
\begin{equation}
    \eta\: [G/H, X]_{G} \isom X^{H}
    \label{eq:identification-1}
\end{equation}
which sends a $G$-map $\psi\: G/H\to X$ to $\psi(H)$.

If $f\: X\to Y$ is a $G$-map, then this gives a map
\begin{align*}
    f_{*}\:  [G/H, X]_{G} & \to [G/H, Y]_{G},
    \\
    \psi & \mapsto f\circ \psi.
\end{align*}
Under the identification~\eqref{eq:identification-1} is just the
restriction of $f$ to a map $X^{H}\to Y^{H}$.

On the other hand, if $\varphi\: G/H\to G/K$ is a morphism of $\calO 
G$ then this gives a map
\begin{align*}
    \varphi^{*}\: [G/K, X]_{G} & \to [G/H, X]_{G},
    \\
    \psi & \mapsto  \psi \circ \varphi.
\end{align*}
To see what $\varphi_{*}$ becomes under the identification assume
that $\varphi = f_{g,H,K}$.  Then $g$ is uniquely determined up to
right multiplication by an element of $K$ and $H^{g}\leq K$.  If $x\in
X^{K}$ and $h\in H$, then
\begin{equation*}
  hgx = g(g^{-1}hg)x = gx
\end{equation*}
and therefore $gx\in X^{H}$.  If we set $\psi\defeq  \eta^{-1}(x) \in
[G/K,X]_{G}$, then
\begin{equation*}
    \varphi^{*}(\psi)(H) = (\psi\circ \varphi)(H) = \psi(gK) = 
    g\psi(K) = gx.
\end{equation*}
Thus $\varphi^{*}$ becomes under the
identification~\eqref{eq:identification-1} the map $X^{K}\to X^{H}$
which sends $x$ to $gx$.

Let $\Delta = (\Delta, \varphi)$ be a $\frakF$-set and consider the 
$G$-set
\begin{equation*}
    X \defeq  \coprod_{x\in \Delta} G/\varphi(x)
\end{equation*}
which is the disjoint union of homogeneous $G$-spaces $G/\varphi(x)$ 
with $\varphi(x)\in \frakF$. Then
\begin{equation*}
    [\?, X]_{G} = \coprod_{x\in \Delta} [\?, G/\varphi(x)]_{G}
\end{equation*}
and since the functor $\Z[\?]\: \Set \to \Ab$ commutes with 
coproducts it follows that
\begin{equation*}
    \Z[\?, X]_{G} = \coprod_{x\in \Delta} \Z[\?, G/\varphi(x)]_{G}
\end{equation*}
is the free right $\OFG$-module with basis the $\frakF$-set $\Delta$
as introduced in the previous section. On the other hand it is clear 
that if $X$ is a $G$-set with $\frakF(X)\subset \frakF$, then 
$\Z[\?,X]_{G}$ is a free $\OFG$-module. Therefore we obtain the 
following result.

\begin{proposition}
    \label{prop:GSet-to-ModOFG}
    Let $\frakF$ be a family of subgroups of $G$. Then we have a 
    covariant functor
    \begin{equation*}
        \Z[\?,\??]_{G}\: \GSet \to \ModOFG.
    \end{equation*}
    This functor sends disjoint unions of $G$-sets are send to
    coproducts in $\ModOFG$.  The free right $\OFG$-modules are
    precisely all the Bredon modules of the form $\Z[\?,X]_{G}$ where
    $X$ is a $G$-set with $\frakF(X)\subset \frakF$.  \qed
\end{proposition}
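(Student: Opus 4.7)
The plan is to verify the three assertions in the order they are stated, each following fairly directly from the discussion preceding the proposition.

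First, I would establish functoriality. For each $G$-set $X$, the assignment $G/H \mapsto \Z[G/H, X]_G$ together with the precomposition action $\varphi^{*}(\psi) \defeq \psi \mathop{\circ} \varphi$ of morphisms $\varphi$ in $\calO G$, as computed just before the proposition, yields a contravariant functor $\OFG \to \Ab$, i.e. a right $\OFG$-module. For a $G$-map $f\: X \to Y$, postcomposition $\psi \mapsto f \mathop{\circ} \psi$ defines an abelian group homomorphism $\Z[G/H, X]_G \to \Z[G/H, Y]_G$, and these homomorphisms commute with precomposition by morphisms in $\OFG$ (associativity of composition of $G$-maps), so assemble into a natural transformation $f_{*}\: \Z[\?, X]_G \to \Z[\?, Y]_G$. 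Functoriality in $X$ (preservation of identities and composition) is immediate from the corresponding properties of postcomposition.

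Second, for the disjoint union claim, I would observe that the set-valued bifunctor $[G/H, \?]_G$ preserves coproducts in the second variable: since $G/H$ is a transitive $G$-set, any $G$-map from $G/H$ into a disjoint union $X = \coprod_{i} X_{i}$ has image contained in a single orbit, hence in a single~$X_{i}$, so $[G/H, \coprod_{i} X_{i}]_G = \coprod_{i} [G/H, X_{i}]_G$ as sets. Applying the free abelian group functor $\Z[\?]\: \Set \to \Ab$ (which sends disjoint unions to direct sums) and recalling that coproducts in $\ModOFG$ are calculated componentwise gives the desired natural isomorphism $\Z[\?, \coprod_{i} X_{i}]_G \isom \coprod_{i} \Z[\?, X_{i}]_G$.

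Third, for the characterisation of free modules, the forward direction is~\eqref{eq:general-free-module}: any $\frakF$-set $\Delta = (\Delta, \varphi)$ gives rise to the $G$-set $X \defeq \coprod_{\delta \in \Delta} G/\varphi(\delta)$, which satisfies $\frakF(X) \subset \frakF$ and, by the second assertion, $F\Delta \isom \Z[\?, X]_G$. Conversely, given any $G$-set $X$ with $\frakF(X) \subset \frakF$, choose orbit representatives $\{x_{i}\}_{i \in I}$ with stabilisers $H_{i} \defeq G_{x_{i}} \in \frakF$. Then $X \isom \coprod_{i} G/H_{i}$ as $G$-sets, so $\Z[\?, X]_G \isom \coprod_{i} \Z[\?, G/H_{i}]_G = F\Delta$, where $\Delta$ is the $\frakF$-set with one element in the $H_{i}$-component for each $i \in I$. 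This exhibits $\Z[\?, X]_G$ as free.

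The only mild obstacle is keeping track of the two senses in which the word ``coproduct'' appears — disjoint unions on the $\GSet$ side versus componentwise coproducts on the $\ModOFG$ side — but these are reconciled precisely by the elementary fact that a transitive $G$-set maps into a single component of a disjoint union.
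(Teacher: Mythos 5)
Your proposal is correct and matches the paper's line of reasoning, which leaves the proposition with a \qed and takes the surrounding discussion as the proof: post-composition gives functoriality, transitivity of $G/H$ plus the fact that $\Z[\?]$ commutes with coproducts gives the disjoint-union statement, and the identification $F\Delta \isom \Z[\?, \coprod_\delta G/\varphi(\delta)]_G$ together with orbit decomposition gives the characterisation of free modules. You have simply written out explicitly what the paper leaves implicit.
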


Let $X$ be a set and let $X_{\lambda}$, $\lambda\in\Lambda$, be a 
collection of subsets of $X$ indexed by an abstract index set 
$\Lambda$. We say that $X$ is the \emph{directed union of the 
sets}~$X_{\lambda}$ if the following two conditions hold: 
\begin{enumerate}
    \item for every $x\in X$ there exists a $\lambda\in \Lambda$ such 
    that $x\in X_{\lambda}$;
    
    \item for every $\lambda_{1},\lambda_{2}\in \Lambda$ there exists 
    a $\mu\in \Lambda$ such that $X_{\lambda_{1}}\subset 
    X_{\mu}$ and~$X_{\lambda_{2}}\subset X_{\mu}.$
\end{enumerate}
Since a directed union is a special case of a colimit in the category
of sets (or~$G$-sets) we may identify $X = \varinjlim X_{\lambda}$.

\begin{lemma}
    \label{lem:X-H-direct-union}
    Assume that the $G$-set $X$ is the directed union of $G$-invariant
    subsets $X_{\lambda}$, $\lambda\in \Lambda$.  Let $H$ be a
    subgroup $G$.  Then $X^{H}$ is the directed union of the
    subsets $X_{\lambda}^{H}$, $\lambda\in \Lambda$. That is
    \begin{equation*}
        X^{H} = \varinjlim X_{\lambda}^{H}.
    \end{equation*}
\end{lemma}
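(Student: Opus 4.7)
The plan is to verify directly the two defining conditions of a directed union for the family of subsets $X_\lambda^H \subset X^H$, indexed by $\lambda \in \Lambda$. First I would observe that each $X_\lambda^H$ is well-defined as a subset of $X^H$: since $X_\lambda$ is $G$-invariant it is in particular $H$-invariant, so $X_\lambda^H = X_\lambda \cap X^H$ makes sense, and clearly $X_\lambda^H \subset X^H$.

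Next, I would check the ``covering'' condition. Let $x \in X^H$. Since $x \in X$ and $X$ is the directed union of the $X_\lambda$, there exists some $\lambda_0 \in \Lambda$ with $x \in X_{\lambda_0}$. But $x$ is fixed by $H$, so $x \in X_{\lambda_0} \cap X^H = X_{\lambda_0}^H$, as required.

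Then I would verify the ``directedness'' condition: given $\lambda_1, \lambda_2 \in \Lambda$, the hypothesis supplies a $\mu \in \Lambda$ with $X_{\lambda_1} \subset X_\mu$ and $X_{\lambda_2} \subset X_\mu$. Intersecting both sides with $X^H$ gives $X_{\lambda_1}^H \subset X_\mu^H$ and $X_{\lambda_2}^H \subset X_\mu^H$, which is what is needed. These two facts together establish that $X^H = \varinjlim X_\lambda^H$.

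There is no real obstacle here; the argument is essentially a diagram chase noting that taking $H$-fixed points is just intersecting with $X^H$, which trivially preserves inclusions and commutes with directed unions in the category of sets. The only point worth emphasizing is that $G$-invariance of each $X_\lambda$ (rather than just being a subset) is what guarantees $X_\lambda^H$ is a sensible object to form.
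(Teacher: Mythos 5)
Your proof is correct and follows the same route as the paper: the key observation in both is that $X_{\lambda}^{H} = X^{H}\cap X_{\lambda}$, from which the covering and directedness conditions follow immediately. You simply spell out the details that the paper leaves implicit.
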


\begin{proof}
    This follows immediately from the fact that $X_{\lambda}^{H} = 
    X^{H}\cap X_{\lambda}$.
\end{proof}

\pagebreak[3]

\begin{proposition}
    \label{prop:direct-union}
    The homomorphism of right $\OFG$-modules
    \begin{equation}
	\varinjlim \Z[\?, X_{\lambda}]_{G} \to \Z[\?, X]_{G}
	\label{eq:Z-direct-union}
    \end{equation}
    induced by the canonical monomorphisms
    $\Z[\?, X_{\lambda}]_{G} \hookrightarrow \Z[\?, X]_{G}$ is an
    isomorphism.
\end{proposition}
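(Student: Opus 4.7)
The plan is to prove the isomorphism by checking it pointwise on the orbit category, reducing the problem to an analogous statement for free abelian groups on $G$-sets. Recall from the discussion of the abelian structure of $\ModOFG$ that filtered colimits are computed componentwise and are exact; so the map in~\eqref{eq:Z-direct-union} is an isomorphism of right $\OFG$-modules if and only if, for every $H\in\frakF$, the evaluated map
\begin{equation*}
    \varinjlim \Z[G/H, X_{\lambda}]_{G} \to \Z[G/H, X]_{G}
\end{equation*}
is an isomorphism of abelian groups.

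Next I would invoke the natural identification $[G/H, Y]_{G} \isom Y^{H}$ (for any $G$-set $Y$) which was established earlier in this section. Since this identification is natural in $Y$, the inclusion-induced maps $\Z[G/H, X_{\lambda}]_{G} \hookrightarrow \Z[G/H, X]_{G}$ correspond to the maps $\Z[X_{\lambda}^{H}] \hookrightarrow \Z[X^{H}]$ induced by the inclusions $X_{\lambda}^{H} \subset X^{H}$ of fixed-point sets.

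By Lemma~\ref{lem:X-H-direct-union} the set $X^{H}$ is the directed union of the subsets $X_{\lambda}^{H}$. Thus it suffices to show that the free abelian group functor $\Z[\?]\: \Set\to \Ab$ preserves directed unions (equivalently, filtered colimits). This is standard: surjectivity follows because every element of $\Z[X^{H}]$ is a finite $\Z$\=/linear combination of points $x_{1},\dots,x_{n}\in X^{H}$, each of which lies in some $X_{\lambda_{i}}^{H}$, and by directedness one can choose a common $\mu\in\Lambda$ with $X_{\lambda_{i}}^{H}\subset X_{\mu}^{H}$ for all~$i$, so the combination is the image of an element of $\Z[X_{\mu}^{H}]$. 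Injectivity follows from the analogous finite support argument applied to any relation that dies in $\Z[X^{H}]$.

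There is no real obstacle; the only subtle point to address carefully is the naturality of the identification $[G/H,\?]_{G} \isom (\?)^{H}$ in the $G$-set variable, so that the connecting maps of the directed systems on both sides correspond under this identification. Once that is in place, the argument is a formal combination of the componentwise computation of colimits in $\ModOFG$, Lemma~\ref{lem:X-H-direct-union}, and the fact that $\Z[\?]$ commutes with filtered colimits.
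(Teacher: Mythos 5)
Your proposal is correct and follows essentially the same route as the paper's proof: evaluate componentwise, apply Lemma~\ref{lem:X-H-direct-union} to identify $X^{H}$ as the directed union of the $X_{\lambda}^{H}$, and use that $\Z[\?]\: \Set\to\Ab$ commutes with the relevant colimits. The only (harmless) difference is that you verify the last point by a direct finite-support argument for filtered colimits, whereas the paper simply cites that $\Z[\?]$ preserves all colimits as a left adjoint to the forgetful functor.
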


\begin{proof}
    We have
    \begin{align*}
	\Z[\?,X]_{G} 
	& \isom
	\Z[\?, \varinjlim X_{\lambda}]_{G}
	\\[1ex]
	& \isom
	\Z\bigl[ \varinjlim [\?, X_{\lambda}]_{G}\bigr] 
	&&\text{(by Lemma~\ref{lem:X-H-direct-union})}
	\\[1ex]
	& \isom 
	\varinjlim \Z[\?,X_{\lambda}]_{G}
    \end{align*}
    where the last isomorphism is due to the fact that the functor 
    $\Z[\?]\: \Set \to \Ab$ commutes with arbitrary colimits since it 
    is the left adjoint to the forgetful functor $U\: \Ab\to \Set$,
    see~\cite[pp.~118f.]{mac-lane-98}. 
    Now the composite of this sequence of isomorphism is precisely 
    the homomorphism~\eqref{eq:Z-direct-union}.
\end{proof}

\begin{lemma}
    \label{lem:countability-criterion}
    Let $X$ be a $G$-set such that the orbit space $X/G$ is countable.
    Then $\Z[\?, X]_{G}$ is countably generated if one of the
    following conditions holds:
     
    \begin{enumerate}
	\item $\frakF(X)\subset \frakF$;
	
	\item $G$ and $\frakF$ are countable.
    \end{enumerate}
\end{lemma}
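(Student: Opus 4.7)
The plan is to treat the two cases separately, since they have very different flavours: in case~(i) we will exhibit $\Z[\?,X]_G$ as a \emph{free} Bredon module on a countable basis, while in case~(ii) we will simply write down an explicit countable generating $\frakF$\=/set, without any freeness claim.

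First I would pick a set of orbit representatives $\{x_i : i\in X/G\}$ so that $X = \coprod_{i\in X/G} Gx_i$ and $Gx_i \isom G/G_{x_i}$ as $G$\=/sets. Under the hypothesis~(i) that $\frakF(X)\subset\frakF$, each stabiliser $G_{x_i}$ lies in $\frakF$. Since the functor $\Z[\?,\??]_G\:\GSet\to\ModOFG$ of Proposition~\ref{prop:GSet-to-ModOFG} sends disjoint unions to coproducts, this gives
\begin{equation*}
   \Z[\?,X]_G \isom \coprod_{i\in X/G}\Z[\?,G/G_{x_i}]_G.
\end{equation*}
In view of~\eqref{eq:general-free-module} the right-hand side is the free right $\OFG$\=/module on the $\frakF$\=/set $\Delta$ with underlying set $X/G$ and structure map $i\mapsto G_{x_i}$. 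By hypothesis $X/G$ is countable, so $\Delta$ is a countable $\frakF$\=/set of generators, which settles case~(i).

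For case~(ii) I would first observe that the countability of $G$ together with that of $X/G$ forces $X$ itself to be countable, as $|X|\leq |G|\cdot|X/G|$. Consequently every fixed-point set $X^H$ (for $H\in\frakF$) is countable, and under the identification $[G/H,X]_G\isom X^H$ of~\eqref{eq:identification-1} each $X^H$ sits as a subset of $\Z[G/H,X]_G$. I would then define a $\frakF$\=/subset $\Delta$ of $\Z[\?,X]_G$ by $\Delta_H \defeq X^H$ for every $H\in\frakF$. Since $\frakF$ is countable and each $\Delta_H$ is countable, $\Delta$ is a countable $\frakF$\=/set.

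It remains to check that $\Delta$ generates $\Z[\?,X]_G$ as a Bredon module. But by construction $\Delta_H$ is the canonical $\Z$\=/basis of the abelian group $\Z[G/H,X]_G$ for every $H\in\frakF$, so the submodule generated by $\Delta$ already agrees with $\Z[\?,X]_G$ on every object of $\OFG$; hence it equals $\Z[\?,X]_G$. The only mildly delicate point anywhere in the argument is verifying that in case~(ii) one really does get a bona fide $\frakF$\=/subset of the ambient module (rather than an abstract $\frakF$\=/set that merely maps in), and this is handled by the identification $[G/H,X]_G\isom X^H$ recalled above.
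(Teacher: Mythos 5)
Your proof is correct. Case~(1) is exactly the paper's argument: decompose $X$ into orbits, observe that $\Z[\?,X]_{G}$ is then free on a countable $\frakF$\-/set, done. For case~(2), however, you take a genuinely more direct route. The paper keeps the orbit decomposition and, for each orbit representative $x$, builds an explicit countably generated \emph{free} module $F_{x}=\coprod_{H\in\frakF}\coprod_{\varphi\in[G/H,G/G_{x}]_{G}}\Z[\?,G/H]_{G}$ together with a surjection onto $\Z[\?,G/G_{x}]_{G}$ sending canonical generators to the maps $\varphi$; countable generation of $\Z[\?,X]_{G}$ is then read off from the countable free cover $F=\coprod_{x}F_{x}$. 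You instead skip the free cover (and indeed the orbit decomposition) entirely: countability of $G$ and $X/G$ gives countability of $X$, hence of each $X^{H}\isom[G/H,X]_{G}$, and the $\frakF$\-/subset $\Delta$ with $\Delta_{H}$ the canonical $\Z$\-/basis of $\Z[G/H,X]_{G}$ is a countable generating set, since the submodule it generates already exhausts every value $\Z[G/H,X]_{G}$. The generating elements are in substance the same ones the paper uses (all $G$\-/maps $G/H\to X$ with $H\in\frakF$), so the counting is identical, but your packaging is shorter and needs no auxiliary construction; what the paper's version buys in exchange is an explicit countably generated free module surjecting onto $\Z[\?,X]_{G}$, which is the form one would feed into resolution- or presentation-type arguments later on, though of course one recovers that from your generating set by applying the free functor $F$ to $\Delta$.
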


\begin{proof}
    Let $R$ be a complete system of representatives for the
    orbits~$X/G$.  Then the $\OFG$-module $\Z[\?, X]_{G}$ is a
    countable coproduct
    \begin{equation*}
        \Z[\?, X]_{G} \isom \coprod_{x\in R} \Z[\?, G/G_{x}]_{G}.
    \end{equation*}
    
    If $\frakF(X)\subset \frakF$ then the right hand side is free and
    thus $\Z[\?, X]_{G}$ is countably generated.  This proves the
    first case.
    
    Thus assume that $G$ and $\frakF$ are countable.  We construct for
    any $x\in R$ a countably generated free $\OFG$ module $F_{x}$ that
    surjects onto $\Z[\?, G/G_{x}]_{G}$.  The construction is dual to
    the construction given in~\cite[p.~43]{weibel-94}.  Since~$G$ is
    countable the set $[G/H, G/G_{x}]_{G}$ is countable for any $H\in
    \frakF$.  Set $F_{\varphi,H} \defeq \Z[\?, G/H]_{G}$ for each
    $\varphi\in [G/H,G/G_{x}]_{G}$ and $H\in\frakF$.  Then
    \begin{equation*}
        F_{x}\defeq  \coprod_{H\in\frakF} \;
	\coprod_{\varphi\in [G/H,G/G_{x}]_{G}} F_{\varphi,H}
    \end{equation*}
    is a countable free $\OFG$-module that surjects onto $\Z[\?,
    G/G_{x}]_{G}$.  This surjection can be constructed as
    follows.  For each $\varphi\in[G/H, G/G_{x}]_{G}$ let $f_{x,H}\:
    F_{\varphi,H}\to \Z[\?, G/G_{x}]_{G}$ be the unique morphism of
    $\OFG$-modules that maps the generator of $F_{\varphi,H}$ to the
    generator $\varphi$ of $\Z[G/H, G/G_{x}]_{G}$.  Then
    \begin{equation*}
	f_{x} \defeq  \coprod_{H\in\frakF} \; \coprod_{\varphi\in
	[G/H,G/G_{x}]_{G}} f_{\varphi,H}
    \end{equation*}
    is a surjection of $F_{x}$ onto $\Z[?,G/G_{x}]$. It follows that
    \begin{equation*}
        F \defeq  \coprod_{x\in R} F_{x}
    \end{equation*}
    is a countably generated free $\OFG$-module that surjects onto 
    $\Z[\?, X]_{G}$.
\end{proof}

%
%

\section{Projective Bredon Modules}

It follows from a categorical argument that free objects
share the following universal property: any morphism $f\: P\to M$ from
a free $\OFG$-module $P$ to an arbitrary $\OFG$-module $M$ factors
through any epimorphism $p\: M'\to M$.  That is we can always find a
morphism $P\to M'$ making the following diagram, with the row exact,
commute:
\begin{equation}
    \label{eq:projective-module-diagram}
    \begin{diagram}
        \node[2]{P}
	\arrow{sw,..}
	\arrow{s,r}{f}\\
	\node{M'}
	\arrow{e,t}{p}
	\node{M}
	\arrow{e}
	\node{0}
    \end{diagram}
\end{equation}

Projective objects are the usual generalisation of free objects.  We
recall the definition of a projective object in the category of Bredon
modules and the following result, which is a standard result for
abelian categories.

\begin{DEF}
    An $\OFG$-module $P$ is called \emph{projective} if for every
    diagram of the form~\eqref{eq:projective-module-diagram} with
    exact row, there exists a morphism $P\to M'$ that makes the
    diagram commute.
\end{DEF}

\pagebreak[3]

\begin{proposition}
    Let $P$ be a Bredon module over the orbit category $\OFG$. Then 
    the following statements for $P$ are equivalent:
    \begin{enumerate}
        \item  $P$ is projective;
    
        \item  every exact sequence $0\to M\to N\to P \to 0$ splits;
    
        \item  $\mor_{\frakF}(P, \?)$ is an exact functor;
    
        \item  $P$ is a direct summand of a free $\OFG$-module.
    \end{enumerate}
\end{proposition}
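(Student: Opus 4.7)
The plan is to establish the four equivalences by a cyclic chain of implications, specifically \textup{(1)} $\Rightarrow$ \textup{(2)} $\Rightarrow$ \textup{(4)} $\Rightarrow$ \textup{(1)} together with \textup{(1)} $\Leftrightarrow$ \textup{(3)}. None of the steps is really difficult; the argument is formally identical to the one in ordinary module theory and relies only on the universal property of free $\OFG$-modules already established above and on the fact that $\ModOFG$ is an abelian category with enough free objects.

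For \textup{(1)} $\Rightarrow$ \textup{(2)}, given a short exact sequence $0\to M\to N\xrightarrow{p} P\to 0$, I would apply the defining diagram~\eqref{eq:projective-module-diagram} of projectivity to $f=\id_{P}$ to obtain a section $s\colon P\to N$ with $p\circ s=\id_{P}$; standard abelian category arguments then produce a splitting $N\isom M\oplus P$. For \textup{(2)} $\Rightarrow$ \textup{(4)}, I would first produce a free cover: taking the underlying $\frakF$-set $UP$ and setting $F \defeq F(UP)$, the adjunction \eqref{eq:adjont-realation-free} applied to $\id_{UP}$ yields a canonical morphism $F\to P$, which is an epimorphism because its image contains the generating $\frakF$-subset $UP$. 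Letting $K$ denote its kernel produces a short exact sequence $0\to K\to F\to P\to 0$ which, by \textup{(2)}, splits, so that $P$ is a direct summand of the free $\OFG$-module $F$. For \textup{(4)} $\Rightarrow$ \textup{(1)}, I would first observe that every free $\OFG$-module is projective: this is exactly the universal property of free modules mentioned in the paragraph preceding the definition, i.e.\ the lift in \eqref{eq:projective-module-diagram} is provided by the adjunction \eqref{eq:adjont-realation-free} applied to any set-theoretic lift of the basis along the epimorphism $p$. A straightforward diagram chase then shows that a direct summand of a projective module is projective, which gives~\textup{(1)}.

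For the remaining equivalence \textup{(1)} $\Leftrightarrow$ \textup{(3)}, I would use that the functor $\mor_{\frakF}(P,?)$ is automatically left exact in any abelian category, so it suffices to observe that its right exactness, namely the statement that every epimorphism $N\to N''$ induces a surjection $\mor_{\frakF}(P,N)\to \mor_{\frakF}(P,N'')$, is precisely a reformulation of the lifting property~\eqref{eq:projective-module-diagram}.

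The step I expect to require the most care is \textup{(2)} $\Rightarrow$ \textup{(4)}, since it requires explicitly invoking the existence of the free cover of $P$; all the other steps are formal consequences of the axioms of an abelian category together with the adjunction $F\dashv U$ of Proposition~\ref{prop:free-functor}. Once the free cover is in place, the whole cycle closes and the four conditions become equivalent.
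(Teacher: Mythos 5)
Your argument is correct and is precisely the standard proof one finds in a homological algebra text; the paper itself does not spell it out but simply cites Weibel for this result, so you have filled in exactly the details the paper delegates to a reference. Each step is sound: the free cover $F(UP)\twoheadrightarrow P$ in (2)\,$\Rightarrow$\,(4) does surject because the counit's image is the submodule generated by the entire underlying $\frakF$-set of $P$, and your reduction of (4)\,$\Rightarrow$\,(1) to the already-stated universal property of frees plus a diagram chase for summands is the right move.
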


\begin{proof}
    The result can be found in any homological algebra book, for 
    example~\cite[pp.~33ff.]{weibel-94}.
\end{proof}

Again the above definitions and results are valid in the category of
left~$\OFG$-modules as they are valid in the category of right
$\OFG$-modules.

%
%

\section{Two Tensor Products for Bredon Modules}

There are two possible ways to define a tensor product for Bredon
modules.  The first one generalises the tensor product over the group
ring $\Z G$ for $G$-modules.  The second tensor product is the
generalisation of the tensor product over $\Z$ in the category of
$G$-modules with the diagonal action of~$\Z G$ on the tensor product.

\subsection{The Tensor Product over $\frakF$}

The definition of the tensor product over $\frakF$ involves the
categorical tensor product as described
in~\cite[pp.~45ff.]{schubert-70a}.  Given a small category $\mathcal
B$, the categorical tensor product is a bifunctor
\begin{equation*}
    \?  \mathbin\otimes_{\mathcal B} \q?  \: [\mathcal B^{\op}, \Ab]
    \times [\mathcal B, \Ab] \to \Ab
\end{equation*}
with properties expected from a tensor product.

In the case that $\mathcal B = \OFG$ a concrete model for this tensor
product is given in~\cite[p.~166]{luck-89}: if $M$ is a right
$\OFG$-module and $N$ is a left~$\OFG$-module, then let $P$ be the
abelian group
\begin{equation}
    \label{eq:tensorproduct-def-P}
    P \defeq  \coprod_{H\in \frakF} M(G/H) \otimes N(G/H)
\end{equation}
where the tensor product is taken over $\Z$.  Let $Q$ be the subgroup
of $P$ generated by all elements of the form $\varphi^{*}(m)
\otimes n - m\otimes \varphi_{*}(n)$ with $m\in M(G/H)$, $n\in
N(G/K)$, $\varphi\in [G/K, G/H]_{G}$, $H,K\in \frakF$.  Then the
\emph{tensor product $M\mathop{\otimes_{\frakF}} N$ of $M$ and $N$
over $\frakF$} is defined as the abelian group
\begin{equation*}
    M\mathop{\otimes_{\frakF}} N \defeq  P/Q.
\end{equation*}
If $f\: M\to M'$ and $g\: N\to N'$ are
morphisms of right and left $\OFG$-modules respectively, then
$f\mathop{\otimes_{\frakF}} g\: M\mathop{\otimes_{\frakF}}N \to
M'\mathop{\otimes_{\frakF}}N'$ is defined in the obvious way.

Altogether the tensor product over $\frakF$ becomes an additive
bifunctor
\begin{equation}
    \label{eq:tensor-product-0}
    \? \mathop{\otimes_{\frakF}} \q? \: \ModOFG\times \OFGMod \to \Ab.
\end{equation}

\begin{proposition}
    \label{prop:tensor-product-1}
    Let $M$ be a fixed right $\OFG$-module and let $N$ be a fixed left
    $\OFG$-module.  Then the functors
    \begin{align*}
        M\mathop{\otimes_{\frakF}} \q?\: & \OFGMod\to \Ab\\
	\intertext{and}
	\?\mathop{\otimes_{\frakF}} N\: & \ModOFG \to \Ab
    \end{align*}
    preserve arbitrary colimits.
\end{proposition}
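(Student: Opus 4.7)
The plan is to exhibit both functors as left adjoints, since any left adjoint preserves arbitrary colimits. This is the Bredon analogue of the classical fact that the tensor product of modules over a ring preserves colimits via the tensor–hom adjunction.

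For $M \mathop{\otimes_\frakF} \?$, the candidate right adjoint sends an abelian group $A$ to the left $\OFG$-module $H_{M,A}$ defined on objects by $G/H \mapsto \mor_\Ab(M(G/H),A)$. The variance works out: $M$ is contravariant in $G/H$ and $\mor_\Ab(\?,A)$ is contravariant, so their composite is covariant, as required for a left $\OFG$-module. The goal is to produce a natural isomorphism
\begin{equation*}
\mor_\Ab\bigl(M \mathop{\otimes_\frakF} N,\, A\bigr) \isom \mor_\frakF\bigl(N,\, H_{M,A}\bigr),
\end{equation*}
natural in both $N \in \OFGMod$ and $A \in \Ab$.

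To build this isomorphism I would work from the explicit model $M \mathop{\otimes_\frakF} N = P/Q$ of equation~\eqref{eq:tensorproduct-def-P} in two steps. First, applying the classical tensor–hom adjunction over $\Z$ componentwise and assembling over $H \in \frakF$ identifies $\mor_\Ab(P, A)$ with the abelian group of arbitrary families $\bigl\{f_H\: N(G/H) \to \mor_\Ab(M(G/H),A)\bigr\}_{H \in \frakF}$, with no naturality imposed.  Second, I would verify that such a family annihilates the subgroup $Q$---generated by the elements $\varphi^*(m) \otimes n - m \otimes \varphi_*(n)$ for $\varphi\: G/K \to G/H$---if and only if it is natural in the morphisms of $\OFG$, hence defines a morphism of left $\OFG$-modules. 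The relation attached to a single morphism $\varphi$ translates directly into the naturality square for $f$ at $\varphi$; I expect this bookkeeping to be the most delicate part, but no deep idea lies behind it.

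The argument for $\? \mathop{\otimes_\frakF} N$ is entirely symmetric: the right adjoint sends $A$ to the right $\OFG$-module $G/H \mapsto \mor_\Ab(N(G/H),A)$, and the same translation furnishes the adjunction. Since both functors are left adjoints, they preserve arbitrary colimits.
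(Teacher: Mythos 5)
Your proof is correct and takes essentially the paper's route: the paper's proof is a citation to Schubert's treatment of the categorical tensor product, and in the remark immediately following the proposition it records precisely your argument, namely the adjunction $\mor(M\mathop{\otimes_{\frakF}} N, A) \isom \mor(M, \Mor(N,A))$ and the fact that left adjoints preserve colimits. Your explicit verification via the presentation $P/Q$, where killing the relations $\varphi^{*}(m)\otimes n - m\otimes\varphi_{*}(n)$ is exactly naturality of the adjoint family, correctly supplies the details for both variables.
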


\begin{proof}
    See \cite[pp.~46f.]{schubert-70a}.
\end{proof}

The fact that the functor $\?\mathop{\otimes_{\frakF}} N$ preserves
colimits is not a surprise because functors that have right adjoints
preserve colimits~\cite[pp.~118f.]{mac-lane-98} and the functor
$\?\mathop{\otimes_{\frakF}} N$ has a right adjoint, namely
$\Mor(N,\?)$~\cite[p.~46]{schubert-70a}.  This is the functor that
assigns to each abelian group $A$ the right $\OFG$-module $\Mor(N,A)$,
which is defined on the objects $G/H$ of $\OFG$ by $\Mor(N,A)(G/H)
\defeq \mor(N(G/H), A)$, where the morphism set on the defining side
is in $\Ab$.  Explicitly, for an abelian group $A$ and a right
$\OFG$-module~$M$ the adjoint relation is
\begin{equation}
    \label{eq:tensor-product-adjoint-1}
    \mor(M\mathop{\otimes_{\frakF}} N, A) \isom \mor(M,
    \Mor(N,A)).
\end{equation}

\begin{lemma}
    \label{lem:tensor-product-2}
    Let $M$ be a right $\OFG$-module and $N$ a left $\OFG$-module. 
    Then for every $K\in \frakF$ we have isomorphisms
    \begin{align*}
	M(\?)  \mathop{\otimes_{\frakF}} \Z[G/K, \?]_{G} & \isom
	M(G/K)
	\\
	\intertext{and}
	\Z[\?, G/K]_{G} \mathop{\otimes_{\frakF}} N(\?)  & \isom N(G/K)
    \end{align*}
    which are natural in $M$ and $N$.
\end{lemma}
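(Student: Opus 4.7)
The lemma is a Yoneda-type formula for the tensor product $\otimes_{\frakF}$, and my plan is to construct the required isomorphisms explicitly using the concrete model for the tensor product given in~\eqref{eq:tensorproduct-def-P}. I will work out the second isomorphism in detail; the first is entirely analogous by swapping variances.

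First I would define the candidate map. By the universal property of the direct sum, to define a homomorphism out of $P = \coprod_{H\in\frakF} \Z[G/H,G/K]_G \otimes N(G/H)$ into $N(G/K)$ it suffices to specify its value on each summand. On the summand indexed by $H$, send a basis element $\psi\otimes n$, where $\psi\in [G/H,G/K]_G$ and $n\in N(G/H)$, to $\psi_{*}(n)\in N(G/K)$, and extend bilinearly. Now I would check that this homomorphism vanishes on the subgroup $Q$, that is on the typical generator $\varphi^{*}(\psi)\otimes n - \psi\otimes \varphi_{*}(n)$ with $\varphi\: G/K'\to G/H$ in $\OFG$, $\psi\in [G/H,G/K]_G$ and $n\in N(G/K')$. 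The first term is sent to $(\psi\circ\varphi)_{*}(n) = \psi_{*}(\varphi_{*}(n))$, and the second to $\psi_{*}(\varphi_{*}(n))$, so the difference is killed and the map descends to a homomorphism $\Phi\: \Z[?,G/K]_G \otimes_{\frakF} N \to N(G/K)$.

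Next I would exhibit the inverse. Define $\Psi\: N(G/K) \to \Z[?,G/K]_G \otimes_{\frakF} N$ by $\Psi(n) \defeq \id_{G/K} \otimes n$. Clearly $\Phi\circ\Psi = \id$ since $\id_{*}(n) = n$. For $\Psi\circ\Phi = \id$, take any generator $\psi\otimes n$ with $\psi\in [G/H,G/K]_G$ and $n\in N(G/H)$; then in the tensor product the relation in $Q$ reads $\psi\otimes n = \psi^{*}(\id)\otimes n = \id\otimes \psi_{*}(n) = \Psi(\psi_{*}(n)) = \Psi(\Phi(\psi\otimes n))$, so the identity holds on a generating set and hence everywhere. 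This proves that $\Phi$ is an isomorphism. Naturality in $N$ is immediate from the definition of $\Phi$: any morphism $\eta\: N\to N'$ of left $\OFG$-modules gives $(\eta_{G/K}\circ\Phi)(\psi\otimes n) = \eta_{G/K}(\psi_{*}(n)) = \psi_{*}(\eta_{G/H}(n)) = \Phi'(\psi\otimes \eta_{G/H}(n))$, by the naturality of $\eta$ applied to $\psi$.

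The first isomorphism is obtained by the same argument with the roles of left and right reversed: send a generator $m\otimes\varphi$ with $m\in M(G/H)$ and $\varphi\in [G/K,G/H]_G$ to $\varphi^{*}(m)\in M(G/K)$, and define the inverse by $m\mapsto m\otimes \id_{G/K}$. The verification that this factors through $Q$ and is inverse to $m\mapsto m\otimes\id$ is the mirror image of the calculation above. The only conceptual point in the whole proof is the realization that the defining relations of $\otimes_{\frakF}$ allow every elementary tensor to be pushed onto the single index $H=K$, at which point the tensor product collapses to $M(G/K)\otimes \Z$ or $\Z\otimes N(G/K)$; there is no real obstacle beyond bookkeeping of variances.
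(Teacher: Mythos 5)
Your proof is correct and follows essentially the same route as the paper: both work with the concrete model $P/Q$ of $\otimes_{\frakF}$, define the evaluation map $\psi\otimes n\mapsto \psi_{*}(n)$ (respectively $m\otimes\varphi\mapsto\varphi^{*}(m)$), and check it kills the relations in $Q$. The only difference is that you verify bijectivity by exhibiting the explicit inverse $n\mapsto \id\otimes n$ and using the relation $\psi\otimes n\equiv\id\otimes\psi_{*}(n)$, whereas the paper argues surjectivity plus $\ker\eta=Q$; your variant is, if anything, slightly tidier, since the paper's kernel computation is only spelled out for elementary tensors.
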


\begin{proof}
    These are known results, see for example~\cite[p.~166]{luck-89} or
    \cite[p.~14]{mislin-03}.  We carry out the details for the first
    isomorphism in order to exhibit the precise definition of the
    isomorphism.  The second isomorphism is constructed in the same
    way.
    
    Let $P$ and $Q$ be the abelian groups as in the definition of the
    tensor product over $\frakF$.  That is we have that $M(\?)
    \mathop{\otimes_{\frakF}} \Z[G/K, \?]_{G}$ is a quotient of the
    group
    \begin{equation*}
        P = \coprod_{H\in\frakF} M(G/H)\otimes \Z[G/K, G/H]_{G}.
    \end{equation*}
    Observe that each element of the abelian group $M(G/H)\otimes
    \Z[G/K, G/H]_{G}$
    can be written uniquely as a finite sum
    \begin{equation*}
    	m_1\otimes \varphi_1 + \ldots  + m_n\otimes \varphi_n
    \end{equation*}
    with $m_i \in M(G/H)$ and $\varphi_i\in [G/H,G/K]_{G}$.  If $m\in
    M(G/H)$ and $\varphi\in [G/K, G/H]_G$ then $\varphi^{*}(m)\in
    M(G/K)$.  It follows that there exists a unique well defined
    homomorphism
    \begin{equation*}
        \eta_{H}\: M(G/K)\otimes\Z[G/K,G/H]_{G} \to M(G/K)
    \end{equation*}
    of abelian groups for which $\eta_{H}(m\otimes\varphi) =
    \varphi^{*}(m)$ for every $m\in M(G/H)$ and $\varphi\in
    [G/K,G/H]_G$.  The collection $\{ \eta_{H} : H\in \frakF\}$
    defines then a homomorphism
    \begin{equation*}
        \eta\: P\to M(G/K).
    \end{equation*}
    This homomorphism is surjective, since for every $m\in M(G/K)$ we
    have $\eta_{K}(m\otimes \id) = \id^{*}(m) = m$.  Furthermore,
    elements of the form
    \begin{equation*}
	\varphi_{1}^{*}(m) \otimes \varphi_{2} - m\otimes
	(\varphi_{1})_{*}(\varphi_{2}) = \varphi_{1}^{*}(m) \otimes
	\varphi_{2} - m\otimes (\varphi_{1} \comp \varphi_{2})
    \end{equation*}
    are in the kernel of $\eta$, since
    \begin{align*}
	\eta(\varphi_{1}^{*}(m) \otimes \varphi_{2} - m\otimes
	(\varphi_{1} \comp \varphi_{2}))
	& = 
	\eta(\varphi_{1}^{*}(m) \otimes \varphi_{2}) - \eta(m\otimes
	(\varphi_{1} \comp \varphi_{2}))\\
	& = \varphi_{2}^{*}(\varphi_{1}^{*}(m)) - (\varphi_{1}\comp
	\varphi_{2})^{*}(m)
	\\
	& = (\varphi_{1}\comp
	\varphi_{2})^{*}(m) - (\varphi_{1}\comp
	\varphi_{2})^{*}(m)
	\\
	& = 
	0.
    \end{align*}
    It follows that $Q\subset \ker \eta$.
    
    On the other hand, assume that $m\otimes \varphi\in \ker\eta$.
    Then $\varphi^{*}(m) = \eta(m\otimes \varphi) = 0$ and we get
    \begin{equation*}
         m\otimes \varphi = - ( \varphi^{*}(m) \otimes \id - m\otimes
	 \varphi_{*}(\id)) \in Q.
    \end{equation*}
    Hence $\ker \eta = Q$ and $\eta$ induces an isomorphism
    \begin{equation*}
        M(\?) \mathop{\otimes_{\frakF}} \Z[G/K, \?]_{G} = P/Q \isom
	M(G/K).
    \end{equation*}
    The naturality of this isomorphism is evident.
\end{proof}

A priori the tensor product $M\mathop{\otimes_{\frakF}} N$ of two
$\OFG$-modules is only an abelian group.  But if either $M$ or $N$ is
a Bredon bimodule, then the tensor product can be made into a Bredon
module as well.  More precisely, assume we are given two groups
$G_{1}$ and $G_{2}$, a family $\frakF$ of subgroups of
$G_{1}$ and a family $\frakG$ of subgroups of $G_{2}$.  If $M$ is
an $\OgG$-$\OfG$-bimodule and $N$ a left~$\OfG$-module, then
\begin{equation*}
    M(\?,\q?) {\otimes_{\frakF}} N(\q?)
\end{equation*}
is a left $\OgG$-module. Similarly, if $M$ is a right $\OfG$-module 
and $N$ an~$\OfG$-$\OgG$-bimodule, then
\begin{equation*}
    M(\?) {\otimes_{\frakF}} N(\?,\q?)
\end{equation*}
is a right $\OgG$-module.

For a fixed $\OfG$-$\OgG$-bimodule we get an adjoint 
relation for the tensor product similar to 
\eqref{eq:tensor-product-adjoint-1}:

\begin{lemma}
    Let $B$ be a $\OfG$-$\OgG$-bimodule. Then the functor
    \begin{equation*}
        \? {\otimes_{\frakF}} B \: \ModOfG \to \ModOgG
    \end{equation*}
    is left adjoint to the functor
    \begin{equation*}
        \mor_{\frakG}(B,\?) \: \ModOgG \to \ModOfG.
    \end{equation*}
    Explicitly we have for every right $\OfG$-module $M$ and every 
    right $\OgG$-module $N$ the adjoint relation
    \begin{equation}
	\label{eq:tensor-product-adjoint-2}
        \mor_{\frakG}(M {\otimes_{\frakF}} B, N) \isom 
	\mor_{\frakF}(M,\mor_{\frakG}(B,N)).
    \end{equation}
\end{lemma}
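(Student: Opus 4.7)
The plan is to adapt the classical tensor-hom adjunction to the Bredon setting by constructing the bijection explicitly. Given a morphism $f \in \mor_{\frakG}(M \otimes_{\frakF} B, N)$, I would define $\Phi(f) \colon M \to \mor_{\frakG}(B, N)$ as follows: for each $H \in \frakF$ and each $m \in M(G_{1}/H)$, let $\Phi(f)_{H}(m)$ be the morphism $B(G_{1}/H, \q?) \to N$ of right $\OgG$-modules whose component at $K \in \frakG$ is the map $b \mapsto f_{G_{2}/K}(m \otimes b)$, where $m \otimes b$ denotes the image of the elementary tensor in $(M \otimes_{\frakF} B)(G_{2}/K) = M(\?) \otimes_{\frakF} B(\?, G_{2}/K)$. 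The first routine checks are that $\Phi(f)_{H}(m)$ is indeed a morphism of right $\OgG$-modules (which uses that $f$ is one and that the right $\OgG$-structure on $M \otimes_{\frakF} B$ is induced from the $\OgG$-structure of $B$ in its second variable), and that $\Phi(f)$ itself is a morphism of right $\OfG$-modules, which amounts to translating the defining relation $\varphi^{*}(m) \otimes b = m \otimes \varphi_{*}(b)$ of the tensor product into the identity $\Phi(f)_{H} \circ \varphi^{*} = \varphi^{*} \circ \Phi(f)_{H'}$ for every morphism $\varphi \colon G_{1}/H \to G_{1}/H'$ in $\OfG$.

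Conversely, given $g \in \mor_{\frakF}(M, \mor_{\frakG}(B, N))$, I would define $\Psi(g) \colon M \otimes_{\frakF} B \to N$ at $K \in \frakG$ by the formula
\begin{equation*}
    \Psi(g)_{G_{2}/K}(m \otimes b) \defeq g_{H}(m)_{G_{2}/K}(b)
\end{equation*}
for $H \in \frakF$, $m \in M(G_{1}/H)$ and $b \in B(G_{1}/H, G_{2}/K)$, and extend linearly to the abelian group $P$ of~\eqref{eq:tensorproduct-def-P}. The key step is to show that this map factors through the subgroup $Q$ defining the tensor product; that is, that the defining relations $\varphi^{*}(m) \otimes b - m \otimes \varphi_{*}(b)$ lie in the kernel. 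This holds precisely because $g$ is a morphism of right $\OfG$-modules, noting that the right $\OfG$-structure on $\mor_{\frakG}(B, N)$ is by precomposition in the first variable of $B$. The verification that $\Psi(g)$ is a morphism of right $\OgG$-modules then follows from the fact that the $\OgG$-action on $M \otimes_{\frakF} B$ is inherited from $B$ together with the fact that each $g_{H}(m)$ is a morphism of right $\OgG$-modules.

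The remaining steps are to verify that $\Phi$ and $\Psi$ are mutually inverse, which is immediate from the explicit formulas once the preceding well-definedness checks are in place, and that both constructions are natural in $M$ and $N$, which is straightforward. The bulk of the work is bookkeeping; the main technical obstacle is tracking the several module structures in parallel, since at every intermediate stage one must confirm that a map constructed from a component of $f$ or $g$ respects the \emph{correct} module structure on the \emph{correct} variable. An alternative route would be to apply the already-established adjunction~\eqref{eq:tensor-product-adjoint-1} pointwise at each $K \in \frakG$, between abelian groups, and then upgrade to the statement about $\OgG$-modules by checking compatibility with morphisms $\psi \colon G_{2}/K \to G_{2}/K'$; the net bookkeeping cost is essentially the same.
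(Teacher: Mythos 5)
The paper does not give a proof of this lemma at all: it simply cites L\"uck's book~\cite[p.~169]{luck-89} for the statement. Your explicit construction of the mutually inverse maps $\Phi$ and $\Psi$ is correct, and it is the standard tensor-hom unwinding that the cited reference itself carries out. In particular your observation that the well-definedness of $\Psi(g)$ on the generators $\varphi^{*}(m)\otimes b - m\otimes\varphi_{*}(b)$ of the subgroup $Q$ is exactly the statement that $g$ commutes with the right $\OfG$-action on $\mor_{\frakG}(B,\?)$ (which is by precomposition with $B(\varphi,\?)$ in the first variable of $B$) is the heart of the matter and is stated correctly. Your remark at the end, that one could instead apply the unenriched adjunction~\eqref{eq:tensor-product-adjoint-1} objectwise at each $K\in\frakG$ and then check naturality in $K$, is also a valid alternative and in fact is the way L\"uck organizes the argument; the two routes are essentially a reshuffling of the same verifications.
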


\begin{proof}
    See~\cite[p.~169]{luck-89}.
\end{proof}

\subsection{The Tensor Product over $\Z$}

The second tensor product for Bredon modules is the \emph{tensor
product over $\Z$}~\cite[p.~166]{luck-89}. Given two right
$\OFG$-modules $M$ and $N$ define a right $\OFG$-module $M\otimes N$
as follows.

For $H\in \frakF$ let $(M\otimes N)(G/H) \defeq  M(G/H)\otimes N(G/H)$
where the tensor product on the defining side is taken over $\Z$.  If
$\varphi\: G/H \to G/K$ is a morphism in the orbit category $\OFG$,
then define $(M\otimes N)(\varphi) \defeq  \varphi\otimes \varphi$, which
is a homomorphism $(M\otimes N)(G/K) \to (M\otimes N)(G/H)$.

If $f\: M\to M'$ and $g\: N\to N'$ are morphisms in $\ModOFG$, then 
$f\otimes g$ is defined to be the morphism
\begin{equation*}
    f\otimes g\: M\otimes N \to M'\otimes N'
\end{equation*}
which is given by $(f\otimes g)_{H} \defeq  f_{H}\otimes g_{H}$ for every 
$H\in \frakF$.

In this way the tensor product of right $\OFG$-modules over $\Z$ is an
additive bifunctor
\begin{equation*}
    \? \otimes \q? \: \ModOFG\times \ModOFG \to \ModOFG.
\end{equation*}

The tensor product over $\Z$ for left 
$\OFG$-modules is defined in a similar way.

%
%

\section{Flat Bredon Modules}

The tensor product functor over $\frakF$ maps epimorphisms to
epimorphisms and thus this functor is right exact.  But in general the
tensor product over $\frakF$ is not exact.

\begin{DEF}
    A right $\OFG$-module $M$ is called \emph{flat} if the functor
    $M\mathop{\otimes_{\frakF}} \q?$ is exact.  Dually a left
    $\OFG$-module $N$ is called \emph{flat} if the functor
    $\?\mathop{\otimes_{\frakF}} N$ is exact.
\end{DEF}

\begin{proposition}
    Projective $\OFG$-modules are flat.
\end{proposition}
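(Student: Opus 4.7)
The plan is to prove this in three steps, moving from the most concrete free modules of the form $\Z[\?,G/K]_G$ up to arbitrary projectives via the direct-summand characterisation.

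First I would verify that the ``elementary'' free right $\OFG$-modules $\Z[\?,G/K]_G$ are flat. By the second isomorphism in Lemma~\ref{lem:tensor-product-2}, for any left $\OFG$-module $N$ we have a natural isomorphism $\Z[\?,G/K]_G \mathop{\otimes_\frakF} N \isom N(G/K)$. Since exactness in $\OFGMod$ is tested componentwise, the functor $N \mapsto N(G/K)$ (evaluation at $G/K$) is exact, so $\Z[\?,G/K]_G \mathop{\otimes_\frakF} \q?$ is exact.

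Next I would pass to arbitrary free modules. Recall from~\eqref{eq:general-free-module} that any free right $\OFG$-module has the form $F = \coprod_{\delta\in\Delta}\Z[\?,G/\varphi(\delta)]_G$ for some $\frakF$-set $\Delta=(\Delta,\varphi)$. By Proposition~\ref{prop:tensor-product-1} the tensor product over $\frakF$ preserves arbitrary colimits, in particular coproducts, so there is a natural isomorphism
\begin{equation*}
    F \mathop{\otimes_\frakF} N \isom \coprod_{\delta\in\Delta} \bigl(\Z[\?,G/\varphi(\delta)]_G \mathop{\otimes_\frakF} N\bigr).
\end{equation*}
Each summand is exact in $N$ by the previous step, and coproducts in $\Ab$ are exact, so $F$ is flat.

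Finally, let $P$ be a projective right $\OFG$-module. By the characterisation of projectives, $P$ is a direct summand of some free module $F$, say $F \isom P \oplus P'$. Given a short exact sequence $0\to N'\to N\to N''\to 0$ of left $\OFG$-modules, the additivity of $\mathop{\otimes_\frakF}$ in the first variable gives a direct-sum decomposition of the induced sequence
\begin{equation*}
    0 \to F\mathop{\otimes_\frakF}N' \to F\mathop{\otimes_\frakF} N \to F\mathop{\otimes_\frakF} N''\to 0
\end{equation*}
into the sequences obtained by tensoring with $P$ and with $P'$. The $F$-sequence is exact by the previous step, and a direct sum of sequences of abelian groups is exact if and only if each summand is; hence the $P$-sequence is exact as well. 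Since $\mathop{\otimes_\frakF}$ is already right exact, this shows that $P\mathop{\otimes_\frakF}\q?$ is exact, i.e.\ $P$ is flat. The dual argument handles projective left modules. I do not anticipate any real obstacle beyond carefully invoking the naturality in Lemma~\ref{lem:tensor-product-2} and colimit-preservation in Proposition~\ref{prop:tensor-product-1}.
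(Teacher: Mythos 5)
Your proof is correct and is exactly the argument the paper intends: its proof cites the same two ingredients, Lemma~\ref{lem:tensor-product-2} (tensoring with $\Z[\?,G/K]_G$ is evaluation at $G/K$, hence exact) and Proposition~\ref{prop:tensor-product-1} (preservation of coproducts), and then passes to summands of frees. You have simply written out the details that the paper leaves as a sketch.
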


\begin{proof}
    This is true in general in abelian categories.  But the result
    follows also from Proposition~\ref{prop:tensor-product-1} and
    Lemma~\ref{lem:tensor-product-2}.
\end{proof}

Under mild conditions on the family $\frakF$ of subgroups Nucinkis has
given a characterisation of flat
$\OFG$-modules in~\cite[p.~38]{nucinkis-04}, which is the Bredon
equivalent to Lazard's Theorem
in~\cite[p.~84]{lazard-69}.

\begin{proposition}\cite[Theorem~3.2]{nucinkis-04}
    \label{prop:lazard}
    Assume that $\frakF$ is a full family of subgroups of $G$. Then
the 
    following statements are equivalent for a right $\OFG$-module $M$:
    \begin{enumerate}
        \item  $M$ is flat;
	
	\item  any morphism $\varphi\: P \to M$ from a finitely 
	presented $\OFG$-module $P$ to $M$ factors through some 
	finitely generated free $\OFG$-module $F$;
    
        \item  $M$ is the direct limit of finitely generated free 
	$\OFG$-modules.\qed
    \end{enumerate}
\end{proposition}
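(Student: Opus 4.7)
The plan is to follow the classical argument of Lazard, adapted to the setting of Bredon modules over $\OFG$, by proving the cyclic chain of implications (3)~$\Rightarrow$~(1)~$\Rightarrow$~(2)~$\Rightarrow$~(3). The fullness hypothesis on $\frakF$ will be used in the hardest direction, since it allows the restriction of any morphism from a finitely generated free module to again land in the class of permitted free modules.

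For (3)~$\Rightarrow$~(1): free $\OFG$-modules are projective and hence flat by the previous proposition. Since filtered colimits in $\ModOFG$ are computed componentwise and are therefore exact (as they are in $\Ab$), and since $\?\mathop{\otimes_{\frakF}}\q?$ commutes with all colimits by Proposition~\ref{prop:tensor-product-1}, the class of flat modules is closed under directed colimits. Thus a direct limit of finitely generated free $\OFG$-modules is flat.

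For (1)~$\Rightarrow$~(2): given a finitely presented $\OFG$-module $P$ with presentation $F_1 \xrightarrow{\pi_1} F_0 \xrightarrow{\pi_0} P \to 0$ and a morphism $\varphi\: P \to M$, apply the Yoneda-type formula (Lemma~\ref{lem:yoneda-type-formula}) to encode $\varphi \circ \pi_0$ as a finite tuple of elements of $M$ at suitable objects $G/K_i$, and encode $\pi_1$ as a finite system of ``linear relations'' between these elements. Flatness of $M$, combined with Proposition~\ref{prop:tensor-product-1} and Lemma~\ref{lem:tensor-product-2}, furnishes an equational criterion analogous to Lazard's classical one: every such relation satisfied in $M$ can be lifted to a relation already holding in some finitely generated free $\OFG$-module $F$, together with a morphism $F \to M$ through which the original tuple factors. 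From this one reads off morphisms $F_0 \to F$ and $F \to M$ whose composite agrees with $\varphi \circ \pi_0$ and whose pre-composition with $\pi_1$ is zero, giving the desired factorisation $P \to F \to M$.

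For (2)~$\Rightarrow$~(3), consider the category $\mathcal{C}$ whose objects are pairs $(F,\alpha)$ with $F$ a finitely generated free $\OFG$-module and $\alpha\: F \to M$, and whose morphisms are commuting triangles over $M$. The plan is to verify that $\mathcal{C}$ is filtered and that $\varinjlim_{\mathcal{C}} F \isom M$. Filteredness requires two checks: joint upper bounds are obtained by forming direct sums $F_1 \coprod F_2$ with the induced map to $M$; and any two parallel morphisms $f,g\:(F_1,\alpha_1) \to (F_2,\alpha_2)$ are coequalised by passing to the finitely presented quotient $F_2/\langle (f-g)(F_1)\rangle$, which maps to $M$ via $\alpha_2$, and then invoking (2) to factor this further through a finitely generated free module. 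The identification $M \isom \varinjlim_{\mathcal{C}} F$ follows because every element of $M(G/K)$ with $K \in \frakF$ is the image under some morphism $\Z[\?,G/K]_{G} \to M$ by the Yoneda-type formula, so the cone maps from $\mathcal{C}$ cover $M$, and (2) shows that any two representatives of the same element become equal further along in the system.

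The main obstacle will be implication (1)~$\Rightarrow$~(2), where one must translate the ring-theoretic equational characterisation of flatness into the Bredon setting with its multi-sorted ``scalars'' indexed by the orbits $G/H$. Fullness of $\frakF$ is what keeps the auxiliary finitely generated free modules produced in this step within $\ModOFG$; without it, intermediate modules might be forced to involve objects outside the orbit category.
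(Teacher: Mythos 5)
The paper itself does not prove this proposition: the \textsc{qed} symbol closes a citation to Nucinkis's Theorem~3.2, which the text earlier describes as ``the Bredon equivalent to Lazard's Theorem.'' Your proposal therefore has no in-paper proof to match against, but it does reconstruct the classical Lazard cycle, which is the approach Nucinkis takes. The implications (3)~$\Rightarrow$~(1) and (2)~$\Rightarrow$~(3) are handled correctly: free modules are projective hence flat, tensor product over $\frakF$ commutes with colimits and filtered colimits in $\ModOFG$ are exact, so a directed colimit of flats is flat; and the category of finitely generated free modules over $M$ is made filtered by coproducts together with an application of~(2) to the finitely presented coequaliser, with Yoneda giving surjectivity of the canonical map $\varinjlim F \to M$ and (2) again giving injectivity.

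The gap is in (1)~$\Rightarrow$~(2), which is the content of the theorem. You invoke ``an equational criterion analogous to Lazard's classical one'' and declare that it follows from Proposition~\ref{prop:tensor-product-1} and Lemma~\ref{lem:tensor-product-2}, but those results (tensor preserves colimits; tensoring with a representable is evaluation) do not by themselves produce a relation-lifting criterion. In the Bredon setting the putative relation $\sum_i \varphi_i^*(m_i)=0$ involves elements $m_i$ at various objects $G/K_i$ and ``scalars'' $\varphi_i$ that are morphisms in the orbit category; lifting such a relation to a finitely generated free $\OFG$-module requires producing new objects $G/J_l\in\frakF$, elements $y_l\in M(G/J_l)$, and morphisms in the orbit category linking them, and it is exactly in controlling which subgroups and morphisms are available that the fullness hypothesis is used. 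Your stated role for fullness --- ``keeps the auxiliary finitely generated free modules produced in this step within $\ModOFG$'' --- is not the right explanation: a finitely generated free $\OFG$-module is by construction a finite coproduct of $\Z[\?,G/K]_G$ with $K\in\frakF$ and so is always in $\ModOFG$, full family or not. So the overall strategy is sound and matches the cited source's, but the step carrying the theorem's content is asserted rather than established, and the one place the hypothesis actually bites is misattributed.
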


%
%

\section{Restriction, Induction and Coinduction}
\label{sec:res-ind-coind}

The concept of restriction, induction and coinduction as known for
modules over group rings generalizes to Bredon modules.  Roughly
speaking in the case of group rings these functors are defined using a
ring homomorphism induced by an inclusion $H\hookrightarrow G$ where
$H$ is a subgroup of $G$.  In the case of Bredon cohomology the role
of this ring homomorphism is replaced by a functor between orbit
categories.  The following definition is due to
Lück~\cite[pp.~166f. and p.~350]{luck-89}.

\begin{DEF}
    Let $\frakF$ be a family of subgroups of a group $G_{1}$ and let
    $\frakG$ be a family of subgroups of a group $G_{2}$.
Furthermore let
    $F\: \OfG \to \OgG$ be a functor between the corresponding orbit
    categories.  Associated with the functor $F$ we have the following
    three additive functors:
    \begin{align*}
        \res_{F}\: & \ModOgG \to\ModOfG,
	\\
        & \qquad M \mapsto M(\q?) \mathop{\otimes_{\frakG}} 
        \Z[F(\?), \q?]_{G_{2}}
        &&\text{(\emph{restriction with $F$})},
	\\[\smallskipamount]
        \ind_{F}\: &  \ModOfG \to \ModOgG,  \\
        & \qquad M \mapsto M(\?) \mathop{\otimes_{\frakF}} 
        \Z[\q?, F(\?)]_{G_{2}}
        &&\text{(\emph{induction with $F$})}\\
        \intertext{and}
        \coind_{F}\: &  \ModOfG \to \ModOgG,  \\
        & \qquad M \mapsto \mor_{\frakF}(\Z[F(\?), \q?]_{G_{2}}, 
	M(\?))
        &&\text{(\emph{coinduction with $F$})}.\\
    \end{align*}
\end{DEF}

There are two other ways to interpret the restriction functor.
Namely we have natural equivalences of $\res_F$ with the following
two functors (see~\cite[pp.~116f.]{luck-89}):
\begin{align*}
    \res'_{F}\: &\ModOgG \to\ModOfG,\\
      & \qquad M\mapsto M\circ F\\
\intertext{and}
    \res''_{F}\: &\ModOgG \to\ModOfG,\\
	& \qquad M\mapsto \mor_{\frakG}(\Z[\q?,F(\?)]_{G_{2}},
	M(\q?)).
\end{align*}
Here the first natural equivalence is essentially due to
Lemma~\ref{lem:tensor-product-2}. The second natural equivalence is
due to the Yoneda-style isomorphism
\begin{equation*}
    \mor_{\frakG}(\Z[\q?,F(\?)]_{G_{2}}, M(\q?)) \isom (M \comp F)(\?)
\end{equation*}
which gives a natural equivalence of $\res''_{F}$ with $\res'_{F}$.

As in the ordinary case, restriction, induction and coinduction with
$F$ are closely related functors.  From the adjunction
relation~\eqref{eq:tensor-product-adjoint-2} we get the following
result:

\begin{proposition}
    Induction with $F$ is a left adjoint to restriction with $F$.
    Coinduction with $F$ is a right adjoint to restriction with $F$.
\end{proposition}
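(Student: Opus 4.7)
The plan is to reduce both adjunctions to the tensor--Hom adjunction~\eqref{eq:tensor-product-adjoint-2}, applied to the two Bredon bimodules
\begin{equation*}
    B_{\ind} \defeq \Z[\q?, F(\?)]_{G_{2}}
    \quad \text{and} \quad
    B_{\res} \defeq \Z[F(\?), \q?]_{G_{2}},
\end{equation*}
together with the natural equivalence $\res_{F} \simeq \res''_{F}$ from the discussion preceding the proposition.  First I would verify the variances: $B_{\ind}$ is covariant in $\?$ (via $F$) and contravariant in $\q?$, so it is an $\OfG$-$\OgG$-bimodule in the sense of the paper, while $B_{\res}$ has the opposite variances and is an $\OgG$-$\OfG$-bimodule.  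With these bookkeeping data in place, $\ind_{F}$ and $\res_{F}$ are literally the tensor-product-with-bimodule functors, and $\coind_{F}$ is literally the $\mor$-with-bimodule functor.

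For the first half, I would apply~\eqref{eq:tensor-product-adjoint-2} to $B_{\ind}$: for $M \in \ModOfG$ and $N \in \ModOgG$,
\begin{equation*}
    \mor_{\frakG}(\ind_{F} M, N)
    = \mor_{\frakG}\!\bigl(M \mathop{\otimes_{\frakF}} B_{\ind}, N\bigr)
    \isom \mor_{\frakF}\!\bigl(M, \mor_{\frakG}(B_{\ind}, N)\bigr),
\end{equation*}
and then identify the inner $\mor_{\frakG}$ term.  Evaluating at an object $G/H \in \OfG$ gives
$\mor_{\frakG}(\Z[\q?, F(G/H)]_{G_{2}}, N(\q?)) \isom N(F(G/H))$
by the Yoneda-type formula (Lemma~\ref{lem:yoneda-type-formula}), so the inner term is natural isomorphic to $\res'_{F} N \isom \res_{F} N$.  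This yields $\mor_{\frakG}(\ind_{F} M, N) \isom \mor_{\frakF}(M, \res_{F} N)$, natural in both variables.

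For the second half, I would apply the same tensor--Hom adjunction, but this time to $B_{\res}$ viewed as an $\OgG$-$\OfG$-bimodule, swapping the roles of the two orbit categories: for $N \in \ModOgG$ and $M \in \ModOfG$,
\begin{equation*}
    \mor_{\frakF}(\res_{F} N, M)
    = \mor_{\frakF}\!\bigl(N \mathop{\otimes_{\frakG}} B_{\res}, M\bigr)
    \isom \mor_{\frakG}\!\bigl(N, \mor_{\frakF}(B_{\res}, M)\bigr).
\end{equation*}
The inner term is by definition $\coind_{F} M$, so the desired adjunction $\mor_{\frakF}(\res_{F} N, M) \isom \mor_{\frakG}(N, \coind_{F} M)$ follows.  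Naturality in $M$ and $N$ is inherited from the naturality already supplied by~\eqref{eq:tensor-product-adjoint-2} and by Lemma~\ref{lem:yoneda-type-formula}.

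The only real obstacle is administrative rather than mathematical: one must track the contravariant/covariant roles of the two variables of each bimodule carefully so that the correct form of~\eqref{eq:tensor-product-adjoint-2} applies and so that both sides of each claimed isomorphism really do land in $\Ab$.  Once the variances are correctly identified, both adjunctions are formal consequences of the tensor--Hom adjunction combined with the Yoneda identification of $\mor_{\frakG}(\Z[\q?, F(\?)]_{G_{2}}, \q?)$ with evaluation at $F(\?)$.
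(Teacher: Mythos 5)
Your proposal is correct and follows essentially the same route as the paper: both halves are reduced to the tensor--Hom adjunction~\eqref{eq:tensor-product-adjoint-2} applied to the bimodules $\Z[\q?,F(\?)]_{G_{2}}$ and $\Z[F(\?),\q?]_{G_{2}}$, with the inner Hom in the first case identified with $\res'_{F}N \isom \res_{F}N$ via the Yoneda-type evaluation isomorphism (which is exactly the natural equivalence $\res_{F}\simeq\res''_{F}\simeq\res'_{F}$ the paper invokes) and, in the second case, recognised as $\coind_{F}M$ by definition. Your attention to the variances of the two bimodules is the same bookkeeping the paper performs implicitly.
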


\begin{proof}
    Due to~\eqref{eq:tensor-product-adjoint-2} we have the following
    sequences of natural isomorphisms for any right $\OFG$-module 
    $M$ and right $\OGG$-module $N$:
    \allowdisplaybreaks
    \begin{align*}
        \mor_{\frakG}(\ind_{F} M, N) 
	& \isom 
	\mor_{\frakG}(M(\?) \mathop{\otimes_{\frakF}} 
	\Z[\q?, F(\?)]_{G_{2}}, N(\q?))
	\\
	& \isom
	\mor_{\frakF}(M(\?), \mor_{\frakG} (\Z[\q?, F(\?)]_{G_{2}}, 
	N(\q?)))
	\\
	& \isom
	\mor_{\frakF}(M, \res'_{F} N)
	\\
	& \isom
	\mor_{\frakF}(M, \res_{F} N)
	\\
	\intertext{and}
	\mor_{\frakF}(\res_{F} N, M) 
	& \isom
	\mor_{\frakF}(N(\q?) \mathop{\otimes_{\frakG}} 
        \Z[F(\?), \q?]_{G_{2}}, M(\?))
	\\
	& \isom 
	\mor_{\frakG}(N(\q?), \mor_{\frakF}(\Z[F(\?), \q?]_{G_{2}}, 
	M(\?)))
	\\
	& \isom
	\mor_{\frakG}(N, \coind_{F} M).\qedhere
    \end{align*}
\end{proof}

In the following we list some further properties for the above
functors, though not all of them will be needed.  Most of the
following results are direct consequences of the adjunction
result above.

\begin{proposition}
     \label{prop:exactness-properties}
    Restriction with $F$ is an exact functor, induction with~$F$ is a 
    right exact functor and coinduction with $F$ is a left exact
functor.
\end{proposition}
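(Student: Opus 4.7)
The plan is to derive all three exactness statements from general principles already established in the excerpt, rather than doing any new module-theoretic computation.

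First I would handle the restriction functor. Using the natural equivalence $\res_F \simeq \res'_F$ noted just after the definition, where $\res'_F M = M \circ F$, the value of $\res_F M$ at an object $G_1/H \in \OfG$ is simply $M(F(G_1/H))$. Since exactness in the functor categories $\ModOfG$ and $\ModOgG$ is checked componentwise (a fact recorded earlier in the chapter when the properties of $\ModOFG$ were summarised), a short exact sequence $0 \to M' \to M \to M'' \to 0$ in $\ModOgG$ evaluated at the object $F(G_1/H)$ remains exact in $\Ab$. Hence $\res_F$ is exact.

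Next I would treat induction and coinduction together by appealing to the adjunction proposition proved immediately above the statement. Induction with $F$ is a left adjoint to $\res_F$, and coinduction with $F$ is a right adjoint to $\res_F$. It is a standard categorical fact that in abelian categories a left adjoint preserves all colimits, and in particular is right exact (it preserves cokernels and epimorphisms); dually a right adjoint preserves all limits, and in particular is left exact. This gives both remaining statements in one stroke. For a self-contained alternative in the induction case, one can observe directly that $\ind_F M = M(\?) \otimes_{\frakF} \Z[\q?, F(\?)]_{G_2}$, and Proposition~\ref{prop:tensor-product-1} tells us that $\? \otimes_{\frakF} N$ preserves arbitrary colimits, hence is right exact. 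Analogously, $\coind_F M = \mor_{\frakF}(\Z[F(\?), \q?]_{G_2}, M(\?))$, and $\mor_{\frakF}(P, \?)$ is always left exact (it is exact precisely when $P$ is projective).

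There is essentially no obstacle here: the real content was packaged into the preceding proposition, Lemma~\ref{lem:tensor-product-2}, and Proposition~\ref{prop:tensor-product-1}. The only thing to be careful about is that exactness is meant in the appropriate module category, but since exactness is pointwise in both $\ModOfG$ and $\ModOgG$, and the adjunctions are natural, no subtlety arises. I would therefore present the proof as three short paragraphs, one invoking componentwise exactness of $\res'_F$, one citing the adjunction and the general fact that adjoints preserve the relevant one-sided limits or colimits, and closing with the remark that the tensor-hom descriptions give a direct verification if desired.
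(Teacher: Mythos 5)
Your proposal is correct and, in its core, takes the same route as the paper: the paper's entire proof is a citation of the standard fact (Weibel, Theorem~2.6.1) that a left adjoint is right exact and a right adjoint is left exact, applied to the adjunctions $\ind_{F} \dashv \res_{F} \dashv \coind_{F}$, which is exactly your second paragraph. The only difference is minor and harmless: you establish exactness of $\res_{F}$ directly from the identification $\res_{F}M \simeq M \circ F$ together with componentwise exactness, whereas the paper obtains it by using the adjunction fact on both sides of $\res_{F}$; your tensor--hom remarks are likewise a valid optional direct verification.
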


\begin{proof}
    This is a direct application of Theorem 2.6.1 in
    \cite[pp.~51f.]{weibel-94}.  It states that if $L$ and $R$ are
    additive functors and $L$ is a left adjoint to $R$ (and therefore
    $R$ is a right adjoint to $L$), then $L$ is right exact and $R$ is
    left exact.
\end{proof}

\begin{proposition}
    \label{prop:preservation-prop-1}
    Induction and restriction with $F$ preserve arbitrary colimits. 
    Coinduction and restriction with $F$ preserve arbitrary limits.
\end{proposition}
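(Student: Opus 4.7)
The plan is to derive everything from the adjunctions established in the previous proposition together with the standard categorical fact that a left adjoint preserves all colimits and a right adjoint preserves all limits (see, e.g., \cite[pp.~118f.]{mac-lane-98}). The point is that this single abstract principle covers every claim of the statement, and restriction plays a double role: it is simultaneously a right adjoint (to induction) and a left adjoint (to coinduction).

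First I would unpack the two adjunctions from the preceding result into four consequences:
\begin{align*}
    \ind_{F} \dashv \res_{F} \quad & \Longrightarrow \quad
    \text{$\ind_{F}$ preserves colimits and $\res_{F}$ preserves limits,}\\
    \res_{F} \dashv \coind_{F} \quad & \Longrightarrow \quad
    \text{$\res_{F}$ preserves colimits and $\coind_{F}$ preserves limits.}
\end{align*}
Combining the two lines one immediately reads off: $\ind_{F}$ preserves arbitrary colimits, $\coind_{F}$ preserves arbitrary limits, and $\res_{F}$ preserves both. This already yields the proposition.

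As a sanity check, and in order to record a direct argument for $\res_{F}$, I would note that $\res_{F}$ is naturally equivalent to the precomposition functor $\res'_{F}\colon M\mapsto M\comp F$ from $\ModOgG$ to $\ModOfG$, as recalled in the paragraph preceding the previous proposition. Since both functor categories take values in the complete and cocomplete category $\Ab$, limits and colimits in $\ModOgG$ and $\ModOfG$ are computed componentwise, and precomposition with $F$ trivially commutes with componentwise operations. This gives an alternative and explicit verification that $\res_{F}$ preserves arbitrary limits and colimits, which is moreover independent of the adjoint functor machinery.

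There is no serious obstacle here: the entire proposition is essentially a formal consequence of the adjunctions in the previous proposition. The only point that requires a moment of care is to make sure that restriction really does appear on \emph{both} sides of an adjunction so that both halves of the ``preservation'' conclusion become available for it; this is precisely what was shown in the previous proof, so invoking it is enough.
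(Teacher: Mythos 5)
Your argument is correct and is exactly the paper's proof: both deduce all four preservation statements from the adjunctions $\ind_{F}\dashv\res_{F}$ and $\res_{F}\dashv\coind_{F}$ together with the standard fact that left adjoints preserve colimits and right adjoints preserve limits, citing the same reference. The extra remark about $\res_{F}\simeq\res'_{F}$ being precomposition and hence commuting with componentwise (co)limits is a sound alternative verification, but it is not needed and does not appear in the paper.
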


\begin{proof}
    The result follows from the fact that left adjoints preserve all 
    colimits and dually that right adjoints preserve all limits, see 
    \cite[pp.~ 118f.]{mac-lane-98}.
\end{proof}

\begin{proposition}
    \label{prop:preservation-prop-2}
    Induction with $F$ preserves frees and projectives. If $M$ is a 
    finitely generated $\OfG$-module, then so is $\ind_{F} M$.
    If both families $\frakF$ and $\frakG$ are full families of 
    subgroups, then induction with $F$ preserves flats.
\end{proposition}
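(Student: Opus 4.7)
The plan is to reduce all four claims to a single computation of $\ind_F$ on a representable module $\Z[\?, G_1/K]_{G_1}$ (with $K\in\frakF$), and then leverage the preservation properties of induction already collected in Propositions~\ref{prop:exactness-properties} and~\ref{prop:preservation-prop-1}.

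The key computation is to apply the second isomorphism of Lemma~\ref{lem:tensor-product-2}, reading the bifunctor $\Z[\q?, F(\?)]_{G_2}$ as a left $\OfG$-module in the variable $\?$, with $\q?$ playing the role of a bookkeeping parameter carrying the right $\OgG$-module structure. This yields
\[ \ind_F \Z[\?, G_1/K]_{G_1} \;=\; \Z[\?, G_1/K]_{G_1}(\?) \mathop{\otimes_{\frakF}} \Z[\q?, F(\?)]_{G_2} \;\isom\; \Z[\q?, F(G_1/K)]_{G_2}. \]
Since $F$ sends objects of $\OfG$ to objects of $\OgG$, we have $F(G_1/K) = G_2/H$ for some $H\in\frakG$, so the right hand side is a representable free right $\OgG$-module.

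For a general free $\OfG$-module written as in~\eqref{eq:general-free-module}, namely $F\Delta = \coprod_{\delta\in\Delta} \Z[\?, G_1/\varphi(\delta)]_{G_1}$, Proposition~\ref{prop:preservation-prop-1} tells us that $\ind_F$ commutes with the coproduct, so $\ind_F(F\Delta)$ is itself a coproduct of representable free $\OgG$-modules, and is therefore free; if $\Delta$ is finite the same argument gives a finitely generated free $\OgG$-module. For finite generation of $\ind_F M$ in general, pick a surjection $F\Delta \twoheadrightarrow M$ with $\Delta$ finite; right exactness of induction (Proposition~\ref{prop:exactness-properties}) produces a surjection $\ind_F(F\Delta) \twoheadrightarrow \ind_F M$ from a finitely generated free module, so $\ind_F M$ is finitely generated. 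A projective $\OfG$-module is a direct summand of a free one, and since $\ind_F$ is additive and sends free to free, it preserves projectives.

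For the flatness statement, assume $\frakF$ and $\frakG$ are full and $M$ is a flat right $\OfG$-module. By Proposition~\ref{prop:lazard} we may write $M \isom \varinjlim F_\lambda$ as a directed colimit of finitely generated free $\OfG$-modules. Since induction commutes with colimits (Proposition~\ref{prop:preservation-prop-1}), we have $\ind_F M \isom \varinjlim \ind_F F_\lambda$, and by the previous step each $\ind_F F_\lambda$ is a finitely generated free $\OgG$-module. Applying Proposition~\ref{prop:lazard} in the reverse direction over the full family $\frakG$, we conclude that $\ind_F M$ is flat. The only real subtlety throughout is the bookkeeping in the initial representable computation; everything else is formal nonsense about adjoints.
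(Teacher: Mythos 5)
Your proof is correct, and for the one claim the paper actually argues in detail -- preservation of flats -- your argument is identical to the paper's: write a flat $M$ as a directed colimit of finitely generated frees via Proposition~\ref{prop:lazard} over the full family $\frakF$, commute $\ind_F$ past the colimit by Proposition~\ref{prop:preservation-prop-1}, and apply Proposition~\ref{prop:lazard} again over the full family $\frakG$. The only divergence is in the first three claims, which the paper simply cites from L\"uck: you instead derive them from the representable computation $\ind_F \Z[\?, G_1/K]_{G_1} \isom \Z[\q?, F(G_1/K)]_{G_2}$ via the second isomorphism of Lemma~\ref{lem:tensor-product-2} (its naturality in the module variable is what makes this an isomorphism of right $\OgG$-modules, not merely of abelian groups evaluated objectwise -- worth stating explicitly), then spread this over coproducts for general frees, use additivity for projectives, and right exactness of $\ind_F$ for finite generation. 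This is in substance the same argument found in the cited source, so what your version buys is self-containedness within the thesis's own toolkit rather than a genuinely different method; in particular your finite-generation step via a surjection from a finite free module is a clean, standard reduction that the paper leaves implicit.
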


\begin{proof}
    All the statements except the last one can be found
    in~\cite[p.~169]{luck-89}.  If $M$ is a flat right $\OfG$-module
    and $\frakF$ is a full family of subgroups of~$G_{1}$, then from
    Proposition~\ref{prop:lazard} it follows that $M$ is the direct
    limit of finitely generated free $\OfG$-modules $M_{\lambda}$.
    Then since induction with~$F$ preserves colimits, we get
    \begin{equation*}
        \ind_{F} M  \isom \ind_{F} (\varinjlim M_{\lambda})  \isom 
	\varinjlim (\ind_{F} M_{\lambda}).
    \end{equation*}
    Since the $M_{\lambda}$ are finitely generated free Bredon modules
    so are the $\ind_{F} M_{\lambda}$.  Thus $\ind_{F} M$ is the
    direct limit of finitely generated free $\OgG$-modules and since
    the family~$\frakG$ of subgroups of $G_{2}$ is full we can apply
    again Proposition~\ref{prop:lazard} from which it then follows
    that the $\OgG$-module $\ind_{F} M$ is flat.
\end{proof}

\begin{proposition}
    Coinduction with $F$ preserves injectives.
\end{proposition}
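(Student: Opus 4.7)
The plan is to exploit the adjunction $\mor_{\frakG}(\?, \coind_{F} M) \isom \mor_{\frakF}(\res_{F} \?, M)$ established in the adjointness proposition, together with the exactness of $\res_{F}$ from Proposition~\ref{prop:exactness-properties}. This is the standard pattern: a right adjoint of an exact functor preserves injectives.

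In detail, let $I$ be an injective right $\OfG$-module. To show that $\coind_{F} I$ is injective in $\ModOgG$, it suffices to verify that the contravariant functor $\mor_{\frakG}(\?, \coind_{F} I)$ is exact on $\ModOgG$. Applying the adjunction naturally in the first argument gives a natural equivalence
\begin{equation*}
    \mor_{\frakG}(\?, \coind_{F} I) \isom \mor_{\frakF}(\res_{F}\?, I),
\end{equation*}
so it is enough to show that the right hand side is exact.

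The right hand side is the composition of $\res_{F}\: \ModOgG\to \ModOfG$ with $\mor_{\frakF}(\?, I)\: \ModOfG\to \Ab$. The first of these is exact by Proposition~\ref{prop:exactness-properties}, and the second is exact because $I$ is injective. Composition of exact functors is exact, so $\mor_{\frakG}(\?, \coind_{F} I)$ is exact and $\coind_{F} I$ is injective as required.

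There is no real obstacle here: the argument is purely formal once one has both halves of the adjunction and the exactness of restriction. The only subtlety worth flagging is why the analogous statement for induction (preservation of projectives) worked via a direct-summand-of-a-free argument in Proposition~\ref{prop:preservation-prop-2}, whereas here the categorical dualisation through the adjoint is cleaner because $\res_{F}$ is exact on both sides, not merely right exact like $\ind_{F}$.
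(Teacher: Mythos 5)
Your argument is correct and is essentially the paper's own proof: the paper simply cites the standard fact (Weibel, Theorem~2.3.10) that an additive right adjoint of an exact functor preserves injectives, while you unpack that fact via the natural isomorphism $\mor_{\frakG}(\?,\coind_{F} I)\isom \mor_{\frakF}(\res_{F}\?,I)$ and the exactness of $\res_{F}$. No gap to report.
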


\begin{proof}
    This is a direct application of Theorem 2.3.10 in
    \cite[p.~41]{weibel-94}, since coinduction with $F$ is an additive
    functor that is right adjoint to the exact restriction functor.
\end{proof}

%
%

\chapter{Classifying Spaces}
\label{ch:classifying-spaces}

%
%

\section{$G$-CW-Complexes}

CW-complexes have been introduced by J.~H.~C.~Whitehead 
in~\cite{whitehead-49} and are widely known by now. The concept was 
generalised to the equivariant case in~\cite{matumoto-71}, 
\cite{illman-72} and~\cite{dieck-87}. In this thesis we use  the 
definition described in~\cite[pp.~6f.]{luck-89}. Even though we are 
concerned with the study of classifying spaces for 
discrete group we will state the definition of a $G$-CW-complex and 
of a classifying space first for topological groups before we pass to 
discrete groups in the subsequent studies.

By a \emph{topological group} $G$ we understand a group $G$ which is 
at the same time a Hausdorff space such that the the map 
\begin{align*}
    G\times G & \to G,
    \\
    (g,h) &  \mapsto gh^{-1}
\end{align*}
is continuous.

\begin{DEF}
    \cite[pp.~6f.]{luck-89} Let $G$ be a topological group acting
    continuously on a topological space~$X$.  A \emph{$G$-CW-complex
    structure on $X$} consists of
    \begin{enumerate}
        \item  a filtration $X_{0}\subset X_{1}\subset X_{2} \subset 
	\ldots$  of $X$ which exhausts $X$, and
    
        \item  a collection $\{ e_{i}^{n} : i\in I_{n} \}$ of 
	$G$-subspaces $e^{i}_{n}\subset X_{n}$ for each $n\in \N$,
    \end{enumerate}
    with the properties
    \begin{enumerate}
	\item $X$ has the weak topology with respect to the filtration
	$\{X_{n} : n\in \N\}$ (that is $B\subset X$ is closed in $X$
	if and only if $B\cap X_{n}$ is closed in~$X_{n}$ for every
	$n\in \N$);
	
	\item for each $n\geq 1$ there exists a $G$-pushout as in
	Figure~\ref{fig:def-G-CW-complex-pushout} such that $e_{i}^{n}
	= Q_{i}^{n}(G/H_{i} \times \Int D^{n})$.  Here the $H_{i}$ are
	closed subgroups of~$G$, the $q_i\: G/H_i \times S^{n-1}\to
	X_{n-1}$ are continuous maps and the $Q_i\: G/H_i\times D^n
	\to X_n$ are continuous maps corresponding to the~$q_i$.
    \end{enumerate}

    \begin{figure}[t]
	\centering 
	\begin{tikzpicture}
	    \node (A) at (0,0) 
	    {$\coprod_{i\in I_{n}} G/H_{i} \times S^{n-1}$};
	    \node (C) at (0,-2.8)
	    {$\coprod_{i\in I_{n}} G/H_{i} \times D^{n}$};
	    \node (B) at (4.5,0)
	    {$X_{n-1}$};
	    \node (D) at (4.5,-2.8)
	    {$X_{n}$};
	    \draw[-latex] (A) -- (B) 
	    node[pos=.5, above]{$\coprod q_{i}$};
	    \draw[right hook-latex] (A) -- (C);
	    \draw[-latex] (C) -- (D)  
	    node[pos=.5, above]{$\coprod Q_{i}$};
	    \draw[right hook-latex] (B) -- (D);
	\end{tikzpicture}
	\caption{Attaching equivariant $n$-cells to the
	$(n-1)$-skeleton of $X$, $n\geq 1$.}
	\label{fig:def-G-CW-complex-pushout}
    \end{figure}
    
    The $G$-subspace $X_{n}$ is called the \emph{$n$-skeleton} of $X$.
    The $e_{i}^{n}$ are called the \emph{open equivariant $n$-cells}
    of $X$.  The \emph{(closed) equivariant $n$-cells} are the
    $G$-subspaces $\bar e_{i}^{n} \defeq Q_{i}(G/H_{i}\times D^{n})$.
\end{DEF}

Note that if $G=1$ is the trivial group, one recovers from the above
definition the non-equivariant CW-complex in the sense
of~\cite{whitehead-49}.

If $G$ is a discrete group, then one can express the above definition
also in the following way: a CW-complex $X$ with a $G$-action is a
$G$-CW-complex if the action of $G$ on $X$ is cellular and the cell
stabilisers are the point stabilisers~\cite[p.~8]{luck-89}.  That is,
the action of $G$ on $X$ permutes the cells and any $g\in G$ which
fixes a cell fixes this cell pointwise.
    
There are various finiteness properties for $G$-CW-complexes which
are generalisations of the corresponding finiteness properties of
CW-complexes.  The following is a list of some common finiteness
properties.

\begin{DEF}
    \label{def:G-CW-complex-fincond}
    Let $X$ be a $G$-CW-complex as in the definition before.
    \begin{enumerate}
	\item If there exists a integer $n\geq -1$ such that $X=X_{n}$
	(with the convention that $X_{-1} \defeq \emptyset$), then the
	least such integer is called the dimension of $X$ and we
	denote this fact by $\dim X = n$.  If no such integer exists,
	then we say that $X$ is an \emph{infinite dimensional}
	$G$-CW-complex and we denote this fact by $\dim X = \infty$.
	
	\item We say that $X$ is of \emph{finite type} if it has only
	finitely many equivariant cells in each dimension.
	
	\item We say that $X$ is \emph{finite} if $X$ consists of only
	finitely many equivariant cells.
    \end{enumerate}
\end{DEF}

Note that a $G$-CW-complex is finite if and only if it is of finite 
type and finite dimension. Moreover, a $G$-CW-complex $X$ is finite 
if and only if the quotient space $X/G$ is compact.

%
%

\section{Classifying Spaces}
\label{sec:classifying-space}

In the literature there are several variations of the concept of a
universal $G$-space (also known as a classifying space of $G$) for the
family $\frakF$, see for example the survey article~\cite{luck-05},
which is also the source of the following definition.

\begin{DEF}
    \label{def:classifying-space}
    Let $G$ be a topological group and let $\frakF$ be a semi-full
    family of closed subgroups of $G$.  A $G$-CW-complex $X$ is a
    \emph{classifying space of $G$ for the family $\frakF$} or a
    \emph{model for $E_{\frakF}G$}, if it satisfies the following two
    conditions:
    \begin{enumerate}
        \item  $\frakF(X)\subset \frakF$;
    
        \item  if $Y$ is a $G$-CW-complex with $\frakF(Y)\subset 
	\frakF$, then there exists a $G$-map $f\: Y\to X$ which is 
	unique up to $G$-homotopy.
    \end{enumerate}
\end{DEF}

In other words, a model for $E_{\frakF}G$ is a terminal object in the 
homotopy category of $G$-CW-complexes with isotropy groups in the 
family $\frakF$. In particular, a model for $E_{\frakF}G$ is only 
unique up to $G$-homotopy and the $G$-homotopy class of a classifying 
space of $G$ for the family $\frakF$ can be seen as an invariant of 
the group $G$.

For any given group $G$ and semi-full family $\frakF$ of subgroups $G$
there exists a classifying space of $G$ for the family
$\frakF$~\cite[p.~275]{luck-05}.  Furthermore it has been shown
in~\cite[p.~275]{luck-05}, that a $G$-CW-complex $X$ with
$\frakF(X)\subset \frakF$ is a model for $E_{\frakF}G$ if and only if
the fixed point set $X^{H}$ is weakly contractible for every $H\in
\frakF$. A space $X$ is called \emph{weakly contractible} if the
homotopy groups $\pi_{n}(X,x)$ are trivial for all $n\in \N$ and all
$x\in X$.

A contractible space is always weakly contractible.  However, in
general a weakly contractible space does not need to be contractible.
But if $G$ is discrete and $X$ is a model for $E_{\frakF}G$, then for
every $H\in\frakF$ the fixed point space $X^{H}$ has the homotopy type
of a CW-complex and is therefore
contractible~\cite[pp.~219ff.]{whitehead-78}. Thus we obtain
the following known characterisation result,
see for example~\cite[p.~295]{luck-00}.

\begin{proposition}
    \label{prop:classifying-space-alt}
    Let $G$ be a discrete group and let $\frakF$ be a semi-full family
    of subgroups of $G$.  A $G$-CW-complex $X$ is a model for
    $E_{\frakF}G$ if and only if the following two conditions are 
    satisfied:
    \begin{enumerate}
        \item  $\frakF(X)\subset \frakF$;
    
        \item  $X^{H}$ is contractible for every $H\in \frakF$.\qed
    \end{enumerate}
\end{proposition}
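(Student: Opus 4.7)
The plan is to deduce this proposition as a mild strengthening of the characterisation result already mentioned in the paragraph immediately preceding the statement, namely that $X$ is a model for $E_{\frakF}G$ if and only if $\frakF(X)\subset \frakF$ and $X^H$ is \emph{weakly} contractible for every $H\in \frakF$. Thus the only substantive content is upgrading ``weakly contractible'' to ``contractible'' for the fixed point subspaces, and this relies on the fact (cited in the excerpt from Whitehead~\cite{whitehead-78}) that when $G$ is discrete and $X$ is a $G$-CW-complex, each fixed point set $X^H$ has the homotopy type of a CW-complex.

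For the forward direction, I would assume that $X$ is a model for $E_{\frakF}G$.  The inclusion $\frakF(X) \subset \frakF$ is immediate from the definition.  Applying the quoted characterisation, $X^H$ is weakly contractible for every $H\in\frakF$.  Since $X^H$ has the homotopy type of a CW-complex, Whitehead's theorem (a weak equivalence between CW-complexes is a homotopy equivalence; equivalently, a weakly contractible CW-complex is contractible) promotes weak contractibility to contractibility.

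For the reverse direction, I would assume conditions (1) and (2).  Any contractible space is weakly contractible, so the hypotheses of the quoted characterisation are met and $X$ is a model for $E_{\frakF}G$.

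The proof is therefore really a one-line application of two already available facts, and I expect no genuine obstacle.  The only mildly delicate point is making clear that the assumption that $G$ is discrete is what allows the appeal to the CW-homotopy type of $X^H$; in the topological-group setting of Definition~\ref{def:classifying-space} one cannot expect fixed point sets to carry CW-structures, which is precisely why the general result is formulated with weak contractibility. Since this strengthening is what the proposition asserts, no further work is needed beyond invoking these two inputs.
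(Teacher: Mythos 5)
Your argument is correct and is precisely the one the paper uses: the proposition carries a \qed because the preceding paragraph already records the weak-contractibility characterisation from L\"uck together with the fact (via Whitehead) that for a discrete group the fixed point sets $X^{H}$ have the homotopy type of CW-complexes, which upgrades weak contractibility to contractibility. Your two directions reproduce this reasoning faithfully, including the correct observation about why discreteness of $G$ is needed.
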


For full families of subgroups the the above result has the following 
corollary, which is often used as the definition of a classifying 
space of discrete groups for full families of subgroups.

\begin{corollary}
    \label{cor:classifying-space-alt}
    Let $G$ be a discrete group and let $\frakF$ be a full family
    of subgroups of $G$.  A $G$-CW-complex $X$ is a model for
    $E_{\frakF}G$ if and only if the following two conditions are 
    satisfied:
    \begin{enumerate}
	\item $X^{H}=\emptyset$ for every subgroup $H$ of $G$ which is
	not in $\frakF$;
    
        \item  $X^{H}$ is contractible for every $H\in \frakF$.
    \end{enumerate}
\end{corollary}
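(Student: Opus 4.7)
The plan is to deduce this corollary directly from Proposition~\ref{prop:classifying-space-alt}. Since condition~(2) of the corollary is verbatim condition~(ii) of the proposition, the only task is to show that, under the hypothesis that $\frakF$ is a full family, condition~(1) of the corollary is equivalent to the condition $\frakF(X)\subset \frakF$ of the proposition.

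For the first direction, I would assume $\frakF(X)\subset\frakF$ and let $H\leq G$ be a subgroup with $H\notin\frakF$. If $X^{H}$ were non-empty, say $x\in X^{H}$, then $H\leq G_{x}$, and $G_{x}\in\frakF(X)\subset\frakF$. Since $\frakF$ is full it is closed under taking subgroups, so $H\in\frakF$, contradicting the choice of $H$. Hence $X^{H}=\emptyset$, giving condition~(1). For the converse, I would assume condition~(1) and pick any $x\in X$. Then $x\in X^{G_{x}}$, so $X^{G_{x}}\neq\emptyset$; the contrapositive of~(1) then forces $G_{x}\in\frakF$. As $x$ was arbitrary this yields $\frakF(X)\subset\frakF$.

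Having established this equivalence, the corollary is immediate from Proposition~\ref{prop:classifying-space-alt}. There is no real obstacle here; the only subtle point is the explicit use of fullness of $\frakF$ in the first direction, without which the argument breaks down (a subgroup of an element of $\frakF$ need not lie in $\frakF$ if we only assume $\frakF$ is semi-full).
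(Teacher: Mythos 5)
Your proof is correct and follows essentially the same route as the paper: both reduce the corollary to showing that, for a full family, condition~(1) is equivalent to $\frakF(X)\subset\frakF$, prove the forward direction by the same contradiction via $H\leq G_{x}$ and fullness, and prove the converse by noting that stabilisers have non-empty fixed-point sets. No gaps.
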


\begin{proof}
    We only need to show that for full families $\frakF$ the 
    assumption~(1) in Proposition~\ref{prop:classifying-space-alt} is 
    equivalent to the assumption~(1) in this corollary.
    
    \smallskip
    
    ``$\Rightarrow$'': Let $H$ be a subgroup of $G$ which is not in
    the family $\frakF$.  Assume towards a contradiction that
    $X^{H}\neq \emptyset$ and let $x\in X^{H}$.  Then $H$ is a
    subgroup of $G_{x}$.  Now $G_{x}\in\frakF(X)\subset \frakF$ and
    since $\frakF$ is a full family of subgroups of $G$ we get $H\in 
    \frakF$ which is a contradiction! Therefore $X^{H}=\emptyset$.
    
    \smallskip 
    
    ``$\Leftarrow$'': \quad Let $H\in \frakF(X)$. Then $X^{H}\neq 
    \emptyset$ and thus $H\in \frakF$.
\end{proof}

\begin{examples}
    \begin{enumerate}
        \item  If $\frakF =\{1\}$ is the trivial family of subgroups 
	then a contractible $G$-CW-complex $X$ is a model for 
	$E_{\frakF}G$ if the action of $G$ of on $X$ is free. In 
	particular the universal cover of an Eilenberg--Mac~Lane 
	space $K(G,1)$ is a model for $E_{\frakF}G$. It is customary 
	to abbreviate $EG \defeq  E_{\frakF}G$.
    
        \item  If $\frakF =\Ffin(G)$ a model for $E_{\frakF}G$ 
	is also known as the universal space of proper actions of 
	$G$. In literature the abbreviation $\underline EG \defeq  
	E_{\frakF}G$ is commonly used.
	
	\item In the case that $\frakF = \Fvc(G)$, which is the family
	of subgroups on which focus of study of this thesis lies, the
	abbreviation $\uu EG \defeq E_{\frakF}G$ is used.
    \end{enumerate}
\end{examples}

%
%

\section{Free Resolutions Obtained from Classifying Spaces}
\label{sec:resolutions-from-classifying-space}

If $\frakF$ is a semi-full family of subgroups of $G$, then a model
for $E_{\frakF}G$ can be used to construct a free resolution of the
trivial $\OFG$-module $\underline\Z_{\frakF}$.  The construction is as
follows (see for example~\cite[pp.~151f.]{luck-89}
or~\cite[pp.~10ff.]{mislin-03}).

Let $X$ be a $G$-CW-complex.  Consider the \emph{cellular chain
complex $C(X) = (C_{*}(X), d_{*})$}.  This chain complex is defined by
\begin{equation*}
    C_{n}(X) \defeq  H^{\Delta}_{n}(X_{n}, X_{n-1})
\end{equation*}
with $H^{\Delta}_{n}$ denoting the singular homology functor. The 
differentials 
\begin{equation}
    d_{n}\: C_{n}(X)\to C_{n-1}(X)
    \label{eq:celular-chain-complex-differentials}
\end{equation}
of the cellular chain complex are the connecting homomorphisms of the
triple $(X_{n}, X_{n-1}, X_{n-2})$, see for
example~\cite[pp.~40ff.]{geoghegan-08}.

Let $\Delta_{n}$ be the set of all $n$-cells of the $G$-CW-complex 
$X$. Since $G$ acts on $X$ by permuting the cells of $X$ the set 
$\Delta_{n}$ is in a natural way a $G$-set. Note that 
$\Delta_{n}^{H}$ is the set of all $n$-cells of the CW-complex 
$X^{H}$ for any group subgroup $H$ of $G$.

We define the right $\OFG$-module $\underline{C}_{n}(X)$ to be
\begin{equation*}
    \underline{C}_{n}(X) \defeq  \Z[\?, \Delta_{n}]_{G}.
\end{equation*}
For each $n\geq 1$ we define homomorphisms 
\begin{equation*}
    d_{n}\: \underline{C}_{n}(X) \to \underline{C}_{n-1}(X)
\end{equation*}
of right $\OFG$-modules as follows.  First note that for every $H\in
\frakF$ and $n\geq 1$ we have $\underline C_{n}(X)(G/H) =
C_{n}(X^{H})$.  Let
\begin{equation*}
    d_{n,H}\: C_{n}(X)(G/H) \to C_{n-1}(X)(G/H)
\end{equation*}
be the differential $d_{n}\: C_{n}(X^{H}) \to C_{n-1}(X^{H})$ of the
cellular chain complex~$C(X^{H})$.  If $\varphi\in [G/H, G/K]_{G}$ is
a morphism of the orbit category $\OFG$, say $\varphi = f_{g,H,K}$,
then $\varphi^{*}\:C_{n}(X)(G/K) \to C_{n}(X)(G/H)$ is for each $n\in
\N$ the homomorphism induced by the map $X^{K}\to X^{H}$ which sends
$x$ to $gx$. Since this map defines a chain map $C(X^{K})\to 
C(X^{H})$ this implies that the diagram
\begin{equation*}
    \begin{diagram}
        \node{C_{n}(X^{K})}
	\arrow{e,t}{d_{n,K}}
	\arrow{s,l}{\varphi^{*}}
	\node{C_{n-1}(X^{K})}
	\arrow{s,r}{\varphi^{*}}
	\\
	\node{C_{n}(X^{H})}
	\arrow{e,t}{d_{n,H}}
	\node{C_{n-1}(X^{H})}
    \end{diagram}
\end{equation*}
commutes for every $n\geq 1$.  In particular this implies that the
homomorphism~$d_{n,H}$ define a homomorphism $d_{n}\:
\underline{C}_{n}(X) \to \underline{C}_{n-1}(X)$ of right
$\OFG$-modules.

Furthermore, for every $H\in \frakF$ there exists an augmentation 
homomorphism $\varepsilon_{H}\: C_{0}(X^{H})\to \Z$ which sends every 
$0$-cell of the CW-complex $X^{H}$ to $1$. It follows that these 
homomorphism define an \emph{augmentation homomorphism}
\begin{equation*}
    \varepsilon\: \underline{C}_{0}(X)\to \underline\Z_{\frakF}.
\end{equation*}

\begin{lemma}
    The sequence
    \begin{equation}
	\label{eq:resolution-2}
	\ldots\longto \underline{C}_{2}(X) \stackrel{d_{2}}{\longto}
	\underline{C}_{1}(X) \stackrel{d_{1}}{\longto}
	\underline{C}_{0}(X) \stackrel{\varepsilon}{\longto}
	\underline\Z_{\frakF} \longto 0
    \end{equation}
    is a chain complex of $\OFG$-modules.
\end{lemma}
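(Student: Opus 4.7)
The plan is to verify the chain complex identities $d_n \circ d_{n+1} = 0$ for all $n \geq 1$ and $\varepsilon \circ d_1 = 0$ by reducing them to the corresponding identities in ordinary cellular chain complexes of CW-complexes, which are standard.

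First I would recall that a sequence of morphisms in $\ModOFG$ is exact (and in particular, composes to zero) if and only if it is so after evaluation at each object $G/H$, $H \in \frakF$, since kernels and images in $\ModOFG$ are computed componentwise. Thus it suffices to show, for each $H \in \frakF$, that the sequence of abelian groups
\begin{equation*}
    \ldots \longto C_2(X^H) \stackrel{d_{2,H}}{\longto} C_1(X^H) \stackrel{d_{1,H}}{\longto} C_0(X^H) \stackrel{\varepsilon_H}{\longto} \Z \longto 0
\end{equation*}
is a chain complex. By construction the differentials $d_{n,H}$ are precisely the differentials of the cellular chain complex of the CW-complex $X^H$ (which is a CW-complex since $H$-fixed subcomplexes of $G$-CW-complexes are CW-complexes), and $\varepsilon_H$ is the usual augmentation. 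The identity $d_{n,H} \circ d_{n+1,H} = 0$ is the familiar fact that connecting homomorphisms of the triples $(X^H_{n+1}, X^H_n, X^H_{n-1})$ and $(X^H_n, X^H_{n-1}, X^H_{n-2})$ compose to zero; similarly $\varepsilon_H \circ d_{1,H} = 0$ is standard. See for instance the reference to Geoghegan cited in the text.

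The only mildly subtle point is ensuring that the morphisms $d_n$ and $\varepsilon$ are genuinely defined in $\ModOFG$, i.e., that they are natural with respect to morphisms $\varphi \in [G/H, G/K]_G$ of the orbit category. This naturality was established just before the lemma by observing that a $G$-map $X^K \to X^H$ induced by $\varphi = f_{g,H,K}$ is cellular (hence induces a chain map of cellular chain complexes) and preserves $0$-cells (hence commutes with augmentation). I expect no real obstacle here — the main content is simply organizing the observation that the Bredon chain complex is nothing more than a coherent assembly of ordinary cellular chain complexes, one for each fixed-point subcomplex, and that chain complex identities detected pointwise lift to $\ModOFG$.
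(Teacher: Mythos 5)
Your proposal is correct and follows essentially the same route as the paper: evaluate at each object $G/H$, observe that the result is the augmented cellular chain complex of the CW-complex $X^{H}$, and rely on the naturality of the $d_{n,H}$ and $\varepsilon_{H}$ (established just before the lemma) to know the maps live in $\ModOFG$. Nothing further is needed.
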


\begin{proof}
    Let $H\in\frakF$. Then the sequence~\eqref{eq:resolution-2}
    evaluated at $G/H$ is the augmented cellular chain complex of
    the CW-complex $X^{H}$ and the claim follows.
\end{proof}

\begin{lemma}
    \label{lem:resolution-2-exact}
    Assume that $X^{H}$ is contractible for every $H\in\frakF$. Then
    the sequence~\eqref{eq:resolution-2} is exact.
\end{lemma}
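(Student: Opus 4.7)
The plan is to reduce the claim to a componentwise check, which by the previous lemma (and the general observation made earlier in the chapter that kernels and images in $\ModOFG$ are computed componentwise) amounts to proving exactness of a sequence of abelian groups for each $H\in\frakF$.

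First I would fix an arbitrary $H\in\frakF$ and evaluate the sequence \eqref{eq:resolution-2} at $G/H$. By the very construction of $\underline{C}_n(X)$, the differentials $d_{n,H}$, and the augmentation $\varepsilon_H$ carried out in the paragraphs preceding the previous lemma, this evaluated sequence is exactly the augmented cellular chain complex
\begin{equation*}
    \ldots \longto C_2(X^H) \stackrel{d_2}{\longto} C_1(X^H) \stackrel{d_1}{\longto} C_0(X^H) \stackrel{\varepsilon_H}{\longto} \Z \longto 0
\end{equation*}
of the fixed point CW-complex $X^H$. Here I would invoke the standard fact, recalled earlier in this chapter for discrete groups, that if $X$ is a $G$-CW-complex then $X^H$ inherits a genuine CW-complex structure whose $n$-cells are exactly the $H$-fixed equivariant $n$-cells of $X$, so that the cellular machinery applies in the usual way.

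By hypothesis $X^H$ is contractible, hence in particular path-connected with vanishing reduced singular homology in all degrees. The augmented cellular chain complex of a CW-complex computes its reduced singular homology, so exactness of the displayed sequence in positive degrees follows from $\tilde H_n(X^H)=0$ for $n\geq 1$, and exactness at $C_0(X^H)$ (i.e. surjectivity of $\varepsilon_H$ with kernel equal to $\im d_1$) follows from the fact that $X^H$ is non\textendash empty and path-connected, so $H_0(X^H)\isom\Z$ via $\varepsilon_H$.

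Since this holds for every $H\in\frakF$, the sequence \eqref{eq:resolution-2} is exact at each object of $\OFG$, and hence exact as a sequence of right $\OFG$-modules. The only point requiring any care is the invocation of the fact that $X^H$ inherits a CW-structure whose cellular chain complex agrees with what we defined as $\underline{C}_*(X)(G/H)$; this is the step I expect to deserve an explicit reference rather than any real work.
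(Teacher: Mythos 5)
Your proof is correct and follows essentially the same route as the paper: evaluate the sequence at each $G/H$, recognise the result as the augmented cellular chain complex of the CW-complex $X^{H}$, and conclude exactness from the contractibility of $X^{H}$, exactness being checked componentwise in $\ModOFG$. The extra remarks on the CW-structure of $X^{H}$ and reduced homology are just a more explicit spelling-out of what the paper leaves implicit.
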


\begin{proof}
    If $X^{H}$ is contractible then the augmented cellular chain
    complex of the CW-complex $X^{H}$ is exact. Thus the claim
    follows by evaluating the sequence~\eqref{eq:resolution-2} at
    $G/H$ for any $H\in\frakF$.
\end{proof}

The results of this section yield the following conclusion.

\begin{proposition}
    \label{prop:free-resolution-from-model}
    Let $\frakF$ be a semi-full family of subgroups and let $X$ be a
model
    for $E_{\frakF}G$.  Then the sequence~\eqref{eq:resolution-2} of
    right $\OFG$-modules is a free resolution of the trivial
    $\OFG$-module $\underline\Z_{\frakF}$.
\end{proposition}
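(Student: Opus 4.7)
The plan is to observe that almost all of the required work has been carried out in the two preceding lemmas, so that the proposition reduces to verifying that each right $\OFG$-module $\underline C_n(X)$ is free. First I would unpack what remains to be shown: that (a) the sequence is a chain complex, (b) it is exact, and (c) each $\underline C_n(X)$ is free over $\OFG$.

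Parts (a) and (b) are essentially immediate. The chain complex property is delivered by the first lemma of this section, which uses only that $X$ is a $G$-CW-complex. For exactness, since $\frakF$ is semi-full and $X$ is a model for $E_{\frakF}G$, Proposition~\ref{prop:classifying-space-alt} guarantees that the fixed point set $X^H$ is contractible for every $H\in\frakF$. This is precisely the hypothesis needed to apply Lemma~\ref{lem:resolution-2-exact}, which gives exactness at every spot.

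For freeness, the main step is to identify the $G$-set structure of $\Delta_n$. The cells of a $G$-CW-complex are permuted by $G$, and by the definition used in the paper the stabiliser of each cell coincides with the stabiliser of any of its points; hence the isotropy of every $n$-cell of $X$ lies in $\frakF(X)$, and $\frakF(X)\subset \frakF$ because $X$ is a model for $E_\frakF G$. Therefore $\Delta_n$ decomposes as a disjoint union of homogeneous $G$-sets $G/H_i$ with $H_i\in\frakF$, which is exactly the condition $\frakF(\Delta_n)\subset\frakF$ appearing in Proposition~\ref{prop:GSet-to-ModOFG}. Invoking that proposition with $X\defeq\Delta_n$ yields that $\underline C_n(X) = \Z[\?,\Delta_n]_{G}$ is a free right $\OFG$-module.

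Assembling the three ingredients, the sequence~\eqref{eq:resolution-2} is an exact complex of free $\OFG$-modules augmenting onto $\underline\Z_{\frakF}$, which is the definition of a free resolution. I do not anticipate any genuine obstacle; the only point requiring care is the book-keeping between ``cell stabiliser'' and ``point stabiliser'' needed to conclude $\frakF(\Delta_n)\subset\frakF$, but this is built into the definition of a $G$-CW-complex recalled earlier in the chapter.
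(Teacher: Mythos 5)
Your proof is correct and takes essentially the same route as the paper: freeness from $\frakF(X)\subset\frakF$ via Proposition~\ref{prop:GSet-to-ModOFG}, and exactness from contractibility of the fixed-point sets $X^H$ via Lemma~\ref{lem:resolution-2-exact}. The one small step you spell out that the paper leaves implicit is that cell stabilisers coincide with point stabilisers, so that $\frakF(\Delta_n)\subset\frakF(X)$; this is a worthwhile clarification but not a different approach.
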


\begin{proof}
    Since $X$ is a model for $E_{\frakF}G$, we have that
    $\frakF(X)\subset \frakF$ and so the $\OFG$-modules
    $\underline{C}_{n}(X)$ are free by
    Proposition~\ref{prop:GSet-to-ModOFG}.  The fixed point sets
    $X^{H}$ are contractible for any $H\in \frakF$ and therefore the
    sequence~\eqref{eq:resolution-2} is exact by
    Lemma~\ref{lem:resolution-2-exact}.
\end{proof}

%
%

\section{Geometric Finiteness Conditions in Terms of Algebraic 
Properties}
\label{sec:geometric-finiteness-conditions}

If follows from the construction of the previous section that the 
finiteness conditions of 
Definition~\ref{def:G-CW-complex-fincond} 
on a model $X$ for $E_{\frakF}G$ imply the following 
statements:
\begin{enumerate}
    \item if $\dim X = n$ then there exists a free
    resolution of the trivial $\OFG$-module $\underline
    \Z_{\frakF}$ of length $n$ in $\ModOFG$;

    \item if $X$ is of finite type then there exists a resolution
    of the trivial $\OFG$-module $\underline \Z_{\frakF}$ by finitely
    generated free Bredon modules in $\ModOFG$;

    \item if $X$ is finite then there exists a finite length
    resolution of the trivial $\OFG$-module $\underline \Z_{\frakF}$
    by finitely generated free Bredon modules in $\ModOFG$.
\end{enumerate}

In~\cite{luck-00} it has been shown that the above statements are 
nearly reversible. The relevant part of the main result 
in this article is the following

\begin{proposition}
    \label{prop:luck-meintrup}
    \cite[Theorem~0.1]{luck-00} Let $G$ be a discrete group, let
    $\frakF$ be a semi-full family of subgroups of $G$ and let $n\geq
    3$.  Then we have:
    \begin{enumerate}
        \item  there is a $n$-dimensional model for $E_{\frakF}G$ if 
	and only if there exists a projective resolution of the 
	trivial $\OFG$-module $\underline \Z_{\frakF}$ of length $n$ 
	in $\ModOFG$.
    
	\item there exists a finite type model for $E_{\frakF}G$ if
	and only if there exists a model for $E_{\frakF}G$ with finite
	equivariant $2$-skeleton and the trivial $\OFG$-module
	$\underline \Z_{\frakF}$ has a resolution by finitely
	generated projective Bredon modules in $\ModOFG$;
    
	\item there exists a finite model for $E_{\frakF}G$ if and
	only if there exists a model for $E_{\frakF}G$ with finite
	equivariant $2$-skeleton and the trivial $\OFG$-module
	$\underline \Z_{\frakF}$ has a resolution of finite length by
	finitely generated \emph{free} Bredon modules in
	$\ModOFG$.\qed
    \end{enumerate}
\end{proposition}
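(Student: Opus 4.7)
The plan is to verify both directions of the three biconditionals. The forward direction (``geometric implies algebraic'') is essentially a consequence of Proposition~\ref{prop:free-resolution-from-model}. If $X$ is a model for $E_{\frakF}G$ of dimension $n$, then $\underline{C}_k(X) = 0$ for $k > n$, and so the cellular chain complex provides a free resolution of $\underline{\Z}_{\frakF}$ of length $n$. If $X$ has finite type, then the $G$-set $\Delta_k$ of equivariant $k$-cells has only finitely many orbits, so each $\underline{C}_k(X) = \Z[\?, \Delta_k]_{G}$ is finitely generated by Proposition~\ref{prop:GSet-to-ModOFG}. For a finite $X$ both conclusions hold simultaneously, giving the forward directions of (1), (2) and (3). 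In (2) and (3) the forward direction also requires producing a model with finite equivariant $2$-skeleton, which is trivial since the given finite type (resp.\ finite) model already is one.

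The substantive content is the reverse direction, for which I would follow the strategy of L\"uck and Meintrup. Given the algebraic data one constructs a model skeleton by skeleton, using the resolution of $\underline{\Z}_{\frakF}$ as a blueprint for attaching equivariant cells. First, a projective resolution must be replaced by a free one: projectives are direct summands of frees (see Section~7), so after stabilising by adding appropriate $G/H$-cells one obtains a free resolution $F_*$ of the same length (here $n\geq 3$ leaves enough room to absorb the stabilisation without increasing the dimension). Starting from a model with finite equivariant $2$-skeleton (whose existence is asserted as part of the hypothesis in (2) and (3), and which can be arranged in (1) by a direct construction once $\underline{\Z}_{\frakF}$ is shown to be finitely presented), one then inductively attaches equivariant $k$-cells of the form $G/H\times D^k$ indexed by the basis of $F_k$, realising the boundary homomorphism $d_k\colon F_k\to F_{k-1}$ as an actual equivariant attaching map. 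This realisation step uses equivariant obstruction theory together with the Yoneda-type identification $\underline{C}_*(X)(G/H) = C_*(X^H)$.

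To verify that the resulting $X$ is indeed a model for $E_{\frakF}G$, one applies Proposition~\ref{prop:classifying-space-alt}: by construction $\frakF(X) \subset \frakF$, and evaluating the exact resolution $F_* \to \underline{\Z}_{\frakF}$ at $G/H$ shows that each fixed-point CW-complex $X^H$ has trivial reduced homology. Simple connectivity of $X^H$ can be ensured on the $2$-skeleton, after which the Hurewicz and Whitehead theorems promote acyclicity to contractibility.

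The main obstacle is the hypothesis $n\geq 3$. This reflects the classical Eilenberg--Ganea phenomenon: in dimension $2$ the algebraic cohomological dimension and the geometric dimension may genuinely differ, and the reverse direction cannot be established purely by homological means. The delicate technical point is that the realisation of an algebraic boundary map by an equivariant attaching map has to be performed simultaneously and coherently on every fixed-point set $X^H$ for $H\in\frakF$; controlling the fundamental groups $\pi_1(X^H)$ (so that the higher stages of the construction can be carried out via Hurewicz) is precisely what forces the $n\geq 3$ restriction in (1) and the finite-$2$-skeleton hypothesis in (2) and~(3).
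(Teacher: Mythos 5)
The paper does not prove this proposition: it is quoted from L\"uck with a terminal \qed and no argument, so there is no proof of the paper's to compare against.  On its own terms your outline correctly captures the L\"uck--Meintrup strategy: cellular chains of a model give the forward direction via Proposition~\ref{prop:free-resolution-from-model}, and the reverse direction is a skeleton-by-skeleton construction realising an algebraic resolution by attaching equivariant cells, using equivariant Hurewicz, stabilisation by wedging on spheres $G/H\times S^{n-1}$, and the equivariant Whitehead theorem, with $n\geq 3$ ensuring the fixed-point sets of the partially built complex are simply connected so that acyclicity can be promoted to contractibility.

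Two details are off.  Your parenthetical that $n\geq 3$ ``leaves enough room to absorb the stabilisation without increasing the dimension'' misattributes the purpose of the hypothesis: passing from a length-$n$ projective resolution to a length-$n$ free one never raises the length, regardless of $n$; the restriction $n\geq 3$ is needed only for the $\pi_1$-control you correctly identify at the end.  More seriously, in part~(1) you propose to first arrange a model with finite equivariant $2$-skeleton ``once $\underline{\Z}_{\frakF}$ is shown to be finitely presented''.  Statement~(1) carries no finiteness hypothesis, and $\underline{\Z}_{\frakF}$ need not be finitely presented for a general group and family --- it need not even be finitely generated.  No finite $2$-skeleton is needed for~(1): one starts from an arbitrary model for $E_{\frakF}G$, which exists because $\frakF$ is semi-full, and rebuilds it from the $(n-1)$-skeleton upward.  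The finite-$2$-skeleton condition is part of the \emph{hypothesis} in~(2) and~(3), not a consequence to be established.
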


In this thesis we focus on the question whether for a group $G$ and 
full family $\frakF$ of subgroups of $G$, there exists a finite 
dimensional model for $E_{\frakF}G$. This leads to the following 
definition.

\begin{DEF}
    Let $G$ be a group and $\frakF$ semi-full family of subgroups.
    Assume that there exists a finite dimensional model for
    $E_{\frakF}G$.  Then the least integer $n\geq 0$ for which there
    exists an $n$-dimensional model for $E_{\frakF}G$ is called the
    \emph{Bredon geometric dimension of $G$ for the family $\frakF$}
    and we denote this by~$\gd_{\frakF} G \defeq n$.  If there exist
    no finite dimensional model for $E_{\frakF}G$, then we
    set~$\gd_{\frakF}G \defeq \infty$.
\end{DEF}

Following the notation introduced at the end of
Section~\ref{sec:classifying-space} we abbreviate~$\gd_{\frakF}G$ by
$\gd G$ if $\frakF=\{1\}$, by $\ugd G$ if $\frakF = \Ffin(G)$
and by $\uugd G$ if~$\frakF = \Fvc(G)$.

%
%

\chapter{Bredon (Co-)Homological Dimensions}
\label{ch:bredon-dimensions}

%
%

\section{Bredon (Co-)Homology}

Since the category $\ModOFG$ has enough projectives we can define
derived functors and do homological
algebra~\cite[pp.~30ff.]{weibel-94}.  We are interested in the derived
functors of the morphism functor $\mor_{\frakF}(\?, \??)$ and the
tensor product functor $\?\otimes_{\frakF} \??$.  Therefore,
for each right $\OFG$-module $M$, we choose a projective resolution
$P_{*}(M)$ of $M$.

\begin{DEF}
    Let $N$ be a right $\OFG$-module. Then $\Ext^{n}_{\frakF}(\?, 
    N)$ is the $n$-th right derived functor of $\mor_{\frakF}(\?,N)$, 
    that is
    \begin{equation*}
        \Ext^{n}_{\frakF}(M,N) \defeq  H_{n}(\mor_{\frakF}(P_{*}(M),N).
    \end{equation*}
    for any right $\OFG$-module $M$ and all $n\in \N$.
    
    Likewise, if $N$ is a left $\OFG$-module, then
    $\Tor_{n}^{\frakF}(\?, N)$ is the $n$-th left derived functor of 
    $\?\otimes_{\frakF} N$, that is
    \begin{equation*}
        \Tor_{n}^{\frakF}(M,N) \defeq  H_{n}(P_{*}(M)\otimes_{\frakF}N)
    \end{equation*}
    for all right $\OFG$-modules $M$ and all $n\in \N$.
\end{DEF}

It is a standard fact in homological algebra that this definition is
-- up to natural isomorphism -- independent of the choice of the
projective resolutions. Furthermore it is a standard fact that the 
$\Ext^{*}_{\frakF}$ and $\Tor_{*}^{\frakF}$ functors are also 
functorial in the second variable.

\begin{proposition}
    \label{prop:projectives-Ext-classification}
    The following statements about a right $\OFG$-module $M$ are 
    equivalent:
    \begin{enumerate}
        \item $M$ is projective;
	
	\item $\mor_{\frakF}(M, \?)$ is an exact functor;
	
	\item $\Ext^{n}_{\frakF}(M, N) = 0$ for every right
	$\OFG$-module $N$ and every $n\geq 1$;
	
	\item $\Ext^{1}_{\frakF}(M, N) = 0$ for every right
	$\OFG$-module $N$.
    \end{enumerate}
\end{proposition}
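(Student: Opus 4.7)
The plan is to prove the cyclic chain of implications $(1)\Rightarrow(2)\Rightarrow(3)\Rightarrow(4)\Rightarrow(1)$. Two of these are essentially immediate from the general homological machinery of $\ModOFG$ and one is a direct invocation of an earlier result, so the only real content is the closing implication $(4)\Rightarrow(1)$.

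The equivalence $(1)\Leftrightarrow(2)$ has already been established in the earlier proposition characterising projective $\OFG$\=/modules (which in turn rests on the fact that $\ModOFG$ is an abelian category). For $(1)\Rightarrow(3)$ I would argue as follows: if $M$ is projective, then the complex concentrated in degree zero with $M$ at position $0$ and augmentation $\id_M$ is itself a projective resolution of $M$. Since $\Ext^{n}_{\frakF}(M,N)$ is independent of the chosen projective resolution, applying $\mor_{\frakF}(\?, N)$ to this trivial resolution yields a complex which is zero in all positive degrees, so $\Ext^{n}_{\frakF}(M,N) = 0$ for all $n\geq 1$ and all right $\OFG$\=/modules $N$. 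The step $(3)\Rightarrow(4)$ is trivial by specialising to $n=1$.

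The substantive step is $(4)\Rightarrow(1)$. Assume $\Ext^{1}_{\frakF}(M, N) = 0$ for every right $\OFG$\=/module $N$. To verify projectivity of $M$ in the sense of the lifting diagram~\eqref{eq:projective-module-diagram}, let $p\: M'\to M''$ be an epimorphism and $f\: M\to M''$ an arbitrary morphism. Set $K \defeq \ker p$, so we have a short exact sequence
\begin{equation*}
    0\longto K\longto M'\stackrel{p}{\longto}M''\longto 0
\end{equation*}
of right $\OFG$\=/modules. Applying the functor $\mor_{\frakF}(M,\?)$ and invoking the standard long exact sequence for the derived functors of $\mor_{\frakF}(M,\?)$ (which exists because $\ModOFG$ has enough projectives) gives the exact sequence
\begin{equation*}
    \mor_{\frakF}(M,M')\longto \mor_{\frakF}(M,M'')\longto \Ext^{1}_{\frakF}(M,K).
\end{equation*}
By hypothesis the right\-/hand term vanishes, so $p_{*}\: \mor_{\frakF}(M,M')\to \mor_{\frakF}(M,M'')$ is surjective and a lift of $f$ along $p$ exists. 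Hence $M$ is projective.

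The main subtlety is ensuring that the long exact Ext sequence is available in $\ModOFG$; this however follows from the general machinery of derived functors in an abelian category with enough projectives, which was already invoked to define $\Ext^{*}_{\frakF}$ in the first place, so no new work is required. An alternative route for $(4)\Rightarrow(1)$ would be to pick a short exact sequence $0\to K\to F\to M\to 0$ with $F$ free, observe via the same long exact sequence that it splits, and then appeal to the earlier proposition stating that direct summands of free modules are projective; either argument works equally well.
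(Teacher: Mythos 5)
Your proof is correct, and it is essentially the standard homological-algebra argument that the paper simply cites (to Weibel) rather than writing out; the cyclic chain with the long exact Ext sequence in the second variable (or the splitting of $0\to K\to F\to M\to 0$) is exactly the textbook route, and your remark that the long exact sequence is available because each term of a projective resolution turns the short exact sequence into a short exact sequence of complexes settles the only subtle point.
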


\begin{proposition}
    \label{prop:flats-Tor-classification}
    The following statements for a right $\OFG$-module $M$ are 
    equivalent:
    \begin{enumerate}
        \item $M$ is flat;
	
	\item $M \mathbin{\otimes_{\frakF}} \?$ is an exact functor;
	
	\item $\Tor_{n}^{\frakF}(M, N) = 0$ for every left 
	$\OFG$-module $N$ and every $n\geq 1$;
	
	\item $\Tor_{1}^{\frakF}(M, N) = 0$ for every left 
	$\OFG$-module $N$.
    \end{enumerate}
\end{proposition}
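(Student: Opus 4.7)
The plan is to mimic the proof of Proposition~\ref{prop:projectives-Ext-classification}, with projectives replaced by flats, $\Ext$ replaced by $\Tor$, and $\mor_{\frakF}(M,\?)$ replaced by $M\otimes_{\frakF}\?$. The equivalence $(1)\Leftrightarrow(2)$ holds by the definition of flatness given in the preceding section, and $(3)\Rightarrow(4)$ is immediate by specialisation to $n=1$; the content therefore lies in the two remaining implications.

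For $(2)\Rightarrow(3)$ I would invoke the balancing of $\Tor$: for any projective resolution $Q_{*}\to N$ of the left $\OFG$-module $N$ there is a natural isomorphism
\[
    \Tor_{n}^{\frakF}(M,N) \isom H_{n}(M\otimes_{\frakF} Q_{*}).
\]
Since by hypothesis $M\otimes_{\frakF}\?$ is exact, applying it to the exact complex $\ldots\to Q_{1}\to Q_{0}\to N\to 0$ yields an exact complex, whose homology in positive degrees vanishes. Alternatively, one can argue by dimension shifting along a projective resolution of $M$, using the short exact sequences of syzygies together with the fact that $\Tor_{1}^{\frakF}(M,\?)=0$ once $(4)$ is established in the first step.

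For $(4)\Rightarrow(1)$ I would fix an arbitrary short exact sequence $0\to A\to B\to C\to 0$ of left $\OFG$-modules and write down the long exact sequence of $\Tor$ in the second variable (whose existence again rests on balancing):
\[
    \Tor_{1}^{\frakF}(M,C)\to M\otimes_{\frakF} A \to M\otimes_{\frakF} B \to M\otimes_{\frakF} C \to 0.
\]
By assumption the leftmost term vanishes, and right exactness of the tensor product (Proposition~\ref{prop:tensor-product-1}) supplies the missing $0$ on the right, so that the sequence $0\to M\otimes_{\frakF} A \to M\otimes_{\frakF} B \to M\otimes_{\frakF} C \to 0$ is exact. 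Hence $M\otimes_{\frakF}\?$ is exact, meaning $M$ is flat.

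The main obstacle is that $\Tor$ has been defined in this chapter using a projective resolution of the \emph{first} variable, whereas both implications above need to manipulate resolutions or connecting homomorphisms in the \emph{second} variable. This is handled by the standard balancing theorem for $\Tor$, established via a Cartan--Eilenberg resolution or double-complex argument as in~\cite[\S2.7]{weibel-94}. Since $\ModOFG$ and $\OFGMod$ are abelian categories with enough projectives and $\?\otimes_{\frakF}\?$ is additive and right exact in each variable, that proof transfers verbatim from the ring-theoretic setting to the Bredon setting, and with balancing in hand the remainder of the argument is routine.
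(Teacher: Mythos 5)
Your proposal is correct and is essentially the argument the paper itself relies on: the paper's proof consists of citing the standard results in Weibel, and your write-up --- (1)$\Leftrightarrow$(2) by definition, (2)$\Rightarrow$(3) via balancing of $\Tor$, and (4)$\Rightarrow$(1) via the long exact sequence in the second variable together with right exactness of $\? \mathbin{\otimes_{\frakF}} \q?$ --- is precisely that standard proof transported to $\ModOFG$. You also correctly identify the only point needing care (that $\Tor$ was defined by resolving the first variable), and the balancing/double-complex argument does go through here because both $\ModOFG$ and $\OFGMod$ have enough projectives and projective Bredon modules are flat, as established earlier in the chapter.
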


\begin{proof}[Proof of Proposition~\ref{prop:projectives-Ext-classification} 
    and~\ref{prop:flats-Tor-classification}] These are standard results in
    homological algebra, see for example~\cite[p.~50]{weibel-94} and
    ~\cite[p.~69]{weibel-94}.
\end{proof}

Let $N$ be a left $\OFG$-module.  We say that a right $\OFG$-module
$M$ is~\emph{$\?  \mathbin{\otimes_{\frakF}} N$-acyclic} if the groups
$\Tor_{n}^{\frakF}(M, N)$ vanish for every $n\geq 1$.  Thus $M$ is
flat if and only if it is $\?  \mathbin{\otimes_{\frakF}} N$-acyclic
for any left $\OFG$-module $N$.

Note that from the theory of derived functors, it follows that we can
relax the requirement on the resolution of $M$ used to calculate the
$\Tor$ groups.  In fact any $\?  \mathbin{\otimes_{\frakF}} N$-acyclic
resolution of $M$ will be sufficient~\cite[p.~47]{weibel-94}. As this
requirement is satisfied by flat $\OFG$-modules this means we can
calculate the $\Tor$ groups using flat resolutions.

\begin{DEF}
    Let $G$ be a group, $\frakF$ a family of subgroups of
    $G$ and let~$M$ be a right $\OFG$-module.  Then the
    \emph{Bredon cohomology groups $H^{n}_{\frakF}(G; M)$ of $G$ with
    coefficients in $M$} are defined as the $\Ext$ groups of the
    trivial $\OFG$-module~$\underline\Z_{\frakF}$ with coefficients in
    $M$, that is
    \begin{equation*}
        H^{n}_{\frakF}(G; M) \defeq 
	\Ext^{n}_{\frakF}(\underline\Z_{\frakF}, M).
    \end{equation*}
    Similarly, if $N$ is a left $\OFG$-module, then the \emph{Bredon
    homology groups $H_{n}^{\frakF}(G; N)$ of $G$ with coefficients in
    $N$} are defined to be the $\Tor$ groups of the trivial
    $\OFG$-module $\underline\Z_{\frakF}$ with coefficients in $N$,
    that is
    \begin{equation*}
        H_{n}^{\frakF}(G; N) \defeq 
	\Tor_{n}^{\frakF}(\underline\Z_{\frakF}, N).
    \end{equation*}
\end{DEF}

%
%

\section{The Standard Resolution}
\label{sec:standard-resolution}

Concrete examples of projective resolutions of the trivial
$\OFG$-module~$\underline\Z_{\frakF}$ are useful in order to
calculate the Bredon (co-)homology groups of a group~$G$.  In
Section~\ref{sec:resolutions-from-classifying-space} we have already 
seen how to obtain a free resolution of $\underline \Z_{\frakF}$ 
from a model for $E_{\frakF}G$. Another example of a very specific 
free resolution of~$\underline\Z_{\frakF}$ is the standard resolution.

Recall that in classical group (co-)homology there
exists the standard resolution
\begin{equation}
    \label{eq:classic-standard-resolution}
    \ldots \to \Z[G\times G\times G] \to \Z[G\times G] \to \Z G \to \Z
    \to 0
\end{equation}
of the trivial $G$-module $\Z$ by free $G$-modules, see for
example~\cite[pp.~18f.]{brown-82}.  This resolution is also known as
the bar resolution.  From the view point of category theory, standard
resolutions arise from simplicial objects constructed from comonads
(also known as triples), see~\cite[pp.~180ff.]{mac-lane-98}
and~\cite[pp.~278ff.]{weibel-94}.

Nucinkis has shown in~\cite[pp.~41f.]{nucinkis-04} how the 
construction~\eqref{eq:classic-standard-resolution} generalises to 
the Bredon setting for the family $\frakF = \Ffin(G)$ of finite
subgroups of 
$G$. That is, there exists a free resolution
\begin{equation*}
    \ldots \to \Z[\?, \Delta_{2}]_G \to \Z[\?,\Delta_{1}]_G \to \Z[\?,
    \Delta_{0}]_G \to \underline\Z_{\frakF}\to 0
\end{equation*}
of the trivial $\OFG$-module $\underline\Z_{\frakF}$ where 
$\Delta_{n}$ is the $G$-set
\begin{equation*}
    \Delta_{n} \defeq  \{ (g_{0}K_{0}, \ldots, g_{n}K_{n}) : g_{i}\in G 
    \text{ and } K_{i}\in \frakF\}.
\end{equation*}

It turns out that we can construct a resolution of this form of the
trivial~$\OFG$-module $\underline\Z_{\frakF}$ for an arbitrary
non-empty family $\frakF$ of subgroups of~$G$.  The details are as
follows.

For $n\geq 1$ and $0\leq i\leq n$ define $G$-maps $\partial_{i}\:
\Delta_{n}\to \Delta_{n-1}$ by
\begin{equation*}
    \partial_{i}(g_{0}K_{0}, \ldots, g_{n}K_{n}) \defeq
    (g_{0}K_{0},\ldots, \widehat{g_{i}K_{i}}, \ldots, g_{n}K_{n})
\end{equation*}
where $(g_{0}K_{0},\ldots, \widehat{g_{i}K_{i}}, \ldots, g_{n}K_{n})$
denotes the $n$-tuple obtained from the $(n+1)$-tuple $(g_{0}K_{0},
\ldots, g_{n}K_{n})$ by deleting the $i$-th component.  With these
maps the collection $\Delta_{*} \defeq  \{\Delta_{n} : n\in\N \}$ of
$G$-sets becomes a semi-simplicial complex.

Let $\Delta_{-1}$ be the singleton $G$-set $\Delta_{-1} \defeq  \{ * \}$.
We get an augmentation $G$-map $\varepsilon\: \Delta_{0}\to
\Delta_{-1}$ if we set $\varepsilon(g_{0}K_{0}) \defeq  *$ for every
$g_{0}K_{0}\in\Delta_{0}$, that is $\varepsilon \comp \partial_{0} =
\varepsilon \comp \partial_{1}$.

Applying the functor $\Z[\?,  \??]_G$ to the semi-simplicial $G$-set
$\Delta$ gives a semi-simplicial $\OFG$-module
\begin{equation*}
    \Z[\?, \Delta_{*}]_G \defeq  \{ \Z[\?, \Delta_{n}]_G : n\in \N \}
\end{equation*}
with augmentation $\varepsilon\: \Z[\?, \Delta_{0}]_G \to \Z[\?,
\{*\}]_G = \underline\Z_{\frakF}$.
The associated augmented chain complex $C_{*}(\Delta_{*})$
is given by
\begin{equation*}
    C_{n}(\Delta_{*}) \defeq 
    \begin{cases}
	\Z[\?, \Delta_{*}]_{G}  & n\geq -1   \\[0.7ex]
	0 & \text{otherwise}
    \end{cases}    
\end{equation*}
with the differentials given by
\begin{equation*}
    d_{n} \defeq  
    \begin{cases}
        \sum_{i=0}^{n} (-1)^{i} \partial_{i} & n > 0
	\\[0.7ex]
        \varepsilon & n=0
	\\[0.7ex]
        0 & n<0
    \end{cases}
\end{equation*}
It follows that $C_{*}(\Delta_{*})$ is necessarily a complex of
$\OFG$-modules, that is $d_{n-1} \comp d_{n} = 0$ for every
$n\in\Z$~\cite[pp.~259ff.]{weibel-94}.

\begin{proposition}
    \label{prop:standard-resolution}
    The sequence
    \begin{equation}
	\label{eq:standard-resolution}
	\ldots \longto \Z[\?, \Delta_{2}]_{G} \stackrel{d_{2}}{\longto}
	\Z[\?, \Delta_{1}]_{G} \stackrel{d_{1}}{\longto} \Z[\?,
	\Delta_{0}]_{G} \stackrel{\varepsilon}{\longto}
	\underline\Z_{\frakF} \longto 0
    \end{equation}
    is a resolution of the trivial $\OFG$-module 
    $\underline\Z_{\frakF}$. If $\frakF$ is a semi-full family of 
    subgroups then this resolution is free.
\end{proposition}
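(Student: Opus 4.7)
The plan is to prove the two assertions (exactness and freeness) separately, reducing the exactness claim to a pointwise statement and then using the standard simplicial ``cone'' trick.

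\textbf{Freeness.} I would first dispense with the second claim. By Proposition~\ref{prop:GSet-to-ModOFG}, each $\Z[\?,\Delta_n]_G$ is free as soon as $\frakF(\Delta_n)\subset \frakF$. A direct computation gives the stabiliser
\begin{equation*}
    G_{(g_0K_0,\ldots,g_nK_n)} \;=\; \bigcap_{i=0}^{n} g_iK_ig_i^{-1}.
\end{equation*}
Each $g_iK_ig_i^{-1}$ lies in $\frakF$ because $\frakF$ is closed under conjugation, and their intersection lies in $\frakF$ because $\frakF$ is semi-full. Hence $\frakF(\Delta_n)\subset \frakF$ and every $\Z[\?,\Delta_n]_G$ is free.

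\textbf{Exactness.} Since kernels, images and equality of morphisms in $\ModOFG$ are computed objectwise, it suffices to show that for every $H\in \frakF$ the evaluated sequence
\begin{equation*}
    \ldots \longto \Z[\Delta_2^H] \longto \Z[\Delta_1^H] \longto \Z[\Delta_0^H] \stackrel{\varepsilon}{\longto} \Z \longto 0
\end{equation*}
of abelian groups is exact, where I have used the bijection $[G/H,\Delta_n]_G\isom \Delta_n^H$ from~\eqref{eq:G-map-identification} applied componentwise to $\Delta_n = \coprod G/K_0\times\cdots\times G/K_n$. The plan is to produce an explicit contracting homotopy. Because $H\in \frakF$, the tuple $v\defeq (1\cdot H)$ lies in $\Delta_0^H$, and for every $(g_0K_0,\ldots,g_nK_n)\in \Delta_n^H$ the tuple $(H, g_0K_0,\ldots,g_nK_n)$ lies in $\Delta_{n+1}^H$. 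Define $\Z$-linear maps
\begin{equation*}
    s_{-1}(*) = v,\qquad s_n(g_0K_0,\ldots,g_nK_n) = (H, g_0K_0, \ldots, g_nK_n)\quad (n\geq 0).
\end{equation*}

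\textbf{Homotopy identity.} A routine check using the formula $d_n = \sum_{i=0}^n (-1)^i\partial_i$ yields $\partial_0 s_n = \id$ and $\partial_i s_n = s_{n-1}\partial_{i-1}$ for $i\geq 1$, from which
\begin{equation*}
    d_{n+1} s_n + s_{n-1} d_n = \id \qquad (n\geq 0)
\end{equation*}
and similarly $\varepsilon s_{-1} = \id$. Hence $s_*$ is a contracting chain homotopy of the augmented complex $\Z[\Delta_*^H]\to \Z$, so this complex is acyclic. As this holds for every $H\in\frakF$, the sequence~\eqref{eq:standard-resolution} is exact.

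The only real obstacle is bookkeeping: verifying that $s_n$ indeed lands in $\Delta_{n+1}^H$ (which requires $H\in \frakF$, so that $H$ is a legitimate component) and that the simplicial identities $\partial_i s_n = s_{n-1}\partial_{i-1}$ for $i\geq 1$ and $\partial_0 s_n = \id$ translate correctly into the chain-level homotopy identity with the alternating signs. Both points are standard for the bar/cone construction, and no additional idea beyond that is needed.
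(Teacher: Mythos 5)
Your proof is correct and follows essentially the same route as the paper's: both reduce exactness to a pointwise statement, produce the contracting homotopy $h_n(g_0K_0,\ldots,g_nK_n)=(H,g_0K_0,\ldots,g_nK_n)$ (the cone on $H$), and obtain freeness by computing the stabiliser of a simplex as an intersection of conjugates and invoking semi-fullness together with Proposition~\ref{prop:GSet-to-ModOFG}. The paper leaves the simplicial-identity verification of $d_{n+1}s_n+s_{n-1}d_n=\id$ implicit ("known to be given by"), whereas you spell it out; otherwise the two arguments coincide.
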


\begin{proof}
    First observe that $\Z[\?, \Delta_{-1}]_{G} =
    \underline\Z_{\frakF}$.  Thus the
    sequence~\eqref{eq:standard-resolution} is nothing else than the
    associated augmented chain complex $C_{*}(\Delta_{*})$.
    
    We need to show that the sequence~\eqref{eq:standard-resolution}
    evaluated at any object $G/H$ of the orbit category $\OFG$ is an
    exact sequence of abelian groups.  We know already that
    \begin{equation*}
	 \ldots \longto \Z[G/H, \Delta_{2}]_{G}
	 \stackrel{d_{2,H}}{\longto} \Z[G/H, \Delta_{1}]_{G}
	 \stackrel{d_{1,H}}{\longto} \Z[G/H, \Delta_{0}]_{G}
	 \stackrel{\varepsilon_{H}}{\longto} \underline\Z_{\frakF}
	 \longto 0
    \end{equation*}
    is a chain complex of abelian groups.  Thus it remains to show
    that there exists a contracting homotopy $h\: \id \homotop 0$.
    But such a contracting homotopy is known to be given by
    \begin{equation*}
	h_{n}(g_{0}K_{0}, \ldots, g_{n}K_{n}) \defeq  (H, g_{0}K_{0},
	\ldots, g_{n}K_{n})
    \end{equation*}
    for $n\in\N$, $h_{-1}(*) \defeq  (H)$ and $h_{n} \defeq  0$ for $n>-1$.
    
    Given $\sigma \defeq  (g_{0}K_{0}, \ldots,g_{n}K_{n})\in\Delta_{n}$ its
    stabiliser $G_{\sigma}$ is
    \begin{equation*}
        G_{\sigma} = K_{0}^{g_{0}^{-1}} \cap \ldots \cap
	K_{n}^{g_{n}^{-1}}.
    \end{equation*}
    If $\frakF$ is a semi-full family of subgroups of $G$, then
    $G_{\sigma}\in\frakF$ for any~$\sigma\in \Delta_{n}$.  That is
    $\frakF(\Delta_{n})\subset \frakF$.  Hence the $\OFG$-modules
    $\Z[\?, \Delta_{n}]_{G}$ are free by
    Proposition~\ref{prop:GSet-to-ModOFG}.
\end{proof}

\begin{DEF}
    We call the resolution~\eqref{eq:standard-resolution} the
    \emph{standard resolution} of the trivial $\OFG$-module
    $\underline\Z_{\frakF}$.
\end{DEF}

\begin{lemma}
    \label{lem:standard-resolution-lemma1}
    Let $G$ be a group and let $\frakF$ be a semi-full family of
    subgroups of $G$.  If both $G$ and $\frakF$ are countable then the
    standard resolution of the trivial~$\OFG$-module
    $\underline\Z_{\frakF}$ is countably generated.
\end{lemma}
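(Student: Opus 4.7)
The plan is to verify that each term $\Z[\?,\Delta_n]_{G}$ in the standard resolution satisfies the hypotheses of Lemma~\ref{lem:countability-criterion}, and thus is countably generated. Since a resolution is ``countably generated'' when each of its terms is, this will complete the proof.

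First I would observe that under the countability hypotheses the $G$-set $\Delta_n$ is itself countable. Indeed, the set of cosets $\{ gK : g\in G, K\in\frakF\}$ is a quotient of the countable set $G\times\frakF$, hence countable, and therefore the set $\Delta_n$ of $(n+1)$-tuples of such cosets is countable as a countable union of countable sets (in fact as a finite product of countable sets). Consequently the orbit space $\Delta_n/G$ is countable as well.

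Next, from the proof of Proposition~\ref{prop:standard-resolution} we already know that the stabiliser of a tuple $\sigma = (g_0K_0,\ldots,g_nK_n)\in\Delta_n$ equals $K_0^{g_0^{-1}}\cap\cdots\cap K_n^{g_n^{-1}}$, which lies in $\frakF$ because $\frakF$ is semi-full (and closed under conjugation as a family). Thus $\frakF(\Delta_n)\subset\frakF$ for every $n\geq 0$. This means the first hypothesis of Lemma~\ref{lem:countability-criterion} is satisfied, and combined with the countability of $\Delta_n/G$ established above, we conclude that $\Z[\?,\Delta_n]_{G}$ is countably generated for every $n\in\N$.

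Since every module in the standard resolution~\eqref{eq:standard-resolution} is countably generated, the standard resolution itself is countably generated, which is what was to be shown. I do not anticipate any real obstacle here: once the countability of $\Delta_n$ is noted, the lemma reduces entirely to an invocation of Lemma~\ref{lem:countability-criterion}; even the second countability condition of that lemma would apply directly, making this essentially immediate given the earlier machinery.
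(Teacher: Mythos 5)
Your proposal is correct and follows essentially the same route as the paper: note that the countability of $G$ and $\frakF$ makes $\Delta_{n}$, and hence $\Delta_{n}/G$, countable, and then invoke Lemma~\ref{lem:countability-criterion}. The paper's proof is just more terse; your extra verification that $\frakF(\Delta_{n})\subset\frakF$ (condition (1)) is fine but not needed, since condition (2) of that lemma applies directly.
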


\begin{proof}
    If $G$ and $\frakF$ are countable then $\Delta_{n}$ is countable
    and thus $\Delta_{n}/G$ is countable. Now the claim follows from 
    Lemma~\ref{lem:countability-criterion}.
\end{proof}

%
%

\section{Bredon (Co-)Homological Dimensions}

In Section~\ref{sec:geometric-finiteness-conditions} in the previous
chapter we have introduced the Bredon geometric dimension of a
group~$G$ with respect to a family $\frakF$ of subgroups of~$G$.  It
has two closely related algebraic invariants, the Bredon cohomological
and Bredon homological dimension.  They are the obvious
generalisations of the classical (co-)homological dimensions.

\begin{DEF}
    Let $G$ be a group and let $\frakF$ be a family of subgroups
    of~$G$.  Assume that there exists an integer $n\in \N$ such that
    the trivial $\OFG$\-/module~$\underline\Z_{\frakF}$ has a
    projective resolution
    \begin{equation*}
        0\to P_{n} \to \ldots \to P_{1}\to P_{0}\to
	\underline\Z_{\frakF} \to 0
    \end{equation*}
    in $\ModOFG$ of length $n$ but not one of length $n-1$.  We say
    that $G$ has \emph{Bredon cohomological dimension $n$ with respect
    to the family $\frakF$,} which we denote by $\cd_{\frakF} G \defeq  n$.
    If no finite length projective resolution exists, then we say that
    $G$ has \emph{infinite Bredon cohomological dimension with respect
    to $\frakF$,} which we denote by $\cd_{\frakF} G \defeq  \infty$.
\end{DEF}

We abbreviate $\cd_{\frakF} G$ by $\cd G$, $\ucd G$ or 
$\uucd G$ in the case that $\frakF$ is the trivial family, the family 
of finite or the family of virtually cyclic subgroups respectively.

The definition of the Bredon homological dimension follows the same
idea, except that projective $\OFG$-modules are replaced by flat
$\OFG$-modules:

\begin{DEF}
    Let $G$ be a group and let $\frakF$ be a family of subgroups
    of~$G$.  Assume that there exists an integer $n\in \N$ such that
    the trivial $\OFG$\-/module~$\underline\Z_{\frakF}$ has a flat
    resolution
    \begin{equation*}
        0\to Q_{n} \to \ldots \to Q_{1}\to Q_{0}\to
	\underline\Z_{\frakF} \to 0
    \end{equation*}
    in $\ModOFG$ of length $n$ but not one of length $n-1$.  We say
    that $G$ has \emph{Bredon homological dimension $n$ with respect
    to the family $\frakF$,} which we denote by $\hd_{\frakF} G \defeq
    n$.  If no finite length flat resolution exists then we say that
    $G$ has \emph{infinite Bredon homological dimension with respect
    to $\frakF$,} which we denote by $\hd_{\frakF} G \defeq \infty$.
\end{DEF}

Analogous to before we abbreviate $\hd_{\frakF} G$ by $\hd G$, $\uhd 
G$ or $\uuhd G$ in the case that $\frakF$ is the trivial family, the
family 
of finite or the family of virtually cyclic subgroups respectively.

The Bredon (co-)homological dimension is a special case of the
projective and flat dimension of a right $\OFG$-module $M$.  These
dimensions are defined in a similar spirit as the minimal length of a
projective (or respectively flat) resolution of the $\OFG$-module $M$.
We denote the \emph{projective dimension} of $M$ by $\pd_{\frakF} M$
and the \emph{flat dimension} of $M$ by $\fld_{\frakF} M$.  With this
notation, the (co-)homological dimension of a group $G$ is the
projective and flat dimension of the trivial $\OFG$-module
$\underline\Z_{\frakF}$, that is
\begin{equation*}
    \cd_{\frakF} G \defeq \pd_{\frakF} \underline\Z_{\frakF} \qquad
    \text{and} \qquad \hd_{\frakF} G \defeq \fld_{\frakF}
    \underline\Z_{\frakF}.
\end{equation*}

The following two propositions are standard results in homological
algebra in abelian categories; their proof can be found
in~\cite[pp.~93ff.]{weibel-94}, for example.

\begin{proposition}
    \label{prop:pd-via-ext}
    Let $M$ be a right $\OFG$-module.  Then the following statements
    are equivalent:
    \begin{enumerate}
        \item  $\pd_{\frakF} M\leq n$;
    
        \item  $\Ext^{d}_{\frakF}(M,N) = 0$ for every right
	$\OFG$-module $N$ and every $d>n$;
	
        \item  $\Ext^{n+1}_{\frakF}(M,N) = 0$ for every right
	$\OFG$-module $N$;
    
	\item  given any projective resolution of $M$,
	\begin{equation*}
	    \ldots\to P_{2}\to P_{1}\to P_{0} \to M \to 0,
	\end{equation*}
	the kernel of $P_{n}\to P_{n-1}$ is projective.\qed
    \end{enumerate}
\end{proposition}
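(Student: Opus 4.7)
My plan is to prove the proposition via the standard cycle of implications $(1) \Rightarrow (2) \Rightarrow (3) \Rightarrow (4) \Rightarrow (1)$, using mainly a dimension-shift argument together with Proposition~\ref{prop:projectives-Ext-classification}.

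The implication $(1) \Rightarrow (2)$ is essentially by definition: if $\pd_{\frakF} M \leq n$, I pick a projective resolution of $M$ of length at most $n$ and use it to compute $\Ext$; the induced $\mor_{\frakF}$-complex vanishes in degrees greater than $n$, hence so does its cohomology, yielding $\Ext^{d}_{\frakF}(M,N) = 0$ for every right $\OFG$-module $N$ and every $d > n$. The implication $(2) \Rightarrow (3)$ is the trivial specialisation to $d = n+1$.

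The heart of the proof is $(3) \Rightarrow (4)$ via a dimension-shift. Given an arbitrary projective resolution $\cdots \to P_1 \to P_0 \to M \to 0$, I set $K_0 \defeq M$ and $K_{k+1} \defeq \ker(P_k \to P_{k-1})$ for $k \geq 0$ (with the convention $P_{-1} \defeq M$), so that the resolution decomposes into short exact sequences
\begin{equation*}
    0 \to K_{k+1} \to P_k \to K_k \to 0.
\end{equation*}
Applying $\mor_{\frakF}(\?, N)$ to each of these, the associated long exact sequence of $\Ext$, combined with the vanishing $\Ext^{\geq 1}_{\frakF}(P_k, N) = 0$ from Proposition~\ref{prop:projectives-Ext-classification}, yields natural isomorphisms $\Ext^{j+1}_{\frakF}(K_k, N) \isom \Ext^{j}_{\frakF}(K_{k+1}, N)$ valid for all $j \geq 1$. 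Iterating $n$ times gives $\Ext^{n+1}_{\frakF}(M, N) \isom \Ext^{1}_{\frakF}(K_n, N)$. By hypothesis the left hand side vanishes for every $N$, so $\Ext^{1}_{\frakF}(K_n, N) = 0$ for every $N$, and the $\Ext^{1}$-vanishing criterion of Proposition~\ref{prop:projectives-Ext-classification} forces $K_n$ to be projective.

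Finally, $(4) \Rightarrow (1)$ is immediate: once $K_n$ is known to be projective, the truncated sequence $0 \to K_n \to P_{n-1} \to \cdots \to P_0 \to M \to 0$ is itself a projective resolution of $M$ of length at most $n$, so $\pd_{\frakF} M \leq n$. The only delicate step is keeping the syzygy indices consistent across the dimension-shift iteration; the rest is formal bookkeeping with long exact sequences.
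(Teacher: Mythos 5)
Your argument is correct, and it is essentially the argument the paper itself relies on: the paper gives no proof of this proposition, deferring to the standard treatment in Weibel, and your write-up is precisely that standard dimension-shifting proof (the cycle $(1)\Rightarrow(2)\Rightarrow(3)\Rightarrow(4)\Rightarrow(1)$, with $\Ext^{\geq 1}_{\frakF}(P,N)=0$ for projectives supplied by Proposition~\ref{prop:projectives-Ext-classification}).

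One point deserves an explicit remark, and it concerns the statement rather than your reasoning. Your shift gives $\Ext^{n+1}_{\frakF}(M,N)\isom\Ext^{1}_{\frakF}(K_{n},N)$ with $K_{n}=\ker(P_{n-1}\to P_{n-2})$, the $n$-th syzygy (convention $P_{-1}\defeq M$), and it is this module whose projectivity you deduce and then use to truncate to a length-$n$ projective resolution in $(4)\Rightarrow(1)$; that is all consistent and correct. But item~(4) as printed names the kernel of $P_{n}\to P_{n-1}$, which in your indexing is $K_{n+1}$, the $(n+1)$-st syzygy; projectivity of that module characterises $\pd_{\frakF}M\leq n+1$, so for the literal wording the implication $(4)\Rightarrow(1)$ would break down (for instance when $\pd_{\frakF}M=n+1$ exactly). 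In effect you have proved the version of~(4) that appears in Weibel, tacitly correcting an off-by-one in the proposition; the same slip recurs in the paper's follow-up remark, where the module fitting into the truncation $0\to K\to P_{n-1}\to\ldots\to P_{0}\to M\to 0$ is the image of $d_{n}$ (equivalently $\ker d_{n-1}$, your $K_{n}$), not its kernel. Stating this identification explicitly would make your proof match the proposition as the reader will want to read it.
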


There are two immediate applications of this result.  The first is
that if we can show that $\Ext^{d}_{\frakF}(M,N) \neq 0$ for some
right $\OFG$-module $N$, then $\pd_{\frakF} M\geq d$.  The second
application is that, given a projective resolution $P_{*}$ of a right
$\OFG$-module $M$ with $\pd_{\frakF} M \leq n$, we obtain a projective
resolution
\begin{equation*}
    0 \to K \to P_{n-1} \to \ldots \to P_{0} \to M \to 0
\end{equation*}
of length $n$, where $K$ is the kernel of $d_{n}\: P_{n}\to P_{n-1}$
and $K\to P_{n-1}$ is the restriction of $d_{n}$ to $K$. That is, any 
projective resolution of $M$ can be truncated by inserting a suitable 
projective kernel as soon as the length of the resolution exceeds the 
projective dimension of~$M$.

\begin{proposition}
     \label{prop:fld-via-tor}
    Let $M$ be a right $\OFG$-module. Then the following statements
are
    equivalent:
    \begin{enumerate}
        \item  $\fld_{\frakF} M\leq n$;
    
        \item  $\Tor_{d}^{\frakF}(M,N) = 0$ for every left
	$\OFG$-module $N$ and every $d>n$;
	
        \item  $\Tor_{n+1}^{\frakF}(M,N) = 0$ for every left
	$\OFG$-module $N$;
    
	\item  given any flat resolution of $M$,
	\begin{equation*}
	    \ldots\to Q_{2}\to Q_{1}\to Q_{0} \to M \to 0,
	\end{equation*}
	the kernel of $Q_{n}\to Q_{n-1}$ is flat.\qed	
    \end{enumerate}
\end{proposition}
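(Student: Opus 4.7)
The proof follows exactly the pattern established for $\pd_{\frakF}$ in Proposition~\ref{prop:pd-via-ext}, with flat modules and $\Tor$ playing the roles of projective modules and $\Ext$. The plan is to establish the cycle $(1)\Rightarrow(2)\Rightarrow(3)\Rightarrow(4)\Rightarrow(1)$. The implications $(1)\Rightarrow(2)$ and $(2)\Rightarrow(3)$ are almost formal, $(4)\Rightarrow(1)$ is a single truncation, and $(3)\Rightarrow(4)$ is the substantive step carried out by dimension shifting.

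For $(1)\Rightarrow(2)$, I would pick a flat resolution $Q_*$ of $M$ of length at most $n$. Since $\Tor$ may be computed from any $\?\mathbin{\otimes_{\frakF}}N$-acyclic resolution (as remarked after Proposition~\ref{prop:flats-Tor-classification}) and flat modules are acyclic, the complex $Q_*\mathbin{\otimes_{\frakF}} N$ vanishes in degrees above $n$, so $\Tor_d^{\frakF}(M,N)=0$ for $d>n$. The implication $(2)\Rightarrow(3)$ is the trivial specialisation $d=n+1$.

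For $(3)\Rightarrow(4)$, fix an arbitrary flat resolution $\ldots\to Q_2\to Q_1\to Q_0\to M\to 0$ and set
\begin{equation*}
    K_0 \defeq \ker(Q_0\to M), \qquad K_i \defeq \ker(Q_i\to Q_{i-1}) \quad (i\geq 1).
\end{equation*}
The short exact sequences $0\to K_0\to Q_0\to M\to 0$ and $0\to K_i\to Q_i\to K_{i-1}\to 0$ together with the long exact sequence for $\Tor$ yield, for every left $\OFG$-module $N$ and every $d\geq 1$, natural isomorphisms
\begin{equation*}
    \Tor_{d+1}^{\frakF}(K_{i-1},N) \isom \Tor_d^{\frakF}(K_i,N),
\end{equation*}
because $\Tor_d^{\frakF}(Q_i,N)=\Tor_{d+1}^{\frakF}(Q_i,N)=0$ for $d\geq 1$ by flatness of $Q_i$ (Proposition~\ref{prop:flats-Tor-classification}). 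Iterating this dimension shift $n$ times gives
\begin{equation*}
    \Tor_1^{\frakF}(K_n,N) \isom \Tor_{n+1}^{\frakF}(M,N) = 0
\end{equation*}
for all $N$, and a second application of Proposition~\ref{prop:flats-Tor-classification} shows that $K_n=\ker(Q_n\to Q_{n-1})$ is flat.

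Finally, for $(4)\Rightarrow(1)$, take any flat resolution of $M$ (for instance a projective one, which exists since $\ModOFG$ has enough projectives and projectives are flat). By hypothesis the kernel $K$ of $Q_n\to Q_{n-1}$ is flat, so
\begin{equation*}
    0\to K\to Q_{n-1}\to \ldots \to Q_0\to M\to 0
\end{equation*}
is a flat resolution of $M$ of length $n$, whence $\fld_{\frakF}M\leq n$. The only conceptually non-routine step is the dimension-shifting in $(3)\Rightarrow(4)$; the main point to be careful about is that one must iterate with left $\OFG$-modules in the second slot throughout, and that the isomorphism $\Tor_{d+1}(A,N)\isom\Tor_d(B,N)$ needs the middle term $Q_i$ to be flat rather than merely projective, which is exactly what Proposition~\ref{prop:flats-Tor-classification} provides.
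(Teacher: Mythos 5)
Your overall plan --- the cycle $(1)\Rightarrow(2)\Rightarrow(3)\Rightarrow(4)\Rightarrow(1)$ with dimension shifting as the substantive step --- is exactly the standard argument that the paper delegates to Weibel, and your steps $(1)\Rightarrow(2)\Rightarrow(3)$ are fine. However, your syzygy bookkeeping is off by one, and this breaks the cycle. With your definitions $K_0\defeq\ker(Q_0\to M)$ and $K_i\defeq\ker(Q_i\to Q_{i-1})$, the shifting isomorphisms $\Tor_{d+1}^{\frakF}(M,N)\isom\Tor_{d}^{\frakF}(K_0,N)$ and $\Tor_{d+1}^{\frakF}(K_{i-1},N)\isom\Tor_{d}^{\frakF}(K_i,N)$ (valid for $d\geq 1$) iterate to give $\Tor_{n+1}^{\frakF}(M,N)\isom\Tor_{1}^{\frakF}(K_{n-1},N)$, not $\Tor_{1}^{\frakF}(K_n,N)$; in fact $\Tor_{1}^{\frakF}(K_n,N)\isom\Tor_{n+2}^{\frakF}(M,N)$. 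So what assumption (3) directly yields is flatness of $K_{n-1}=\ker(Q_{n-1}\to Q_{n-2})$, the $n$-th syzygy. (Flatness of $K_n$ does also follow, from the short exact sequence $0\to K_n\to Q_n\to K_{n-1}\to 0$ with $Q_n$ and $K_{n-1}$ flat, but it is the weaker fact.)

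The real damage is in $(4)\Rightarrow(1)$: the module $K=\ker(Q_n\to Q_{n-1})$ is a submodule of $Q_n$, not of $Q_{n-1}$, and the restriction of $Q_n\to Q_{n-1}$ to its kernel is the zero map, so your displayed sequence $0\to K\to Q_{n-1}\to\ldots\to Q_0\to M\to 0$ is not a resolution at all. The only exact truncation that flatness of $\ker(Q_n\to Q_{n-1})$ provides is $0\to K\to Q_n\to Q_{n-1}\to\ldots\to Q_0\to M\to 0$, which has length $n+1$ and gives merely $\fld_{\frakF}M\leq n+1$; indeed, ``$\ker(Q_n\to Q_{n-1})$ is flat for every flat resolution'' is equivalent to $\fld_{\frakF}M\leq n+1$, so no argument can close the cycle from the literal reading of (4). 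The fix is to run the cycle through the $n$-th syzygy: deduce from (3) that $K_{n-1}=\ker(Q_{n-1}\to Q_{n-2})$ (equivalently the image of $Q_n\to Q_{n-1}$) is flat, and truncate to $0\to K_{n-1}\to Q_{n-1}\to\ldots\to Q_0\to M\to 0$, which is exact of length $n$ (for $n=1$ the relevant kernel is $\ker(Q_0\to M)$, for $n=0$ it is $M$ itself). To be fair, this indexing slip is already present in the paper's own formulation of item (4) and in the remark following Proposition~\ref{prop:pd-via-ext} (where the ``restriction of $d_n$ to $\ker d_n$'' would likewise be zero); the paper offers no proof beyond a citation to Weibel, whose formulation uses precisely the kernel of $Q_{n-1}\to Q_{n-2}$. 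Your proof needs the same correction to be valid.
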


Of course this proposition has two analogous immediate applications,
just as the previous proposition had.  Firstly, a non-trivial
$\Tor_{d}^{\frakF}(M,N)$ gives rise to a lower bound for the flat
dimension of $M$.  Secondly, any flat resolution of~$M$ can be
truncated by inserting a suitable flat kernel as soon as the length
of the
resolution exceeds the flat dimension of $M$.

%
%

\section{Cohomological vs.~Homological vs.~Geometric Dimension}
\label{sec:cd-vs-hd-vs-gd}

In this section, we will compare the three Bredon dimensions we have
introduced in the previous section for a fixed family $\frakF$ of
subgroups of $G$.  The first result is just a direct consequence of
the
fact that projective Bredon modules are flat.

\begin{lemma}
    \label{lem:hd-vs-cd}
    For any family $\frakF$ of subgroups of a group $G$ we have
    \begin{equation*}
        \hd_{\frakF} G \leq \cd_{\frakF} G.\tag*{\qed}
    \end{equation*}
\end{lemma}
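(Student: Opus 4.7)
The plan is to exploit the fact, already established in the excerpt, that every projective $\OFG$-module is flat. This turns the inequality into an almost immediate consequence of the definitions, once the edge case $\cd_{\frakF} G = \infty$ is dispatched.

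First I would observe that if $\cd_{\frakF} G = \infty$ then the inequality is vacuous, so I may assume $\cd_{\frakF} G = n < \infty$. By definition of Bredon cohomological dimension, there is then a projective resolution
\begin{equation*}
    0 \to P_{n} \to \cdots \to P_{1} \to P_{0} \to \underline{\Z}_{\frakF} \to 0
\end{equation*}
of length $n$ in $\ModOFG$.

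Next I would invoke the proposition in the section on flat Bredon modules stating that projective $\OFG$-modules are flat. Applying this componentwise, each $P_i$ is a flat right $\OFG$-module, so the displayed sequence is simultaneously a flat resolution of $\underline{\Z}_{\frakF}$ of length $n$. By the definition of Bredon homological dimension, this forces $\hd_{\frakF} G \leq n = \cd_{\frakF} G$.

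There is no real obstacle here; the only minor point to be careful about is the convention for the $\infty$ case, which I handle at the outset, and the fact that the defining inequality for $\hd_{\frakF} G$ is phrased as a minimum over flat resolutions so that exhibiting any flat resolution of length $n$ suffices to bound it above by $n$.
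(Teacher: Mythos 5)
Your argument is correct and is exactly the route the paper takes: the lemma is stated there as an immediate consequence of the proposition that projective $\OFG$-modules are flat, so any projective resolution of $\underline\Z_{\frakF}$ of minimal length is already a flat resolution of the same length. Your explicit handling of the $\cd_{\frakF} G=\infty$ case is a harmless elaboration of what the paper leaves implicit.
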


The next result has been proven by Nucinkis
in~\cite[p.~42]{nucinkis-04} for the family of finite subgroups of
$G$.  The proof also works without modification for more general
families of subgroups.

\begin{theorem}
    \label{thrm:hd-vs-cd}
    Let $G$ be a countable group and let $\frakF$ be a full family of
    subgroups of $G$.  If $\frakF$ is countable then
    \begin{equation*}
	\cd_\frakF G \leq \hd_\frakF G +1.
    \end{equation*}
\end{theorem}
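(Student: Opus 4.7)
The plan is to bridge from a finite-length flat resolution to a projective resolution by proving a Bredon analogue of Jensen's theorem: under the countability hypotheses, every countably generated flat $\OFG$-module has projective dimension at most one. Set $n \defeq \hd_\frakF G$; if $n=\infty$ there is nothing to prove, so assume $n$ is finite. By Proposition~\ref{prop:standard-resolution} the standard resolution provides a free resolution
\begin{equation*}
    \cdots \to F_n \to F_{n-1} \to \cdots \to F_0 \to \underline\Z_\frakF \to 0,
\end{equation*}
and since $G$ and $\frakF$ are countable, Lemma~\ref{lem:standard-resolution-lemma1} ensures that each $F_i$ is countably generated. Let $K$ be the image of $F_n$ in $F_{n-1}$. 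Proposition~\ref{prop:fld-via-tor} gives that $K$ is flat (because $\fld_\frakF \underline\Z_\frakF = n$), and as a quotient of the countably generated module $F_n$, it is countably generated as well.

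It suffices to produce a projective resolution of $K$ of length one: splicing such a resolution with the truncation $0\to K \to F_{n-1} \to \cdots \to F_0 \to \underline\Z_\frakF \to 0$ then yields a projective resolution of $\underline\Z_\frakF$ of length $n+1$, giving $\cd_\frakF G \leq n+1$. Since $\frakF$ is a full family, Lazard's theorem in the Bredon setting (Proposition~\ref{prop:lazard}) writes $K$ as a directed colimit $K = \varinjlim_{\lambda \in \Lambda} L_\lambda$ of finitely generated free $\OFG$-modules. Using the countable generation of $K$, one extracts a countable directed subsystem $(L_i)_{i\in\N}$ with transition maps $\phi_i \colon L_i \to L_{i+1}$ such that $K = \varinjlim_{i\in\N} L_i$: lift each of the countably many generators of $K$ through some $L_{\lambda_i}$ and close the chosen indices under binary upper bounds in $\Lambda$ to obtain a countable directed subsystem whose colimit still contains every generator, hence equals $K$.

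With this countable directed presentation in hand, invoke the Milnor (telescope) short exact sequence
\begin{equation*}
    0 \to \bigoplus_{i\in\N} L_i \xrightarrow{\,\mathrm{id} - \sigma\,} \bigoplus_{i\in\N} L_i \to K \to 0,
\end{equation*}
where $\sigma$ is the shift induced by the transition maps $\phi_i$. Exactness of this sequence holds componentwise in $\ModOFG$: the quotient is the colimit by construction, and injectivity of $\mathrm{id}-\sigma$ follows from the standard finite-support argument (any element in the kernel must satisfy $x_i = \phi_{i-1}(x_{i-1})$, which forces all $x_i$ to vanish). The two outer terms are projective as direct sums of finitely generated free $\OFG$-modules, so $\pd_\frakF K \leq 1$.

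The main obstacle is the extraction of the countable directed subsystem; the rest is routine homological algebra transplanted to the cocomplete abelian category $\ModOFG$. This is where all three hypotheses are used: \emph{fullness} of $\frakF$ unlocks Lazard's theorem, while \emph{countability} of $G$ and $\frakF$ guarantees that the standard resolution --- and hence $K$ --- is countably generated, so that the Lazard system can be thinned to a countable one suitable for the telescope argument.
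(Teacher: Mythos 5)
Your skeleton is the same as the paper's: take the standard resolution, which is countably generated by Lemma~\ref{lem:standard-resolution-lemma1}, pass to the flat kernel $K = \im(F_n\to F_{n-1})$, prove $\pd_{\frakF}K\leq 1$, and splice; the flatness of $K$, the splicing, and the telescope sequence are all fine. The genuine gap is exactly at the step you yourself flag as the main obstacle: thinning the Lazard system $K=\varinjlim_{\lambda\in\Lambda}L_\lambda$ to a countable directed subsystem using only countable \emph{generation} of $K$. Choosing indices so that every generator of $K$ is hit and closing under binary upper bounds makes the comparison map $\varinjlim_{i\in\N}L_i\to K$ surjective, but not injective: two elements that become identified in $K$ do so at some stage of $\Lambda$ which your countable subsystem may never contain, and since the kernels $\ker(L_i\to K)$ need not be countably generated you cannot schedule witnesses for all such identifications in countably many steps. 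The failure is not merely an unverified detail. If countable generation sufficed, then every finitely generated flat module would be a countable directed colimit of finitely generated free modules and hence, by your own telescope argument, would have projective dimension at most one; this is false in general module categories (finitely generated flat modules need not be projective, and over suitable rings, e.g.\ infinite products of fields as in Osofsky's global dimension computations, there are cyclic flat modules of projective dimension at least two). Since $\ModOFG$ is a module category over a category algebra, nothing in the Bredon setting rules this out, so "countably generated" cannot carry the argument.

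The correct hypothesis is countable \emph{presentation}, and your $K$ has it: the standard resolution continues one step to the left, giving an exact sequence $F_{n+1}\to F_{n}\to K\to 0$ with $F_{n+1}$ and $F_{n}$ countably generated. For a countably presented flat module one can interleave a countable presentation with the factorization property in Proposition~\ref{prop:lazard}(2) to build an honest countable chain of finitely generated free modules whose colimit really is $K$, after which your telescope applies verbatim. This statement --- over a full family, every countably presented flat module has projective dimension at most one --- is precisely Proposition~\ref{prop:nucinkis-04-prop3.5}, which is what the paper cites at this point. So the repair is to upgrade "countably generated" to "countably presented" and either invoke that proposition or reprove it along the lines just indicated; with that change your argument coincides with the paper's proof.
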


To prove this theorem, we need a result by Nucinkis that gives an 
upper bound on the cohomological dimension of countably presented 
flat modules.

\begin{proposition}\cite[Proposition~3.5]{nucinkis-04}
    \label{prop:nucinkis-04-prop3.5}
    Let $\frakF$ be a full family of subgroups.  Then every countably
    presented flat right $\OFG$-module $M$ has $\pd_{\frakF} M \leq
    1$.\qed
\end{proposition}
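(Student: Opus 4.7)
The plan is to construct an explicit length-one free resolution of $M$ via a mapping telescope of a countable chain of finitely generated free $\OFG$-modules. The two ingredients are Lazard's characterisation of flatness (Proposition~\ref{prop:lazard}) together with the countable presentability of $M$.

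First I would reduce to the case where $M = \varinjlim_{n\in\N} F_n$ is the colimit of a \emph{countable} chain
\[
F_0 \longto F_1 \longto F_2 \longto \cdots
\]
of finitely generated free $\OFG$-modules. To do so, fix a countable presentation $0 \to K \to F \to M \to 0$ with $F$ countably generated free and $K$ countably generated. Writing $F$ as the union of a countable ascending chain of finitely generated free submodules $F^0_n$ and choosing (after reindexing) finitely generated submodules $K_n \subseteq K\cap F^0_n$ which exhaust $K$, the finitely presented quotients $P_n \defeq F^0_n/K_n$ form a directed system with $\varinjlim P_n = M$. Now Proposition~\ref{prop:lazard}(2) applied to each composite $P_n \to M$ yields a factorisation $P_n \to F_n \to M$ through a finitely generated free $\OFG$-module $F_n$. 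A standard inductive back-and-forth argument, using the factorisation property once more, produces compatible transition maps $\varphi_n\: F_n \to F_{n+1}$ making the diagram commute and such that $\varinjlim F_n = M$.

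Second I would write down the telescope sequence
\[
0 \longto \coprod_{n\geq 0} F_n \stackrel{1 - \sigma}{\longto} \coprod_{n\geq 0} F_n \longto M \longto 0,
\]
where $\sigma$ is induced by the maps $\varphi_n$. Injectivity of $1-\sigma$ follows by induction on the highest non-vanishing component of an element of the coproduct. Exactness at the middle term is immediate because the relations $x = \varphi_n(x)$ for $x\in F_n$ precisely compute the colimit $\varinjlim F_n = M$. Finally, since an arbitrary coproduct of free $\OFG$-modules is again free (by~\eqref{eq:general-free-module}), the displayed sequence is a free resolution of~$M$ of length one, giving $\pd_{\frakF} M \leq 1$.

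The main obstacle is the first step: arranging the factorisations $P_n \to F_n \to M$ into a coherent countable chain rather than merely an unordered family. This is where the fullness of $\frakF$ enters critically, because it is needed both for Proposition~\ref{prop:lazard} to apply and to ensure that the finitely generated submodules $F^0_n$ and $K_n$ used in approximating the presentation live in $\ModOFG$ in a well-behaved way. Once the countable chain is in place, the telescope argument is a formal categorical manipulation in any cocomplete abelian category with free objects closed under countable coproducts.
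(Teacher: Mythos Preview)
The paper does not supply its own proof of this proposition; it is quoted from \cite[Proposition~3.5]{nucinkis-04} and stated without argument. Your approach is correct and is the standard one, matching what Nucinkis does: use Lazard's criterion to realise $M$ as the colimit of a countable chain of finitely generated free modules, then apply the telescope exact sequence. The telescope step is in fact cited separately in this thesis as Lemma~3.4 of \cite{nucinkis-04} (in the proof of Theorem~\ref{thrm:direct-union-result}), so your two-step outline mirrors the original structure.

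Two minor remarks. First, your justification for injectivity of $1-\sigma$ via ``induction on the highest non-vanishing component'' is slightly off: looking at the top component only yields $\varphi_N(x_N)=0$, not $x_N=0$. The correct induction runs upward from degree~$0$: the degree-$0$ component of $(1-\sigma)(x)$ is $x_0$, forcing $x_0=0$, whence $x_1=\varphi_0(x_0)=0$, and so on. Second, the fullness of $\frakF$ is required solely to invoke Proposition~\ref{prop:lazard}; the submodules $F^0_n$ and $K_n$ live in $\ModOFG$ automatically because it is an abelian category, so that portion of your final paragraph is not quite the right explanation.
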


\begin{proof}[Proof of Theorem~\ref{thrm:hd-vs-cd}]
    In order to avoid triviality, we assume that $\hd_{\frakF} G$ is
    finite.  Consider the standard resolution
    \begin{equation*}
	\ldots\to F_{2} \to F_{1} \to F_{0} \to \underline\Z_{\frakF} 
	\to 0
    \end{equation*}
    of the trivial $\OFG$-module $\underline\Z_{\frakF}$ as defined in
    section~\ref{sec:standard-resolution}.  Since $G$ and $\frakF$ are
    countable, this resolution is countably generated by
    Lemma~\ref{lem:standard-resolution-lemma1}.
    
    Let $K$ be the $n$-th kernel of the above resolution, which is
    flat.  It follows that $K$ is countably presented.  Since the
    family $\frakF$ of subgroups is assumed to be full we can apply
    Proposition~\ref{prop:nucinkis-04-prop3.5}, which tells us that
    $\pd_{\frakF} K \leq 1$.  From this it follows that there exists a
    projective resolution of the trivial $\OFG$-module
    $\underline\Z_{\frakF}$ of length $n+1$ and hence $\cd_{\frakF}
    G\leq n+1$.
\end{proof}

\begin{remark}
    \label{rem:F-countable}
    Let $G$ be a countable group. Then $G$ has only countably many
    finitely generated subgroups.
\end{remark}

Hence in the case $\frakF = \Ffin(G)$, Theorem~\ref{thrm:hd-vs-cd} and
Lemma~\ref{lem:hd-vs-cd} combine to recover the statement of Theorem
4.1 in~\cite{nucinkis-04}.  Moreover, since virtual cyclic groups are
finitely generated we get that Theorem 4.1 in~\cite{nucinkis-04} also
holds for the family of virtual cyclic subgroups of $G$, that is to
say we have the following result.

\begin{theorem}
    \label{thrm:hd-vs-cd2}
    Let $G$ be a countable group.  Then
    \begin{equation*}
	\uucd G \leq \uuhd G + 1.\tag*{\qed}
    \end{equation*}
\end{theorem}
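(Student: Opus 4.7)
The plan is to deduce Theorem~\ref{thrm:hd-vs-cd2} as a direct specialisation of Theorem~\ref{thrm:hd-vs-cd} to the family $\frakF = \Fvc(G)$. To do so, I need to verify that the hypotheses of Theorem~\ref{thrm:hd-vs-cd} are met, namely that $\Fvc(G)$ is a full family of subgroups of $G$ and that $\Fvc(G)$ is countable whenever $G$ is countable.

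First I would check fullness: if $V\in\Fvc(G)$ contains a cyclic subgroup $C$ of finite index and $H\leq V$, then $H\cap C$ has finite index in $H$ and is cyclic (being a subgroup of a cyclic group), so $H\in\Fvc(G)$. Hence $\Fvc(G)$ is closed under taking subgroups and is therefore a full family. Next I would verify countability: every virtually cyclic group is finitely generated, so $\Fvc(G)$ is a subset of the collection of finitely generated subgroups of $G$. By Remark~\ref{rem:F-countable}, a countable group has only countably many finitely generated subgroups, so $\Fvc(G)$ is countable.

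With both hypotheses verified, Theorem~\ref{thrm:hd-vs-cd} applies to give $\cd_{\Fvc(G)} G \leq \hd_{\Fvc(G)} G + 1$, which is precisely the assertion $\uucd G \leq \uuhd G + 1$ after unpacking notation. There is no genuine obstacle here: all the work has already been done in the proof of Theorem~\ref{thrm:hd-vs-cd}; the only content of the present theorem is the two elementary observations (fullness and countability of $\Fvc(G)$) that allow the earlier result to be specialised. In that sense the proof is essentially a one-line deduction once the remark about finitely generated subgroups of countable groups is invoked.
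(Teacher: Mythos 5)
Your proof is correct and follows essentially the same route as the paper: the paper also deduces this as a direct specialisation of Theorem~\ref{thrm:hd-vs-cd}, noting that virtually cyclic groups are finitely generated so that Remark~\ref{rem:F-countable} gives countability of $\Fvc(G)$. The only cosmetic difference is that you spell out the (true, and easy) fact that $\Fvc(G)$ is a full family, whereas the paper leaves that implicit.
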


Next we compare the Bredon cohomological dimension to the Bredon
geometric dimension of a group in the case that $\frakF$ is a full
family of subgroups of $G$.  In
Section~\ref{sec:resolutions-from-classifying-space} we have seen that
a model for $E_{\frakF}G$ gives rise to a projective resolution of the
trivial $\OFG$-module $\underline\Z_{\frakF}$.  If the model of
$E_{\frakF}G$ has finite dimension $n$, then the projective
resolution of
$\underline\Z_{\frakF}$ has length $n$.  Thus we have the following
result.

\begin{lemma}
    For any semi-full family $\frakF$ of subgroups of $G$ we have
    \begin{equation*}
        \cd_{\frakF} G \leq \gd_{\frakF} G.\tag*{\qed}
    \end{equation*}
\end{lemma}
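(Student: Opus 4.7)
The plan is to produce the desired inequality directly from the cellular chain complex construction of Section~\ref{sec:resolutions-from-classifying-space}. If $\gd_{\frakF} G = \infty$, there is nothing to prove, so I would assume $\gd_{\frakF} G = n$ is finite and fix an $n$\=/dimensional model $X$ for $E_{\frakF} G$.

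Next I would invoke Proposition~\ref{prop:free-resolution-from-model}, which produces a free resolution
\begin{equation*}
    \ldots \longto \underline{C}_{2}(X) \stackrel{d_{2}}{\longto} \underline{C}_{1}(X) \stackrel{d_{1}}{\longto} \underline{C}_{0}(X) \stackrel{\varepsilon}{\longto} \underline\Z_{\frakF} \longto 0
\end{equation*}
of the trivial $\OFG$\=/module $\underline\Z_{\frakF}$. The crucial observation is that the $G$\=/set $\Delta_{k}$ of equivariant $k$\=/cells is empty for $k>n$ (because $X=X_{n}$), so $\underline{C}_{k}(X) = \Z[\?, \Delta_{k}]_{G}=0$ for all $k>n$. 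Hence the free resolution above has length at most $n$.

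The existence of such a projective resolution of length $n$ means $\pd_{\frakF} \underline\Z_{\frakF} \leq n$, i.e.\ $\cd_{\frakF} G \leq n = \gd_{\frakF} G$, which is what was to be shown. There is no real obstacle here; the only subtle point worth mentioning is the need for $\frakF$ to be semi-full so that Proposition~\ref{prop:free-resolution-from-model} applies (classifying spaces are defined only for semi\=/full families in Definition~\ref{def:classifying-space}, and the cells $\Delta_{k}$ must have stabilisers in $\frakF$ for $\underline{C}_{k}(X)$ to be a free $\OFG$\=/module via Proposition~\ref{prop:GSet-to-ModOFG}).
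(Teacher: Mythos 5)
Your proof is correct and follows exactly the same route as the paper: the paper's remark immediately before the lemma points to the free resolution of $\underline\Z_{\frakF}$ built from an $n$\=/dimensional model for $E_{\frakF}G$ in Section~\ref{sec:resolutions-from-classifying-space}, which is precisely Proposition~\ref{prop:free-resolution-from-model}, and concludes $\cd_{\frakF}G\leq n$ as you do. Your observation that $\underline{C}_{k}(X)=0$ for $k>n$ and your remark on why semi-fullness is needed are accurate and spell out the details the paper leaves implicit.
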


As a consequence of the first part of
Proposition~\ref{prop:luck-meintrup}
we get the following statement about the geometric and cohomological
Bredon dimension of a group $G$.

\begin{proposition}
    \label{prop:luck-meintrup-alt}
    Let $\frakF$ be a semi-full family of subgroups of $G$.  If
    $\cd_{\frakF} G \geq 3$ or $\gd_{\frakF} G\geq 4$ then
    $\cd_{\frakF} G = \gd_{\frakF} G$.
\end{proposition}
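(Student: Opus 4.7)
The plan is to combine the trivial lower bound $\cd_{\frakF} G \leq \gd_{\frakF} G$ from the preceding lemma with a single application of Proposition~\ref{prop:luck-meintrup}(1), which translates between geometric dimension and projective dimension of $\underline{\Z}_{\frakF}$ in the range $n \geq 3$. All the content is bookkeeping built on that theorem.

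First I would dispose of the case $\cd_{\frakF} G = n \geq 3$. By definition of $\cd_{\frakF} G$ there is a projective resolution of the trivial $\OFG$-module $\underline{\Z}_{\frakF}$ of length $n$ in $\ModOFG$, and since $n \geq 3$ the forward direction of Proposition~\ref{prop:luck-meintrup}(1) produces an $n$-dimensional model for $E_{\frakF}G$. Hence $\gd_{\frakF} G \leq n = \cd_{\frakF} G$, and together with the lemma this gives $\cd_{\frakF} G = \gd_{\frakF} G$.

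Next I would reduce the case $\gd_{\frakF} G \geq 4$ to the previous one. Suppose, for contradiction, that $\cd_{\frakF} G \leq 2$; then any projective resolution of $\underline{\Z}_{\frakF}$ realising this dimension can be padded by zero terms (the zero module being projective) to yield a projective resolution of length $3$. Applying Proposition~\ref{prop:luck-meintrup}(1) with $n = 3$ would then yield a $3$-dimensional model for $E_{\frakF}G$, contradicting $\gd_{\frakF} G \geq 4$. Therefore $\cd_{\frakF} G \geq 3$, and the first case already closes the argument.

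There is really no obstacle here beyond a correct reading of Proposition~\ref{prop:luck-meintrup}: the phrase \emph{projective resolution of length $n$} must be interpreted as \emph{length at most $n$}, so that padding a short resolution with zeros is legitimate. Once that convention is granted, the only content of the proposition is bootstrapping the Lück--Meintrup equivalence past the low-dimensional exceptional range, which is precisely what the two cases above accomplish.
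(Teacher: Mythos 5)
Your proof is correct and follows essentially the same route as the paper's: apply Proposition~\ref{prop:luck-meintrup}(1) together with the trivial inequality $\cd_{\frakF} G \leq \gd_{\frakF} G$. The only cosmetic difference is in the second case, where the paper argues directly that $\gd_{\frakF} G \geq 4$ forces $\cd_{\frakF} G \geq \gd_{\frakF} G$ (no projective resolution of length $\gd_{\frakF} G - 1$ can exist), whereas you establish $\cd_{\frakF} G \geq 3$ by contradiction and then fold back into the first case; both hinge on the same application of the L\"uck--Meintrup equivalence, and your explicit remark about reading ``length $n$'' as ``length at most $n$'' is a fair and appropriate clarification of the convention.
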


\begin{proof}
    If $\cd_{\frakF}\geq 3$ then there exists a model $E_{\frakF}G$ of
    dimension $\cd_{\frakF} G$ by
    Proposition~\ref{prop:luck-meintrup}, that is $\gd_{\frakF} G\leq
    \cd_{\frakF} G$ and thus equality holds by the previous Lemma.  If
    $\gd_{\frakF} G\geq 4$ then there exists no projective resolution
    of length $\gd_{\frakF} G - 1$ by
    Proposition~\ref{prop:luck-meintrup} and therefore $\cd_{\frakF} G
    \geq \gd_{\frakF} G$.  Again equality holds by the previous lemma.
\end{proof}

In~\cite{eilenberg-57} it has been shown that $\cd G = \gd G$ whenever
$\cd G\geq 3$.  In the same paper it has been asked whether $\cd G =
\gd G$ in general.  The statement that this is true is known as the
Eilenberg--Ganea Conjecture.  Groups with $\cd G\leq 1$ cannot give
counter examples to this conjecture since~$\cd G=0$ implies that $G$
is trivial and $\cd G=1$ implies that $G$ is free by a famous work of
Stallings~\cite{stallings-68a} and Swan~\cite{swan-69}.  The trivial
group has geometric dimension~$0$ and since free groups can act freely
on a tree it follows that $\gd G=1$ for free groups.  Therefore a
possible counter example to the Eilenberg--Ganea Conjecture needs to
be a torsion free group $G$ with~$\cd G=2$ and $\gd G=3$.  Until the
present day, neither such counter example has been found nor has the
conjecture been proven.

The conjecture generalises to the Bredon setting as follows.

\begin{EGC}[for Bredon Cohomology]
    Let $\frakF$ be a semi-full of subgroups of a group $G$.  Then
    $\cd_{\frakF} G = \gd_{\frakF} G$.
\end{EGC}

Let $G$ be a group and consider $\frakF=\Ffin(G)$.  We see in the next
section that the implication $\ugd G=0 \Rightarrow \ucd G=0$ is
actually an equivalence.  Therefore $\ugd G=1$ implies $\ucd G=1$.  On
the other hand it is known that~$\ucd G=1$ implies that the rational
cohomological dimension $\cd_{\Q} G=1$, see for
example~\cite[p.~493]{brady-01}, which in turn implies that $\ugd G=1$
by a result by Dunwoody~\cite{dunwoody-79}.  Altogether, this and
Proposition~\ref{prop:luck-meintrup-alt} imply that $\ucd G = \ugd G$
for all groups $G$ with $\ucd G\neq 2$.  Thus a possible counter
example for the above conjecture for the family of finite subgroups
must satisfy $\ucd G=2$ and $\ugd G=3$.  Brady, Leary and Nucinkis
have shown in~\cite{brady-01} that there exist certain right-angled
Coxeter groups which have precisely this property.  That is, the
Eilenberg--Ganea Conjecture is false for the family of finite
subgroups.  This also implies that the statement of
Proposition~\ref{prop:luck-meintrup-alt} is the best possible if one
does not impose any further restriction on the family~$\frakF$.

A natural question question is to ask whether the Eilenberg--Ganea
Conjecture is true for the family $\frakF= \Fvc(G)$ of virtually
cyclic subgroups.  It is unknown, whether in $\uucd G=1$ is equivalent
to $\uugd G=1$ (we will show in the end of this thesis that this is
true for countable, torsion-free soluble groups, see
Theorem~\ref{thrm:classification}).  Therefore possible counter
examples to the conjecture must fall -- similar
to~\cite{eilenberg-57} 
-- into one of the following three 
cases.
\begin{enumerate}
    \item  $\uucd G=1$ and $\uugd G=2$;

    \item  $\uucd G=1$ and $\uugd G=3$;

    \item  $\uucd G=2$ and $\uugd G=3$;
\end{enumerate}
There is not much known about groups $G$ with $\uugd G=2$ or $\uugd
G=3$.  Juan-Pineda and Leary have shown in~\cite{juan-pineda-06} that
for Gromov-hyperbolic groups~$\ugd G\leq 2$ implies that $\uugd G=2$
provided that $G$ is not virtually cyclic.  Lück and Weiermann have
shown that $\vcd G=2$ implies $\uugd G=3$ for virtually polycyclic
groups.  In the next chapter we will show that the Eilenberg--Ganea
Conjecture for the family $\Fvc$ holds for these groups.  Moreover, we
will show in Chapter~\ref{ch:geometric-interlude} that the soluble
Baumslag--Solitar groups $BS(1,m)$ staisfy the Eilenberg--Ganea
Conjecture for the family $\Fvc$.

%
%

\section{Groups of Bredon Dimension Zero}

\begin{proposition}
    \label{prop:gdG=0}
    Let $G$ be a group and $\frakF$ a semi-full family of subgroups of
    $G$.  Then $\gd_{\frakF} G= 0$ if and only if $G\in \frakF$.
\end{proposition}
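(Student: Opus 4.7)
The plan is to prove both implications directly from the characterisation in Proposition~\ref{prop:classifying-space-alt}, using that a $0$\=/dimensional $G$-CW\-/complex is just a disjoint union of homogeneous $G$-sets $G/H_i$ with $H_i\in\frakF$, and that a contractible discrete space is a single point.

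For the ``if'' direction, assume $G\in\frakF$ and consider $X\defeq G/G$ viewed as a $0$\=/dimensional $G$-CW\-/complex. Then $\frakF(X)=\{G\}\subset\frakF$, and for every $H\in\frakF$ the fixed set $X^H=\{*\}$ is contractible, so $X$ is a model for $E_{\frakF}G$ by Proposition~\ref{prop:classifying-space-alt}. Hence $\gd_{\frakF}G=0$.

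For the ``only if'' direction, suppose $\gd_{\frakF}G=0$, so that there exists a $0$\=/dimensional model $X$ for $E_{\frakF}G$. Write $X=\coprod_{i\in I}G/H_i$ with $H_i\in\frakF$. Since $\frakF$ is non-empty and every $X^H$ for $H\in\frakF$ must be contractible, i.e.\ a single point of the discrete space $X$, we have $I\neq\emptyset$. I would then argue in two steps. Step one: $|I|=1$. Suppose for contradiction that there exist distinct indices $i,j\in I$. Semi-fullness gives $K\defeq H_i\cap H_j\in\frakF$, and the cosets $1\cdot H_i$ and $1\cdot H_j$ lie in $X^K$, yielding $|X^K|\geq 2$, contradicting that $X^K$ is a single point. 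Hence $X=G/H$ for a single $H\in\frakF$.

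Step two: $H=G$. Given any $g\in G$, conjugation-closedness gives $gHg^{-1}\in\frakF$ and semi-fullness then yields $K\defeq H\cap gHg^{-1}\in\frakF$. By construction $K\leq H$ and $K\leq gHg^{-1}$, so both cosets $H$ and $gH$ lie in $X^K=(G/H)^K$; since $|X^K|=1$, we must have $gH=H$, i.e.\ $g\in H$. As $g\in G$ was arbitrary, $H=G$, and in particular $G\in\frakF$. The main obstacle is really just identifying the right auxiliary element of $\frakF$ (the intersection $H\cap gHg^{-1}$) to exploit semi-fullness; once that is in place the argument is routine.
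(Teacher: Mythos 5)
Your proof is correct, and the ``if'' direction is the same as the paper's (a one\-/point space $G/G$ is a model as soon as $G\in\frakF$). In the ``only if'' direction your route differs in how you force the model to be trivial: the paper simply asserts that a $0$\-/dimensional model must be the singleton $\{*\}$ because a point is the only $0$\-/dimensional contractible $G$\-/CW\-/complex (so it implicitly uses contractibility of the model itself), and then reads off $G\in\frakF$ from $\{*\}^{G}\neq\emptyset$; you instead decompose the $0$\-/dimensional model as a discrete union of orbits $\coprod_{i}G/H_{i}$ and use only the contractibility of the fixed sets $X^{K}$ for $K\in\frakF$, invoking semi\-/fullness to produce $K=H_{i}\cap H_{j}$ ruling out a second orbit, and conjugation\-/closedness plus semi\-/fullness to produce $K=H\cap gHg^{-1}$ forcing $gH=H$ and hence $H=G$. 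What your version buys is that it makes explicit exactly where the hypotheses on $\frakF$ enter and never needs the model to be contractible as a space (which is automatic when $\{1\}\in\frakF$ but is not part of the characterisation in Proposition~\ref{prop:classifying-space-alt} for a general semi\-/full family), at the cost of being longer; the paper's version is shorter but leaves that reduction to a one\-/line assertion.
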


\begin{proof}
    Again the proof is a standard argument.  If $G\in \frakF$ then any
    singleton space $\{*\}$ is a model for $E_{\frakF}G$ and thus
    $\gd_{\frakF} G = 0$.  On the other hand, if $\gd_{\frakF} G=0$,
    then $G$ has a singleton space $\{*\}$ as a model for
    $E_{\frakF}G$ as this is the only $0$-dimensional contractible
    $G$-CW-complex which exists.  Clearly $\{*\}^{G} \neq \emptyset$
    and thus $G\in \frakF$.
\end{proof}

\begin{proposition}
    \label{prop:cdG=0}
    Let $G$ be a group and $\frakF$ a family of subgroups of $G$. 
    If $G\in \frakF$, then the trivial $\OFG$-module 
    $\underline\Z_{\frakF}$ is free and in particular $\cd_{\frakF} 
    G= 0$. If the family $\frakF$ is semi-full, then $\cd_{\frakF} 
    G=0$ implies that $G\in \frakF$.
\end{proposition}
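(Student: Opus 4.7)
First, the forward implication is direct: when $G \in \frakF$, the set $[G/H, G/G]_G$ is a singleton for every $H \in \frakF$, and every morphism of $\OFG$ acts trivially on the resulting free abelian group~$\Z$. Hence $\Z[\?, G/G]_G = \underline\Z_\frakF$ as right $\OFG$-modules, which identifies $\underline\Z_\frakF$ as a free $\OFG$-module by Proposition~\ref{prop:GSet-to-ModOFG}. It is therefore its own projective resolution, so $\cd_\frakF G = 0$.

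For the converse, suppose $\frakF$ is semi-full and $\cd_\frakF G = 0$, so that $\underline\Z_\frakF$ is projective. I would fix the concrete surjection $\epsilon \: F \to \underline\Z_\frakF$ out of the free module $F \defeq \coprod_{K \in \frakF} \Z[\?, G/K]_G$ that sends every basis morphism to~$1$, and choose a natural section~$s$. For each $H \in \frakF$ the element $s_H(1) \in F(G/H)$ is a finite sum $\sum_j a_j f_{g_j, H, K_j}$ with $H \leq g_j K_j g_j^{-1}$ for every $j$ and $\sum_j a_j = 1$.

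The decisive step uses semi-fullness to form the hull $L_H \defeq \bigcap_j g_j K_j g_j^{-1}$, which lies in $\frakF$ by closure under conjugation and intersection and contains~$H$. Applying naturality of $s$ to the canonical morphisms $G/(H_1 \cap H_2) \to G/H_i$ identifies $s_{H_1}(1)$ and $s_{H_2}(1)$ with the same element of $F(G/(H_1 \cap H_2))$, whose basis-element decomposition is unique; this forces the two hulls to agree. Denoting their common value by $L$, every $H \in \frakF$ satisfies $H \leq L \in \frakF$, so $L$ is the maximum of $\frakF$, and conjugation closure of $\frakF$ then makes $L$ normal in $G$.

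The hard part is to upgrade maximality of $L$ to the equality $L = G$. I would apply naturality of $s$ once more, this time to the automorphisms $f_{g, L, L} \: G/L \to G/L$ (valid because $L \lhd G$) as $g$ ranges over $G$. These force $s_L(1)$ to be $G$-invariant under the natural left-translation action on $F(G/L) = \bigoplus_K \Z[(G/K)^L]$. Since $L$ is normal, $(G/K)^L$ is a $G$-stable subset of the transitive $G$-set $G/K$ and is therefore either empty or all of~$G/K$; the latter occurs exactly when $L \leq K$, which combined with $K \in \frakF$ and the maximality of $L$ forces $K = L$. A nonzero finite $G$-invariant element of $\Z[G/L]$ is a nonzero integer multiple of $\sum_{gL \in G/L}[gL]$ and thus demands $[G:L] < \infty$; the augmentation condition $\epsilon(s_L(1)) = 1$ then yields $[G:L] = 1$, so $G = L \in \frakF$.
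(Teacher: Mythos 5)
Your argument is correct, but it follows a genuinely different route from the paper's. The forward direction is the same (identifying $\underline\Z_{\frakF}$ with $\Z[\?,G/G]_{G}$). For the converse, the paper quotes Symonds' criterion (Lemma~\ref{lem:symonds-05-lem2.5}): projectivity of $\underline\Z_{\frakF}$ forces a unique maximal element $M$ of the single component with $M=N_{G}(M)$, and then a conjugation argument ($M^{g}$ would be another maximal element) upgrades this to $M=G$. You instead bypass the external lemma: splitting the augmentation $\epsilon\: \coprod_{K\in\frakF}\Z[\?,G/K]_{G}\to\underline\Z_{\frakF}$ and taking the ``hull'' $L_{H}=\bigcap_{j}g_{j}K_{j}g_{j}^{-1}$ of the support of $s_{H}(1)$ uses semi-fullness exactly where it is needed (conjugation plus finite intersections keep $L_{H}$ in $\frakF$), and naturality of $s$ along $G/(H_{1}\cap H_{2})\to G/H_{i}$ -- where precomposition is injective on basis elements and preserves the target-coset data -- makes the hull independent of $H$, so $\frakF$ actually has a maximum $L$, which is then normal by conjugation closure. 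Your endgame also differs: instead of the normaliser argument, you observe that for normal $L$ only the $K=L$ summand of $F(G/L)$ survives, so $s_{L}(1)$ is a finitely supported, left-translation-invariant element of $\Z[G/L]$ with augmentation $1$, forcing $[G:L]=1$. What each approach buys: the paper's proof is shorter given the cited lemma, while yours is self-contained, in effect reproving the relevant direction of Symonds' result in the semi-full one-component case, and yields the slightly stronger intermediate fact that $\frakF$ has a maximum element rather than merely a unique maximal one.
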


In order to prove this statement we need a result from Symonds.  For a
family $\frakF$ of subgroups of $G$, he defines a \emph{component} of
$\frakF$ to be an equivalence class under the equivalence relation
generated by inclusion~\cite[p.~265]{symonds-05}.  Note that if
$\frakF$ is a semi-full family of subgroups of $G$, then $\frakF$ has
only one component.  This is because for any $H_{1},H_{2}\in \frakF$,
we have that $H_{1}\cap H_{2}$ is contained in $\frakF$ and is a
common subgroup of $H_{1}$ and $H_{2}$.

\begin{lemma}
    \cite[Lemma~2.5]{symonds-05}
    \label{lem:symonds-05-lem2.5}
    Let $\frakF$ be a family of subgroups of~$G$.  Then the trivial
    $\OFG$-module $\underline\Z_{\frakF}$ is projective if and only if
    each component of $\frakF$ has a unique maximal element $M$ and
    this $M$ is equal to its normaliser~$N_{G}(M)$ in $G$.\qed
\end{lemma}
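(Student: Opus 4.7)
The plan is to prove both directions by analysing morphisms involving $\underline\Z_\frakF$ via the Yoneda type formula of Lemma~\ref{lem:yoneda-type-formula}. First observe that conjugation by elements of $G$ permutes the components of $\frakF$ and carries maximal elements to maximal elements, and that under the unique-maximal hypothesis a Zorn-style argument upgrades ``unique maximal'' to ``maximum'': every element of the component lies beneath it.

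For the direction ``if'', choose representatives $\{M_\alpha\}$ for the $G$-orbits on the set of these unique maximums, and define
\begin{equation*}
    \psi : \bigoplus_\alpha \Z[\?, G/M_\alpha]_G \to \underline\Z_\frakF
\end{equation*}
at each object $G/H$ by sending every basis element of $(G/M_\alpha)^H$ to $1 \in \Z$. The verification reduces to showing that the disjoint union $\bigsqcup_\alpha (G/M_\alpha)^H$ is a singleton for each $H \in \frakF$. Existence: the component of $H$ has a unique maximum, conjugate to exactly one $M_\alpha$ via some $g \in G$, so $gM_\alpha \in (G/M_\alpha)^H$. Uniqueness: any other $g'M_\alpha \in (G/M_\alpha)^H$ produces a conjugate of $M_\alpha$ containing $H$, hence lying in the component of $H$, hence equal to $gM_\alpha g^{-1}$; the normaliser condition $M_\alpha = N_G(M_\alpha)$ then forces $g^{-1}g' \in M_\alpha$, i.e.\ $gM_\alpha = g'M_\alpha$. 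This exhibits $\underline\Z_\frakF$ as a coproduct of representables, hence free, hence projective.

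For the direction ``only if'', assume $\underline\Z_\frakF$ is projective; it is then a direct summand of some free module $F = \bigoplus_j \Z[\?, G/K_j]_G$, giving $i: \underline\Z_\frakF \to F$ and $p: F \to \underline\Z_\frakF$ with $pi = \id$. By the Yoneda type formula, $p$ corresponds to a collection of integers $\{m_j\}$ (one per summand), while $i$ corresponds to a coherent family of finite sums $i_H(1) \in F(G/H)$ satisfying $p_H(i_H(1)) = 1$ and the naturality relation $\varphi^*(i_{H'}(1)) = i_H(1)$ for every morphism $\varphi$ in $\OFG$. Fix a component; one shows that the $K_j$'s contributing to the support of $i_H(1)$ for $H$ in that component all conjugate into the component, and that naturality couples these supports as $H$ ranges upward, forcing a maximum $M$ in the component. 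Evaluating $pi = \id$ at $G/M$ itself then forces $(G/M)^M$ to consist of a single coset, which is equivalent to $M = N_G(M)$.

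The main obstacle is this converse direction: extracting a structural statement about components from the mere existence of a splitting requires careful tracking of the $G$-action on components and of how the supports of the $i_H(1)$'s couple across morphisms in $\OFG$. In particular, showing uniqueness of the maximum produced in each component, and showing that the normaliser condition emerges \emph{precisely} (rather than merely being sufficient) from evaluating the retraction at the candidate maximum, constitutes the delicate core of the argument.
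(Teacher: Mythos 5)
The thesis itself gives no proof of this lemma---it is quoted from Symonds and closed with a \qed---so your attempt has to be judged on its own merits rather than against an argument in the text. Your ``if'' direction is essentially the right argument, but it rests on a false step: the claim that ``a Zorn-style argument upgrades unique maximal to maximum.'' A component (an equivalence class under the relation generated by inclusion) can perfectly well have a unique maximal element that is not a greatest element: take $M$ maximal, $A<M$, and an unbounded chain $A<B_{1}<B_{2}<\ldots$ with no $B_{i}$ comparable to $M$; then $M$ is the unique maximal element of this component but $B_{1}\not\leq M$. Zorn needs chains to be bounded, which is not a hypothesis here. You must either read ``unique maximal element'' as ``greatest element'' or supply an argument; granting that, the rest of your verification that $\coprod_{\alpha}(G/M_{\alpha})^{H}$ is a singleton for every $H\in\frakF$ (existence from $H\leq M$, uniqueness from conjugation-invariance of maximality plus $N_{G}(M_{\alpha})=M_{\alpha}$) is correct and does exhibit $\underline\Z_{\frakF}$ as the free module $\coprod_{\alpha}\Z[\?,G/M_{\alpha}]_{G}$.

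The genuine gap is the converse, where you state the two decisive steps as goals (``one shows that\ldots'', ``forcing a maximum $M$'') and yourself flag them as the delicate core. Both can be done, and here is what is missing. First, for an inclusion $H\leq H'$ the map $\varphi^{*}\:F(G/H')\to F(G/H)$ is the inclusion of bases $(G/K_{j})^{H'}\hookrightarrow(G/K_{j})^{H}$, so naturality $i_{H}(1)=\varphi^{*}(i_{H'}(1))$ says the finite, nonempty support of $i_{H}(1)$ is \emph{literally the same set of cosets} for every $H$ in a fixed component $C$; since each coset $xK_{j}$ in the support of $i_{H}(1)$ yields a subgroup $K_{j}^{x^{-1}}\in\frakF$ containing $H$, that subgroup contains every member of $C$ and is therefore a greatest element $M$ of $C$---this is how the maximum is ``forced'', and it gives uniqueness for free. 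Second, your final claim that evaluating $pi=\id$ at $G/M$ ``forces $(G/M)^{M}$ to consist of a single coset'' is asserted, not derived, and the evaluation alone does not visibly do this. The mechanism is: every $K_{j}$ meeting the support of $i_{M}(1)$ is conjugate to $M$, each nonempty $(G/K_{j})^{M}$ is a free transitive $N_{G}(M)/M$-set under the automorphisms $f_{n,M,M}$, $n\in N_{G}(M)$, and naturality forces $i_{M}(1)$ to be invariant with constant coefficients on each such orbit; hence $1=p_{M}(i_{M}(1))=|N_{G}(M):M|\cdot\sum_{j}c_{j}m_{j}$, so $|N_{G}(M):M|$ is finite and divides $1$. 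Without these two arguments your converse is a plan rather than a proof.
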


\begin{proof}[Proof of Proposition~\ref{prop:cdG=0}]
    Assume first that $G\in \frakF$, then a standard argument shows
    that $\cd_{\frakF} G=0$ as follows.  Since $[G/H, G/G]_{G}$
    contains only one map (namely the trivial map) we have that
    $\Z[G/H,G/G]_{G} = \Z$ for every $H\in \frakF$.  Furthermore,
    every morphism of the orbit category $\OFG$ is mapped to the
    identity map.  Thus it follows that the trivial
    $\OFG$-module~$\underline\Z_{\frakF}$ is equal to the free right
    $\OFG$-module $\Z[\?, G/G]_{G}$.  Hence $\cd_{\frakF} G=0$.
    
    Next, assume that the family $\frakF$ is semi-full and that
    $\cd_{\frakF}G =0$.  Let $H_{1},H_{2}\in \frakF$ be two arbitrary
    subgroups.  Since $\frakF$ is closed under finite intersections it
    has only one component.  Then by Lemma~\ref{lem:symonds-05-lem2.5}
    the family~$\frakF$ has a unique maximal element $M$ with
    $M=N_{G}(M)$.  Assume towards a contradiction that $N_{G}(M)$ (and
    therefore $M$) is a proper subgroup of $G$.  Then there exists a
    $g\in G\setminus M$ such that $M^{g}\neq M$.  Since $\frakF$ is
    closed under conjugation, it follows that~$M^{g}\in \frakF$.  Now
    let $N\in \frakF$ with~$M^{g}\leq N$.  Then~$M\leq N^{g^{-1}}$ and
    so~$M=N^{g^{-1}}$ by the maximality of $M$.  Therefore we
    have~$M^{g}=N$ and since $N$ was an arbitrary element of $\frakF$
    with $M^{g}\geq N$ it follows that~$M^{g}$ is maximal in $\frakF$.
    Thus by the uniqueness of a maximal element in $\frakF$ we have
    that $M^{g}=M$, which is a contradiction.  Therefore~$M=G$ and
    so~$G\in \frakF$.
\end{proof}

For completeness, we include the statement about groups $G$ with
$\hd_{\frakF} G$ equal to zero.  This result is just the statement of
Theorem~\ref{thrm:hd-vs-cd} in the case~$\hd_{\frakF} G=0$.

\begin{proposition}
    \label{prop:hdG=0}
    Let $G$ be a group and let $\frakF$ be a semi-full family of
    subgroups of $G$.  If both $G$ and $\frakF$ are countable then
    $\hd_{\frakF} G=0$ implies $\cd_{\frakF} G\leq 1$.\qed
\end{proposition}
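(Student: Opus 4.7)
The plan is to obtain this as an immediate corollary of Theorem~\ref{thrm:hd-vs-cd}: substituting the hypothesis $\hd_{\frakF} G = 0$ into the inequality $\cd_{\frakF} G \leq \hd_{\frakF} G + 1$ yields $\cd_{\frakF} G \leq 1$, which is exactly the claim. The countability hypotheses on $G$ and $\frakF$ are in force by assumption, so no further verification is required.

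For transparency, one can also simply retrace the proof of Theorem~\ref{thrm:hd-vs-cd} in the special case at hand, which is shorter because the flatness hypothesis sits already at the bottom of the resolution. Since $\hd_{\frakF} G = 0$, the trivial right $\OFG$-module $\underline{\Z}_{\frakF}$ is itself flat, so no descent through kernels of a flat resolution is necessary. By Proposition~\ref{prop:standard-resolution}, $\underline{\Z}_{\frakF}$ admits a free resolution (the standard resolution), and by Lemma~\ref{lem:standard-resolution-lemma1} this resolution is countably generated since both $G$ and $\frakF$ are countable; in particular $\underline{\Z}_{\frakF}$ is countably presented. Proposition~\ref{prop:nucinkis-04-prop3.5} then gives $\pd_{\frakF}\underline{\Z}_{\frakF} \leq 1$, i.e.\ $\cd_{\frakF} G \leq 1$.

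The only point meriting attention is the mismatch between the ``semi-full'' assumption stated here and the ``full'' assumption demanded by Proposition~\ref{prop:nucinkis-04-prop3.5} (and hence by Theorem~\ref{thrm:hd-vs-cd}). This is the sole substantive step, and one deals with it exactly as in the parent theorem: either read the hypothesis in the spirit of the author's remark identifying the proposition with the $\hd_{\frakF} G = 0$ case of Theorem~\ref{thrm:hd-vs-cd}, or, if one insists on rigor, first complete $\frakF$ to the full family $\bar{\frakF}$ before invoking Nucinkis's result. No new obstacle arises beyond what is already inherent to the parent theorem, and the argument is otherwise calculation-free.
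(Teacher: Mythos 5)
Your proposal coincides with the paper's own proof: the paper presents Proposition~\ref{prop:hdG=0} as nothing more than the $\hd_{\frakF} G=0$ case of Theorem~\ref{thrm:hd-vs-cd}, and your retraced argument (the standard resolution is countably generated by Lemma~\ref{lem:standard-resolution-lemma1}, $\underline\Z_{\frakF}$ is itself flat, then Proposition~\ref{prop:nucinkis-04-prop3.5} gives $\pd_{\frakF}\underline\Z_{\frakF}\leq 1$) is exactly that theorem's proof specialised to the bottom of the resolution. One caution on your aside: completing $\frakF$ to $\bar\frakF$ would not repair the semi-full/full mismatch, since passing to $\bar\frakF$ changes the orbit category and neither $\hd_{\bar\frakF}G=0$ nor any implication from $\cd_{\bar\frakF}G\leq 1$ back to $\cd_{\frakF}G\leq 1$ is available; the mismatch is inherited from the paper itself, which likewise invokes the full-family theorem at this point.
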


Note that the estimation $\cd_{\frakF} G \leq 1$ is sharp.  For
example, $\Q$ is the direct union of its cyclic subgroups.  It follows
by two results which we will prove later in
Section~\ref{sec:dimensions-for-direct-unions}, namely
Theorem~\ref{thrm:direct-union-result} and 
Corollary~\ref{cor:simple-comp-condition}, that $\uuhd \Q=0$ and
$\uucd \Q \leq 1$.  On the other hand $\Q$ is not virtually cyclic
and so $\uucd \Q\neq 0$ by Proposition~\ref{prop:cdG=0}. Therefore
$\Q$ is group with $\uuhd \Q=0$ and $\uucd\Q = 1$.
\vspace{-2pt}

%
%

\section{Induction with $I_{K}$ and Preservation of Exactness}
\label{sec:exatness-of-IndIK}

In Section~\ref{sec:cd-vs-hd-vs-gd} we have compared different types
of Bredon dimensions for a given group and with respect to a given
family of subgroups.  In order to make comparisons of the same kind of
Bredon dimensions, but for different groups, or if we are interested
how the the choice of the family of subgroups affects a certain Bredon
dimension, then the restriction and induction functors are an
important tool.  The exactness of these functors is an important
property and we already know that the restriction functor is always
exact.  The result we present in this section is due to
Symonds~\cite{symonds-05}.

In this section we consider induction with the following functor.  Let
$K$ be a fixed subgroup of $G$.  If $\frakF\cap K\subset
\frakF$ then we obtain a well defined functor
\vspace{-2pt}
\begin{equation*}
    I_{K}\: \OC{K}{\frakF\cap K} \to \OFG
\end{equation*}
of orbit categories as follows: on objects of $\OC{K}{\frakF\cap K}$
we set $I_{K}(K/H) \defeq  G/H$ and if $f\: K/H \to K/L$ is a $G$-map, then
$I_{K}(f)$ is the obvious extension of~$f$ to a $G$-map $G/H\to G/L$,
which by abuse of notation we will also denote by~$f$.

\begin{lemma}
    \label{lem:isomorphism-1}
    Let $H\in \frakF$ and $L\in \frakF\cap K$.  Let $R$ be a complete
    set of representatives for the left cosets in $(G/K)^{H}$.  Then
    there exists a bijection 
    \begin{equation}
	\eta_{L}\: [G/H, G/L]_{G} \to \coprod_{x\in R} [K/H^{x},
	K/L]_{K}
	\label{eq:isomorphism-1}
    \end{equation}
    \vspace{-2pt}
    of sets.
\end{lemma}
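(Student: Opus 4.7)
The plan is to reduce the statement to the standard identification $[G/H, G/M]_G \cong (G/M)^H$ of equation~\eqref{eq:G-map-identification} and then analyse the fibres of a natural projection. Under this identification the claimed bijection becomes a bijection
\begin{equation*}
    (G/L)^H \;\longleftrightarrow\; \coprod_{x \in R} (K/L)^{H^x},
\end{equation*}
so the task is to construct and invert this set-theoretic bijection.

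First I would observe that since $L \leq K$, the canonical $G$-projection $p\: G/L \to G/K$ restricts to a map $p^H\: (G/L)^H \to (G/K)^H$. Given $gL \in (G/L)^H$ (so $H^g \leq L \leq K$) we have $p^H(gL) = gK \in (G/K)^H$, and since $R$ is a complete set of representatives for $(G/K)^H$ there is a unique $x \in R$ with $gK = xK$, i.e.\ $g = xk$ for some $k \in K$ (unique modulo $L$). From $H^g \leq L$ and $H^g = H^{xk} = (H^x)^k$ we conclude $(H^x)^k \leq L$, so $kL \in (K/L)^{H^x}$. This defines
\begin{equation*}
    \eta_L\: gL \longmapsto (x,\, kL) \in \coprod_{x\in R} (K/L)^{H^x}.
\end{equation*}
Well-definedness in $gL$ is immediate: replacing $g$ by $g\ell$ with $\ell \in L \leq K$ leaves $x$ unchanged and alters $k$ by right-multiplication with $\ell$, hence $kL$ is unchanged.

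Next I would construct the inverse. Given $(x, kL)$ with $x \in R$ and $(H^x)^k \leq L$, send it to $xkL$; the relation $H^{xk} = (H^x)^k \leq L$ shows $xkL \in (G/L)^H$. This is well-defined on $kL$ since $L \leq K$, and the two constructions are manifestly inverse to each other: starting from $gL$, decomposing $g = xk$ and then forming $xkL$ returns $gL$; starting from $(x, kL)$, the unique representative in $R$ of the coset $xkK = xK$ is $x$ itself, recovering the pair.

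Finally I would pull this bijection back to $G$-map sets via the isomorphisms $\eta$ of~\eqref{eq:G-map-identification} both on the $G$-side and on the $K$-side (the latter applied to the subgroup $K$ with its family $\frakF \cap K$). I do not anticipate a genuine obstacle here; the only point deserving a moment's care is checking that the decomposition $g = xk$ is canonical modulo right-multiplication by elements of $L$, which is precisely what makes $\eta_L$ a map of sets rather than of double cosets. This observation is the conceptual content of the lemma and will be reused in the subsequent treatment of induction with $I_K$.
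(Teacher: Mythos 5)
Your proof is correct and is essentially the paper's own argument: the paper likewise decomposes $g = xy$ with $x\in R$, $y\in K$ (using $H^{g}\leq L\leq K$ to get $gK\in(G/K)^{H}$), checks that $x$ is determined and $y$ only up to right multiplication by $L$, and sends $f_{g,H,L}\mapsto f_{y,H^{x},L}$; your phrasing via the identification~\eqref{eq:G-map-identification} and an explicit two-sided inverse is just a cosmetic repackaging of the same decomposition. The only point you leave implicit is that $H^{x}\leq K$ for $x\in R$ (needed for $[K/H^{x},K/L]_{K}$, resp.\ $(K/L)^{H^{x}}$, to make sense), which is immediate from $xK\in(G/K)^{H}$.
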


\begin{proof}
    If $x\in R$ then $H^{x}\leq K$ and thus the right hand side
    of~\eqref{eq:isomorphism-1} is well defined.  Let $f_{g,H,L}\in
    [G/H,G/L]_{G}$.  Then $H^{g}\leq L$ and since $L\in \frakF\cap K$ 
    we have that $H^{g}\leq K$. This implies that $gK\in (G/K)^{H}$
    and thus there exists a unique $x\in R$ and $y\in K$
    such that $g=xy$.
    
    Assume that $f_{g',H,L} = f_{g,H,L}$.  Then $g'=gl$ for some $l\in
    L\leq K$.  Furthermore there exists a unique $x'\in R$ and $y'\in
    K$ such that $g'=x'y'$.  Thus $gl = xyl=x'y'$ and from this
    follows that $x^{-1}x' = yl(y')^{-1} \in K$.  This means that $x$
    and $x'$ are in the same left coset of $K$ in $G$.  Therefore
    $x=x'$ and $yl = y'$.  That is $x$ is uniquely determined by
    $f_{g,H,L}$ and $y$ is uniquely determined by $f_{g,H,L}$ up to
    right multiplication by an element of $L$.
    
    Since $(H^{x})^{y} = H^{xy} = H^{g} \leq L$ we get that
    $f_{y,H^{x},L}\in [K/H^{x},K/L]_{K}$ and we define
    $\eta_{L}(f_{g,H,L}) \defeq  f_{y,H^{x},L}$.  Since $x$ is uniquely
    determined by the map~$f_{g,H,L}$, and since $y$ is uniquely
    determined by the map~$f_{g,H,L}$ up to right multiplication by an
    element of $L$, this definition of $\eta_{L}(f_{g,H,L})$ is well
    defined and ensures that $\eta_{L}$ is an injective map.
    
    It remains to show that the map $\eta_{L}$ is surjective.
    Therefore choose an arbitrary $x\in R$ and let $f_{y,H^{x},L} \in
    [K/H^{x}, K/L]_{K}$.  Then
    \begin{equation*}
        H^{xy} = (H^{x})^{y} \leq L
    \end{equation*}
    and thus $f_{xy,H,L}\in [G/H, G/L]_{G}$ and $\eta_{L}(f_{xy,H,L})
    = f_{y,H^{x},L}$.  Therefore, we conclude that $\eta$ is
    surjective.
\end{proof}

\begin{lemma}
    \label{lem:free-1}
     Let $H\in \frakF$.  Then the left $\OC{K}{\frakF\cap K}$-module
     \begin{equation*}
	\Z[K/H^{x}, \?]_K
     \end{equation*}
     is free for every $x\in G$ for which $xK\in (G/K)^{H}$.
\end{lemma}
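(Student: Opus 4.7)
The plan is to reduce the claim to the general principle, already established in Section~\ref{sec:free-Bredon-modules} (together with the remark that all results carry over to left Bredon modules), that a module of the form $\Z[K/L,\?]_{K}$ is free over the orbit category $\OC{K}{\frakF\cap K}$ whenever $L$ is an object of that orbit category, i.e.\ whenever $L\in\frakF\cap K$. Indeed, formula~\eqref{eq:general-free-module} (in its left-module version) identifies such modules as the basic building blocks of free left Bredon modules, corresponding to the singleton $(\frakF\cap K)$-set supported at $L$. So the only thing to check is that $H^{x}$ genuinely lies in $\frakF\cap K$.

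First I would unpack the hypothesis. The condition $xK\in(G/K)^{H}$ means that $hxK=xK$ for every $h\in H$, or equivalently $x^{-1}Hx\le K$; that is $H^{x}\le K$. Next, since $H\in\frakF$ and $\frakF$ is a family of subgroups of $G$ (and thus closed under conjugation by elements of $G$), we have $H^{x}\in\frakF$. Combining these two facts gives $H^{x}\cap K=H^{x}$, and hence
\begin{equation*}
    H^{x}\;=\;H^{x}\cap K\;\in\;\frakF\cap K
\end{equation*}
by the very definition of $\frakF\cap K$ given in Section~\ref{sec:families-of-subgroups}.

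With $H^{x}$ now identified as a bona fide object of the orbit category $\OC{K}{\frakF\cap K}$, the conclusion is immediate from the left-module analogue of the construction in Proposition~\ref{prop:free-functor}: the module $\Z[K/H^{x},\?]_{K}$ is precisely the free left $\OC{K}{\frakF\cap K}$-module on the singleton $(\frakF\cap K)$-set $\Delta$ with $\Delta_{H^{x}}=\{*\}$ and $\Delta_{L}=\emptyset$ for $L\ne H^{x}$.

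There is really no obstacle here beyond the bookkeeping; the only point where one could slip is in forgetting to check that $H^{x}$ itself, rather than merely some subgroup containing it, is in $\frakF\cap K$, which is what the hypothesis $xK\in(G/K)^{H}$ guarantees. I would keep the write-up to the three short observations above and cite Proposition~\ref{prop:free-functor} (together with the stated left-module remark) for the freeness conclusion.
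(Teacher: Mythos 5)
Your proof is correct and matches the paper's argument: the paper likewise reduces everything to checking $H^{x}\in\frakF\cap K$, using closure of $\frakF$ under conjugation together with $H^{x}\leq K$ (from $xK\in(G/K)^{H}$) to conclude $H^{x}=H^{x}\cap K\in\frakF\cap K$, after which freeness of $\Z[K/H^{x},\?]_{K}$ is immediate from the construction of free left Bredon modules.
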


\begin{proof}
    We only need to show that $H^{x}\in \frakF\cap K$.  Since $\frakF$
    is closed under conjugation we have $H^{x}\in\frakF$.  Since
    $xK\in (G/H)^{H}$ we have $H^{x}\leq K$ and thus $H^{x} =
    H^{x}\cap K \in \frakF \cap K$.
\end{proof}

\begin{lemma}
    \label{lem:free-2}
    Assume that $\frakF\cap K$ is a subset of $\frakF$.  Then the
    collection of isomorphisms $\{\eta_{L} : L\in \frakF\cap K\}$
    defined in Lemma~\ref{lem:isomorphism-1} give an isomorphism
    \begin{equation}
	\eta\: \Z[G/H, I_K(\?)]_G \to \coprod_{x\in R} \Z[K/H^{x},
	\?]_K
	\label{eq:isom-1}
    \end{equation}
    of left $\OC{G}{\frakF\cap K}$-modules.  In particular, $\Z[G/H,
    I_K(\?)]_G$ is a free left $\OC{K}{\frakF\cap K}$-module.
\end{lemma}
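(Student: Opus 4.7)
The plan is to promote the set bijections $\eta_L$ from Lemma~\ref{lem:isomorphism-1} to group homomorphisms by $\Z$-linear extension, assemble them into a family indexed by $L\in\frakF\cap K$, and verify naturality in $L$ so that this family is a morphism of left $\OC{K}{\frakF\cap K}$-modules. Both sides of~\eqref{eq:isom-1} are functors $\OC{K}{\frakF\cap K}\to\Ab$ (the left side via the composite of $I_K$ with the left $\OFG$-module $\Z[G/H,\?]_G$), with structure maps given on generators by post-composition with morphisms in $\OC{K}{\frakF\cap K}$.

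The heart of the argument is naturality. Take a morphism $\varphi = f_{z,L,L'}$ in $\OC{K}{\frakF\cap K}$ and a generator $f_{g,H,L}\in\Z[G/H,G/L]_G$. By the proof of Lemma~\ref{lem:isomorphism-1} there is a unique decomposition $g=xy$ with $x\in R$ and $y\in K$, and $\eta_L(f_{g,H,L}) = f_{y,H^x,L}$ in the $x$-th summand. Chasing the naturality square for $\varphi$, one route gives
\[
    \varphi_*\bigl(\eta_L(f_{g,H,L})\bigr) \;=\; \varphi_*(f_{y,H^x,L}) \;=\; f_{yz,H^x,L'}
\]
in the $x$-th summand on the right. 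The other route gives $\eta_{L'}\bigl((I_K\varphi)_*(f_{g,H,L})\bigr) = \eta_{L'}(f_{xyz,H,L'})$; since $yz\in K$, the decomposition $xyz = x\cdot(yz)$ already has representative $x\in R$ and second factor in $K$, so the uniqueness of the decomposition forces $\eta_{L'}(f_{xyz,H,L'}) = f_{yz,H^x,L'}$ in the same $x$-th summand. Hence the two routes agree and the diagram commutes.

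Once naturality is in place, $\eta$ is a morphism of left $\OC{K}{\frakF\cap K}$-modules that is componentwise an isomorphism of abelian groups, hence an isomorphism. For the final claim, Lemma~\ref{lem:free-1} says each summand $\Z[K/H^x,\?]_K$ ($x\in R$) is free, and a coproduct of free left $\OC{K}{\frakF\cap K}$-modules is free; so $\Z[G/H,I_K(\?)]_G$ is free. The only subtle point in the whole argument is the bookkeeping in the decomposition $g=xy$ and the observation that post-composition by $\varphi$ never shifts the coset representative $xK$ (because $z\in K$); everything else is formal.
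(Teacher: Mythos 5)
Your proposal is correct and follows essentially the same route as the paper: extend the bijections $\eta_L$ linearly, check the naturality square by chasing a generator $f_{g,H,L}$ with $g=xy$ through both routes to land on $f_{yz,H^x,L'}$ (using $yz\in K$ and uniqueness of the coset representative $x\in R$), and then conclude freeness from Lemma~\ref{lem:free-1} plus closure of free modules under coproducts. No gaps to report.
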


\begin{proof}
    The assumption $\frakF\cap K\subset \frakF$ ensures that the
    functor $I_{K}$ is defined.  We have to show that the collection
    of isomorphisms $\eta_{L}$ gives a natural transformation of
    covariant functors.  We do this by chasing generators around the
    necessary diagrams.
    
    Let $L_{1}, L_{2}\in \frakF\cap K$, $\varphi\in [K/L_{1},
    K/L_{2}]_{K}$ and $f\in [G/H, I_{K}(K/L_{1})]_{G} = [G/H,
    G/L_{1}]_{G}$.  Then there exists a $z\in K$ such that $\varphi =
    f_{z,L_{1},L_{2}}$ and a $g\in G$ such that $f = f_{g,H,L_{1}}$.
    Furthermore there exists a unique $x\in R$ such that we can write
    $f_{g,H,L_{1}} = f_{xy,H,L_{1}}$ for some $y\in K$. This $y$ is
    unique up to right multiplication by an element of $L_{1}$.  
    We need to chase $f$ around the following diagram.
    \begin{equation*}
	\begin{diagram}
	    \node{[G/H, G/L_{1}]_{G}}
	    \arrow{e,t}{\eta_{L_{1}}}
	    \arrow{s,l}{\varphi_{*}}
	    \node{[K/H^{x},K/L_{1}]_{K}}
	    \arrow{s,r}{\varphi_{*}}
	    \\
	    \node{[G/H, G/L_{2}]_{G}}
	    \arrow{e,t}{\eta_{L_{2}}}
	    \node{[K/H^{x}, K/L_{2}]_{K}}
	\end{diagram}
    \end{equation*}
    On the one hand we have
    \begin{align*}
	(\varphi_{*}\mathbin{\circ}\eta_{L_{1}})(f_{g,H,L_{1}})
	& = \varphi_{*}(f_{y,H^{x},L_{1}}) 
	\\
	& = f_{z,L_{1},L_{2}} \mathbin{\circ} f_{y,H^{x},L_{1}}
	\\
	& = f_{yz, H^{x},L_{2}} \in \Z[K/H^{x}, K/L_{2}]_K.
	\\
	\intertext{On the other hand we have}
        (\eta_{L_{2}} \mathbin{\circ} \varphi_{*})(f_{g,H,L_{1}})
	& = \eta_{L_{2}} (f_{z,L_{1},L_{2}} \mathbin{\circ} 
	f_{xy,H,L_{1}})
	\\
	& = \eta_{L_{2}} (f_{xyz,H,L_{2}})
	\\
	& = f_{yz,H^{x},L_{2}}\in \Z[K/H^{x}, K/L_{2}]_K
    \end{align*}
    where the last equality follows from $yz\in K$.  Therefore the
    maps $\{\eta_{L} : L\in \frakF\cap K\}$ define a homomorphism of
    left $\OC{K}{\frakF\cap K}$-modules.  Since each $\eta_{L}$ is an
    isomorphism it follows that $\eta$ is an isomorphism, too.
    
    The Bredon modules in the coproduct on the right hand side of
    \eqref{eq:isom-1} are all free by Lemma~\ref{lem:free-1}.  We can
    conclude that the right hand side of~\eqref{eq:isom-1} is free and
    the claim follows.
\end{proof}

Note that in Symonds' article~\cite[p.~266]{symonds-05} the above
result together with Lemma~\ref{lem:tensor-product-2} is used as the
\emph{definition} of the induction functor
$\ind_{I_{K}}\: \mathop{\text{$\Mod$-$\OC{K}{\frakF\cap K}$}}
\to \ModOFG$.

\begin{proposition}
    \label{prop:exactness-I-K}
    \cite{symonds-05}
    Let $\frakF$ be a family of subgroups of $G$.  Let $K\leq G$ be a
    subgroup such that $\frakF\cap K\subset \frakF$.  Then induction
    with $I_{K}\: \OC{K}{\frakF\cap K} \to \OFG$ is an exact functor.
\end{proposition}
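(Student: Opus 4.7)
The plan is to reduce the claim to the fact that free Bredon modules are flat. By Proposition~\ref{prop:exactness-properties} we already know that $\ind_{I_K}$ is right exact, being a left adjoint (to $\res_{I_K}$). Hence it suffices to verify that $\ind_{I_K}$ preserves monomorphisms, that is, that it is left exact as well.

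First, I would unfold the definition: for a right $\OC{K}{\frakF\cap K}$-module $M$ and any $H\in\frakF$,
\begin{equation*}
    (\ind_{I_K} M)(G/H) \;=\; M(\?) \mathop{\otimes_{\frakF\cap K}} \Z[G/H, I_K(\?)]_G.
\end{equation*}
Since kernels and cokernels in $\ModOFG$ are computed componentwise, a sequence of right $\OFG$-modules is exact if and only if it is exact after evaluation at $G/H$ for every $H\in\frakF$. Therefore exactness of $\ind_{I_K}$ reduces to showing that, for each fixed $H\in\frakF$, the functor
\begin{equation*}
    \? \mathop{\otimes_{\frakF\cap K}} \Z[G/H, I_K(\?)]_G\colon \ModOC{K}{\frakF\cap K} \to \Ab
\end{equation*}
is exact.

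This is where the hypothesis $\frakF\cap K\subset\frakF$ (which ensures that $I_K$ is even defined) does its real work: Lemma~\ref{lem:free-2} precisely asserts that under this condition, $\Z[G/H, I_K(\?)]_G$ is a \emph{free} left $\OC{K}{\frakF\cap K}$-module, namely a coproduct of modules of the form $\Z[K/H^x,\?]_K$ indexed by a transversal $R$ of $(G/K)^H$. Free modules are projective and projective modules are flat, so tensoring with this module over $\frakF\cap K$ is an exact functor.

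Assembling these observations gives the conclusion: given any short exact sequence $0\to M'\to M\to M''\to 0$ in $\ModOC{K}{\frakF\cap K}$, applying $\?\mathop{\otimes_{\frakF\cap K}} \Z[G/H, I_K(\?)]_G$ yields a short exact sequence of abelian groups for every $H\in\frakF$, and therefore $0\to \ind_{I_K} M'\to \ind_{I_K} M\to \ind_{I_K} M''\to 0$ is exact in $\ModOFG$. There is essentially no obstacle beyond invoking Lemma~\ref{lem:free-2}; the only minor subtlety is to keep variance straight, namely that the object one tensors with is a \emph{left} $\OC{K}{\frakF\cap K}$-module while $M$ is a \emph{right} module, so that the tensor product over $\frakF\cap K$ is indeed defined and its flatness in the left variable is what is needed.
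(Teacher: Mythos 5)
Your proposal is correct and follows essentially the same route as the paper: evaluate the induced sequence at each $G/H$, identify the result with the tensor product against $\Z[G/H, I_K(\?)]_G$, and use Lemma~\ref{lem:free-2} to see that this left $\OC{K}{\frakF\cap K}$-module is free, hence flat, so that componentwise exactness gives exactness in $\ModOFG$. The preliminary reduction via right exactness of the left adjoint is harmless but superfluous, since the flatness argument already yields exactness of the full short exact sequence at once, exactly as in the paper.
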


\begin{proof}
    Let $0 \to L \to M \to N \to 0$ be an exact sequence of right
    $\OC{K}{\frakF\cap K}$-modules.  Applying the functor
    $\ind_{I_{K}}$ to this sequence yields the sequence
    \begin{equation}
	0 \to \ind_{I_{K}} L \to \ind_{I_{K}} M \to \ind_{I_{K}} N \to 0
        \label{eq:seq2}
    \end{equation}
    of $\OFG$-modules. We evaluate this sequence at $H\in \frakF$ and 
    obtain
    \begin{multline}
	0 \to L(\?)  \mathop{\otimes_{\frakF \cap K}}
	\Z[G/H,I_{K}(\?)]_{G} \to M(\?)  \mathop{\otimes_{\frakF \cap
	K}} \Z[G/H,I_{K}(\?)]_{G}\\
	\to N(\?)  \mathop{\otimes_{\frakF \cap K}}
	\Z[G/H,I_{K}(\?)]_{G} \to 0
	\label{eq:seq3}
    \end{multline}
    By the previous lemma the left $\OC{K}{\frakF\cap K}$-module 
    $\Z[G/H,I_{K}(\?)]_{G}$ is free and hence flat. Thus the 
    sequence~\eqref{eq:seq3} is exact. Since this holds for every 
    $H\in \frakF$ we have that the sequence~\eqref{eq:seq2} of 
    $\OFG$-modules is exact.
\end{proof}

%
%

\section{Restriction with $I_{K}$ and Preservation of Projectives}

Let $G$ be a group and $\frakF$ a family of subgroups of $G$.  Let $K$
be a subgroup of $G$ such that $\frakF\cap K\subset \frakF$.
Lemma~\ref{lem:free-2} states that restriction with $I_{K}$ preserves
free \emph{left} Bredon modules.  It turns out that restriction with
$I_{K}$ preserves free \emph{right} Bredon modules, too.  However, we
get a different answer to how the restricted free right Bredon modules
look like.  The following statements together with their proofs are
due to Martínez-Pérez~\cite[p.~167]{martinez-perez-02}.

\begin{lemma}
    \label{lem:preserve-right-frees}
    Let $\frakF$ be a family of subgroups of $G$ and let $K$ be a
    subgroup of $G$.  Then for any $H\in \frakF$ and any complete set
    $R$ of representatives for the double cosets $K\backslash G/H$ we
    have an isomorphism
    \begin{equation*}
	\eta\: \Z[I_{K}(\?),G/H]_{G}\to \coprod_{x\in R} \Z[\?, 
	K/(K\cap H^{x^{-1}})]_{K}
    \end{equation*}
    of right $\OC{K}{\frakF\cap K}$-modules.  In particular if
    $\frakF\cap K\subset \frakF$ then $\Z[I_{K}(\?),G/H]_{G}$ is a
    free right $\OC{K}{\frakF\cap K}$-module.
\end{lemma}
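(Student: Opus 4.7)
The plan is to exhibit $\eta$ component-wise. Since both sides are right $\OC{K}{\frakF\cap K}$-modules, it suffices to construct, for every $L\in\frakF\cap K$, a bijection
\begin{equation*}
    \eta_{L}\: [G/L, G/H]_{G} \to \coprod_{x\in R} [K/L, K/(K\cap H^{x^{-1}})]_{K}
\end{equation*}
and then verify naturality in $L$. Given $f_{g,L,H}\in [G/L,G/H]_{G}$ (so $L^{g}\leq H$), there is a unique $x\in R$ with $g\in KxH$; write $g=kxh$ with $k\in K$, $h\in H$. I would set $\eta_{L}(f_{g,L,H})\defeq f_{k,L,K\cap H^{x^{-1}}}$ in the $x$\=/th summand. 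The first thing to check is that this lands in the correct morphism set and is independent of the decomposition $g=kxh$: from $L^{g}\leq H$ one gets $L^{kx}=L^{gh^{-1}}\leq H^{h^{-1}}=H$, hence $L^{k}\leq H^{x^{-1}}$, and of course $L^{k}\leq K$, so $L^{k}\leq K\cap H^{x^{-1}}$; and if $g=kxh=k'xh'$ then $k^{-1}k'=x(h(h')^{-1})x^{-1}\in K\cap xHx^{-1}=K\cap H^{x^{-1}}$, so $f_{k,L,K\cap H^{x^{-1}}}=f_{k',L,K\cap H^{x^{-1}}}$.

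Next I would write down the inverse directly: given $x\in R$ and $f_{k,L,K\cap H^{x^{-1}}}\in [K/L,K/(K\cap H^{x^{-1}})]_{K}$, send it to $f_{kx,L,H}$. Since $L^{kx}\leq(K\cap H^{x^{-1}})^{x}\leq H$, this is a well-defined $G$\-/map, and the two assignments are mutual inverses by inspection. This establishes that $\eta_{L}$ is a bijection of sets, which extends linearly to an isomorphism of the corresponding free abelian groups.

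For naturality, let $\varphi=f_{y,L',L}\in [K/L',K/L]_{K}$. On the left, $\varphi^{*}(f_{g,L,H})=f_{g,L,H}\comp f_{y,L',L}=f_{yg,L',H}$. With $g=kxh$, we have $yg=(yk)xh$ with $yk\in K$, so $yg$ still belongs to the double coset $KxH$ and $\eta_{L'}(\varphi^{*}(f_{g,L,H}))=f_{yk,L',K\cap H^{x^{-1}}}$. On the right, $\varphi^{*}(f_{k,L,K\cap H^{x^{-1}}})=f_{yk,L',K\cap H^{x^{-1}}}$. The two agree, so $\eta$ is natural in $L$ and therefore a morphism — hence an isomorphism — of right $\OC{K}{\frakF\cap K}$-modules.

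For the final assertion, under the hypothesis $\frakF\cap K\subset \frakF$ the functor $I_{K}$ is defined, and for each $x\in R$ the subgroup $H^{x^{-1}}\in \frakF$ (by conjugation-closure of $\frakF$), so $K\cap H^{x^{-1}}\in \frakF\cap K$. Each summand $\Z[\?,K/(K\cap H^{x^{-1}})]_{K}$ is therefore free in $\mathop{\text{$\Mod$-$\OC{K}{\frakF\cap K}$}}$, and a coproduct of free modules is free. The only genuinely fiddly point I anticipate is keeping the two-sided-coset bookkeeping straight — in particular, showing that the choice $g=kxh$ of decomposition is precisely ambiguous up to $K\cap H^{x^{-1}}$, which is exactly what is needed both for well-definedness of $\eta_{L}$ and for the composition rule used in the naturality check.
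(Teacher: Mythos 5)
Your proof is correct and follows essentially the same route as the paper's: decompose $g=kxh$ along the double coset $KxH$, send $f_{g,L,H}$ to $f_{k,L,K\cap H^{x^{-1}}}$ in the $x$-th summand, observe that the ambiguity in the decomposition is exactly by $K\cap H^{x^{-1}}$, check naturality by chasing generators, and deduce freeness from conjugation-closure of $\frakF$. The only cosmetic difference is that you write down the explicit inverse $f_{k,L,K\cap H^{x^{-1}}}\mapsto f_{kx,L,H}$, whereas the paper simply asserts bijectivity.
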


\begin{proof}    
    Let $L\in \frakF\cap K$ and let $f_{g,L,H}\in [G/L,K/H]_{G}$.
    Then there exists a unique $x\in R$ such that we can write $g=yxh$
    for some~$y\in K$ and $h\in H$.  Since $g$ is uniquely determined
    by $f$, up to right multiplication by an element of~$H$, it
    follows that $x$ is uniquely determined by $f$.  Assume that we
    have~$y_{1}\in K$ and $h_{1}\in H$ such that $yxh = y_{1}xh_{1}$.
    Then $y_{1}^{-1}y = xh_{1}h^{-1}x^{-1}\in H^{x^{-1}}$ and since
    $y_{1}^{-1}y\in K$ we get even that $y$ and $y_{1}$ lie in the
    same left coset of $K\cap H^{x^{-1}}$ in $K$.  Furthermore from
    $L^{yxh} = L^{g}\leq H$ follows that $L^{y} \leq H^{h^{-1}x^{-1}}
    = H^{x^{-1}}$.  Since $L\leq K$ and $y\in K$ we get $L^{y}\leq
    K^{y} = K$ and thus altogether that $L^{y}\leq K\cap H^{x^{-1}}$.
    Hence $f_{y,L,K\cap H^{x^{-1}}}\in [K/L, K/(K\cap
    H^{x^{-1}})]_{K}$ and we obtain a well defined map
    \begin{equation*}
	\eta_{L}\: \Z[G/L,G/H]_{G}\to \coprod_{x\in R} \Z[K/L, 
	K/(K\cap H^{x^{-1}})]_{K}
    \end{equation*}
    by $\eta_{L}(f_{yxh,L,G}) \defeq  f_{y,L,K\cap H^{x^{-1}}} \in
    [K/L, K/(K\cap H^{x^{-1}})]_{K}$. 
    
    It follows that the above defined map is bijective.  By chasing
    generators around the usual diagrams it follows that the
    collection $\{\eta_{L} : L\in \frakF\cap L\}$ of maps defines an
    isomorphism $\eta$ of right $\OC{K}{\frakF\cap K}$-modules as
    required in the statement of the lemma.
    
    Let $H\in \frakF$.  Since $\frakF$ is closed under conjugation
    $H^{x^{-1}}\in \frakF$ and thus $K\cap H^{x^{-1}}\in \frakF\cap
    H^{x^{-1}}$.  Therefore all the summands of the codomain of $\eta$
    are free right $\OC{K}{\frakF\cap K}$-modules and therefore the
    domain of $\eta$ must be a free $\OC{K}{\frakF\cap K}$-module,
    too.
\end{proof}

\begin{proposition}\cite[Lemma 3.7]{martinez-perez-02}
    \label{prop:res-I-K-preserves-projectives}
    Let $\frakF$ be a family of subgroups of~$G$.  Let $K\leq G$ be
    some subgroup of~$G$ such that $\frakF\cap K\subset\frakF$.  Then
    restriction with~$I_{K}$ preserves free Bredon modules and
    consequently also projective Bredon modules.
\end{proposition}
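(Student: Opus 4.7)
The strategy is to prove the claim first for the building blocks $\Z[\?, G/H]_{G}$ with $H \in \frakF$, where it reduces directly to Lemma~\ref{lem:preserve-right-frees}, and then to bootstrap to arbitrary free modules via preservation of coproducts. Concretely, I would exploit the natural equivalence $\res_{I_{K}} \simeq \res'_{I_{K}}$ noted in Section~\ref{sec:res-ind-coind}, where $\res'_{I_{K}}$ is plain precomposition with $I_{K}$. Under this equivalence one has $\res_{I_{K}}(\Z[\?, G/H]_{G}) \isom \Z[I_{K}(\?), G/H]_{G}$ canonically, and Lemma~\ref{lem:preserve-right-frees} says precisely that the right hand side is a free right $\OC{K}{\frakF \cap K}$-module.

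For an arbitrary free right $\OFG$-module, recall from~\eqref{eq:general-free-module} that it has the shape $F = \coprod_{\delta \in \Delta} \Z[\?, G/\varphi(\delta)]_{G}$ for some $\frakF$-set $\Delta = (\Delta, \varphi)$. Since $\res_{I_{K}}$ is both a right adjoint (to $\ind_{I_{K}}$) and a left adjoint (to $\coind_{I_{K}}$), it preserves all colimits, in particular coproducts, by Proposition~\ref{prop:preservation-prop-1}. Hence
\[
\res_{I_{K}} F \isom \coprod_{\delta \in \Delta} \res_{I_{K}}\bigl(\Z[\?, G/\varphi(\delta)]_{G}\bigr),
\]
and the right hand side, being a coproduct of free right $\OC{K}{\frakF \cap K}$-modules, is itself free.

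The projective case follows by a short additivity argument. If $P$ is projective, it is a direct summand of some free $\OFG$-module $F$, say $F \isom P \oplus P'$. The additive functor $\res_{I_{K}}$ respects finite direct sums, so $\res_{I_{K}} F \isom \res_{I_{K}} P \oplus \res_{I_{K}} P'$. By the first part $\res_{I_{K}} F$ is free, so $\res_{I_{K}} P$ is a direct summand of a free $\OC{K}{\frakF\cap K}$-module and hence projective. I do not anticipate any serious obstacle: once Lemma~\ref{lem:preserve-right-frees} is available the argument is essentially formal, and the only thing that needs slight care is recording that precomposition with $I_{K}$ commutes with coproducts of Bredon modules, which is transparent because coproducts in functor categories are computed pointwise.
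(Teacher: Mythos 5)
Your argument is correct and is essentially the paper's own proof: the paper likewise reduces to Lemma~\ref{lem:preserve-right-frees} for the single-generator frees and handles projectives by the same additivity/direct-summand argument, while you merely spell out the (true and routine) coproduct-preservation step that the paper leaves implicit. The only omission is that the proposition, by the paper's convention, covers left Bredon modules as well; that half is the symmetric argument with Lemma~\ref{lem:free-2} in place of Lemma~\ref{lem:preserve-right-frees}.
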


\begin{proof}
    The first statement is Lemma~\ref{lem:preserve-right-frees} (for
    right Bredon modules) and~\ref{lem:free-2} (for left Bredon
    modules).  The statement about projective right Bredon modules
    follows from the first statement since restriction is an additive
    functor.
\end{proof}

%
%

\section{Shapiro's Lemma in the Bredon Setting}
\label{sec:shapiros-lemma}

\begin{proposition}
    \label{prop:pre-shapiros-lemma}
    Let $\frakF$ be a family of subgroups of $G$ and let $K$ be a 
    subgroup of $G$ such that $\frakF\cap K\subset\frakF$. Then
    \begin{enumerate}
	\item for any right $\OFG$-module $M$ and any right 
	$\OC{K}{\frakF\cap K}$-module $N$ we have
	\begin{equation*}
	    \Ext^{*}_{\frakF\cap K}(\res_{I_{K}} M,N)
	    \isom
	    \Ext^{*}_{\frakF}(M, \coind_{I_{K}}N);
	\end{equation*}

    	\item for any right $\OFG$-module $M$ and any left 
	$\OC{K}{\frakF\cap K}$-module $N$ we have
	\begin{equation*}
	    \Tor_{*}^{\frakF\cap K}(\res_{I_{K}} M,N)
	    \isom
	    \Tor_{*}^{\frakF}(M, \ind_{I_{K}}N).
	\end{equation*}
    \end{enumerate}
    In both cases the isomorphism is natural in both $M$ and $N$.
\end{proposition}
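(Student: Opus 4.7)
The plan is to prove both isomorphisms by the standard Shapiro-type argument: resolve one of the two arguments projectively, use either the adjunction $\res_{I_{K}} \dashv \coind_{I_{K}}$ or a tensor-product rephrasing of the adjunction $\ind_{I_{K}} \dashv \res_{I_{K}}$ to convert one (co)chain complex into another, and then pass to (co)homology. Naturality in $M$ and $N$ will follow from the naturality of the adjunctions and of the standard comparison between projective resolutions.

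For part~(1), I would first choose a projective resolution $P_{*} \to M$ in $\ModOFG$. By Proposition~\ref{prop:res-I-K-preserves-projectives}, the functor $\res_{I_{K}}$ sends projectives to projectives, and restriction is exact by Proposition~\ref{prop:exactness-properties}; hence $\res_{I_{K}} P_{*} \to \res_{I_{K}} M$ is a projective resolution in the category of right $\OC{K}{\frakF\cap K}$-modules. Consequently
\begin{equation*}
    \Ext^{*}_{\frakF\cap K}(\res_{I_{K}} M, N) \isom H^{*}\bigl(\mor_{\frakF\cap K}(\res_{I_{K}} P_{*}, N)\bigr).
\end{equation*}
The adjunction $\res_{I_{K}} \dashv \coind_{I_{K}}$ supplies, for each $n$, a natural isomorphism $\mor_{\frakF\cap K}(\res_{I_{K}} P_{n}, N) \isom \mor_{\frakF}(P_{n}, \coind_{I_{K}} N)$, and naturality ensures these assemble into an isomorphism of cochain complexes. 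Taking cohomology yields $\Ext^{*}_{\frakF}(M, \coind_{I_{K}} N)$.

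For part~(2), I would again take a projective resolution $P_{*} \to M$ in $\ModOFG$, so that $\res_{I_{K}} P_{*}$ is a projective, hence flat, resolution of $\res_{I_{K}} M$. This gives $\Tor_{*}^{\frakF\cap K}(\res_{I_{K}} M, N) \isom H_{*}(\res_{I_{K}} P_{*} \mathbin{\otimes_{\frakF\cap K}} N)$. The main technical obstacle is to establish a natural isomorphism
\begin{equation*}
    \res_{I_{K}} P \mathbin{\otimes_{\frakF\cap K}} N \isom P \mathbin{\otimes_{\frakF}} \ind_{I_{K}} N
\end{equation*}
for any right $\OFG$-module $P$ and any left $\OC{K}{\frakF\cap K}$-module $N$. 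I would derive this from an associativity/Fubini-style identity for the categorical tensor product (see \cite[pp.~45ff.]{schubert-70a}) combined with the Yoneda-type formula of Lemma~\ref{lem:tensor-product-2}: unfolding $\ind_{I_{K}} N$ as a tensor product against the bimodule $\Z[\q?, I_{K}(\?)]_{G}$ and re-bracketing, the factor $P(\q?) \mathbin{\otimes_{\frakF}} \Z[\q?, I_{K}(\?)]_{G}$ collapses by Lemma~\ref{lem:tensor-product-2} to $P(I_{K}(\?)) = \res_{I_{K}} P(\?)$, which leaves $\res_{I_{K}} P \mathbin{\otimes_{\frakF\cap K}} N$.

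Finally, since $P_{*}$ is a projective resolution of $M$ and Tor is balanced, the chain complex $P_{*} \mathbin{\otimes_{\frakF}} \ind_{I_{K}} N$ also computes $\Tor_{*}^{\frakF}(M, \ind_{I_{K}} N)$, finishing the proof. Naturality in $M$ and $N$ is inherited from the naturality of the adjunctions, the natural isomorphism above, and the usual comparison theorems for projective resolutions. The hard part is the tensor-product identity in~(2): the Ext half of the statement is essentially formal once one has Proposition~\ref{prop:res-I-K-preserves-projectives}, whereas the Tor half requires one to verify that the categorical tensor product behaves associatively in the bimodule setting so that Lemma~\ref{lem:tensor-product-2} can be applied inside a double tensor product.
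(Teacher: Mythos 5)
Your argument is correct and is essentially the paper's proof: part~(1) is the same formal step of restricting a projective resolution of $M$ (exactness of $\res_{I_{K}}$ plus Proposition~\ref{prop:res-I-K-preserves-projectives}) and applying the adjunction $\res_{I_{K}}\dashv\coind_{I_{K}}$ levelwise, and part~(2) rests on the same isomorphism $\res_{I_{K}}P\mathbin{\otimes_{\frakF\cap K}}N\isom P\mathbin{\otimes_{\frakF}}\ind_{I_{K}}N$, obtained from associativity of the categorical tensor product together with the Yoneda-type collapse of Lemma~\ref{lem:tensor-product-2}. Only note that for a \emph{left} $\OC{K}{\frakF\cap K}$-module $N$ the inducing bimodule is $\Z[I_{K}(\?),\q?]_{G}$ rather than $\Z[\q?,I_{K}(\?)]_{G}$; with that variance corrected your re-bracketing is exactly the computation carried out in the paper.
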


\begin{proof}
    From a projective resolution 
    \begin{equation*}
	\ldots\to P_{2} \to P_{1} \to P_{0} \to M \to 0
    \end{equation*}
    of the $\OFG$-module $M$ we obtain a sequence
    \begin{equation*}
	 \ldots\to \res_{I_{K}} P_{2} \to \res_{I_{K}} P_{1} \to 
	\res_{I_{K}} P_{0} \to \res_{I_{K}} M \to 0
    \end{equation*}
    of $\OC{K}{\frakF\cap K}$-modules.  This sequence is exact (since
    restriction preserves exactness) and each $\res_{I_{K}} P_{i}$ is
    projective by
    Proposition~\ref{prop:res-I-K-preserves-projectives}.
    
    Now consider the first case, that is $M$ is a right $\OFG$-module 
    and $N$ is a right $\OC{K}{\frakF\cap K}$-module. Then
    \begin{multline*}
	\Ext^{n}_{\frakF\cap K}(M,N)
	=
	H_{n}(\mor_{\frakF\cap K}(\res_{I_{K}} P_{*}, N))
	\isom
	\\
	H_{n}(\mor_{\frakF}(P_{*}, \coind_{I_{K}} N))
	= 
	\Ext^{n}_{\frakF}(M, \coind_{I_{K}} N)
    \end{multline*}
    for any $n\in \N$.  Here the middle isomorphism is due to the fact
    that the restriction functor $\res_{I_{K}}$ is left adjoint to
    coinduction functor $\coind_{I_{K}}$.  In particular the
    isomorphism is natural in $M$ and $N$.  This proves the first
    isomorphism of the statement of the proposition.
    
    Consider the second part of the statement. That is, $M$ is a 
    right $\OFG$-module and $N$ is a left $\OC{K}{\frakF\cap 
    K}$-module. Then we have for any
    $n\in \N$ natural isomorphisms
    \begin{align*}
	\res_{I_{K}} P_{n}(\q?)  \mathop{\otimes_{\frakF\cap K}}
	N(\q?) 
	& \isom \bigl( P_{n}(\?)\mathop{\otimes_{\frakF}}
	\Z[I_{K}(\q?),\?]_{G} \bigr) \mathbin{\otimes_{\frakF\cap K}}
	N(\q?)  \\
	& \isom P_{n}(\?)\mathbin{\otimes_{\frakF}}
	\bigl(\Z[I_{K}(\q?),\?]_{G} \mathop{\otimes_{\frakF\cap K}}
	N(\q?)\bigr) \\
	& \isom P_{n}(\?)  \mathop{\otimes_{\frakF}} \ind_{I_{K}}
	N(\?),
    \end{align*}
    where the middle isomorphism is due to the fact that the
    categorical tensor product is
    associative~\cite[p.~163]{martinez-perez-02}.  Using this we
    obtain 
       \begin{multline*}
	\Tor_{n}^{\frakF\cap K}(M,N)
	=
	H_{n}(\res_{I_{K}} P_{*} \mathop{\otimes_{\frakF\cap K}} N)
	\isom
	\\
	H_{n}(P_{*} \mathop{\otimes_{\frakF}} \ind_{I_{K}} N)
	= 
	\Tor_{n}^{\frakF}(M, \ind_{I_{K}} N)
    \end{multline*}
    which again is natural in both $M$ and $N$.
\end{proof}

\begin{corollary}
    \label{cor:restriction-and-flats}
    Let $\frakF$ be a family of subgroups of $G$.  Furthermore, let
    $K$ be a subgroup of $G$ such that $\frakF\cap
    K\subset \frakF$. Then restriction with $I_{K}$ preserves flat 
    right Bredon modules.
\end{corollary}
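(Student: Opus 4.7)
The plan is to apply the Bredon version of Shapiro's Lemma (Proposition~\ref{prop:pre-shapiros-lemma}) together with the characterisation of flat modules via vanishing of $\Tor$ (Proposition~\ref{prop:flats-Tor-classification}). Let $M$ be a flat right $\OFG$-module and let $N$ be an arbitrary left $\OC{K}{\frakF\cap K}$-module. To conclude that $\res_{I_K} M$ is flat it suffices to show that $\Tor_{1}^{\frakF \cap K}(\res_{I_K} M, N) = 0$.

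First I would invoke the second part of Proposition~\ref{prop:pre-shapiros-lemma}, which under the hypothesis $\frakF \cap K \subset \frakF$ gives the natural isomorphism
\begin{equation*}
    \Tor_{n}^{\frakF\cap K}(\res_{I_{K}} M, N) \isom \Tor_{n}^{\frakF}(M, \ind_{I_{K}} N)
\end{equation*}
for every $n \in \N$. Next, since $M$ is flat as a right $\OFG$-module, Proposition~\ref{prop:flats-Tor-classification} tells us that $\Tor_{n}^{\frakF}(M, L) = 0$ for every left $\OFG$-module $L$ and every $n \geq 1$. Taking $L \defeq \ind_{I_K} N$ (which is a left $\OFG$-module) we obtain that the right-hand side of the displayed isomorphism vanishes in every positive degree.

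Combining these two observations yields $\Tor_{n}^{\frakF \cap K}(\res_{I_K} M, N) = 0$ for all $n \geq 1$ and all left $\OC{K}{\frakF \cap K}$-modules $N$, so applying Proposition~\ref{prop:flats-Tor-classification} a second time (now in the category $\ModOC{K}{\frakF\cap K}$) gives that $\res_{I_K} M$ is flat. There is really no obstacle here: the only point worth noticing is that Proposition~\ref{prop:pre-shapiros-lemma} is exactly the statement with $M$ on the left of the $\Tor$ (so that the restricted module appears on the flat-testing side), and that the hypothesis $\frakF \cap K \subset \frakF$ is precisely what makes the functor $I_K$, and hence $\ind_{I_K}$, well defined in the first place.
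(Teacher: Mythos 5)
Your argument is exactly the paper's proof: it applies the second isomorphism of Proposition~\ref{prop:pre-shapiros-lemma} to identify $\Tor_{1}^{\frakF\cap K}(\res_{I_{K}} M, N)$ with $\Tor_{1}^{\frakF}(M, \ind_{I_{K}} N)$, which vanishes by flatness of $M$, and then concludes via the $\Tor$-characterisation of flatness in Proposition~\ref{prop:flats-Tor-classification}. The only cosmetic difference is that you check vanishing in all positive degrees, whereas the paper (equivalently) only needs degree one.
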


\begin{proof}
    Let $M$ be a flat right $\OFG$-module. Then
    \begin{equation*}
	\Tor_{1}^{\frakF\cap K}(\res_{I_{K}} M, N) \isom 
	\Tor_{1}^{\frakF}(M, \ind_{I_{K}} N) = 0
    \end{equation*}
    for any left $\OC{K}{\frakF\cap K}$-module $N$. Therefore 
    $\res_{I_{K}} M$ is flat.
\end{proof}

\begin{proposition}[Shapiro's Lemma]
    \label{prop:shapiros-lemma}
    Let $\frakF$ be a family of subgroups of~$G$ and let $K$ be a
    subgroup of~$G$ such that $\frakF\cap K\subset\frakF$.  Then for
    any right $\OC{K}{\frakF\cap K}$-module $M$ and any left
    $\OC{K}{\frakF\cap K}$-module $N$, there exist isomorphisms
        \begin{align*}
	H_{\frakF\cap K}^{*}(K;M) & \isom
	H_{\frakF}^{*}(G;\coind_{I_{K}}M) \\
        \intertext{and}
	H^{\frakF\cap K}_{*}(K;N) & \isom
	H^{\frakF}_{*}(G;\ind_{I_{K}} N)
    \end{align*}
    which are natural in $M$ and $N$.
\end{proposition}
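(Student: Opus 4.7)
The plan is to deduce this directly from Proposition~\ref{prop:pre-shapiros-lemma}, by specialising the right $\OFG$-module in that proposition to the trivial module $\underline{\Z}_{\frakF}$. The key observation needed to make this work is the identity
\begin{equation*}
    \res_{I_{K}} \underline{\Z}_{\frakF} \isom \underline{\Z}_{\frakF\cap K}.
\end{equation*}
This is essentially immediate: under the equivalence of $\res_{I_K}$ with the precomposition functor $\res'_{I_K}$ (see the discussion just after the definition of restriction in Section~\ref{sec:res-ind-coind}), one has $(\underline{\Z}_{\frakF}\comp I_{K})(K/H) = \underline{\Z}_{\frakF}(G/H) = \Z$ for every $H\in \frakF\cap K$, and every morphism in $\OC{K}{\frakF\cap K}$ is sent by~$I_K$ to a morphism in $\OFG$ which $\underline{\Z}_{\frakF}$ sends to the identity.

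Once this identification is in hand, the two isomorphisms follow by substituting $M = \underline{\Z}_{\frakF}$ into the two parts of Proposition~\ref{prop:pre-shapiros-lemma}. Explicitly, for a right $\OC{K}{\frakF\cap K}$-module $M$ we have
\begin{equation*}
    H^{n}_{\frakF\cap K}(K; M) = \Ext^{n}_{\frakF\cap K}(\underline{\Z}_{\frakF\cap K}, M) \isom \Ext^{n}_{\frakF\cap K}(\res_{I_{K}} \underline{\Z}_{\frakF}, M) \isom \Ext^{n}_{\frakF}(\underline{\Z}_{\frakF}, \coind_{I_{K}} M),
\end{equation*}
which by definition is $H^{n}_{\frakF}(G;\coind_{I_{K}} M)$. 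Analogously, for a left $\OC{K}{\frakF\cap K}$-module~$N$ we substitute into the $\Tor$ version of the proposition to obtain
\begin{equation*}
    H_{n}^{\frakF\cap K}(K; N) \isom \Tor_{n}^{\frakF\cap K}(\res_{I_{K}}\underline{\Z}_{\frakF}, N) \isom \Tor_{n}^{\frakF}(\underline{\Z}_{\frakF}, \ind_{I_{K}} N) = H_{n}^{\frakF}(G; \ind_{I_{K}} N).
\end{equation*}

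There is no real obstacle here; the naturality in $M$ and $N$ is inherited directly from the naturality statement in Proposition~\ref{prop:pre-shapiros-lemma}. The only point worth checking carefully is the identification $\res_{I_{K}}\underline{\Z}_{\frakF} \isom \underline{\Z}_{\frakF\cap K}$, and this is where the hypothesis $\frakF\cap K\subset\frakF$ is invisibly used (it is what ensures that the functor~$I_{K}$, and hence $\res_{I_K}$, is defined in the first place).
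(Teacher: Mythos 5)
Your proposal is correct and follows exactly the paper's own argument: the paper likewise notes $\res_{I_{K}}\underline\Z_{\frakF}\isom\underline\Z_{\frakF\cap K}$ and then specialises Proposition~\ref{prop:pre-shapiros-lemma} to the trivial module, with naturality inherited from that proposition. Your extra justification of the identification via the precomposition description of $\res_{I_K}$ is a fine (and slightly more explicit) elaboration of a step the paper states without proof.
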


\begin{proof}
    Note that $\res_{I_{K}} \underline\Z_{\frakF} \isom
    \underline\Z_{\frakF\cap K}$. Then 
    Proposition~\ref{prop:pre-shapiros-lemma} says that we have 
    isomorphisms, natural in $M$ and $N$, such that
    \begin{equation*}
        H_{\frakF\cap K}^{*}(K; M) = \Ext^{*}_{\frakF\cap K} 
	(\underline\Z_{\frakF\cap K}, M) \isom 
	\Ext^{*}_{\frakF}(\underline\Z_{\frakF}, \coind_{I_{K}} M) = 
	H_{\frakF}^{*}(G;M)
    \end{equation*}
    and likewise
    \begin{equation*}
        H^{\frakF\cap K}_{*}(K; N) = \Tor_{*}^{\frakF\cap K} 
	(\underline\Z_{\frakF\cap K}, N) \isom 
	\Tor_{*}^{\frakF}(\underline\Z_{\frakF}, \ind_{I_{K}} N) = 
	H^{\frakF}_{*}(G;N).\tag*{\qedhere}
    \end{equation*}
\end{proof}

%
%

\section{Bredon Dimensions for Subgroups}

The following results about the dimension of subgroups are
generalisations of the corresponding results in classical
(co-)homology of groups.  The proofs for the classical statements work
without structural modifications.

\begin{proposition}
    \label{prop:dim-subgroups-1}
    Let $G$ be a group and $\frakF$ a family of subgroups of $G$.
    Then for any subgroup $K$ of $G$ such that $\frakF\cap
    K\subset\frakF$ we have inequalities
    \begin{equation*}
        \cd_{\frakF\cap K} K \leq \cd_{\frakF} G
	\qquad
	\text{and}
	\qquad 
	\hd_{\frakF\cap K} K \leq \hd_{\frakF} G.
    \end{equation*}
\end{proposition}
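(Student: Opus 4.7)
The plan is to transport a resolution of $\underline{\Z}_{\frakF}$ over $\OFG$ down to a resolution of $\underline{\Z}_{\frakF\cap K}$ over $\OC{K}{\frakF\cap K}$ via the restriction functor $\res_{I_{K}}$, and then compare lengths. The assumption $\frakF\cap K\subset\frakF$ ensures that the functor $I_{K}$, and hence $\res_{I_{K}}$, is well-defined.

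First I would observe that $\res_{I_{K}} \underline{\Z}_{\frakF} \isom \underline{\Z}_{\frakF\cap K}$. This is essentially immediate from the interpretation $\res_{I_{K}} M \isom M \comp I_{K}$ (noted after the definition of $\res_{F}$): evaluating $\underline{\Z}_{\frakF}$ on $I_{K}(K/H) = G/H$ for every $H\in \frakF\cap K$ gives the constant $\Z$ on objects, and every morphism is sent to the identity.

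Next, assume without loss of generality that $n \defeq \cd_{\frakF} G$ is finite, and pick a projective resolution
\begin{equation*}
    0 \to P_{n} \to \ldots \to P_{1} \to P_{0} \to \underline{\Z}_{\frakF} \to 0
\end{equation*}
of length $n$ in $\ModOFG$. Applying $\res_{I_{K}}$ yields the sequence
\begin{equation*}
    0 \to \res_{I_{K}} P_{n} \to \ldots \to \res_{I_{K}} P_{0} \to \underline{\Z}_{\frakF\cap K} \to 0
\end{equation*}
in $\mathop{\text{$\Mod$-$\OC{K}{\frakF\cap K}$}}$. This sequence is exact because restriction with $I_{K}$ is exact (Proposition~\ref{prop:exactness-properties}), and each $\res_{I_{K}} P_{i}$ is projective by Proposition~\ref{prop:res-I-K-preserves-projectives}. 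Hence $\underline{\Z}_{\frakF\cap K}$ admits a projective resolution of length $n$, giving $\cd_{\frakF\cap K} K \leq n = \cd_{\frakF} G$.

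For the homological inequality I would run exactly the same argument, starting from a flat resolution of $\underline{\Z}_{\frakF}$ of length $\hd_{\frakF} G$. The only change is that one invokes Corollary~\ref{cor:restriction-and-flats} in place of Proposition~\ref{prop:res-I-K-preserves-projectives} to ensure that each $\res_{I_{K}} Q_{i}$ remains flat. There is no real obstacle here: all of the heavy lifting (exactness of $\res_{I_{K}}$, and preservation of projectives and flats) has been established earlier in the chapter, so the proof reduces to assembling these facts and noting $\res_{I_{K}} \underline{\Z}_{\frakF} \isom \underline{\Z}_{\frakF\cap K}$.
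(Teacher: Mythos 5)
Your proof is correct and follows essentially the same route as the paper: apply the exact functor $\res_{I_{K}}$ to a minimal-length projective (respectively flat) resolution of $\underline\Z_{\frakF}$, cite Proposition~\ref{prop:res-I-K-preserves-projectives} (respectively Corollary~\ref{cor:restriction-and-flats}) to keep the terms projective (flat), and use $\res_{I_{K}}\underline\Z_{\frakF}\isom\underline\Z_{\frakF\cap K}$ to conclude.
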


\begin{proof}
    Since restriction is exact in general and since restriction with
    $I_{K}$ preservers projectives
    (Proposition~\ref{prop:res-I-K-preserves-projectives}) we get 
    from a projective resolution
    \begin{equation*}
        0 \to P_{n} \to \ldots \to P_{1} \to P_{0} \to 
	\underline\Z_{\frakF} \to 0,
    \end{equation*}
    where $n= \cd_{\frakF}G$, a projective resolution
    \begin{equation*}
        0 \to \res_{I_{K}} P_{n} \to \ldots \to \res_{I_{K}} P_{1} 
	\to \res_{I_{K}} P_{0} \to 
	\res_{I_{K}} \underline\Z_{\frakF} \to 0.
    \end{equation*}
    Now the first claim follows from $\res_{I_{K}} 
    \underline\Z_{\frakF} = \underline\Z_{\frakF\cap K}$.
    
    Similarly, since restriction with $I_{K}$ preserves flats 
    (Corollary~\ref{cor:restriction-and-flats}), one obtains from a 
    flat resolution
    \begin{equation*}
        0 \to Q_{m} \to \ldots \to Q_{1}\to Q_{0} \to 
	\underline\Z_{\frakF}\to 0
    \end{equation*}
    with $m=\hd_{\frakF}G$ a flat resolution of the trivial 
    $\OC{K}{\frakF\cap K}$-module of length~$m$. Therefore the second 
    statement is true, too.
\end{proof}

Note that if $\frakF$ is a family of subgroups of $G$ and if we are
given a chain of subgroups $H\leq K\leq G$ such that $\frakF\cap
H\subset \frakF\cap K\subset \frakF$, then we get the inequalities
\begin{align*}
    \cd_{\frakF\cap H} H & \leq \cd_{\frakF\cap K} K \leq \cd_{\frakF}
    G \\
    \intertext{and}
    \hd_{\frakF\cap H} H & \leq \hd_{\frakF\cap K} K \leq 
    \hd_{\frakF} G
\end{align*}
as in the case of classical group (co-)homology.  In particular, if
$\frakF$ is a full family of subgroups of $G$ then we get the above
inequalities for any chain of subgroups $H\leq K\leq G$.

For the geometric Bredon dimension we have the analogous result to 
Proposition~\ref{prop:dim-subgroups-1}. 

\begin{proposition}
    \label{prop:dim-subgroups-2}
    Let $G$ be a group and let $\frakF$ be a full family of subgroups 
    of $G$. Then for any subgroup $K$ of $G$ we have that
    \begin{equation*}
        \gd_{\frakF\cap K} K \leq \gd_{\frakF} G.
    \end{equation*}
\end{proposition}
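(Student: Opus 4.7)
The plan is to take a model $X$ for $E_{\frakF}G$ realizing the Bredon geometric dimension and show that, viewed as a $K$-CW-complex via restriction of the action, it is a model for $E_{\frakF\cap K}K$. Since restriction along $K\leq G$ preserves the CW-structure and cannot increase the dimension, this will immediately yield $\gd_{\frakF\cap K}K \leq \dim X = \gd_{\frakF}G$. If $\gd_{\frakF}G = \infty$ the statement is vacuous, so I assume it is finite.

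First I would verify the ``fullness transfer'' at the level of families: for $H\leq K$, the condition $H\in\frakF\cap K$ is equivalent to $H\in\frakF$, assuming $\frakF$ is full. Indeed, if $H\in\frakF$ then $H = H\cap K\in\frakF\cap K$, and conversely any $H\in\frakF\cap K$ has the form $H'\cap K$ for some $H'\in\frakF$, so $H\leq H'$ puts $H$ in $\frakF$ by subgroup-closedness. In particular $\frakF\cap K$ is a full family of subgroups of $K$, so Corollary~\ref{cor:classifying-space-alt} applies to both $E_{\frakF}G$ and~$E_{\frakF\cap K}K$.

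Next I would apply Corollary~\ref{cor:classifying-space-alt} to check the two criteria for $X$ to be a model for $E_{\frakF\cap K}K$. For any subgroup $H\leq K$ with $H\notin\frakF\cap K$, the equivalence above shows $H\notin\frakF$, and so $X^H = \emptyset$ since $X$ is a model for $E_{\frakF}G$. For any $H\in\frakF\cap K$, we have $H\in\frakF$, and hence $X^H$ is contractible by the same corollary applied to $E_{\frakF}G$.

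Finally I would observe that a $G$-CW-complex is automatically a $K$-CW-complex when restricted to the subgroup $K\leq G$ (the cell structure and its stabilisers are unchanged, only the group acting is smaller), and in particular $\dim X$ as a $K$-CW-complex is at most $\dim X$ as a $G$-CW-complex. Thus $X$ is a model for $E_{\frakF\cap K}K$ of dimension $\gd_{\frakF}G$, proving the inequality. The only mildly delicate point, and essentially the only place where fullness is used, is the equivalence $H\in\frakF\cap K \Leftrightarrow H\in\frakF$ for subgroups $H\leq K$; without it the condition $\frakF(X)\subset\frakF\cap K$ would not be automatic.
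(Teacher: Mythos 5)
Your proof is correct and follows essentially the same route as the paper: take an $n$-dimensional model $X$ for $E_{\frakF}G$ with $n=\gd_{\frakF}G$, restrict the action to $K$, and use fullness of $\frakF$ to see that $X$ is then a model for $E_{\frakF\cap K}K$ of the same dimension. Your verification via the fixed-point criterion of Corollary~\ref{cor:classifying-space-alt} just spells out in more detail what the paper compresses into one sentence, so there is nothing further to add.
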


\begin{proof}
    The proof is standard.  First of all observe that $\frakF\cap K$
    is a full family of subgroups of $K$, since $\frakF$ is a full
    family of subgroups of $G$.  Thus the geometric dimension
    $\gd_{\frakF\cap K} K$ is defined.  In order to avoid triviality,
    assume that $n\defeq \gd_{\frakF} G$ is finite.  Then there exists
    an $n$-dimensional model~$X$ for $E_{\frakF}G$ which is also a
    $K$-space when restricting the action of $G$ to $K$.
    Since~$\frakF$ is closed under taking subgroups, it follows that
    $\frakF\cap K\subset \frakF$ and as a consequence~$X$ is an
    $n$-dimensional model for $E_{\frakF\cap K}K$.  Thus
    $\gd_{\frakF\cap K} K\leq n$ and the proposition follows.
\end{proof}

As before, if $\frakF$ satisfies the conditions of
Proposition~\ref{prop:dim-subgroups-2} then so does the family of
subgroups $\frakF\cap K$ for any subgroup $K$ of $G$ and we get for
any subgroup $H$ of $K$ the sequence of inequalities
\begin{equation*}
    \gd_{\frakF\cap H} H  \leq \gd_{\frakF\cap K} K \leq 
    \gd_{\frakF} G
\end{equation*}
as in the case of classical group (co-)homology.

%
%

\section{(Co-)Homological Dimension when Passing to Larger Families
of 
Subgroups}

In this section we consider the following setup: Let $(\frakG,
\frakF)$ be a pair of families of subgroups of $G$ and we denote
by~$I$ the
inclusion functor $I\: \OFG \hookrightarrow \OGG$ of the 
corresponding orbit categories.

\begin{proposition}
    \label{prop:hd-cd-result-1}
    \begin{enumerate}
	\item
	If there exists a $k\in \N$ such that $\pd_{\frakF}
	(\res_{I} P)\leq k$ for every projective $\OGG$-module $P$
	then
	\begin{equation*}
	    \cd_{\frakF} G \leq \cd_{\frakG} G + k.
	\end{equation*}
	
	\item If there exists a $k\in \N$ such that $\fld_{\frakF}
	(\res_{I} Q)\leq k$ for every flat $\OGG$-module $Q$
	then
	\begin{equation*}
	    \hd_{\frakF} G \leq \hd_{\frakG} G + k.
	\end{equation*}
    \end{enumerate}
\end{proposition}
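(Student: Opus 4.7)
The plan is to prove both parts in parallel by a dimension-shifting argument starting from a finite length resolution of the trivial $\OGG$-module $\underline{\Z}_{\frakG}$. In order to avoid triviality I would assume that $n \defeq \cd_{\frakG} G$ (respectively $n \defeq \hd_{\frakG} G$) is finite.

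For part~(1), I would pick a projective resolution
\begin{equation*}
    0 \to P_n \to P_{n-1} \to \cdots \to P_0 \to \underline{\Z}_{\frakG} \to 0
\end{equation*}
in $\ModOGG$ of length $n$. Since restriction is exact (Proposition~\ref{prop:exactness-properties}) and the natural equivalence $\res_I \simeq \res'_I$ implies $\res_I \underline{\Z}_{\frakG} \isom \underline{\Z}_{\frakF}$, applying $\res_I$ yields an exact sequence
\begin{equation*}
    0 \to \res_I P_n \to \cdots \to \res_I P_0 \to \underline{\Z}_{\frakF} \to 0
\end{equation*}
of $\OFG$-modules in which, by hypothesis, every term $\res_I P_i$ has projective dimension at most $k$. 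The statement $\cd_{\frakF} G \leq n + k$ then reduces to the following general dimension-shifting claim: if $\underline{\Z}_{\frakF}$ admits an exact resolution of length $n$ by $\OFG$-modules of projective dimension $\leq k$, then $\pd_{\frakF} \underline{\Z}_{\frakF} \leq n+k$.

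I would prove this claim by induction on $n$. The base case $n=0$ is immediate since $\underline{\Z}_{\frakF} \isom \res_I P_0$. For the inductive step, let $K$ be the kernel of $\res_I P_0 \to \underline{\Z}_{\frakF}$. Then we get a short exact sequence $0 \to K \to \res_I P_0 \to \underline{\Z}_{\frakF} \to 0$ and an exact sequence of length $n-1$ resolving $K$ by modules of projective dimension $\leq k$. The induction hypothesis gives $\pd_{\frakF} K \leq (n-1)+k$, and the standard inequality $\pd_{\frakF} C \leq \max(\pd_{\frakF} A + 1,\, \pd_{\frakF} B)$ for a short exact sequence $0\to A\to B\to C\to 0$ (a direct consequence of Proposition~\ref{prop:pd-via-ext} and the long exact $\Ext$ sequence) then yields $\pd_{\frakF} \underline{\Z}_{\frakF} \leq \max(n+k,\, k) = n+k$, as required.

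For part~(2) the argument is structurally identical: start with a flat resolution of $\underline{\Z}_{\frakG}$ of length $n = \hd_{\frakG} G$, apply the exact functor $\res_I$ to obtain an exact resolution of $\underline{\Z}_{\frakF}$ by $\OFG$-modules of flat dimension $\leq k$, and then perform the same inductive dimension shift using the analogous inequality $\fld_{\frakF} C \leq \max(\fld_{\frakF} A + 1, \fld_{\frakF} B)$, which follows from Proposition~\ref{prop:fld-via-tor} and the long exact $\Tor$ sequence. Nothing here is difficult; the only mild obstacle is the bookkeeping of the dimension-shifting inequalities, but these are standard in homological algebra and immediate from the $\Ext$/$\Tor$ characterisations already recorded earlier in this chapter.
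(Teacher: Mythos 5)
Your proposal is correct and follows essentially the same route as the paper: restrict a length-$n$ projective (resp.\ flat) resolution of $\underline\Z_{\frakG}$ along the exact functor $\res_I$, note $\res_I\underline\Z_{\frakG}\isom\underline\Z_{\frakF}$, and then dimension-shift over a resolution whose terms have projective (resp.\ flat) dimension at most $k$ — exactly the content of the paper's auxiliary Lemma~\ref{lem:hd-cd-result-1-aux}, which should be stated for an arbitrary $\OFG$-module $M$ rather than just $\underline\Z_{\frakF}$ so that your induction may legitimately be applied to the kernel $K$. The only (cosmetic) difference is that you induct by splitting off the augmentation kernel and invoking $\pd_{\frakF} C\leq\max(\pd_{\frakF} A+1,\pd_{\frakF} B)$, whereas the paper shortens the resolution from the top using $\pd_{\frakF}(\im d_{n'})\leq k+1$; both are the same standard dimension-shifting argument.
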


To prove this proposition we need the following standard result from 
homological algebra.

\begin{lemma}
    \label{lem:hd-cd-result-1-aux}
    Assume that we have a resolution
    \begin{equation*}
        0\to X_{n} \to \ldots \to X_{0} \to M \to 0
    \end{equation*}
    of the $\OFG$-module $M$.  If there exists a $k\in \N$ such that
    $\pd_{\frakF} X_{i}\leq k$ for all $0\leq i\leq n$, then
    $\pd_{\frakF} M\leq n+k$.  Similarly, if there exists a $k\in \N$
    such that $\fld_{\frakF} X_{i}\leq k$ for all $0\leq i\leq n$,
    then $\fld_{\frakF} M\leq n+k$.
\end{lemma}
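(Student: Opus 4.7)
The plan is to prove both statements simultaneously by induction on $n$, using the characterisations of $\pd_{\frakF}$ and $\fld_{\frakF}$ in terms of vanishing of $\Ext^{*}_{\frakF}$ and $\Tor_{*}^{\frakF}$ given in Proposition~\ref{prop:pd-via-ext} and Proposition~\ref{prop:fld-via-tor}.

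For the base case $n=0$ the resolution reduces to $0\to X_{0}\to M\to 0$, giving $M\isom X_{0}$, so $\pd_{\frakF} M = \pd_{\frakF} X_{0}\leq k$ and likewise $\fld_{\frakF} M\leq k$. For the inductive step I would split the given resolution at $X_{0}$: letting $K\defeq \ker(X_{0}\to M)$, one obtains a short exact sequence
\begin{equation*}
0\to K\to X_{0}\to M\to 0
\end{equation*}
together with a resolution $0\to X_{n}\to \cdots \to X_{1}\to K\to 0$ of $K$ of length $n-1$ whose terms still satisfy the hypothesis. By the inductive hypothesis, $\pd_{\frakF} K\leq (n-1)+k$ (respectively $\fld_{\frakF} K\leq (n-1)+k$).

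The cohomological conclusion now follows from the long exact sequence
\begin{equation*}
\cdots \to \Ext^{j-1}_{\frakF}(K,N)\to \Ext^{j}_{\frakF}(M,N)\to \Ext^{j}_{\frakF}(X_{0},N)\to \Ext^{j}_{\frakF}(K,N)\to \cdots
\end{equation*}
associated with the short exact sequence above: for any $j\geq n+k+1$ we have $j-1>(n-1)+k\geq \pd_{\frakF} K$ and $j>k\geq \pd_{\frakF} X_{0}$, so the outer terms vanish for every right $\OFG$-module $N$ by Proposition~\ref{prop:pd-via-ext}, forcing $\Ext^{j}_{\frakF}(M,N)=0$. Applying Proposition~\ref{prop:pd-via-ext} once more yields $\pd_{\frakF} M\leq n+k$. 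The homological statement is handled by the same argument with $\Ext^{*}_{\frakF}(\?,N)$ replaced by $\Tor_{*}^{\frakF}(\?,N)$ for a left $\OFG$-module $N$ and Proposition~\ref{prop:pd-via-ext} replaced by Proposition~\ref{prop:fld-via-tor}.

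There is no real obstacle here; the only care needed is in the index bookkeeping to make sure both the kernel term and the module $X_{0}$ contribute trivially to the Ext (respectively Tor) long exact sequence in the relevant range. An alternative, slightly less economical, route would be to splice projective (respectively flat) resolutions of the $X_{i}$ together by iterated application of the Horseshoe Lemma; this avoids the long exact sequence but produces more notational overhead, so the inductive argument above seems preferable.
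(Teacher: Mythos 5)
Your proof is correct, and it goes by a different route than the paper's. The paper also inducts on $n$, but it truncates the resolution at the \emph{top} end: it forms the short exact sequence $0\to X_{n}\to X_{n-1}\to \im d_{n}\to 0$, invokes the standard dimension-shift bound (if two terms of a short exact sequence have projective dimension $\leq k$ then the third has projective dimension $\leq k+1$) to deduce $\pd_{\frakF}(\im d_{n})\leq k+1$, and then applies the inductive hypothesis to the shorter resolution $0\to \ker d_{n-1}\to X_{n-2}\to\cdots\to X_{0}\to M\to 0$ with the increased bound $k+1$, giving $(n-1)+(k+1)=n+k$. You instead truncate at the \emph{bottom} end: you split off $0\to K\to X_{0}\to M\to 0$, apply the inductive hypothesis first to the length-$(n-1)$ resolution of $K$, and then read off the bound for $M$ directly from the long exact sequence in $\Ext$ (respectively $\Tor$). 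Both arguments ultimately rest on the same long exact sequence; the difference is whether the short-exact-sequence step is applied once at the end (your version) or folded into the induction via an accumulating $k\mapsto k+1$ (the paper's version). Your version has the small advantage of keeping $k$ fixed throughout the induction and of being entirely self-contained in the $\Ext$/$\Tor$ vanishing characterisations, whereas the paper's version is slightly shorter on the page because it delegates the degree-shift step to a citation. Your index bookkeeping ($j-1>(n-1)+k$ and $j>k$ for $j\geq n+k+1$) is correct, and the analogous $\Tor$ argument carries over verbatim, so the proof is sound.
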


\begin{proof}
    We prove the first claim by induction on $n$. If $n=0$ then 
    $X_{0}\isom M$ and $\pd_{\frakF} M = \pd_{\frakF} X_{0} \leq k$ 
    and we are done.
    
    Thus assume that $n'\geq 1$ and that the statement of the lemma is
    true for~$n\leq n'-1$.  Then we have a short exact sequence
    \begin{equation*}
        0\to X_{n'} \to X_{n'-1} \to \im d_{n'} \to 0
    \end{equation*}
    with $\pd_{\frakF} X_{n'}\leq k$ and $\pd_{\frakF} X_{n'-1} \leq
    k$.  Then a standard argument in homological algebra, see for
    example~\cite[p.~95]{weibel-94}, implies that $\pd_{\frakF} (\im
    d_{n'}) \leq k+1$.  Since $\im d_{n'} = \ker d_{n'-1}$ we get a
    resolution
    \begin{equation*}
	0 \to \ker d_{n'-1} \to X_{n'-2} \to \ldots \to X_{0} \to M
	\to 0
    \end{equation*}
    of $M$ by $\OFG$-modules of projective dimension at most $k+1$.
    This resolution has length $n'-1$ and we can apply the induction
    hypothesis.  It follows that~$\pd_{\frakF} M \leq (n'-1) + (k+1)
    = n'+k$.  Therefore the statement of the lemma is also true in the
    case $n=n'$.
    
    Finally, the remaining statement of the lemma for the flat
    dimension of~$M$ is verified in exactly the same way.
\end{proof}

\begin{proof}[Proof of Proposition~\ref{prop:hd-cd-result-1}]
    We prove the cohomological statement first: In order to avoid
    triviality assume that $n\defeq \cd_{\frakG} G$ is finite.  Then
    there exists a projective resolution
    \begin{equation*}
	0 \to P_{n} \to \ldots \to P_{0} \to \underline\Z_{\frakG} \to
	0
    \end{equation*}
    of length $n$ of the trivial $\OGG$-module
    $\underline\Z_{\frakG}$.  Applying the restriction
    functor~$\res_{I}$ to this sequence yields a sequence
    \begin{equation*}
        0 \to \res_{I} P_{n} \to \ldots \to \res_{I} P_{0} \to 
	\underline\Z_{\frakF} \to 0
    \end{equation*}
    of $\OFG$-modules that is exact, since restriction preserves
    exactness in general.  By assumption $\pd_{\frakF}(\res_{I} P_{i})
    \leq k$ for all $0\leq i\leq n$ and so the first claim of the
    proposition follows now from Lemma~\ref{lem:hd-cd-result-1-aux}.
    
    Now the homological statement is proven in exactly the same way
    and this concludes the proof.
\end{proof}

The following result gives a upper bound for the dimensions
$\pd_{\frakF} (\res_{I} P)$, which appear in first part of 
Proposition~\ref{prop:hd-cd-result-1}, in terms of Bredon
cohomological 
dimensions of the subgroups in $\frakG$.

\begin{proposition}
    \label{prop:hd-cd-result-2}
    Assume that $\frakF\cap K\subset\frakF$ for every~$K\in \frakG$.
    Then the following two statements are true:
    \begin{enumerate}
	\item If there exists a $k\in \N$ such that
	$\cd_{\frakF\cap K} K\leq k$ for every $K\in \frakG$ then
	\begin{equation*}
	    \pd_{\frakF} (\res_{I} P)\leq k
	\end{equation*}
	for every projective $\OGG$-module $P$.
	
	\item Assume further that $\frakF$ is a full family of
	subgroups of $G$.  If there exists a $k\in \N$ such that
	$\hd_{\frakF\cap K} K\leq k$ for every $K\in \frakG$ then
	\begin{equation*}
	    \fld_{\frakF} (\res_{I} P)\leq k
	\end{equation*}
	for every projective $\OGG$-module $P$.
    \end{enumerate}
\end{proposition}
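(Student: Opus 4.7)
The plan is to reduce to the case of the free generators $\Z[\?, G/K]_{G}$ with $K\in\frakG$ and then establish the identification
\begin{equation*}
    \res_{I} \Z[\?, G/K]_{G} \isom \ind_{I_{K}}\underline\Z_{\frakF\cap K}
\end{equation*}
as right $\OFG$-modules, after which both parts follow by applying $\ind_{I_{K}}$ to an appropriate short resolution of $\underline\Z_{\frakF\cap K}$. The reduction is routine: every projective $\OGG$-module is a direct summand of a free $\OGG$-module, and the latter is a coproduct of modules of the form $\Z[\?, G/K_{i}]_{G}$ with $K_{i}\in\frakG$; restriction with $I$ preserves coproducts by Proposition~\ref{prop:preservation-prop-1} (being a left adjoint to coinduction), while projective and flat dimensions are bounded by the supremum of those of the summands of a coproduct and do not increase under passage to a direct summand, both facts being immediate from the $\Ext$ and $\Tor$ characterisations of Propositions~\ref{prop:pd-via-ext} and~\ref{prop:fld-via-tor}.

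The key identification uses the functor $I_{K}\: \OC{K}{\frakF\cap K}\to \OFG$ from Section~\ref{sec:exatness-of-IndIK}, which is well-defined by the hypothesis $\frakF\cap K\subset\frakF$. Evaluated at $G/H$ with $H\in\frakF$, the left-hand side is $\Z[G/H,G/K]_{G}$. Unwinding the definition of the tensor product over $\frakF\cap K$ and exploiting the triviality of $\underline\Z_{\frakF\cap K}$, the right-hand side is the quotient of $\coprod_{L\in\frakF\cap K}\Z[G/H, G/L]_{G}$ by the identifications $n \sim I_{K}(\varphi)\circ n$ for each morphism $\varphi$ in $\OC{K}{\frakF\cap K}$. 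The natural comparison map sends a representative $n\:G/H\to G/L$ to $p_{L}\circ n$, where $p_{L}\:G/L\to G/K$ is the canonical projection. Surjectivity follows because any $G$-map $\psi\:G/H\to G/K$ with $\psi(H)=xK$ satisfies $H^{x}\in\frakF\cap K$ (using that $\frakF$ is closed under conjugation and $H^{x}\leq K$) and hence factors through $G/H^{x}$. Injectivity amounts to showing that two such factorisations of a common map become identified under the relations: if $\psi = p_{L}\circ n_{1} = p_{L'}\circ n_{2}$, then after further factoring each $n_{i}$ through $G/H^{x_{i}}$ the two intermediate representatives differ by right multiplication by an element of $K$, yielding the necessary $K$-map.

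Granting the identification, part~(1) is immediate: a projective resolution of $\underline\Z_{\frakF\cap K}$ of length $k$ (supplied by $\cd_{\frakF\cap K} K\leq k$) is transported through $\ind_{I_{K}}$, which is exact by Proposition~\ref{prop:exactness-I-K} and preserves projectives by Proposition~\ref{prop:preservation-prop-2}, giving a projective resolution of $\ind_{I_{K}}\underline\Z_{\frakF\cap K}\isom \res_{I}\Z[\?, G/K]_{G}$ of length $\leq k$. Part~(2) is analogous: $\frakF$ being full entails $\frakF\cap K$ is full, whence Proposition~\ref{prop:preservation-prop-2} additionally ensures that $\ind_{I_{K}}$ preserves flats; applying it to a flat resolution of $\underline\Z_{\frakF\cap K}$ of length $\leq k$ (coming from $\hd_{\frakF\cap K} K\leq k$) produces the required flat resolution.

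The main obstacle will be the injectivity in the key identification, which requires carefully tracking how different factorisations of a common map through $\frakF\cap K$-orbits are related by $K$-equivariant morphisms in the orbit category. The surjectivity and the formal properties of the induction-restriction apparatus are, by comparison, straightforward once the setup is in place.
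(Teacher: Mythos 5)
Your proposal is correct and follows essentially the same route as the paper: reduce by additivity to $P=\Z[\?,G/K]_{G}$ with $K\in\frakG$, identify $\res_{I}\Z[\?,G/K]_{G}\isom\ind_{I_{K}}\underline\Z_{\frakF\cap K}$, and push a length-$k$ projective (resp.\ flat) resolution of $\underline\Z_{\frakF\cap K}$ through $\ind_{I_{K}}$, which is exact and preserves projectives (resp.\ flats when $\frakF$ is full). The only difference is that the paper simply cites Symonds (Lemma~2.7 in~\cite{symonds-05}) for the key identification, whereas you prove it directly from the tensor-product description of induction; your surjectivity and injectivity arguments for that identification are sound.
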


\begin{proof}
    Since restriction is an additive functor it is enough to carry out
    the proof for projective $\OGG$-modules of the form $P =
    \Z[\?,G/K]_{G}$ for~$K\in \frakG$.
    
    \smallskip
    
    We prove the cohomological statement first.  By assumption
    $\cd_{\frakF \cap K} K \leq k$ and therefore there exists a
    projective resolution
    \begin{equation*}
        0\to P_{k} \to \ldots \to P_{0} \to \underline{\Z}_{\frakF 
	\cap K}\to 0
    \end{equation*}
    of the trivial $\mathop{\calO_{\frakF\cap K}K}$-module
    $\underline{\Z}_{\frakF \cap K}$.  Since $\frakF\cap
    K\subset\frakF$, the inclusion functor $I_{K}\: \OC{K}{\frakF\cap
    K} \to \OFG$ is defined.  We have by
    Proposition~\ref{prop:exactness-I-K} that induction with $I_{K}$
    is exact.  Therefore we get an exact sequence
    \begin{equation*}
	0\to \ind_{I_{K}} P_{k} \to \ldots \to \ind_{I_{K}} P_{0} \to
	\ind_{I_{K}} \underline{\Z}_{\frakF \cap K}\to 0.
    \end{equation*}
    Induction preserves projectives and thus we have obtained a
    projective resolution of the $\OFG$-module $\ind_{I_{K}}
    \underline{\Z}_{\frakF \cap K}$ of length $k$.  By Lemma 2.7
    in~\cite[p.~268]{symonds-05} we have that $\Z[\?,G/K]_{G} \isom
    \ind_{I_{K}} \underline{\Z}_{\frakF \cap K}$ as $\OFG$-modules.
    On the other hand we have the equality $\Z[\?,G/K]_{G} = \res_{I}
    P$ of $\OFG$-modules and therefore $\pd_{\frakF} (\res_{I} P)\leq
    k$.
    
    \smallskip
    
    Next consider the assumptions of the homological statement of the
    proposition. Since $\hd_{\frakF\cap K}K \leq k$ we have a 
    flat resolution
    \begin{equation*}
        0\to Q_{k} \to \ldots \to Q_{0} \to \underline{\Z}_{\frakF 
	\cap K}\to 0
    \end{equation*}
    of the trivial $\OFG$-module $\underline\Z_{\frakF}$. As before 
    we can apply the induction functor~$I_{K}$ to obtain an exact 
    sequence
    \begin{equation*}
	0\to \ind_{I_{K}} Q_{k} \to \ldots \to \ind_{I_{K}} Q_{0} \to
	\Z[\?, G/K]_{G}\to 0.
    \end{equation*}
    Since $\frakF$ is assumed to be a full family of subgroups of $G$
    if follows that $\frakF\cap K$ is a full family of subgroups of
    $K$.  It follows from Proposition~\ref{prop:preservation-prop-2}
    that induction preserves flats and hence the above resolution is 
    a flat resolution of $\Z[\?, G/K]_{G}$ of length $k$. Now the
claim 
    follows from the fact $\Z[\?,G/K]_{G} = \res_{I}(P)$.
\end{proof}

The next two theorems are the algebraic counterparts to 
Proposition~5.1~(i) in~\cite{luck-12}.

\begin{theorem}
    \label{thrm:cdF-G-vs-cdG-G}
    Let $(\frakG, \frakF)$ be a pair of families of subgroups of $G$.
    Assume that $\frakF \cap K\subset\frakF$ for every $K\in \frakG$.
    If there exists a $k\in \N$ such that $\cd_{\frakF \cap K} K \leq
    k$ for every $K\in \frakG$, then
    \begin{equation*}
        \cd_{\frakF} G \leq \cd_{\frakG} G + k.
    \end{equation*}
\end{theorem}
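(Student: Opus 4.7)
The plan is to combine the two preceding propositions, which have been set up precisely for this kind of argument. Since $\cd_{\frakF \cap K} K \leq k$ for every $K \in \frakG$ and $\frakF \cap K \subset \frakF$ for every $K \in \frakG$, the hypotheses of Proposition~\ref{prop:hd-cd-result-2}(1) are satisfied. This yields $\pd_{\frakF}(\res_{I} P) \leq k$ for every projective $\OGG$-module $P$, where $I\: \OFG \hookrightarrow \OGG$ is the inclusion functor of orbit categories.

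Having established the uniform bound $\pd_{\frakF}(\res_{I} P) \leq k$, I would then invoke Proposition~\ref{prop:hd-cd-result-1}(1) directly, which produces the desired inequality $\cd_{\frakF} G \leq \cd_{\frakG} G + k$. In essence, one takes a projective resolution of $\underline\Z_{\frakG}$ of length $\cd_{\frakG} G$ in $\ModOGG$, restricts it along $I$ to obtain an exact resolution of $\underline\Z_{\frakF} = \res_I \underline\Z_{\frakG}$ by $\OFG$-modules, each of projective dimension at most $k$, and then applies the dimension-shifting Lemma~\ref{lem:hd-cd-result-1-aux} to bound $\pd_{\frakF} \underline\Z_{\frakF}$ by $\cd_{\frakG} G + k$.

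There is no real obstacle here, since all the technical work has been done in the two preceding propositions; the theorem is purely the assembly of these pieces. The only thing worth double-checking is that the assumption $\frakF \cap K \subset \frakF$ for $K \in \frakG$ is exactly what is needed to define the induction functor $\ind_{I_{K}}$ used inside Proposition~\ref{prop:hd-cd-result-2}, and that the identification $\Z[\?, G/K]_{G} \isom \ind_{I_{K}} \underline\Z_{\frakF \cap K}$ is available (which is cited there from Symonds). With these verifications already in place upstream, the proof collapses to a one-line chain of inequalities obtained by feeding Proposition~\ref{prop:hd-cd-result-2}(1) into Proposition~\ref{prop:hd-cd-result-1}(1).
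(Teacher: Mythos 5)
Your proposal is correct and is exactly the paper's argument: the theorem is deduced by feeding Proposition~\ref{prop:hd-cd-result-2}(1) into Proposition~\ref{prop:hd-cd-result-1}(1), with the dimension-shifting handled by Lemma~\ref{lem:hd-cd-result-1-aux} upstream. Nothing further is needed.
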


\begin{proof}
    This is an immediate consequence of 
    Proposition~\ref{prop:hd-cd-result-1} and
    Proposition~\ref{prop:hd-cd-result-2}.
\end{proof}

\begin{theorem}
    \label{thrm:hdF-G-vs-hdG-G}
    Let $(\frakG, \frakF)$ be a pair of families of subgroups of $G$.
    Assume that $\frakF$ is a full family.  If there exists a $k\in
    \N$ such that $\hd_{\frakF \cap K} K \leq k$ for every $K\in
    \frakG$, then
    \begin{equation*}
        \hd_{\frakF} G \leq \hd_{\frakG} G + k.
    \end{equation*}
\end{theorem}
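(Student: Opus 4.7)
The plan is to mirror the proof of Theorem~\ref{thrm:cdF-G-vs-cdG-G}, combining Proposition~\ref{prop:hd-cd-result-1}(2) with a homological upgrade of Proposition~\ref{prop:hd-cd-result-2}(2). By Proposition~\ref{prop:hd-cd-result-1}(2) it suffices to show that $\fld_{\frakF}(\res_{I} Q) \leq k$ for every flat $\OGG$-module $Q$, where $I\: \OFG \hookrightarrow \OGG$ denotes the inclusion functor of orbit categories.

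The proof of Proposition~\ref{prop:hd-cd-result-2}(2) already yields this bound when $Q$ is projective. Concretely, for $Q = \Z[\?, G/K]_{G}$ with $K\in \frakG$, the fullness of $\frakF$ forces $\frakF \cap K$ to be a full family of subgroups of $K$, so the hypothesis $\hd_{\frakF \cap K} K \leq k$ provides a flat $\OC{K}{\frakF \cap K}$-resolution of $\underline{\Z}_{\frakF \cap K}$ of length at most~$k$. Inducing up via $I_{K}$, which is exact by Proposition~\ref{prop:exactness-I-K} and preserves flatness by Proposition~\ref{prop:preservation-prop-2}, produces a flat $\OFG$-resolution of $\ind_{I_{K}} \underline{\Z}_{\frakF \cap K} \isom \res_{I}(\Z[\?, G/K]_{G})$ of length at most~$k$. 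Additivity then gives the same bound for every projective $\OGG$-module.

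To pass from projectives to an arbitrary flat $Q$, I would invoke Proposition~\ref{prop:lazard} and write $Q$ as a directed colimit $\varinjlim F_{\lambda}$ of finitely generated free $\OGG$-modules. Since $\res_{I}$ has both a left adjoint ($\ind_{I}$) and a right adjoint ($\coind_{I}$), it commutes with colimits; and since $\Tor$ commutes with filtered colimits, the flat dimension of a directed colimit is bounded by the supremum of the flat dimensions of its terms. Hence $\fld_{\frakF}(\res_{I} Q) \leq \sup_{\lambda} \fld_{\frakF}(\res_{I} F_{\lambda}) \leq k$, and plugging this into Proposition~\ref{prop:hd-cd-result-1}(2) delivers the stated inequality $\hd_{\frakF} G \leq \hd_{\frakG} G + k$.

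The main technical point is precisely this last step, which tacitly applies Proposition~\ref{prop:lazard} over $\frakG$ even though only $\frakF$ has been assumed full. In the concrete applications of the theorem throughout this thesis, $\frakG$ is one of the full families $\Ffin(G)$, $\Fvc(G)$, or $\Fall(G)$, so no additional hypothesis is needed; in greater generality one can bypass Lazard by appealing to a Grothendieck spectral sequence coming from the natural identification $\res_{I}(M) \otimes_{\frakF} N \isom M \otimes_{\frakG} \ind_{I} N$ (for $N$ a left $\OFG$-module), which relates $H^{\frakF}_{*}(G; N)$ to $H^{\frakG}_{*}$ via derived functors of induction whose vanishing beyond degree~$k$ follows from the projective case treated above.
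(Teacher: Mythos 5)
Your argument is essentially the paper's own proof: reduce via Proposition~\ref{prop:hd-cd-result-1}(2) to showing $\fld_{\frakF}(\res_{I} Q)\leq k$ for every flat $\OGG$-module $Q$, obtain this for projectives from Proposition~\ref{prop:hd-cd-result-2}(2), and pass to arbitrary flats by writing $Q$ as a filtered colimit of finitely generated free $\OGG$-modules and commuting $\Tor_{k+1}^{\frakF}(\?,B)$ with the colimit (restriction preserves colimits, and the tensor product is a left adjoint in an AB5 category). The subtlety you flag, namely invoking Proposition~\ref{prop:lazard} over $\frakG$ when only $\frakF$ is assumed full, occurs verbatim in the paper's proof as well, so it is not a deviation from the intended argument.
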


\begin{proof} 
    Let $Q$ be a flat $\OGG$-module.  By Proposition~\ref{prop:lazard}
    we have that~$Q$ is the filtered colimit of finitely generated
    free $\OGG$-modules $Q_{\lambda}$.  Since $\ModOGG$ is an
    AB5--category with enough projectives, and $\?  \otimes_{\frakF}
    B$ is a left adjoint for any left $\OFG$-module, we have by
    Corollary~2.6.16 in~\cite[p.~58]{weibel-94} that
    \begin{equation*}
	\Tor_{k+1}^{\frakF} (\varinjlim \res_{I} Q_{\lambda}, B) \isom
	\varinjlim \Tor_{k+1}^{\frakF} (\res_{I} Q_{\lambda}, 
	B)\tag{$*$}
    \end{equation*}
    for any left $\OFG$-module $B$.  By
    Proposition~\ref{prop:hd-cd-result-2}, part~(2), it follows
    that~$\fld_{\frakF} (\res_{I} Q_{\lambda}) \leq k$.  Therefore
    $\Tor_{k+1}^{\frakF} (\res_{I} Q_{\lambda}, B)=0$ for every
    $\lambda$ and thus the right hand side of~($*$) is equal to $0$,
    too.  Hence we get that
    \begin{equation*}
	\Tor_{k+1}^{\frakF}(\res_{I} Q, B) \isom \Tor_{k+1}^{\frakF}
	(\varinjlim \res_{I} Q_{\lambda}, B) = 0
    \end{equation*}
    for any left $\OFG$ module $B$ since the restriction functor
    commutes with colimits. But this implies that 
    $\fld_{\frakF}(\res_{I} Q)\leq k$. Since $Q$ was an arbitrary 
    flat $\OGG$-module, we can apply 
    Proposition~\ref{prop:hd-cd-result-1} to get $\hd_{\frakF} G\leq 
    \hd_{\frakG} G + k$.
\end{proof}

The following example is the algebraic equivalent to the first part 
of Corollary~5.4 in~\cite[p.~518]{luck-12}.

\begin{example}
    Consider the pair $(\Fvc(G), \Ffin(G))$ of subgroups of $G$.  The
    family $\Ffin(G)$ is closed under conjugation.  For any
    $K\in\Fvc(G)$ we have that~$\Ffin(G)\cap K\subset\Ffin(G)$ and it
    is known that $\uhd K\leq 1$ and $\ucd K \leq 1$, see for
    example~\cite[p.~137]{juan-pineda-06}.  Hence we have by
    Theorem~\ref{thrm:hdF-G-vs-hdG-G} and
    Theorem~\ref{thrm:cdF-G-vs-cdG-G} the two inequalities
    \begin{equation*}
        \uhd G \leq \uuhd G + 1
	\quad 
	\text{and}
	\quad
	\ucd G \leq \uucd G + 1.
    \end{equation*}
\end{example}

%
%

\section{(Co-)Homological Dimension for Direct Unions}
\label{sec:dimensions-for-direct-unions}

This section is motivated by Theorem~4.2 in
\cite[pp.~42f.]{nucinkis-04}.  In this theorem upper bounds for the
Bredon (co-)homological dimension for direct unions of groups are
given for the family of finite subgroups.  The proof given by Nucinkis
extends to a more general setting.

A direct union of groups is a special case of direct limits of groups,
a constructive description for the latter can for example be found
in~\cite[pp.~22ff.]{robinson-96}.  

\begin{DEF}
    Let $\{G_\lambda : \lambda\in \Lambda\}$ be a family of subgroups
    of a group $G$, indexed by an abstract indexing set $\Lambda$.  We
    say that $G$ is the \emph{direct union of the groups $G_\lambda$}
    if the following two conditions hold:
    \begin{enumerate}
    \item for every $\lambda,\mu\in \Lambda$ there exists a $\nu\in
    \Lambda$ such that $G_\lambda\leq G_\nu$ and $G_\mu\leq G_\nu$; 
    
    \item for every $g\in G$ there exists a $\lambda\in \Lambda$ such
    that $g\in G_\lambda$.
    \end{enumerate}
\end{DEF}

Note that we can recover $G$ from this definition as a direct limit of
the subgroups $G_{\lambda}$ in the sense
of~\cite[pp.~22ff.]{robinson-96} as follows.  We define a relation
``$\leq$'' on $\Lambda$ by
\begin{equation*}
\lambda\leq \mu :\Longleftrightarrow G_\lambda\leq G_\mu.
\end{equation*}
In this way $\Lambda$ becomes a directed set.  Whenever $G_\lambda\leq
G_\mu$ there exists an inclusion map $\varphi_\lambda^\mu\: G_\lambda
\to G_\mu$.  It is clear that
\begin{equation*}
\{ G_\lambda, \varphi_\lambda^\mu : \lambda,\mu \in \Lambda \text{
and }\lambda\leq \mu\}
\end{equation*}
forms a directed system of groups and that the obvious inclusion
homomorphisms $\imath_\lambda\: G_\lambda \hookrightarrow G$ give an
isomorphism
\begin{equation*}
\imath\: \varinjlim G_\lambda \to G.
\end{equation*}
We use this isomorphism to identify the direct limit $\varinjlim
G_\lambda$ with $G$.

\begin{DEF}
    \label{def:compatible-families}
    Let $G$ be the direct union of a family $\{G_{\lambda}\}$ of its
    subgroups indexed by the set $\Lambda$.  Assume that we are given
    a family $\frakF$ of subgroups of~$G$ and for each $\lambda\in
    \Lambda$ a family $\frakF_\lambda$ of subgroups of $G_\lambda$.
    We say that these families of subgroups are \emph{compatible with
    the direct union}\pagebreak[3] if the following four conditions are 
    satisfied:
    \begin{enumerate}
	\item\label{cond-1}
	$\frakF_\lambda \subset \frakF_\mu$ for
	every $\lambda,\mu\in \Lambda$ with $\lambda\leq \mu$,

	\item\label{cond-2} $\frakF_\lambda\subset
	\frakF$ for every $\lambda\in \Lambda$,

	\item\label{cond-3} $\frakF \subset
	\bigcup_{\lambda\in\Lambda}
	\frakF_\lambda$,
	
	\item\label{cond-4} $\frakF_{\lambda} = \frakF \cap 
	G_{\lambda}$ for all $\lambda\in \frakF$.
    \end{enumerate}
\end{DEF}

The main result of this section will be a generalised version of
Theorem~4.2 in~\cite{nucinkis-04}:

\begin{theorem}
\label{thrm:direct-union-result}
Assume that a group $G$ is the direct union of a family $\{G_\lambda :
\lambda \in \Lambda\}$ of its subgroups.  Assume that we are given
full families~$\frakF$ and~$\frakF_{\lambda}$, $\lambda\in \Lambda$,
which are compatible with the direct union.  Then
    \begin{enumerate}
	\item\label{claim1} $\hd_{\frakF} G \leq \sup
	\{\hd_{\frakF_{\lambda}}G_{\lambda}\}$ in general, and

	\item\label{claim2} $\cd_{\frakF} G \leq \sup\{
	\cd_{\frakF_{\lambda}}
	G_{\lambda}\} +1$ if the index set $\Lambda$ is countable.
    \end{enumerate}
\end{theorem}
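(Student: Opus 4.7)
The plan is to realize the trivial module $\underline{\Z}_\frakF$ as a filtered colimit of modules induced from the subgroups $G_\lambda$, and then transport dimension bounds through the colimit. By compatibility condition~(4) we have $\frakF \cap G_\lambda = \frakF_\lambda \subset \frakF$, so the inclusion functor $I_\lambda \: \OC{G_\lambda}{\frakF_\lambda} \to \OFG$ is defined for every $\lambda$, and (as cited in the proof of Proposition~\ref{prop:hd-cd-result-2}) $\ind_{I_\lambda} \underline{\Z}_{\frakF_\lambda} \isom \Z[\?, G/G_\lambda]_G$ in $\ModOFG$. The key observation is the isomorphism
\begin{equation*}
    \underline{\Z}_\frakF \isom \varinjlim_{\lambda} \Z[\?, G/G_\lambda]_G,
\end{equation*}
with the connecting morphism for $\lambda\leq\mu$ coming from the quotient $G/G_\lambda \to G/G_\mu$ (using condition~(1)). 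To verify this, evaluate at any $G/H$ with $H\in\frakF$: since $\Z[\cdot]$ commutes with colimits, the colimit equals $\Z[\varinjlim_\lambda (G/G_\lambda)^H]$; using condition~(3) to find some $\nu$ with $H\leq G_\nu$, and the directedness of $\Lambda$ to absorb any given element of $G$ into some $G_\mu$, one sees that $\varinjlim_\lambda (G/G_\lambda)^H$ is a singleton, so the colimit evaluates to $\Z$ with all structure maps the identity.

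For part~(1), set $n \defeq  \sup \{\hd_{\frakF_\lambda} G_\lambda\}$, which we may assume finite, and pick for each $\lambda$ a flat resolution $F_*^{(\lambda)} \to \underline{\Z}_{\frakF_\lambda}$ of length at most $n$.  Applying $\ind_{I_\lambda}$ -- which is exact by Proposition~\ref{prop:exactness-I-K} and preserves flats by Proposition~\ref{prop:preservation-prop-2} (using fullness of both $\frakF_\lambda$ and $\frakF$) -- yields a flat resolution of $\Z[\?, G/G_\lambda]_G$ of length at most $n$, so $\fld_{\frakF} \Z[\?, G/G_\lambda]_G \leq n$ for every $\lambda$.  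Since $\Tor$ commutes with filtered colimits in the first variable (as cited in the proof of Theorem~\ref{thrm:hdF-G-vs-hdG-G}), we get
\begin{equation*}
    \Tor_{n+1}^{\frakF}(\underline{\Z}_\frakF, N) \isom \varinjlim_\lambda \Tor_{n+1}^{\frakF}(\Z[\?, G/G_\lambda]_G, N) = 0
\end{equation*}
for every left $\OFG$-module $N$, and hence $\hd_\frakF G \leq n$ by Proposition~\ref{prop:fld-via-tor}.

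For part~(2), the analogous argument with $n\defeq \sup\{\cd_{\frakF_\lambda} G_\lambda\}$ and $\ind_{I_\lambda}$ preserving projectives (Proposition~\ref{prop:preservation-prop-2}) gives $\pd_\frakF \Z[\?, G/G_\lambda]_G \leq n$ for every~$\lambda$.  The countability of $\Lambda$ lets one extract a cofinal chain $\lambda_1\leq \lambda_2 \leq \cdots$, reducing the colimit to one indexed by $\N$.  The standard Milnor/telescope short exact sequence
\begin{equation*}
    0 \to \coprod_k \Z[\?, G/G_{\lambda_k}]_G \to \coprod_k \Z[\?, G/G_{\lambda_k}]_G \to \underline{\Z}_\frakF \to 0,
\end{equation*}
whose first map is $\id - \text{shift}$, then exhibits $\underline{\Z}_\frakF$ as the cokernel of a map between modules of projective dimension at most~$n$.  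A one-step dimension shift in the induced long exact $\Ext$-sequence yields $\cd_\frakF G \leq n+1$.

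The main obstacle is the colimit identification $\underline{\Z}_\frakF \isom \varinjlim \Z[\?, G/G_\lambda]_G$: all four compatibility conditions enter here, with condition~(3) doing the real work of collapsing the fixed-point sets to a single point.  The $+1$ loss in part~(2) is inherent because $\Ext$ does not commute with filtered colimits in the first variable; countability of $\Lambda$ is exactly the hypothesis that allows the telescope trick to convert this obstruction into a single dimension jump.
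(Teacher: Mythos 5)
Your proof is correct, and its core mechanism is genuinely different from the paper's. The identification $\underline\Z_{\frakF} \isom \varinjlim \Z[\?, G/G_{\lambda}]_{G}$ that you place at the centre is exactly the paper's Lemma~\ref{lem:direct-union-result-aux2}, and the inputs $\ind_{I_{G_{\lambda}}}\underline\Z_{\frakF_{\lambda}} \isom \Z[\?, G/G_{\lambda}]_{G}$, exactness of induction and preservation of flats and projectives are shared; but from there the paper takes a constructive path: it induces up the whole standard resolution of each $\underline\Z_{\frakF_{\lambda}}$, passes to the direct limit to recover the standard resolution of $\underline\Z_{\frakF}$ (via Proposition~\ref{prop:direct-union} and Lemma~\ref{lem:direct-union-result-aux2}), truncates at the $(n-1)$-st kernel, and finishes part~(1) because a direct limit of flats is flat, and part~(2) by citing Nucinkis's Lemma~3.4 that a countable direct limit of projectives has projective dimension at most~$1$. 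You avoid building any resolution of $\underline\Z_{\frakF}$: in~(1) you push the bound $\fld_{\frakF}\Z[\?, G/G_{\lambda}]_{G}\leq n$ through the fact that $\Tor$ commutes with filtered colimits in the first variable (the same Weibel citation the paper uses in Theorem~\ref{thrm:hdF-G-vs-hdG-G}), and in~(2) you replace the cited lemma by a cofinal chain, the Milnor telescope and a dimension shift, which in effect reproves a slightly stronger form of that lemma at the level of modules of projective dimension at most~$n$. Your route buys economy — no standard resolutions, no explicit kernels, and (1) collapses to a short $\Tor$ computation — while the paper's route additionally produces the explicit colimit description of the standard resolution. Two cosmetic remarks: the connecting map $G/G_{\lambda}\to G/G_{\mu}$ exists because $G_{\lambda}\leq G_{\mu}$, i.e.\ by the definition of the ordering on $\Lambda$, not by compatibility condition~(1); and in the telescope step you should note that coproducts in $\ModOFG$ are exact (computed objectwise), so a coproduct of modules of projective dimension at most~$n$ again has projective dimension at most~$n$ — both points are easily supplied.
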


Note that due to Proposition~\ref{prop:dim-subgroups-1} we always have
the inequalities
\begin{equation*}
    \sup \{\hd_{\frakF_{\lambda}}G_{\lambda}\} \leq \hd_{\frakF} G
    \qquad \text{and} \qquad \sup
    \{\cd_{\frakF_{\lambda}}G_{\lambda}\} \leq \cd_{\frakF} G.
\end{equation*}
In particular the inequality for the homological dimension in
Theorem~\ref{thrm:direct-union-result} is always an equality.

Before we prove the above theorem let us first give a criterion for
the families $\frakF$ and $\frakF_\lambda$, $\lambda\in \Lambda$, to 
be compatible with the direct union.

\begin{proposition}
    \label{prop:simple-comp-condition}
    Let $G$ be the direct union of a family $\{G_\lambda : \lambda
    \in \Lambda\}$ of its subgroups.  Let $\frakF$ be a family of
    subgroups of $G$ and $\frakF_{\lambda}$ families of subgroups of
    $G_{\lambda}$.  Assume that $\frakF$ is closed under forming
    subgroups, every $K\in \frakF$ is finitely generated and
    $\frakF_{\lambda} = \frakF\cap G_{\lambda}$ for every $\lambda\in
    \Lambda$.  Then the families of subgroups $\frakF$ and
    $\frakF_{\lambda}$, $\lambda\in \Lambda$, are compatible with the
    direct union.
\end{proposition}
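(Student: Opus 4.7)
The plan is to verify the four conditions of Definition~\ref{def:compatible-families} in turn. Condition~(\ref{cond-4}) is immediate, since it is exactly the hypothesis $\frakF_{\lambda} = \frakF \cap G_{\lambda}$.

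For condition~(\ref{cond-2}), I would argue as follows. By definition of the intersection of a family with a subgroup (see Section~\ref{sec:families-of-subgroups}), any $K\in \frakF_{\lambda}$ has the form $K=H\cap G_{\lambda}$ for some $H\in \frakF$. Since $K\leq H$ and $\frakF$ is closed under taking subgroups, we get $K\in \frakF$. This also yields condition~(\ref{cond-1}): if $\lambda\leq \mu$ and $K\in \frakF_{\lambda}$, then $K\in \frakF$ by what we have just shown, and $K\leq G_{\lambda}\leq G_{\mu}$. Hence $K = K\cap G_{\mu}\in \frakF\cap G_{\mu} = \frakF_{\mu}$.

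The key step is condition~(\ref{cond-3}), where the finite generation hypothesis enters. Given any $K\in \frakF$, choose a finite generating set $k_{1},\ldots,k_{n}$ of $K$. By hypothesis, every $k_{i}$ lies in some $G_{\lambda_{i}}$. Using the first defining property of a direct union and a short induction on $n$, we may find a single $\mu\in\Lambda$ such that $G_{\lambda_{i}}\leq G_{\mu}$ for every $1\leq i\leq n$. Then all generators of $K$ lie in $G_{\mu}$, so $K\leq G_{\mu}$. Hence $K = K\cap G_{\mu}$, which exhibits $K$ as an element of $\frakF_{\mu}$, as required.

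The only real step of substance is this last one, and even that is routine; the principal observation is simply that finitely many elements of a direct union lie in a common member of the union. Without the finite generation hypothesis on elements of $\frakF$, condition~(\ref{cond-3}) would fail in general: an infinitely generated subgroup of $G$ need not be contained in any single $G_{\lambda}$, even when each of its individual elements is.
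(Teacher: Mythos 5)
Your proof is correct and follows essentially the same route as the paper: condition (4) by hypothesis, conditions (1) and (2) from subgroup closure of $\frakF$, and condition (3) by placing the finitely many generators of $K\in\frakF$ into a common $G_{\mu}$ via directedness. The only difference is that you spell out the subgroup-closure argument for conditions (1) and (2) in slightly more detail than the paper does, which is harmless.
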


\begin{proof}
    Condition (\ref{cond-4}) of
    Definition~\ref{def:compatible-families} is satisfied by
    assumption.  Since~$\frakF$ is closed under forming subgroups
    condition (\ref{cond-2}) is satisfied.  If~$\lambda\leq \mu$ then
    $G_{\lambda}\leq G_{\mu}$ and it follows that
    $\frakF_{\lambda}\subset \frakF_{\mu}$.  Hence condition
    (\ref{cond-1}) is satisfied.  Now let $K\in \frakF$ be an
    arbitrary group.  Then there exists a finite set of generators
    $\{k_{1}, \ldots,k_{n}\}$ of $K$.  For each $1\leq i\leq n$ there
    exists a~$\lambda_{i}\in \Lambda$ such that~$k_{i}\in
    G_{\lambda_{i}}$.  So there exists a $\lambda\in\Lambda$ such that
    $\lambda_{i}\leq \lambda$ for every~$1\leq i\leq n$.  It follows
    that $\{k_{1},\ldots,k_{n} \}\subset G_{\lambda}$ and therefore
    $K\subset G_{\lambda}$.  Hence $K\in \frakF\cap G_{\lambda} =
    \frakF_{\lambda}$ and condition (\ref{cond-3}) is satisfied.
\end{proof}

\begin{corollary}
    \label{cor:simple-comp-condition}
    The families $\Ffin(G)$ and $\Ffin(G_\lambda)$ are 
    compatible with the direct union. Likewise the families $\Fvc(G)$ 
    and $\Fvc(G_\lambda)$ are compatible with the direct union.
    \qed
\end{corollary}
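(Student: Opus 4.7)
The plan is to deduce both statements as direct applications of Proposition~\ref{prop:simple-comp-condition}. For each family, I need to verify its three hypotheses: closure under taking subgroups, finite generation of every member, and the equality $\frakF_\lambda = \frakF\cap G_\lambda$.

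First I would handle the case $\frakF = \Ffin(G)$, with $\frakF_\lambda = \Ffin(G_\lambda)$. Closure under taking subgroups is immediate, since any subgroup of a finite group is finite. Every finite group is trivially finitely generated (take the underlying set as a generating set). The equality $\Ffin(G_\lambda) = \Ffin(G)\cap G_\lambda$ is straightforward from the definitions: a subgroup of $G_\lambda$ is finite if and only if it is a finite subgroup of $G$ contained in $G_\lambda$. Thus all three hypotheses of Proposition~\ref{prop:simple-comp-condition} hold and compatibility with the direct union follows.

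Next I would handle the case $\frakF = \Fvc(G)$, with $\frakF_\lambda = \Fvc(G_\lambda)$. For closure under taking subgroups, if $H$ is virtually cyclic and $H' \leq H$, then $H'$ meets the finite-index cyclic subgroup $C \leq H$ in a cyclic subgroup $H'\cap C$ of finite index in $H'$, so $H'$ is virtually cyclic. For finite generation, any virtually cyclic group is an extension of a finite group by $\Z$ or itself finite, hence finitely generated (this is a standard consequence of the fact that a group containing a finitely generated subgroup of finite index is itself finitely generated). The equality $\Fvc(G_\lambda) = \Fvc(G)\cap G_\lambda$ is again transparent from definitions, since virtual cyclicity is an intrinsic property of a group. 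Hence Proposition~\ref{prop:simple-comp-condition} applies and gives the desired compatibility.

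The proof is therefore essentially a verification exercise; there is no real obstacle, since the two families under consideration were built precisely with these hereditary and finite-generation properties in mind. The only point worth double-checking carefully is the finite generation of virtually cyclic groups, but this is standard.
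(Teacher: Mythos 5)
Your proposal is correct and takes exactly the route the paper intends: the corollary is stated with an immediate \qed, leaving the reader to check that the hypotheses of Proposition~\ref{prop:simple-comp-condition} (subgroup closure, finite generation of members, and $\frakF_\lambda = \frakF\cap G_\lambda$) hold for $\Ffin$ and $\Fvc$, which is precisely what you verify. Your checks are all sound, including the key point that virtually cyclic groups are finitely generated, a fact the paper itself takes for granted elsewhere (e.g.\ just before Theorem~\ref{thrm:hd-vs-cd2}).
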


Note that from the first case of the
Corollary~\ref{cor:simple-comp-condition} and together with
Theorem~\ref{thrm:direct-union-result} we recover Theorem~4.2
in~\cite{nucinkis-04}.

Before proving Theorem~\ref{thrm:direct-union-result} we require
some auxiliary results.

First of all, if $\frakF\cap K\subset \frakF$ then we
can extend the functor $I_{K}\: \OC{K}{\frakF \cap K}\to \OFG$ (see
Section~\ref{sec:exatness-of-IndIK}) to $K$-sets $X$ with
$\frakF(X)\subset \frakF\cap K$ by applying it to each orbit
seperately.  That is, if $X = \coprod_{x\in R} K/K_{x}$ where $R$ is 
a complete system of representatives for the $K$-orbits of $X$, then
\begin{equation*}
    I_{K}(X) \defeq  \coprod_{x\in R} G/K_{x}.
\end{equation*}
The set $I_{K}(X)$ is then a $G$-set with $\frakF(I_{K}(X)) = 
\frakF(X) \subset \frakF\cap K \subset \frakF$.

\begin{lemma}
    \label{lem:direct-union-result-aux1}
    Let $K\leq G$ be a subgroup such that $\frakF\cap K$ is a 
    non-empty subset of $\frakF$. Then
    \begin{equation*}
        \ind_{I_{K}} \Z[\?, X]_{G} \isom \Z[\?, I_{K}(X)]_{G}
    \end{equation*}
    for any $K$-set $X$ with $\frakF(X)\subset \frakF\cap K$. 
\end{lemma}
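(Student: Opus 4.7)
The plan is to decompose the $K$-set $X$ into its orbits, use that induction preserves coproducts, and then reduce to the case of a single orbit, where the Yoneda-type formula (Lemma~\ref{lem:tensor-product-2}) does the work.

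First I would write $X = \coprod_{x\in R} K/K_{x}$ where $R$ is a complete system of orbit representatives; by Proposition~\ref{prop:GSet-to-ModOFG} (applied to the group $K$ and the family $\frakF\cap K$) this gives
\begin{equation*}
\Z[\?, X]_{K} \isom \coprod_{x\in R} \Z[\?, K/K_{x}]_{K},
\end{equation*}
and by definition $I_{K}(X) = \coprod_{x\in R} G/K_{x}$, so $\Z[\?, I_{K}(X)]_{G} \isom \coprod_{x\in R} \Z[\?, G/K_{x}]_{G}$. Since $\ind_{I_{K}}$ is a left adjoint (to $\res_{I_{K}}$), it preserves arbitrary colimits by Proposition~\ref{prop:preservation-prop-1}, and in particular coproducts. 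Thus it suffices to establish the isomorphism
\begin{equation*}
\ind_{I_{K}} \Z[\?, K/L]_{K} \isom \Z[\?, G/L]_{G}
\end{equation*}
naturally in the single-orbit case $L = K_{x} \in \frakF \cap K$.

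For this, I would evaluate both sides at an arbitrary object $G/H$ of $\OFG$. By definition of induction with $I_{K}$,
\begin{equation*}
(\ind_{I_{K}} \Z[\?, K/L]_{K})(G/H) = \Z[\?, K/L]_{K}(\?) \mathop{\otimes_{\frakF\cap K}} \Z[G/H, I_{K}(\?)]_{G}.
\end{equation*}
Applying the Yoneda-type isomorphism of Lemma~\ref{lem:tensor-product-2} (the first isomorphism there, with the right module $\Z[G/H, I_{K}(\?)]_{G}$ and the left module $\Z[\?, K/L]_{K}$, swapping the roles as needed) collapses this tensor product to $\Z[G/H, I_{K}(K/L)]_{G} = \Z[G/H, G/L]_{G}$. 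Naturality in $G/H$ (and in $L$) follows from the naturality clause of Lemma~\ref{lem:tensor-product-2}, so these isomorphisms assemble into an isomorphism of $\OFG$-modules.

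Combining the two steps yields
\begin{equation*}
\ind_{I_{K}} \Z[\?, X]_{K} \isom \coprod_{x\in R} \ind_{I_{K}} \Z[\?, K/K_{x}]_{K} \isom \coprod_{x\in R} \Z[\?, G/K_{x}]_{G} \isom \Z[\?, I_{K}(X)]_{G},
\end{equation*}
as required. The only mildly delicate point is checking that the Yoneda-type collapse is applied in the correct variable and produces an $\OFG$-module structure on the nose (rather than only componentwise abelian-group isomorphisms); this is handled by the naturality part of Lemma~\ref{lem:tensor-product-2}, so I do not expect any serious obstacle.
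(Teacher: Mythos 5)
Your argument is correct and follows essentially the same route as the paper's own proof: decompose $X$ into orbits, use that $\ind_{I_K}$ is a left adjoint to commute past the coproduct, unwind the definition of induction for a single orbit, and apply Lemma~\ref{lem:tensor-product-2} to collapse the tensor product to $\Z[\?, G/L]_{G}$. The only slip is verbal rather than mathematical: in the collapse step you should be invoking the \emph{second} isomorphism of Lemma~\ref{lem:tensor-product-2} with $\Z[\?, K/L]_{K}$ as the free \emph{right} $\OC{K}{\frakF\cap K}$-module and $\Z[G/H, I_{K}(\?)]_{G}$ as the \emph{left} $\OC{K}{\frakF\cap K}$-module (you have the variances reversed in your parenthetical), but the displayed formula and conclusion are exactly right.
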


\begin{proof}
    \allowdisplaybreaks Let $R$ be a complete set of representatives
    of the orbit space~$X/K$.  Then we have the following sequence of
    isomorphisms of $\OFG$-modules.
    \begin{align*}
        \ind_{I_{K}} \Z[?,  X]_{K}
	& \isom
	\coprod_{x\in R} \ind_{I_{K}} \Z[\?, K/K_{x}]_{K}
	\\
	& \isom
	\coprod_{x\in R} \bigl( \Z[\q?, K/K_{x}]_{K} 
	\mathbin{\otimes_{\frakF\cap K}} \Z[\?, I_{K}(\q?)]_{G}\bigr)
	\\
	& \isom
	\coprod_{x\in R} \Z[\?, I_{K}(K/K_{x})]_{G}
	\\
	& \isom
	\coprod_{x\in R} \Z[\?, G/K_{x}]_{G}
	\\
	& \isom
	\Z[\?, I_{K}(X)]_{G}
	\qedhere
    \end{align*}
\end{proof}

\begin{lemma}
    \label{lem:direct-union-result-aux2}
    Consider the direct limiting system
    \begin{equation*}
	\{ \Z[\?, G/G_{\lambda}]_{G}, \varphi^{\mu}_{\lambda} :
	\lambda, \mu\in \Lambda \text{ and } \lambda \leq \mu\}
    \end{equation*}
    directed by $\Lambda$ where the morhpisms
    $\varphi^{\mu}_{\lambda}\: \Z[\?, G/G_{\lambda}]_{G} \to \Z[\?, 
    G/G_{\mu}]_{G}$ are induced by the projections 
    $G/G_{\lambda} \to G/G_{\mu}$. Then
    \begin{equation*}
        \varinjlim \Z[\?, G/G_{\lambda}]_{G} \isom 
	\underline\Z_{\frakF}.
    \end{equation*}
\end{lemma}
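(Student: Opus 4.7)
The plan is to compute the direct limit pointwise at each $G/H$ with $H\in\frakF$, using the identification $[G/H, G/G_\lambda]_G\isom (G/G_\lambda)^H$ to reduce the statement to a computation about the $H$-fixed points of the $G$-sets $G/G_\lambda$.

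First I would recall that colimits in $\ModOFG$ are computed componentwise, and that the free abelian group functor $\Z[\?]\:\Set\to\Ab$ preserves colimits (being left adjoint to the forgetful functor, as noted in the proof of Proposition~\ref{prop:direct-union}). Combining these two facts with the identification above gives
\begin{equation*}
    (\varinjlim \Z[\?, G/G_\lambda]_G)(G/H)
    \isom
    \varinjlim \Z[(G/G_\lambda)^H]
    \isom
    \Z[\varinjlim (G/G_\lambda)^H],
\end{equation*}
where the transition maps on the right are induced by the projections $(G/G_\lambda)^H\to (G/G_\mu)^H$, $gG_\lambda\mapsto gG_\mu$, for $\lambda\leq\mu$.

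The key step is then to show that $\varinjlim (G/G_\lambda)^H$ is a singleton. By compatibility condition~(\ref{cond-3}) together with~(\ref{cond-4}) in Definition~\ref{def:compatible-families}, there is some $\lambda_0\in\Lambda$ with $H\in\frakF_{\lambda_0}$, hence $H\leq G_{\lambda_0}$; for all $\lambda\geq\lambda_0$ the coset $G_\lambda$ itself lies in $(G/G_\lambda)^H$, so the directed system is eventually nonempty. For any two classes $gG_\lambda$ and $g'G_\mu$ represented above level $\lambda_0$, pick $\nu\in\Lambda$ with $\lambda,\mu,\lambda_0\leq\nu$; since $G=\bigcup_\nu G_\nu$, the element $g^{-1}g'$ eventually lies in some $G_{\nu'}$ with $\nu'\geq\nu$, at which point $gG_{\nu'}=g'G_{\nu'}$. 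Hence all representatives become identified in the colimit, giving $\varinjlim(G/G_\lambda)^H=\{*\}$ and therefore $(\varinjlim \Z[\?, G/G_\lambda]_G)(G/H)\isom\Z=\underline\Z_\frakF(G/H)$.

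It remains to check that these pointwise isomorphisms assemble into a morphism of $\OFG$-modules, i.e.\ are natural in morphisms $\varphi\in[G/H,G/H']_G$; but this is automatic since every step of the construction is functorial in $G/H$ and the target $\underline\Z_\frakF$ acts trivially on morphisms. The main obstacle is the verification in the preceding paragraph: one must use the full strength of the compatibility assumptions, namely~(\ref{cond-3})--(\ref{cond-4}) to ensure eventual nonemptiness of $(G/G_\lambda)^H$, and the direct union property of $G$ to collapse pairs of fixed cosets.
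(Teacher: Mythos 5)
Your argument is correct and follows the same basic strategy as the paper's proof: compute the colimit componentwise at each $G/H$, use the fact that $\Z[\?]\: \Set\to\Ab$ preserves colimits to reduce to the claim $\varinjlim(G/G_\lambda)^H = \{*\}$, and then assemble the pointwise isomorphisms. There is however one place where your write-up is more careful than the paper's. The paper verifies the singleton claim by checking the universal property of a colimit for a compatible family of maps $f_\lambda\: G/G_\lambda\to X$, showing each $f_\lambda$ is constant with a common constant value; but this argument is carried out for the full system $G/G_\lambda$ rather than for the restricted system $(G/G_\lambda)^H$, and it does not spell out why the colimit of the fixed-point subsets is again the singleton rather than the empty set. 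This is precisely the nonemptiness issue you address by invoking compatibility conditions (\ref{cond-3}) and (\ref{cond-4}) of Definition~\ref{def:compatible-families} to produce a $\lambda_0$ with $H\leq G_{\lambda_0}$, guaranteeing $(G/G_\lambda)^H\neq\emptyset$ cofinally. Your argument that any two classes eventually coincide, using the direct union property of $G$, is the same in substance as the paper's observation that the $f_\lambda$ are constant. So: essentially the same approach, with your version closing a small gap in the paper's presentation.
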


\begin{proof}
    For each $\lambda\in\Lambda$ we have a homomorphism
    \begin{equation*}
	\eta_{\lambda}\: \Z[\?, G/G_{\lambda}]_{G} \to 
	\Z[?,G/G]_{G} = \underline\Z_{\frakF}    
    \end{equation*}
    induced by the projection $p_{\lambda} G/G_{\lambda} \to G/G$.
    Clearly $\eta_{\mu} = \varphi^{\mu}_{\lambda} \circ
    \eta_{\lambda}$ for all~$\lambda\leq \mu$.  Thus the
    $\eta_{\lambda}$ define a homomorphism
    \begin{equation*}
        \eta\:  \varinjlim \Z[\?, G/G_{\lambda}]_{G} \to
	\underline\Z_{\frakF}
    \end{equation*}
    We need to show that 
    \begin{equation*}
        \eta_{H}\: \varinjlim \Z[G/G_{\lambda}^{H}] \to \Z
    \end{equation*}
    is an isomorphism for every $H\in \frakF$. Since the functor 
    $\Z[\?]\: \Set \to \Ab$ commutes with arbitrary colimits it is 
    enough to show that $\varinjlim (G/G_{\lambda})^{H} = (G/G)^{H} = 
    G/G$ where the last equality is due to the trivial action of $H$ 
    on the singleton set $G/G$.
    
    We verify this by showing that $G/G$ satisfies the universal
    property of a colimit.  Therefore assume that we are given a set
    $X$ and a collection of maps $f_{\lambda}\: G/G_{\lambda} \to X$
    such that $f_{\lambda} = f_{\mu} \circ \varphi^{\mu}_{\lambda}$
    for all $\lambda\leq \mu$.
    
    First of all, observe that each $f_{\lambda}$ is a constant
    function.  To see this, observe that if $g\in G$ there exists
    $\mu\in\Lambda$ such that $g\in G_{\mu}$ and $G_{\lambda}\leq
    G_{\mu}$.  Then
    \begin{equation*}
        f_{\lambda}(gG_{\lambda}) = 
	f_{\mu}(\varphi^{\mu}_{\lambda}(gG_{\lambda})) = 
	f_{\mu}(G_{\mu}) = 
	f_{\mu}(\varphi^{\mu}_{\lambda}(G_{\lambda})) = 
	f_{\lambda}(G_{\lambda})
    \end{equation*}
    which is independent of the choice of $g\in G$. Furthermore, 
    the value $f_{\lambda}(G_{\lambda})$ is independent of $\lambda$. 
    This is because given any $\lambda_{1}, \lambda_{2}\in \Lambda$ 
    there exists a $\mu$ such that $G_{\lambda_{1}}\leq G_{\mu}$ and 
    $G_{\lambda_{2}}\leq G_{\mu}$. Then
    \begin{equation*}
        f_{\lambda_{1}}(G_{\lambda_{1}}) = 
	f_{\mu}(\varphi^{\mu}_{\lambda_{1}}(G_{\lambda_{1}})) = 
	f_{\mu}(G_{\mu}) = 
	f_{\mu}(\varphi^{\mu}_{\lambda_{2}}(G_{\lambda_{2}})) = 
	f_{\lambda_{2}}(G_{\lambda_{2}}).
    \end{equation*}
    
    Now it follows that there exists a function $f\: G/G\to X$ such
    that $f_{\lambda} = f\circ \varphi_{\lambda}$ for all $\lambda\in
    \Lambda$ and that this function must be unique.  Thus $G/G$ has
    the universal property of a colimit and this concludes the proof.
\end{proof}

\begin{proof}[Proof of Theorem~\ref{thrm:direct-union-result}]
    The main part of the proof consists of constructing a free
    resolution of the trivial $\OFG$-module $\underline\Z_{\frakF}$
    using the standard resolutions of the trivial
    $\OC{G_{\lambda}}{\frakF_{\lambda}}$-modules
    $\underline\Z_{\frakF_{\lambda}}$ for $\lambda\in\Lambda$.

    For every $\lambda\in \Lambda$ let
    \begin{equation}
	\label{eq:direct-union-result-seq1}
	\ldots \longto F_{\lambda,2} \stackrel{d_{\lambda,2}}{\longto}
	F_{\lambda,1} \stackrel{d_{\lambda,1}}{\longto} F_{\lambda,0}
	\stackrel{\varepsilon_{\lambda}}{\longto} Z_{\lambda} \longto
	0
    \end{equation}
    be the sequence of $\OFG$-modules obtained from the standard
    resolution~\eqref{eq:standard-resolution} of the trivial
    $\OC{G_{\lambda}}{\frakF_{\lambda}}$-module
    $\underline\Z_{\frakF_{\lambda}}$ by applying the functor
    $\ind_{I_{G_{\lambda}}}$. Induction by 
    $I_{G_{\lambda}}$ is exact and thus the 
    sequence~\eqref{eq:direct-union-result-seq1} is exact. From the 
    construction of the standard resolution and from  
    Lemma~\ref{lem:direct-union-result-aux1} we know that
    \begin{equation*}
	F_{\lambda,n} \isom \Z[\?,
	I_{G_{\lambda}}(\Delta_{\lambda,n})]_{G}.
    \end{equation*}
    Furthermore $Z_{\lambda} \isom \Z[\?, G/G_{\lambda}]_{G}$.
    
    Observe that we can identify
    \begin{equation*}
	I_{G_{\lambda}}(\Delta_{\lambda,n}) = \{ (gg_{0}K_{0}, \ldots,
	gg_{n}K_{n}) : g\in G, g_{i}\in G_{\lambda} \text{ and } K_{i}\in
	\frakF_{\lambda}\}.
    \end{equation*}
    In particular for any $\lambda\leq \mu$ we have $G_{\lambda}\leq
    G_{\mu}$ and $\frakF_{\lambda} \subset \frakF_{\mu}$ and
    therefore~$I_{G_{\lambda}}(\Delta_{\lambda,n})\subset
    I_{G_{\mu}}(\Delta_{\mu,n})$.  We denote the this inclusion by
    $\varphi_{\lambda}^{\mu}$.  Clearly this inclusion is
    $G$-equivariant.  Therefore we get a morphism
    \begin{equation*}
	\varphi_{\lambda}^{\mu}\: F_{\lambda,n} \to F_{\mu,n}
    \end{equation*}
    of right $\OFG$-modules. The collection $\{F_{\lambda,n},
    \varphi_{\lambda}^{\mu} : \lambda, \mu\in \Lambda \text{ and }
    \lambda \leq \mu\}$ forms a direct limiting system directed by
    $\Lambda$. For each $n\in \N$ we denote its limit by~$F_{n}$.
    
    Similarly, for any $\lambda\leq \mu$ there exists a unique $G$-map
    $\varphi\: G/G_{\lambda} \to G/G_{\mu}$ which sends $G_{\lambda}$
    to $G_{\mu}$.  These $G$-maps give rise to morphisms
    $\varphi_{\lambda}^{\mu}\: Z_{\lambda} \to Z_{\mu}$.  It follows
    that the collection $\{Z_{\lambda}, \varphi_{\lambda}^{\mu} :
    \lambda, \mu\in \Lambda \text{ and } \lambda \leq \mu\}$ forms a
    direct limiting system directed by $\Lambda$.  We denote its
    limit by $Z$.
    
    It follows that for every $\lambda\leq\mu$ and $n\geq 1$ we have
    that the diagrams
    \begin{equation*}
        \begin{diagram}
            \node{F_{\lambda,n}}
	    \arrow{e,t}{d_{\lambda,n}}
	    \arrow{s,l}{\varphi_{\lambda}^{\mu}}
	    \node{F_{\lambda,n-1}}
	    \arrow{s,r}{\varphi_{\lambda}^{\mu}}
	    \\
	    \node{F_{\mu,n}}
	    \arrow{e,t}{d_{\mu,n}}
	    \node{F_{\mu,n-1}}
        \end{diagram}
	\qquad
	\text{and}
	\qquad
        \begin{diagram}
            \node{F_{\lambda,0}}
	    \arrow{e,t}{\varepsilon_{\lambda}}
	    \arrow{s,l}{\varphi_{\lambda}^{\mu}}
	    \node{Z_{\lambda}}
	    \arrow{s,r}{\varphi_{\lambda}^{\mu}}
	    \\
	    \node{F_{\mu,0}}
	    \arrow{e,t}{\varepsilon_{\mu}}
	    \node{Z_{\mu}}
        \end{diagram}
    \end{equation*}
    commute. Therefore we obtain a sequence
    \begin{equation}
	\label{eq:direct-union-result-seq2}
	\ldots \longto F_{2} \stackrel{d_{2}}{\longto} F_{1}
	\stackrel{d_{1}}{\longto} F_{0}
	\stackrel{\varepsilon}{\longto} Z \longto 0.
    \end{equation}
    This sequence is exact, since direct limits preserve exactness.
    
    It follows that the collection 
    $\{I_{G_{\lambda}}(\Delta_{\lambda,n}),
    \varphi_{\lambda}^{\mu} \text{ and } \lambda\leq \mu\}$ of
    $G$-sets is a direct limiting system directed by $\Lambda$. Its
    limit is
    \begin{equation*}
	\Delta_{n} \defeq  \{ (g_{0}K_{0}, \ldots, g_{n}K_{n}) : g_{i}\in
	G \text{ and } K_{i}\in \frakF\}
    \end{equation*}
    and this $G$-set is actually the direct union of the
    $I_{G_{\lambda}}(\Delta_{\lambda,n})$.  As a consequence~$F_{n}
    \isom \Z[\?, \Delta_{n}]_{G}$ by
    Proposition~\ref{prop:direct-union}.  Furthermore, it follows
    that~$Z\isom \underline\Z_{\frakF}$ by
    Lemma~\ref{lem:direct-union-result-aux2}.  Thus the
    sequence~\eqref{eq:direct-union-result-seq2} is nothing else but
    the free standard resolution of the trivial $\OFG$-module
    $\underline\Z_{\frakF}$.  We are now ready to prove the two claims
    of the theorem:
   
    \medskip
    
    We first show that
    \begin{equation*}
	\hd_{\frakF} G \leq \sup \{\hd_{\frakF_{\lambda}}G_{\lambda}\}.
    \end{equation*}
    In order to avoid triviality we assume that $n\defeq \sup \{
    \hd_{\frakF_{\lambda}} G_{\lambda}\}$ is finite.  From the
    resolution~\eqref{eq:direct-union-result-seq2} of the trivial
    $\OFG$-module we obtain the resolution
    \begin{equation}
	\label{eq:res-3}
	0 \to K \to F_{n-1} \to \ldots \to
	F_{0} \to \underline\Z_{\frakF}
	\to 0
    \end{equation}
    where $K$ is the $(n-1)$-th kernel of the
    resolution~\eqref{eq:direct-union-result-seq2}.  We claim that $K$
    is flat.  Since direct limits and induction with $I_{G_{\lambda}}$
    are exact, it follows that
    \begin{equation*}
	K \isom \varinjlim (\ind_{I_{G_{\lambda}}} K_{\lambda})
    \end{equation*}
    where the $K_{\lambda}$ are the $(n-1)$-th kernels of the free
    standard resolution of the trivial
    $\OC{G_{\lambda}}{\frakF_{\lambda}}$-modules
    $\underline\Z_{\frakF_{\lambda}}$.  But these are flat because
    $\hd_{\frakF_{\lambda}} G_{\lambda} \leq n$.  Since the functor
    $\ind_{I_{G_{\lambda}}}$ preserves flats we have that $K$ is the
    direct limit of flats and hence is flat as well.
    Therefore~\eqref{eq:res-3} is a flat resolution of the
    trivial~$\OFG$-module $\underline\Z_{\frakF}$ of length $n$ and so
    $\hd_{\frakF}G \leq \sup \{\hd_{\frakF_{\lambda}}G_{\lambda}\}$.
    
    \medskip
    
    Next, we assume that the set $\Lambda$ is countable and we want to
    verify the second claim of the theorem, that
    \begin{equation*}
	\cd_{\frakF} G \leq \sup\{ \cd_{\frakF_{\lambda}}
	G_{\lambda}\} +1.
    \end{equation*}
    Again, in order to avoid triviality we assume that $n\defeq \sup \{
    \cd_{\frakF_{\lambda}} G_{\lambda}\}$ is finite.  As before, let
    $K$ be the $(n-1)$-th kernel of the standard
    resolution~\eqref{eq:direct-union-result-seq2}.  Similarly, it
    follows that $K$ is the direct limit
    \begin{equation*}
	K \isom \varinjlim (\ind_{G_{\lambda}} K_{\lambda})
    \end{equation*}
    of projectives.  Since $\Lambda$ is assumed to be countable we can
    apply Lemma~3.4 in \cite[p.~40]{nucinkis-04}.  This lemma states
    that the limit of a countable directed system of projective right
    $\OFG$-modules has projective dimension at most~$1$.  Hence
    $\pd_{\frakF} K = 1$ and there exists a projective resolution
    \begin{equation*}
	0 \to P_{1} \to P_{0} \to K\to0
    \end{equation*}
    of $K$.  We can splice this sequence together
    with~\eqref{eq:res-3} to get a projective resolution
    \begin{equation*}
	0 \to P_{1} \to P_{0} \to F_{n} \to \ldots \to F_{0} \to
	\underline\Z_{\frakF} \to 0
    \end{equation*}
    of the trivial $\OFG$-module $\underline\Z_{\frakF}$.  Hence
    $\cd_{\frakF} G \leq \sup \{ \cd_{\frakF_{\lambda}} G_{\lambda}\}
    +1 $.
\end{proof}

\begin{proposition}
    \label{prop:dim-locally-F}
    Let $G$ be a group and $\frakF$ be a full family of finitely
    generated subgroups of $G$.  If $G$ is locally $\frakF$, that is
    every finitely generated subgroup of $G$ is contained in the
    family $\frakF$, then $\hd_{\frakF} G = 0$.  If in addition $G$ is
    countable, then $\cd_{\frakF} G \leq 1$.
\end{proposition}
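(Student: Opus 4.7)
The plan is to realise $G$ as a direct union of groups belonging to $\frakF$ and then invoke Theorem~\ref{thrm:direct-union-result}. Let $\Lambda$ denote the set of all finitely generated subgroups of $G$, and for each $\lambda \in \Lambda$ set $G_\lambda \defeq \lambda$ and $\frakF_\lambda \defeq \frakF \cap G_\lambda$. Because $G_\lambda$ is finitely generated, the hypothesis that $G$ is locally $\frakF$ gives $G_\lambda \in \frakF$, and since $\frakF$ is full, $\frakF_\lambda$ is a full family of subgroups of $G_\lambda$ containing $G_\lambda$ itself.

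Next I would check that $G$ is the direct union of the $G_\lambda$: every element $g \in G$ lies in the finitely generated subgroup $\langle g \rangle$, and given $\lambda,\mu \in \Lambda$ the subgroup $\langle \lambda \cup \mu \rangle$ is again finitely generated, so it is an upper bound in $\Lambda$. Compatibility of the families $\frakF$ and $\frakF_\lambda$ with this direct union follows immediately from Proposition~\ref{prop:simple-comp-condition}, whose hypotheses --- $\frakF$ full, all elements of $\frakF$ finitely generated, and $\frakF_\lambda = \frakF \cap G_\lambda$ --- are built into our setup.

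Now since $G_\lambda \in \frakF_\lambda$, Proposition~\ref{prop:cdG=0} gives $\cd_{\frakF_\lambda} G_\lambda = 0$, and therefore also $\hd_{\frakF_\lambda} G_\lambda = 0$ by Lemma~\ref{lem:hd-vs-cd}. Applying part~(\ref{claim1}) of Theorem~\ref{thrm:direct-union-result} yields
\begin{equation*}
    \hd_\frakF G \leq \sup_{\lambda \in \Lambda} \hd_{\frakF_\lambda} G_\lambda = 0,
\end{equation*}
which is the first assertion.

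For the second assertion, assume in addition that $G$ is countable. Every finitely generated subgroup of $G$ is determined by a finite subset of $G$, of which there are only countably many, so $\Lambda$ is countable. Part~(\ref{claim2}) of Theorem~\ref{thrm:direct-union-result} then gives
\begin{equation*}
    \cd_\frakF G \leq \sup_{\lambda \in \Lambda} \cd_{\frakF_\lambda} G_\lambda + 1 = 1,
\end{equation*}
as required. There is no real obstacle here beyond bookkeeping: the only point worth care is ensuring that the families are genuinely compatible, but this is precisely what Proposition~\ref{prop:simple-comp-condition} was designed for.
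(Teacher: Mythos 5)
Your proposal is correct and follows essentially the same route as the paper: write $G$ as the direct union of its finitely generated subgroups, note each lies in $\frakF$ so that Proposition~\ref{prop:cdG=0} gives dimension zero locally, verify compatibility via Proposition~\ref{prop:simple-comp-condition}, and apply Theorem~\ref{thrm:direct-union-result} (using countability of the index set, which the paper gets from Remark~\ref{rem:F-countable}, for the cohomological bound).
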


\begin{proof}
    Every group $G$ is the direct union of its finitely generated
    subgroups $G_{\lambda}$.  Set $\frakF_{\lambda}\defeq \frakF\cap
    G_{\lambda}$.  By assumption $G_{\lambda}\in \frakF$, and so by
    Proposition~\ref{prop:cdG=0}, $\underline\Z_{\frakF_{\lambda}}$ is
    projective and in particular a flat
    $\OC{G_{\lambda}}{\frakF_{\lambda}}$-module, and
    thus~$\cd_{\frakF}G_{\lambda} = \hd_{\frakF}G_{\lambda}=0$.  Since
    $\frakF$ is closed under forming subgroups, we have that the
    families $\frakF$ and $\frakF_{\lambda}$ are compatible with the
    limit by Proposition~\ref{prop:simple-comp-condition}.  Now the
    first part of Theorem~\ref{thrm:direct-union-result} gives
    $\hd_{\frakF} G = 0$.  If $G$ is countable then $\frakF$ is
    countable by Remark~\ref{rem:F-countable} and thus the second part
    of Theorem~\ref{thrm:direct-union-result} gives the estimation
    $\cd_{\frakF} G \leq 1$.
\end{proof}

Examples of families which satisfy the conditions of
Proposition~\ref{prop:dim-locally-F} are~$\Ffin(G)$ and $\Fvc(G)$ in
case that $G$ is locally finite and $\Fvc(G)$ in the case that $G$ is
locally virtually cyclic.  Thus we obtain the following corollary to
Proposition~\ref{prop:dim-locally-F}.

\begin{corollary}
    \label{cor:dim-locally-F}
    Assume that $G$ is a locally finite group.  Then $\uhd G
    = 0$ in general and if in addition $G$ is countable then
    $\ucd G\leq 1$.  Similarly, if $G$ is locally virtually
    cyclic then $\uuhd G=0$ in general, and if in addition $G$ is
    countable, then $\uucd G\leq 1$.\qed
\end{corollary}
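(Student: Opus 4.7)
The plan is to apply Proposition~\ref{prop:dim-locally-F} directly to the two families $\Ffin(G)$ and $\Fvc(G)$; the work reduces to verifying that these satisfy the required hypotheses in each of the two situations.

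First I would handle the locally finite case. Take $\frakF = \Ffin(G)$. Any finite group is finitely generated, and any subgroup of a finite group is finite, so $\Ffin(G)$ is a full family of finitely generated subgroups of $G$. The assumption that $G$ is locally finite means precisely that every finitely generated subgroup of $G$ is finite, hence belongs to $\Ffin(G)$, so $G$ is locally $\Ffin(G)$ in the sense required by Proposition~\ref{prop:dim-locally-F}. That proposition then yields $\uhd G = \hd_{\Ffin(G)} G = 0$, and the additional conclusion $\ucd G \leq 1$ when $G$ is countable.

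The second part is handled analogously with $\frakF = \Fvc(G)$. Every virtually cyclic group is finitely generated (it contains a cyclic subgroup of finite index). A subgroup of a virtually cyclic group is virtually cyclic, so $\Fvc(G)$ is a full family of finitely generated subgroups of $G$. The hypothesis that $G$ is locally virtually cyclic says exactly that every finitely generated subgroup of $G$ lies in $\Fvc(G)$, so $G$ is locally $\Fvc(G)$. Proposition~\ref{prop:dim-locally-F} then delivers $\uuhd G = 0$, and $\uucd G \leq 1$ under the additional assumption that $G$ is countable.

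There is no real obstacle here: the corollary is essentially a bookkeeping exercise confirming that the two standard families in question fit the abstract framework of Proposition~\ref{prop:dim-locally-F}. The only content worth emphasising is that the families $\Ffin(G)$ and $\Fvc(G)$ are genuinely closed under taking subgroups and consist of finitely generated groups, which is what allows the preceding machinery (in particular Proposition~\ref{prop:simple-comp-condition} and Theorem~\ref{thrm:direct-union-result}) to be applied through Proposition~\ref{prop:dim-locally-F}.
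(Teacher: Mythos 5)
Your proposal is correct and follows exactly the paper's route: the corollary is obtained by applying Proposition~\ref{prop:dim-locally-F} to the families $\Ffin(G)$ and $\Fvc(G)$, after noting (as you do) that these are full families of finitely generated subgroups and that "locally finite" respectively "locally virtually cyclic" is precisely the condition of being locally $\frakF$. The verifications you spell out (subgroup-closure and finite generation of finite and virtually cyclic groups) are the same ones the paper leaves implicit.
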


%
%

\section{Tensor Product of Projective Resolutions}
\label{sec:tensor-product-resolutions}

Let $G$ and $H$ be groups and $f\: G\to H$ a group homomorphisms. 
Assume that we are given families $\frakF$ and $\frakG$ of subgroups 
of $G$ and $H$ respectively, such that $f(\frakF)\subset \frakG$.

Then we can construct a functor $f\: \OFG \to \OC{H}{\frakG}$ as
follows.  Given an object $G/K$ in $\OFG$ we set $f(G/K) \defeq  H/f(K)$
which is an object of $\OC{H}{\frakG}$.  If $\varphi\: G/K \to G/L$ is
a morphism in $\OFG$ which maps $K\to gL$, $g\in G$, then~$K^{g}\leq
L$ and therefore
\begin{equation*}
    f(K)^{f(g)} = f(K^{g}) \leq f(L).
\end{equation*}
Hence there exists a unique $H$-map $f(G/K)\to f(G/L)$ which maps the
coset~$f(K)$ to the coset~$f(g)f(L)$.  We denote this $H$-map by
$f(\varphi)$.  This way we get a map $f\: \mor(G/K, G/L) \to
\mor(f(G/K), f(G/L))$ for each pair~$G/K$, $G/L$ of objects in $\OFG$.

\begin{lemma}
    The above construction gives a functor $f\: \OFG\to 
    \OC{H}{\frakG}$.
\end{lemma}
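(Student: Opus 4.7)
The plan is to verify the two functor axioms: preservation of identities and preservation of composition. Both will follow essentially from the labelling calculus $f_{y,K,L} \comp f_{x,H,K} = f_{xy,H,L}$ recalled in the discussion of the orbit category, combined with the fact that $f\: G\to H$ is itself a group homomorphism.

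First I would confirm that the construction is well defined on objects and on morphism sets. For objects, $K\in\frakF$ implies $f(K)\in\frakG$ by the standing hypothesis $f(\frakF)\subset\frakG$, so $f(G/K)=H/f(K)$ is indeed an object of $\OC{H}{\frakG}$. For morphisms, I need to check that the assignment $\varphi\mapsto f(\varphi)$ is independent of the representative $g\in G$ chosen with $\varphi(K)=gL$. If $g'=gl$ for some $l\in L$, then $f(g')=f(g)f(l)$ with $f(l)\in f(L)$, so $f(g')f(L)=f(g)f(L)$, and the prescribed image of $f(K)$ is unchanged. This uses only that $f$ is a group homomorphism.

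Next I would check the two functor axioms. The identity on $G/K$ is represented by any $g\in K$ (for instance $g=1$), and its image $f(1)=1\in f(K)$ represents the identity on $H/f(K)$, so identities are preserved. For composition, take $\varphi = f_{g,K,L}\: G/K\to G/L$ and $\psi = f_{h,L,M}\: G/L\to G/M$. The composition rule gives $\psi\comp\varphi = f_{gh,K,M}$, so
\begin{equation*}
    f(\psi\comp\varphi) = f_{f(gh),f(K),f(M)} = f_{f(g)f(h),f(K),f(M)},
\end{equation*}
where I used that $f$ is a homomorphism. On the other hand, $f(\psi)\comp f(\varphi) = f_{f(h),f(L),f(M)}\comp f_{f(g),f(K),f(L)} = f_{f(g)f(h),f(K),f(M)}$ by the composition rule applied in $\OC{H}{\frakG}$. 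These agree, so $f$ preserves composition.

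There is no genuine obstacle here; the content lies entirely in unwinding definitions and applying the labelling calculus for orbit categories together with the fact that $f$ respects products and inclusions of subgroups. The only subtlety worth flagging explicitly is well-definedness of $f(\varphi)$ on the $L$-coset ambiguity of the representative $g$, which is handled in the first paragraph.
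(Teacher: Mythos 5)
Your proof is correct and follows essentially the same route as the paper: verifying preservation of identities and composition via the labelling calculus $f_{y,K,L}\circ f_{x,H,K}=f_{xy,H,L}$ together with $f(gh)=f(g)f(h)$. The well-definedness check you add is handled in the paper by the uniqueness statement in the construction preceding the lemma, so it is a welcome but not essentially different addition.
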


\begin{proof}
    If $\id$ is the identity on $G/K$, that is $\id =
    \varphi_{e,K,K}$, then
    \begin{equation*}
        f(\id) = f(\varphi_{e,K,K}) = \varphi_{f(e),f(K),f(K)} = 
	\varphi_{e,f(K),f(K)} = \id
    \end{equation*}
    is the identity on $f(G/K)$.
    
    Assume we are given two morphisms $\varphi_{g_{1},K_{1}, K_{2}}$ 
    and $\varphi_{g_{2},K_{2}, K_{3}}$ in $\OFG$, then we know that
    \begin{equation*}
	\varphi_{g_{2},K_{2}, K_{3}}\circ \varphi _{g_{1},K_{1}, 
	K_{2}} = \varphi_{g_{2}g_{1},K_{1},K_{3}}.
    \end{equation*}
    Therefore
    \begin{multline*}
	f(\varphi_{g_{2},K_{2}, K_{3}}\circ \varphi _{g_{1},K_{1}, 
	K_{3}})
	= f(\varphi_{g_{2}g_{1},K_{1},K_{3}})
	= \varphi_{f(g_{2}g_{1}),f(K_{1}),f(K_{3})}
	\\
	= \varphi_{f(g_{2})f(g_{1}),f(K_{1}),f(K_{3})}
	= \varphi_{f(g_{1}),f(K_{1}), f(K_{2})} \circ 
	\varphi_{f(g_{2}, f(K_{2}), f(K_{3})}. 
	\qedhere
    \end{multline*}
\end{proof}

We apply the above result to the following setting.  Given two
groups~$G_{1}$ and $G_{2}$ we consider the group $G \defeq G_{1}
\times G_{2}$.  Assume we are given two semi-full families of
subgroups $\frakF_{1}$ and $\frakF_{2}$ of $G_{1}$ and $G_{2}$
respectively.  The natural choice for a family $\frakF$ of subgroups
of $G$ is the cartesian product of the families $\frakF_{1}$ and
$\frakF_{2}$.  Recall that by definition this family is
\begin{equation*}
    \frakF_{1}\times \frakF_{2} = \{ H_{1}\times H_{2} : H_{1} \in 
    \frakF_{1} \text{ and } H_{2}\in \frakF_{2}\},
\end{equation*}
see Section~\ref{sec:families-of-subgroups} in
Chapter~\ref{ch:basics}.  The projection homomorphisms $p_{i}\: G\to
G_{i}$ satisfy $p_{i}(\frakF) = \frakF_{i}$.  Therefore we obtain
projection functors
\begin{equation*}
    p_{i}\: \OFG \to \OC{G_{i}}{\frakF_{i}}
\end{equation*}
Given right $\OC{G_{i}}{\frakF_{i}}$-modules $M_{i}$, $i=1,2$, we 
can now form a tensor product of these modules over $\Z$ as follows.
Applying the restriction functor $\res_{p_{i}}$ to~$M_{i}$ we obtain 
a $\OFG$-module $\res_{p_{i}} M_{i}$ and we can form the 
$\OFG$-module
\begin{equation*}
    \res_{p_{1}} M_{1} \otimes \res_{p_{2}} M_{2}.
\end{equation*}
Essentially this module is the tensor product of $M_{1}$ and $M_{2}$
with the orbit category $\OC{G_{1}}{\frakF_{1}}$ acting on the first
factor and the orbit category $\OC{G_{2}}{\frakF_{2}}$ acting on the
second factor of the tensor product.  This construction is the
generalisation of the classical construction of the tensor product
$M_{1}\otimes M_{2}$ of a $G_{1}$-module $M_{1}$ and a $G_{2}$-module
$M_{2}$ with~$G_{1}$ acting on the first factor and~$G_{2}$ acting on
the second factor.  This in turn makes $M_{1}\otimes M_{2}$ a
$G$-module.

\begin{lemma}
    \label{lem:resZ}
    For $i=1,2$ we have the equality $\res_{p_{i}}
    \underline\Z_{\frakF_{i}} = \underline\Z_{\frakF}$.
\end{lemma}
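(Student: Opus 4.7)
The plan is to use the natural equivalence $\res_{p_i} \simeq \res'_{p_i}$ given in Section~\ref{sec:res-ind-coind}, where $\res'_{p_i} M = M \circ p_i$ is just precomposition with the functor $p_i$. Once we pass to this description of restriction, the computation becomes immediate from the definition of the constant Bredon module.

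Concretely, I would first observe that $\res_{p_i} \underline\Z_{\frakF_i}$ is naturally isomorphic to $\underline\Z_{\frakF_i} \circ p_i$ as right $\OFG$-modules. Then I would evaluate this functor on an arbitrary object $G/K \in \OFG$. By definition of the family $\frakF = \frakF_1 \times \frakF_2$, any such $K$ has the form $K = K_1 \times K_2$ with $K_i \in \frakF_i$, and $p_i(G/K) = G_i/K_i$. Hence
\begin{equation*}
    (\underline\Z_{\frakF_i} \circ p_i)(G/K) = \underline\Z_{\frakF_i}(G_i/K_i) = \Z,
\end{equation*}
which matches $\underline\Z_{\frakF}(G/K)$ on objects.

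For morphisms, I would note that the constant functor $\underline\Z_{\frakF_i}$ sends every morphism of $\OC{G_i}{\frakF_i}$ to $\id_{\Z}$. Therefore, for any morphism $\varphi$ in $\OFG$, the composite $(\underline\Z_{\frakF_i} \circ p_i)(\varphi) = \underline\Z_{\frakF_i}(p_i(\varphi)) = \id_{\Z}$, which again agrees with $\underline\Z_{\frakF}(\varphi)$. Thus the two functors agree on objects and on morphisms, giving the desired equality (or, more precisely, a natural equivalence, which is what the statement asserts under the identification $\res_{p_i} \simeq \res'_{p_i}$).

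There is really no obstacle here: the statement is essentially a formal consequence of (a) the alternative description of restriction as precomposition, and (b) the fact that any constant functor composed with any functor remains constant with the same value. The only thing to be slightly careful about is that the equality is literally an equality under $\res'_{p_i}$ but only a natural isomorphism under the tensor-product definition of $\res_{p_i}$; the use made of it in the sequel (computing Bredon homology/cohomology of direct products via tensor products of resolutions) is insensitive to this distinction.
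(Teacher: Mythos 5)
Your argument is correct and is essentially the paper's own proof: the paper likewise treats $\res_{p_i}$ as precomposition with $p_i$ and checks agreement with $\underline\Z_{\frakF}$ on objects ($\Z$) and on morphisms ($\id$). Your remark that the literal equality holds under the precomposition description $\res'_{p_i}$, while the tensor-product definition only gives a natural isomorphism, is a fair point of care but does not change the substance.
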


\begin{proof}
    Given a $H = H_{1}\times H_{2} \in \frakF$ we have 
    \begin{equation*}
	(\res_{p_{i}} \underline\Z_{\frakF_{i}})(G/H) = 
	\underline\Z_{\frakF_{i}}(G_{i}/H_{i})  = \Z = 
	\underline\Z_{\frakF}(G/H),
    \end{equation*}
    and similarly if $\varphi$ is a morphism in $\OFG$, then
    \begin{equation*}
        (\res_{p_{i}} \underline\Z_{\frakF_{i}})(\varphi)  = 
	(\underline\Z_{\frakF_{i}}\circ p_{i})(\varphi) = \id = 
	\underline\Z_{\frakF}(\varphi).\qedhere
    \end{equation*}
\end{proof}

\begin{corollary}
    \label{cor:resZ}
    Let $M_{i}$ be an $\OC{G_{i}}{\frakF_{i}}$-module, $i=1,2$. Then
    \begin{equation*} 
	\res_{p_{1}} M_{1} \otimes \res_{p_{2}}
	\underline\Z_{\frakF_{2}} = \res_{p_{1}} M_{1}
	\quad 
	\text{and}
	\quad
	\res_{p_{1}}\underline\Z_{\frakF_{1}} \otimes \res_{p_{2}}
	M_{2} = \res_{p_{2}} M_{2}.
    \end{equation*}
    In particular we have the equality $\res_{p_{1}}
    \underline\Z_{\frakF_{1}} \otimes \res_{p_{2}}
    \underline\Z_{\frakF_{2}} = \underline\Z_{\frakF}$.\qed
\end{corollary}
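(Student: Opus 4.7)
The plan is to reduce the corollary entirely to Lemma~\ref{lem:resZ} together with the elementary fact that tensoring (over $\Z$) any right $\OFG$-module by the trivial $\OFG$-module $\underline\Z_{\frakF}$ returns the module itself.

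First, I would record the following general observation: for any right $\OFG$-module $M$, there is a natural identification $M \otimes \underline\Z_{\frakF} = M$. This is verified componentwise using the definition of the tensor product over $\Z$ for Bredon modules: at each $G/H \in \frakF$ we have $(M \otimes \underline\Z_{\frakF})(G/H) = M(G/H) \otimes_{\Z} \Z = M(G/H)$ via $m \otimes 1 \mapsto m$. For a morphism $\varphi$ in $\OFG$ the structure map is $\varphi^{*} \otimes \id$, which under this identification becomes $\varphi^{*}$, so we genuinely have equality of functors (or at least a canonical natural isomorphism, which is all that is used in the sequel).

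Next, I would apply Lemma~\ref{lem:resZ}. By that lemma, $\res_{p_{2}} \underline\Z_{\frakF_{2}} = \underline\Z_{\frakF}$, so
\begin{equation*}
    \res_{p_{1}} M_{1} \otimes \res_{p_{2}} \underline\Z_{\frakF_{2}}
    = \res_{p_{1}} M_{1} \otimes \underline\Z_{\frakF}
    = \res_{p_{1}} M_{1},
\end{equation*}
where the last equality is the general observation above. The second equality in the corollary is proved in the completely symmetric way, using $\res_{p_{1}} \underline\Z_{\frakF_{1}} = \underline\Z_{\frakF}$.

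Finally, for the ``in particular'' statement, I would specialise the first (or second) equality to $M_{1} = \underline\Z_{\frakF_{1}}$ (resp.\ $M_{2} = \underline\Z_{\frakF_{2}}$) and use Lemma~\ref{lem:resZ} once more to rewrite $\res_{p_{1}} \underline\Z_{\frakF_{1}} = \underline\Z_{\frakF}$. I do not anticipate any serious obstacle here: the only thing to check with care is the componentwise identification $M(G/H) \otimes \Z = M(G/H)$ being compatible with the functorial structure, and this is immediate from the definition of the componentwise tensor product given earlier in the chapter.
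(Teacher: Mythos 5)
Your proof is correct and fills in precisely what the paper leaves implicit (the corollary carries no proof, being regarded as immediate from Lemma~\ref{lem:resZ} and the definition of the tensor product over $\Z$). Your two ingredients — Lemma~\ref{lem:resZ} and the componentwise identification $M(G/H)\otimes_{\Z}\Z = M(G/H)$ respecting the functorial structure — are exactly the intended route.
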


\begin{lemma}
    \label{lem:free-free}
    Let $F_{i} = \Z[\?, X_{i}]_{G_i}$ be free right
    $\OC{G_{i}}{\frakF_{i}}$-modules, $i=1,2$.  Then $\res_{p_{1}}
    F_{1} \otimes \res_{p_{2}} F_{2}$ is the free right $\OFG$-module
    $\Z[\?, X]_G$ with the diagonal action of $G= G_{1}\times G_{2}$
    on $X\defeq X_1\times X_2$.
\end{lemma}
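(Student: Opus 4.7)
The plan is to construct an explicit natural isomorphism $\eta\: \res_{p_1} F_1 \otimes \res_{p_2} F_2 \to \Z[\?, X]_G$ by evaluating both sides at each object $G/H \in \OFG$ and glueing the resulting isomorphisms of abelian groups into a morphism of $\OFG$-modules.

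First I would fix an object $G/H$ of $\OFG$ with $H = H_1 \times H_2$, where necessarily $H_i \in \frakF_i$. Unwinding the definition of restriction with $p_i$ gives
\begin{equation*}
    (\res_{p_1} F_1 \otimes \res_{p_2} F_2)(G/H) = \Z[G_1/H_1, X_1]_{G_1} \otimes_{\Z} \Z[G_2/H_2, X_2]_{G_2}.
\end{equation*}
Using the identification $[G_i/H_i, X_i]_{G_i} \isom X_i^{H_i}$ from \eqref{eq:G-map-identification} (applied with a single orbit at a time) the above becomes $\Z[X_1^{H_1}] \otimes \Z[X_2^{H_2}] \isom \Z[X_1^{H_1} \times X_2^{H_2}]$. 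On the other hand, the diagonal $G$-action on $X = X_1 \times X_2$ yields $(X_1 \times X_2)^H = X_1^{H_1} \times X_2^{H_2}$, so the right hand side evaluates to
\begin{equation*}
    \Z[G/H, X]_G \isom \Z[X^H] = \Z[X_1^{H_1} \times X_2^{H_2}].
\end{equation*}
Composing these two isomorphisms defines the component $\eta_{G/H}$. Concretely, a generator $\psi_1 \otimes \psi_2$ (with $\psi_i\: G_i/H_i \to X_i$) is sent to the unique $G$-map $G/H \to X$ taking $H$ to $(\psi_1(H_1), \psi_2(H_2))$.

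Next I would verify that this collection is natural in $G/H$. For a morphism $\varphi = f_{(g_1,g_2), H, H'}\: G/H \to G/H'$ in $\OFG$, the induced map on the left hand side is $\varphi^* = (p_1(\varphi))^* \otimes (p_2(\varphi))^*$, which under the identification acts componentwise as $(x_1, x_2) \mapsto (g_1 x_1, g_2 x_2)$; on the right hand side $\varphi^*$ acts by precomposition, and for $\psi\: G/H' \to X$ this sends $\psi(H') = (x_1, x_2)$ to $\psi(\varphi(H)) = (g_1, g_2)(x_1, x_2) = (g_1 x_1, g_2 x_2)$. The two agree, so the $\eta_{G/H}$ assemble into an isomorphism of right $\OFG$-modules.

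Finally, to justify that $\Z[\?, X]_G$ is free (i.e.\ lives in $\OFG$ and not merely in $\calO G$), note that the point stabiliser of $(x_1, x_2) \in X$ under the diagonal action is $G_{x_1} \times G_{x_2}$; since $G_{x_i} \in \frakF_i$ (because $F_i$ is free over the family $\frakF_i$, so $\frakF(X_i) \subset \frakF_i$) we have $\frakF(X) \subset \frakF_1 \times \frakF_2 = \frakF$, hence $\Z[\?, X]_G$ is free by Proposition~\ref{prop:GSet-to-ModOFG}. The only mildly delicate step is the last verification of naturality, where one must keep track of the conventions for the $G$-action and the direction of $\varphi^*$; beyond that the argument is a direct unwinding of definitions.
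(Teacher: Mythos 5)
Your proposal is correct and takes essentially the same route as the paper: evaluate both modules at $G/H$ with $H=H_1\times H_2$, define $\eta_{G/H}$ on generators by $\psi_1\otimes\psi_2\mapsto\bigl(H\mapsto(\psi_1(H_1),\psi_2(H_2))\bigr)$, and verify naturality by the same diagram chase (your use of the fixed-point identification $[G/H,X]_G\isom X^H$ is just a repackaging of the paper's factorisation of $G$-maps). Your explicit check that the diagonal stabilisers $G_{x_1}\times G_{x_2}$ lie in $\frakF_1\times\frakF_2$, so that $\Z[\?,X]_G$ is genuinely free by Proposition~\ref{prop:GSet-to-ModOFG}, is a detail the paper leaves implicit and is a welcome addition.
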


\begin{proof}
    \allowdisplaybreaks
    For each $H= H_{1}\times H_{2}\in \frakF$ we have
    \begin{align*}
	(\res_{p_{1}} F_{1} \otimes \res_{p_{2}} F_{2})(G/H) 
	& =
	\Z[G_{1}/H_{1}, X_{1}]_{G_{1}} \otimes \Z[G_{2}/H_{2},
	X_{2}]_{G_{2}} \\
	& =
	\Z \bigl[ [G_{1}/H_{1}, X_{1}]_{G_{1}} \times
	\Z[G_{2}/H_{2}, X_{2}]_{G_{2}}\bigr] \\
	& \isom  
 	\Z[G/H, X]_G
    \end{align*}
    and this isomorphism is given by
    \begin{equation*}
        \psi_{1}\otimes \psi_{2} \mapsto \psi
    \end{equation*}
    where $\varphi\: G/H\to X$ is the $G$-map given by
    \begin{equation*}
	H \mapsto \bigl(\psi_{1}(H_{1}), \psi_{2}(H_{2})\bigr).
    \end{equation*}
    Denote this isomorphism by $\eta_{H}$. We claim that these 
    isomorphisms form isomorphism
    \begin{equation*}
	\eta\: \Z[\?, X_{1}]_{G_{1}} \otimes \Z[\?, X_{2}]_{G_{2}} \to
	\Z[\?, X]_{G}
    \end{equation*}
    of $\OFG$-modules. This can be verified by a straightforward 
    diagram chase as follows.
    
    Let $H= H_{1}\times H_{2}, K = K_{1}\times K_{2}\in \frakF$ and 
    $\varphi = (\varphi_{1}, \varphi_{2}) \in [G/H, G/K]_{G}$. 
    Furthermore, let $\psi_{1}\otimes \psi_{2}$ a basis element of 
    $\Z[G_{1}/K_{1}, X_{1}]_{G_{1}} \otimes \Z[G_{2}/K_{2}, 
    X_{2}]_{G_{2}}$.
    
    Then
    \begin{align*}
        (\eta_{H}\circ \varphi^{*})(\psi_{1}\otimes \psi_{2})
	& = 
	\eta_{H}((\psi_{1} \otimes \psi_{2})\circ \varphi)
	\\
	& = 
	\eta_{H}((\psi_{1} \circ \varphi_{1})\otimes (\psi_{2}\circ 
	\varphi_{2}))
	\\
	& =
	(\psi_{1} \circ \varphi_{1}, \psi_{2}\circ 
	\varphi_{2})
	\\
	& = 
	\varphi^{*}(\psi_{1}, \psi_{2})
	\\
	& = (\varphi^{*}\circ \eta_{K})(\psi_{1}\otimes \psi_{2}).
    \end{align*}
    Therefore the $\eta_{G/H}$ form an homomorphism of right 
    $\OFG$-modules and since each $\eta_{G/H}$ is an isomorphism the 
    homomorphism $\eta$ is an isomorphism, too.
\end{proof}

\begin{corollary}
    \label{cor:proj-proj}
    Let $P_{i}$ be projective right $\OC{G_{i}}{\frakF_{i}}$-modules,
    $i=1,2$.  Then $\res_{p_{1}} P_{1} \otimes \res_{p_{2}} P_{2}$ is 
    a projective $\OFG$-module.
\end{corollary}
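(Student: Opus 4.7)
The plan is to reduce to the case of free modules already treated in Lemma~\ref{lem:free-free}, using the standard characterization of projectives as direct summands of free modules.

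First I would invoke this characterization to choose right $\OC{G_i}{\frakF_i}$-modules $Q_i$ and free right $\OC{G_i}{\frakF_i}$-modules $F_i$ such that $F_i \isom P_i \oplus Q_i$ for $i=1,2$. Next I would observe two distributivity properties of the operations involved. On the one hand, the restriction functor $\res_{p_i}$ is a left adjoint (induction with $p_i$ is right adjoint to restriction in the appropriate formulation, but more to the point restriction is exact and preserves colimits by Proposition~\ref{prop:preservation-prop-1}), so $\res_{p_i}$ commutes with direct sums. On the other hand, the tensor product $\? \otimes \?\?$ of right $\OFG$-modules is defined pointwise as the ordinary tensor product over $\Z$ of abelian groups, and since $\otimes_\Z$ in $\Ab$ distributes over direct sums, so does $\otimes$ in $\ModOFG$.

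Combining these two facts gives an isomorphism of $\OFG$-modules
\begin{equation*}
    \res_{p_{1}} F_{1} \otimes \res_{p_{2}} F_{2}
    \isom
    \bigl(\res_{p_{1}} P_{1} \otimes \res_{p_{2}} P_{2}\bigr)
    \oplus
    \bigl(\res_{p_{1}} P_{1} \otimes \res_{p_{2}} Q_{2}\bigr)
    \oplus
    \bigl(\res_{p_{1}} Q_{1} \otimes \res_{p_{2}} P_{2}\bigr)
    \oplus
    \bigl(\res_{p_{1}} Q_{1} \otimes \res_{p_{2}} Q_{2}\bigr).
\end{equation*}
By Lemma~\ref{lem:free-free} the left-hand side is a free (in particular projective) $\OFG$-module, and consequently the direct summand $\res_{p_{1}} P_{1} \otimes \res_{p_{2}} P_{2}$ is projective as well.

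There is essentially no obstacle here beyond the bookkeeping above; the only point that deserves verification is the claim that the pointwise tensor product commutes with direct sums of Bredon modules, but this is immediate from the componentwise definition in the relevant section and the corresponding fact in $\Ab$. All the substantive work has already been done in Lemma~\ref{lem:free-free}, and the corollary is obtained by the usual free-summand trick used to promote statements about free modules to statements about projective modules.
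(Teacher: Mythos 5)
Your proof is correct and is essentially the paper's own argument: decompose $F_i = P_i \oplus Q_i$ with $F_i$ free, use additivity of $\res_{p_i}$ and of the pointwise tensor product over $\Z$ to exhibit $\res_{p_1} P_1 \otimes \res_{p_2} P_2$ as a direct summand of $\res_{p_1} F_1 \otimes \res_{p_2} F_2$, which is free by Lemma~\ref{lem:free-free}. (Your parenthetical on adjointness is slightly off — in the paper induction is \emph{left} adjoint to restriction, so restriction preserves limits via that adjunction and colimits via its adjunction with coinduction — but this is immaterial, since plain additivity of the two functors is all the argument needs.)
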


\begin{proof}
    Since $P_{i}$ is projective it is a direct summand of a free 
    $\OC{G_{i}}{\frakF_{i}}$-module $F_{i}$, say $F_{i} = P_{i} 
    \oplus Q_{i}$ for some projective $Q_{i}$. Since restriction and 
    the tensor product over $\Z$ are additive functors we get by the 
    previous lemma that
    \begin{multline*}
        \res_{p_{1}} F_{1} \otimes \res_{p_{2}} F_{2}
	= 
	(\res_{p_{1}} P_{1} \otimes \res_{p_{2}} P_{2})
	\oplus
	(\res_{p_{1}} P_{1} \otimes \res_{p_{2}} Q_{2})
	\\
	\oplus
	(\res_{p_{1}} Q_{1} \otimes \res_{p_{2}} P_{2})
	\oplus
	(\res_{p_{1}} Q_{1} \otimes \res_{p_{2}} Q_{2})
    \end{multline*}
    is a free $\OFG$-module.  Therefore $\res_{p_{1}} P_{1} \otimes
    \res_{p_{2}} P_{2}$ is a direct summand of a free $\OFG$-module
    and thus projective.
\end{proof}

\begin{lemma}
    Let $F_{i} = \Z[\?, X_{i}]_{G_i}$ be a free right
    $\OC{G_{i}}{\frakF_{i}}$-modules, $i=1,2$.  Then $\res_{p_{i}}
    F_{i}$ is a free right $\OFG$-module.
\end{lemma}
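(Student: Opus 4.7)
The plan is to identify $\res_{p_1} F_1$ explicitly with a Bredon module of the form $\Z[\?, Y]_G$ for a suitable $G$-set $Y$, and then to invoke Proposition~\ref{prop:GSet-to-ModOFG} to conclude freeness. By symmetry, it suffices to treat the case $i = 1$.

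Write $F_1 = \Z[\?, X_1]_{G_1}$ with $X_1$ a $G_1$-set satisfying $\frakF(X_1) \subset \frakF_1$. I would let $Y$ denote $X_1$ endowed with the $G$-action pulled back along the projection $p_1\: G \to G_1$, so that the factor $\{1\} \times G_2$ acts trivially. Using the natural equivalence of $\res_{p_1}$ with pre-composition by $p_1\: \OFG \to \OC{G_1}{\frakF_1}$, one computes for each $H = H_1 \times H_2 \in \frakF$ that $(\res_{p_1} F_1)(G/H) = F_1(G_1/H_1) \cong \Z[X_1^{H_1}]$. Because $H_2$ acts trivially on $Y$, one has $Y^H = X_1^{H_1}$, so these pointwise abelian groups agree with $\Z[\?, Y]_G(G/H)$.

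The next step is to verify that this pointwise identification is compatible with morphisms of $\OFG$, assembling into an isomorphism $\res_{p_1} F_1 \cong \Z[\?, Y]_G$ of $\OFG$-modules. This amounts to a routine diagram chase tracing a morphism $\varphi = f_{(g_1,g_2),H,H'}$ around the relevant square; the calculation reduces to the action of $g_1$ on $X_1$, which matches on both sides by construction of the $G$-action on $Y$.

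Finally, for any $y \in Y$ the $G$-stabilizer is $(G_1)_y \times G_2$. Since $F_1$ is free over $\frakF_1$ we have $(G_1)_y \in \frakF_1$, and provided the ambient hypothesis $G_2 \in \frakF_2$ holds, the stabilizer lies in $\frakF = \frakF_1 \times \frakF_2$. Hence $\frakF(Y) \subset \frakF$ and Proposition~\ref{prop:GSet-to-ModOFG} yields freeness. The main obstacle is precisely this stabilizer condition: without assuming $G_{3-i} \in \frakF_{3-i}$, the construction produces a well-defined $G$-set $Y$ but its stabilizers involve the full factor $G_{3-i}$, which need not lie in $\frakF_{3-i}$, so one must rely on whatever contextual assumption in the chapter guarantees this containment.
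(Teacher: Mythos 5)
Your identification of $\res_{p_i} F_i$ with $\Z[\?, X_i]_{G}$, where $X_i$ carries the $G$-action $gx \defeq p_i(g)x$, is exactly the route taken in the paper: its proof consists of precisely this one observation and nothing more. But the obstacle you flag at the end is genuine and is \emph{not} resolved anywhere in the chapter. The $G$-stabiliser of $x\in X_1$ under the pulled-back action is $(G_1)_x\times G_2$, and Proposition~\ref{prop:GSet-to-ModOFG} yields freeness of $\Z[\?, X_1]_G$ only when these stabilisers lie in $\frakF=\frakF_1\times\frakF_2$, i.e.\ only when $G_2\in\frakF_2$. There is no such standing assumption in this section (the $\frakF_i$ are arbitrary semi-full families, and in the intended applications they are $\Ffin$ or $\Fvc$ of infinite groups), and without it the statement actually fails: take $G_1=G_2=\Z$, $\frakF_1=\Ffin(G_1)$, $\frakF_2=\Ffin(G_2)$ and $X_1=G_1/1$ the regular $G_1$-set, so that $F_1$ is free on one generator and $\frakF=\{1\}$. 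Then $\res_{p_1}F_1$ corresponds to the right $\Z[\Z^2]$-module $\Z[\Z]$ on which the second factor acts trivially; every element is annihilated by the non-zero element $t_2-1$ of the domain $\Z[\Z^2]$, so this module embeds in no free module and is not even projective. So your argument is as complete as the one in the paper; the missing hypothesis $G_{3-i}\in\frakF_{3-i}$ (or some substitute guaranteeing the stabiliser condition) is a defect of the statement rather than of your proof, and you were right not to pretend it can be avoided.

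It is worth noting that this single-factor statement, and the corollary about $\res_{p_i}P_i$ drawn from it, is not what the main results of the section rely on: Proposition~\ref{prop:key-result} and the K\"unneth arguments only use the product $\res_{p_1}F_1\otimes\res_{p_2}F_2$, which by Lemma~\ref{lem:free-free} is $\Z[\?, X_1\times X_2]_G$ with stabilisers $(G_1)_{x_1}\times(G_2)_{x_2}\in\frakF$. There the stabiliser condition is automatic, which is exactly why the two-factor statement is sound while the one-factor statement needs the extra hypothesis you identified.
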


\begin{proof}
    This follows essentially from the observation that
    \begin{equation*}
	\res_{p_{i}} \Z[\?, X_{i}] _{G_i}= \Z[\?, X_{i}]_{G},
    \end{equation*}
    where the set $X_{i}$ on the right hand side of the 
    equation is seen as a $G$-set by~$gx \defeq  p_{i}(g)x$.\pagebreak[3]
\end{proof}

\begin{corollary}
    Let $P_{i}$ be a projective right 
    $\OC{G_{i}}{\frakF_{i}}$-module. Then $\res_{p_{i}} P_{i}$ is a 
    projective $\OFG$-module.
\end{corollary}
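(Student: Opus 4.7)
The plan is to reduce this immediately to the preceding lemma by exploiting the standard characterisation of projectives as direct summands of frees, together with additivity of the restriction functor. First I would recall that since $P_i$ is projective, there exists a right $\OC{G_i}{\frakF_i}$-module $Q_i$ such that $F_i \defeq P_i \oplus Q_i$ is free. Applying $\res_{p_i}$ to this decomposition gives
\begin{equation*}
    \res_{p_i} F_i = \res_{p_i} P_i \oplus \res_{p_i} Q_i,
\end{equation*}
where the fact that $\res_{p_i}$ preserves direct sums is immediate, since restriction with a functor is in particular additive (this was already noted in Proposition~\ref{prop:preservation-prop-1}, as restriction preserves arbitrary colimits).

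By the preceding lemma, $\res_{p_i} F_i$ is a free right $\OFG$-module. Thus $\res_{p_i} P_i$ is a direct summand of a free $\OFG$-module, and therefore projective by the standard characterisation of projective objects in $\ModOFG$ recalled earlier in this chapter. There is no real obstacle here; the entire content of the corollary is to observe that the argument used inside the proof of Corollary~\ref{cor:proj-proj}, where projectivity is transported across a tensor-over-$\Z$ construction, applies verbatim to the simpler setting of restriction alone.
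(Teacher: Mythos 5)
Your argument is correct and is essentially the paper's own proof: both decompose $P_i$ as a direct summand of a free module, use additivity of $\res_{p_i}$ to pass the summand decomposition through, and invoke the preceding lemma to identify $\res_{p_i}F_i$ as a free right $\OFG$-module, concluding projectivity from the direct-summand characterisation. No gaps.
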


\begin{proof}
    This follows again from the fact that $\res_{p_{i}}$ is an 
    additive functor and hence a direct summand of free 
    $\OC{G_{i}}{\frakF_{i}}$-modules is mapped to direct summand of a 
    $\OFG$-module which is free by the previous lemma.
\end{proof}

Let $P_{*}\to \underline\Z_{\frakF_1}$ be a resolution of right
$\OC{G_{1}}{\frakF_{1}}$-modules and let $Q_{*}\to
\underline\Z_{\frakF_2}$ be a resolution of right
$\OC{G_{2}}{\frakF_{2}}$-modules.  Then we can form the double complex
\begin{equation}
    C_{p,q} \defeq  \res_{p_{1}} P_{p} \otimes \res_{p_{2}} Q_{q}
    \label{eq:double-complex}
\end{equation}
of right $\OFG$-modules where $p,q\in \N$.  Taking the total complex
of this double complex gives the chain complex
\begin{equation*}
    \ldots \to C_3 \to C_2 \to C_1 \to C_0.
\end{equation*}
Denote the epimorphisms $P_0\to \underline\Z_{\frakF_1}$ and $Q_0\to
\underline\Z_{\frakF_2}$ by $\varepsilon_1$ and $\varepsilon_2$.  Then
we obtain a morphism $\varepsilon \defeq  \res_{p_1} \varepsilon_1 \otimes
\res_{p_2} \varepsilon_2$ from $C_0$ onto $\res_{p_1} 
\underline\Z_{\frakF_1} \otimes \res_{p_2} \underline\Z_{\frakF_2}$.

\begin{proposition}
    \label{prop:key-result}
    Let $P_{*}\to \underline\Z_{\frakF_1}$ and $Q_{*}\to
    \underline\Z_{\frakF_2}$ be free (projective) resolutions.  Then
    \begin{equation}
	\ldots\to C_2 \to C_1 \to C_0 \stackrel{\varepsilon}{\to}
	\underline\Z_\frakF \to 0
	\label{eq:resolution}
    \end{equation}
    is a free (projective) resolution of the trivial $\OFG$-module
    $\underline\Z_\frakF$.
\end{proposition}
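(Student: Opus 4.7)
The plan is to separate the statement into two claims and verify them in order: first, each module $C_n$ in the total complex is free (resp.\ projective); second, the augmented sequence~\eqref{eq:resolution} is exact.

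For the first claim, recall that $C_n = \bigoplus_{p+q=n} C_{p,q} = \bigoplus_{p+q=n} (\res_{p_1}P_p \otimes \res_{p_2}Q_q)$. Since a direct sum of free (resp.\ projective) modules is free (resp.\ projective), it suffices to handle each summand, and this is precisely the content of Lemma~\ref{lem:free-free} in the free case and Corollary~\ref{cor:proj-proj} in the projective case. So the first claim is immediate from what has already been established.

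For exactness, I would check it componentwise: evaluate~\eqref{eq:resolution} at an arbitrary object $G/H$ of $\OFG$, where necessarily $H = H_1\times H_2$ with $H_i\in\frakF_i$. By the definition of the tensor product over $\Z$ of Bredon modules, we have $C_{p,q}(G/H) = P_p(G_1/H_1)\otimes_\Z Q_q(G_2/H_2)$, so the evaluated total complex is exactly the total complex of the tensor product (over $\Z$) of the two augmented chain complexes of abelian groups $P_*(G_1/H_1)\to\Z$ and $Q_*(G_2/H_2)\to\Z$. By Corollary~\ref{cor:resZ} the augmentation $\varepsilon$ evaluates at $G/H$ to the natural map induced by $\varepsilon_1\otimes\varepsilon_2\:\Z\otimes\Z\to\Z$, which is the usual identification.

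The key observation is that each $P_p(G_1/H_1)$ and each $Q_q(G_2/H_2)$ is a free abelian group: for free Bredon modules this is immediate from the fact that $\Z[G_1/H_1,X]_{G_1}$ is the free abelian group on the set $[G_1/H_1,X]_{G_1}$, and for projective Bredon modules it passes through direct summands (a direct summand of a free abelian group is projective, hence free over $\Z$). In particular, all modules in sight are flat over $\Z$. Since both augmented complexes $P_*(G_1/H_1)\to\Z$ and $Q_*(G_2/H_2)\to\Z$ are exact by hypothesis, the classical K\"unneth argument for the tensor product of chain complexes of flat abelian groups (see for example~\cite[Ch.~3]{weibel-94}, or equivalently an acyclic-assembly argument via either of the two standard spectral sequences of the double complex) shows that the total complex of $P_*(G_1/H_1)\otimes_\Z Q_*(G_2/H_2)$ is a resolution of $\Z\otimes_\Z\Z\isom\Z$, with augmentation induced by $\varepsilon_1\otimes\varepsilon_2$. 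This gives exactness of~\eqref{eq:resolution} at $G/H$, and since $H$ was arbitrary we conclude exactness in $\ModOFG$.

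The main obstacle, though purely bookkeeping, is to check that the augmentation $\varepsilon$ defined in the statement really is the one produced by the K\"unneth argument after evaluation; this amounts to unwinding the isomorphism of Lemma~\ref{lem:resZ} and Corollary~\ref{cor:resZ}. Alternatively, one can sidestep the spectral sequence entirely by producing an explicit contracting homotopy on the evaluated total complex: lift contracting homotopies $h^{(1)}$ and $h^{(2)}$ for $P_*(G_1/H_1)\to\Z$ and $Q_*(G_2/H_2)\to\Z$ (which exist because both complexes are exact complexes of free abelian groups) and assemble them by the usual formula $h \defeq h^{(1)}\otimes \id + (s\comp\varepsilon_1)\otimes h^{(2)}$ on the total complex, where $s\:\Z\to P_0(G_1/H_1)$ is a $\Z$-linear splitting of $\varepsilon_1$. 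Either route closes the proof.
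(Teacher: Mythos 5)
Your proof follows essentially the same route as the paper's: freeness/projectivity of each $C_n$ from Lemma~\ref{lem:free-free} and Corollary~\ref{cor:proj-proj}, then exactness checked objectwise at $G/H = G_1/H_1 \times G_2/H_2$ by observing that $P_p(G_1/H_1)$ and $Q_q(G_2/H_2)$ are free abelian groups (the paper uses the same direct-summand-of-free argument) and invoking the classical fact that the total complex of a tensor product of two free resolutions of $\Z$ is again a free resolution of $\Z$. The paper cites Brown where you cite Weibel, and it spells out the ``bookkeeping'' identification of the differentials and augmentation that you defer, but these are cosmetic differences; your contracting-homotopy alternative is a valid minor variation on the same idea.
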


\begin{proof}
    It follows straight from Lemma~\ref{lem:free-free}
    (Corollary~\ref{cor:proj-proj}) that the $\OFG$-modules $C_i$ are
    free (projective).  The domain of $\varepsilon$ is by 
    construction~$C_0$ and its codomain is $\underline\Z_\frakF$ by
    Corollary~\ref{cor:resZ}.  Thus it remains to show that the
    sequence~\eqref{eq:resolution} is exact.  For this we need to show
    that~\eqref{eq:resolution} evaluated at any object $G/H$ of the
    orbit category $\OFG$ is exact.

    Therefore let $H=H_1\times H_2\in \frakF$.  Set $P'_* \defeq 
    P_*(G_1/H_1) = (\res_{p_1} P_*)(G/H)$ and $Q'_* \defeq  Q_*(G_2/H_2) =
    (\res_{p_2} Q_*)(G/H)$.  Since restriction is an exact functor we
    obtain two exact resolutions $P'_*\stackrel{\varepsilon'_1}{\to}
    \Z$ and $Q'_*\stackrel{\varepsilon'_2}{\to} \Z$ of abelian groups
    with $\varepsilon'_i \defeq  (\res_{p_i} \varepsilon_i)_{H}$.  Since
    $P_k$ is projective it is a direct summand of a free
    $\OC{G_1}{\frakF_1}$-module $F_k$.  By definition $F_k(G_1/H_1)$
    is a free abelian group.  Since colimits are calculated
    componentwise it follows that $P_k(G_1/H_1)$ is a direct summand
    of $F_k(G_1/H_1)$ and hence a free abelian group.  Thus~$P'_* \to
    \Z$ is a free resolution of $\Z$ and likewise is $Q'_*\to \Z$.
    Let $C'_*$ be the total complex of the double complex $P'_*
    \otimes Q'_*$.  It follows that this complex gives a free
    resolution of $\Z$ where the augmentation map $\varepsilon'\:
    C'_0\to \Z$ is given by $\varepsilon' \defeq  \varepsilon'_1 \otimes
    \varepsilon'_2$, see for example~\cite[p.~107]{brown-82}.  We
    claim that this resolution of $\Z$ is identical with the
    resolution obtained from evaluating
    \begin{equation*}
	\ldots \to C_2 \to C_1 \to C_0 \to \underline\Z_\frakF \to 0
    \end{equation*}
    at $G/H$. This claim is verified by straightforward calculation 
    as follows.

    First, for any $n\in \N$ we have
    \begin{align}
	C_n(G/H) & = \coprod_{k=0}^n C_{k,n-k}(G/H)
	\nonumber
	\\
	& = 
	\coprod_{k=0}^n (\res_{p_{1}} P_{k})(G/H) \otimes (\res_{p_{2}}
	Q_{n-k})(G/H)
	\label{eq:identification}
	\\
	& = 
	\coprod_{k=0}^n P'_{k} \otimes Q'_{n-k} = C'_n
	\nonumber
    \end{align}
    and $\underline\Z_\frakF(G/H) = \Z$ by definition.  Hence the
    sequences agree on objects and it remains to show that the
    homomorphisms agree as well.
    
    For the homomorphism $\varepsilon_{G/H}\: C_0(G/H) \to \Z$ we have
    the following sequence of equalities
    \begin{equation*}
	\varepsilon_{H} = (\res_{p_1} \varepsilon_1 \otimes
	\res_{p_2} \varepsilon_2)_{H} 
	=
	(\res_{p_1} \varepsilon_1)_{H} \otimes (\res_{p_2}
	\varepsilon_2)_{H}
	=
	\varepsilon'_1 \otimes \varepsilon'_2 = \varepsilon'.
    \end{equation*}
    A similar kind of argument shows that, for any $n\geq 1$, the
    differentials $d_{n,H}\: C_n (G/H) \to C_{n-1}(G/H)$ agree with
    the differentials $d'_{n}\: C'_n \to C'_{n-1}$ under the
    identification~\eqref{eq:identification}.
\end{proof}

\begin{corollary}
    If $\cd_{\frakF_1} G_1 \leq m$ and $\cd_{\frakF_2} G_2 \leq n$
    then $\cd_{\frakF} G\leq m+n$.\qed
\end{corollary}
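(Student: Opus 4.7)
The plan is to apply Proposition~\ref{prop:key-result} directly to finite length projective resolutions of the two trivial modules.

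First, since $\cd_{\frakF_1} G_1 \leq m$, there exists a projective resolution
\begin{equation*}
    0 \to P_m \to P_{m-1} \to \ldots \to P_0 \to \underline\Z_{\frakF_1} \to 0
\end{equation*}
in $\ModOFone$ with $P_p = 0$ for $p > m$. Similarly, since $\cd_{\frakF_2} G_2 \leq n$, there exists a projective resolution $Q_* \to \underline\Z_{\frakF_2}$ in $\ModOFtwo$ with $Q_q = 0$ for $q > n$. (Here I am abusing notation; one should read $\OC{G_i}{\frakF_i}$ for the orbit categories.)

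Next, I would form the double complex $C_{p,q} \defeq \res_{p_1} P_p \otimes \res_{p_2} Q_q$ from~\eqref{eq:double-complex} and let $C_*$ denote its total complex. By Proposition~\ref{prop:key-result}, the sequence
\begin{equation*}
    \ldots \to C_2 \to C_1 \to C_0 \stackrel{\varepsilon}{\to} \underline\Z_\frakF \to 0
\end{equation*}
is a projective resolution of the trivial $\OFG$-module $\underline\Z_\frakF$.

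Finally, I would observe that by construction
\begin{equation*}
    C_k = \coprod_{p+q = k} \res_{p_1} P_p \otimes \res_{p_2} Q_q,
\end{equation*}
and for $k > m+n$, every index pair $(p,q)$ with $p+q = k$ satisfies either $p > m$ or $q > n$, so every summand vanishes. Hence $C_k = 0$ for $k > m+n$, which exhibits a projective resolution of $\underline\Z_\frakF$ of length at most $m+n$, proving $\cd_\frakF G \leq m+n$. There is no real obstacle here since the heavy lifting—showing that tensoring two projective resolutions over $\Z$ (after restriction) yields a projective resolution of $\underline\Z_\frakF$—has already been carried out in Proposition~\ref{prop:key-result}; only the bookkeeping of the length of the total complex remains.
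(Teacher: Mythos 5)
Your proposal is correct and is precisely the argument the paper intends by marking the corollary as an immediate consequence of Proposition~\ref{prop:key-result}: take projective resolutions of length $m$ and $n$, form the total complex of the double complex~\eqref{eq:double-complex}, note its terms are projective (Corollary~\ref{cor:proj-proj}) and vanish in degrees above $m+n$. Nothing further is needed.
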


In order to simplify the notation we set $\tilde H_i \defeq  p_i(H)$ and
$\tilde H \defeq  \tilde H_1\times \tilde H_2$ for any subgroup~$H$ of~$G$.
That is, $\tilde H$ is the smallest subgroup of $G$ which 
contains~$H$ and which is of the form $H_{1}\times H_{2}$ with 
$H_{1}\leq G_{1}$ and $H_{2}\leq G_{2}$.

\begin{lemma}
    \label{lem:XH}
    Let $X_1$ be a $G_1$-set and $X_2$ a $G_2$-set.  Consider the set
    $X= X_1\times X_2$ with the diagonal $G$-action given by $(g_{1},
    g_{2})(x_{1},x_{2}) \defeq  (g_{1}x_{1},g_{2}x_{2})$.  Then
    \begin{equation*}
	X^H = X^{\tilde H} = X_1^{\tilde H_1} \times X_2^{\tilde H_2}.
    \end{equation*}
\end{lemma}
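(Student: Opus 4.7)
The plan is to prove the two equalities separately, and the first equality $X^H = X^{\tilde H}$ will be reduced to a chain of inclusions, with the nontrivial direction going through the product description on the right.

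First I would verify the equality $X^{\tilde H} = X_1^{\tilde H_1}\times X_2^{\tilde H_2}$, which is really just an unwinding of the diagonal action. A point $(x_1,x_2)$ lies in $X^{\tilde H}$ precisely when $(h_1,h_2)(x_1,x_2)=(h_1x_1,h_2x_2)=(x_1,x_2)$ for every $(h_1,h_2)\in \tilde H_1\times \tilde H_2$; since $\tilde H_1$ and $\tilde H_2$ act independently, this is equivalent to $h_1x_1=x_1$ for all $h_1\in \tilde H_1$ and $h_2x_2=x_2$ for all $h_2\in \tilde H_2$, that is to $(x_1,x_2)\in X_1^{\tilde H_1}\times X_2^{\tilde H_2}$.

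Next I would establish $X^H = X^{\tilde H}$ via the two inclusions. The inclusion $X^{\tilde H} \subseteq X^H$ is immediate from $H\subseteq \tilde H$: any element of $H$ has the form $h=(p_1(h),p_2(h))\in \tilde H_1\times \tilde H_2=\tilde H$. For the reverse inclusion, let $(x_1,x_2)\in X^H$. Given any $h_1\in \tilde H_1=p_1(H)$ there exists $h_2'\in G_2$ with $(h_1,h_2')\in H$; applying $(h_1,h_2')$ to $(x_1,x_2)$ fixes the pair and so forces $h_1x_1=x_1$. Hence $x_1\in X_1^{\tilde H_1}$, and symmetrically $x_2\in X_2^{\tilde H_2}$. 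By the first step this means $(x_1,x_2)\in X^{\tilde H}$, completing the equality.

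There is no real obstacle here; the only point that requires a moment of care is the nontrivial inclusion $X^H\subseteq X^{\tilde H}$, where one has to observe that although an element $h_1\in \tilde H_1$ need not itself sit in $H$ (the projection $p_1|_H$ is typically not injective), the fact that some $(h_1,h_2')\in H$ suffices to fix the first coordinate, because the action is diagonal and fixes the pair simultaneously.
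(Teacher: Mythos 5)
Your proof is correct and follows essentially the same route as the paper: both arguments use that any $h_1\in\tilde H_1=p_1(H)$ lifts to some $(h_1,h_2')\in H$, so that fixing the pair under the diagonal action forces each coordinate to be fixed, the only cosmetic difference being that you pass through the product description $X^{\tilde H}=X_1^{\tilde H_1}\times X_2^{\tilde H_2}$ at the end while the paper recombines the two lifted elements directly.
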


\begin{proof}
    The second equality is clear by construction of $\tilde H$ and
    $X$.  It remains to show that the first equality is true.  Since
    $H\leq \tilde H$ we have that $X^{\tilde H}\subset X^H$.  Thus it
    remains to show that $X^H\subset X^{\tilde H}$.  Let $x = (x_1,
    x_2)\in X^H$ and let $h= (h_1, h_2) \in \tilde H$.  There exist
    $h'_1\in \tilde H_1$ and $h'_2\in \tilde H_2$ such that~$(h_1,
    h'_2), (h'_1, h_2)\in H$.  Then $(h_1 x_1, h'_2 x_2) =
    (h_1,h'_2)(x_1, x_2) = (x_1, x_2)$ and likewise $(h'_1 x_1, h_2
    x_2) = (h'_1, h_2)(x_1, x_2) = (x_1, x_2)$.  Hence $(h_1, h_2)
    (x_1, x_2) = (x_1, x_2)$ and this implies that $x\in X^{\tilde
    H}$.
\end{proof}

Recall that given a family $\frakF$ of subgroups of a group $G$, we
defined its subgroup completion $\bar\frakF$ to be the smallest
\emph{full} family of subgroups of $G$ which contains $\frakF$ (see
Section~\ref{sec:families-of-subgroups} in Chapter~\ref{ch:basics}).

\begin{lemma}
  \label{lem:extension}
  Assume that $\frakF_1$ and $\frakF_2$ are full families of
  subgroups.  Let $X_1$ and~$Y_1$ be $G_1$-sets with stabilisers in
  $\frakF_1$ and let $X_2$ and $Y_2$ be $G_2$-sets with stabilisers in
  $\frakF_2$.  Consider the sets $X\defeq  X_1\times X_2$ and $Y\defeq  Y_1
  \times Y_2$ with the diagonal action of $G= G_{1}\times G_{2}$.
  Then any morphism
  \begin{equation*}
      f\: \Z[\?, X]_G \to \Z[\?, Y]_G
  \end{equation*}
  of right $\OFG$-modules can be extended to a morphism of right
  $\OC{G}{\bar\frakF}$-modules by $f_{H} \defeq  f_{\tilde H}$.
\end{lemma}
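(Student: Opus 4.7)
The plan is to define the extension componentwise by using the natural identifications of Lemma~\ref{lem:XH}, and then verify that the resulting family of homomorphisms is natural on the larger orbit category.

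First I dispose of the definition. For any $H\in\bar\frakF$ there is some $K = K_1\times K_2\in\frakF$ with $H\leq K$, so $\tilde H_i = p_i(H)\leq K_i$; fullness of $\frakF_i$ then forces $\tilde H_i\in\frakF_i$, hence $\tilde H = \tilde H_1\times\tilde H_2\in\frakF$ and the component $f_{\tilde H}$ is defined. By Lemma~\ref{lem:XH} one has $X^H = X^{\tilde H}$ and $Y^H = Y^{\tilde H}$, so the identification~\eqref{eq:identification-1} yields
\begin{equation*}
\Z[G/H, X]_G \isom \Z[X^H] = \Z[X^{\tilde H}] \isom \Z[G/\tilde H, X]_G
\end{equation*}
and likewise for $Y$. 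Under these identifications I set $f_H\defeq f_{\tilde H}$; when $H\in\frakF$ already, $\tilde H = H$ and this recovers the original component, so the construction really is an extension.

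For naturality, let $\varphi = f_{g,H,H'}$ be a morphism in $\OC{G}{\bar\frakF}$, so $H^g\leq H'$. Writing $g = (g_1,g_2)$ and using that $p_i$ commutes with conjugation, $\tilde H_i^{g_i} = p_i(H^g)\leq p_i(H') = \tilde H'_i$, so $\tilde\varphi\defeq f_{g,\tilde H,\tilde H'}$ is a legitimate morphism in $\OFG$. As described just after~\eqref{eq:identification-1}, under the identification of $[G/L, X]_G$ with $X^L$ the map $\varphi^*$ becomes $X^{H'}\to X^H$, $x\mapsto gx$, and the same description gives $\tilde\varphi^*\: X^{\tilde H'}\to X^{\tilde H}$, $x\mapsto gx$; since $X^{H'} = X^{\tilde H'}$ and $X^H = X^{\tilde H}$, these two maps coincide, and analogously for $Y$.

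Finally, the fact that $f$ is a morphism in $\ModOFG$ gives $f_{\tilde H}\circ\tilde\varphi^* = \tilde\varphi^*\circ f_{\tilde H'}$, and through the identifications above this reads precisely as $f_H\circ\varphi^* = \varphi^*\circ f_{H'}$, so the extended collection is a natural transformation on $\OC{G}{\bar\frakF}$. The only real content lies in Lemma~\ref{lem:XH}, which ensures that both the values of the functors and the structure maps identify correctly; once that is in hand the verification reduces to bookkeeping and I do not anticipate a genuine obstacle.
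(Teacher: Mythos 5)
Your proof is correct and follows essentially the same route as the paper's: fullness of $\frakF_1$ and $\frakF_2$ gives $\tilde H\in\frakF$, Lemma~\ref{lem:XH} identifies $\Z[G/H,X]_G$ with $\Z[G/\tilde H,X]_G$, the morphism $\tilde\varphi=f_{g,\tilde H,\tilde K}$ lies in $\OFG$ and its induced map agrees with $\varphi^*$ under the fixed-point identification, and naturality of $f$ on $\OFG$ then yields naturality of the extension. The only difference is cosmetic: you verify the agreement of $\varphi^*$ and $\tilde\varphi^*$ by the explicit description $x\mapsto gx$ on fixed-point sets, where the paper packages the same fact into a larger commuting diagram with trapezoids.
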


\begin{proof}
  Since $\frakF_1$ and $\frakF_2$ are full families of subgroups of
  $G_1$ and $G_2$ it follows that $\tilde H\in \frakF$ for any $H\in
  \bar\frakF$.  Furthermore it follows from the previous lemma that
  $\Z[G/H, X]_G = \Z[G/\tilde H, X]_G$ and $\Z[G/H, Y]_G =
\Z[G/\tilde H,
  Y]_G$.  Therefore $f_{H}$ is defined for any $H\in \bar\frakF$.  We
  need to show that this extension of $f$ indeed gives a morphism of
  $\OC{G}{\bar\frakF}$-modules.

  Let $H,K\in \bar\frakF$ and let $\varphi \defeq  \varphi_{g, H, K}\: G/H
  \to G/K$ be a morphism in $\OC{G}{\bar\frakF}$.  We need to show
  that the diagram
  \begin{equation}
    \begin{diagram}
      \node{\Z[G/K, X]_G}
      \arrow{e,t}{f_{K}}
      \arrow{s,l}{\varphi^*}
      \node{\Z[G/K, Y]_G}  
      \arrow{s,r}{\varphi^*}
      \\
      \node{\Z[G/H, X]_G}
      \arrow{e,t}{f_{H}}
      \node{\Z[G/H, Y]_G}
    \end{diagram}
    \label{dgrm:commutative-square}
  \end{equation}
  commutes. 
  
  Since $H^g\leq K$ it follows that $\tilde H^g\leq \tilde K$.  Let
  $\tilde\varphi \defeq \varphi_{g, \tilde H, \tilde K}$ which is a
  morphism in $\OFG$ as well as a morphism in $\OC{G}{\bar \frakF}$.
  By the definition of the Bredon modules $\Z[\?, X]_G$ and $\Z[\?,
  Y]_G$ and from Lemma~\ref{lem:XH} it follows that~$\varphi^* \circ
  \id = \id \circ \mathop{\tilde \varphi}^*$.  Now in order to show
  that the diagram~\eqref{dgrm:commutative-square} commutes we imbed
  it into the following larger diagram.
  \begin{equation*}
      \begin{diagram}
	  \node{\Z[G/\tilde K, X]_G}
	  \arrow[3]{e,t}{f_{\tilde K}}
	  \arrow[3]{s,l}{\tilde\varphi^*}
	  \node[3]{\Z[G/\tilde K, Y]_G}
	  \arrow[3]{s,r}{\tilde\varphi^*}
	  \\
	  \node[2]{\Z[G/K, X]_G}
	  \arrow{nw,t}{\id}
	  \arrow{e,t}{f_{K}}
	  \arrow{s,l}{\varphi^*}
	  \node{\Z[G/K, Y]_G}
	  \arrow{ne,t}{\id}
	  \arrow{s,r}{\varphi^*}
	  \\
	  \node[2]{\Z[G/H, X]_G}
	  \arrow{e,t}{f_{H}}
	  \arrow{sw,t}{\id}
	  \node{\Z[G/H, Y]_G}
	  \arrow{se,t}{\id}
	  \\
	  \node{\Z[G/\tilde H, X]_G}
	  \arrow[3]{e,t}{f_{\tilde H}}
	  \node[3]{\Z[G/\tilde H, Y]_G}
      \end{diagram}
  \end{equation*}
  
  The large outer square of this diagram commutes by assumption.  The
  upper and lower trapezoid commute by definition and the above said
  means that the left and right trapezoid commute.  It follows that
  the inner small square commutes, that is, we have shown that the
  diagram~\eqref{dgrm:commutative-square} commutes.
\end{proof}

Under the same assumptions as in the previous lemma we can extend the
morphism $\varepsilon\: \Z[\?, X]_G \to \underline\Z_\frakF$
of $\OFG$-modules to a morphism
\begin{equation*}
    \varepsilon\: \Z[\?, X]_G \to \underline\Z_{\bar\frakF}
\end{equation*}
of $\OC{G}{\bar\frakF}$-modules by setting $\varepsilon_{H} \defeq 
\varepsilon_{\tilde H}$ for every $H\in \bar\frakF$.

Assume that $\cd_{\frakF_1} G_1 \leq m$ and $\cd_{\frakF_2}
G_2\leq n$. Then there exists projective resolutions $P_*\to
\underline\Z_{\frakF_1}$ and $Q_*\to \underline\Z_{\frakF_2}$ of
length $m$ and $n$ respectively. By an Eilenberg swindle we may
assume the resolutions are free. Then by Lemma~\ref{lem:free-free}
the double chain complex~\eqref{eq:double-complex} of $\OFG$-modules
satisfies the assumptions of Lemma~\ref{lem:extension}. Hence we may
extend it to a double chain complex of
$\OC{G}{\bar\frakF}$-modules. Passing to the total complex we
obtain a sequence
\begin{equation*}
    \ldots \to C_3 \to C_2 \to C_1 \to C_0\stackrel{\varepsilon}{\to}
    \Z_{\bar\frakF}\to 0
\end{equation*}
of $\OC{G}{\bar\frakF}$-modules. By construction the exactness of
this sequence follows from the exactness of the
sequence~\eqref{eq:resolution} which was proven in
Proposition~\ref{prop:key-result}. This sequence has length $n +
m$. This proves the following result.

\begin{proposition}
    Let $\frakF_1$ and $\frakF_2$ be full families of subgroups. Then
    \begin{equation*}
	\cd_{\bar\frakF} G\leq \cd_{\frakF_1} G_1 + \cd_{\frakF_2}
	G_2.\tag*{\qed}
    \end{equation*}
\end{proposition}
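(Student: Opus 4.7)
The plan is to follow the hint already laid out in the paragraphs preceding the statement, making the Eilenberg swindle and the ``extension from $\frakF$ to $\bar\frakF$'' steps precise. First I may assume that $m\defeq \cd_{\frakF_1}G_1$ and $n\defeq \cd_{\frakF_2}G_2$ are both finite, for otherwise the inequality is trivial. Pick projective resolutions $P_*\to \underline\Z_{\frakF_1}$ of length $m$ and $Q_*\to \underline\Z_{\frakF_2}$ of length $n$. After an Eilenberg swindle (direct-summing stalks of a free Bredon module) I may assume both resolutions are free of the same lengths, so each $P_p$ and each $Q_q$ has the form $\Z[\?,X]_{G_1}$ or $\Z[\?,Y]_{G_2}$ for some $G_i$-sets with stabilisers in the full families $\frakF_i$.

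Next I form the double complex $C_{p,q}\defeq \res_{p_1}P_p\otimes\res_{p_2}Q_q$ of right $\OFG$-modules and pass to its total complex. By Lemma~\ref{lem:free-free} each $C_{p,q}$ is a free $\OFG$-module, and Proposition~\ref{prop:key-result} then yields a free resolution
\begin{equation*}
    \ldots \to C_2\to C_1 \to C_0 \stackrel{\varepsilon}{\to} \underline\Z_\frakF\to 0
\end{equation*}
of length $m+n$ of the trivial $\OFG$-module $\underline\Z_\frakF$. This already bounds $\cd_\frakF G$ by $m+n$, but the proposition requires the stronger statement about the completion $\bar\frakF$.

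To make the jump from $\frakF$ to $\bar\frakF$ I apply Lemma~\ref{lem:extension}: because the $C_{p,q}$ are of the form $\Z[\?,X]_G$ with $X=X_1\times X_2$ carrying the diagonal $G$-action, every differential in the total complex extends uniquely to a morphism of $\OC{G}{\bar\frakF}$-modules by the rule $f_H\defeq f_{\tilde H}$, and likewise $\varepsilon$ extends to a morphism onto $\underline\Z_{\bar\frakF}$. The key point, which is really the heart of the argument, is that exactness is preserved under this extension: by Lemma~\ref{lem:XH} one has $X^H=X^{\tilde H}$ for every $H\in\bar\frakF$, so evaluating the extended complex at $G/H$ produces the \emph{same} sequence of abelian groups as evaluating the original complex at $G/\tilde H\in\frakF$, which we already know to be exact.

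Thus the extended sequence is an exact resolution of $\underline\Z_{\bar\frakF}$ of length $m+n$ by free (a fortiori projective) $\OC{G}{\bar\frakF}$-modules, giving $\cd_{\bar\frakF}G\leq m+n$ as required. The main technical obstacle I anticipate is therefore not constructing the double complex---which is already done in Proposition~\ref{prop:key-result}---but rather checking carefully that the extension from $\frakF$ to $\bar\frakF$ respects all the differentials simultaneously (so that one really obtains a chain complex, not just a sequence of extended morphisms); this is however delivered cleanly by Lemma~\ref{lem:extension} applied to each differential in turn.
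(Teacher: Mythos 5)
Your proposal is correct and follows essentially the same route as the paper: free resolutions via an Eilenberg swindle, the double complex and Proposition~\ref{prop:key-result}, then the passage from $\frakF$ to $\bar\frakF$ via Lemma~\ref{lem:extension} and Lemma~\ref{lem:XH}, with exactness checked by evaluating at $G/\tilde H$. The only (immaterial) difference is order of operations: the paper extends the double complex to $\OC{G}{\bar\frakF}$-modules and then totalises, whereas you totalise first and extend the differentials summandwise afterwards.
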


\begin{theorem}
    \label{thrm:cd-for-direct-products}
    Let $\frakF_1$ and $\frakF_2$ be a full families of subgroups of
    $G_1$ and~$G_2$ respectively.  Let $\frakF \defeq  \frakF_{1}\times
    \frakF_{2}$.  Assume that $\frakG \subset \bar \frakF$ is a full 
    family of subgroups of $G$.
    If there exists $k\in \N$ such that $\cd_{\frakG\cap K} K \leq k$
    for every $K\in \frakF$, then
    \begin{equation*}
	\cd_\frakG G \leq \cd_{\frakF_1} G_1 + \cd_{\frakF_2} G_2
	+ k.
    \end{equation*}
\end{theorem}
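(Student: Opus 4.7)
The plan is to combine two ingredients that are already available at this point in the chapter: the proposition immediately preceding the theorem, which gives $\cd_{\bar\frakF} G \leq \cd_{\frakF_1} G_1 + \cd_{\frakF_2} G_2$, and Theorem~\ref{thrm:cdF-G-vs-cdG-G}, which compares the cohomological dimension of a group with respect to a pair of families. So the target inequality factors as
\begin{equation*}
    \cd_\frakG G \leq \cd_{\bar\frakF} G + k \leq \cd_{\frakF_1} G_1 + \cd_{\frakF_2} G_2 + k,
\end{equation*}
the second inequality being immediate from the preceding proposition. Hence the whole task reduces to establishing the first inequality, which I intend to obtain by applying Theorem~\ref{thrm:cdF-G-vs-cdG-G} to the pair $(\bar\frakF,\frakG)$; note $\frakG\subset\bar\frakF$ by assumption, so this pair is admissible.

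To apply that theorem I need to verify its two hypotheses for the pair $(\bar\frakF,\frakG)$: first, that $\frakG\cap K\subset\frakG$ for every $K\in\bar\frakF$, and second, that there is a uniform bound $\cd_{\frakG\cap K} K\leq k$ for every $K\in\bar\frakF$. The first is a direct consequence of $\frakG$ being a full family: for any $H\in\frakG$ and any subgroup $K$ of $G$, the intersection $H\cap K$ is a subgroup of $H\in\frakG$, so $H\cap K\in\frakG$, which is exactly the condition $\frakG\cap K\subset\frakG$.

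The main step of the argument is the second condition, and this is where the hypothesis on $\frakF$ enters. I would argue as follows: given $K\in\bar\frakF$, by the definition of the completion there exists $L\in\frakF$ with $K\leq L$. Using fullness of $\frakG$ one sees that $\frakG\cap L$ is a full family of subgroups of $L$, and moreover $\frakG\cap K=(\frakG\cap L)\cap K$. I can now invoke Proposition~\ref{prop:dim-subgroups-1} applied to the subgroup $K\leq L$ with respect to the family $\frakG\cap L$, which yields
\begin{equation*}
    \cd_{\frakG\cap K} K = \cd_{(\frakG\cap L)\cap K} K \leq \cd_{\frakG\cap L} L \leq k,
\end{equation*}
the last inequality being the hypothesis of the theorem since $L\in\frakF$. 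This gives the required uniform bound for all $K\in\bar\frakF$.

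The hard part is really just the bookkeeping around the completion $\bar\frakF$: one has to be careful that the hypothesis is stated only for $K\in\frakF$, and to bridge the gap one exploits both the fullness of $\frakG$ (so that the intersected families stay inside $\frakG$) and the subgroup monotonicity of Bredon cohomological dimension. Once both hypotheses of Theorem~\ref{thrm:cdF-G-vs-cdG-G} are verified, applying it to $(\bar\frakF,\frakG)$ and then chaining with the preceding proposition finishes the proof.
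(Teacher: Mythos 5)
Your proposal is correct and follows essentially the same route as the paper: the paper also reduces to $\cd_{\frakG} G \leq \cd_{\bar\frakF} G + k$ via Theorem~\ref{thrm:cdF-G-vs-cdG-G} applied to the pair $(\bar\frakF,\frakG)$, bounds $\cd_{\frakG\cap K} K$ for $K\in\bar\frakF$ by embedding $K$ in an element of $\frakF$ (there the specific $\tilde K$, in your case an arbitrary $L\in\frakF$ containing $K$) and using subgroup monotonicity, and then invokes the preceding proposition. Your explicit verification of the hypothesis $\frakG\cap K\subset\frakG$ is a detail the paper leaves implicit, but the argument is the same.
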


\begin{proof}
    Let $K\in \bar\frakF$.  Then $K\leq \tilde K \in \frakF$ and
    $\frakG\cap K$ is a non-empty subset of $\frakG\cap \tilde K$.
    Thus $\cd_{\frakG\cap K} K \leq \cd_{\frakG\cap \tilde K} \tilde
    K$ which by the assumption of the theorem is less or equal to $k$.
    Then $\cd_{\frakG} G \leq \cd_{\bar \frakF} G + k$ by
    Theorem~\ref{thrm:cdF-G-vs-cdG-G}.  Now the statement follows from
    the previous proposition.
\end{proof}

\begin{corollary}
    \label{cor:cd-for-direct-products}
    Let $G \defeq  G_{1}\times G_{2}$. Then
    \begin{equation*}
        \ucd G \leq \ucd G_{1} + \ucd G_{2}
	\qquad
	\text{and} 
	\qquad
	\uucd G \leq \uucd G_{1} + \uucd G_{2} + 3.
    \end{equation*}
\end{corollary}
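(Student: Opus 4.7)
My plan is to deduce both inequalities from Theorem~\ref{thrm:cd-for-direct-products} by making suitable choices of families and verifying its hypotheses in each case.

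For the first inequality, I would take $\frakF_{1} \defeq \Ffin(G_{1})$, $\frakF_{2} \defeq \Ffin(G_{2})$ and $\frakG \defeq \Ffin(G)$, all of which are full families of subgroups. Since the direct product of two finite groups is finite we have $\frakF = \frakF_{1}\times\frakF_{2} \subset \Ffin(G)$. Conversely, if $K\in\Ffin(G)$ then $K \leq p_{1}(K)\times p_{2}(K)$ and the latter group lies in $\frakF$, so $\frakG \subset \bar\frakF$. For any $K\in\frakF$ the group $K$ itself is finite and lies in $\frakG\cap K$, so Proposition~\ref{prop:cdG=0} gives $\cd_{\frakG\cap K} K = 0$. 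Theorem~\ref{thrm:cd-for-direct-products} applied with $k=0$ then yields the first inequality.

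For the second inequality, I would take the analogous choices $\frakF_{1} \defeq \Fvc(G_{1})$, $\frakF_{2} \defeq \Fvc(G_{2})$ and $\frakG \defeq \Fvc(G)$. Here the inclusion $\frakF\subset \frakG$ fails in general because a product of two virtually cyclic groups need not be virtually cyclic; however, a virtually cyclic subgroup $K$ of $G$ still satisfies $K \leq p_{1}(K)\times p_{2}(K) \in \frakF$ since virtually cyclic groups are closed under quotients, so $\frakG \subset \bar\frakF$ as required. The essential point is to exhibit a uniform integer $k$ such that $\uucd K \leq k$ for every $K \in \frakF$. Such a $K$ has the form $K = H_{1}\times H_{2}$ with each $H_{i}$ virtually cyclic, and is therefore virtually $\Z^{r}$ for some $r\in\{0,1,2\}$. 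When $r\leq 1$ the group $K$ is itself virtually cyclic, so $K\in\Fvc(K)$ and Proposition~\ref{prop:cdG=0} gives $\uucd K=0$. When $r=2$, the result of L\"uck and Weiermann cited in the introduction asserts $\uugd K = 3$ for virtually polycyclic groups of virtual cohomological dimension $2$, and hence $\uucd K \leq \uugd K = 3$. Thus $k=3$ suffices, and Theorem~\ref{thrm:cd-for-direct-products} yields the second inequality.

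The main obstacle is the uniform bound $\uucd K \leq 3$ when $K$ is virtually $\Z^{2}$, which ultimately relies on the external input from L\"uck--Weiermann; the remaining verifications that $\frakG\subset\bar\frakF$ and that the other hypotheses of Theorem~\ref{thrm:cd-for-direct-products} hold are routine.
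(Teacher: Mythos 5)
Your proposal is correct and follows essentially the same route as the paper: both apply Theorem~\ref{thrm:cd-for-direct-products} with the families $\Ffin$ (respectively $\Fvc$), using that a product of two finite groups is finite to get $k=0$, and that a product of two virtually cyclic groups is virtually polycyclic with $\vcd\leq 2$, whence $\uugd\leq 3$ by L\"uck--Weiermann, to get $k=3$. Your extra verifications that $\frakG\subset\bar\frakF$ (via $K\leq p_{1}(K)\times p_{2}(K)$) are left implicit in the paper but are exactly the routine checks needed.
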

    
\begin{proof}
    The cartesian product $K_{1}\times K_{2}$ of two finite groups is 
    finite and therefore $\ucd (K_{1}\times K_{2})=0$. Thus $\ucd 
    G\leq \ucd G_{1} + \ucd G_{2}$ by 
    Theorem~\ref{thrm:cd-for-direct-products}.
    
    On the other hand, the cartesian product $K_{1}\times K_{2}$ of
    two virtually cyclic groups is a virtually polycyclic group with
    virtually cohomological dimension $\vcd (K_{1}\times K_{2}) \leq
    2$.  In~\cite{luck-12} it has been shown that this implies
    $\uugd(K_{1}\times K_{2})\leq 3$.  Then $\uucd G \leq \uucd G_{1} 
    + \uucd G_{2} + 3$ follows from
    Theorem~\ref{thrm:cd-for-direct-products}.
\end{proof}

Note that the inequality in
Corollary~\ref{cor:cd-for-direct-products} is sharp.  For example, 
take $G_{1}\defeq \Z$ and $G_{2}\defeq \Z$. Then $\ucd G_{i} = 1$ and $\uucd
G_{i} = 
0$. Moreover $\ucd G=2$ and in Proposition~\ref{prop:dimvcZ2=3} we 
will see that $\uucd G=3$. Therefore we have in this case
\begin{equation*}
    \ucd G = \ucd G_{1} + \ucd G_{2}
    \qquad
    \text{and} 
    \qquad
    \uucd G = \uucd G_{1} + \uucd G_{2} + 3.
\end{equation*}

\section{Crossproduct and Künneth Formula for Bredon Homology}

In this section we consider $G \defeq  G_{1}\times G_{2}$ with the notation
as in the section before.  Moreover, throughout this section $M$ and
$M'$ will be a left $\OC{G_{1}}{\frakF_{1}}$-module and
$\OC{G_{2}}{\frakF_{2}}$-module respectively.

\begin{lemma}
    \label{lem:1}
    The map
    \begin{equation}
	\label{eq:iso-rule}
	\begin{split}
	    [G/H, G/K]_{G} \times M(G_{1}/\tilde H_{1}) & \times
	    M'(G_{2}/\tilde H_{2})
	    \\
	    & \to [G_{1}/H_{1} , G_{1}/\tilde
	    K_{1}]_{G_{1}} \times M(G_{1}/H_{1})
	    \\ 
	    & \qquad \times [G_{2}/H_{2} ,
	    G_{2}/\tilde K_{2}]_{G_{2}} \times M'(G_{2}/H_{2})
	    \\
	    (g K, m, m') 
	    & \mapsto
	    (\tilde g_{1}\tilde K_{1}, m, \tilde 
	    g_{2} \tilde K_{2}, m'),
	\end{split}
	\raisetag{8pt}
    \end{equation}
    where $H, K \in \frakF$, defines for every $K\in \frakF$ an
    isomorphism of groups
    \begin{multline}
	\label{eq:iso-map}
        \theta\: \coprod_{H\in \frakF} 
	\Z[G/H, G/K]_{G} \otimes M(G_{1}/\tilde H_{1}) \otimes 
	M'(G_{2}/\tilde H_{2})
	\\
	\longto \Bigl(\coprod_{H_{1}\in \frakF_{1}} 
	\Z[G_{1}/H_{1}, G_{1}/\tilde K_{1}]_{G1} \otimes 
	M(G_{1}/H_{1})\Bigr) \; \otimes
	\\
	\Bigl(\coprod_{H_{2}\in \frakF_{2}} 
	\Z[G_{2}/H_{2} , G_{2}/\tilde K_{2}]_{G_{2}} \otimes
	M'(G_{2}/H_{2})\Bigr).
    \end{multline}
\end{lemma}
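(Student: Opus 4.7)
The plan is to exhibit $\theta$ as a composite of three canonical isomorphisms coming from: (a) the decomposition of the orbit sets on $G = G_1\times G_2$ according to the product structure, (b) the behaviour of the free abelian group functor $\Z[\?]$ with respect to products, and (c) the distributivity of the tensor product over direct sums, combined with the decomposition of the indexing set $\frakF = \frakF_1\times\frakF_2$ as a cartesian product.

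First I would verify that for $H = H_1\times H_2$ and $K = K_1\times K_2$ in $\frakF$, the assignment $gK \mapsto (g_1K_1, g_2 K_2)$ where $g=(g_1,g_2)$ defines a bijection
\begin{equation*}
[G/H, G/K]_G \isom [G_1/H_1, G_1/K_1]_{G_1} \times [G_2/H_2, G_2/K_2]_{G_2}.
\end{equation*}
This follows from the identification $[G/H,G/K]_G \isom (G/K)^H$ used throughout Section~\ref{sec:orbit-category} together with the componentwise identity $(G/K)^H = (G_1/K_1)^{H_1}\times (G_2/K_2)^{H_2}$, which in turn is immediate from the fact that $H$ acts on $G/K = G_1/K_1\times G_2/K_2$ factor by factor.

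Applying the functor $\Z[\?]\: \Set\to\Ab$, which sends a finite product of sets to the tensor product of the corresponding free abelian groups, this bijection induces an isomorphism
\begin{equation*}
    \Z[G/H, G/K]_G \isom \Z[G_1/H_1, G_1/K_1]_{G_1} \otimes \Z[G_2/H_2, G_2/K_2]_{G_2}.
\end{equation*}
Tensoring with $M(G_1/H_1)\otimes M'(G_2/H_2)$ and rearranging factors via associativity and symmetry of $\otimes$, I obtain, for each fixed $H\in\frakF$, a natural isomorphism
\begin{equation*}
    \theta_{H}\: \Z[G/H, G/K]_G \otimes M(G_1/H_1) \otimes M'(G_2/H_2)
    \isom A_{H_1}\otimes B_{H_2},
\end{equation*}
where $A_{H_1} \defeq \Z[G_1/H_1, G_1/\tilde K_1]_{G_1}\otimes M(G_1/H_1)$ and $B_{H_2} \defeq \Z[G_2/H_2, G_2/\tilde K_2]_{G_2}\otimes M'(G_2/H_2)$.

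Finally I would take the direct sum over $H\in\frakF$. Since $\frakF = \frakF_1\times \frakF_2$ every $H\in\frakF$ is uniquely of the form $H_1\times H_2$ with $H_i\in\frakF_i$, so a coproduct indexed by $\frakF$ is the same as a double coproduct indexed by $\frakF_1\times\frakF_2$. Combining this with distributivity of $\otimes$ over direct sums yields
\begin{equation*}
    \coprod_{H\in\frakF} A_{H_1}\otimes B_{H_2}
    \isom
    \Bigl(\coprod_{H_1\in\frakF_1} A_{H_1}\Bigr)
    \otimes
    \Bigl(\coprod_{H_2\in\frakF_2} B_{H_2}\Bigr),
\end{equation*}
which is the codomain of $\theta$. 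Composing $\coprod_H \theta_H$ with this last isomorphism produces an isomorphism of abelian groups, and tracing an elementary tensor $gK\otimes m\otimes m'$ through the composite reproduces precisely the rule~\eqref{eq:iso-rule}, so the composite equals $\theta$. The argument is pure bookkeeping; the only substantive input is the product form $\frakF=\frakF_1\times\frakF_2$, which is exactly what causes both the Hom-sets and the indexing set of the coproduct to factor, and so no serious obstacle is anticipated.
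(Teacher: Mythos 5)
Your proposal is correct and follows essentially the same route as the paper: both reduce to a fixed $H\in\frakF$, identify $[G/H,G/K]_{G}$ with the product of the corresponding $G_{1}$- and $G_{2}$-map sets (you via $(G/K)^{H}=(G_{1}/K_{1})^{H_{1}}\times(G_{2}/K_{2})^{H_{2}}$, the paper via $f\mapsto(\tilde f_{1},\tilde f_{2})$, which is the same decomposition), and then assemble the coproduct using $\frakF=\frakF_{1}\times\frakF_{2}$ and distributivity of $\otimes$ over direct sums. The only difference is cosmetic: the paper first rewrites the codomain as a single coproduct over $\frakF$ and then restricts, whereas you build the summandwise isomorphisms first and reindex afterwards.
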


\begin{proof}
    First note that the right hand side of~\eqref{eq:iso-map} can be 
    rewritten to
    \begin{align*}
	\coprod_{H\in \frakF} \Bigl( \Z[G_{1}/\tilde
	H_{1}, G_{1}/\tilde K_{1}]_{G_{1}} & \otimes M(G_{1}/\tilde 
	H_{1}) \otimes \mbox{}
	\\
	& \hspace{-1.5em} 
	\Z[G_{2}/\tilde H_{2} , G_{2}/\tilde K_{2}]_{G_{2}} \otimes
	M'(G_{2}/\tilde H_{2})\Bigr).
    \end{align*}
    Thus it is enough to show that the rule~\eqref{eq:iso-rule} gives
    rise to an isomorphism for each fixed $H\in \frakF$.  In turn, for
    this it is enough to show that the obvious restriction of the
    map~\eqref{eq:iso-rule} gives rise to an isomorphism
    \begin{equation}
	\label{eq:iso-help1}
	\Z[G/H,G/K]_{G} \to \Z[G_{1}/H_{1}, G_{1}/
	\tilde{K}_{1}]_{G_{1}} \otimes \Z[G_{2}/H_{2},
	G_{2}/\tilde{K}_{2}]_{G_{2}}.
    \end{equation}
    Note that any $G$-map $f\:G/H\to G/K$ is in fact a $G$-map $f\:
    G_{1}/\tilde H_{1} \times G_{2}/\tilde H_{2} \to G_{1}/\tilde
    K_{1} \times G_{2}/\tilde K_{2}$ which is uniquely characterised
    by $G_{i}$-maps $\tilde f_{i}\: G_{i}/\tilde H_{i} \to
    G_{i}/\tilde K_{i}$ which map $\tilde H_{i} \mapsto \tilde
    g_{i}\tilde K_{i}$, $i=1,2$.  In other words, the map $f\mapsto
    (\tilde f_{1}, \tilde f_{2})$ gives an isomorphism
    \begin{equation}
	\label{eq:iso-help2}
        [G/H, G/K]_{G} \to [G_{1}/\tilde H_{1}, G_{1}/\tilde 
	K_{1}]_{G_{1}} 
	\times [G_{2}/\tilde H_{2}, G_{2}/\tilde K_{2}]_{G_{2}}
    \end{equation}
    of sets. In turn this gives rise to an isomorphism of groups as 
    in~\eqref{eq:iso-help1} and by construction this isomorphism 
    agrees with the isomorphism obtained by the obvious restriction 
    of the map~\eqref{eq:iso-rule}.
\end{proof}

\begin{lemma}
    \label{lem:2}
    Let $K\in \frakF$. Then the 
    map~\eqref{eq:iso-rule} defines (this definition is made precise 
    in the proof) an isomorphism
    \begin{multline}
	\label{eq:iso-2}
	\eta\: (\res_{p_{1}} \Z[\?,G_{1}/\tilde K_{1}]_{G_{1}} \otimes
	\res_{p_{2}} \Z[\?,G_{2}/\tilde K_{2}]_{G_{2}})
	\mathbin{\otimes_{\frakF}} (\res_{p_{1}} M \otimes
	\res_{p_{2}} M') \\
	\longto
        (\Z[\?, G_{1}/\tilde K_{1}]_{G_{1}} 
	\mathbin{\otimes_{\frakF_{1}}} M)
	\otimes 
        (\Z[\?, G_{2}/\tilde K_{2}]_{G_{2}} 
	\mathbin{\otimes_{\frakF_{2}}} M')
    \end{multline}
    of abelian groups. 
\end{lemma}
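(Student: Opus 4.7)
The plan is to show that the isomorphism $\theta$ of Lemma~\ref{lem:1} descends to the quotients defining the two tensor products in~\eqref{eq:iso-2}. Writing
\[
 A_H \defeq \Z[G_1/H_1, G_1/\tilde K_1]_{G_1} \otimes \Z[G_2/H_2, G_2/\tilde K_2]_{G_2} \otimes M(G_1/H_1) \otimes M'(G_2/H_2)
\]
for the summand indexed by $H = H_1\times H_2\in \frakF$ (so $\tilde H_i = H_i$), the left hand side of~\eqref{eq:iso-2} is a quotient $P/R$ with $P\defeq \coprod_{H\in\frakF} A_H$, where $R$ is the subgroup generated by the relators
\[
 \varphi^*(f_1\otimes f_2)\otimes (m\otimes m') - (f_1\otimes f_2)\otimes \varphi_*(m\otimes m')
\]
for all morphisms $\varphi = (\varphi_1,\varphi_2)\: G/H\to G/H'$ in $\OFG$. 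Since the Bredon-module structures on the two tensor products over $\Z$ are diagonal, the actions split as $\varphi^* = \varphi_1^*\otimes \varphi_2^*$ and $\varphi_* = \varphi_{1*}\otimes \varphi_{2*}$. Similarly, setting $P_i\defeq \coprod_{H_i\in\frakF_i} \Z[G_i/H_i,G_i/\tilde K_i]_{G_i}\otimes M(G_i/H_i)$ and letting $Q_i\subset P_i$ be the relation subgroup, the right hand side of~\eqref{eq:iso-2} is
$(P_1\otimes P_2)/(Q_1\otimes P_2 + P_1\otimes Q_2)$.

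By Lemma~\ref{lem:1} we already have a canonical isomorphism $\theta\: P\isom P_1\otimes P_2$ of abelian groups given by $(f_1\otimes f_2)\otimes (m\otimes m')\mapsto (f_1\otimes m)\otimes (f_2\otimes m')$. Thus everything comes down to the single identity $\theta(R) = Q_1\otimes P_2 + P_1\otimes Q_2$.

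For the inclusion $\theta(R)\subset Q_1\otimes P_2 + P_1\otimes Q_2$, I would apply the image of $\theta$ to a typical generator of $R$ and use the standard ``add and subtract'' trick:
\begin{align*}
 (\varphi_1^* f_1 \otimes m)\otimes (\varphi_2^* f_2\otimes m') &- (f_1\otimes \varphi_{1*}m)\otimes (f_2\otimes \varphi_{2*} m')\\
 = \bigl(\varphi_1^* f_1\otimes m - f_1\otimes \varphi_{1*}m\bigr)&\otimes (\varphi_2^* f_2\otimes m')\\
 + (f_1\otimes \varphi_{1*}m)&\otimes\bigl(\varphi_2^* f_2\otimes m' - f_2\otimes \varphi_{2*}m'\bigr),
\end{align*}
which visibly lies in $Q_1\otimes P_2 + P_1\otimes Q_2$. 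For the reverse inclusion, the key observation is that because $\frakF = \frakF_1\times \frakF_2$, every ``partial'' morphism $\varphi_1\: G_1/H_1\to G_1/H'_1$ in $\OC{G_1}{\frakF_1}$ lifts to the morphism $(\varphi_1, \id_{G_2/H_2})\: G/(H_1\times H_2)\to G/(H'_1\times H_2)$ in $\OFG$ (both source and target are in $\frakF$), and symmetrically for $\varphi_2$. Applying $\theta^{-1}$ to a generator $(\varphi_1^* f_1\otimes m - f_1\otimes \varphi_{1*}m)\otimes (f_2\otimes m')$ of $Q_1\otimes P_2$ therefore produces precisely the $R$-relator associated with $(\varphi_1,\id)$, and analogously on the other factor.

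I expect no serious obstacle here: the content is really that the $\otimes_\frakF$-relations decompose into ``first factor'' and ``second factor'' pieces exactly matching $Q_1\otimes P_2$ and $P_1\otimes Q_2$, which is made possible by the product structure of $\frakF$. Verifying naturality in $M$, $M'$, and $K$ (not explicitly asserted in the statement but natural to record) is then a straightforward diagram check on generators via the formula defining $\theta$. The only bookkeeping one must be careful about is that the $\tilde{(\cdot)}$ notation is harmless for $H\in\frakF$ (where $\tilde H_i = H_i$), but was written out in Lemma~\ref{lem:1} to keep the bijection~\eqref{eq:iso-rule} meaningful for arbitrary $H\leq G$ in later applications.
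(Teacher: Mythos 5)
Your proposal is correct and follows essentially the same route as the paper: both transport the generators of the $\otimes_{\frakF}$-relation subgroup through the isomorphism $\theta$ of Lemma~\ref{lem:1}, use the add-and-subtract identity for one inclusion, and lift one-factor morphisms via $(\varphi_1,\id)$ (resp.\ $(\id,\varphi_2)$) for the reverse inclusion. The only cosmetic difference is that the paper phrases the identification of the right-hand side as $(R\otimes R')/\ker(\pi_1\otimes\pi_2)$ and cites Bourbaki for $\ker(\pi_1\otimes\pi_2)=S\otimes R'+R\otimes S'$, which is exactly the description $(P_1\otimes P_2)/(Q_1\otimes P_2+P_1\otimes Q_2)$ you assert.
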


\begin{proof}
    For each $H\in \frakF$ we have
    \begin{multline*}
	(\res_{p_{1}} \Z[\?,G_{1}/\tilde K_{1}]_{G_{1}} \otimes \res_{p_{2}} 
	\Z[\?,G_{2}/\tilde K_{2}]_{G_{2}})(G/H)
	\\
	= \Z[G_{1}/\tilde H_{1},
	G_{1}/\tilde K_{1}]_{G_{1}} \otimes \Z[G_{2}/\tilde H_{2},
	G_{2}/\tilde K_{2}]_{G_{2}}
	= \Z[G/H, G/K]_{G}
    \end{multline*}
    using the identification given by~\eqref{eq:iso-help2}.
    
    Now the domain of the isomorphism~\eqref{eq:iso-2} is by 
    definition the abelian group $P/Q$ with
    \begin{multline*}
        P = \coprod_{H\in \frakF} 
	\bigl(
	(\res_{p_{1}} \Z[\?,G_{1}/\tilde K_{1}]_{G_{1}} \otimes \res_{p_{2}} 
	\Z[\?,G_{2}/\tilde K_{2}]_{G_{2}})(G/H)
	\\
	\otimes (\res_{p_{1}} M \otimes \res_{p_{2}} M')(G/H)\bigr)
	\\[1.5ex]
	= 
	\coprod_{H\in \frakF} 
	\Z[G/H, G/K]_{G} \otimes M(G_{1}/\tilde H_{1}) \otimes 
	M'(G_{2}/\tilde H_{2})
    \end{multline*}
    where $Q$ is the subgroup of $P$ generated by all elements of the 
    form
    \begin{equation}
	\label{eq:generators-of-Q}
        \varphi^{*}(f) \otimes (m\otimes m') - f \otimes 
	\varphi_{*}(m\otimes m')
    \end{equation}
    where $f\: G/H\to G/K$ is a $G$-map (that is a generator of the
    abelian group $\Z[G/H, G/K]_{G}$), $m\otimes m'\in M(G_{1}/\tilde
    L_{1}) \otimes M'(G_{2}/\tilde L_{2})$, $\varphi\in [G/L,
    G/H]_{G}$ and $H,L\in \frakF$.
    
    Likewise $\Z[\?, G_{1}/\tilde K_{1}]_{G_{1}}
    \mathbin{\otimes_{\frakF_{1}}} M$ is the abelian group $R/S$ with
    \begin{equation*}
        R\defeq  \coprod_{H_{1}\in \frakF_{1}} \Z[G_{1}/H_{1}, 
	G_{1}/\tilde K_{1}]_{G_{1}}
	\otimes M(G_{1}/H_{1})
    \end{equation*}
    where $S$ is the subgroup of $R$ generated by all elements of the 
    form
    \begin{equation}
        \label{eq:generators-of-S}
	\varphi_{1}^{*}(f_{1}) \otimes m - f_{1}\otimes 
	\varphi_{1*}(m)
    \end{equation}
    where $f_{1}\: G_{1}/H_{1}\to G_{1}/\tilde K_{1}$ is a $G_{1}$-map
    (that is a generator of the abelian group $\Z[G_{1}/H_{1},
    G_{1}/\tilde K_{1}]_{G_{1}}$), $m\in M(G_{1}/L_{1})$,
    $\varphi_{1}\in [G_{1}/L_{1}, G_{1}/H_{1}]_{G_{1}}$ and $H_{1},
    L_{1}\in \frakF_{1}$.  And in the very same fashion we express the
    abelian group $\Z[\?, G_{2}/\tilde K_{2}]_{G_{2}}
    \mathbin{\otimes_{\frakF_{2}}} M'$ as the quotient $R'/S'$.
    
    Since the tensor product is right exact the projection
    homomorphism $\pi_{1}\: R \to R/S$ and $\pi_{2}\: R' \to R'/S'$
    give an epimorphism $\pi_{1}\otimes \pi_{2}\: R \otimes R' \to R/S
    \otimes R'/S'$.  If we precompose this epimorphism with the
    isomorphism $\theta\: P\to R\otimes R'$ from Lemma~\ref{lem:1} we
    get an epimorphism
    \begin{equation*}
        P\to R/S \otimes R'/S'
    \end{equation*}
    The first claim of Lemma~\ref{lem:2} is now that this epimorphism 
    factors through $P/Q$, that is there exists a homomorphism $\eta$ 
    making the diagram
    \begin{equation*}
        \begin{diagram}
            \node{P}
	    \arrow{s,l}{\pi}
	    \arrow{e,t}{\theta}
	    \node{R\otimes R'}
	    \arrow{s,r}{\pi_{1} \otimes \pi_{2}}
	    \\
	    \node{P/Q}
	    \arrow{e,t}{\eta}
	    \node{R/S \otimes R'/S'}
        \end{diagram}
    \end{equation*}
    commute.  Since $\pi$ and $(\pi_{1}\otimes \pi_{2}) \circ \theta$
    are both epimorphisms it follows that this $\eta$ is necessarily
    unique and also an epimorphism.  In order to see that the
    epimorphism~$\eta$ exists, we must show that $Q \subset \ker
    ((\pi_{1}\otimes \pi_{2}) \circ \theta)$.  Since~$\theta$ is an
    isomorphism this is equivalent to $\theta (Q) \subset
    \ker(\pi_{1}\otimes \pi_{2})$.  By Proposition~6, 
    \cite[p.~252]{bourbaki-98} the kernel of $\pi_{1}\otimes\pi_{2}$ 
    has the following simple description
    \begin{equation*}
	\ker(\pi_{1}\otimes\pi_{2}) = \langle s\otimes r' , r\otimes
	s' \mathbin{:} r \in R, r'\in R', s\in S, s'\in S'\rangle.
    \end{equation*}
    Thus let $x$ be a generator of $Q$ as
    in~\eqref{eq:generators-of-Q}, then
    \begin{align*}
       \theta(x) 
       & =
       \theta( \varphi^{*}(f) \otimes (m\otimes m')) - \theta(f
       \otimes \varphi_{*}(m\otimes m'))
       \displaybreak[3]\\
       & =
       \theta( (f \circ \varphi) \otimes (m\otimes m')) - \theta(f
       \otimes \varphi_{*}(m) \otimes \varphi_{*}(m')))
       \displaybreak[3] \\
       & =
       \bigl((\tilde f_{1} \circ \tilde \varphi_{1}) \otimes m \otimes
       (\tilde f_{2} \circ \tilde \varphi_{2}) \otimes m'\bigr)
       - 
       \bigl( \tilde f_{1} \otimes \tilde 
       \varphi_{1*}(m) \otimes \tilde f_{2} \otimes \tilde 
       \varphi_{2*}(m')\bigr)       
       \displaybreak[3]\\
       & =
       \bigl((\tilde f_{1} \circ \tilde \varphi_{1}) \otimes m \otimes
       (\tilde f_{2} \circ \tilde \varphi_{2}) \otimes m'\bigr)
       - 
       \bigl( \tilde f_{1} \otimes \tilde \varphi_{1*}(m) \otimes
       (\tilde f_{2} \circ \tilde \varphi_{2}) \otimes m'\bigr)
       \\
       & 
       \qquad + 
       \bigl( \tilde f_{1} \otimes \tilde \varphi_{1*}(m) \otimes
       (\tilde f_{2} \circ \tilde \varphi_{2}) \otimes m'\bigr)
       -
       \bigl( \tilde f_{1} \otimes \tilde 
       \varphi_{1*}(m) \otimes \tilde f_{2} \otimes \tilde 
       \varphi_{2*}(m')\bigr)
       \displaybreak[3] \\
       & =
       \bigl(((\tilde f_{1} \circ \tilde \varphi_{1}) \otimes m - 
       \tilde f_{1} \otimes \tilde \varphi_{1*}(m)) \otimes
       (\tilde f_{2} \circ \tilde \varphi_{2}) \otimes m'\bigr)
       \\
       & 
       \qquad + 
       \bigl( \tilde f_{1} \otimes \tilde \varphi_{1*}(m) \otimes
       ((\tilde f_{2} \circ \tilde \varphi_{2}) \otimes m' -  \tilde 
       f_{2} \otimes \tilde \varphi_{2*}(m'))\bigr)
       \\
       & =
       \Bigl( (
       \underbrace{\tilde \varphi_{1}^{*}(\tilde f_{1}) \otimes m - 
       \tilde f_{1} \otimes \tilde \varphi_{1*}(m)}_{\in S} ) 
       \;
       \otimes 
       \;
       (\underbrace{
       (\tilde f_{2} \circ \tilde \varphi_{2}) \otimes m'}_{\in
       R'})\Bigr)
       \\
       & 
       \qquad + 
       \Bigl( ( \underbrace{\tilde f_{1} \otimes \tilde 
       \varphi_{1*}(m)}_{\in R}) \; \otimes \; (
       \underbrace{\tilde \varphi_{2}^{*}(\tilde f_{2}) \otimes m' -
       \tilde 
       f_{2} \otimes \tilde \varphi_{2*}(m')}_{\in S'})
       \Bigr)
       \\
       &
       \in \ker(\pi_{1}\otimes \pi_{2}).
    \end{align*}
    This concludes the first part of the claim of the lemma.
    
    It remains to show that the epimorphism $\eta$ is actually an
    isomorphism.  For this we need to show that $\theta$ maps $Q$ 
    epimorphically onto $\ker(\pi_{1}\otimes \pi_{2})$.
    
    Let $s$ be a generator of $S$ as in~\eqref{eq:generators-of-S}.
    Let $f_{2}\: G_{2}/H_{2}\to G_{1}/\tilde K_{2}$ be an arbitrary
    $G_{2}$-map and $m'\in M'(G_{2}/H_{2})$, that is $r \defeq  f_{2}
    \otimes m'$ is a generator of $R'$.  Let $\varphi\: G_{1}/L_{1}
    \times G_{2}/H_{2} \to G_{1}/H_{1} \times G_{2}/H_{2}$ be the
    $G$-map given by $\varphi(L_{1} \times H_{2}) \defeq 
    \varphi_{1}(H_{1}) \times H_{2}$.  Then $\tilde \varphi_{1} =
    \varphi_{1}$ and $\tilde\varphi_{2} = \id$.  Set $L\defeq  L_{1}\times
    H_{2}$ and $H \defeq  H_{1}\times H_{2}$.  Furthermore set $f \defeq 
    f_{1}\times f_{2}$.  Then $\tilde f_{1} = f_{1}$ and $\tilde f_{2}
    = f_{2}$ by construction.  With these definitions it follows that
    $x \defeq  \varphi^{*}(f) \otimes (m\otimes m') - f\otimes
    \varphi_{*}(m \otimes m')$ is a generator of $Q$ and
    \begin{align*}
        \theta(x)
	& =
	\theta(\varphi^{*}(f) \otimes (m\otimes m')) 
	\,
	- 
	\,
	\theta(f\otimes
	\varphi_{*}(m \otimes m'))
	\\
	& =
	\bigl(\tilde \varphi_{1}^{*}(\tilde f_{1}) \otimes m \otimes 
	\tilde \varphi_{2}^{*}(\tilde f_{2}) \otimes m' \bigr)
	\,
	-
	\,
	\bigl(\tilde f_{1} \otimes \tilde \varphi_{1*}(m) \otimes \tilde 
	f_{2}\otimes \tilde \varphi_{2*}(m') \bigr)
	\\
	& = 
	\bigl(
	\varphi_{1}^{*}(f_{1}) \otimes m \otimes f_{2} \otimes m' 
	\bigr)
	\,
	-
	\,
	\bigl(
	f_{1}\otimes \varphi_{1}(m) \otimes f_{2} \otimes m'
	\bigr)
	\\
	& =
	\bigl(
	\varphi_{1}(f_{1}) \otimes m - f_{1} \otimes \varphi_{1}(m) 
	\bigr)
	\,
	\otimes 
	\,
	\bigl(
	f_{2}\otimes m'
	\bigr)
	\\
	& = s \otimes r'.
    \end{align*}
    Similarly to~\eqref{eq:generators-of-S} we can show that any
    element $r\otimes s'$ with $r$ a generator of~$R$ and $s'$ a
    generator of $S'$ is contained in $\theta(Q)$.  Thus
    $\ker(\pi_{1}\otimes \pi_{2}) \subset \theta (Q)$ and equality
    holds.
\end{proof}

\begin{lemma}
    \label{lem:3}
    The isomorphism $\eta$ in Lemma~\ref{lem:2} is natural in each of
    its factors.  That is, given morphisms $f_{1}\: \Z[\?,
    G_{1}/\tilde K_{1}]_{G_{1}} \to \Z[\?,G_{1}/\tilde
    L_{1}]_{G_{1}}$, $f_{2}\: \Z[\?, G_{2}/\tilde K_{2}]_{G_{2}}\to
    \Z[\?,G_{2}/\tilde L_{2}]_{G_{2}}$, $f_{3}\: M\to N$, $f_{4}\: M'
    \to N'$ we have
    \begin{equation*}
	\eta \circ \bigl(f_{1}, f_{2}, f_{3}, f_{4}\bigr) =
	\bigl((f_{1}\otimes f_{2}) \mathbin{\otimes_{\frakF}}
	(f_{3}\otimes f_{4})\bigr) \circ \eta.
    \end{equation*}
\end{lemma}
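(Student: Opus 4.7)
The plan is to verify naturality at the level of the explicit pre-quotient map $\theta$ constructed in Lemma~\ref{lem:1} and then descend to the quotients. Since both source and target of $\eta$ are generated (as abelian groups) by elementary tensors, by $\Z$-linearity it is enough to chase a single generator of the form $f \otimes (m \otimes m')$ with $f \in [G/H, G/K]_{G}$, $m \in M(G_{1}/\tilde H_{1})$, $m' \in M'(G_{2}/\tilde H_{2})$. Under the bijection~\eqref{eq:iso-help2} we write $f$ as a pair $(\tilde f_{1}, \tilde f_{2})$, and $\theta$ then simply rearranges slots according to~\eqref{eq:iso-rule}. A final bookkeeping step will show that the four morphisms $f_{1}, f_{2}, f_{3}, f_{4}$ respect the defining relations of the three tensor products (over $\frakF$, $\frakF_{1}$, $\frakF_{2}$), so the argument passes to $P/Q \to R/S \otimes R'/S'$.

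The verification naturally splits into two cases. For naturality in $f_{3}$ (and, symmetrically, $f_{4}$): these morphisms act only on the module factors $M$ and $M'$, each of which occupies a single tensor slot on both sides of $\eta$ that is left untouched by the rearrangement $\theta$. Tracing $f \otimes (m \otimes m')$ along either route therefore gives $\tilde f_{1} \otimes f_{3}(m) \otimes \tilde f_{2} \otimes m'$, and the corresponding square commutes on the nose.

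For naturality in $f_{1}$ (and, symmetrically, $f_{2}$): by the Yoneda-type formula from Lemma~\ref{lem:yoneda-type-formula} together with $\Z$-linearity we may assume $f_{1}$ is the morphism induced by a single $G_{1}$-map $\varphi\: G_{1}/\tilde K_{1} \to G_{1}/\tilde L_{1}$, so that on $[G_{1}/\tilde H_{1}, G_{1}/\tilde K_{1}]_{G_{1}}$ the morphism $f_{1}$ acts by post-composition with $\varphi$. The key identity needed is that under the bijection~\eqref{eq:iso-help2} post-composition of $f = \tilde f_{1} \times \tilde f_{2}$ with $\varphi \times \id$ in $\OFG$ corresponds to post-composition of $\tilde f_{1}$ with $\varphi$ in the first factor and the identity on the second. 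This follows immediately from $(\varphi \times \id) \circ (\tilde f_{1} \times \tilde f_{2}) = (\varphi \circ \tilde f_{1}) \times \tilde f_{2}$, exactly the kind of product-decomposition identity already used in Lemmas~\ref{lem:1} and~\ref{lem:2}.

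The only conceptual obstacle is precisely this compatibility between post-composition in $\OFG$ and in $\OC{G_{i}}{\frakF_{i}}$ under the product decomposition $f = \tilde f_{1} \times \tilde f_{2}$; everything else is the same index-chasing carried out in the proof of Lemma~\ref{lem:2}. Once naturality of $\theta$ is established at the free level, the commuting square
\begin{equation*}
    \begin{diagram}
        \node{P} \arrow{e,t}{\theta} \arrow{s,l}{\pi}
        \node{R\otimes R'} \arrow{s,r}{\pi_{1}\otimes\pi_{2}}
        \\
        \node{P/Q} \arrow{e,t}{\eta}
        \node{R/S\otimes R'/S'}
    \end{diagram}
\end{equation*}
from the proof of Lemma~\ref{lem:2} shows that the naturality automatically descends to $\eta$, since the induced morphisms on both $P/Q$ and $R/S\otimes R'/S'$ are uniquely determined by the corresponding morphisms on $P$ and $R\otimes R'$.
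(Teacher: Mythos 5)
Your proposal is correct and takes essentially the same approach as the paper: the paper's proof is a one-line appeal to ``the simple form of the map~\eqref{eq:iso-rule}'', and your argument is precisely that check written out in detail (a generator chase at the level of the pre-quotient map $\theta$, using the product decomposition of $G$-maps and the Yoneda reduction for $f_{1}, f_{2}$, followed by descent through the projection square from the proof of Lemma~\ref{lem:2}).
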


\begin{proof}
    This follows immediately from the simple form of the 
    map~\eqref{eq:iso-rule}.
\end{proof}

\begin{proposition}
    \label{prop:1a}
    Let $F = \Z[\?, X_{1}]_{G_1}$ and $F' = \Z[\?, X_{2}]_{G_2}$ be a
    free $\OC{G_{1}}{\frakF_{1}}$-module and
    $\OC{G_{2}}{\frakF_{2}}$-module.  Then the isomorphism $\eta$
    defined in Lemma~\ref{lem:2} induces in a canonical way an
    isomorphism
    \begin{multline*}
        \eta \: (\res_{p_{1}} F \otimes \res_{p_{2}} F') 
	\mathbin{\otimes_{\frakF}} (\res_{p_{1}} M \otimes 
	\res_{p_{2}}M')\\
	\longto (F \mathbin{\otimes_{\frakF_{1}}} M) \otimes 
	(F' \mathbin{\otimes_{\frakF_{2}}} M').
    \end{multline*}
\end{proposition}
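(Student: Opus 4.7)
The plan is to reduce the claim to the ``singleton'' case already handled by Lemma~\ref{lem:2}, by decomposing $F$ and $F'$ into orbits and using the fact that all the functors in sight preserve coproducts.

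By Proposition~\ref{prop:GSet-to-ModOFG}, we may write the $G_1$-set $X_1$ as a disjoint union $X_1 = \coprod_{i\in I} G_1/H_{1,i}$ with each $H_{1,i}\in \frakF_1$, and similarly $X_2 = \coprod_{j\in J} G_2/H_{2,j}$ with $H_{2,j}\in \frakF_2$. This yields
\begin{equation*}
F \isom \coprod_{i\in I} \Z[\?, G_1/H_{1,i}]_{G_1}
\quad\text{and}\quad
F' \isom \coprod_{j\in J} \Z[\?, G_2/H_{2,j}]_{G_2}.
\end{equation*}
The first step is to transport these decompositions through the functors $\res_{p_1}$, $\res_{p_2}$, $\otimes$, and $\otimes_{\frakF}$. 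Restriction preserves arbitrary colimits by Proposition~\ref{prop:preservation-prop-1}; the tensor product over $\Z$ preserves colimits in each variable by component-wise inspection, and the tensor product over $\frakF$ does so by Proposition~\ref{prop:tensor-product-1}. Combining these observations yields a natural isomorphism
\begin{equation*}
(\res_{p_{1}} F \otimes \res_{p_{2}} F') \mathbin{\otimes_{\frakF}} (\res_{p_{1}} M \otimes \res_{p_{2}}M')
\isom
\coprod_{(i,j)\in I\times J} A_{ij},
\end{equation*}
where $A_{ij}$ denotes the left hand side of the isomorphism in Lemma~\ref{lem:2} with $K_\ell$ replaced by $H_{\ell, i}$ for $\ell=1$ and by $H_{\ell,j}$ for $\ell=2$. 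An entirely analogous argument decomposes the right hand side of the claimed isomorphism as $\coprod_{(i,j)} B_{ij}$, where $B_{ij}$ is the corresponding right hand side of Lemma~\ref{lem:2}.

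Next, for each pair $(i,j)$ the subgroup $H_{1,i}\times H_{2,j}$ lies in $\frakF_1\times \frakF_2 = \frakF$, so Lemma~\ref{lem:2} applies and produces an isomorphism $\eta_{ij}\: A_{ij}\to B_{ij}$. Taking the coproduct of the $\eta_{ij}$ over $(i,j)\in I\times J$ and composing with the decomposition isomorphisms above produces the desired morphism $\eta$.

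The one point that still needs checking is that the construction is genuinely canonical, in the sense that it is independent of the chosen orbit decompositions of $X_1$ and $X_2$ and respects morphisms of the free modules $F$ and $F'$. This is where Lemma~\ref{lem:3} enters: any change of orbit decomposition is encoded by an isomorphism of free Bredon modules, and the naturality of $\eta$ in its four factors ensures that the construction is compatible with these identifications. The main potential obstacle is the bookkeeping of the four-fold coproduct together with the interplay between $\otimes$ and $\mathbin{\otimes_{\frakF}}$; however, once the preservation-of-coproducts facts are in hand this amounts to a routine diagram chase, and the coherence on each summand is exactly what Lemma~\ref{lem:2} together with Lemma~\ref{lem:3} provides.
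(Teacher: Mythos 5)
Your proposal is correct and follows essentially the same route as the paper: decompose $X_1$ and $X_2$ into orbits so that $F$ and $F'$ split as coproducts of modules of the form $\Z[\?, G_i/H]_{G_i}$, use that restriction and both tensor products preserve coproducts, and apply Lemma~\ref{lem:2} summand-wise, defining $\eta$ as the coproduct of the resulting isomorphisms. Your extra remarks on canonicity via Lemma~\ref{lem:3} are harmless additional care; the paper simply invokes the additivity of the setup at that point.
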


\begin{proof}
    \allowdisplaybreaks The main work has already been done in
    Lemma~\ref{lem:2} and the remaining claim follows from the
    additivity of the setup.  For completeness, the purely technical
    details are as follows.  Write
    \begin{equation*}
        X_{1} = \coprod_{\alpha} G_{1}/H_{1,\alpha}
	\qquad \text{and} \qquad
        X_{2} = \coprod_{\beta} G_{2}/H_{2,\beta}
    \end{equation*}
    as the disjoint union of transitive $G_{1}$-sets and $G_{2}$-sets
    with stabilisers in~$\frakF_{1}$ and $\frakF_{2}$ respectively. 
    Then $F = \coprod F_{\alpha}$ with $F_{\alpha} \defeq  
    \Z[\?,G_{1}/H_{1,\alpha}]_{G_{1}}$ and similarly $F' = 
    \coprod F'_{\beta}$ with $F'_{\beta} \defeq  
    \Z[\?,G_{2}/H_{2,\beta}]_{G_{2}}$. We get
    \begin{multline*}
	(\res_{p_{1}} F \otimes \res_{p_{2}} F')
	\mathbin{\otimes_{\frakF}} (\res_{p_{1}} M \otimes 
	\res_{p_{2}}M') 
	\\[1ex]
	 = \coprod_{\alpha,\beta} \bigl((\res_{p_{1}} F_{\alpha}
	\otimes \res_{p_{2}} F'_{\beta}) \mathbin{\otimes_{\frakF}}
	(\res_{p_{1}} M \otimes \res_{p_{2}}M') \bigr)
    \end{multline*}
    which is mapped by $\eta \defeq 
    \coprod \eta_{\alpha,\beta}$ summand wise
    isomorphically onto
    \begin{equation*}
	\coprod_{\alpha,\beta} \bigl((F_{\alpha}
	\mathbin{\otimes_{\frakF_{1}}} M) \otimes (F'_{\beta}
	\mathbin{\otimes_{\frakF_{2}}} M')\bigr)
	= (F \mathbin{\otimes_{\frakF_{1}}} M) \otimes 
	(F' \mathbin{\otimes_{\frakF_{2}}} M').\tag*{\qedhere}
    \end{equation*}
\end{proof}

Now let $F_{*}\to \underline\Z_{\frakF_{1}}$ be a free resolution of
the trivial $\OC{G_{1}}{\frakF_{1}}$-module and let $F'_{*}\to
\underline\Z_{\frakF_{2}}$ be a free resolution of the trivial
$\OC{G_{2}}{\frakF_{2}}$-module.  From the previous section we know
that the
total complex of $\res_{p_{1}} F_{*}\otimes \res_{p_{2}} F'_{*}$ gives
a free resolution of the trivial $\OFG$-module
$\underline\Z_{\frakF}$.  Tensoring this total complex over the orbit
category $\OFG$ with $\res_{p_{1}} M\otimes \res_{p_{2}}M'$ gives a 
chain complex whose objects are given by
\begin{equation*}
    C_{n} \defeq  \coprod_{k=0}^{n} 
    \bigl(
    (\res_{p_{1}} F_{k}\otimes \res_{p_{2}} F'_{n-k})
    \,\mathbin{\otimes_{\frakF}}\,
    (\res_{p_{1}} M\otimes \res_{p_{2}}M') \bigr)
\end{equation*}
The isomorphism from Proposition~\ref{prop:1a} maps these groups 
isomorphically onto
\begin{equation*}
    C'_{n} \defeq  \coprod_{k=0}^{n} 
    \bigl( 
    (F_{k} \mathbin{\otimes_{\frakF_{1}}} M)
    \, \otimes \,
    (F'_{n-k} \mathbin{\otimes_{\frakF_{2}}} M')
    \bigr).
\end{equation*}
We denote these isomorphisms by $\eta_{n}\: C_{n}\to C'_{n}$, $n\in 
\N$. Using Lemma~\ref{lem:3} one can conclude that this collection of 
isomorphisms defines a chain map
\begin{equation*}
    \eta\: C_{*} \to C'_{*}.
\end{equation*}
Now $C'_{*}$ is nothing else than the chain complex obtained as a
total complex of $(F_{*} \mathbin{\otimes_{\frakF_{1}}} M) \otimes
(F'_{*} \mathbin{\otimes_{\frakF_{2}}} M')$. 
Following~\cite[pp.~108f.]{brown-82}, if $z$ is a $p$-cycle of 
$F_{*}\otimes_{\frakF_{1}} M$ and $z'$ a $q$-cycle of 
$F'_{*} \mathbin{\otimes_{\frakF_{2}}} M'$, then 
\begin{equation*}
    z\times z' \defeq  \eta^{-1}(z \otimes z')
\end{equation*}
is a $(p+q)$-cycle of $C_*$. Its homology class depends only on the 
homology class of $z$ and $z'$. Thus we obtain a \emph{homology cross 
product}
\begin{equation*}
    \times \,\: H^{\frakF_{1}}_{p}(G_{1}; M) \otimes 
    H^{\frakF_{2}}_{q}(G_{2}; M') \to H_{p+q}^{\frakF_{1}\times 
    \frakF_{2}}(G_{1}\times G_{2}; 
    \res_{p_{1}} M \otimes \res_{p_{2}} M')
\end{equation*}
which maps $[z]\otimes [z'] \mapsto [z] \times [z'] \defeq [z\times
z']$ as in the classical case.

\begin{theorem}[Künneth Formula for Bredon Homology]
    \label{thrm:kunneth-formula-for-homology}
    Assume that there exists free resolutions $F_{*}\to
    \underline\Z_{\frakF_{1}}$ and $F'_{*}\to
    \underline\Z_{\frakF_{2}}$ such that the chain complex $F_{*}
    \mathbin{\otimes_{\frakF_{1}}} M$ or the chain complex $F'_{*}
    \mathbin{\otimes_{\frakF_{2}}} M'$ is a free chain complex.  Then
    for every $n\in \N$ there exists a short exact sequence
    \begin{multline*}
        0 
	\to 
	\coprod_{k=0}^{n} H^{\frakF_{1}}_{k}(G_{1}; M) \otimes 
	H^{\frakF_{2}}_{n-k}(G_{2}; M') 	
	\\
	\stackrel{\alpha}{\longto} H^{\frakF_{1}\times
	\frakF_{2}}_{n}(G_{1}\times G_{2}; \res_{p_{1}} M\otimes
	\res_{p_{2}} M') \\
	\longto 
	\coprod_{k=0}^{n-1}
	\Tor_{1}(H^{\frakF_{1}}_{k}(G_{1}; M), 
	H^{\frakF_{2}}_{n-k-1}(G_{2}; M') 
	\to 0
    \end{multline*}
    of abelian groups.  The homomorphism $\alpha$ is given by the
    homological cross product, that is, $\alpha(z \otimes z') \defeq 
    z\times z'$.  If both chain complexes $F_{*}
    \mathbin{\otimes_{\frakF_{1}}} M$ and~$F'_{*}
    \mathbin{\otimes_{\frakF_{2}}} M'$ are dimension wise free, then
    this sequence is split, but this splitting is not natural.
\end{theorem}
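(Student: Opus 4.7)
My plan is to reduce the statement to the classical algebraic Künneth formula for the tensor product of two chain complexes of abelian groups, using the identifications already developed in Section~\ref{sec:tensor-product-resolutions} and in Proposition~\ref{prop:1a} above.

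First I would recall that by Proposition~\ref{prop:key-result}, the total complex of the double complex $\res_{p_{1}} F_{*} \otimes \res_{p_{2}} F'_{*}$ is a free resolution of the trivial $\OFG$-module $\underline\Z_{\frakF}$.  Consequently, the Bredon homology group $H^{\frakF}_{n}(G; \res_{p_{1}} M \otimes \res_{p_{2}} M')$ is computed as the $n$-th homology of the chain complex $C_{*}$ obtained by applying $\? \mathbin{\otimes_{\frakF}} (\res_{p_{1}} M \otimes \res_{p_{2}} M')$ to this resolution.  Then Proposition~\ref{prop:1a} (with the naturality provided by Lemma~\ref{lem:3}) gives a chain isomorphism $\eta\: C_{*}\to C'_{*}$, where $C'_{*}$ is the total complex of the double complex of abelian groups
\begin{equation*}
    D_{p,q} \defeq  (F_{p} \mathbin{\otimes_{\frakF_{1}}} M) \otimes (F'_{q} \mathbin{\otimes_{\frakF_{2}}} M').
\end{equation*}
Hence $H_{n}(C'_{*}) \isom H^{\frakF}_{n}(G; \res_{p_{1}} M \otimes \res_{p_{2}} M')$.

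Next I would set $A_{*} \defeq  F_{*} \mathbin{\otimes_{\frakF_{1}}} M$ and $B_{*} \defeq  F'_{*} \mathbin{\otimes_{\frakF_{2}}} M'$, so that $C'_{*} = \mathrm{Tot}(A_{*}\otimes B_{*})$ is nothing but the tensor product chain complex of two chain complexes of abelian groups.  By definition, the homology groups of $A_{*}$ and $B_{*}$ are exactly the Bredon homology groups $H^{\frakF_{1}}_{*}(G_{1}; M)$ and $H^{\frakF_{2}}_{*}(G_{2}; M')$ respectively.  By the hypothesis of the theorem, at least one of $A_{*}$ or $B_{*}$ is a dimension-wise free chain complex of abelian groups.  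I now invoke the classical algebraic Künneth formula for chain complexes of abelian groups, as stated for example in~\cite[p.~87]{weibel-94}: if one of the two chain complexes is dimension-wise free (equivalently flat over $\Z$, since $\Z$ is a PID), then there is a natural short exact sequence
\begin{equation*}
    0 \to \coprod_{k=0}^{n} H_{k}(A_{*}) \otimes H_{n-k}(B_{*}) \stackrel{\times}{\longto} H_{n}(A_{*}\otimes B_{*}) \longto \coprod_{k=0}^{n-1} \Tor_{1}(H_{k}(A_{*}), H_{n-k-1}(B_{*})) \to 0
\end{equation*}
where the first map is the homological cross product $[z]\otimes[z'] \mapsto [z\otimes z']$.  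Pulling this back through the isomorphism $\eta$, and noting that under $\eta^{-1}$ the cycle $z\otimes z'$ is sent exactly to our cross product $z\times z'$, yields the sequence claimed in the theorem.

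Finally, for the splitting assertion, I would appeal to the fact that the classical Künneth sequence for chain complexes of abelian groups splits (non-naturally) whenever \emph{both} factors are dimension-wise free; see~\cite[p.~166]{brown-82} or~\cite[p.~88]{weibel-94}.  The only real obstacle in this plan is ensuring that the algebraic Künneth formula for $\Z$-modules applies directly, but this is automatic: after invoking Proposition~\ref{prop:1a}, all constructions take place in the abelian category $\Ab$, and the freeness hypothesis on one of $A_{*}$ or $B_{*}$ is precisely the standard flatness hypothesis needed in the classical statement.  Everything else is bookkeeping to match the cross product defined on Bredon chains with the classical one on $A_{*}\otimes B_{*}$, which is immediate from the definition $z\times z' \defeq \eta^{-1}(z\otimes z')$.
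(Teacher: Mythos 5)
Your proposal is correct and follows essentially the same route as the paper: the paper's proof likewise identifies the Bredon chain complex with the total complex of $(F_{*}\mathbin{\otimes_{\frakF_{1}}} M)\otimes(F'_{*}\mathbin{\otimes_{\frakF_{2}}} M')$ via Proposition~\ref{prop:1a} (with the chain-level compatibility from Lemma~\ref{lem:3}) and then applies the classical K\"unneth formula for chain complexes of abelian groups, citing Spanier where you cite Weibel. The only difference is bookkeeping detail, which you carry out correctly, including the observation that dimension-wise freeness over $\Z$ supplies the flatness hypothesis of the classical statement.
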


\begin{proof}
    This is essentially the Künneth Formula for chain complexes of
    abelian groups applied to chain complexes $F_{*}
    \mathbin{\otimes_{\frakF_{1}}} M$ and  $F'_{*}
    \mathbin{\otimes_{\frakF_{2}}} M'$ (see for 
    example~\cite[pp.~227ff.]{spanier-66}) together with 
    Proposition~\ref{prop:1a}.
\end{proof}

%
%

\chapter{Bredon Dimensions for the Family $\Fvc$}
\label{ch:dimensions-for-Fvc}

%
%

\section{A Geometric Lower Bound for $\hd_{\frakF}G$ and 
$\cd_{\frakF} G$}

Assume that $\frakF$ is a semi-full family of subgroups of $G$.  Then
the
geometric dimension $\gd_{\frakF} G$ is defined and gives an upper
bound for the dimension $\cd_{\frakF} G$, and therefore also for
$\hd_{\frakF} G$.  In this section, we will prove a result that can be
used to obtain a lower bound for the cohomological and homological
Bredon dimension of $G$ using geometrical methods.

\begin{DEF}
    Let $\frakF$ be a semi-full family of subgroups of $G$.  We say
    that~$Y$ is a model for $B_{\frakF}G$ if there exists a model $X$
    for $E_{\frakF} G$ such that~$Y = X/G$, that is, $Y$ is the orbit
    space of some classifying space for the family $\frakF$.
\end{DEF}

The main result of this section will be the following theorem. It is 
essentially the generalisation of the classical fact that
\begin{equation*}
    H_{n}(G; \Z) \isom H_{n}(Y)
    \qquad \text{and} \qquad 
    H^{n}(G; \Z) \isom H^{n}(Y)
\end{equation*}
where $Y$ is an Eilenberg--Mac~Lane space $K(G,1)$,
see for example~\cite[pp.~36ff.]{brown-82}.

\begin{theorem}
    \label{thrm:geometric-lower-bound}
    Let $G$ be a group and let $\frakF$ be a semi-full family of
subgroups 
    of~$G$.  Then for every $n\in \N$ we have isomorphisms
    \begin{equation*}
	H_{n}^{\frakF} (G; \underline\Z_{\frakF}) \isom
	H_{n}(B_{\frakF} G) \qquad \text{and} \qquad H^{n}_{\frakF}
	(G; \underline\Z_{\frakF}) \isom H^{n}(B_{\frakF} G)
    \end{equation*}
    of abelian groups.
\end{theorem}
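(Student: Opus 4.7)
The strategy is to compute both Bredon (co)homology groups using the free resolution of $\underline\Z_{\frakF}$ coming from a model $X$ for $E_{\frakF}G$ (Proposition~\ref{prop:free-resolution-from-model}), and to show that after applying $\?\otimes_{\frakF}\underline\Z_{\frakF}$ or $\mor_{\frakF}(\?,\underline\Z_{\frakF})$ the resulting complex is naturally identified with the cellular (co)chain complex of the orbit space $B_{\frakF}G = X/G$. Fix a model $X$ for $E_{\frakF}G$ and consider the free resolution
\begin{equation*}
    \ldots \longto \underline{C}_{1}(X) \longto \underline{C}_{0}(X)
    \stackrel{\varepsilon}{\longto}\underline\Z_{\frakF} \longto 0,
\end{equation*}
where $\underline{C}_n(X)=\Z[\?,\Delta_n]_{G}$ and $\Delta_n$ is the $G$-set of open equivariant $n$-cells of $X$.

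Next, I would compute the effect of $\?\otimes_{\frakF}\underline\Z_{\frakF}$ on each $\underline{C}_n(X)$. Picking a complete system $R_n$ of $G$-orbit representatives for $\Delta_n$, we have by~\eqref{eq:general-free-module} a decomposition $\Z[\?,\Delta_n]_G \isom \coprod_{\sigma\in R_n}\Z[\?,G/G_\sigma]_G$, and Lemma~\ref{lem:tensor-product-2} together with the fact that $\?\otimes_{\frakF}\underline\Z_{\frakF}$ commutes with coproducts (Proposition~\ref{prop:tensor-product-1}) yields a natural isomorphism
\begin{equation*}
    \underline{C}_n(X)\otimes_{\frakF}\underline\Z_{\frakF} \isom
    \coprod_{\sigma\in R_n}\underline\Z_{\frakF}(G/G_\sigma) = \Z[\Delta_n/G].
\end{equation*}
Since $X$ is a $G$-CW-complex, the orbit space $B_{\frakF}G=X/G$ inherits a CW-structure whose set of $n$-cells is precisely $\Delta_n/G$; hence this target is the group of cellular $n$-chains of $B_{\frakF}G$. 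Dually, the Yoneda-type formula (Lemma~\ref{lem:yoneda-type-formula}) gives $\mor_{\frakF}(\Z[\?,G/G_\sigma]_G,\underline\Z_{\frakF})\isom\Z$, which after taking products over $\sigma\in R_n$ identifies $\mor_{\frakF}(\underline{C}_n(X),\underline\Z_{\frakF})$ with the group of cellular $n$-cochains of $B_{\frakF}G$.

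The main obstacle is verifying that under these identifications the differentials of the Bredon complex coincide (up to sign) with the cellular boundary and coboundary operators of $B_{\frakF}G$. I would carry this out by unwinding the definition of $d_n\:\underline{C}_n(X)\to\underline{C}_{n-1}(X)$ given in Section~\ref{sec:resolutions-from-classifying-space}: on $G/G_\sigma$-summands, $d_{n,H}$ is the cellular differential of $X^H$ coming from the triple $(X^H_n,X^H_{n-1},X^H_{n-2})$, and after passing to coinvariants along $\?\otimes_{\frakF}\underline\Z_{\frakF}$ this reduces to summing incidence numbers of the underlying cells in $X/G$. Naturality of the connecting homomorphisms under the quotient maps $X\to X/G$ and $X^H\to X/G$ shows that the induced differential on $\Z[\Delta_n/G]$ is exactly the cellular differential of $B_{\frakF}G$; the cohomological case is the $\Hom(\?,\Z)$-dual of this identification.

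Once the chain-level isomorphism of complexes is established, taking homology and cohomology yields the two claimed isomorphisms, since $H^\frakF_n(G;\underline\Z_{\frakF}) = \Tor_n^{\frakF}(\underline\Z_{\frakF},\underline\Z_{\frakF})$ and $H_\frakF^n(G;\underline\Z_{\frakF}) = \Ext^n_{\frakF}(\underline\Z_{\frakF},\underline\Z_{\frakF})$ may be computed from any projective (in particular free) resolution of $\underline\Z_{\frakF}$. A small additional remark is that the isomorphisms are independent of the choice of model $X$, which follows from the uniqueness up to natural isomorphism of derived functors.
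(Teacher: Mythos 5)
Your argument is correct and shares the paper's overall architecture: both compute the Bredon (co)homology from the free resolution $\underline{C}_{*}(E_{\frakF}G)\to\underline\Z_{\frakF}$ of Proposition~\ref{prop:free-resolution-from-model} and identify the resulting complexes with the cellular (co)chains of the orbit space. Where you differ is in how the central chain-level identification is established. The paper proves Proposition~\ref{prop:1b} globally: it exhibits $C_{*}(B_{\frakF}G)$ as the quotient of $\coprod_{H\in\frakF}C_{*}(X^{H})$ by the kernel of the projection $\pi$ induced by the maps $X^{H}\to X/G$, and the real work is the explicit verification that this kernel coincides with the relation subgroup $Q_{*}$ from the definition of $\otimes_{\frakF}$ (the telescoping rewriting of a cycle in one orbit). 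You instead decompose $\underline{C}_{n}(X)$ into its orbit summands $\Z[\?,G/G_{\sigma}]_{G}$ and invoke Lemma~\ref{lem:tensor-product-2}, respectively the Yoneda formula of Lemma~\ref{lem:yoneda-type-formula}, to compute the chain and cochain groups, so the kernel computation disappears and the burden shifts to checking that the differentials agree; your justification via functoriality of cellular chain complexes under the cellular quotient maps $X^{H}\to X/G$ (naturality of the connecting homomorphisms of the triples) is exactly what is needed, and it is the same ingredient the paper uses implicitly when it declares $\pi_{H}$ to be a chain map. For the cohomological half the paper passes through the adjunction~\eqref{eq:tensor-product-adjoint-1} after Proposition~\ref{prop:1b}, whereas you dualise summand-wise by Yoneda; your phrase that the cochain identification is ``the $\hom(\?,\Z)$-dual'' is most cleanly made precise by exactly that adjunction, $\mor_{\frakF}(M,\underline\Z_{\frakF})\isom\hom(M\otimes_{\frakF}\underline\Z_{\frakF},\Z)$, so you may as well state it. In short: your route is more formal and avoids the paper's hands-on kernel argument, at the cost of a careful differential-compatibility check; the paper's route makes the map to $C_{*}(X/G)$ completely explicit, which is convenient when one wants to trace concrete cycles (as in the later applications).
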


Now this result together with Proposition~\ref{prop:pd-via-ext} and 
Proposition~\ref{prop:fld-via-tor} implies the following 
immediate result.

\begin{corollary}
    If $H_{n}(B_{\frakF}G)\neq 0$ then $\hd_{\frakF}G\geq n$.
    Likewise $H^{m}(B_{\frakF}G)\neq 0$ implies $\cd_{\frakF}G\geq 
    m$.\qed
\end{corollary}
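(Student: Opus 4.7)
The plan is to extract both isomorphisms from a single free resolution of $\underline\Z_{\frakF}$ coming from a model for $E_{\frakF}G$, and then identify the resulting complexes with the cellular (co)chain complex of the orbit space $B_{\frakF}G$. Concretely, fix a model $X$ for $E_{\frakF}G$ and consider the free resolution $\underline{C}_{*}(X) \to \underline\Z_{\frakF}$ provided by Proposition~\ref{prop:free-resolution-from-model}, where $\underline{C}_{n}(X) = \Z[\?, \Delta_{n}]_{G}$ and $\Delta_{n}$ is the $G$-set of $n$-cells of $X$. Being free, this resolution may be used to compute both $\Tor_{*}^{\frakF}(\underline\Z_{\frakF}, \underline\Z_{\frakF})$ and $\Ext^{*}_{\frakF}(\underline\Z_{\frakF}, \underline\Z_{\frakF})$.

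For the homological statement, tensor $\underline{C}_{*}(X)$ with $\underline\Z_{\frakF}$ over $\frakF$. Writing $\Delta_{n}$ as a disjoint union $\coprod_{x\in R_{n}} G/G_{x}$ of transitive $G$-sets over a set $R_{n}$ of orbit representatives, Lemma~\ref{lem:tensor-product-2} gives $\Z[\?, G/G_{x}]_{G} \mathbin{\otimes_{\frakF}} \underline\Z_{\frakF} \isom \Z$ for each $x$, so $\underline{C}_{n}(X) \mathbin{\otimes_{\frakF}} \underline\Z_{\frakF} \isom \coprod_{x\in R_{n}} \Z$. The set $R_{n}$ is in canonical bijection with the $n$-cells of $B_{\frakF}G = X/G$, so these are exactly the cellular chain groups of $B_{\frakF}G$. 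For cohomology I would apply $\mor_{\frakF}(\?, \underline\Z_{\frakF})$ instead; by the Yoneda-type formula (Lemma~\ref{lem:yoneda-type-formula}), $\mor_{\frakF}(\Z[\?, G/G_{x}]_{G}, \underline\Z_{\frakF}) \isom \underline\Z_{\frakF}(G/G_{x}) = \Z$, and the coproduct in the domain becomes a product, so the resulting cochain complex agrees with the cellular cochain complex of $B_{\frakF}G$ term by term.

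The main obstacle is verifying that under these identifications the boundary and coboundary maps coincide with the cellular ones on $B_{\frakF}G$. The quotient map $X \to X/G$ is cellular and, because each equivariant cell of a $G$-CW-complex is fixed pointwise by its stabiliser, cellular orientations descend to $X/G$. Tracing the differential~\eqref{eq:celular-chain-complex-differentials} through the identification $\Z[\?, G/G_{x}]_{G} \mathbin{\otimes_{\frakF}} \underline\Z_{\frakF} \isom \Z$ coming from the defining relations $\varphi^{*}(m)\otimes n - m\otimes \varphi_{*}(n)$ in the tensor product, one recovers precisely the cellular boundary on $C_{n}(X/G)$; the cochain calculation is dual.

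Granting Theorem~\ref{thrm:geometric-lower-bound}, the corollary follows at once: if $H_{n}(B_{\frakF}G) \neq 0$ then $\Tor_{n}^{\frakF}(\underline\Z_{\frakF}, \underline\Z_{\frakF}) \neq 0$, so Proposition~\ref{prop:fld-via-tor} forces $\fld_{\frakF} \underline\Z_{\frakF} \geq n$, i.e.\ $\hd_{\frakF} G \geq n$; similarly $H^{m}(B_{\frakF}G) \neq 0$ yields $\Ext^{m}_{\frakF}(\underline\Z_{\frakF}, \underline\Z_{\frakF}) \neq 0$, and Proposition~\ref{prop:pd-via-ext} gives $\cd_{\frakF} G \geq m$.
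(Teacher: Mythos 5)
Your final paragraph is exactly the paper's argument: the corollary is an immediate consequence of Theorem~\ref{thrm:geometric-lower-bound} together with Proposition~\ref{prop:fld-via-tor} and Proposition~\ref{prop:pd-via-ext}. The preceding re-derivation of the theorem (identifying $\underline{C}_{*}(E_{\frakF}G)\mathbin{\otimes_{\frakF}}\underline\Z_{\frakF}$, resp.\ $\mor_{\frakF}(\underline{C}_{*}(E_{\frakF}G),\underline\Z_{\frakF})$, with the cellular (co)chain complex of $B_{\frakF}G$) is sound but unnecessary for the corollary and follows essentially the same route as the paper's Proposition~\ref{prop:1b}.
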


Before we can prove this theorem we need the following auxiliary
result.  This result is the main reason for the
Theorem~\ref{thrm:geometric-lower-bound} to be true.

\begin{proposition}
    \label{prop:1b}
    Let $\frakF$ be a semi-full family of subgroups of $G$.  Then we
    have an isomorphism $\underline{C}_{*}(E_{\frakF}G)
    \mathbin{\otimes_{\frakF}} \underline\Z_{\frakF} \isom
    C_{*}(B_{\frakF}G)$ of chain complexes.
\end{proposition}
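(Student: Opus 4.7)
I will construct, for each $n\in \N$, an isomorphism $\eta_n\: \underline{C}_n(E_{\frakF}G) \mathbin{\otimes_{\frakF}} \underline\Z_{\frakF} \isom C_n(B_{\frakF}G)$ of abelian groups, and then verify that the $\eta_n$ commute with the boundary operators.

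\emph{Degreewise isomorphism.} Let $X = E_{\frakF}G$ and $Y = B_{\frakF}G = X/G$, and let $\Delta_n$ be the $G$-set of $n$-cells of $X$, so that $\underline{C}_n(X) = \Z[\?, \Delta_n]_G$. Since $X$ is a model for $E_{\frakF}G$, we have $\frakF(\Delta_n) \subset \frakF$, and choosing a set of orbit representatives $R_n \subset \Delta_n$ we may write $\Delta_n \isom \coprod_{\sigma \in R_n} G/G_\sigma$ as a $G$-set with every $G_\sigma \in \frakF$. By Proposition~\ref{prop:GSet-to-ModOFG} this yields $\underline{C}_n(X) \isom \coprod_{\sigma \in R_n} \Z[\?, G/G_\sigma]_G$. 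Combining this with Proposition~\ref{prop:tensor-product-1} (so that $\?\mathbin{\otimes_{\frakF}}\underline\Z_{\frakF}$ preserves coproducts) and Lemma~\ref{lem:tensor-product-2} gives
\begin{equation*}
    \underline{C}_n(X) \mathbin{\otimes_{\frakF}} \underline\Z_{\frakF}
    \isom \coprod_{\sigma \in R_n} \underline\Z_{\frakF}(G/G_\sigma)
    = \coprod_{\sigma \in R_n} \Z \isom C_n(Y),
\end{equation*}
where the last identification uses the standard bijection between $n$-cells of the quotient CW-complex $Y = X/G$ and $G$-orbits of $n$-cells of $X$. Tracing through the proof of Lemma~\ref{lem:tensor-product-2}, the isomorphism $\eta_n$ sends a generator $\sigma \otimes 1 \in \underline{C}_n(X)(G/H) \otimes \underline\Z_{\frakF}(G/H)$ with $\sigma \in \Delta_n^H$ and $H \in \frakF$ to the $n$-cell $[\sigma]\in C_n(Y)$; compatibility with the defining relations of $\mathbin{\otimes_{\frakF}}$ holds because the action of $\OFG$ on $\underline\Z_{\frakF}$ is trivial, so $\varphi^*(\sigma) \otimes 1 = \sigma \otimes 1$ for any morphism $\varphi$ between orbit representatives.

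\emph{Compatibility with differentials.} On the left-hand side the differential is $d_n \otimes \id$, where $d_{n,H}\: C_n(X^H) \to C_{n-1}(X^H)$ is the cellular differential of the subcomplex $X^H$. Because our $G$-CW-complex structure demands that stabilisers fix cells pointwise, $X^H$ is the subcomplex of $X$ consisting exactly of the cells $\sigma$ with $H \leq G_\sigma$, and its cellular differential is simply the restriction of that of $X$. The quotient map $q\: X \to Y$ is cellular, and the standard theory identifies $C_*(Y)$ with the $G$-coinvariants $(C_*(X))_G$, so $d^Y[\sigma] = \sum_{\tau} [\sigma:\tau]\,[\tau]$ where $\tau$ ranges over $(n-1)$-cells of $X$ appearing in $d^X\sigma$, all of which lie in $X^{G_\sigma}$. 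Chasing $\sigma\otimes 1$ through the square then gives
\begin{equation*}
    \eta_{n-1}\bigl((d_n \otimes \id)(\sigma \otimes 1)\bigr)
    = \eta_{n-1}\Bigl(\sum_\tau [\sigma:\tau]\,\tau\otimes 1\Bigr)
    = \sum_\tau [\sigma:\tau]\,[\tau]
    = d^Y\bigl(\eta_n(\sigma\otimes 1)\bigr),
\end{equation*}
as required. Hence the $\eta_n$ assemble into the desired isomorphism of chain complexes.

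\emph{Main obstacle.} The substantive point is the comparison between the componentwise cellular differentials $d_{n,H}$ on the fixed-point subcomplexes $X^H$ and the cellular differential of the orbit space $Y$. Once one invokes the standard facts that (i)~the fixed-point sets of a $G$-CW-complex with the stabiliser-fixes-pointwise convention are subcomplexes with the restricted cellular boundary, and (ii)~the quotient map $X \to X/G$ is cellular and identifies $C_*(Y)$ with $(C_*(X))_G$, the verification reduces to inspecting the image of $\sigma \otimes 1$ under the isomorphism of Lemma~\ref{lem:tensor-product-2}; the rest is bookkeeping with incidence numbers.
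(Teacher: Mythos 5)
Your proof is correct, but it reaches the key isomorphism by a genuinely different route than the paper. The paper works directly with the defining presentation of the tensor product: it writes $\underline{C}_{*}(E_{\frakF}G)\mathbin{\otimes_{\frakF}}\underline\Z_{\frakF}$ as $P_{*}/Q_{*}'$ with $P_{*}=\coprod_{H\in\frakF}C_{*}(X^{H})$, maps $P_{*}$ onto $C_{*}(X/G)$ via the chain maps induced by the projection, observes surjectivity, and then does the real work: an explicit, orbit-by-orbit telescoping computation showing that the kernel of $P_{*}\to C_{*}(X/G)$ is exactly the relator subgroup generated by the elements $\varphi^{*}(\sigma)-\sigma$. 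That step is where the semi-full hypothesis is actually invoked, through the subgroups $K_{i}=H_{i}\cap g_{i}H_{i}g_{i}^{-1}\in\frakF$ needed to realise the translations $\sigma_{i+1}=g_{i}\sigma_{i}$ by morphisms of $\OFG$. You avoid this kernel computation altogether: decomposing $\underline{C}_{n}(X)\isom\coprod_{\sigma\in R_{n}}\Z[\?,G/G_{\sigma}]_{G}$ and invoking Proposition~\ref{prop:tensor-product-1} together with Lemma~\ref{lem:tensor-product-2} gives injectivity formally, and the remaining content is your (correctly executed) bookkeeping that the abstract isomorphism sends $\sigma\otimes 1\mapsto[\sigma]$ and commutes with the differentials, using the same geometric inputs the paper uses implicitly ($X^{H}$ is a subcomplex with the restricted differential, and $C_{*}(X/G)\isom(C_{*}(X))_{G}$ via the cellular quotient map). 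What each approach buys: yours is shorter, needs only $\frakF(X)\subset\frakF$ for the algebraic identification (semi-fullness enters solely through the standing hypotheses guaranteeing the model $E_{\frakF}G$ exists), and makes the structural reason for the isomorphism transparent; the paper's hands-on identification of the relators with the kernel is more explicit about which chains get identified and does not require choosing orbit representatives or tracing a composite of abstract isomorphisms on generators.
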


\begin{proof}
    To simplify the notation, denote by $X$ a model for $E_{\frakF}G$
    and by~$\pi\: X\to X/G$ the canonical projection onto the orbit
    space.  For each $H\in \frakF$ this map restricts to the map
    $\pi\: X^{H} \to X/G$ which in turn induces a chain map
    \begin{equation*}
        \pi_{H}\: C_{*}(X^{H}) \to C_{*}(X/G).
    \end{equation*}
    Then the coproduct of these chain 
    maps is a chain map
    \begin{equation*}
        P_{*} \defeq  \coprod_{H\in\frakF} C_{*}(X^{H}) \to C_{*}(X/G)
    \end{equation*}
    which we will denote  (with abuse of notation) by $\pi$.
    
    Let $\tau$ be an $n$-cell of the CW-complex $X/G$.  Then there
    exists an $n$-cell~$\sigma$ of the $G$-CW-complex $X$ such that
    $\pi(\sigma) = \tau$.  Let $H$ be the isotropy group~$G_\sigma$ of
    $\sigma$.  Then $H\in \frakF$ and $\tau$ lies in the image of
    $\pi_{H}$.  Therefore $\pi$ is surjective.
    
    Denote by $Q_{*}$ the kernel of the map $\pi$. We claim that
    \begin{equation}
	\label{eq:claim-1}
        \underline{C}_{*}(X) \mathbin{\otimes_{\frakF}} 
	\underline\Z_{\frakF} = P_{*}/Q_{*}.
    \end{equation}
    
    Recall that by definition
    \begin{equation*}
	\underline{C}_{*}(X) \mathbin{\otimes_{\frakF}}
	\underline\Z_{\frakF} = P'_{*}/Q'_{*}
    \end{equation*}
    where
    \begin{equation*}
        P'_{*} = \coprod_{H\in \frakF} C_{*}(X^{H}) \otimes \Z = 
        \coprod_{H\in \frakF} C_{*}(X^{H}) = P_{*}
    \end{equation*}
    and $Q'_{*}$ is the subcomplex generated by the elements of the
    form $\varphi^{*}(\sigma) - \sigma$.  Note that in the case of the
    trivial $\OFG$-module $\underline\Z_{\frakF}$ we have $\varphi_{*}
    = \id$. Now~\eqref{eq:claim-1} follows from the following claim.
    
    \setcounter{claim}{0}
    
    \begin{claim}
	\label{claim:prop:1b}
	$Q'_{n} = Q_{n}$ for all $n\in \N$.
    \end{claim}
    
    \noindent ``$Q'_{n} \subset Q_{n}$'':\quad This inclusion follows
    immediately from
    \begin{equation*}
	\pi(\varphi^{*}(\sigma) -\sigma) = \pi(g\sigma) -\pi(\sigma) =
	0.
    \end{equation*}
    
    \medskip
    
    \noindent ``$Q'_{n}\supset Q_{n}$'':\quad Let $x\in Q_{n}$. Then 
    there exists pairwise distinct orbits $A_{1}$, \ldots, $A_{n}$ of 
    orbits of $n$-cells of $X$ such that we can write
    \begin{equation*}
        x = x_{1} + \ldots + x_{r}
    \end{equation*}
    with each $x_{s}$, $1\leq s\leq r$ satisfying the following: there
    exists $k_{s}\geq 1$ and for~$1\leq i\leq k_{s}$ there exist
    unique $H_{s,i}\in \frakF$, $\sigma_{s,i}\in
    X^{H_{s,i}}$ and $a_{s,i}\in \Z\setminus \{0\}$ such that
    $\sigma_{s,i} \in A_{s}$ and
    \begin{equation*}
        x_{s} = \sum_{i=1}^{k_{s}} a_{s,i} \, \sigma_{s,i}.
    \end{equation*}

    Now $\pi(x) = 0$ if and only if $\pi(x_{s}) = 0$ for each $1\leq 
    s\leq r$. In particular each $x_{s}\in Q_{n}$ and it is enough to 
    show that each $x_{s}\in Q'_{n}$.
    
    Thus assume that $r=1$. We obmit the variable $s$ from the 
    notation, that is
    \begin{equation}
	\label{eq:x}
        x = \sum_{i=1}^{k} a_{i} \sigma_{i}
    \end{equation}
    where the $\sigma_{i}$ are $n$-cell of $X^{H_{i}}$ and the 
    $a_{i}$ are all non-zero integers and the $\sigma_{i}$ belong all 
    to the same orbit.
    
    We can find elements $g_{i}\in G$ such that $\sigma_{i+1} =
    g_{i}\sigma_{i}$ for $i=1,\ldots, k-1$.  For each $i$ set $K_{i}
    \defeq H_{i} \cap g_{i}H_{i}g_{i}^{-1}$.  Since $\frakF$ is
    assumed to be closed under conjugation and taking finite
    intersections, it follows that the $K_{i}$ are all elements of
    $\frakF$.  By construction we have that $g_{i}^{-1}K_{i} g_{i} =
    g_{i}^{-1}H_{i}g_{i} \cap H_{i} \leq H_{i}$ and thus there exists
    a $G$-map $\varphi_{i}\: G/K_{i} \to G/H_{i}$ that maps $K_{i}$ to
    $g_{i}H_{i}$.  Then $\varphi_{i}^{*}(\sigma_{i}) = g_{i}\sigma_{i}
    = \sigma_{i+1}$.
    
    Using this equality together with $s_{i} \defeq  a_{1} 
    + \ldots + a_{i}$, we can successively rewrite the
    right hand side of \eqref{eq:x} in the following way:
    \begin{align*}
        x & = a_{1}(\sigma_{1} - \sigma_{2}) + (a_{1} + 
	a_{2})\sigma_{2} + a_{3}\sigma_{3} + \ldots + 
	a_{k}\sigma_{k}\\
	& = s_{1}(\sigma_{1}-\varphi^{*}_{1}(\sigma_{1})) + s_{2} 
	(\sigma_{2} -\sigma_{3}) + (s_{2}+a_{3})\sigma_{3} + 
	a_{4}\sigma_{4} + \ldots + a_{k}\sigma_{k}
	\\
	& = s_{1}(\sigma_{1}-\varphi^{*}_{1}(\sigma_{1})) + s_{2}
	(\sigma_{2} -\varphi^{*}_{2}(\sigma_{2})) + \ldots +\\
	&\hspace{4.5cm} + 
	s_{k-1}(\sigma_{k-1} - \varphi^{*}_{k-1}(\sigma_{k-1})) + s_{k}
	\sigma_{k}.
    \end{align*}
    On the other hand, by assumption we have $\pi(\sigma_{1}) = \ldots
    = \pi(\sigma_{k})$ and thus~$\pi(x) = a_{1} \pi(\sigma_{1}) +
    \ldots + a_{k}\pi(\sigma_{k}) = s_{k} \pi(\sigma_{1})$ and this is
    equal to $0$ if and only if $s_{k}=0$.  Therefore
    \begin{equation*}
        x = \sum_{i=1}^{k-1} s_{i} 
	(\sigma_{i}-\varphi^{*}_{i}(\sigma_{i})) \in Q'_{n}.
    \end{equation*}
    
    \medskip
    
    Alltogether $Q'_{n} = Q_{n}$ for every $n\in \N$ and this 
    proves Claim~\ref{claim:prop:1b}. On the other hand, 
    Claim~\ref{claim:prop:1b} implies the claim of the proposition 
    and this concludes the proof.
\end{proof}

\begin{proof}[Proof of Theorem~\ref{thrm:geometric-lower-bound}]
    First of all, we have
    \begin{align*}
        H_{n}(B_{\frakF} G) & \isom H_{n}(C_{*}(B_{\frakF}G))
	\\
	& \isom H_{n}(\underline{C}_{*}(E_{\frakF}G)
	\mathbin{\otimes_{\frakF}} \underline\Z_{\frakF}) && \text{(by
	Proposition~\ref{prop:1b})}
	\\
	& \isom H_{n}^{\frakF} (G;\underline\Z_{\frakF})
    \end{align*}
    and this proves the first isomorphism.
    
    In order to verify the second isomorphism, we first observe that 
    we have
    \begin{align*}
        \hom(C_{*}(B_{\frakF}G),\Z)& \isom 
	\mor_{\frakF}(C_{*}(B_{\frakF}G),\underline\Z_{\frakF})
	\\
	& \isom \mor_{\frakF}(\underline{C}_{*}(E_{\frakF}G)
	\mathbin{\otimes_{\frakF}} \underline\Z_{\frakF}, 
	\underline\Z_{\frakF}) && \text{(by 
	Proposition~\ref{prop:1b})}
	\\
	& \isom \mor_{\frakF} (\underline{C}_{*}(E_{\frakF}G), 
	\mor_{\frakF}(\underline\Z_{\frakF}, \underline\Z_{\frakF})) 
	&& \text{(adjoint isomorphism)}
	\\
	& \isom \mor_{\frakF} (\underline{C}_{*}(E_{\frakF}G),
	\underline\Z_{\frakF}).
    \end{align*}
    From this it follows 
    \begin{align*}
        H^{n}(B_{\frakF}G) & \isom H_{n} (\hom(C_{*}(B_{\frakF}G),\Z))
	\\
	& = H_{n} (\mor_{\frakF} (\underline{C}_{*}(E_{\frakF}G), 
	\underline\Z_{\frakF})\\
	& \isom H^{n}_{\frakF}(G;\underline\Z_{\frakF})
    \end{align*}
    which is the second isomorphism.
\end{proof}

%
%

\section{Applications of Theorem~\ref{thrm:geometric-lower-bound}}

In order to apply the result of 
Theorem~\ref{thrm:geometric-lower-bound} we need groups $G$ for which 
we know nice enough models $X$ for $\uu EG$ so that we can determine 
the homology or cohomology groups of $X/G$. In this section we 
present examples where this is the case.
Juan-Pineda and Leary have described in~\cite[p.~138]{juan-pineda-06}
a model for~$\uu E\Z^{2}$.  The construction goes back to Farrell and it
is as follows.

Label the maximal infinite cyclic subgroups of $\Z^{2}$ by $H_{i}$,
$i\in \Z$.  We have $\Z^{2}/H_{i}\isom \Z$ and there exists a
$1$-dimensional model $X_{i}$ for $E(\Z^{2}/H_{i})$.  It is a line on
which $\Z^{2}/H_{i}$ acts by translation.  Let $\pi_{i}\: \Z^{2} \to
\Z^{2}/H_{i}$ be the canonical projection.  Then $\Z^{2}$ acts on
$X_{i}$ by $gx \defeq  \pi_{i}(g)x$. 

For each $i\in \Z$ we consider the join $X_{i}*X_{i+1}$. There exists 
canonical embeddings $\varphi_{i}\: X_{i}\hookrightarrow 
X_{i}*X_{i+1}$ and $\psi_{i}\: X_{i}\hookrightarrow X_{i-1}* X_{i}$.
We let $X$ be the~$\Z^{2}$-space
\begin{equation*}
    X \defeq  \Bigl(\coprod_{i\in \Z} X_{i} * X_{i+1}\Bigr) \Big/ \sim
\end{equation*}
where the equivalence relation ``$\sim$'' is given by
\begin{equation*}
    y_{1} \sim y_{2} \,:\!\!\iff \exists x\in X_{i}\: \varphi_{i}(x)
= y_{1} 
    \text{ and } \psi_{i}(x) = y_{2}.
\end{equation*}
Note that the embeddings $\varphi_{i}\: X_{i}\hookrightarrow
X_{i}*X_{i+1}$ induce embeddings $\varphi_{i}\: X_{i}\hookrightarrow
X$ and we use them to identify the $X_{i}$ as subspaces of $X$. See 
Figure~\ref{fig:model-for-ZxZ} for a schematic picture of the space 
$X$.

\begin{figure}[t]
    \centering
    \begin{tikzpicture}[scale=\ratio{\textwidth}{7cm},
	line width=\ratio{7cm}{\textwidth}*1pt,
	my ultra thin/.style={line 
	width=\ratio{7cm}{\textwidth}*0.1pt},
	my thick/.style={line 
	width=\ratio{7cm}{\textwidth}*1.2pt}]

        \clip  (-0.5,-1.5) rectangle (6.5,1.65);
        
        \draw[loosely dashed, lightgray] (-3.5,0) -- (6.5,0);
            
            \foreach \i in {-2,-1,0,1,2} {
            
               \coordinate (A1) at (3*\i,-1,0);
               \coordinate (O1) at (3*\i,0,0);
               \coordinate (B1) at (3*\i,1,0);
               \coordinate (A2) at (3*\i+1.5,0,-1);
               \coordinate (O2) at (3*\i+1.5,0,0);
               \coordinate (B2) at (3*\i+1.5,0,1);
               \coordinate (A3) at (3*\i+3,-1,0);
               \coordinate (B3) at (3*\i+3,1,0);
               
               \draw[my thick]
	       (A1) -- (B1) 
	       (A2) -- (B2);
	       
               \filldraw[gray]
	       (O1) circle(0.7pt) (O2) 
	       circle(\ratio{10cm}{\textwidth}*0.7pt);
               \draw 
	       (O1) circle(0.7pt) (O2) 
	       circle(\ratio{10cm}{\textwidth}*0.7pt);
               
	       \draw[lightgray,densely dotted] 
	       (A1) -- (A2)
	       (A2) -- (A3);
           
	       \draw[my ultra thin]
	       (A1) -- (B2)
               (B1) -- (A2)
               (B1) -- (B2)
	       (B2) -- (A3)
	       (B2) -- (B3)
               (A2) -- (B3); 
            }
	    
	    \draw 
	    (1.35,0,0) node [above]{$X_{i-1}$} 
	    (2.8,0,0) node [above]{$X_{i}$}
	    (4.35,0,0) node [above]{$X_{i+1}$} ;
	\end{tikzpicture}
    \caption{A model for $\uu E \Z^{2}$. The thick lines represents 
    the models $X_{i} \isom \R$ for $E\Z^{2}/H_{i}$.}
    \label{fig:model-for-ZxZ}
\end{figure}

It follows that $X$ is a $3$-dimensional model for $\uu E\Z^{2}$. 
This is because of the following observations:
\begin{enumerate}
    \item $X$ is contractible by construction;

    \item if $H$ is an infinite cyclic subgroup of $\Z^{2}$, then 
    $H\leq H_{i}$ for some unique $i\in \Z$ and therefore $X^{H} = 
    X^{H_{i}} = X_{i}$ which is contractible;

    \item  if $H$ is not cyclic, then $H=K_{1}\times K_{2}$ with 
    $K_{1}$ and $K_{2}$ infinite cyclic subgroups of $\Z^{2}$ and
    \begin{equation*}
	X^{H} \subset X^{K_{1}}\cap X^{K^{2}} = X^{H_{i_{1}}}\cap 
	X^{H_{i_{2}}} = \emptyset
    \end{equation*}
    for some $i_{1}, i_{2}\in \Z$, $i_{1}\neq i_{2}$.
\end{enumerate}

We have that $(X_{i}*X_{i+1})/G \isom S^{3}$ and $X_{i}/G\isom S^{1}$
for every $i\in \Z$ and thus $H_{3}(X/G)$ is free abelian of infinite
rank~\cite[p.~138]{juan-pineda-06}.  In particular~$X$ is a model for
$\uu E\Z^{2}$ of minimal dimension, that is $\uugd \Z^{2}=3$.

Now Theorem~\ref{thrm:geometric-lower-bound} states that $\uuhd
\Z^{2}\geq
3$ and therefore we get the following complete statement about the 
Bredon dimensions with respect to the family of virtually cyclic 
subgroups.

\begin{proposition}
    \label{prop:dimvcZ2=3}
    $\uuhd \Z^{2} = \uucd \Z^{2} = \uugd \Z^{2} = 3$.\qed
\end{proposition}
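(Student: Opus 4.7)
The plan is to squeeze the three Bredon dimensions between an upper bound coming from the explicit model $X$ for $\uu E\Z^{2}$ and a lower bound coming from the non-vanishing of the homology of the orbit space $X/\Z^{2}$, via the key result of Theorem~\ref{thrm:geometric-lower-bound}.

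First I would record the general chain of inequalities
\begin{equation*}
    \uuhd \Z^{2} \leq \uucd \Z^{2} \leq \uugd \Z^{2},
\end{equation*}
which holds because projective $\OFG$-modules are flat (Lemma~\ref{lem:hd-vs-cd}) and because the cellular chain complex of a model for $E_{\frakF}G$ yields a free resolution of $\underline\Z_{\frakF}$ (Proposition~\ref{prop:free-resolution-from-model}). It therefore suffices to show that $\uugd \Z^{2}\leq 3$ and $\uuhd \Z^{2}\geq 3$.

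For the upper bound I would invoke the Farrell/Juan-Pineda--Leary model $X$ described just before the proposition: it is a $3$-dimensional $\Z^{2}$-CW-complex, is contractible by construction, and its fixed point sets $X^{H}$ are either a line $X_{i}$ (when $H\leq H_{i}$ for a unique maximal infinite cyclic $H_{i}$) or empty (when $H$ is not contained in any single $H_{i}$, which happens precisely when $H$ is not virtually cyclic). By Proposition~\ref{prop:classifying-space-alt}, $X$ is thus a model for $\uu E\Z^{2}$, giving $\uugd \Z^{2}\leq 3$.

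For the lower bound I would apply Theorem~\ref{thrm:geometric-lower-bound} to the family $\frakF=\Fvc(\Z^{2})$: taking $X$ as above, we have a model for $B_{\frakF}\Z^{2}$ given by $Y \defeq X/\Z^{2}$, and as noted in the text following the construction, $(X_{i}\ast X_{i+1})/\Z^{2}\isom S^{3}$ glued along circles $X_{i}/\Z^{2}\isom S^{1}$, so $H_{3}(Y)$ is free abelian of infinite rank. Hence
\begin{equation*}
    H_{3}^{\Fvc}(\Z^{2};\underline\Z_{\Fvc}) \isom H_{3}(Y) \neq 0,
\end{equation*}
and then Proposition~\ref{prop:fld-via-tor} forces $\uuhd \Z^{2}\geq 3$. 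Putting everything together gives $\uuhd \Z^{2}=\uucd \Z^{2}=\uugd \Z^{2}=3$.

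There is no real obstacle here: the non-trivial content is contained in the geometric construction of $X$ (already presented) and in Theorem~\ref{thrm:geometric-lower-bound}; the only thing that needs care is verifying that the quotient $X/\Z^{2}$ genuinely has the claimed $3$-dimensional homology, but this follows from the join description since joins of circles give $3$-spheres and the identifications along the $X_{i}/\Z^{2}$ are $1$-dimensional, leaving $H_{3}$ free on the collection of $3$-cells indexed by $\Z$.
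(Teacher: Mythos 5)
Your proposal is correct and follows essentially the same route as the paper: the $3$-dimensional Farrell/Juan-Pineda--Leary model gives $\uugd \Z^{2}\leq 3$, and the non-vanishing of $H_{3}(X/\Z^{2})$ together with Theorem~\ref{thrm:geometric-lower-bound} gives $\uuhd \Z^{2}\geq 3$, so the chain $\uuhd\leq\uucd\leq\uugd$ collapses to equalities. No gaps.
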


\pagebreak[3]

From this result we obtain immediately two interesting consequences 
regarding virtually polycyclic groups.

\begin{proposition}
    \label{prop:dimvc-polycyclic}
    Let $G$ be a virtually polycyclic group. Then 
    \begin{equation*}
	\uucd G = \uugd G.
    \end{equation*}
\end{proposition}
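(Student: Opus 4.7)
The plan is to reduce everything to Proposition~\ref{prop:luck-meintrup-alt}, which guarantees $\uucd G=\uugd G$ as soon as $\uucd G\geq 3$. So it suffices to show that either $G$ is virtually cyclic, in which case both invariants vanish, or $\uucd G\geq 3$, in which case the proposition finishes the job.

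First I would dispose of the virtually cyclic case: if $G\in\Fvc(G)$ then Proposition~\ref{prop:gdG=0} and Proposition~\ref{prop:cdG=0} give $\uugd G=\uucd G=0$ directly. Otherwise, suppose $G$ is virtually polycyclic but not virtually cyclic. The next step is to observe that $\vcd G\geq 2$: indeed if $\vcd G\leq 1$, then a torsion-free polycyclic subgroup of finite index of $G$ has classical cohomological dimension at most $1$ and is therefore free by Stallings--Swan; but the only polycyclic free groups are trivial or $\Z$, so $G$ would be virtually cyclic after all.

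The key geometric input comes next: I would show that any virtually polycyclic group of Hirsch length at least two contains a copy of $\Z^{2}$. This follows from standard structure theory of torsion-free polycyclic groups, passing to a torsion-free polycyclic subgroup $N$ of finite index and exhibiting $\Z^{2}\leq N$ along a polycyclic series (for example, by using that $N$ is nilpotent-by-finite and examining its upper central series, or by treating the Hirsch length $2$ case directly — where $N$ is either $\Z^{2}$ or the Klein bottle group, the latter containing $\Z^{2}$ with index $2$ — and inducing on the Hirsch length). Once $\Z^{2}\hookrightarrow G$ is in hand, Proposition~\ref{prop:dim-subgroups-1} applied to the full family $\Fvc$ (noting $\Fvc(G)\cap\Z^{2}=\Fvc(\Z^{2})$) gives
\begin{equation*}
    \uucd G \geq \uucd \Z^{2} = 3
\end{equation*}
by Proposition~\ref{prop:dimvcZ2=3}.

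Finally, Proposition~\ref{prop:luck-meintrup-alt} applied with $\uucd G\geq 3$ yields $\uucd G=\uugd G$, completing the proof. The main technical point — and essentially the only non-routine step — is the extraction of $\Z^{2}$ from an arbitrary non-virtually-cyclic virtually polycyclic group; everything else is a clean appeal to results already established earlier in the thesis.
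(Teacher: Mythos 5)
Your proposal is correct and follows essentially the same route as the paper: reduce to the non-virtually-cyclic case, note $\vcd G\geq 2$, extract a copy of $\Z^{2}$ (via a subgroup that is an extension of $\Z$ by $\Z$), conclude $\uucd G\geq \uucd\Z^{2}=3$ by Proposition~\ref{prop:dimvcZ2=3} and Proposition~\ref{prop:dim-subgroups-1}, and finish with Proposition~\ref{prop:luck-meintrup-alt}. You merely spell out details (Stallings--Swan for $\vcd G\geq 2$, the Klein bottle case) that the paper leaves implicit or cites to Segal.
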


\begin{proof}
    To avoid triviality we assume that $G$ is not virtually cyclic.
    By Proposition~2 in~\cite[p.~2]{segal-83}, the group $G$ is
    virtually poly-$\Z$.  Since $G$ is not virtually cyclic we have
    $\vcd G \geq 2$.  In particular $G$ contains a subgroup that is an
    extension of $\Z$ by $\Z$, which in turn contains a subgroup
    isomorphic to $\Z^{2}$.  Thus $\uucd G\geq \uucd \Z^{2}=3$.  Now
    the claim follows from Proposition~\ref{prop:luck-meintrup-alt}.
\end{proof}

Note that in Theorem~5.13 in~\cite[pp.~525f.]{luck-12} a complete
description of $\uugd G$ for virtually polycyclic groups are given. 
The above result states that the same theorem gives a complete 
description of $\uucd G$ for virtually polycyclic groups, too. 

\begin{proposition}
    \label{prop:dimvc-polycyclic-1}
    Let $G$ be a virtually polycyclic group with $\vcd G=2$.  Then
    \begin{equation*}
        \uuhd G = \uucd G = \uugd G = 3.
    \end{equation*}
\end{proposition}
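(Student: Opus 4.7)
The plan is to combine the previous proposition with the calculation for $\Z^{2}$ to pin down all three dimensions.

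First I would establish the upper bound. Since $G$ is virtually polycyclic with $\vcd G = 2$, the result of L\"uck and Weiermann cited in the text (which is used in the proof of the preceding proposition) gives $\uugd G \leq 3$. By Proposition~\ref{prop:luck-meintrup-alt} together with the fact that $\uucd G = \uugd G$ from Proposition~\ref{prop:dimvc-polycyclic}, we obtain $\uucd G \leq 3$, and Lemma~\ref{lem:hd-vs-cd} then yields $\uuhd G \leq 3$.

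Next I would establish the lower bound by exhibiting a copy of $\Z^{2}$ inside $G$. By Proposition~2 in~\cite{segal-83}, $G$ is virtually poly-$\Z$, and since $\vcd G = 2$ it contains a subgroup which is an extension of $\Z$ by $\Z$; such a group contains a $\Z^{2}$-subgroup. The family $\Fvc$ is closed under taking subgroups, so $\Fvc(G) \cap \Z^{2} \subset \Fvc(\Z^{2})$. Applying Proposition~\ref{prop:dim-subgroups-1} gives
\begin{equation*}
    \uucd \Z^{2} \leq \uucd G \qquad \text{and} \qquad \uuhd \Z^{2} \leq \uuhd G.
\end{equation*}
By Proposition~\ref{prop:dimvcZ2=3}, both $\uucd \Z^{2}$ and $\uuhd \Z^{2}$ equal $3$, so we conclude $\uucd G \geq 3$ and $\uuhd G \geq 3$.

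Combining the two bounds yields $\uuhd G = \uucd G = \uugd G = 3$. No step looks genuinely difficult here: the work has already been done in Proposition~\ref{prop:dimvcZ2=3} (the geometric lower bound for $\Z^{2}$ via the Juan-Pineda--Leary model) and in Proposition~\ref{prop:dimvc-polycyclic} (the equality $\uucd G = \uugd G$ for virtually polycyclic groups). The only mild subtlety is observing that the $\Z^{2}$-subgroup really does exist inside any virtually poly-$\Z$ group of virtual cohomological dimension $2$, which is immediate from the structure theory.
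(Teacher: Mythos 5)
Your proof is correct and takes essentially the same approach as the paper: the lower bound comes from the $\Z^{2}$-subgroup (via Proposition~\ref{prop:dim-subgroups-1} and the computation $\uuhd\Z^{2}=\uucd\Z^{2}=3$), and the upper bound comes from L\"uck--Weiermann's result for virtually polycyclic groups. The only small difference is that the paper invokes Theorem~5.13 of~\cite{luck-12} to get the exact value $\uugd G=3$ directly (so that only $\uuhd G\geq 3$ needs to be added), whereas you use L\"uck--Weiermann just for $\uugd G\leq 3$ and then route the upper bound for $\uucd G$ through Proposition~\ref{prop:dimvc-polycyclic}; this is slightly more indirect than necessary (the general inequality $\uucd G\leq\uugd G$ already suffices) but not incorrect.
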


\begin{proof}
    Since $G$ has a subgroup isomorphic to $\Z^{2}$ we have $\uuhd
    G\geq \uuhd\Z^{2} = 3$.  On the other hand Theorem~5.13
    in~\cite[pp.~525f.]{luck-12} gives $\uugd G = 3$ and thus all
    three Bredon dimensions agree and are equal to~$3$.
\end{proof}

The next two applications of Theorem~\ref{thrm:geometric-lower-bound}
rely on Juan-Pineda and Leary's construction of a model for $\uu EG$
for a class of group which includes Gromov-hyperbolic groups, see
Proposition~9 and Corollary~10 in~\cite{juan-pineda-06}.

The class of groups considered in~\cite{juan-pineda-06} is
characterised by the following condition: Every infinite virtually
cyclic subgroup $H$ of $G$ is contained in a unique maximal virtually
cyclic subgroup $H_{\max}$ of $G$ which is equal to its own
normaliser.  This class is known to contain all Gromov-hyperbolic
groups~\cite[Theorem~8.37]{ghys-90}.

Recall that if $H$ is a virtually cyclic subgroup it is known (see for
example~\cite{juan-pineda-06}) that $H$ has a unique maximal normal
finite subgroup $N$ and one of the following three cases is true: $H$
is finite, $H/N$ is infinite cyclic (in this case we call $H$
\emph{orientable}) or $H/N$ is infinite dihedral (in this case we call
$H$ \emph{non-orientable}).

\begin{proposition}
    \label{prop:JPL-orig}
    \cite[Proposition~9 and Corollary~10]{juan-pineda-06} Let $G$ be a
    group satisfying the above condition on the set of infinite
    virtually cyclic subgroups.  Let $\calC$ be a complete system of
    representatives for the conjugacy classes of maximal infinite
    virtually cyclic subgroups of $G$.  Denote by $\calC_{o}$ and the
    set of orientable elements of $\calC$ and denote by $\calC_{n}$
    the set of non-orientable elements of $\calC$.  Then a model for
    $\uu EG$ can be obtained from model for $\underline{E}G$ by
    attaching
    \begin{enumerate}
        \item  orbits of $0$-cells indexed by $\calC$;
    
        \item  orbits of $1$-cells indexed by $\calC_{o} \cup 
	\{1,2\}\times \calC_{n}$;
    
        \item  orbits of $2$-cells indexed by $\calC$.
    \end{enumerate}
    Furthermore, a model for $\uu BG$ can be obtained from a
    model for $\underline{B} G$ by attaching $2$-cells indexed by
    $\calC_{o}$.\qed
\end{proposition}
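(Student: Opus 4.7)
The plan is to realize $X$ as a L\"uck--Weiermann-style pushout for the pair $(\Ffin(G),\Fvc(G))$. Under our hypotheses, the equivalence classes in that construction are indexed by $\calC$ and each representative $V$ satisfies $N_G(V) = V$, so the induction in the pushout reduces simply to $G\times_V(-)$. Since $V\in\Fvc(V)$, we can take $E_{\Fvc(V)}V$ to be a point, and the pushout collapses to the statement that $\uu E G$ is obtained from $\underline E G$ by gluing, for every $V\in\calC$, the $G$-equivariant cone $G\times_V\mathrm{Cone}(\underline E V)$ along the canonical $G$-map $G\times_V\underline E V\to\underline E G$.

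I would then choose a small $V$-CW model for $\underline E V$. Write $N$ for the finite normal radical of $V$; the quotient $V/N$ is infinite cyclic or infinite dihedral, and in either case acts properly and cocompactly on $\R$ by isometries. Pulling back through $V\to V/N$ gives a one-dimensional $V$-CW model for $\underline E V$ with one orbit each of $0$- and $1$-cells when $V$ is orientable, and with two orbits of $0$-cells plus one orbit of $1$-cells when $V$ is non-orientable (the two $0$-cell orbits corresponding to the two conjugacy classes of reflections in $D_\infty$). Taking $G$-cones and counting, each $V\in\calC$ contributes exactly: one $0$-cell orbit (the apex $G/V$), one $1$-cell orbit per $0$-cell orbit of $\underline E V$, and one $2$-cell orbit per $1$-cell orbit of $\underline E V$. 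Summed over $\calC$ this reproduces exactly the indexing sets $\calC$, $\calC_o\cup\{1,2\}\times\calC_n$, and $\calC$ claimed in the statement.

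The main technical step, which I expect to be the hardest, is verifying via Corollary~\ref{cor:classifying-space-alt} that $X$ really is a model for $\uu E G$. All stabilisers of the attached cells lie inside some conjugate of some $V\in\calC$ and are therefore virtually cyclic, so $\frakF(X)\subset\Fvc(G)$ and $X^H=\emptyset$ for every $H$ not virtually cyclic. The delicate point is contractibility of $X^H$ for $H\in\Fvc(G)$. For infinite virtually cyclic $H$, uniqueness of the maximal overgroup $V_{\max}\supset H$ means that exactly one of the attached cones contributes to $X^H$, and any cone is contractible. For finite $H$, one decomposes $X^H$ as the union of the contractible space $(\underline E G)^H$ with cones over the fixed sets $(\underline E V)^{g^{-1}Hg}$ for the cosets $gV$ with $H\subset gVg^{-1}$; each such fixed set is contractible since $\underline E V$ is a model for the proper actions of $V$, and attaching a cone along a contractible subspace of a contractible space preserves contractibility.

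Finally, for $\uu B G$, I would apply the $G$-orbit functor to the pushout, which adds $\mathrm{Cone}(\underline B V)$ to $\underline B G$ for each $V\in\calC$. When $V$ is orientable, $\underline B V\isom S^1$ and $\mathrm{Cone}(\underline B V)\isom D^2$, giving a genuine $2$-cell attachment indexed by $\calC_o$. When $V$ is non-orientable, $\underline B V$ is an interval and $\mathrm{Cone}(\underline B V)$ is a $2$-disk attached along a contractible subspace; such an attachment deformation retracts onto its base and contributes nothing up to homotopy, leaving precisely the cell structure on $\uu B G$ asserted in the proposition.
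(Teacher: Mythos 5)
The paper itself states Proposition~\ref{prop:JPL-orig} as a citation to Juan-Pineda and Leary and does not reprove it; however it does prove the direct generalisation Proposition~\ref{prop:JPL}, so I compare your argument against that.

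Your approach is correct and genuinely different in its organisation. The paper builds the space by hand: it forms $Z=\coprod_H G\times_H E_H$, shows (via the Equivariant Whitehead Theorem) that $(f,\pi)\colon Z\to\underline{E}G\times X$ is a $G$-homotopy equivalence, realises the double mapping cylinder $\tilde E$ of $\underline{E}G\leftarrow Z\to X$, identifies $\tilde E\simeq_G \underline{E}G * X$, and then checks the fixed-point criterion on the join, where contractibility of $(\underline{E}G * X)^K$ is immediate because a join of a contractible space with anything is contractible. You instead invoke the L\"uck--Weiermann pushout, simplify it using $N_G[V]=V$ and $E_{\Fvc(V)[V]}V\simeq *$, describe the result as a mapping cone, and verify the fixed-point criterion directly on that mapping cone. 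The cell counts agree (the paper's $\tilde E$ is exactly your $\underline{E}G\cup_{Z}\bigl(\coprod_V G\times_V\mathrm{Cone}(\underline{E}V)\bigr)$), so both routes yield the proposition. What the paper's join detour buys is a cleaner contractibility check; what your route buys is that it runs on autopilot once L\"uck--Weiermann is available and avoids the auxiliary Claim that $\pi_H$ is a homotopy equivalence on fixed sets. Your orbit-functor argument for the $\uu BG$ statement is also a pleasant alternative to the citation.

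One step is stated imprecisely and should be tightened. For finite $H$ you argue that ``attaching a cone along a contractible subspace of a contractible space preserves contractibility,'' but the attaching map $f^H_V\colon (G\times_V\underline{E}V)^H\to (\underline{E}G)^H$ is merely a cellular $G$-map, not an inclusion, so the cones are glued via a map rather than along a subspace. The correct justification is that each piece is the mapping cone of a map whose domain and codomain are both contractible CW-complexes; such a map is automatically a homotopy equivalence, and the mapping cone of a homotopy equivalence is contractible. Attaching the cones one coset $gV$ at a time (which one may do, since the attachments are independent) then gives contractibility by induction. A similar remark applies to the non-orientable contribution in the $\uu BG$ part: the $2$-disc is glued to $\underline{B}V$ along the image of a map, not literally along a subarc, and the resulting space is only homotopy equivalent, rather than isomorphic as a CW-complex, to $\underline{B}G$ with $2$-cells indexed by $\calC_o$; this is enough for the claim as stated, since models for $\uu BG$ are well defined only up to homotopy.
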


\begin{proposition}
    \label{prop:dim-gromov-hyperbolic-groups}
    Let $G$ be a Gromov-hyperbolic group which is not virtually 
    cyclic. Then $\uuhd G\geq 2$. Moreover, if $\ugd G\leq 2$, then
    \begin{equation*}
        \uuhd G = \uucd G = \uugd G = 2.
    \end{equation*}
\end{proposition}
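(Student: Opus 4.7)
The plan is to deduce the lower bound $\uuhd G \geq 2$ from the geometric criterion of Theorem~\ref{thrm:geometric-lower-bound}, applied to a model for $\uu BG$ built via Proposition~\ref{prop:JPL-orig}. Since $G$ is Gromov-hyperbolic, a finite-dimensional cocompact model for $\underline EG$ (for instance, the Rips complex $P_d(G)$ for $d$ sufficiently large) is available; the corresponding model for $\underline BG$ is then a finite CW-complex, so $H_1(\underline BG)$ is a finitely generated abelian group. By Proposition~\ref{prop:JPL-orig}, a model for $\uu BG$ is obtained from this $\underline BG$ by attaching one ordinary $2$-cell for each $c\in \calC_o$, so the relative group $H_2(\uu BG, \underline BG)$ is free abelian of rank $|\calC_o|$.

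Next I would show that $|\calC_o| = \infty$. Since $G$ is non-elementary Gromov-hyperbolic, it contains infinitely many conjugacy classes of primitive loxodromic elements. For such an infinite-order $g$, the centralizer $C_G(g)$ is a virtually cyclic subgroup which is automatically orientable, and $C_G(g) = H_{\max}(g)$ whenever $g$ is not conjugate to $g^{-1}$ in $G$; a standard argument using the action of $G$ on its Gromov boundary shows that this non-conjugacy condition is satisfied for infinitely many conjugacy classes of primitive loxodromic elements. In the long exact sequence
\begin{equation*}
    H_2(\underline BG) \longto H_2(\uu BG) \longto \bigoplus_{c \in \calC_o} \Z \stackrel{\partial}{\longto} H_1(\underline BG)
\end{equation*}
of the pair $(\uu BG, \underline BG)$ the map $\partial$ then has infinite-rank domain and finite-rank codomain, hence non-trivial kernel. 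It follows that $H_2(\uu BG) \neq 0$, and Theorem~\ref{thrm:geometric-lower-bound} yields $\uuhd G \geq 2$.

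For the ``moreover'' part, assume $\ugd G \leq 2$. Proposition~\ref{prop:JPL-orig} builds $\uu EG$ from $\underline EG$ by attaching equivariant cells of dimension at most $2$, so any model for $\underline EG$ of dimension at most $2$ produces a model for $\uu EG$ of dimension at most $2$, whence $\uugd G \leq 2$. Combined with Lemma~\ref{lem:hd-vs-cd} and the general bound $\uucd G \leq \uugd G$, the lower bound from the first part sandwiches all three Bredon dimensions between $2$ and $2$, giving $\uuhd G = \uucd G = \uugd G = 2$. The main obstacle in this argument is verifying that $|\calC_o| = \infty$; once this is in hand, the remainder is a routine chase of the long exact sequence of the pair together with the estimates provided by Theorem~\ref{thrm:geometric-lower-bound} and the explicit geometric construction of Proposition~\ref{prop:JPL-orig}.
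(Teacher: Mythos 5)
Your proof follows essentially the same route as the paper: the paper cites Juan-Pineda and Leary for the nonvanishing of $H_{2}(\uu BG)$, which rests on exactly the two facts you use (a finite Rips-complex model for $\underline EG$ and infinitely many conjugacy classes of orientable maximal infinite virtually cyclic subgroups), your long-exact-sequence argument being their proof of that nonvanishing, and the upper bound plus the sandwich $2\leq \uuhd G\leq \uucd G\leq \uugd G\leq 2$ is identical. The one point you only sketch, $|\calC_{o}|=\infty$, is precisely Theorem~13 of~\cite{juan-pineda-06}, which the paper cites rather than reproves; note that your stated criterion is slightly imprecise there -- $H_{\max}(g)$ is the stabiliser of the endpoint pair of $g$ on the boundary (it contains $C_{G}(g)$ with finite index but need not equal it), and it is orientable precisely when no power of $g$ is conjugate to its inverse power, which is a stronger requirement than $g$ not being conjugate to $g^{-1}$.
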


\begin{proof}
    It has been shown in~\cite[p.~141]{juan-pineda-06}, that if $G$ is
    Gromov-hyperbolic group which not virtually cyclic, then
    $H_{2}(\uu BG)\neq 0$.  This follows from the following two facts:
    \begin{enumerate}
	\item for large enough integers~$d$ the Rips complex
	$R_{d}(G)$ is a \emph{finite} model for $\underline 
	EG$~\cite{baum-94, meintrup-02};
	
	\item $G$ has infinitely many conjugacy classes of orientable maximal
	infinite  virtually cyclic subgroups~\cite[p.~141,
	Theorem~13]{juan-pineda-06}.
    \end{enumerate}
    Thus it follows from Theorem~\ref{thrm:geometric-lower-bound} 
    that $\uuhd G\geq 2$ if $G$ is a Gromov-hyperbolic group which is 
    not virtually cyclic.
    
    If moreover $\ugd G\leq 2$, then there exists a $2$-dimensional
    model for $\uu EG$ by Proposition~\ref{prop:JPL-orig}, that is
    $\uugd G\leq 2$. Therefore we have altogether
    \begin{equation*}
        2\leq \uuhd G \leq \uucd G \leq \uugd G \leq 2
    \end{equation*}
    and equality holds.
\end{proof}

\begin{corollary}
    \label{cor:dimvcF=2}
    Let $F$ be a free group of finite rank at least~$2$. 
    Then 
    \begin{equation*}
        \uuhd F = \uucd F = \uugd F = 2.
    \end{equation*}
\end{corollary}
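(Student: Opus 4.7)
The plan is to invoke Proposition~\ref{prop:dim-gromov-hyperbolic-groups} directly, which asserts that for any Gromov-hyperbolic group $G$ that is not virtually cyclic and satisfies $\ugd G \leq 2$, all three Bredon dimensions with respect to $\Fvc(G)$ equal $2$. Thus the task reduces to verifying these three hypotheses for $F$.

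First, I would note that a free group of finite rank is Gromov-hyperbolic: indeed, any free group acts freely and cocompactly on its Cayley graph with respect to a free generating set, which is a tree and in particular a $0$-hyperbolic geodesic metric space. Next, a free group of rank at least $2$ contains a non-abelian free subgroup of rank $2$, and such a subgroup is certainly not virtually cyclic; therefore $F$ itself is not virtually cyclic.

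Finally, I would verify that $\ugd F \leq 2$; in fact $\ugd F = 1$. Since $F$ is torsion-free, the family $\Ffin(F)$ equals the trivial family $\{1\}$, so a model for $\underline{E}F$ is the same as a model for $EF$. The Cayley graph of $F$ with respect to a free generating set is a tree on which $F$ acts freely and cellularly, hence it is a $1$-dimensional contractible free $F$\=/CW\=/complex and thus a $1$-dimensional model for $\underline{E}F$.

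Having checked all three hypotheses, Proposition~\ref{prop:dim-gromov-hyperbolic-groups} yields $\uuhd F = \uucd F = \uugd F = 2$. There is no genuine obstacle here; the only subtle point is recognising that the Rips-complex/finite model statements implicitly used in Proposition~\ref{prop:dim-gromov-hyperbolic-groups} apply to free groups (via the tree model), which they do trivially.
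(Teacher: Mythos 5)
Your proposal is correct and follows essentially the same route as the paper: verify that $F$ is Gromov-hyperbolic, not virtually cyclic, and has $\ugd F = 1$ via the free action on a tree (using torsion-freeness to pass from $EF$ to $\underline{E}F$), then apply Proposition~\ref{prop:dim-gromov-hyperbolic-groups}. The only difference is that you spell out the non-virtually-cyclic hypothesis explicitly, which the paper leaves implicit.
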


\begin{proof}
    Free groups are characterised by the fact that they can act freely
    on a tree, that is $\gd F=1$.  Since $F$ is torsion free it
    follows that $\ugd F = \gd F = 1$.  Free groups of finite rank are
    Gromov-hyperbolic and thus the claim follows from
    Proposition~\ref{prop:dim-gromov-hyperbolic-groups}.
\end{proof}

Another example of Gromov-hyperbolic groups are fundamental groups of
finite graphs of finite groups.  We use the notation as introduced by
Serre in~\cite[pp.~41ff.]{serre-80}.  Given a connected, non-empty,
orientated graph $Y$, a graph $(G,Y)$ of groups consists of
\begin{enumerate}
    \item a collection of groups $G_{P}$ indexed by the vertices $P\in
    \vertx Y$ of $Y$;

    \item a collection of groups $G_{y}$ indexed by the edges $y\in
    \edge Y$ of $Y$ subject to the condition $G_{y} = G_{\bar y}$ 
    where $\bar y$ denotes the inverse edge of $y$;

    \item  for each edge $y\in \edge Y$, a monomorphism
    $G_{y}\hookrightarrow G_{t(y)}$ where $t(y)$ denotes the terminal 
    vertex of the edge $y$.
\end{enumerate}
From this data one can construct the \emph{fundamental group
$\pi_{1}(G,Y)$} of the graph of groups $(G,Y)$, see~\cite{serre-80}. 
By abuse of notation we say that a group is the \emph{fundamental 
group} of the graph of groups $(G,Y)$ if $G\isom \pi_{1}(G,Y)$.

The $G$ is the fundamental group of a graph of groups, then there
exists canonical inclusions of the vertex groups $G_{P}$, $P\in \vertx
Y$, into $G$.  Furthermore, one can construct a tree $T$ with an
action of $G$ with vertex stabilisers being precisely the conjugates
of the groups $G_{P}$ and with the edge stabilisers being precisely
the conjugates of the edge groups $G_{y}$, $y\in\edge Y$, and such
that $T/G = Y$.  This tree is called the \emph{universal cover of
$(G,Y)$} or the \emph{Bass--Serre tree} associated with the
fundamental group of the graph of groups~$(G,Y)$.  Conversely, every
group $G$ which admits an action on a tree~$T$ is the fundamental
group of some graph of groups $(G,Y)$ such that $T$ is the associated
Bass--Serre tree of the graph of groups.

\begin{examples}
    \begin{enumerate}
        \item  Let $(1, Y)$ be a graph of groups whose vertex and 
	edge groups are all trivial. Then $\pi_{1}(1,Y) \isom 
	\pi_{1}(Y)$ is the fundamental group of the graph $Y$.
    
	\item Let $Y$ be the graph with two edges (that is
	non-oriented edge) and two vertices, that is a line segment.
	Let $(G,Y)$ be the graph of groups with vertex groups $A$ and
	$B$ and with edge group $C$, see
	Figure~\ref{fig:edge-and-loop}.  Then $\pi_{1}(G,Y) \isom A
	*_{C} B$ is the free product of~$A$ and~$B$ with the common
	subgroup $C$ amalgamated.
	
	\item Let $Y$ be the graph with two edges and
	one vertex, that is a loop.  Let $(G,Y)$ be the graph of
	groups with vertex group $A$ and with edge group $B$, see
	Figure~\ref{fig:edge-and-loop}.  Then $\pi_{1}(G,Y)$ is an
	HNN-extension of the group~$A$.
    \end{enumerate}
    \begin{figure}[t]
	\centering
	
	\subfloat{\begin{tikzpicture}
	    \clip (-1.9,-1.7) rectangle (1.9,1.7);
	    \coordinate (A) at (-1.1,0);
	    \coordinate (B) at (1.1,0);
	    \draw (A) -- node[below] {$C$} (B);
	    \filldraw (A) circle (1.8pt) node [above=1.8pt] {$A$}
	    (B) circle (2pt) node [above=2pt] {$B$};
	\end{tikzpicture}}
	\qquad
	\subfloat{\begin{tikzpicture}
            \clip (-1.9,-1.7) rectangle (1.9,1.7);
	    \draw (0,0) circle (1.2) 
	    (1.2,0) node [right] {$B$};
	    \filldraw (-1.2,0) circle (1.8pt) node [left] {$A$};
	\end{tikzpicture}}
	\caption{A segment of groups and loop of
	groups~\cite[p.~41]{serre-80}.}
	\label{fig:edge-and-loop}
    \end{figure}
\end{examples}

\begin{proposition}
    \label{prop:dimvc-finite-graphs}
    Let $G$ be the fundamental group of a finite graph of finite
    groups $(G,Y)$, and assume that $G$ is not virtually cyclic.  Then
    \begin{equation*}
	\uuhd G = \uucd G =\uugd G = 2.
    \end{equation*}
\end{proposition}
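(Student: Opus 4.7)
The plan is to reduce the statement to Proposition~\ref{prop:dim-gromov-hyperbolic-groups} by exhibiting $G$ as a non-virtually-cyclic Gromov-hyperbolic group with $\ugd G\leq 2$. To this end I would work directly with the Bass--Serre tree $T$ associated with the graph of groups $(G,Y)$.

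First, I would verify that $T$ is a $1$-dimensional model for $\underline{E}G$, which immediately gives $\ugd G\leq 1\leq 2$. The action of $G$ on $T$ has stabilisers which are conjugates of the vertex and edge groups of $(G,Y)$; since each of these is finite by hypothesis, we have $\frakF(T)\subset \Ffin(G)$. For any finite subgroup $H\leq G$, Serre's fixed point theorem (see \cite[I.6.5]{serre-80}; after subdividing to eliminate inversions if necessary) ensures that $T^{H}$ is a non-empty subtree of $T$, and is therefore contractible. By Proposition~\ref{prop:classifying-space-alt}, $T$ is a model for $\underline{E}G$, so $\ugd G\leq 1$.

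Next, I would show that $G$ is Gromov-hyperbolic. Since $Y$ is finite and all vertex groups are finite, the action of $G$ on $T$ is proper and cocompact. A standard application of the Svarc--Milnor lemma then implies that $G$ is quasi-isometric to the tree $T$; in particular $G$ is Gromov-hyperbolic. (Equivalently, one could invoke the Karrass--Pietrowski--Solitar theorem which states that such a $G$ is virtually free, and virtually free groups are hyperbolic.)

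Finally, since $G$ is Gromov-hyperbolic, not virtually cyclic, and satisfies $\ugd G\leq 2$, Proposition~\ref{prop:dim-gromov-hyperbolic-groups} applies and yields
\begin{equation*}
    \uuhd G = \uucd G = \uugd G = 2.
\end{equation*}
I do not expect any real obstacle here: both that the Bass--Serre tree is a model for $\underline{E}G$ and that such a $G$ is Gromov-hyperbolic are standard facts, and once assembled they feed directly into the already-proven Proposition~\ref{prop:dim-gromov-hyperbolic-groups}. The only point requiring a brief justification is the fixed-point property $T^{H}\neq\emptyset$ for finite $H$, which I would handle by citing Serre's fixed point theorem.
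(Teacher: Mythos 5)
Your proof is correct and follows essentially the same route as the paper: show $\ugd G\leq 1$ via the Bass--Serre tree, show $G$ is Gromov-hyperbolic, and then invoke Proposition~\ref{prop:dim-gromov-hyperbolic-groups}. The only superficial difference is in justifying hyperbolicity — you lead with the \v{S}varc--Milnor argument for the proper cocompact action on $T$, whereas the paper cites that $G$ is virtually free of finite rank (which you mention as an alternative); both are standard and equivalent in weight.
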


\begin{proof}
    Fundamental groups of a finite graph of finite groups have a free
    subgroup of finite index~\cite[p.~104]{dicks-89} which has finite
    rank.  Free groups of finite rank are Gromov-hyperbolic and the
    property of being Gromov-hyperbolic is preserved by finite
    extensions.  Therefore the group $G$ is Gromov-hyperbolic.
    
    Fundamental groups of finite graphs of groups are known to admit a
    $1$-dimensional model for $\underline EG$.  To see this, let $T$
    be the Bass--Serre tree associated with the graph of groups
    $(G,Y)$.  Then $\frakF(T)\subset \Ffin(G)$.  On the other hand, it
    is a well known fact that a finite group cannot act
    fixed point free on a tree and therefore $T^{H}\neq \emptyset$ for
    all $H\in \Ffin(G)$.  Thus $T$ is a one-dimensional model for
    $\underline{E} G$ and we have $\ugd G\leq 1$.
    
    Altogether the conditions of
    Proposition~\ref{prop:dim-gromov-hyperbolic-groups} are satisfied
    and the claim follows since $G$ is not virtually cyclic by
    assumption.
\end{proof}

%
%

\section{Dimensions of Extensions of $C_{\infty}$}

The results of this section rely on a result which Martínez-Pérez has
obtained using a spectral sequence that she has constructed for Bredon
(co\=/)homology in~\cite{martinez-perez-02}.

\begin{proposition}[Martínez-Pérez]
    \label{prop:MP}
    Let $G$ be an extension
    \begin{equation*}
        0\to N \to G \to Q \to 0
    \end{equation*}
    of a group $N$ by a group $Q = G/N$.  Let 
    \begin{equation*}
	\frakH \defeq  \{ H\leq G : N\leq H \text{ and } H/N\in \Fvc(Q)\}
    \end{equation*}
    and set
    \begin{equation*}
	m \defeq  \sup \{ \uuhd H :  H\in \frakH \}
	\quad \text{and} \quad 
	n \defeq  \sup \{ \uucd H : H\in \frakH\}.
    \end{equation*}
    Then we have the estimates:
    \begin{equation*}
	m\leq \uuhd G\leq \uuhd Q + m
	\quad \text{and} \quad
	n\leq \uucd G\leq \uucd Q + n.
    \end{equation*}
\end{proposition}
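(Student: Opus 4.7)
Proof proposal. The lower bounds $m \leq \uuhd G$ and $n \leq \uucd G$ are immediate: every $H\in\frakH$ is a subgroup of $G$, and since $\Fvc$ is a full family we have $\Fvc(G)\cap H = \Fvc(H)$, so Proposition~\ref{prop:dim-subgroups-1} gives $\uuhd H \leq \uuhd G$ and $\uucd H \leq \uucd G$; taking suprema over $\frakH$ yields the bounds.

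The plan for the upper bounds is to introduce the intermediate family
\begin{equation*}
    \frakG \defeq \pi^{-1}(\Fvc(Q)) = \{H\leq G : HN/N \in \Fvc(Q)\},
\end{equation*}
where $\pi\: G\to Q$ is the projection. The family $\frakG$ is full (as $\Fvc(Q)$ is closed under subgroups), closed under conjugation, and contains $\Fvc(G)$, since the image of a virtually cyclic subgroup is virtually cyclic. Hence $(\frakG,\Fvc(G))$ is a pair of full families of subgroups of $G$ in the sense of the paper. I would then apply Theorem~\ref{thrm:cdF-G-vs-cdG-G} and Theorem~\ref{thrm:hdF-G-vs-hdG-G} to this pair. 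The hypothesis $\Fvc(G)\cap K \subset \Fvc(G)$ for $K\in \frakG$ is automatic, and $\Fvc(G)\cap K = \Fvc(K)$ as families of subgroups of $K$. For each $K\in \frakG$ the subgroup $KN$ lies in $\frakH$ (we have $N\leq KN$ and $KN/N = \pi(K)\in \Fvc(Q)$); applying Proposition~\ref{prop:dim-subgroups-1} to $K\leq KN$ gives $\uucd K\leq \uucd(KN)\leq n$ and $\uuhd K\leq \uuhd(KN)\leq m$. Thus the two theorems yield
\begin{equation*}
    \uucd G \leq \cd_{\frakG} G + n
    \qquad\text{and}\qquad
    \uuhd G \leq \hd_{\frakG} G + m.
\end{equation*}

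It remains to show $\cd_{\frakG} G \leq \uucd Q$ and $\hd_{\frakG} G \leq \uuhd Q$, which is the main technical step. Since $\pi(\frakG)\subset \Fvc(Q)$, the construction of Section~\ref{sec:tensor-product-resolutions} produces a functor $\pi\: \OC{G}{\frakG} \to \OC{Q}{\Fvc(Q)}$, and the associated restriction functor $\res_{\pi}$ sends $\underline\Z_{\Fvc(Q)}$ to $\underline\Z_{\frakG}$ by the argument of Lemma~\ref{lem:resZ}. The crucial observation is that for $V\in \Fvc(Q)$ one has an isomorphism of $G$-sets $G/\pi^{-1}(V)\isom Q/V$ (where the right-hand side carries the $G$-action via $\pi$), because $N$ is the kernel of $\pi$ and hence acts trivially. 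Passing to fixed points gives $(G/\pi^{-1}(V))^H = (Q/V)^{\pi(H)}$ naturally in $H\in \frakG$, which translates into a natural isomorphism
\begin{equation*}
    \res_{\pi} \Z[\?,Q/V]_{Q} \isom \Z[\?, G/\pi^{-1}(V)]_{G}
\end{equation*}
of right $\OC{G}{\frakG}$-modules; since $\pi^{-1}(V)\in \frakH\subset \frakG$, the right-hand side is a free $\OC{G}{\frakG}$-module. Hence $\res_{\pi}$ preserves frees, and being additive and exact, it preserves projectives and (using that $\frakG$ is full, Proposition~\ref{prop:lazard}, and that restriction commutes with colimits) flats as well. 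A projective (respectively flat) resolution of $\underline\Z_{\Fvc(Q)}$ of length $\uucd Q$ (respectively $\uuhd Q$) therefore restricts to one of $\underline\Z_{\frakG}$ of the same length, proving the bounds. Combining everything gives the stated inequalities. The main obstacle is verifying that $\res_{\pi}$ preserves free Bredon modules; the rest is a careful bookkeeping of families and a routine application of the comparison theorems.
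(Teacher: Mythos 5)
Your argument is correct, but it takes a genuinely different route from the thesis for the upper bounds: the thesis simply quotes Mart\'inez-P\'erez's spectral sequence for Bredon (co\=/)homology of extensions (only the lower bound is derived in-house, exactly as you do, from Proposition~\ref{prop:dim-subgroups-1}). You instead pull back the family, setting $\frakG = \{H\leq G : \pi(H)\in\Fvc(Q)\}$, check that $(\frakG,\Fvc(G))$ is a pair of full families with $\uucd K\leq \uucd(KN)\leq n$ and $\uuhd K\leq \uuhd(KN)\leq m$ for every $K\in\frakG$ (since $KN\in\frakH$), and feed this into Theorem~\ref{thrm:cdF-G-vs-cdG-G} and Theorem~\ref{thrm:hdF-G-vs-hdG-G} to get $\uucd G\leq \cd_{\frakG}G+n$ and $\uuhd G\leq \hd_{\frakG}G+m$; the remaining comparison $\cd_{\frakG}G\leq\uucd Q$, $\hd_{\frakG}G\leq\uuhd Q$ comes from your observation that restriction along the functor $\OC{G}{\frakG}\to \OC{Q}{\Fvc(Q)}$ induced by $\pi$ sends $\Z[\?,Q/V]_{Q}$ to $\Z[\?,G/\pi^{-1}(V)]_{G}$ with $\pi^{-1}(V)\in\frakG$, hence preserves frees, projectives, flats (via Proposition~\ref{prop:lazard} and preservation of colimits) and the trivial module. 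I checked the key isomorphism $[G/H,G/\pi^{-1}(V)]_{G}\isom [Q/\pi(H),Q/V]_{Q}$ and its naturality, and the bookkeeping of hypotheses in the two comparison theorems; everything goes through (with the usual proviso that the upper bounds are vacuous when $m$, $n$, $\uuhd Q$ or $\uucd Q$ is infinite). What your approach buys is a proof that is self-contained within the machinery already developed in this thesis, avoiding the external spectral-sequence input; what the citation buys is brevity and access to the full strength of Mart\'inez-P\'erez's result (the spectral sequence itself, with general coefficients and identification of the $E_2$-page), of which the dimension estimate used here is only a corollary.
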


\begin{proof}
    The lower bound is a direct consequence of
    Proposition~\ref{prop:dim-subgroups-1}.  The upper bound is due to
    Martínez-Pérez's result in~\cite[pp.~171f.]{martinez-perez-02}.
\end{proof}

For future reference we state the following corollary to this result.

\begin{corollary}
    \label{cor:MP}
    Let $G$ be an extension of a group $N$ by a group $Q$. 
    Assume that $\uuhd Q<\infty$. Then $\uuhd G<\infty$ if 
    and only if there exists an integer $k$ such that $\uuhd 
    H\leq k$ for every $N\leq H\leq G$ such that $H/N$ is 
    virtually cyclic.
    
    The statement remains true if ``$\uuhd$'' is replaced by
    ``$\uucd$'' or ``$\uugd$''.
\end{corollary}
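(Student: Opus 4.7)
The plan is to reduce everything to Proposition~\ref{prop:MP}, using the subgroup estimates of Proposition~\ref{prop:dim-subgroups-1} and~\ref{prop:dim-subgroups-2} for the forward direction, and a bridging step via the L\"uck--Meintrup result (Proposition~\ref{prop:luck-meintrup-alt}) for the geometric case.

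First I would handle the forward implication in all three cases uniformly. The key observation is that subgroups of virtually cyclic groups are virtually cyclic, so for every subgroup $H\leq G$ one has $\Fvc(G)\cap H=\Fvc(H)\subset \Fvc(G)$. Hence Proposition~\ref{prop:dim-subgroups-1} applies (and Proposition~\ref{prop:dim-subgroups-2}, since $\Fvc(G)$ is a full family), yielding $\uuhd H\leq \uuhd G$, $\uucd H\leq \uucd G$ and $\uugd H\leq \uugd G$ for any $H\in\frakH$. Thus finiteness of the relevant Bredon dimension of $G$ immediately supplies a uniform bound $k$ on $\frakH$.

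For the backward implication in the $\uuhd$ and $\uucd$ cases I would simply read off Proposition~\ref{prop:MP}: the hypothesis furnishes $m\leq k$, respectively $n\leq k$, so together with the assumed finiteness of $\uuhd Q$ (resp.\ $\uucd Q$) the upper bounds in that proposition give
\begin{equation*}
\uuhd G\leq \uuhd Q+k<\infty \quad\text{and}\quad \uucd G\leq \uucd Q+k<\infty.
\end{equation*}

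The only case requiring a small detour is the backward direction for $\uugd$, and this is where I expect the main (mild) obstacle, since Proposition~\ref{prop:MP} is only formulated cohomologically. From $\uugd H\leq k$ I would deduce $\uucd H\leq k$ via the general inequality $\uucd\leq \uugd$, and from $\uugd Q<\infty$ I similarly obtain $\uucd Q<\infty$. The cohomological case just established then yields $\uucd G\leq \uucd Q+k<\infty$. Finally I would invoke Proposition~\ref{prop:luck-meintrup-alt}: if $\uucd G\geq 3$ then $\uugd G=\uucd G<\infty$, while if $\uucd G\leq 2$ then Proposition~\ref{prop:luck-meintrup}(1) applied to a trivially extended length-$3$ projective resolution of $\underline\Z_{\Fvc(G)}$ produces a $3$-dimensional model for $\uu EG$, giving $\uugd G\leq 3$. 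Either way $\uugd G$ is finite, completing the argument.
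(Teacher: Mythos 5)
Your proof is correct and follows essentially the same route as the paper: the homological and cohomological cases are read off from the two-sided bounds of Proposition~\ref{prop:MP}, and the geometric case is reduced to the cohomological one via $\uucd\leq\uugd$ together with Proposition~\ref{prop:luck-meintrup-alt}. Your additional care with the forward direction (via Propositions~\ref{prop:dim-subgroups-1} and~\ref{prop:dim-subgroups-2}) and with the low-dimensional case $\uucd G\leq 2$ (via Proposition~\ref{prop:luck-meintrup}) only spells out details the paper leaves implicit.
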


\begin{proof}
    The statements about $\uuhd G$ and $\uucd G$ are direct
    consequences of Proposition~\ref{prop:MP}.  The last statements
    follows from the second since the Bredon cohomological and Bredon
    geometric dimension agree for values greater than $3$ by
    Proposition~\ref{prop:luck-meintrup-alt}.
\end{proof}

In this section we apply Proposition~\ref{prop:MP} to the special
case where 
$N=C_{\infty}$, that is $G$ is an extension
\begin{equation*}
    0 \to C_{\infty} \to G \to Q\to 0
\end{equation*}
of the infinite cyclic group $C_{\infty}$ by an arbitrary group $Q$.

\pagebreak[3]

\begin{proposition}
    \label{prop:main-result}
    Let $G$ be an extension of the infinite cyclic group $C_{\infty}$
    by an arbitrary group $Q$.  Then precisely one of the following
    cases occurs:
    \begin{enumerate}
        \item  $Q$ has no element of infinite order. Then all $H\in 
	\frakH$ are virtually cyclic and
	\begin{equation*}
	    \uuhd G \leq \uuhd Q \qquad \text{and} \qquad \uucd G 
	    \leq \uucd Q;
	\end{equation*}
    
        \item  $Q$ has elements of infinite order. Then
	\begin{equation*}
	    3 \leq \uuhd G \leq \uuhd Q + 3 \qquad \text{and} \qquad 
	    3 \leq \uucd G \leq \uucd Q + 3.
	\end{equation*}
    \end{enumerate}
\end{proposition}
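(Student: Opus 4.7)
The plan is to analyse each subgroup $H\in\frakH$ directly via Proposition~\ref{prop:MP}. The key dichotomy is the following: since $H$ fits in an extension
\begin{equation*}
    0 \to C_\infty \to H \to H/C_\infty \to 0
\end{equation*}
with $H/C_\infty\in\Fvc(Q)$, either (a) $H/C_\infty$ is finite, in which case $C_\infty$ has finite index in $H$ and so $H$ is itself virtually cyclic, giving $\uuhd H=\uucd H=0$ by Proposition~\ref{prop:cdG=0}; or (b) $H/C_\infty$ is infinite, in which case it contains $C_\infty$ as a finite-index subgroup, whose preimage is a finite-index subgroup of $H$ that is an extension of $C_\infty$ by $C_\infty$. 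Such an extension is virtually polycyclic of Hirsch length $2$, and in particular contains a subgroup isomorphic to $\Z^2$, so $H$ is virtually polycyclic with $\vcd H=2$. Proposition~\ref{prop:dimvc-polycyclic-1} then yields $\uuhd H=\uucd H=3$.

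For Case~(1), if $Q$ contains no element of infinite order, then every virtually cyclic subgroup of $Q$ is finite (a virtually cyclic group is either finite or contains $C_\infty$, hence an element of infinite order). Thus $H/C_\infty$ is finite for every $H\in\frakH$, so by the dichotomy above every $H\in\frakH$ is virtually cyclic and $m=n=0$. The upper bounds of Proposition~\ref{prop:MP} reduce to $\uuhd G\leq\uuhd Q$ and $\uucd G\leq\uucd Q$.

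For Case~(2), pick an element $q\in Q$ of infinite order and let $H_0$ be the preimage of $\langle q\rangle$ in $G$. Then $H_0\in\frakH$ and $H_0/C_\infty\isom\langle q\rangle$ is infinite cyclic, placing $H_0$ in case~(b) of the dichotomy. Hence $\uuhd H_0=\uucd H_0=3$, so $m\geq 3$ and $n\geq 3$. The dichotomy also shows that $\uuhd H,\uucd H\in\{0,3\}$ for every $H\in\frakH$, so in fact $m=n=3$. Applying Proposition~\ref{prop:MP} both for its lower and upper estimate yields $3\leq\uuhd G\leq\uuhd Q+3$ and $3\leq\uucd G\leq\uucd Q+3$.

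The only non-routine ingredient is step~(b): verifying that an extension of $C_\infty$ by an infinite virtually cyclic group is virtually $\Z^2$, so that Proposition~\ref{prop:dimvc-polycyclic-1} applies and pins down the dimensions to exactly $3$. Everything else is bookkeeping with the definition of $\frakH$ and the direct application of Proposition~\ref{prop:MP}.
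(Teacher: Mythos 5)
Your proof is correct and follows essentially the same route as the paper: split each $H\in\frakH$ according to whether $H/C_{\infty}$ is finite (so $H$ is virtually cyclic and contributes $0$) or infinite virtually cyclic (so $H$ is virtually polycyclic with $\vcd H=2$ and contributes $3$ via Proposition~\ref{prop:dimvc-polycyclic-1}), and then feed the resulting values of $m$ and $n$ into Proposition~\ref{prop:MP}. The only difference is that you spell out the verification that $H$ is virtually polycyclic of $\vcd$ equal to $2$, which the paper leaves implicit.
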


\begin{proof}
    If $Q$ does not have elements of infinite order then every 
    $C_{\infty}\leq H\leq G$ with $H/C_{\infty}$ virtually cyclic is 
    itself virtually cyclic and therefore $\uuhd G = \uucd G = 0$. 
    Now the first claim follows from Proposition~\ref{prop:MP}.
    
    If $Q$ does have an element of infinite order then there exists
    $C_{\infty} \leq H\leq G$ with $H/C_{\infty}$ infinite virtually
    cyclic.  In this case $\vcd H = 2$ and therefore $\uuhd H = \uugd
    H = 3$ by Proposition~\ref{prop:dimvc-polycyclic-1}.  The second
    claim follows now from~\ref{prop:MP}.
\end{proof}

Note that we can replace $C_{\infty}$ by an infinite virtually cyclic 
group $N$ and still get the same result. 

\begin{proposition}
    For the braid group $B_{3}$ we have the estimate 
    \begin{equation*}
	3\leq \uuhd B_{3} \leq \uucd B_{3} = \uugd B_{3} \leq 5.
    \end{equation*}
\end{proposition}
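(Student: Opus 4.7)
The plan is to exhibit $B_{3}$ as a central extension of $C_{\infty}$ by $\PSL_{2}(\Z)$ and then apply Proposition~\ref{prop:main-result} together with the known Bredon dimensions of $\PSL_{2}(\Z)$.

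First I would recall the classical fact that the centre $Z(B_{3})$ is the infinite cyclic group generated by $(\sigma_{1}\sigma_{2})^{3}$ and that the quotient $B_{3}/Z(B_{3})$ is isomorphic to $\PSL_{2}(\Z)$. This gives a short exact sequence
\begin{equation*}
   1 \to C_{\infty} \to B_{3} \to \PSL_{2}(\Z) \to 1,
\end{equation*}
which is exactly the setup required by Proposition~\ref{prop:main-result}.

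Next I would use the well-known decomposition $\PSL_{2}(\Z) \isom \Z/2 * \Z/3$, exhibiting $\PSL_{2}(\Z)$ as the fundamental group of a finite graph of finite groups (a segment with vertex groups $\Z/2$ and $\Z/3$ and trivial edge group). Since this group contains a free subgroup of rank two, it is not virtually cyclic, so Proposition~\ref{prop:dimvc-finite-graphs} applies and yields
\begin{equation*}
   \uuhd \PSL_{2}(\Z) = \uucd \PSL_{2}(\Z) = \uugd \PSL_{2}(\Z) = 2.
\end{equation*}

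Now $\PSL_{2}(\Z)$ obviously contains elements of infinite order (e.g.\ the image of $\begin{psmallmatrix}1 & 1\\0 & 1\end{psmallmatrix}$), so we are in the second case of Proposition~\ref{prop:main-result}. Feeding in the bounds above yields
\begin{equation*}
    3 \leq \uuhd B_{3} \leq 2 + 3 = 5
    \qquad \text{and} \qquad
    3 \leq \uucd B_{3} \leq 2 + 3 = 5.
\end{equation*}
Finally, $\uuhd B_{3} \leq \uucd B_{3}$ comes from Lemma~\ref{lem:hd-vs-cd}, and since $\uucd B_{3} \geq 3$, Proposition~\ref{prop:luck-meintrup-alt} gives $\uucd B_{3} = \uugd B_{3}$, completing the chain of inequalities in the statement. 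There is no serious obstacle here; the only thing to verify carefully is the identification of $B_{3}$ as the stated central extension, which is standard, so the whole argument is essentially an application of two results already established in the preceding chapters.
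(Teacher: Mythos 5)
Your argument is exactly the paper's: exhibit $B_{3}$ as an extension of $C_{\infty}$ by the modular group $C_{2}*C_{3}$, note that the latter is a non-virtually-cyclic fundamental group of a finite graph of finite groups with an element of infinite order, and then apply Proposition~\ref{prop:dimvc-finite-graphs}, Proposition~\ref{prop:main-result}, and Proposition~\ref{prop:luck-meintrup-alt} in that order. The only cosmetic difference is that you name the quotient as $\mathrm{PSL}_{2}(\Z)$ and spell out the generator of the centre, which the paper leaves implicit.
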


\begin{proof}
    The braid group $B_{3}$ is an extension of the infinite cyclic
    group~$C_{\infty}$ by the modular group $C_{2}*C_{3}$.  Now
    $C_{2}*C_{3}$ is not virtually cyclic and therefore $\uuhd
    (C_{2}*C_{3}) = \uucd (C_{2}*C_{3}) = 2$ by
    Proposition~\ref{prop:dimvc-finite-graphs}.  Moreover it has an
    element of infinite order and thus $3\leq \uuhd B_{3}\leq \uucd
    B_{3} \leq 5$ by Proposition~\ref{prop:main-result}. Furthermore
    $\uucd B_{3}\geq 3$ and therefore it must be equal to~$\uugd 
    B_{3}$ by Proposition~\ref{prop:luck-meintrup-alt}.
\end{proof}

%
%

\section{Nilpotent Groups}

The \emph{Hirsch length} $\h G$ of a group $G$ is an invariant of
groups which has originally been defined for polycyclic groups.  For
polycyclic groups it is the number of infinite cyclic factors in a
infinite cyclic series of $G$~\cite[p.~152]{robinson-96}.

The notion of Hirsch length can be extended to elementary amenable
groups, which is a class of groups which contains all locally
nilpotent
groups and also all soluble groups (more detail will follow in the
next section). The extension can be done in such a way that 
the following holds.

\begin{proposition}
    \cite[Theorem 1]{hillman-91}
    \label{prop:hillman-91}
    Let $G$ be an elementary amenable group. Then
    \begin{enumerate}
        \item  if $H$ is a subgroup of $G$ then $\h H\leq \h G$;
    
        \item  if $G$ is the direct union of subgroups $G_{\lambda}$, 
	$\lambda\in \Lambda$, then
	\begin{equation*}
	    \h G = \sup\{ \h G_{\lambda}\};
	\end{equation*}
    
        \item  if $H$ is a normal subgroup of $G$, then $\h G = \h H 
	+ \h (G/H)$.\qed
    \end{enumerate}
\end{proposition}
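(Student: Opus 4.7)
The proof will proceed by transfinite induction along the standard construction of the class of elementary amenable groups. I would first recall that this class admits a description as the union $\bigcup_{\alpha}\mathcal{X}_\alpha$ of an ascending hierarchy indexed by the ordinals, where $\mathcal{X}_0$ consists of all finite groups and all abelian groups, $\mathcal{X}_{\alpha+1}$ is obtained from $\mathcal{X}_\alpha$ by closing under extensions and directed unions (and one checks along the way that closure under subgroups and quotients propagates through the hierarchy), and at limit ordinals one simply takes the union of the earlier terms. The Hirsch length is then extended recursively through this hierarchy by setting $\h G = 0$ for $G$ finite and $\h G = \dim_{\Q}(G\otimes_{\Z}\Q)$ for $G$ abelian, and using the formula in (3) to define $\h$ across extensions and the formula in (2) to define it across directed unions.

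The plan is to prove by transfinite induction on $\alpha$ that $\h G$ is a well-defined invariant for any $G\in\mathcal{X}_\alpha$ and that the three conclusions (1), (2) and (3) hold whenever the groups involved lie in $\mathcal{X}_\alpha$. The base case $\alpha=0$ is handled by classical facts on torsion-free rank of abelian groups: monotonicity under subgroups, additivity on short exact sequences, and compatibility with directed unions (the latter because $-\otimes_{\Z}\Q$ is exact and commutes with filtered colimits). At the successor step, if $G\in\mathcal{X}_{\alpha+1}$ is presented as a directed union $\varinjlim G_\lambda$, then for a subgroup $H\leq G$ the identity $H=\varinjlim(H\cap G_\lambda)$ reduces (1) to the inductive version applied inside the $G_\lambda$, and (3) is handled analogously. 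If instead $G$ is presented via an extension $1\to N\to G\to Q\to 1$ with $N,Q\in\mathcal{X}_\alpha$, then a subgroup $H\leq G$ fits into $1\to H\cap N\to H\to HN/N\to 1$, and the inductive forms of (1) and (3) give $\h H = \h(H\cap N)+\h(HN/N)\leq \h N + \h Q = \h G$; property (3) at level $\alpha+1$ is verified by a parallel exact-sequence computation on a normal $H$.

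The genuine obstacle is the \emph{well-definedness} of $\h G$ at the successor step, because a single group $G$ can be presented simultaneously as many different extensions and as many different directed unions of groups of strictly lower complexity, and the recursion has to return the same value in every case. The standard remedy is to strengthen the inductive hypothesis into an unconditional additivity statement: for any short exact sequence $1\to N\to G\to Q\to 1$ of elementary amenable groups in $\mathcal{X}_\alpha$, the value $\h N + \h(G/N)$ is independent of the choice of normal subgroup $N$. Two competing choices $N_1,N_2$ are then compared by means of the auxiliary subgroups $N_1\cap N_2$ and $N_1N_2$ together with the quotients $N_i/(N_1\cap N_2)$ and $N_1N_2/N_i$, all of which sit at strictly smaller ordinal level; two applications of the inductive additivity lemma around the resulting diamond force $\h N_1 + \h(G/N_1) = \h N_2 + \h(G/N_2)$. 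An analogous refinement argument, using a common upper bound of two competing directed systems, takes care of well-definedness under competing directed-union presentations, and with well-definedness in hand the three listed statements follow essentially by inspection of the recursion.
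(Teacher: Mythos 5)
Your overall strategy (transfinite induction along a hierarchy building the class of elementary amenable groups, with the Hirsch length defined recursively by additivity on extensions and by suprema on directed unions) is the right kind of argument, and indeed the paper itself does not reprove this statement: it quotes Hillman's Theorem~1 and, in the section on elementary amenable groups, only recalls the Kropholler--Linnell--Moody hierarchy $\mathscr{X}_{1}$, $\mathscr{X}_{\alpha} = (\cll\mathscr{X}_{\alpha-1})\mathscr{X}_{1}$ and the recursive definition $\h G = \sup\{\h N_{\lambda}\} + \h(G/N)$, where $N$ is a normal subgroup in $\cll\mathscr{X}_{\alpha-1}$ with $G/N\in\mathscr{X}_{1}$ and the supremum runs over the finitely generated subgroups of $N$. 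However, as written your sketch has genuine gaps exactly at the point you yourself identify as the crux, namely well-definedness. First, in the diamond argument comparing two normal subgroups $N_{1}, N_{2}$ you assert that $N_{1}\cap N_{2}$, $N_{1}N_{2}$ and the quotients $N_{i}/(N_{1}\cap N_{2})$, $N_{1}N_{2}/N_{i}$ all sit at strictly smaller ordinal level; this is false in general, since $N_{1}N_{2}$ is merely a subgroup of $G$ and can equal $G$ itself, so the inductive additivity lemma cannot simply be applied ``around the diamond'' — the argument needs to be restructured (for instance by first establishing additivity for all extensions of the fixed group $G$ before deducing anything about subgroups). Second, in your hierarchy a group at level $\alpha+1$ may be presented simultaneously as an extension of level-$\alpha$ groups and as a directed union of level-$\alpha$ subgroups; you check consistency of two extension presentations and of two directed-union presentations, but never of a mixed pair, and this is precisely the case that occurs already for $\Q$ or for countable locally finite groups. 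Without it the recursion is not known to return a single value. There is also a smaller circularity to flag: in the successor step for monotonicity you apply statement (3) to an arbitrary subgroup $H\leq G$, which may itself only lie at level $\alpha+1$, so the order of quantifiers in the inductive hypothesis (additivity at level $\alpha+1$ first, monotonicity second) must be made explicit.

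It is worth noting why Hillman's (and hence the paper's) setup avoids these difficulties and is therefore the cleaner route: in the hierarchy used there, directed unions never appear as a separate building operation at successor stages — they enter only through the local closure operator $\cll$ and through the supremum over \emph{finitely generated} subgroups in the definition of $\h$. Consequently the only well-definedness issue is independence of the choice of the normal subgroup $N$ with $G/N\in\mathscr{X}_{1}$, which is handled by an intersection argument entirely inside groups of strictly smaller complexity, and properties (1)--(3) then follow by the kind of exact-sequence and refinement computations you outline. If you want a self-contained proof along your lines, you should either switch to that hierarchy or add the missing mixed-presentation comparison and repair the diamond step; as it stands the proposal does not yet yield a well-defined invariant $\h$.
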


A finitely generated nilpotent group is known to be 
polycyclic~\cite[p.~137]{robinson-96}. It is shown in~\cite{luck-12} 
that
\begin{equation*}
    \vcd G -1 \leq \uugd G \leq \vcd G + 1
\end{equation*}
for virtually polycyclic groups $G$.  For virtually polycyclic groups
the virtual cohomological dimension $\vcd G$ is equal to the Hirsch
length $\h G$.  Therefore Proposition~\ref{prop:dimvc-polycyclic}
states that we have for finitely generated nilpotent groups the
estimate
\begin{equation*}
   \h G - 1 \leq  \uucd G \leq \h G + 1.
\end{equation*}

If $G$ is a countable group, then $G$ is the countable direct union of
its finitely generated subgroups $G_{\lambda}$ and we have the
estimate
\begin{equation*}
    k \leq \uucd G \leq k+1
\end{equation*}
where $k\defeq  \sup \{ \uucd G_{\lambda}\}$. On the other hand, if $G$ is 
locally nilpotent group, then 
\begin{equation*}
    \h G = \sup \{ \h G_{\lambda}\},
\end{equation*}
by Proposition~\ref{prop:hillman-91}. Since $\h G_{\lambda} = \uucd 
G_{\lambda}$ for all $\lambda$ it follows that $k = \h G$. We get the 
following estimate for $\uucd G$ for locally nilpotent groups.

\begin{proposition}
    \label{prop:cd-nilpotent-G}
    Let $G$ be a countable locally nilpotent group with finite Hirsch
    length $\h G$.  Then
    \begin{equation*}
        \h G - 1\leq \uucd G \leq \h G + 2.
    \end{equation*}
    In particular this estimate is true for countable nilpotent
    groups.
\end{proposition}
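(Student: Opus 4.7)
The plan is to realise $G$ as a direct union of its finitely generated subgroups and then combine the known polycyclic bound with Theorem~\ref{thrm:direct-union-result}. Write $G = \bigcup_{\lambda \in \Lambda} G_\lambda$ as the directed union of its finitely generated subgroups. Since $G$ is countable, $\Lambda$ may be taken countable. Each $G_\lambda$ is finitely generated and nilpotent (locally nilpotent is a local property), hence polycyclic, so $\vcd G_\lambda = \h G_\lambda$ is finite. The Lück--Weiermann bound $\vcd G_\lambda - 1 \leq \uugd G_\lambda \leq \vcd G_\lambda + 1$ together with Proposition~\ref{prop:dimvc-polycyclic} (which gives $\uucd G_\lambda = \uugd G_\lambda$ for virtually polycyclic groups) yields
\begin{equation*}
    \h G_\lambda - 1 \;\leq\; \uucd G_\lambda \;\leq\; \h G_\lambda + 1
\end{equation*}
for every $\lambda \in \Lambda$.

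For the upper bound, note first that the families $\Fvc(G)$ and $\Fvc(G_\lambda)$ are compatible with the direct union by Corollary~\ref{cor:simple-comp-condition}. Since $\h G_\lambda \leq \h G$ for every $\lambda$ by Proposition~\ref{prop:hillman-91}, the preceding display gives $\uucd G_\lambda \leq \h G + 1$ uniformly in $\lambda$. As $\Lambda$ is countable, part~(\ref{claim2}) of Theorem~\ref{thrm:direct-union-result} applies and delivers
\begin{equation*}
    \uucd G \;\leq\; \sup_{\lambda} \uucd G_\lambda + 1 \;\leq\; \h G + 2.
\end{equation*}

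For the lower bound, Proposition~\ref{prop:hillman-91} also gives $\h G = \sup_\lambda \h G_\lambda$; since $\h G$ is a finite non-negative integer and each $\h G_\lambda$ is likewise a non-negative integer bounded above by $\h G$, this supremum is attained. Thus there exists $\lambda_0$ with $\h G_{\lambda_0} = \h G$, and consequently $\uucd G_{\lambda_0} \geq \h G - 1$. Since $\Fvc$ is a full family, Proposition~\ref{prop:dim-subgroups-1} gives $\uucd G \geq \uucd G_{\lambda_0}$, whence $\uucd G \geq \h G - 1$, completing the proof.

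There is no real obstacle here: the only points that deserve a moment of care are (i) verifying that the indexing set $\Lambda$ of finitely generated subgroups is countable so that Theorem~\ref{thrm:direct-union-result}(\ref{claim2}) is applicable, and (ii) observing that the supremum defining $\h G$ is attained because it is a finite supremum of non-negative integers---this is what allows the lower bound to be transported from a single subgroup rather than only as a limit inferior.
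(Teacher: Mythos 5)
Your proposal is correct and follows essentially the same route as the paper: realise $G$ as a countable direct union of its finitely generated (hence polycyclic) subgroups, apply the L\"uck--Weiermann bound together with Proposition~\ref{prop:dimvc-polycyclic} to each $G_\lambda$, and then combine Theorem~\ref{thrm:direct-union-result} with Proposition~\ref{prop:dim-subgroups-1} and $\h G = \sup_\lambda \h G_\lambda$. Your handling of the lower bound (noting the supremum of Hirsch lengths is attained) is in fact slightly more careful than the paper's sketch, but it is the same argument in substance.
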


Note that if $G$ locally virtually cyclic, then $\uucd G\leq 1$.
Otherwise~$G$ contains a subgroup isomorphic to $\Z^{2}$ and it
follows that~$\uucd G\geq 3$ and~$\h G\geq 2$.

The proof of Proposition~\ref{prop:cd-nilpotent-G} relies heavily on
Lück and Weiermann's geometric result for virtually polycyclic
groups~\cite{luck-12}.  In what follows we will give an algebraic
proof which avoids the use of geometric results as far as possible.
The only geometric input we need in the following result is that
$\uugd \Z^{n} \leq n+1$.  In the case of $n=2$ this follows from the
very simple model for $\uu E \Z^{2}$ explained
in~\cite{juan-pineda-06} and for general $n\in \N$ it follows from a
construction in~\cite{connolly-06}.  In both cases the results are
obtained with a much simpler machinery than the general result
in~\cite{luck-12}.  Note that the geometric results enter in the proof
of Theorem~\ref{thrm:cd-nilpotent-G-alt} at the following two places:
we need $\uugd \Z^{n} \leq n+1$ for general $n$ in the proof of
Lemma~\ref{lem:nilpotent2} and $\uugd\Z^{2} = 3$ is implicitly used in
the inequality~\eqref{eq:geometric-result} in the proof of
Proposition~\ref{prop:nilpotent3}.

If $G$ is a nilpotent group, then the set
$\tau(G)$\label{def:tau-G-for-N-groups} consisting of all elements
of~$G$ which have finite order is a fully invariant subgroup of~$G$
and the quotient~$G/\tau(G)$ is
torsionfree~\cite[p.~132]{robinson-96}.  The subgroup $\tau(G)$ is
called the \emph{torsion-subgroup} of $G$.

\begin{lemma}
    \label{lem:nilpotent1}
    Let $G$ be a countable nilpotent group.  Then $\uuhd G \leq \uuhd
    G/\tau(G)$ and $\uucd G \leq \uugd G/\tau(G) + 1$.
\end{lemma}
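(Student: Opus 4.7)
The plan is to apply Proposition~\ref{prop:MP} to the short exact sequence
\begin{equation*}
    0 \to \tau(G) \to G \to G/\tau(G) \to 0.
\end{equation*}
Setting $Q \defeq G/\tau(G)$, the relevant family is $\frakH = \{H \leq G : \tau(G) \leq H \text{ and } H/\tau(G) \in \Fvc(Q)\}$, and I will show that every $H \in \frakH$ satisfies $\uuhd H = 0$ and $\uucd H \leq 1$; this forces $m = 0$ and $n \leq 1$ in the notation of that proposition, which yields both inequalities at once.

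First I would fix $H \in \frakH$ and note that $H$ is countable (as a subgroup of the countable group $G$) and nilpotent. Since $\tau(G) \leq H$ and since $\tau(H) \leq \tau(G)$ (every torsion element of $H$ is torsion in $G$), we have $\tau(H) = \tau(G)$, so $H/\tau(H) \isom H/\tau(G)$ is virtually cyclic. The key step is then to show that $H$ is locally virtually cyclic. Let $K$ be any finitely generated subgroup of $H$. Being finitely generated and nilpotent, $K$ is polycyclic, and consequently its torsion subgroup $\tau(K) = K \cap \tau(H)$ is finite. The quotient $K/\tau(K)$ embeds in $H/\tau(H)$, which is virtually cyclic, so $K/\tau(K)$ is virtually cyclic as well. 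Combined with the finiteness of $\tau(K)$, this forces $K$ itself to be virtually cyclic.

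Having established that $H$ is countable and locally virtually cyclic, Corollary~\ref{cor:dim-locally-F} gives $\uuhd H = 0$ and $\uucd H \leq 1$. Plugging $m = 0$ and $n \leq 1$ into Proposition~\ref{prop:MP} yields
\begin{equation*}
    \uuhd G \leq \uuhd (G/\tau(G)) + 0 = \uuhd (G/\tau(G))
\end{equation*}
and
\begin{equation*}
    \uucd G \leq \uucd (G/\tau(G)) + 1 \leq \uugd (G/\tau(G)) + 1,
\end{equation*}
where the last inequality uses the general comparison $\uucd \leq \uugd$.

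The main obstacle is the verification that every $H \in \frakH$ is locally virtually cyclic; this rests on the fact that finitely generated nilpotent groups are polycyclic and therefore have finite torsion subgroup. Once this is settled, the rest is a direct application of Proposition~\ref{prop:MP} and Corollary~\ref{cor:dim-locally-F}.
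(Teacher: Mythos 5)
Your proof is correct and follows essentially the same route as the paper: apply Proposition~\ref{prop:MP} to the extension $1\to\tau(G)\to G\to G/\tau(G)\to 1$, and show that each intermediate subgroup $H$ with $H/\tau(G)$ virtually cyclic is countable and locally virtually cyclic (via the fact that finitely generated nilpotent groups are polycyclic, hence have finite torsion), so that $\uuhd H=0$ and $\uucd H\leq 1$ by Corollary~\ref{cor:dim-locally-F}. The only cosmetic difference is that the paper first notes $S/\tau(G)$ is infinite cyclic using torsion-freeness of $G/\tau(G)$, whereas you work directly with virtually cyclic quotients; both arguments yield the same bounds.
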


\begin{proof}
    Let $S$ be a subgroup of $G$ such that $\tau(G)\leq S$ and
    $S/\tau(G)$ is a virtually cyclic subgroup of $G/\tau(G)$.  We
    claim that $\uuhd S=0$ and $\uucd S\leq 1$.
 
    Since $G/\tau(G)$ is torsion-free it follows that $S/\tau(G)$ is
    infinite cyclic.  Thus we have a short exact sequence
    \begin{equation*}
	1 \to \tau(G) \to S \to S/\tau(G) \to 1
    \end{equation*}
    The group $S$ is the countable and hence it is the countable union
    of its finitely generated subgroups $S_{\lambda}$.  Since
    $S_{\lambda}$ is a finitely generated nilpotent group it follows
    that $S_{\lambda}$ is a polycyclic group.  In particular
    $S_{\lambda}$ satisfies the maximal condition on
    subgroups~\cite[p.~152]{robinson-96}.  As a consequence this
    implies that~$\tau(G)\cap S_{\lambda}$ cannot have a infinite
    strictly ascending sequence of finite groups and hence it is
    finite.  Therefore $S_{\lambda}$ is virtually cyclic.  It follows
    that $S$ is locally virtually cyclic.  Therefore $\uuhd S = 0$ and
    $\uucd S \leq 1$ and this proves the claim.
    
    We can apply Martínez-Pérez's spectral sequence, that is the
    result of Proposition~\ref{prop:MP}, and we get the desired
    inequalities
    \begin{equation*}
        \uuhd G \leq \uuhd Q
	\qquad \text{and} \qquad
	\uucd G \leq \uucd Q + 1.\qedhere
    \end{equation*}
\end{proof}

\begin{lemma}
    \label{lem:nilpotent2}
    Let $G$ be a torsion-free abelian group. Then
    \begin{equation*}
        \uucd G\leq \h G + 2.
    \end{equation*}
\end{lemma}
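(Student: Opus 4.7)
The plan is to reduce to the case $n \defeq \h G < \infty$ and then combine a known upper bound for $\uugd \Z^{n}$ with the direct-union result of Theorem~\ref{thrm:direct-union-result}.

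If $\h G = \infty$ the inequality holds trivially, so assume $n = \h G$ is finite. Pick a free abelian subgroup $A\leq G$ of rank $n$; then $G/A$ is a torsion group (otherwise $\h G$ would exceed $n$ by the additivity in Proposition~\ref{prop:hillman-91}), so $G$ embeds into $G\otimes \Q \isom A\otimes \Q \isom \Q^{n}$. In particular $G$ is countable.

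Write $G$ as the directed union of its finitely generated subgroups $\{G_{\lambda} : \lambda\in \Lambda\}$; since $G$ is countable we may take $\Lambda$ countable. Each $G_{\lambda}$ is a finitely generated torsion-free abelian group, hence $G_{\lambda}\isom \Z^{n_{\lambda}}$ with $n_{\lambda} = \h G_{\lambda}\leq n$ by Proposition~\ref{prop:hillman-91}. By Corollary~\ref{cor:simple-comp-condition}, the families $\Fvc(G_{\lambda})$ and $\Fvc(G)$ are compatible with the direct union.

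The one external input required is the geometric bound $\uugd \Z^{k}\leq k+1$, which is supplied by the explicit construction of a model for $\uu E\Z^{k}$ in \cite{connolly-06}. This gives $\uucd G_{\lambda}\leq \uugd G_{\lambda}\leq n_{\lambda}+1\leq n+1$ for every $\lambda\in \Lambda$. Applying the second part of Theorem~\ref{thrm:direct-union-result} then yields
\begin{equation*}
    \uucd G \leq \sup\{\uucd G_{\lambda} : \lambda\in\Lambda\} + 1 \leq (n+1)+1 = \h G + 2,
\end{equation*}
which is the desired inequality. The only non-trivial step is the direct-union estimate itself; the remaining work is bookkeeping with the Hirsch length, verifying countability of $G$, and quoting the geometric bound for $\uu E \Z^{k}$.
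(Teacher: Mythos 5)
Your proof is correct and follows essentially the same route as the paper: write $G$ as the (countable) directed union of its finitely generated subgroups $G_{\lambda}\isom\Z^{n_{\lambda}}$, quote $\uugd \Z^{k}\leq k+1$ from~\cite{connolly-06}, and apply the second part of Theorem~\ref{thrm:direct-union-result}. The only cosmetic difference is the countability step, where the paper cites the general fact that torsion-free soluble groups of finite Hirsch length are countable~\cite{bieri-81}, while you give a direct embedding of $G$ into $\Q^{n}$ — both are fine.
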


\begin{proof}
    In order to avoid triviality we assume that that the Hirsch length
    of $G$ is finite.  In general, torsion-free soluble groups of
    finite Hirsch length are countable~\cite[p.~100]{bieri-81}.  In
    particular $G$ is countable.  The group $G$ is the direct union of
    its finitely generated subgroups $G_{\lambda}$.  Since $G$ is
    torsion-free and abelian, it follows that $G_{\lambda} \isom
    \Z^{n_{\lambda}}$.  Then $\uucd \Z^{n_{\lambda}} \leq \uugd
    \Z^{n_{\lambda}} \leq n_{\lambda}+1 = \h G_{\lambda} +1$ where the
    last inequality is due to~\cite{connolly-06}.  Therefore
    \begin{equation*}
        \uucd G \leq \sup \{ \uucd G_{\lambda} \} + 1 \leq \sup \{ \h 
	G_{\lambda} \} + 2 = \h G + 2.\qedhere
    \end{equation*}
\end{proof}

\begin{proposition}
    \label{prop:nilpotent3}
    Let $G$ be a torsion-free nilpotent group. Then
    \begin{equation*}
        \uucd G \leq \h G + 5(c-1) + 2
    \end{equation*}
    where $c$ is the nilpotency class of $G$.
\end{proposition}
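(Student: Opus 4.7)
The plan is to induct on the nilpotency class $c$, applying Proposition~\ref{prop:MP} at each step to a central extension whose quotient has strictly smaller class. The base case $c=1$ is immediate from Lemma~\ref{lem:nilpotent2}, since a torsion-free nilpotent group of class $1$ is abelian.

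For the inductive step, I take $N \defeq I_G(\gamma_c(G))$, the isolator in $G$ of the last non-trivial term of the lower central series. Standard facts about isolators in torsion-free nilpotent groups give that $N$ is a normal subgroup of $G$ with $G/N$ torsion-free, and since $\gamma_c(G)\leq N$ the quotient $G/N$ has nilpotency class at most $c-1$. A crucial additional observation is that $N\leq Z(G)$. This relies on the classical fact that root extraction is unique in torsion-free nilpotent groups: for $g\in N$ one has $g^n\in \gamma_c(G)\leq Z(G)$ for some $n\geq 1$, and for any $h\in G$ the identity $(h^{-1}gh)^n = h^{-1}g^n h = g^n$ combined with root-uniqueness forces $h^{-1}gh=g$. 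Hence $N$ is torsion-free abelian and central in $G$.

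The key estimate is a uniform bound on $\uucd H$ for subgroups $H$ with $N\leq H\leq G$ and $H/N$ virtually cyclic, as required by Proposition~\ref{prop:MP}. I claim each such $H$ is abelian. Centrality of $N$ in $H$ makes $H$ central-by-virtually-cyclic. If $H/N$ is finite, Schur's theorem forces $[H,H]$ finite, hence trivial as $H$ is torsion-free. If $H/N$ is infinite virtually cyclic, a finite-index infinite cyclic subgroup pulls back to an abelian subgroup $K$ of finite index in $H$; another application of root-uniqueness (if $x^n$ centralises $a$ then $(a^{-1}xa)^n=x^n$ yields $a^{-1}xa=x$) shows $K\leq Z(H)$, whence Schur again forces $H$ abelian. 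Then $H$ is torsion-free abelian with $\h H\leq \h N+1$ by Hirsch length additivity, and Lemma~\ref{lem:nilpotent2} gives $\uucd H\leq \h N+3$.

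Combining these bounds via Proposition~\ref{prop:MP}, the inductive hypothesis $\uucd(G/N)\leq \h(G/N)+5(c-2)+2$, and the identity $\h G=\h N+\h(G/N)$, one obtains
\begin{equation*}
    \uucd G \;\leq\; \uucd(G/N) + \sup_{H} \uucd H \;\leq\; \bigl(\h(G/N) + 5(c-2) + 2\bigr) + \bigl(\h N + 3\bigr) \;=\; \h G + 5(c-1),
\end{equation*}
which is well within the claimed bound $\h G+5(c-1)+2$. The main obstacle is the algebraic input that every such $H$ must be abelian; once root-uniqueness in torsion-free nilpotent groups is accepted, both this and the containment $N\leq Z(G)$ follow, and everything else amounts to bookkeeping with the estimates already assembled in the chapter.
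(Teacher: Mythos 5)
Your argument is correct and has the same skeleton as the paper's proof: induct on the nilpotency class, pass to a central extension $1\to N\to G\to Q\to 1$ with $Q$ torsion-free nilpotent of class at most $c-1$, bound $\uucd H$ uniformly over the subgroups $N\leq H\leq G$ with $H/N$ virtually cyclic, and combine via Proposition~\ref{prop:MP} with the inductive bound for $Q$. You diverge in two places. First, the paper takes $N\defeq \zeta(G)$ and quotes Mal'cev's theorem that $G/\zeta(G)$ is torsion-free, while you take the isolator of $\gamma_{c}(G)$ and deduce $N\leq \zeta(G)$ from uniqueness of roots; both choices work, and yours sits inside the paper's. Second, and more substantively, for the subgroups $H$ the paper writes $H\isom N\times H/N$ and invokes Corollary~\ref{cor:cd-for-direct-products}, which imports the geometric inequality~\eqref{eq:geometric-result} and only gives $\uucd H\leq \h N+5$; you instead note that $H$ is itself torsion-free abelian (central-by-cyclic, or your Schur/root-uniqueness variant) and apply Lemma~\ref{lem:nilpotent2} directly, getting $\uucd H\leq \h N+3$ and hence the sharper conclusion $\uucd G\leq \h G+5(c-1)$ --- a shortcut that was in fact also available in the paper's setting, since $N\times H/N$ is abelian, and which removes one geometric input from the proof. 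Two cosmetic remarks: since $Q$ is torsion-free, $H/N$ is already infinite cyclic or trivial, so the finite-index subgroup $K$ and the Schur detour are unnecessary; and in your parenthetical the roles of $x$ and $a$ are swapped --- what is actually available is that some power $a^{n}$ of $a\in H$ lies in the abelian finite-index subgroup $K$, hence commutes with $x$, so $(x^{-1}ax)^{n}=a^{n}$ and root-uniqueness gives $[x,a]=1$. Neither point affects correctness.
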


\begin{proof}
    If $c=1$ then $G$ is abelian and the claim is the statement of
    Lemma~\ref{lem:nilpotent2}.  Therefore assume that $c\geq 2$ and
    that the statement is true for groups with nilpotency class
    strictly less then $c$.
    
    Let $N\defeq  \zeta(G)$ be the centre of $G$ and consider the central
    extension
    \begin{equation*}
        1 \to N \to G \to Q \to 1.
    \end{equation*}
    Since $G$ is torsion-free it follows by a theorem of Mal'cev that
    $Q$ is torsion-free~\cite[p.~137]{robinson-96}.  The nilpotency
    class of $Q$ is $c-1$ and therefore we have by induction the 
    inequality
    \begin{equation*}
	\uucd Q \leq \h Q + 5(c-2) +2.
    \end{equation*}
    
    Let $H$ be a subgroup of $G$ with $N\leq H$ and $H/N$ a virtually 
    cyclic subgroup of $Q$. Since $Q$ is torsion-free it follows that 
    $H/N$ is infinite cyclic. Therefore we have a short exact sequence
    \begin{equation*}
        1\to N\to H\to H/N\to 1
    \end{equation*}
    and this sequence is split since $H/N$ is free.  Every
    element of $H$ commutes with every element of $N = \zeta(G)$ and
    thus $N\leq \zeta(H)$.  Therefore the above extension is central
    and hence $H \isom N\times H/N$.  Then
    \begin{align}
        \uucd H & \leq \uucd N + \underbrace{\uucd H/N}_{=0} + 3
	\label{eq:geometric-result}
	\\
	& \leq  \h N + 5 \nonumber
    \end{align}
    where the first inequality is due to
    Corollary~\ref{cor:cd-for-direct-products} and the second
    inequality is due to Lemma~\ref{lem:nilpotent2}.  We can apply
    Proposition~\ref{prop:MP} and get altogether
    \begin{align*}
        \uucd G & \leq \h N + 5 + \uucd Q
	\\
	& \leq \h N + \h Q + 5(c-1) + 2
	\\
	& \leq \h G + 5(c-1) + 2\qedhere
    \end{align*}
\end{proof}

\begin{theorem}
    \label{thrm:cd-nilpotent-G-alt}
    Let $G$ be a countable nilpotent group. Then
    \begin{equation*}
        \uucd G \leq \h G + 5(c-1) + 3
    \end{equation*}
    where $c$ is the nilpotency class of $G$.
\end{theorem}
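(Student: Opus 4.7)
The plan is to combine the two main ingredients that have just been established: Lemma~\ref{lem:nilpotent1}, which reduces the problem to the torsion-free case at the cost of a single~$+1$ in the dimension, and Proposition~\ref{prop:nilpotent3}, which already provides the desired bound for torsion-free nilpotent groups. The remaining task is then to control the Hirsch length of the quotient $G/\tau(G)$ in terms of $\h G$.

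First I would apply Lemma~\ref{lem:nilpotent1} to obtain $\uucd G \leq \uucd(G/\tau(G)) + 1$. The quotient $Q \defeq G/\tau(G)$ is torsion-free, and (as recorded on page~\pageref{def:tau-G-for-N-groups}) nilpotent; moreover its nilpotency class is at most $c$, the class of $G$. Hence Proposition~\ref{prop:nilpotent3} applies to $Q$ and gives
\begin{equation*}
    \uucd Q \leq \h Q + 5(c-1) + 2.
\end{equation*}

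Next I would show that $\h Q = \h G$. Since $\tau(G)$ is a normal subgroup of $G$, Proposition~\ref{prop:hillman-91}(3) yields $\h G = \h \tau(G) + \h Q$, so it is enough to verify $\h\tau(G) = 0$. The group $\tau(G)$ is the direct union of its finitely generated subgroups, each of which is a finitely generated nilpotent torsion group and therefore finite. Thus $\tau(G)$ is locally finite and $\h\tau(G)=0$ either directly from the definition or from Proposition~\ref{prop:hillman-91}(2) applied to the trivial bound on finite subgroups. Putting the estimates together gives
\begin{equation*}
    \uucd G \leq \uucd Q + 1 \leq \h Q + 5(c-1) + 3 = \h G + 5(c-1) + 3,
\end{equation*}
which is the desired inequality.

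There is no real obstacle here beyond checking that the reductions are clean: one must confirm that the nilpotency class does not increase on passing to $Q$ (it can only drop, since quotients of nilpotent groups of class $c$ are nilpotent of class at most $c$) and that the torsion subgroup of a countable nilpotent group is indeed locally finite so that its Hirsch length vanishes. The geometric and homological content of the proof has already been absorbed into Lemma~\ref{lem:nilpotent1} and Proposition~\ref{prop:nilpotent3}, so the final step is essentially a bookkeeping argument.
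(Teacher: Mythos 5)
Your proposal is correct and follows essentially the same route as the paper's own proof: pass to $Q = G/\tau(G)$, note that $\tau(G)$ is locally finite so $\h Q = \h G$ and the class of $Q$ is at most $c$, apply Proposition~\ref{prop:nilpotent3} to $Q$, and finish with Lemma~\ref{lem:nilpotent1}. The only difference is that you spell out the bookkeeping (local finiteness of $\tau(G)$, non-increase of the nilpotency class) slightly more explicitly than the paper does.
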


\begin{proof}
    Let $Q\defeq  G/\tau(G)$ and consider the short exact sequence
    \begin{equation*}
        1\to \tau(G) \to G \to Q \to 1.
    \end{equation*}
    Since $\tau(G)$ is locally finite we have $\h (\tau(G))=0$ and it
    follows that $\h Q = \h G$.  Furthermore the nilpotency class of
    $Q$ is at most~$c$.  We can apply
    Proposition~\ref{prop:nilpotent3} to $Q$ and we get
    \begin{align*}
	\uucd Q
	& \leq
	\h Q + 5(c-1) + 2
	\\
	& = 
	\h G + 5(c-1) + 2.
    \end{align*}
    The claim follows now from Lemma~\ref{lem:nilpotent1}.
\end{proof}

%
%

\section{Elementary Amenable Groups}
\label{sec:EA-groups}

\nocite{lennox-04}

In the previous section we have already made a vague reference to
elementary amenable groups.  This class of groups was first introduced
by von~Neumann in~\cite{neumann-29}.  It is the smallest class of
groups which contains all finite groups and the infinite cyclic group
and which is closed under forming extensions, increasing unions, see
for example~\cite{hillman-92}.  We denote this class of groups in the
following by $\mathscr{C}$.  This class is closed under forming
subgroups and quotients.  It follows that the class of elementary
amenable groups includes all locally finite groups, all locally
nilpotent groups and all virtually soluble groups.

In order to extend the notation of Hirsch length to elementary
amenable groups we need the following hierarchical description of the
class $\mathscr{C}$ of elementary amenable
groups~\cite[pp.~678f.]{kropholler-88}.  We use the following
notation: Let $\mathscr{X}$ and $\mathscr{Y}$ be two classes of
groups.  Then a group $G$ belongs to the class~$\mathscr{X\!Y}$ if it
is isomorphic to a group extension of a group in $\mathscr{X}$ by a
group in $\mathscr{Y}$.  A group $G$ is in the class $\cll\mathscr{X}$
if every finitely generated subgroup of $G$ belongs to the
class~$\mathscr{X}$.  Let $\mathscr{X}_{1}$ be the class of finitely
generated abelian-by-finite groups.  For every ordinal $\alpha\geq 2$
we define inductively the following classes of groups:
\begin{align*}
    \mathscr{X}_{\alpha }& \defeq 
    (\cll\mathscr{X}_{\alpha-1})\mathscr{X}_{1} && \text{if $\alpha$
    is asuccessor ordinal;} \\
    \mathscr{X}_{\alpha }& \defeq  \bigcup_{\beta<\alpha} 
    \mathscr{X_{\beta}} && \text{if $\alpha$ is a limit ordinal.}
\end{align*}
It has been shown in~\cite[p.~679]{kropholler-88} that
\begin{equation*}
   \mathscr{C} = \bigcup_{\alpha\geq 1} \mathscr{X}_{\alpha},
\end{equation*}
where the union is taken over all ordinals greater or equal to $1$.
The Hirsch lengths of an elementary amenable group can now be defined
inductively as follows.  If $G$ is a group in $\mathscr{X}_{1}$ then
the Hirsch length is the rank of an abelian subgroup of finite index
in $G$.  If $G$ is an elementary group which is not
in~$\mathscr{X}_{1}$, then there exists a least ordinal $\alpha$ such
that $G$ is in $\mathscr{X}_{\alpha}$.  This ordinal~$\alpha$ is
necessarily a successor ordinal greater than~$1$.  Then $G$ has a
normal subgroup~$N$ such that $N$ is in~$\cll\mathscr{X}_{\alpha-1}$
and $G/N$ is in $\mathscr{X}_{1}$.  In particular~$\h N_{\lambda}$ is
defined for every finitely generated subgroup of $N$ and $\h(G/N)$ is
defined as well.  We set
\begin{equation*}
    \h G \defeq  \sup \{ \h N_{\lambda}\} + \h(G/N),
\end{equation*}
where the supremum is taken over the Hirsch length of all finitely
generated subgroups $N_{\lambda}$ of $N$.  It follows by transfinite
induction that this way $\h G$ is well defined for all elementary
amenable groups and that the properties of
Proposition~\ref{prop:hillman-91} are
satisfied~\cite[pp.~163f.]{hillman-91}.

In the previous section we have shown that locally nilpotent groups
$G$ with finite Hirsch length have finite Bredon cohomological
dimension with respect to the family $\Fvc(G)$ and thus admit a finite
dimensional model for the classifying space $\uu EG$.
The natural question is to ask, if the same is true for elementary
amenable groups.

Flores and Nucinkis have shown in~\cite{flores-05} 
that for elementary amenable groups the Bredon homological dimension 
$\uhd G$ is equal to the Hirsch length $\h G$. For countable groups 
this implies due to $\ucd G\leq \uhd G + 1$ and 
Proposition~\ref{prop:luck-meintrup} that
\begin{equation*}
    \ugd G \leq \min(\h G +1, 3).
\end{equation*}
In particular countable elementary amenable groups of finite Hirsch
length admit a finite dimensional model for $\underline EG$.  In the
light of the conjecture stated in~\cite{luck-12} that $\uugd G\leq
\ugd G + 1$ one may hope to proof that a similar result holds for
models for $\uu EG$.  However, the proof of the result 
in~\cite{flores-05} does not generalise to the family of virtually 
cyclic subgroups as a finite index argument in the proof of Lemma~2
fails beyond repair. In 
the following we describe a possible strategy to for a proof that 
countable elementary amenable groups of finite Hirsch length admit a 
finite dimensional model for $\uu EG$. However, certain assumptions 
have to be made. We believe these assumptions to be reasonable.

First of all, note that the requirement that an elementary amenable
group needs 
to have finite Hirsch length in order to admit a finite dimensional 
model for $\uu EG$ is necessary:

\begin{lemma}
    \label{lem:EA-aux2}
    Let $G$ be an elementary amenable group with $\h G=\infty$. 
    Then $\uugd G=\infty$.
\end{lemma}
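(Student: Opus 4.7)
The plan is to chain together results already established in the paper into a short argument; no new construction is needed.

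The starting point is the cited Flores--Nucinkis theorem, which asserts that for every elementary amenable group $G$ one has $\uhd G = \h G$. Next I would invoke the inequality $\uhd G \leq \uuhd G + 1$ derived in the example following Theorem~\ref{thrm:hdF-G-vs-hdG-G}. That example applies Theorem~\ref{thrm:hdF-G-vs-hdG-G} to the pair of families $(\Fvc(G), \Ffin(G))$, using the fact that every virtually cyclic subgroup $K$ satisfies $\uhd K \leq 1$ (together with $\Ffin(G)\cap K \subset \Ffin(G)$ and $\Ffin(G)$ being full). This inequality is valid for every group $G$, not just elementary amenable ones, so it applies in our setting.

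Combining the two facts gives $\h G \leq \uuhd G + 1$ for every elementary amenable group $G$. To reach $\uugd G$ from $\uuhd G$ I would finally apply Lemma~\ref{lem:hd-vs-cd}, which yields $\uuhd G \leq \uucd G$, followed by the lemma preceding Proposition~\ref{prop:luck-meintrup-alt}, which gives $\uucd G \leq \uugd G$ for any semi-full family of subgroups. Concatenating,
\begin{equation*}
    \h G \;=\; \uhd G \;\leq\; \uuhd G + 1 \;\leq\; \uucd G + 1 \;\leq\; \uugd G + 1.
\end{equation*}
Taking contrapositives, $\h G = \infty$ forces $\uugd G = \infty$, which is exactly the statement of the lemma.

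There is no genuine obstacle here, since each ingredient has already been assembled in the preceding chapters; the lemma is essentially a one-line corollary of this chain. Its role in the paper is to show that finite Hirsch length is a \emph{necessary} condition for an elementary amenable group to admit a finite-dimensional model for $\uu EG$, thereby justifying the restriction to groups of finite Hirsch length in the subsequent discussion of Section~\ref{sec:EA-groups}.
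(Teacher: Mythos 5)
Your chain of inequalities is valid and each link is indeed available in the text: $\uhd G = \h G$ by the Flores--Nucinkis result quoted in Section~\ref{sec:EA-groups}, $\uhd G \leq \uuhd G + 1$ from the example following Theorem~\ref{thrm:hdF-G-vs-hdG-G} (which holds for arbitrary groups), $\uuhd G \leq \uucd G$ by Lemma~\ref{lem:hd-vs-cd}, and $\uucd G \leq \uugd G$ since $\Fvc(G)$ is a (semi-)full family; so $\h G \leq \uugd G + 1$ and the contrapositive gives the lemma. This is, however, not the route the paper takes as its primary proof: there the argument is $\uugd G \geq \uucd G \geq \ucd G - 1 \geq \cd_{\Q} G - 1$, where the last step is the standard evaluation-at-$G/1$-and-tensor-with-$\Q$ argument of Brady--Leary--Nucinkis, combined with Hillman's inequality $\cd_{\Q} G \geq \h G$ for elementary amenable groups. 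That version needs only Hillman's Hirsch-length bound on rational cohomological dimension, whereas yours leans on the substantially deeper Flores--Nucinkis equality $\uhd G = \h G$; on the other hand, yours stays entirely within the algebraic Bredon machinery developed in Chapter~\ref{ch:bredon-dimensions}. The paper itself records essentially your argument as an alternative immediately after its proof, phrased as $\uugd G \geq \ugd G - 1 \geq \uhd G - 1$ together with \cite{flores-05}; your variant replaces the geometric inequality $\uugd G \geq \ugd G - 1$ by the homological one $\uuhd G \geq \uhd G - 1$, which is equally legitimate.
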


\begin{proof}
    We have $\uugd G \geq \uucd G \geq \ucd G - 1 \geq \cd_{\Q} G - 1$
    where the last inequality is due to the following standard
    argument~\cite{brady-01}: evaluating any projective resolution
    $P_{*}\to \underline{\Z}$ of the trivial $\OC{G}{\Ffin(G)}$-module
    at $G/1$ and tensoring it with~$\Q$ gives a projective resolution
    of the trivial $\Q G$-module~$\Q$.
    
    However, $\cd_{\Q} G \geq \h G$ is true for any elementary
    amenable group by~\cite[pp.~167f]{hillman-91}.  Therefore, if $\h
    G=\infty$ it follows that $\cd_{\Q} G=\infty$ and thus $\uugd
    G=\infty$, too.
\end{proof}

Alternatively the above result follows from~\cite{flores-05} together 
with the inequality $\uugd G \geq \ugd G-1 \geq \uhd G -1$.

In what will follow in this section, two special subgroups of a group
$G$ will be of importance.  Given an arbitrary group $G$ there exists
a unique maximal normal locally finite subgroup which we denote by
$\Lambda(G)$~\cite[p.~436]{robinson-96}.  There exists also a unique
maximal normal torsion subgroup which we denote by $\tau(G)$, see for
example~\cite[p.~90]{lennox-04}.  The subgroup $\tau(G)$ is called the
\emph{torsion radical} of $G$.  Note that for nilpotent groups
$\tau(G)$ agrees with the definition given on
page~\pageref{def:tau-G-for-N-groups}.  Clearly $\Lambda(G)\leq
\tau(G)$ but equality does not hold in
general~\cite[pp.~422ff.]{robinson-96}.  However, equality holds in
case that~$G$ is soluble~\cite[p.~90]{lennox-04}.

The structure of elementary amenable groups of finite Hirsch length
has been studied by Hillman and Linnell in~\cite{hillman-92} and
Wehrfritz in~\cite{wehrfritz-95}.  The relevant result is the 
following.

\begin{proposition}
    \cite{hillman-92, wehrfritz-95}
    \label{prop:EA-structure}
    Let $G$ be an elementary amenable group with $\h G<\infty$. Then
    $G/\Lambda(G)$ is a finite extension of a torsion-free soluble 
    group.\qed
\end{proposition}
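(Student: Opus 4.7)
The plan is to prove the statement by transfinite induction on the least ordinal $\alpha$ for which $G\in\mathscr{X}_{\alpha}$, using the hierarchical description of $\mathscr{C}$ together with the additivity and subgroup properties of the Hirsch length from Proposition~\ref{prop:hillman-91}. First I would pass to the quotient $\bar G\defeq G/\Lambda(G)$, which is again elementary amenable (quotients preserve $\mathscr{C}$), has $\h\bar G=\h G<\infty$ (since $\Lambda(G)$ is locally finite and therefore has Hirsch length zero), and satisfies $\Lambda(\bar G)=1$ by the maximality of $\Lambda$. Thus it suffices to prove the following reformulated claim: any $H\in\mathscr{C}$ with $\h H<\infty$ and $\Lambda(H)=1$ is virtually torsion-free soluble.

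The base case $\alpha=1$ is immediate: a finitely generated abelian-by-finite group $H$ contains an abelian subgroup $A\isom\Z^{n}\oplus F$ of finite index with $F$ finite; after intersecting with its conjugates we may take $A$ normal, and the torsion subgroup $F$ becomes a finite normal subgroup of $H$, hence locally finite and therefore trivial by $\Lambda(H)=1$. This leaves $A\isom\Z^{n}$ of finite index in $H$, giving the desired virtually torsion-free soluble structure. For the inductive step at a successor ordinal $\alpha>1$, we are given a normal subgroup $N\trianglelefteq H$ with $N\in\cll\mathscr{X}_{\alpha-1}$ and $H/N\in\mathscr{X}_{1}$. I would apply the inductive hypothesis to the finitely generated subgroups of $N$ and assemble them using $\h$-additivity, then handle the extension by the virtually abelian quotient $H/N$ separately.

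The hard part is controlling torsion as we ascend the hierarchy. The key technical claim I would need is that in any elementary amenable group of finite Hirsch length, the set $\tau$ of elements of finite order is itself a (characteristic) locally finite subgroup, so that $\tau\leq\Lambda$. To establish this one argues inductively: within a finitely generated $\mathscr{X}_{1}$-group the torsion is finite; in a $\cll\mathscr{X}_{\alpha-1}$-group any finite subset is contained in a finitely generated subgroup to which the inductive hypothesis applies, showing the torsion is locally finite; and in a $\mathscr{X}_{1}$-extension of such a group the finiteness of the Hirsch length forces only finitely many new torsion-carrying cosets to appear, so the torsion of the whole group is again locally finite. Given this, $\Lambda(H)=1$ forces $\tau=1$, so $H$ is torsion-free, and then the inductively obtained virtual solubility completes the proof.

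The main obstacle is precisely this step—showing that torsion in an elementary amenable group of finite Hirsch length is globally locally finite and normal. This is the content of the Hillman--Linnell and Wehrfritz analyses cited and uses, beyond the formal hierarchical induction, Mal'cev-type results on torsion in nilpotent-of-finite-rank sections together with the fact that an ascending union of locally finite normal subgroups is locally finite and normal. Once this is in place the rest is a straightforward assembly via transfinite induction and the properties of $\h$ from Proposition~\ref{prop:hillman-91}.
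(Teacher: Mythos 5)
Your key technical claim is false, and it sits exactly where the real difficulty of the theorem lies. You assert that in an elementary amenable group of finite Hirsch length the set $\tau$ of all torsion elements is a (characteristic) locally finite subgroup, hence $\tau\leq \Lambda$, and you then conclude that $\Lambda(H)=1$ forces $H$ to be torsion-free. The infinite dihedral group $H = C_{2}*C_{2}$ already refutes this: it is finitely generated abelian-by-finite (so lies in $\mathscr{X}_{1}$), has $\h H=1$ and $\Lambda(H)=1$, yet its torsion elements (the reflections) do not form a subgroup, and $H$ is not torsion-free --- it is only a finite extension of a torsion-free (infinite cyclic) group. This is precisely why Proposition~\ref{prop:EA-structure} asserts that $G/\Lambda(G)$ is a \emph{finite extension} of a torsion-free soluble group rather than torsion-free soluble itself. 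So the reduction on which your argument rests ($\Lambda=1$ implies torsion-free) collapses, and the remark that ``the finiteness of the Hirsch length forces only finitely many new torsion-carrying cosets to appear'' in the $\mathscr{X}_{1}$-extension step is not an argument that could repair it.

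Beyond this, the inductive step is not actually carried out. At the $\cll\mathscr{X}_{\alpha-1}$ stage you must pass the conclusion from the finitely generated subgroups of $N$ to $N$ itself, and a directed union of virtually (torsion-free soluble) groups need not be virtually (torsion-free soluble) unless one has uniform control, in terms of $\h G$, on the index of the torsion-free soluble subgroup and on its derived length; producing such bounds is the substance of the cited results of Hillman--Linnell and Wehrfritz, whose proofs go through $\Q$-linearity of torsion-free soluble groups of finite Hirsch length (cf.\ Proposition~\ref{prop:W-74}), Mal'cev--Zassenhaus theory of soluble linear groups and Fitting subgroup arguments, not merely Proposition~\ref{prop:hillman-91} plus transfinite induction along the hierarchy. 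Note finally that the thesis itself gives no proof of Proposition~\ref{prop:EA-structure}: it is quoted from \cite{hillman-92, wehrfritz-95}. Deferring the hard step to those papers is therefore legitimate, but then your write-up adds nothing beyond the citation, while the portions you do supply contain the false claim identified above.
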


Note that since torsion-free soluble groups of finite Hirsch length
are countable~\cite[p.~100]{bieri-81} it follows that an elementary
amenable group~$G$ of finite Hirsch length is countable modulo
$\Lambda(G)$~\cite{hillman-92}.  Therefore an elementary amenable
group which admits a finite dimensional model for $\uu EG$ are not far
away from being countable torsion-free soluble with finite Hirsch
length.  

Hence we will first restrict ourself to the following question: does a
countable soluble group $G$ with finite Hirsch length admit a finite
dimensional model for~$\uu EG$?  Later on we then consider countable
elementary amenable groups with finite Hirsch length.  However, we
will not consider elementary amenable groups of arbitrary large
cardinality since they differ by Proposition~\ref{prop:EA-structure}
from the countable case only by the cardinality of their locally
finite subgroup $\Lambda(G)$.

We collect two results about soluble groups and linear groups which 
will be needed in what follows.

\begin{proposition}
    \label{prop:W-74}
    \cite[Corollary 1.2]{wehrfritz-74} Let $G$ be a finite
    extension of a torsion-free, soluble group.  Then $\h G<\infty$ 
    implies that $G$ is $\Q$-linear.\qed
\end{proposition}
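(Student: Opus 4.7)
The plan is to reduce to the torsion-free soluble case and then induct on derived length, at each stage exhibiting $S$ inside a semidirect product of the form $V \rtimes GL(V)$ for a suitable finite-dimensional $\Q$-vector space $V$.

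First I would reduce to the torsion-free soluble case. By hypothesis $G$ contains a normal torsion-free soluble subgroup $S$ of finite index with $\h S \leq \h G < \infty$. The class of $\Q$-linear groups is closed under finite extensions: if $S \hookrightarrow GL_n(\Q)$ and $[G:S] = k$, then the representation of $G$ induced from this embedding of $S$ realises $G$ as a subgroup of $GL_{nk}(\Q)$ (one checks faithfulness using that $S$ is normal and the original representation is faithful). Hence it suffices to show that every torsion-free soluble group $S$ with $\h S < \infty$ is $\Q$-linear.

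For this I would argue by induction on the derived length $d$ of $S$. The base case $d = 1$ is immediate: a torsion-free abelian group of finite rank $n$ embeds into its divisible hull $\Q^n \hookrightarrow GL_{n+1}(\Q)$ via the affine representation. For the inductive step, consider
\begin{equation*}
    1 \longto A \longto S \longto Q \longto 1,
\end{equation*}
where $A \defeq S^{(d-1)}$ is the last non-trivial term of the derived series and $Q \defeq S/A$ has derived length $d-1$. Since $S$ is torsion-free, Mal'cev's theorem ensures that $Q$ is torsion-free, and $\h Q, \h A \leq \h S$. By induction $Q$ embeds faithfully in some $GL_m(\Q)$, and $A$ embeds in some finite-dimensional $\Q$-vector space $V \defeq A \otimes_\Z \Q$ of dimension $\h A$. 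The conjugation action of $S$ on the abelian group $A$ factors through $Q$, and extends $\Q$-linearly to a representation $\rho\: Q \to GL(V)$.

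The main obstacle is that the extension need not split, so one cannot directly embed $S$ into $V \rtimes (Q \times GL_m(\Q))$. To overcome this, I would use a Mal'cev-style rationalisation argument: form the pushout
\begin{equation*}
    1 \longto V \longto \hat S \longto Q \longto 1
\end{equation*}
along the inclusion $A \hookrightarrow V$. Because $V$ is a divisible $\Q$-vector space and $\h Q < \infty$, the cohomology class of this extension in $H^2(Q; V)$ becomes controllable, and one shows that $S \hookrightarrow \hat S$. Finally, $\hat S$ admits an affine representation
\begin{equation*}
    \hat S \longto V \rtimes GL(V) \hookrightarrow GL_{\dim V + 1}(\Q)
\end{equation*}
whose linear part is $\rho$ composed with projection to $Q$, combined with the faithful representation of $Q$ on $V \oplus \Q^m$. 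Verifying that the composite faithfully embeds $S$ — in particular that no non-trivial element of $A$ is killed after rationalisation — is the delicate technical point, and is where the torsion-freeness of $S$ and the finiteness of Hirsch length are both essential.
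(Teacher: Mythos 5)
The paper does not actually prove this proposition---it is quoted from Wehrfritz's article and stated without proof---so there is no in-text argument to compare against; I can only assess your proof on its own terms, and it contains a genuine gap in the inductive step. You invoke Mal'cev's theorem to conclude that $Q \defeq S/S^{(d-1)}$ is torsion-free, but Mal'cev's theorem (Robinson, p.~137, the very reference the thesis uses in the proof of Proposition~\ref{prop:nilpotent3}) is about torsion-free \emph{nilpotent} groups and their \emph{central} quotients; it does not apply to the quotient of a soluble group by a term of its derived series. The claim is in fact false: the Klein bottle group $S = \langle a,b \mid bab^{-1}=a^{-1}\rangle$ is torsion-free, metabelian, of Hirsch length $2$, and its last non-trivial derived term is $S' = \langle a^{2}\rangle \isom \Z$, yet $S/S' \isom C_{2}\times\Z$ has an element of order~$2$. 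So the group $Q$ to which you apply the inductive hypothesis need not be torsion-free, and the induction does not close.

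There is a second, independent gap, which you flag as the ``delicate technical point'' but do not fill. Even with a faithful $\Q$-linear representation of $Q$ in hand, you need the pushout $\hat S$---an extension of $Q$ by the finite-dimensional $\Q$-vector space $V$---to be $\Q$-linear, and the remark that the class in $H^{2}(Q;V)$ ``becomes controllable'' is no argument: $H^{2}(\Z^{2};\Q)\neq 0$, for instance, so such extensions need not split, and the class of $\Q$-linear groups is not closed under arbitrary extensions with abelian kernel. Wehrfritz's actual proof proceeds through the structure theory of soluble groups of finite rank---nilpotent radicals, linear algebraic groups over $\Q$---rather than a naive induction on derived length, precisely to avoid having to split such extensions.
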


\begin{proposition}
    \label{prop:gruneberg}
    {\rm (Gruneberg, see for example~\cite[p.~102]{wehrfritz-73})}
    Let $G$ be a linear group. Then its Fitting subgroup $\Fit(G)$ 
    is nilpotent.\qed
\end{proposition}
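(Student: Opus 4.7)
The plan is to combine a uniform bound on the nilpotency class of nilpotent subgroups of $GL_n(k)$ with Zariski closure techniques. Fix a faithful linear representation $G \hookrightarrow GL_n(k)$, and, after passing to the algebraic closure, let $\bar G$ denote the Zariski closure of $G$ in $GL_n(\bar k)$.

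First I would establish, or invoke, the classical Mal'cev-type result that every nilpotent subgroup $N$ of $GL_n(k)$ has nilpotency class at most some constant $c(n)$ depending only on $n$. The route I have in mind is Lie--Kolchin: after passing to a subgroup of finite index and to the algebraic closure, the Zariski closure of a nilpotent subgroup can be placed in upper triangular form, and then the class is bounded in terms of the length of the unipotent part. A key observation is that nilpotency of a fixed class $c$ is a Zariski closed condition (it is cut out by the vanishing of iterated commutator polynomials), so if $N \leq GL_n(k)$ is nilpotent of class $c$, then its Zariski closure $\bar N$ is nilpotent of class $c$ as well.

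Second, I would apply this to normal nilpotent subgroups. If $N \lhd G$ is nilpotent, then $\bar N$ is normalised by $G$, and by continuity of conjugation it is normalised by $\bar G$; hence $\bar N \lhd \bar G$ and $\bar N$ is nilpotent of some bounded class. In the algebraic group $\bar G$ there is a unique maximal closed connected nilpotent normal subgroup (essentially the unipotent radical together with the central torus of the radical), which is honestly nilpotent of some class $C$ bounded in terms of $n$. Every $\bar N$ arising this way is contained in this subgroup (up to a finite-index adjustment), so the nilpotency class of every normal nilpotent subgroup of $G$ is uniformly bounded by some constant $C$ depending only on $n$.

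Finally I would upgrade this bound to nilpotency of the full Fitting subgroup. Any finitely many normal nilpotent subgroups $N_1, \ldots, N_r$ of $G$ all sit inside the single normal nilpotent subgroup $\overline{N_1 \cdots N_r} \cap G$ of class at most $C$, so their product has class at most $C$; hence every finitely generated subgroup of $\Fit(G)$ is nilpotent of class at most $C$. A locally nilpotent group with a uniform bound on the nilpotency class of its finitely generated subgroups is itself nilpotent of that class, so $\Fit(G)$ is nilpotent.

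The main obstacle I foresee is the characteristic $p$ case of the Zariski closure argument: in positive characteristic the algebraic group theory is more delicate, and one has to be careful when passing between an abstract normal nilpotent subgroup and its Zariski closure. A clean way around this is to avoid the full algebraic group machinery and to argue directly that in $GL_n(k)$ the product of two \emph{normal} nilpotent subgroups is nilpotent with a class bound independent of the summands (not merely the $c_1+c_2$ bound from the general Fitting lemma), using the uniform bound $C$ above; the rest of the argument is then purely abstract group theory.
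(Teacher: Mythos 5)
The paper offers no argument of its own here (the proposition is quoted with a reference to Wehrfritz), so the only issue is whether your self-contained proof is sound, and it is not. The very first step fails: there is no constant $c(n)$ bounding the nilpotency class of nilpotent subgroups of $\mathrm{GL}_n(k)$. The dihedral group $D_{2^m}$ of order $2^m$ embeds in $\mathrm{O}(2)\leq \mathrm{GL}_2(\R)$ (and the generalized quaternion group $Q_{2^m}$ embeds in $\mathrm{GL}_2(\C)$), and it has nilpotency class $m-1$, which is unbounded as $m\to\infty$. Taking $G=N=D_{2^m}$ also refutes the conclusion of your second step: these are normal nilpotent subgroups of linear groups of fixed degree $2$ with unbounded class, so no bound $C$ depending only on $n$ exists. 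The Zariski-closure reasoning only controls the identity component: $\bar N^{\circ}$ is indeed a connected nilpotent algebraic group, hence a central torus times a unipotent group of class at most $n-1$; but the finite component group $\bar N/\bar N^{\circ}$ is not controlled at all, and nilpotency class is not bounded under finite extensions -- $D_{2^m}$ is abelian-by-(order $2$), yet of class $m-1$ -- which is exactly what the phrase ``up to a finite-index adjustment'' sweeps under the rug. Note also that the obstruction is already present in characteristic $0$ and for finite subgroups, so the characteristic-$p$ subtleties you single out as the main worry are not the real difficulty; likewise your fallback claim that a product of two normal nilpotent subgroups of a linear group has class bounded independently of the factors is refuted by the same example.

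Consequently the final local-to-global step collapses: $\Fit(G)$ is a directed union of normal nilpotent subgroups, and to conclude nilpotency one needs a single bound on their classes, but that bound cannot be a function of $n$ alone -- it must depend on $G$ itself (again $G=D_{2^m}$, where $\Fit(G)=G$ has class $m-1$ in degree $2$, shows this). So the uniform-bound strategy cannot be repaired by a sharper closure argument; any correct proof, such as the one in the cited source, has to extract a $G$-dependent bound from a finer analysis of normal nilpotent and locally nilpotent linear groups (unipotent radical of class $<n$ via Kolchin, triangularizable subgroups of finite index, and control of the semisimple parts within the given group), rather than from a bound valid for all nilpotent subgroups of $\mathrm{GL}_n$.
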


Moreover, we need the following result by Lück which gives an
upper  bound on $\uugd G$ if the group $G$ contains a
finite index subgroup $H$ which admits a finite dimensional model for
$\uu EH$.

\begin{lemma}\cite[p.~191]{luck-00a}.
    \label{lem:luck-1}
    Let $G$ be a group and let $H$ be a finite index subgroup of $G$.
    Then
    \begin{equation*}
        \uugd G \leq |G:H| \cdot \uugd H.
    \end{equation*}
    In particular $\uugd G<\infty$ if and only if $\uugd 
    H<\infty$.\qed
\end{lemma}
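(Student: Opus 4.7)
The plan is to construct a model for $\uu EG$ out of a given model for $\uu EH$ via coinduction, then estimate the dimension.

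Let $n \defeq \uugd H$ and choose an $n$\-/dimensional model $X$ for $\uu EH$. Write $m \defeq [G:H]$ and fix a set $\{g_{1},\ldots,g_{m}\}$ of right coset representatives for $H\backslash G$. I propose to work with the coinduced $G$-space
\begin{equation*}
    Y \defeq \map_{H}(G,X) = \{ f\: G\to X : f(hx) = hf(x) \text{ for all } h\in H, x\in G\},
\end{equation*}
equipped with the $G$-action $(g\cdot f)(x) \defeq f(xg)$. The evaluation map $f\mapsto (f(g_{1}),\ldots,f(g_{m}))$ is a homeomorphism onto $X^{m}$, so as a topological space $Y$ has dimension $mn = [G:H]\cdot \uugd H$.

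First I would endow $Y$ with the structure of a $G$-CW-complex of dimension $mn$. The product CW-structure on $X^{m}$ (coming from the $H$-CW-structure on $X$) is cellular for the permutation-with-twists action of $G$ that corresponds under the evaluation homeomorphism to the natural $G$-action on $Y$: for $g\in G$ each $g_{i}g$ can be written uniquely as $h_{i}(g)g_{\sigma(i)}$ with $h_{i}(g)\in H$ and $\sigma$ a permutation of $\{1,\ldots,m\}$, and $g$ acts on $X^{m}$ by $(x_{1},\ldots,x_{m})\mapsto (h_{1}(g)x_{\sigma(1)},\ldots,h_{m}(g)x_{\sigma(m)})$. After passing to a barycentric subdivision if necessary (to make sure that cells fixed setwise are fixed pointwise), this becomes a genuine $G$-CW structure of dimension at most $mn$.

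Next I would verify the fixed-point criterion of Corollary~\ref{cor:classifying-space-alt} for the full family $\Fvc(G)$. For a subgroup $K\leq G$, an element $f\in Y$ lies in $Y^{K}$ iff for every $i$ and every $k\in K$ the relation $f(g_{i}k) = f(g_{i})$ holds. Setting $K_{i}\defeq K\cap g_{i}^{-1}Hg_{i}$ (a finite index subgroup of $K$, hence virtually cyclic when $K$ is), the $H$-equivariance of $f$ translates this into the requirement $f(g_{i})\in X^{g_{i}K_{i}g_{i}^{-1}}$ for each $i$, with the remaining values of $f$ determined freely by the coset representatives. Thus evaluation identifies
\begin{equation*}
    Y^{K} \;\isom\; \prod_{i=1}^{m} X^{\,g_{i}K_{i}g_{i}^{-1}}.
\end{equation*}
If $K\in \Fvc(G)$, then each $g_{i}K_{i}g_{i}^{-1}$ lies in $\Fvc(H)$, so every factor is contractible and therefore $Y^{K}$ is contractible. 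If $K\notin \Fvc(G)$, then (since virtual cyclicity is preserved under finite extensions) at least one $g_{i}K_{i}g_{i}^{-1}$ fails to lie in $\Fvc(H)$, and by Corollary~\ref{cor:classifying-space-alt} applied to the full family $\Fvc(H)$ the corresponding factor $X^{g_{i}K_{i}g_{i}^{-1}}$ is empty, hence $Y^{K}=\emptyset$. In particular every point stabiliser of $Y$ lies in $\Fvc(G)$, so Corollary~\ref{cor:classifying-space-alt} shows that $Y$ is a model for $\uu EG$, giving $\uugd G\leq mn = [G:H]\cdot \uugd H$. The final ``in particular'' clause follows by combining this inequality with Proposition~\ref{prop:dim-subgroups-2} applied to the subgroup $H\leq G$.

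The main technical obstacle is the careful verification that the product CW-structure on $X^{m}$ actually yields a $G$-CW-structure on $Y$ of dimension $mn$ rather than merely a $G$-space with a CW-structure: one must ensure that cells fixed setwise by a group element are fixed pointwise, which is the only point where a subdivision argument (and hence a slightly more delicate dimension check) is really needed. Everything else is a coset bookkeeping exercise built on the coinduction adjunction.
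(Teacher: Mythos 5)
Your strategy -- coinduce an $n$\-/dimensional model $X$ for $\uu EH$ to the $G$-space $Y=\mathrm{map}_{H}(G,X)\isom X^{m}$, compute fixed points, and read off the dimension -- is exactly the argument behind the result of L\"uck that the thesis simply cites (the thesis gives no proof of its own). One step is misstated but harmless: the displayed identification $Y^{K}\isom\prod_{i=1}^{m}X^{g_{i}K_{i}g_{i}^{-1}}$ is false as written, because for $k\in K$ moving the coset $Hg_{i}$ to a different coset $Hg_{j}$ the condition $f(g_{i}k)=f(g_{i})$ determines $f(g_{i})$ from $f(g_{j})$; there is only one free coordinate per $K$-orbit on $H\backslash G$, so the correct statement is $Y^{K}\isom\prod X^{H\cap gKg^{-1}}$ with the product over the double cosets $HgK\in H\backslash G/K$ (for $K=G$ one gets $Y^{G}\isom X^{H}$, not $(X^{H})^{m}$). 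Since all you actually use is that every factor has the form $X^{H\cap gKg^{-1}}$ with $H\cap gKg^{-1}$ virtually cyclic precisely when $K$ is (finite index in $K$, plus fullness of $\Fvc(H)$ and Corollary~\ref{cor:classifying-space-alt}), the contractibility, respectively emptiness, of $Y^{K}$ survives this correction, as does the final appeal to Proposition~\ref{prop:dim-subgroups-2}.

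The genuine gap is the $G$-CW structure, which you yourself flag and then dispatch with ``passing to a barycentric subdivision if necessary''. Barycentric subdivision is a simplicial operation and is not available for a CW complex, and the problem it is meant to cure is real: an element $g$ whose permutation $\sigma$ is nontrivial can stabilise a product cell of $X^{m}$ setwise while genuinely permuting its factors, so the product cell structure is not a $G$-CW structure, and there is no CW-level subdivision that repairs it. Producing a $G$-CW structure on $\mathrm{map}_{H}(G,X)$ of dimension $[G{:}H]\cdot\dim X$ is precisely the content of the cited page of L\"uck's paper; to make your argument self-contained you must either quote that statement, or prove it -- for instance by first replacing $X$ by an $H$-simplicial model (and then justifying that this can be done without increasing the dimension, which is not automatic) and subdividing equivariantly, or by decomposing the $G$-set of product cells into orbits and analysing the cell stabilisers directly. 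Until that is done, $Y$ is a $G$-space with the right fixed-point behaviour but not yet a $G$-CW model, so the inequality $\uugd G\leq |G:H|\cdot\uugd H$ has not been fully established.
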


For what follows, we will make the following two assumptions.
We return our attention to these assumptions in a moment.

\begin{plain-number}
    \label{num:EA-assumption-1}
    Let $G$ be a countable torsion-free soluble group with $\h
    G<\infty$.  Let $N\defeq \Fit(G)$ and denote by $\zeta(N)$ the centre
    of $N$.  Then there exists an integer $k\geq 0$ such that $\uugd
    K\leq k$ for every $\zeta(N)\leq K\leq G$ with $K/\zeta(N)$
    virtually cyclic.
\end{plain-number}

\begin{plain-number}
    \label{num:EA-assumption-2}
    Let $G$ be a countable soluble group with $\h G<\infty$. Then 
    \begin{equation*}
        \uugd (G/\Lambda(G))<\infty\Rightarrow \uugd(G)<\infty.
    \end{equation*}
\end{plain-number}

\begin{proposition}
    \label{prop:EA-induction}
    Assume that the assumptions~\ref{num:EA-assumption-1}
    and~\ref{num:EA-assumption-2} are satisfied.  Then any countable
    soluble group $G$ with finite Hirsch length admits a finite
    dimensional model for $\uu EG$.
\end{proposition}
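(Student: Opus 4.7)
My plan is to prove the proposition by induction on the Hirsch length $\h G$ over all countable soluble groups of finite Hirsch length (not merely the torsion-free ones, since the inductive step will naturally produce quotients that carry torsion). The interplay of Assumption~\ref{num:EA-assumption-2}, Proposition~\ref{prop:W-74}, Gruneberg's result (Proposition~\ref{prop:gruneberg}), Assumption~\ref{num:EA-assumption-1} and Corollary~\ref{cor:MP} will drive the reduction.

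For the base case $\h G = 0$ I observe that a soluble group of Hirsch length zero is locally finite, so Corollary~\ref{cor:dim-locally-F} yields $\uucd G \leq 1$, and hence $\uugd G \leq 3$ by Proposition~\ref{prop:luck-meintrup}.

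For the inductive step, suppose the conclusion holds for all countable soluble groups of Hirsch length strictly less than $n$, and let $G$ be countable soluble with $\h G = n \geq 1$. First I would invoke Assumption~\ref{num:EA-assumption-2} to reduce the problem to proving $\uugd(G/\Lambda(G))<\infty$. Because $G$ is soluble, $\Lambda(G) = \tau(G)$, so $G_0 \defeq G/\Lambda(G)$ is countable, torsion-free and soluble with $\h G_0 = n$. Proposition~\ref{prop:W-74} then tells me that $G_0$ is $\Q$-linear, so by Gruneberg (Proposition~\ref{prop:gruneberg}) the Fitting subgroup $N \defeq \Fit(G_0)$ is nilpotent; it is non-trivial because $G_0$ itself is non-trivial soluble. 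The centre $\zeta(N)$ is a characteristic subgroup of $N$, hence normal in $G_0$, and being the centre of a non-trivial nilpotent group it is itself non-trivial; as a torsion-free abelian group it satisfies $\h \zeta(N) \geq 1$.

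Next I would apply Corollary~\ref{cor:MP} to the central extension $1 \to \zeta(N) \to G_0 \to Q \to 1$ with $Q \defeq G_0/\zeta(N)$. The quotient $Q$ is countable soluble with $\h Q = n - \h \zeta(N) < n$, so the induction hypothesis yields $\uugd Q < \infty$. Assumption~\ref{num:EA-assumption-1} supplies precisely the uniform bound required on $\uugd K$ for the intermediate subgroups $\zeta(N) \leq K \leq G_0$ with $K/\zeta(N)$ virtually cyclic, so Corollary~\ref{cor:MP} delivers $\uugd G_0 < \infty$, and one final application of Assumption~\ref{num:EA-assumption-2} finishes the induction. The only real subtlety, and the reason for the detour through linearity and Gruneberg, is ensuring the existence at every stage of a non-trivial torsion-free abelian normal subgroup to quotient by, so that the Hirsch length strictly decreases; all the genuine geometric difficulty has been absorbed into the two standing assumptions, whose justification would be the actual hurdle to address separately.
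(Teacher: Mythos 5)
Your overall strategy (induction on $\h G$ over all countable soluble groups of finite Hirsch length, base case via local finiteness, inductive step via the centre of the Fitting subgroup, Corollary~\ref{cor:MP} and the two standing assumptions) is the paper's, but there is a genuine gap in the inductive step: you claim that since $G$ is soluble, $\Lambda(G)=\tau(G)$ and therefore $G_{0}=G/\Lambda(G)$ is torsion-free. Quotienting by the \emph{maximal normal} torsion subgroup does not kill all torsion: for a soluble group the torsion elements need not form a subgroup, and the torsion radical can even be trivial while the group has plenty of torsion (the infinite dihedral group, or $\Z^{2}\rtimes\Z_{2}$ with the inversion action, are soluble of finite Hirsch length with $\tau(G)=\Lambda(G)=1$). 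So $G_{0}$ need not be torsion-free, and the two places where you use torsion-freeness break down: Assumption~\ref{num:EA-assumption-1} is only formulated for countable \emph{torsion-free} soluble groups, so you may not apply it to $G_{0}$; and without torsion-freeness of $G_{0}$ you cannot conclude that $\zeta(\Fit(G_{0}))$ is infinite, i.e.\ that $\h\zeta(N)\geq 1$, which is what makes the Hirsch length strictly drop in the induction.

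The correct bridge, and the one the paper uses, is Proposition~\ref{prop:EA-structure}: $G/\Lambda(G)$ is a \emph{finite extension} of a torsion-free soluble group $H$ (of the same finite Hirsch length). Run your argument on $H$: it is $\Q$-linear by Proposition~\ref{prop:W-74}, so $N=\Fit(H)$ is nilpotent by Proposition~\ref{prop:gruneberg} and non-trivial, $\zeta(N)$ is normal in $H$, torsion-free and non-trivial, hence $\h(H/\zeta(N))\leq \h G-1$; the induction hypothesis gives $\uugd(H/\zeta(N))<\infty$, Assumption~\ref{num:EA-assumption-1} applied to the torsion-free group $H$ together with Corollary~\ref{cor:MP} gives $\uugd H<\infty$, and then Lemma~\ref{lem:luck-1} transfers finiteness across the finite index $|G/\Lambda(G):H|$ to give $\uugd(G/\Lambda(G))<\infty$; only then does Assumption~\ref{num:EA-assumption-2} finish. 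With this detour through the finite-index torsion-free subgroup inserted, the rest of your argument is sound.
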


\begin{proof}
    We proof by induction on the Hirsch length $\h(G)$.  If $\h(G)=0$
    then $G$ is locally finite which in turn implies $\uugd G\leq 1$
    (Theorem 4.3 in~\cite[pp.~511f.]{luck-12}).
    
    Thus we may assume that $\h(G)\geq 1$.  Then $G/\Lambda(G)$ is a
    finite extension of a torsion free soluble group $H$ by
    Proposition~\ref{prop:EA-structure}.  Since $G$ is not locally
    finite it follows that $H$ is non-trivial.  Since $H$ is soluble
    it follows that~$N\defeq \Fit(H)$ must be non-trivial as it
    contains the smallest non-trivial term of the derived
    series~\cite[p.~133]{robinson-96}.  Moreover $H$ is $\Q$-linear by
    Proposition~\ref{prop:W-74} and therefore $N$ must be nilpotent by
    Proposition~\ref{prop:gruneberg}.  Therefore the centre~$\zeta(N)$
    of $N$ must be non-trivial and since $H$ is torsion-free it
    follows that $\h(\zeta(N))\geq1$.  Since $\zeta(N)$ is
    characteristic in $N$ and since~$N$ is normal in~$H$ it follows
    that~$\zeta(N)$ is normal in $H$.  Therefore we can form the
    quotient group $H/\zeta(N)$ and $\h(H/\zeta(N))\leq
    \h(G/\Lambda(G))-1 = \h(G)-1$.  It follows by induction
    that~$\uugd (H/\zeta(N))<\infty$.
    
    Since we assume that assumption~\ref{num:EA-assumption-1} is
    satisfied for the group $H$ it follows that there exists an
    integer $k$ such that
    \begin{equation*}
        \uugd K\leq k
    \end{equation*}
    for every $\zeta(N)\leq K\leq H$ for which $K/\zeta(N)$ is
    virtually cyclic.  Therefore it follows by Corollary~\ref{cor:MP}
    that $\uugd (H/\zeta(N))<\infty$ implies $\uugd H<\infty$.  Since
    $H$ has finite index in $G/\Lambda(G)$ this implies that $\uugd
    (G/\Lambda(G))<\infty$ by Lemma~\ref{lem:luck-1}.  Finally,
    assumption~\ref{num:EA-assumption-2} implies that $\uugd
    G<\infty$.
\end{proof}

We return our attention to the assumption~\ref{num:EA-assumption-1}
which is clearly a necessary condition in
Proposition~\ref{prop:EA-induction}.  The centre $\zeta(N)$ of the
Fitting subgroup~$N = \Fit(G)$ of a soluble group $G$ is known to be
the centraliser $C_{G}(N)$ of $N$ in $G$~\cite[p.~149]{robinson-96}.
This is another information in addition to the many constraints we
know from the setup in the assumption~\ref{num:EA-assumption-1}.  By
Proposition~\ref{prop:cd-nilpotent-G} we know that $\uugd(\zeta(N))$
is finite and one may hope that with all the additional information
provided one can conclude that virtually cyclic extensions
\begin{equation*}
    1\to \zeta(N) \to K \to K/\zeta(N) \to 1
\end{equation*}
within the given countable torsion-free soluble $\Q$-linear group $G$
do have a bound on $\uugd K$.

\begin{proposition}
    \label{prop:EA-assumption-alt}
    Let $G$ be a torsion-free soluble group with $\h G<\infty$.
    Let~$N\defeq \Fit(G)$.  Assume there exists an integer $k\geq 0$
    such that $\uugd K\leq k$ for every infinite cyclic extension $K$
    of $\zeta(N)$ within $G$.  Then the
    assumption~\ref{num:EA-assumption-1} is satisfied.
\end{proposition}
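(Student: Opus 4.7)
The plan is to reduce the general virtually cyclic case to the hypothesis on infinite cyclic extensions by means of Lemma~\ref{lem:luck-1}. Given $\zeta(N) \leq K \leq G$ with $V \defeq K/\zeta(N)$ virtually cyclic, I would choose a subgroup $K_{0} \leq K$ containing $\zeta(N)$ such that $K_{0}/\zeta(N)$ is either infinite cyclic or trivial, and set $m \defeq [K:K_{0}]$. If $V$ is infinite virtually cyclic, I pick $K_{0}$ to be the preimage in $K$ of an infinite cyclic subgroup $V_{0} \leq V$ of finite index; then $K_{0}$ is an infinite cyclic extension of $\zeta(N)$ in $G$, so $\uugd K_{0} \leq k$ by hypothesis. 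If $V$ is finite, I set $K_{0} \defeq \zeta(N)$; since $\zeta(N)$ is torsion-free abelian of Hirsch length at most $\h G$, Lemma~\ref{lem:nilpotent2} combined with Proposition~\ref{prop:luck-meintrup-alt} yields a bound $\uugd \zeta(N) \leq C(\h G)$ depending only on $\h G$. In either case, Lemma~\ref{lem:luck-1} gives $\uugd K \leq m \cdot \max(k, C(\h G))$.

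The main obstacle is to bound $m$ uniformly over all admissible $K$. Since $m$ is at most twice the order of a finite subgroup of $V$, and $V$ is a subgroup of $G/\zeta(N)$, it suffices to bound the orders of finite subgroups of $G/\zeta(N)$. For soluble $G$ the classical identity $C_{G}(\Fit G) = \zeta(\Fit G)$ holds (see~\cite[p.~149]{robinson-96}), so conjugation yields a faithful embedding $G/\zeta(N) \hookrightarrow \Aut(N)$. By Proposition~\ref{prop:W-74}, $G$ is $\Q$-linear, hence $N$ is torsion-free nilpotent of finite Hirsch length $h \leq \h G$ and embeds canonically into its Mal'cev rationalisation $N^{\Q}$, a $\Q$-nilpotent Lie group of dimension $h$. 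Every automorphism of $N$ extends uniquely to $N^{\Q}$, giving an embedding $\Aut(N) \hookrightarrow \Aut(N^{\Q})$; the latter is the group of $\Q$-points of an algebraic $\Q$-group and is therefore a subgroup of $\operatorname{GL}_{r}(\Q)$ for some $r$ depending only on $h$. Any finite subgroup of $\operatorname{GL}_{r}(\Q)$ preserves a lattice and so is conjugate into $\operatorname{GL}_{r}(\Z)$, whose finite subgroups have bounded order by Minkowski's lemma. This produces a uniform bound $M = M(\h G)$ on the orders of finite subgroups of $G/\zeta(N)$, and setting $k' \defeq 2M \cdot \max(k, C(\h G))$ provides the desired constant witnessing assumption~\ref{num:EA-assumption-1}.

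The most delicate point is the second paragraph: justifying that $\Aut(N)$ embeds into a $\Q$-linear group of dimension bounded in terms of $\h G$ when $N$ need not be finitely generated. This rests on the existence and uniqueness of the Mal'cev rationalisation of a torsion-free nilpotent group of finite Hirsch length and on the functorial extension of automorphisms of $N$ to $N^{\Q}$. Once this structural ingredient is in place, the remaining steps -- the case split on $V$, the bound on $\uugd \zeta(N)$, and the application of Lemma~\ref{lem:luck-1} -- are routine bookkeeping.
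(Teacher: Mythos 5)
Your proof is correct, and its overall skeleton matches the paper's: the case split on whether $K/\zeta(N)$ is finite or infinite, the passage to a finite-index subgroup $K_{0}$ which is an infinite cyclic extension of $\zeta(N)$ (your remark that the index is at most twice the order of a finite subgroup of $K/\zeta(N)$ is exactly Lemma~\ref{lem:EA-assumption-alt-aux3}), and the final application of Lemma~\ref{lem:luck-1}. Where you genuinely diverge is in the key finiteness input, namely the uniform bound on the orders of finite subgroups of $G/\zeta(N)$. The paper obtains it in Lemmas~\ref{lem:EA-assumption-alt-aux1} and~\ref{lem:EA-assumption-alt-aux2}: Mal'cev's theorem on upper central factors shows $N/\zeta(N)$ is torsion-free, so finite subgroups of $G/\zeta(N)$ embed into $Q=G/N$, whose finite subgroups are bounded because $\Lambda(Q)$ is finite and $Q/\Lambda(Q)$ is Euclidean crystallographic. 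You instead use $C_{G}(N)=\zeta(N)$ to embed $G/\zeta(N)$ into $\Aut(N)$ and then, via the Mal'cev rationalisation of the torsion-free nilpotent group $N$ (nilpotency coming from Propositions~\ref{prop:W-74} and~\ref{prop:gruneberg}, as in the paper), into $\operatorname{GL}_{h}(\Q)$ with $h\leq \h G$, where conjugation into $\operatorname{GL}_{h}(\Z)$ and Minkowski's bound finish the job. Both routes are valid; yours yields a bound depending only on $\h G$, at the cost of importing the Mal'cev completion and Lie-algebra correspondence for not necessarily finitely generated torsion-free nilpotent groups of finite Hirsch length (standard, but outside the toolkit the paper assembles), whereas the paper stays within results it already cites. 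One minor caveat: to turn Lemma~\ref{lem:nilpotent2} into a bound on $\uugd \zeta(N)$ you need, besides Proposition~\ref{prop:luck-meintrup-alt}, the observation that $\uucd \zeta(N)\leq 2$ still gives $\uugd \zeta(N)\leq 3$ via Proposition~\ref{prop:luck-meintrup}, since Proposition~\ref{prop:luck-meintrup-alt} only bites when the dimensions are at least $3$; the paper's own proof makes the same implicit conversion when it quotes Proposition~\ref{prop:cd-nilpotent-G} at that spot.
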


In order to prove this proposition we need three auxiliary results.

\begin{lemma}
    \label{lem:EA-assumption-alt-aux1}
    Let $G$ be a torsion-free soluble group with $\h G<\infty$.  Let
    $N\defeq \Fit(G)$.  Then $G/N$ has a bound on the order of its
    finite subgroups.
\end{lemma}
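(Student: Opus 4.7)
The plan is to exploit the $\Q$-linearity of $G$ provided by Proposition~\ref{prop:W-74}, together with the classical Minkowski-type bound that the orders of finite subgroups of $\operatorname{GL}_n(\Q)$ are uniformly bounded by a constant $\mu(n)$ (any finite subgroup of $\operatorname{GL}_n(\Q)$ preserves a $\Z$-lattice after averaging, hence conjugates into $\operatorname{GL}_n(\Z)$, where the bound applies). The strategy is to transfer such a bound from $G$ itself to the quotient $G/N$ via the conjugation action of $G$ on its Fitting subgroup $N$.

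First I would invoke the analogue for soluble groups of finite Hirsch length of the classical Fitting theorem, namely $C_G(N) = Z(N)$. The conjugation action of $G$ on $N$ then provides an embedding $G/Z(N) \hookrightarrow \Aut(N)$. Since $N$ is torsion-free nilpotent of Hirsch length $s \leq \h G$, its Malcev $\Q$-completion $N_\Q$ is a unipotent algebraic $\Q$-group of dimension $s$, and $\Aut(N)$ sits inside $\Aut(N_\Q)(\Q)$, which is a $\Q$-algebraic linear group. A finite subgroup $F \leq G/N$ lifts to a subgroup $\tilde F \leq G$ with $\tilde F/N = F$; chasing through the conjugation map $\phi\: \tilde F \to \Aut(N)$, whose image $\tilde F/Z(N)$ contains $\operatorname{Inn}(N) = \phi(N)$, one sees that $F$ embeds into $\operatorname{Out}(N) = \Aut(N)/\operatorname{Inn}(N)$.

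It then remains to bound the orders of finite subgroups of $\operatorname{Out}(N)$. For this I would use the induced action of $\Aut(N)$ on the graded object $\bigoplus_{i} \gamma_i(N)/\gamma_{i+1}(N)$ associated with the lower central series of $N$. Each factor is a torsion-free abelian group of finite rank $r_i$, so the action yields a homomorphism $\Aut(N) \to \prod_i \operatorname{GL}_{r_i}(\Q)$. The kernel consists of automorphisms acting trivially on every graded piece, and these constitute the $\Z$-points of the unipotent radical of $\Aut(N_\Q)$; hence they form a torsion-free group. Since $\operatorname{Inn}(N)$ manifestly acts trivially on every graded piece, it lies in this kernel, so the induced map $\operatorname{Out}(N) \to \prod_i \operatorname{GL}_{r_i}(\Q)$ also has torsion-free kernel. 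Consequently any finite subgroup of $\operatorname{Out}(N)$ injects into $\prod_i \operatorname{GL}_{r_i}(\Q)$, where Minkowski's bound furnishes the required uniform bound on $|F|$.

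The main obstacle I anticipate is verifying the Fitting-type identity $C_G(N) = Z(N)$ in the generality of torsion-free soluble groups of finite Hirsch length: it is classical for finite soluble groups and appears in Robinson's treatment of soluble groups of finite rank, but adapting its proof to our setting (where $N$ may fail to be minimax) requires some care. A secondary delicate step is identifying the kernel of the $\Aut(N)$-action on $\bigoplus_i \gamma_i(N)/\gamma_{i+1}(N)$ with a torsion-free group; this hinges on the structure theory of the $\Q$-algebraic automorphism group of the nilpotent Lie algebra of $N_\Q$, and on the fact that the corresponding algebraic subgroup is unipotent.
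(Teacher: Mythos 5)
Your first reductions are sound: $C_G(N)=\zeta(N)$ does hold for soluble groups (the paper itself quotes this from Robinson in Section on elementary amenable groups), so a finite subgroup of $G/N$ does embed into $\operatorname{Out}(N)$, and Minkowski's bound for $\operatorname{GL}_r(\Q)$ is available. The genuine gap is the step ``$U\mathrel{:=}\ker\bigl(\Aut(N)\to\prod_i\operatorname{GL}_{r_i}(\Q)\bigr)$ is torsion-free and contains $\operatorname{Inn}(N)$, hence $U/\operatorname{Inn}(N)$ is torsion-free''. That is a non sequitur (a quotient of a torsion-free group by a normal subgroup is in general not torsion-free), and it is actually false here. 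Take $N=\langle x,y,z\mid [x,y]=z^{2},\ [x,z]=[y,z]=1\rangle$, a finitely generated torsion-free nilpotent group of Hirsch length $3$ (a lattice in the rational Heisenberg group with $z$ a square root of $[x,y]$). The automorphism $\alpha$ with $\alpha(x)=x$, $\alpha(y)=yz$, $\alpha(z)=z$ (conjugation by the Malcev root $x^{1/2}$, which preserves $N$) is not inner, since conjugation by $x^{a}y^{b}z^{c}$ moves $y$ only by even powers of $z$; but $\alpha^{2}$ is conjugation by $x^{\pm1}$, and $\alpha$ acts trivially on $\gamma_2(N)\otimes\Q$ and on $(N/\gamma_2 N)\otimes\Q$ because $z$ is torsion modulo $\gamma_2N=\langle z^{2}\rangle$. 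So the class of $\alpha$ is an element of order $2$ of $\operatorname{Out}(N)$ lying in the kernel of your map $\operatorname{Out}(N)\to\prod_i\operatorname{GL}_{r_i}(\Q)$.

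Worse, the whole reduction to a statement about $\operatorname{Out}(N)$ alone cannot be repaired, because in the present setting $N=\Fit(G)$ need not be finitely generated (for $G=BS(1,m)=\Z[1/m]\rtimes\Z$ one has $\Fit(G)=\Z[1/m]$), and for non-finitely generated torsion-free nilpotent groups of finite Hirsch length that kernel can even contain unbounded torsion: for the subgroup $N=\{X(a)Y(b)Z(c):a,b\in\Z,\ c\in\Z[1/2]\}$ of the rational Heisenberg group (a central extension of $\Z^{2}$ by $\Z[1/2]$), conjugation by $X(\alpha)$ with $\alpha\in\Z[1/2]$ preserves $N$, is inner exactly when $\alpha\in\Z$, and acts trivially on the rationalised lower central factors; this yields a copy of the Pr\"ufer group $\Z[1/2]/\Z$ inside $\operatorname{Out}(N)$, so $\operatorname{Out}(N)$ has finite subgroups of unbounded order. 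Hence any proof must use that $N$ is the full Fitting subgroup of the ambient group $G$ of finite Hirsch length, not merely that $N$ is torsion-free nilpotent of finite Hirsch length. The paper's proof is of a completely different nature: it quotes a structure theorem of Berrick for $Q=G/\Fit(G)$, namely that $\Lambda(Q)$ is finite and $Q/\Lambda(Q)$ is a Euclidean crystallographic group, and then combines the crystallographic bound on finite point groups with $|H|=|H\Lambda(Q):\Lambda(Q)|\cdot|H\cap\Lambda(Q)|\leq k\cdot|\Lambda(Q)|$. You would either need to import such a structure result or find an argument exploiting the maximality of $N$ among nilpotent normal subgroups; your current chain of reductions discards exactly that information.
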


\begin{proof}
    Let $Q\defeq  G/N$.  By \cite[pp.~29f.]{berrick-01} we have that
    $\Lambda(Q)$ is finite and $Q/\Lambda(Q)$ is an Euclidean
    crystallographic group.  Since $Q/\Lambda(Q)$ is crystallographic
    it follows that $Q/\Lambda(Q)$ has a bound on the order of its
    finite subgroup, say $|K|\leq k$ for any finite $K\leq
    Q/\Lambda(Q)$.
    
    Now let $H$ be an arbitrary finite subgroup of $Q$. Then
    \begin{align*}
        |H| 
	& = 
	|H:H\cap \Lambda(Q)| \cdot |H \cap \Lambda(Q)|
	\\
	& = 
	\underbrace{|H\Lambda(Q):\Lambda(Q)|}_{\leq k}
	\cdot
	\underbrace{|H\cap \Lambda(Q)|}_{\leq |\Lambda(Q)|}
	\\
	& \leq
	k \cdot  |\Lambda(Q)| < \infty
    \end{align*}
    which is a bound independent of $H$.
\end{proof}

\begin{lemma}
    \label{lem:EA-assumption-alt-aux2}
    Let $G$ and $N$ as in the previous lemma. Then $G/\zeta(N)$ has a 
    bound on the orders of its finite subgroups.
\end{lemma}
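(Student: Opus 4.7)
The plan is to exploit the short exact sequence
\begin{equation*}
    1 \to N/\zeta(N) \to G/\zeta(N) \to G/N \to 1,
\end{equation*}
which is defined because $\zeta(N)$ is characteristic in $N$ and $N$ is normal in~$G$. By Lemma~\ref{lem:EA-assumption-alt-aux1}, there is an integer $k$ such that every finite subgroup of $G/N$ has order at most $k$. So for any finite subgroup $H \leq G/\zeta(N)$, the image of $H$ in $G/N$ has order at most $k$, and the kernel of this map (restricted to $H$) is $H \cap (N/\zeta(N))$. The whole problem therefore reduces to bounding $|H \cap (N/\zeta(N))|$ uniformly, and the cleanest way to do this is to show that $N/\zeta(N)$ is torsion-free, forcing that intersection to be trivial.

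To establish that $N/\zeta(N)$ is torsion-free, I would first recall that by Proposition~\ref{prop:W-74} the group $G$ is $\Q$-linear (since it is a finite extension of the torsion-free soluble group $G$ itself and has finite Hirsch length), and then by Proposition~\ref{prop:gruneberg} the Fitting subgroup $N = \Fit(G)$ is nilpotent. Since $G$ is torsion-free, $N$ is a torsion-free nilpotent group. I would then invoke the classical fact (going back to Mal'cev) that in a torsion-free nilpotent group the centre is \emph{isolated}: if $g \in N$ satisfies $g^n \in \zeta(N)$ for some $n \geq 1$, then $g \in \zeta(N)$. Equivalently, $N/\zeta(N)$ has no non-trivial elements of finite order. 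If a self-contained argument is preferred, one can proceed by induction on the nilpotency class $c$ of $N$: for $c \leq 1$ the statement is trivial, and for $c = 2$ one has $[g^n, h] = [g,h]^n$ for all $h \in N$ (since commutators are central), so $g^n \in \zeta(N)$ gives $[g,h]^n = 1$ and torsion-freeness of $N$ forces $[g,h] = 1$; the higher-class case follows by applying the class-$2$ argument to the quotient $N/\zeta_{c-2}(N)$ after using the induction hypothesis to ensure this quotient is torsion-free.

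Combining the two ingredients: given a finite subgroup $H \leq G/\zeta(N)$, the subgroup $H \cap (N/\zeta(N))$ is a finite subgroup of the torsion-free group $N/\zeta(N)$, hence trivial. The natural map $H \to G/N$ therefore has trivial kernel, so $H$ embeds into $G/N$ and $|H| \leq k$ by Lemma~\ref{lem:EA-assumption-alt-aux1}. This bound is independent of $H$, giving the desired conclusion.

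I do not anticipate a serious obstacle here, since every ingredient is classical; the only point requiring a small amount of care is the isolation of the centre in a torsion-free nilpotent group, which can either be cited from standard references on nilpotent group theory (e.g.\ Robinson~\cite{robinson-96}) or proved in a few lines by induction on the nilpotency class as sketched above.
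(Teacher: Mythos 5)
Your proof follows the same route as the paper: reduce via the short exact sequence $1 \to N/\zeta(N) \to G/\zeta(N) \to G/N \to 1$, apply Lemma~\ref{lem:EA-assumption-alt-aux1} to bound the image, and show $N/\zeta(N)$ is torsion-free using $\Q$-linearity of $G$ (Proposition~\ref{prop:W-74}), nilpotence of $N$ (Proposition~\ref{prop:gruneberg}), and the Mal'cev-type result on torsion-free upper central factors. The only cosmetic difference is that you phrase the key fact as ``the centre of a torsion-free nilpotent group is isolated'' and sketch an inductive proof of it, whereas the paper directly cites Robinson for the statement that every upper central factor of a nilpotent group is torsion-free once the first one is; these are the same thing.
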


\begin{proof}
    The group $G$ is linear (Proposition~\ref{prop:W-74}) and 
    therefore $N$ is nilpotent (Proposition~\ref{prop:gruneberg}).
    
    Now $\zeta(N)$ is torsion-free and therefore each upper central 
    factor of $N$ is torsion-free by a result of 
    Mal'cev~\cite[p.~137]{robinson-96}. In particular 
    $\zeta(N/\zeta(N))$ is torsion-free. Thus every upper central 
    factor of $N/\zeta(N)$ is torsion-free. Since $N/\zeta(N)$ is 
    nilpotent its upper central series reaches $N/\zeta(N)$. it 
    follows that $N/\zeta(N)$ is torsion-free, too.
    
    Consider the short exact sequence
    \begin{equation*}
        1\to N/\zeta(N) \to G/\zeta(N) \to Q \to 1.
    \end{equation*}
    Since $(G/\zeta(N))/(N/\zeta(N) \isom G/N \isom Q$ there exists a
    upper bound $k\geq 1$ on the order of the finite subgroups of $Q$
    by Lemma~\ref{lem:EA-assumption-alt-aux1}. Let $H$ be a finite 
    subgroup of $G/\zeta(N)$. Since $N/\zeta(N)$ is torsion-free it 
    follows that $|H\cap N/\zeta(N)| = 1$. Then
    \begin{align*}
        |H| 
	& = 
	|H : H\cap N/\zeta(N)| \cdot
	\underbrace{|H \cap N/\zeta(N)|}_{=1}
	\\
	& =
	|H (N/\zeta(N)) : N/\zeta(N)| \leq k
	\tag*{\qedhere}
    \end{align*}
\end{proof}

\begin{lemma}
    \label{lem:EA-assumption-alt-aux3}
    Let $G$ be group and assume that there exists $r\geq 1$ such 
    that~$|H|\leq r$ for every finite subgroup $H$ of $G$. Then every 
    infinite virtually cyclic subgroup $K$ of $G$ has an infinite 
    cyclic subgroup $C$ with $|K:C|\leq 2r$.
\end{lemma}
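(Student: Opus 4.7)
The plan is to exploit the standard structure theorem for infinite virtually cyclic groups, which is already recalled just before Proposition~\ref{prop:JPL-orig} in the excerpt: any infinite virtually cyclic group $K$ has a unique maximal normal finite subgroup $N$, and $K/N$ is either infinite cyclic (the orientable case) or infinite dihedral (the non-orientable case). Since $N$ is a finite subgroup of $G$, the hypothesis gives $|N|\leq r$. I would then handle the two cases separately.

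In the orientable case, let $\pi\: K \to K/N$ be the quotient map and pick $t\in K$ with $\pi(t)$ a generator of $K/N\isom \Z$. Set $C\defeq \langle t\rangle$. Since $\pi(t)$ has infinite order, $t$ has infinite order and $C$ is infinite cyclic. Moreover $\pi(C) = K/N$, so $CN = K$, and by the second isomorphism theorem
\begin{equation*}
    |K:C| = |CN:C| = |N : N\cap C| \leq |N| \leq r,
\end{equation*}
giving the required bound (in fact better than $2r$).

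In the non-orientable case, $K/N$ is infinite dihedral and has a canonical index-$2$ infinite cyclic subgroup (generated by the product of the two involutions). Let $Q\leq K$ be its preimage under the quotient $K\to K/N$; then $|K:Q|=2$ and $Q/N$ is infinite cyclic, so $Q$ is an infinite virtually cyclic subgroup of $G$ whose unique maximal normal finite subgroup is again $N$ (since $N$ is already normal in $K$). Applying the orientable case to $Q$ yields an infinite cyclic $C\leq Q$ with $|Q:C|\leq r$, and hence $|K:C|=|K:Q|\cdot|Q:C|\leq 2r$, as desired.

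There is no real obstacle here; the only point requiring a sentence of care is that the hypothesis on $G$ passes down to $K$ to bound $|N|$, and that the construction in the orientable case produces an honestly infinite cyclic subgroup (which follows since $\pi(t)$ has infinite order). The factor of $2$ in the statement comes precisely from the dihedral case and is optimal, for instance in $K=D_\infty$ itself with $r=1$.
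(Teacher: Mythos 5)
Your proof is correct and follows essentially the same route as the paper: invoke the structure theorem (unique maximal normal finite subgroup $N$ with $K/N$ infinite cyclic or infinite dihedral), bound $|N|\leq r$ by the hypothesis on $G$, and in each case take the cyclic subgroup generated by an element mapping to a generator of the infinite cyclic (sub)group of the quotient. Your reduction of the dihedral case to the orientable case via the index-$2$ preimage is just a slight repackaging of the paper's direct choice of $k\in K$ with $kN$ generating the index-$2$ infinite cyclic subgroup of $K/N$.
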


\begin{proof}
    Let $N$ be the unique maximal normal finite subgroup of $K$ such 
    that $K/N$ is either infinite cyclic or infinite dihedral.
    
    If $K/N$ is infinite cyclic, then $K \isom N\rtimes C$ with $C$
    infinite cyclic and $|K:C| = |N| \leq r$.  On the other hand, if
    $K/N$ is infinite dihedral, then there exists $k\in K$ such that
    $kN$ generates an infinite cyclic subgroup of~$K/N$ of index $2$.
    Then $C\defeq  \langle k\rangle$ is an infinite cyclic subgroup of $K$
    with $|K:C| = 2 |N| \leq 2r$.
\end{proof}

\begin{proof}[Proof of Proposition~\ref{prop:EA-assumption-alt}]
    By Lemma~\ref{lem:EA-assumption-alt-aux2} we know that 
    there exists $r\geq 0$ such that $|H|\leq r$ for every finite 
    subgroup $H$ of $G/\zeta(N)$
    
    Let $K$ be an extension of $\zeta(N)$ within $G$ such that 
    $K/\zeta(N)$ is virtually cyclic.
    
    If $K/\zeta(N)$ is finite, then
    \begin{align*}
        \uugd K & \leq |K:\zeta(N)| \cdot \uugd \zeta(N) && 
	\text{(by Lemma~\ref{lem:luck-1})}
	\\
	& \leq 
	r \cdot (\h (\zeta(N)) + 2)
	&& \text{(by Proposition~\ref{prop:cd-nilpotent-G})}
    \end{align*}
    
    If $K/\zeta(N)$ is infinite, then $K$ has a subgroup $C$
    containing $\zeta(N)$ as a subgroup such that $C/\zeta(N)$
    is infinite cyclic and 
    \begin{equation*}
	|K:C| = |K/\zeta(N) : C/\zeta(N)| \leq 2r
    \end{equation*}
    by Lemma~\ref{lem:EA-assumption-alt-aux3}.
    By assumption $\uugd C\leq k$ and thus 
    \begin{align*}
	\uugd K & \leq |K:C| \cdot \uugd C &&
	\text{(by Lemma~\ref{lem:luck-1})}
	\\
	& \leq 2rk
    \end{align*}
    
    Therefore
    \begin{equation*}
        \uugd K \leq r \cdot \max(\h (\zeta(N)) + 2,  2k)
    \end{equation*}
    for any $\zeta (N) \leq K\leq G$ with $K/\zeta(N)$ virtually
    cyclic.  That is, the assumption~\ref{num:EA-assumption-1} is
    satisfied.
\end{proof}

The consequence of Proposition~\ref{prop:EA-assumption-alt} is 
that the assumption~\ref{num:EA-assumption-1} is equivalent to the 
following assumption.

\begin{plain-number}
    \label{num:EA-assumption-1-alt}
    Let $G$ be a countable torsion-free soluble group with $\h
    G<\infty$.  Let $N\defeq \Fit(G)$ and denote by $\zeta(N)$ the
    centre of $N$.  Then there exists an integer $k\geq 0$ such that
    $\uugd K\leq k$ for every $\zeta(N)\leq K\leq G$ with $K/\zeta(N)$
    \emph{infinite} cyclic.
\end{plain-number}

If $K$ is nilpotent, then $\uugd K\leq \h(\zeta(N))+3$ by
Proposition~\ref{prop:cd-nilpotent-G}.  In particular this bound is
satisfied whenever $K\leq N$.  If $\zeta(N)$ is finitely generated
then $\zeta(N)$ and therefore also any infinite cyclic extension $K$
of~$\zeta(N)$ is polycyclic.  In this case~$\uugd K\leq \h K + 1 = \h
(\zeta(N)) + 2$ by~\cite{luck-12}.  In the next chapter we show that
under certain conditions we can ensure $\uugd K \leq \uugd \zeta(N)
+1$, see Proposition~\ref{prop:dimension}.  However, these estimates
do not cover all possibilities yet.  But one may hope that one has
enough constraints to be able to answer all possibilities.  After all
the possible extension~$K$ of~$\zeta(N)$ within $G$ are
well understood.

The validity of the assumption~\ref{num:EA-assumption-2} 
is essential in the induction step which appears in the
proof of Proposition~\ref{prop:EA-induction}.  Whether or whether not
this assumption holds in this form is open at the moment.  However, one
may relax the assumption in case one wants to restrict the attention
to torsion-free soluble groups. In this case the following assumption 
is enough.

\begin{plain-number}
    \label{num:EA-assumption-3}
    Let $G$ be a torsion-free soluble group with finite Hirsch length
    and let~$H\defeq G/\zeta(\Fit(G))$.  Then $\uugd
    (H/\Lambda(H))<\infty$ implies $\uugd H<\infty$.
\end{plain-number}

In the assumption~\ref{num:EA-assumption-2} the subgroup $\Lambda(G)$
is allowed to be any countable soluble locally finite group.
However, in the above assumption~$\Lambda(H)$ may not be anymore as
arbitrary.

Finally, since an elementary amenable group $G$ with finite Hirsch
length is, modulo $\Lambda(G)$, a finite extension of a countable
torsion-free soluble group (Proposition~\ref{prop:EA-structure}) one
may consider the following variation of the
assumption~\ref{num:EA-assumption-2}.

\begin{plain-number}
    \label{num:EA-assumption-4}
    Let $G$ be a countable elementary amenable group with $\h
    G<\infty$.  Then
    \begin{equation*}
        \uugd (G/\Lambda(G))<\infty\Rightarrow \uugd(G)<\infty.
    \end{equation*}
\end{plain-number}

\begin{theorem}
    Suppose that the assumptions~\ref{num:EA-assumption-1-alt}
    and~\ref{num:EA-assumption-4} are satisfied.  Then any countable
    elementary amenable group $G$ with finite Hirsch length admits a
    finite dimensional model for $\uu EG$.
\end{theorem}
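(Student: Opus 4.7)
The plan is to reduce the statement to the already-established Proposition~\ref{prop:EA-induction} for countable soluble groups, and then to bridge from countable elementary amenable groups to their quotients modulo the locally finite radical $\Lambda(G)$.

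First, I would check that the hypotheses of Proposition~\ref{prop:EA-induction} are available in disguise. Assumption~\ref{num:EA-assumption-1} follows from assumption~\ref{num:EA-assumption-1-alt} by Proposition~\ref{prop:EA-assumption-alt}. Moreover, since every countable soluble group of finite Hirsch length is in particular a countable elementary amenable group of finite Hirsch length, assumption~\ref{num:EA-assumption-4} specialises to assumption~\ref{num:EA-assumption-2}. Hence Proposition~\ref{prop:EA-induction} applies unconditionally under our hypotheses, giving the intermediate statement that every countable soluble group of finite Hirsch length admits a finite dimensional model for $\uu EG$.

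Now let $G$ be a countable elementary amenable group with $\h G<\infty$. By Proposition~\ref{prop:EA-structure} the quotient $G/\Lambda(G)$ contains a finite index subgroup $H$ which is torsion-free and soluble. Since $G$ is countable so is $G/\Lambda(G)$, and therefore so is $H$; furthermore $\h H \leq \h (G/\Lambda(G)) \leq \h G < \infty$ by Proposition~\ref{prop:hillman-91}. The intermediate statement above then yields $\uugd H < \infty$. Lemma~\ref{lem:luck-1} applied to the finite index inclusion $H \leq G/\Lambda(G)$ upgrades this to
\begin{equation*}
    \uugd(G/\Lambda(G)) \leq |G/\Lambda(G) : H| \cdot \uugd H < \infty.
\end{equation*}
Finally, assumption~\ref{num:EA-assumption-4} lifts this across $\Lambda(G)$ to give $\uugd G < \infty$, which is exactly the assertion that $G$ admits a finite dimensional model for $\uu EG$.

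The step which is, strictly speaking, automatic once the setup is in place is the geometric argument itself; the genuine obstacle lies entirely outside this proof, namely in the validity of assumptions~\ref{num:EA-assumption-1-alt} and~\ref{num:EA-assumption-4}. The former demands a uniform bound on $\uugd K$ for infinite cyclic extensions $K$ of $\zeta(\Fit(G))$ inside a countable torsion-free $\Q$\nobreakdash-linear soluble group, where Proposition~\ref{prop:cd-nilpotent-G} and the constructions of Chapter~\ref{ch:geometric-interlude} only cover special configurations; the latter asks for a general method to pass finiteness of $\uugd$ through an arbitrary countable locally finite normal subgroup, an issue orthogonal to the inductive skeleton presented here.
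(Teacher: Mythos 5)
Your proposal is correct and follows essentially the paper's own route: reduce assumption~\ref{num:EA-assumption-1-alt} to assumption~\ref{num:EA-assumption-1} via Proposition~\ref{prop:EA-assumption-alt}, note that assumption~\ref{num:EA-assumption-4} specialises to assumption~\ref{num:EA-assumption-2}, invoke Proposition~\ref{prop:EA-induction}, and finish with assumption~\ref{num:EA-assumption-4} applied to $G$ and $G/\Lambda(G)$. If anything you are slightly more careful than the paper, which applies Proposition~\ref{prop:EA-induction} directly to $G/\Lambda(G)$ even though that quotient is a priori only a finite extension of a torsion-free soluble group, whereas you apply the soluble case to the finite-index torsion-free soluble subgroup provided by Proposition~\ref{prop:EA-structure} and then pass up via Lemma~\ref{lem:luck-1}.
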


\begin{proof}
    Let $G$ be a countable elementary amenable group.  By
    Proposition~\ref{prop:EA-assumption-alt} the
    assumption~\ref{num:EA-assumption-1-alt} is equivalent to the
    assumption~\ref{num:EA-assumption-1}.
    Assumption~\ref{num:EA-assumption-4} implies the
    assumption~\ref{num:EA-assumption-2} and thus we can apply
    Proposition~\ref{prop:EA-induction} to~$G/\Lambda(G)$.  It follows
    that $\uugd (G/\Lambda(G))<\infty$.  Then
    assumption~\ref{num:EA-assumption-4} implies that $\uugd
    G<\infty$.
\end{proof}

%
%

\chapter{A Geometric Interlude}
\label{ch:geometric-interlude}

%
%

\section{A Class of Infinite Cyclic Extensions}

The spectral sequence developed by
Martínez-Pérez~\cite{martinez-perez-02} suggests that virtually cyclic
extensions are the main obstruction to understand the behaviour of
Bredon dimensions under general extensions for the family of virtually
cyclic subgroups.  A first general answer for finite extensions has
been given in~\cite[p.~191]{luck-00a}, see Lemma~\ref{lem:luck-1} in
the previous chapter.  Yet effectively nothing is known for the
general case.  The main objective in this chapter is to construct a
model for $\uu EG$ in the case that $G$ belongs to a certain class of
infinite cyclic extensions.

Infinite cyclic extensions are always split.  Therefore such kind of
extensions are always semidirect products.  Let $B$ be a group and let
$\varphi\in \Aut(B)$.  Recall that the semidirect product
\begin{equation*}
    G \defeq  B\rtimes \Z,
\end{equation*}
where $\Z$ acts on $B$ via the automorphism $\varphi$, is the set
$B\times \Z$ with the multiplication given by
\begin{equation*}
    (x,r)\cdot (y,s) = (x\varphi^{r}(y),r+s).
\end{equation*}
The identity is $(1,0)$ and the inverse of any element $(x,r)$ is
given by $(x,r)^{-1}= (\varphi^{-r}(x^{-1}),-r)$. The group $B$ 
is embedded via $x\mapsto (x,0)$ as a normal subgroup of~$G$ and we 
consider $\Z$ embedded as a subgroup of $G$ via $r\mapsto (1,r)$.

Up to  and including Section~\ref{sec:BS-groups} of this chapter we
will assume that $G = B\rtimes \Z$ and that that this extension
satisfy the following condition:

\begin{quote}
    \emph{The subgroup $\Z$ acts via conjugation \emph{freely} on the
    set of conjugacy classes of nontrivial elements of $B$.}
\end{quote}

\noindent 
Under this condition we can show that there exists a suitable set of 
unique maximal virtually cyclic subgroups to apply a variation of 
Juan-Pineda and Leary's construction (see 
Proposition~\ref{prop:JPL-orig}) in order to construct a model for
$\uu 
EG$ from a model for $\uu EB$.

%
%

\section{Technical Preparations}

\begin{lemma}
    \label{lem:aux1}
    Assume that $B$ is torsion-free and does not contain a subgroup
    isomorphic to $\Z^{2}$.  Then $\Z$ acts freely by conjugation on
    the set of conjugacy classes of the non-trivial elements of $B$ if
    and only if $G$ does not contain a subgroup isomorphic to
    $\Z^{2}$.
\end{lemma}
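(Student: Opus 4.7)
The plan is to prove both implications by contraposition and make essential use of the semidirect product multiplication rule $(x,r)\cdot(y,s) = (x\varphi^{r}(y), r+s)$, from which one computes that conjugation in $G$ satisfies $(y,r)(a,0)(y,r)^{-1} = (y\varphi^{r}(a)y^{-1}, 0)$. This computation will be the engine in both directions: having $(y,r)$ commute with $(a,0)$ in $G$ is equivalent to $\varphi^{r}(a)$ being conjugate to $a$ in $B$ via $y$.

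For the direction ``free action $\Rightarrow$ no $\Z^{2}$,'' I would assume towards a contradiction that $A\leq G$ is isomorphic to $\Z^{2}$ and consider the projection $p\colon G\to \Z$. The subgroup $A\cap B = \ker(p|_{A})$ is a subgroup of $\Z^{2}$; it cannot be all of $A$ since then $B$ would contain $\Z^{2}$, contrary to hypothesis, and it cannot be trivial since then $A$ would embed into $\Z$. Hence $A\cap B\cong \Z$, generated by some nontrivial $a\in B$, and $A$ contains an element $b=(y,r)$ with $r\neq 0$. Since $A$ is abelian, $bab^{-1}=a$, and the conjugation formula yields $y\varphi^{r}(a)y^{-1}=a$, so that $\varphi^{r}(a)$ is conjugate to $a$ in $B$. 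This contradicts freeness of the action of $\Z$ on conjugacy classes of nontrivial elements of $B$.

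For the direction ``no $\Z^{2}$ $\Rightarrow$ free action,'' I would again argue by contraposition. If the action is not free, there exist a nontrivial $a\in B$, an $r\neq 0$, and $y\in B$ with $\varphi^{r}(a)=y^{-1}ay$. Set $b\defeq (y,r)$; by the same conjugation computation $bab^{-1}=a$, so $a$ and $b$ commute and $H\defeq \langle a,b\rangle\leq G$ is abelian. The subgroup $H$ is torsion-free, because $G$ is an extension of the torsion-free group $\Z$ by the torsion-free group $B$ and hence itself torsion-free. Next, $a$ has infinite order since $B$ is torsion-free, and $b$ has infinite order since its $\Z$-component $r$ is nonzero. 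A direct computation of $b^{k}=(y\varphi^{r}(y)\cdots\varphi^{(k-1)r}(y), kr)$ shows that $\langle b\rangle\cap B=\{1\}$, so $\langle a\rangle\cap\langle b\rangle=\{1\}$. Therefore $H$ is a torsion-free abelian group generated by two elements of infinite order whose cyclic subgroups meet trivially, so $H\cong \Z^{2}$, contradicting the hypothesis on $G$.

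The only place requiring any care will be verifying that the subgroup generated by $a$ and $b$ is genuinely rank two rather than rank one; the torsion-freeness of $B$ and the nonvanishing of the second coordinate of $b$ make this straightforward, so no real obstacle is expected.
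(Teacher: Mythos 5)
Your proof is correct and follows essentially the same strategy as the paper's: both directions proceed by contraposition, and both hinge on the computation in $G=B\rtimes\Z$ showing that $(a,0)$ and $(y,r)$ commute if and only if $\varphi^{r}(a)$ is conjugate to $a$ in $B$. The only cosmetic difference is that you phrase the key identity as a conjugation $bab^{-1}=a$ and spell out a bit more carefully why $\langle a,b\rangle\cong\Z^{2}$ (torsion-freeness plus trivial intersection of the two cyclic subgroups), whereas the paper phrases it as a commutator $[(x,r),(y,0)]=1$ and states the conclusion more tersely; the substance is identical.
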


\begin{proof}
    ``$\Rightarrow$'':\quad Suppose that $H$ is a subgroup of $G$
    which is isomorphic to $\Z^{2}$.  It must have a non-trivial
    intersection with the kernel of the canonical projection $B\rtimes
    \Z \to \Z$.  Therefore there exists $(y,0)\in B\cap H$ with $y\neq
    1$.  On the other hand, $H$ is not contained in $B$ and thus there
    exists $(x,r)\in H\setminus B$.  Then, as $H$ is abelian, the
    commutator
    \begin{equation*}
        [(x,r),(y,0)] = (x\varphi^{r}(y)x^{-1}y^{-1},0)
    \end{equation*}
    must be trivial which is the case if and only 
    if $x\varphi^{r}(y)x^{-1}y^{-1} = 1$. This implies
    that $\varphi^{r}(y)$ and $y$ belong to the same conjugacy class 
    in $B$. Since~$r\neq 0$ and~$y\neq 1$ this implies that 
    $\Z$ does not act freely on the set of conjugacy classes of 
    non-trivial elements of $B$. 
    
    \smallskip
    
    ``$\Leftarrow$'':\quad Suppose that $\Z$ does not act freely on
    the set of conjugacy classes of non-trivial elements of $B$.  Then
    there exists $1\neq y\in B$ and $0\neq r\in \Z$ such that
    $\varphi^{r}(y) = x^{-1}yx$ for some $x\in B$.  This implies that
    the non-trivial elements $(x,r)$ and $(y,0)$ commute.  In general
    $(x,r)$ has infinite order and since $B$ is assumed to be
    torsion-free it follows that the order of $(y,0)$ is also
    infinite.  Furthermore the subgroups generated by $(x,r)$ and
    $(y,0)$ have clearly trivial intersection.  Therefore $(x,r)$ and
    $(y,0)$ generate a subgroup of~$G$ which is isomorphic to
    $\Z^{2}$.
\end{proof}

\begin{lemma}
    \label{lem:aux1a}
    Let $B$ be a non-trivial virtually cyclic group.  Then $\Z$ cannot
    act freely by conjugation on the set of conjugacy classes of the
    non-trivial elements of $B$.
\end{lemma}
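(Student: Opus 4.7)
My plan is to split the proof into two cases according to whether $B$ is finite or infinite.

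If $B$ is finite, then the set of conjugacy classes of non-trivial elements of $B$ is a finite non-empty set (it contains at least the class of any non-identity element). Any action of $\Z$ on a finite set has finite orbits, and so the stabiliser of any point is a finite-index, hence non-trivial, subgroup of $\Z$. Therefore the action cannot be free.

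If $B$ is infinite virtually cyclic, my strategy is to exhibit a \emph{characteristic} infinite cyclic subgroup $C\leq B$ of finite index; then the induced action of $\varphi\in \Aut(B)$ on $C$ lies in $\Aut(C)\isom \Aut(\Z) =\{\pm \id\}$, so $\varphi^{2}$ restricts to the identity on $C$. For any non-trivial $c\in C$ we then have $\varphi^{2}(c)=c$, which is in particular conjugate to $c$ in $B$; since $c$ represents a non-trivial conjugacy class and $2\neq 0$, this contradicts freeness of the $\Z$-action on the set of conjugacy classes of non-trivial elements of~$B$.

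To produce such a characteristic subgroup, I would start with any infinite cyclic subgroup $C_{1}\leq B$ of finite index $n$ (which exists by virtue of $B$ being infinite virtually cyclic). The main technical point is that $B$ has only finitely many subgroups of index $n$: indeed, each such subgroup contains the core of $C_{1}$ in $B$, which is a normal subgroup of finite index, and a finite group has only finitely many subgroups. Consequently the orbit $\{\psi(C_{1}) : \psi\in \Aut(B)\}$ is a finite collection of subgroups of $B$, and I can take $C$ to be their intersection. By construction $C$ is characteristic in $B$; it is a finite intersection of finite-index subgroups, hence of finite index; and it is contained in the infinite cyclic subgroup $C_{1}$, hence itself infinite cyclic. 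This completes the construction.

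The only step that requires any care is the existence of the characteristic infinite cyclic subgroup of finite index; the rest of the argument is formal. Once this is in place, both cases combine to give the claim.
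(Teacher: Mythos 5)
Your overall route is the same as the paper's: the finite case is disposed of because $\Z$ cannot act freely on a non-empty finite set, and in the infinite case you produce a characteristic infinite cyclic subgroup $C\leq B$, observe that $\varphi$ restricts to an automorphism of $C\isom\Z$ whose square is the identity, and contradict freeness via $\varphi^{2}(c)=c$ for a non-trivial $c\in C$. (The paper merely asserts the existence of such a $C$; you try to construct it, which is a reasonable addition.) However, the step you yourself flag as the main technical point is false as stated: it is not true that every subgroup of index $n$ in $B$ contains the core of $C_{1}$. Take $B$ to be the infinite dihedral group $D_{\infty}=\langle t,s\mid s^{2}=1,\ sts^{-1}=t^{-1}\rangle$, which is virtually cyclic, and $C_{1}=\langle t\rangle$, normal of index $2$, so its core is $C_{1}$ itself; the subgroup $\langle t^{2},s\rangle$ also has index $2$ in $B$ but does not contain $t$. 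So the containment you invoke fails precisely within the class of groups the lemma is about, and your justification that $B$ has only finitely many index-$n$ subgroups does not stand as written.

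The gap is local and repairable: all you need is that $B$ has finitely many subgroups of index $n$, and this holds because $B$, being virtually cyclic, is finitely generated, and a finitely generated group has only finitely many subgroups of any given finite index --- each index-$n$ subgroup is the stabiliser of a point for the action of $B$ on its $n$ cosets, hence arises from one of the finitely many homomorphisms from $B$ to the symmetric group on $n$ letters. With that substitute justification, the intersection of the $\Aut(B)$-orbit of $C_{1}$ is, as you say, characteristic, of finite index, contained in $C_{1}$ and therefore infinite cyclic, and the remainder of your argument (restriction of $\varphi$ to $C$ has order at most $2$, so the class of any non-trivial $c\in C$ is fixed by $2\neq 0$, contradicting freeness) is correct, as is your treatment of the finite case.
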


\begin{proof}
    To avoid triviality assume that $B$ is infinite.  Then $B$
    contains a characteristic infinite cyclic subgroup $C$.  Therefore
    the automorphism~$\varphi$ restricts to an automorphism
    of $C$ which has order at most~$2$.  Hence the action of $\Z$ on
    the non-trivial elements of $B$ cannot be free and this implies
    the statement of the lemma.
\end{proof}

\begin{lemma}
    \label{lem:aux3}
    Assume that $\Z$ acts freely via conjugation on the set of 
    conjugacy classes of non-trivial elements of $B$. Then for any 
    $(x,r)\in G\setminus B$ and~$y\in B$ we have
    \begin{equation*}
        (x,r)^{y} = (x,r) \iff y=1.
    \end{equation*}
\end{lemma}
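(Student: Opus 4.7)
The plan is to reduce the fixed-point equation to a statement about $\Z$-orbits of $B$-conjugacy classes and then invoke the freeness hypothesis.

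First I would unwind the conjugation using the semidirect product formula. Writing $y$ for the element $(y,0)\in B\leq G$, a direct calculation gives
\begin{equation*}
    (y,0)^{-1}(x,r)(y,0) = (y^{-1},0)(x,r)(y,0) = (y^{-1}x\varphi^{r}(y), r).
\end{equation*}
Hence $(x,r)^{y} = (x,r)$ if and only if $y^{-1}x\varphi^{r}(y) = x$, which rearranges to
\begin{equation*}
    \varphi^{r}(y) = x^{-1}y x,
\end{equation*}
i.e.\ $\varphi^{r}(y)$ and $y$ lie in the same $B$-conjugacy class. This is the key reformulation.

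For the nontrivial direction, note that since $(x,r)\notin B$ we have $r\neq 0$. If $y\neq 1$, then the displayed conjugacy would say that the nontrivial integer $r$ fixes the conjugacy class of $y$ under the $\Z$-action on conjugacy classes of nontrivial elements of $B$; this contradicts the hypothesis that $\Z$ acts freely on this set. Therefore $y=1$. The reverse implication is immediate, since conjugation by the identity is trivial.

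I do not expect any genuine obstacle here: the statement is essentially a direct translation of the freeness hypothesis via the semidirect product multiplication, once one has the explicit formula for $(x,r)^{y}$.
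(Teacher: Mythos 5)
Your proposal is correct and follows essentially the same approach as the paper: unwind the conjugation in the semidirect product to see that $(x,r)^{y}=(x,r)$ is equivalent to $\varphi^{r}(y)=x^{-1}yx$, then observe that this says $r\in\Z$ fixes the $B$-conjugacy class of $y$, so freeness of the $\Z$-action forces $y=1$ (using $r\neq 0$ since $(x,r)\notin B$). In fact your version is slightly more careful than the paper's one-line proof, which omits the exponent $r$ on $\varphi$.
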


\begin{proof}
    $(x,r)^{y} = (x,r)$ is equivalent to $\varphi(y) = xyx^{-1}$, 
    which is by assumption on the action of $\Z$ on $B$ equivalent to 
    $y=1$.
\end{proof}

The statement of the next lemma is only non-trivial if $B$ has
torsion.
 
\begin{lemma}
    \label{lem:aux2}
    Assume that $\Z$ acts freely via conjugation on the set of
    conjugacy classes of non-trivial elements of $B$.  Let $H$ be a
    virtually cyclic subgroup of $G$ which is not a subgroup of
    $B$. Then $H$ is infinite cyclic.
\end{lemma}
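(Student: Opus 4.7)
The plan is to argue that the hypothesis on $H$ forces $H$ to fit into a split extension $1 \to K \to H \to \mathbb{Z} \to 1$ with $K$ a finite subgroup of $B$, and then to use Lemma~\ref{lem:aux3} to squeeze $K$ down to the trivial group.

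First I would observe that $H$ must be infinite: any element $(x,r) \in G$ of finite order satisfies $(x,r)^k = (\ast, kr) = (1,0)$ for some $k \geq 1$, which forces $r=0$ and hence $(x,r) \in B$. Consequently every finite subgroup of $G$ is contained in $B$, so the hypothesis $H \not\subset B$ rules out $H$ finite. Next, consider the restriction to $H$ of the canonical projection $p\colon G \to \mathbb{Z}$, and set $K \defeq \ker(p|_H) = H \cap B$. Since $H \not\subset B$, the image $p(H) \subset \mathbb{Z}$ is a nontrivial infinite cyclic group; because $H$ is virtually cyclic and $H/K \cong p(H)$ is infinite, the subgroup $K$ cannot itself be infinite virtually cyclic (else it would have finite index in $H$), so $K$ is finite. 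The resulting short exact sequence $1 \to K \to H \to \mathbb{Z} \to 1$ splits, giving a choice of element $(x,r) \in H$ with $r \neq 0$ whose powers, together with $K$, generate $H$.

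The main step is to show $K = 1$. Since $K$ is normal in $H$ and finite, conjugation by $(x,r)$ induces an element of the finite group $\operatorname{Aut}(K)$, so some positive power $(x,r)^m = (x',mr)$ centralises $K$. Note that $mr \neq 0$, so $(x',mr) \in G \setminus B$. For any $(y_{0},0) \in K$, the commutation relation $(x',mr)^{(y_{0},0)} = (x',mr)$ is exactly the hypothesis of Lemma~\ref{lem:aux3}, and that lemma then forces $y_{0} = 1$. Hence $K$ is trivial and $H \cong \mathbb{Z}$.

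The only genuinely delicate point is step three, where one has to use that $K$ being finite (not merely the maximal finite normal subgroup of $H$ in the abstract virtually cyclic structure) implies that conjugation by some power of $(x,r)$ acts trivially on $K$; but this is immediate from $|\operatorname{Aut}(K)|<\infty$. Everything else is bookkeeping inside the semidirect product. In particular, one does not need to invoke the classification of virtually cyclic groups into orientable/non-orientable types: the kernel-of-projection argument bypasses that dichotomy entirely and delivers $H \cong \mathbb{Z}$ directly.
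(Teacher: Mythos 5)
Your proof is correct and follows essentially the same route as the paper: set $K := H \cap B$ (the paper writes $\tau(H)$), argue $K$ is finite since $H$ is virtually cyclic, use finiteness of $\operatorname{Aut}(K)$ to find a power of an element of $H\setminus B$ centralising $K$, and invoke Lemma~\ref{lem:aux3} to force $K=1$. The only cosmetic differences are that you spell out the splitting of the short exact sequence (which is not needed -- the paper simply picks an arbitrary $g\in H\setminus B$) and phrase the finiteness of $K$ via the infinite quotient $H/K\cong\Z$ rather than via $\langle g\rangle\cap K=1$; both are standard and equivalent here.
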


\begin{proof}
    Let $\tau(H) \defeq  H\cap B$.  By assumption there exists $g\in
    H\setminus B$.  This element generates an infinite cyclic subgroup
    of $H$ which has trivial intersection with $B$ and hence also
    trivial intersection with $\tau(H)$.  Since $H$ is virtually
    cyclic this implies that $\tau(H)$ is finite.
    
    Since $\tau(H)$ is a normal subgroup of $H$ it follows that
    conjugation by~$g$ induces an automorphism of $\tau(H)$.  Since
    $\tau(H)$ is finite it follows that its automorphism group is
    finite, too.  Hence there exists an $m\geq 1$ such that~$g^{m}$
    commutes with every element of $\tau(H)$, that is $(g^{m})^{y} =
    g^{m}$ for every $y\in \tau(H)$.  However, it follows from
    Lemma~\ref{lem:aux3} that this can happen only if $\tau(H)$ is
    trivial.  Therefore $H$ is infinite cyclic.
\end{proof}

\begin{lemma}
    \label{lem:aux4}
    Under the assumptions of the previous lemma, if $H$ is an infinite
    cyclic subgroup of $G$ that is not a subgroup of $B$, and $y\in
    B$, then $|H\cap H^{y}| = \infty$ if and only if $y=1$.
\end{lemma}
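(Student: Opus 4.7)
The ``if'' direction is immediate, since $y=1$ gives $H^y=H$ and $H$ is infinite. So the plan is to establish the contrapositive of the ``only if'' direction: assuming $y\in B$ with $y\neq 1$, show that $H\cap H^y$ is finite.

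Since $H$ is infinite cyclic and not contained in $B$, it has a generator $g=(x,r)\in G\setminus B$, so $r\neq 0$. Any element of $H\cap H^y$ is then of the form $g^m$ for some $m\in\Z$, and also of the form $y^{-1}g^n y$ for some $n\in\Z$. The plan is to compute both sides in the semidirect product: writing $g^m=(x_m,mr)$ with $x_m\defeq x\,\varphi^r(x)\cdots\varphi^{(m-1)r}(x)$, a direct multiplication in $B\rtimes \Z$ gives
\begin{equation*}
    y^{-1}g^n y = (y^{-1}x_n\varphi^{nr}(y),\,nr).
\end{equation*}
Equating second coordinates yields $mr=nr$, and since $r\neq 0$ this forces $m=n$. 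Equating first coordinates then yields $\varphi^{mr}(y)=x_m^{-1}y\,x_m$, so $\varphi^{mr}(y)$ and $y$ lie in the same $B$-conjugacy class.

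At this point the free-action hypothesis on $\Z$ applies: since $y\neq 1$, the conjugacy class of $y$ in $B$ is fixed by the action of $mr\in\Z$ only when $mr=0$, whence $m=0$ (as $r\neq 0$). Hence the only element of $H\cap H^y$ is the identity, so $|H\cap H^y|=1<\infty$.

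I expect no serious obstacle here; the main point requiring care is the bookkeeping in the semidirect product (choosing conventions for $g^y$ consistently with those used earlier, e.g.\ in Lemma~\ref{lem:aux3}) and the observation that once the exponents $m,n$ are forced to agree, the first-coordinate condition is precisely a conjugacy relation to which the hypothesis on the $\Z$-action on conjugacy classes of $B$ can be applied. Lemma~\ref{lem:aux3} is essentially the case $m=1$ of this computation, so the argument is a natural extension of it.
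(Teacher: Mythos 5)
Your proof is correct and follows essentially the same route as the paper: compare the $\Z$-coordinates to force the exponents to agree, then reduce to a conjugacy relation in $B$ and invoke the freeness of the $\Z$-action on conjugacy classes. The paper packages the final step more briefly by applying Lemma~\ref{lem:aux3} to the element $(x,r)^{k}\in G\setminus B$ (using $\bigl((x,r)^{y}\bigr)^{k}=\bigl((x,r)^{k}\bigr)^{y}$), whereas you inline that computation explicitly; the mathematical content is the same.
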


\begin{proof}
    The ``if'' statement is trivial.  Therefore assume that $y\neq 1$
    and let $(x,r)$ be a generator of $H$.  Then $r\neq 0$ and
    \begin{equation*}
	(z,r) \defeq  (x,r)^{y}\neq(x,r)
    \end{equation*}
    is a generator of $H^{y}$ where the inequality is due to
    Lemma~\ref{lem:aux3}.  Suppose, for a contradiction, that
    $|H\cap H^{y}| = \infty$.  Then there must exist $k,l\in
    \Z\setminus \{0\}$ such that $(x,r)^{k} = (z,r)^{l}$.  In
    particular this implies that $k=l$. But then we get 
    \begin{equation*}
        (z,r)^{l} = (z,r)^{k} =  \bigl((x,r)^{y}\bigr)^{k}  = 
	\bigl((x,r)^{k}\bigr)^{y} \neq (x,r)^{k},
    \end{equation*}
    where the last inequality is again due to Lemma~\ref{lem:aux3},
    and so we achieve our desired contradiction.  Hence we must have
    $|H\cap H^{y}| \neq \infty$.
\end{proof}

As in~\cite[p.~502]{luck-12} we define an equivalence relation
``$\sim$'' on the set $\Fvc(G)\setminus \Ffin(G)$ by
\begin{equation*}
    H\sim K :\!\!\iff |H\cap K| = \infty.
\end{equation*}
We denote by $[H]$ the equivalence class of the group $H$.  If $K$ is
not finite then~$K\leq H$ implies that $K\sim H$.  Furthermore the
equivalence relation satisfies $H\sim K$ if and only $H^{g}\sim
K^{g}$.  Therefore the action of $G$ by conjugation on the set
$\Fvc(G)\setminus \Ffin(G)$ gives an action of $G$ on the set of
equivalence classes.  If $[H]$ is an equivalence class, then we denote
by $G_{[H]}$ the stabiliser of~$[H]$.

Given a subgroup $H$ of $G$, the \emph{commensurator} $\Comm_{G}(H)$
of
$H$ in $G$ is defined as the subgroup
\begin{equation*}
    \Comm_{G}(H) \defeq \{ g\in G :  |H : H\cap H^{g}| \text{ and } 
    |H^{g} : H\cap H^{g}| \text{ are finite} \}.
\end{equation*}
This subgroup is also known as the \emph{virtual normaliser}
$VN_{G}(H)$ of the subgroup $H$ in $G$.  In general it contains the
normaliser $N_{G}(H)$ of $H$ in $G$ as its subgroup.  In the case that
$H$ is a virtually cyclic subgroup of $G$ which is not finite we have
\begin{equation*}
    \Comm_{G}(H) = \{ g\in G : |H\cap H^{g}| = \infty\}.
\end{equation*}
In particular we have that $\Comm_{G}(H) = G_{[H]}$ in this case.

\begin{lemma}
    \label{lem:aux5}
    Assume that $\Z$ acts freely by conjugation on the set of
    non-trivial conjugacy classes of non-trivial elements of $B$.
    Then the commensurator $\Comm_{G}(H)$ is infinite cyclic for any
    virtually cyclic subgroup $H$ of $G$ that is not a subgroup
    of~$B$.
\end{lemma}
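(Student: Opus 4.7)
The plan is to reduce the problem to showing that $\Comm_G(H)$ injects into $\Z$ under the canonical projection $G\to G/B\isom \Z$. Once established, this immediately gives that $\Comm_G(H)$ is a cyclic subgroup of $\Z$, and since $H$ is itself contained in~$\Comm_G(H)$ and is infinite, we conclude that $\Comm_G(H)$ must be infinite cyclic.

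First I would invoke Lemma~\ref{lem:aux2} to assert that $H$ is in fact infinite cyclic, say $H=\langle(x,r)\rangle$ with $r\neq 0$. Since $H$ is infinite virtually cyclic, I would then use the standard identification
\begin{equation*}
    \Comm_G(H) = \{g\in G : |H\cap H^g| = \infty\}
\end{equation*}
recalled in the paragraph preceding Lemma~\ref{lem:aux5}.

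The main step will be to show that $\Comm_G(H) \cap B = \{1\}$. This is where Lemma~\ref{lem:aux4} does precisely the required work: any $y\in \Comm_G(H)\cap B$ satisfies $y\in B$ and $|H\cap H^y|=\infty$, and since $H$ is infinite cyclic with $H\not\leq B$, Lemma~\ref{lem:aux4} forces $y=1$. This is the only substantive content in the proof, and it is essentially a direct application of an already established result, so I do not expect any real obstacle here.

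Finally, I would compose $\Comm_G(H) \hookrightarrow G$ with the projection $G\to G/B\isom \Z$. By the previous step its kernel is trivial, so $\Comm_G(H)$ embeds into $\Z$ and is therefore cyclic. Since the infinite cyclic subgroup $H$ sits inside $\Comm_G(H)$, this cyclic group cannot be finite, and hence $\Comm_G(H)$ is infinite cyclic, as claimed.
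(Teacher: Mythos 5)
Your proposal is correct and follows essentially the same route as the paper: reduce to $H$ infinite cyclic via Lemma~\ref{lem:aux2}, kill $\Comm_{G}(H)\cap B$ via Lemma~\ref{lem:aux4}, and conclude from the projection $G\to\Z$. The paper merely phrases this as a contradiction argument (if $\Comm_{G}(H)$ were not infinite cyclic, the projection would have a non-trivial element of $B$ in its kernel), whereas you argue directly; the content is identical.
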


\begin{proof}
    Any such virtually cyclic subgroup $H$ of $G$ is infinite cyclic
    by Lemma~\ref{lem:aux2}.  Therefore $G_{[H]} = \Comm_{G}(H)$.
    Suppose that $G_{[H]}$ is not infinite cyclic.  Then the canonical
    projection $\pi\: B\rtimes Z \to \Z$ cannot map $G_{[H]}$
    isomorphically onto its image.  Hence there exists a non-trivial
    $y\in G_{[H]}\cap \ker(\pi) = G_{[H]} \cap B$.  Since $H$ is
    infinite cyclic we get $|H\cap H^{y}| \neq \infty$ by
    Lemma~\ref{lem:aux4} which is equivalent to $[H] \neq [H^{y}]$,
    and this is a contradiction to the assumption that $y\in G_{[H]}$.
    Therefore $G_{[H]} = \Comm_{G}(H)$ must be infinite cyclic.
\end{proof}

\begin{proposition}
    \label{prop:main-aux}
    Let $G$ be an arbitrary group and let $\frakF$ and $\frakG$ be
    families of subgroups of $G$ such that
    \begin{equation*}
        \Ffin(G) \subset \frakF \subset \frakG \subset\Fvc(G).
    \end{equation*}
    Assume that the commensurator $\Comm_{G}(H)\in \frakG$ for any 
    $H\in \frakG\setminus\frakF$, then every $H\in \frakG\setminus 
    \frakF$ is contained in a unique maximal element $H_{\max}\in 
    \frakG$ and~$N_{G}(H_{\max}) = H_{\max}$.
\end{proposition}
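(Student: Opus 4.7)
The plan is to take $H_{\max}\defeq \Comm_{G}(H)$ as the candidate maximum. By hypothesis $H_{\max}\in\frakG$, and trivially $H\leq H_{\max}$, because $H$ is infinite (since $\Ffin(G)\subset\frakF$ excludes~$H$ from the finite subgroups) and conjugation by any $h\in H$ fixes $H$, giving $|H\cap H^{h}|=|H|=\infty$.

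First I would show that $H_{\max}$ contains every $K\in\frakG$ with $H\leq K$. Since $K\in\frakG\subset\Fvc(G)$ and $K$ already contains the infinite group $H$, the subgroup $K$ is infinite virtually cyclic and hence $H$ has finite index in~$K$. For any $k\in K$ the conjugate $H^{k}\leq K$ also has finite index in $K$, so $H\cap H^{k}$ has finite index in $K$ and in particular is infinite. Thus $k\in\Comm_{G}(H)=H_{\max}$, proving $K\leq H_{\max}$. This shows that $H_{\max}$ is the actual maximum (for inclusion) of the set of elements of~$\frakG$ containing $H$, so it is in particular the unique maximal such element.

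For the normaliser equality I would prove the stronger statement $\Comm_{G}(H_{\max})=H_{\max}$, from which $N_{G}(H_{\max})\leq\Comm_{G}(H_{\max})=H_{\max}$ follows (the reverse inclusion $H_{\max}\leq N_{G}(H_{\max})$ is trivial). So let $g\in\Comm_{G}(H_{\max})$. Because $H_{\max}\in\frakG\subset\Fvc(G)$ is infinite virtually cyclic, $H$ has finite index in $H_{\max}$ and, after conjugation, $H^{g}$ has finite index in $H_{\max}^{g}$. Combining these two finite-index inclusions with $|H_{\max}\cap H_{\max}^{g}|=\infty$ yields that $H\cap H_{\max}^{g}$, and therefore $H\cap H^{g}$, has finite index in $H_{\max}\cap H_{\max}^{g}$ and is in particular infinite. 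Hence $g\in\Comm_{G}(H)=H_{\max}$, as required.

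The argument is essentially bookkeeping with finite-index subgroups inside virtually cyclic groups; the only thing one must be careful about is that the relevant subgroups are genuinely infinite rather than merely non-trivial, which is guaranteed at each step by the chain $\Ffin(G)\subset\frakF\subset\frakG\subset\Fvc(G)$ together with the hypothesis $\Comm_{G}(H)\in\frakG$. I do not anticipate any serious obstacle.
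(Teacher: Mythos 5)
Your proposal is correct and follows essentially the same route as the paper: both take $H_{\max}\defeq\Comm_{G}(H)$, show every $K\in\frakG$ containing $H$ commensurates $H$ and hence lies in $H_{\max}$, and then deduce $N_{G}(H_{\max})\leq\Comm_{G}(H_{\max})=H_{\max}$. The only difference is cosmetic: the paper phrases the argument through the commensuration equivalence relation $H\sim K\iff|H\cap K|=\infty$ and the stabiliser $G_{[H]}$ of the class $[H]$, whereas you unpack the same content as direct finite-index bookkeeping inside virtually cyclic groups.
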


\begin{proof}
    Since $H$ is an infinite virtually cyclic subgroup of $G$ it 
    follows that $G_{[H]}= \Comm_{G}(H)$ and thus $G_{[H]}\in\frakG$ 
    by assumption.
    
    Trivially we have that $H\leq G_{[H]}$.  If $K\in \frakG$ with
    $H\leq K$, then $H\sim K$ since $H$ is not finite, and for any
    $k\in K$ we get $[H^{k}] = [K^{k}] = [K] = [H]$.  Therefore any
    $k\in K$ stabilises $[H]$.  This implies $K\leq
    G_{[H]}$ and thus $G_{[H]}$ is maximal and unique in
    $\frakG\setminus \frakF$, that is $H_{\max} = G_{[H]}$.
    
    Finally, $H_{\max}\leq N_{G}(H_{\max}) \leq \Comm_{G}(H_{\max}) = 
    G_{[H_{\max}]} = H_{\max}$ and hence $H_{\max} = 
    N_{G}(H_{\max})$.
\end{proof}

Together with Lemma~\ref{lem:aux2}, we get the following result:

\begin{corollary}
    \label{cor:main-aux}
    Let $G = B\rtimes \Z$ and assume that $\Z$ acts freely by
    conjugation on the set of conjugacy classes of non-trivial
    elements of $B$.  Then every $H\in \Fvc(G)\setminus \Fvc(B)$ is
    contained in a unique maximal element $H_{\max}\in
    \Fvc(G)\setminus \Fvc(B)$ and $N_{G}(H_{\max}) = H_{\max}$. 
    Furthermore $\Fvc(B)\cap H = \{1\}$ for any $H\in 
    \Fvc(G)\setminus\Fvc(B)$.\qed
\end{corollary}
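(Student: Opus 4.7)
The plan is to deduce this corollary directly from Proposition~\ref{prop:main-aux} together with Lemmas~\ref{lem:aux2} and~\ref{lem:aux5}; the preceding technical lemmas have already done the hard work, so the proof is essentially a matter of bookkeeping and correctly installing the hypotheses.

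First I would set $\frakF \defeq \Fvc(B)$, where subgroups of $B$ are identified with subgroups of $G$ via the canonical inclusion $B\hookrightarrow G$, and $\frakG \defeq \Fvc(G)$. To apply Proposition~\ref{prop:main-aux} I must verify the chain $\Ffin(G)\subset \frakF\subset \frakG\subset \Fvc(G)$. The outer inclusions are immediate: virtually cyclic subgroups of $B$ are virtually cyclic subgroups of $G$. For $\Ffin(G)\subset \Fvc(B)$ I would observe that any finite subgroup of $G=B\rtimes \Z$ must lie in the kernel of the projection $G\to \Z$, since $\Z$ is torsion-free; hence every finite subgroup of $G$ is a finite (in particular virtually cyclic) subgroup of $B$.

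Next I would verify the commensurator hypothesis. Let $H\in\frakG\setminus \frakF$, that is, $H$ is a virtually cyclic subgroup of $G$ which is not contained in $B$. By Lemma~\ref{lem:aux5} the commensurator $\Comm_{G}(H)$ is infinite cyclic, so in particular $\Comm_{G}(H)\in \Fvc(G)=\frakG$. Proposition~\ref{prop:main-aux} then yields a unique maximal element $H_{\max}\in \frakG$ with $H\leq H_{\max}$ and $N_{G}(H_{\max}) = H_{\max}$. Because $H\leq H_{\max}$ and $H\not\subset B$, we also have $H_{\max}\not\subset B$, so $H_{\max}\in \Fvc(G)\setminus \Fvc(B)$ as required.

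Finally, for the last statement I would invoke Lemma~\ref{lem:aux2}: any $H\in \Fvc(G)\setminus \Fvc(B)$ is infinite cyclic, say $H=\langle (x,r)\rangle$ with $r\neq 0$. Every non-trivial power $(x,r)^{n}$ projects to $nr\neq 0$ under the canonical map $G\to \Z$, so $H\cap B=\{1\}$. Consequently $K\cap H=\{1\}$ for every $K\in \Fvc(B)$, giving $\Fvc(B)\cap H=\{1\}$ in the sense of the intersection of families. There is no real obstacle here; the substantive content has been packaged into the earlier lemmas, and the only mild subtlety is to keep straight the identification of $\Fvc(B)$ as a subcollection of the subgroups of $G$ when applying Proposition~\ref{prop:main-aux}.
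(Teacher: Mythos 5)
Your proof is correct and follows essentially the same route as the paper, which simply cites Proposition~\ref{prop:main-aux} together with Lemma~\ref{lem:aux2} (the commensurator hypothesis being Lemma~\ref{lem:aux5}); you have merely made the bookkeeping explicit, including the check that $\Ffin(G)\subset \Fvc(B)$ and that $H_{\max}\notin\Fvc(B)$.
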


%
%

\section{A Generalisation of Juan-Pineda and Leary's Construction}

Let $G$ be an arbitrary group and assume that $\frakF$ and $\frakG$
are two families of subgroups of $G$ which satisfy the conditions of
Proposition~\ref{prop:main-aux}.  Then we have the following
generalisation of Proposition~\ref{prop:JPL-orig}.

\begin{proposition}
    \label{prop:JPL}
    Let $\frakF$ be a full family and $\frakG$ a semi-full family of
    subgroups of $G$ with $\Ffin(G)\subset \frakF\subset \frakG\subset
    \Fvc(G)$.  Assume that every $H\in \frakG\setminus \frakF$ is
    contained in a unique maximal element $H_{\max}\in \frakG$ and
    $N_{G}(H_{\max}) = H_{\max}$.  Moreover, assume that $\frakF\cap H
    \subset \Ffin(H)$ for every $H\in \frakG\setminus \frakF$.
    Let~$\calC$ be a complete set of representatives of conjugacy
    classes of maximal elements in~$\frakG\setminus \frakF$.  Denote
    by $\calC_{o}$ the set of orientable elements of $\calC$ and
    denote by~$\calC_{n}$ the set of non-orientable elements of
    $\calC$.  Then a model for $E_{\frakG}G$ can be obtained from
    model for $E_{\frakF}G$ by attaching
    \begin{enumerate}
        \item  orbits of $0$-cells indexed by $\calC$;
    
        \item  orbits of $1$-cells indexed by $\calC_{o} \cup 
	\{1,2\}\times \calC_{n}$;
    
        \item  orbits of $2$-cells indexed by $\calC$.
    \end{enumerate}
    Furthermore, a model for $B_{\frakG}G$ can be obtained from a
    model for $B_{\frakF}G$ by attaching $2$-cells indexed by
    $\calC_{o}$.
\end{proposition}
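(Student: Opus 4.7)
The plan is to adapt Juan-Pineda and Leary's argument from Proposition~\ref{prop:JPL-orig}. Starting from a model $X$ for $E_{\frakF}G$, I will attach equivariant cells indexed by $\calC$ to form a model $Y$ for $E_{\frakG}G$. The hypotheses that each $H\in\frakG\setminus\frakF$ lies in a unique maximal $H_{\max}\in\frakG$ and that $N_G(H_{\max})=H_{\max}$ play exactly the role that the corresponding assertions about maximal virtually cyclic subgroups played in the original proof: they guarantee that the isotropy of the cells we attach is precisely $H_{\max}$ and not larger.

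The building block is a one-dimensional $H_{\max}$-CW-complex $T_H$ for each $H\in\calC$. Because $\Ffin(G)\subset\frakF$ and $\frakF\cap H\subset\Ffin(H)$ by hypothesis, we have $\frakF\cap H_{\max}=\Ffin(H_{\max})$, so $T_H$ may be taken to be a model for $\underline{E}H_{\max}$: a line on which $H_{\max}$ acts through its infinite cyclic quotient (the orientable case, giving one orbit of $0$-cells and one of $1$-cells) or through its infinite dihedral quotient (the non-orientable case, giving one orbit of $0$-cells and two of $1$-cells). Using the universal property of $X$ as an $H_{\max}$-CW-complex I choose an $H_{\max}$-equivariant map $f_H\colon T_H\to X$ and form the $G$-pushout
\begin{equation*}
    Y \defeq X\cup_{\coprod_{H\in\calC} G\times_{H_{\max}} T_H}\Bigl(\coprod_{H\in\calC} G\times_{H_{\max}} CT_H\Bigr),
\end{equation*}
where $CT_H$ is the cone with cone point $v_H$ fixed by $H_{\max}$. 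The new equivariant cells are exactly those listed in the statement: one orbit of $0$-cells (the orbit of $v_H$, with stabiliser $H_{\max}$), the orbits of $1$-cells coming from $T_H$, and one orbit of $2$-cells (the cone) per $H\in\calC$.

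The main technical obstacle is to verify contractibility of $Y^K$ for every $K\in\frakG$. For $K\in\frakF$ the new contribution to $Y^K$ is a disjoint union of cones on the sets $(T_H)^K$, each of which deformation retracts onto its base, leaving $Y^K$ homotopy equivalent to $X^K$. For $K\in\frakG\setminus\frakF$ one must first use the uniqueness of $H_{\max}$ together with the condition $N_G(H_{\max})=H_{\max}$ (arguing as in Proposition~\ref{prop:main-aux}) to show that only a single $G$-translate of a single cone contributes fixed cells to $Y^K$; one then concludes that $Y^K$ deformation retracts onto the cone point. The subtle point is ruling out spurious fixed cells from translates $g\cdot CT_H$, which follows because $K^{g^{-1}}\leq H_{\max}$ forces $g\in N_G(H_{\max})=H_{\max}$.

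Finally, the statement about $B_{\frakG}G$ follows by passing to the orbit space $Y/G$. Each new $G$-orbit of cells contributes one cell of the same dimension to $Y/G$. In the orientable case $T_H/H_{\max}$ is a circle, sitting inside $B_{\frakF}G$ via the induced map $\bar f_H$, so the cone attaches as a new $2$-cell along this circle, giving the claimed $\calC_o$-indexed family of $2$-cells. In the non-orientable case $T_H/H_{\max}$ is a closed interval, and the cone on an interval is itself contractible and can be absorbed into $B_{\frakF}G$ after collapsing onto the image of $\bar f_H$, contributing no new cells in the orbit space. This explains the asymmetry between $\calC_o$ and $\calC_n$ in the orbit-space description.
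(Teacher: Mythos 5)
Your construction is, in substance, the paper's own: attaching $\coprod_{H\in\calC}G\times_{H_{\max}}CT_H$ to a model $E$ for $E_{\frakF}G$ along the induced $G$-map is exactly the double mapping cylinder of $E \leftarrow \coprod_{H}G\times_{H_{\max}}T_H \rightarrow \coprod_{H}G/H_{\max}$ that the paper builds, so the space is the same; what differs is the verification, where you check contractibility of fixed sets directly on the pushout instead of following the paper through the equivariant Whitehead theorem and the identification of the result with the join $E * \coprod_H G/H_{\max}$. Your analysis for $K\in\frakG\setminus\frakF$ is the right one: such $K$ is infinite (because $\Ffin(G)\subset\frakF$), so it can fix only cone points, and uniqueness of the maximal element together with $N_G(H_{\max})=H_{\max}$ leaves exactly one fixed cone point; note that you also need fullness of $\frakF$ at this step, to know that $E^K=\emptyset$, so that $Y^K$ really is that single point.

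Two concrete points need repair. First, the cell bookkeeping in the non-orientable case is backwards: a line on which $H_{\max}$ acts through its infinite dihedral quotient has \emph{two} orbits of $0$-cells and \emph{one} orbit of $1$-cells (every reflection fixed point must be a vertex, and these fall into two orbits; the structure you describe, one vertex orbit and two edge orbits, is not even realizable as an equivariant CW-structure, since a reflection would flip an edge). This matters because the new cells of $Y$ are the cone point together with the cones on the cells of $T_H$, so the attached $1$-cells are indexed by the vertex orbits of $T_H$ and the attached $2$-cells by its edge orbits; with your numbers you would get one orbit of $1$-cells and two of $2$-cells for each $H\in\calC_n$, contradicting the statement, whereas the corrected structure gives precisely $\calC_o\cup\{1,2\}\times\calC_n$ and $\calC$ (and is also the structure your own description of $T_H/H_{\max}$ as a closed interval presupposes). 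Second, ``each cone deformation retracts onto its base'' is false for a general cone; it holds here only because each base $(T_H)^{K^{g}}$ is nonempty and contractible, and you must record why: any $K\in\frakF$ with $K^{g}\leq H_{\max}$ lies in $\frakF\cap H_{\max}\subset\Ffin(H_{\max})$, hence is finite, and $T_H$ is a model for $\underline{E}H_{\max}$, so $(T_H)^{K^{g}}$ is contractible. Were some base empty, the corresponding cone point would be an isolated extra component of $Y^K$ and contractibility would fail; for infinite $K\in\frakF$ the same hypothesis shows no translate of any cone contributes, so $Y^K=E^K$. With these corrections, plus the easy remark that every new cell has stabiliser either a conjugate of some $H_{\max}$ or a finite group, hence in $\frakG$, your argument goes through, and your treatment of the orbit-space statement is the same as the paper's appeal to Juan-Pineda and Leary.
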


\begin{proof}
    \setcounter{claim}{0}
    We only need to verify that Juan-Pineda and Leary's construction
    works unchanged in the more general setting.  We fix a model
    $E$ for~$E_{\frakF}G$.
    
    Let $H\in \calC$.  By~\cite[p.~137]{juan-pineda-06} we can choose
    a $1$-dimensional model~$E_{H}$ for $\underline EH$ which
    homeomorphic to the real line and such that $E_{H}/H$ is a loop if
    $H\in \calC_{o}$ or a line segment if $H\in \calC_{n}$.  Denote by
    \begin{equation*}
	Z_{H} \defeq  G\times_{H} E_{H}
    \end{equation*}
    the $G$ space which is induced from the $H$-space
    $E_{H}$~\cite[pp.~52ff.]{kawakubo-91}.  If $[g,x]\in Z_{H}$, then
    $G_{[g,x]} = G_{g[1,x]} = (G_{[1,x]})^{g^{-1}}$.  Since $G_{[1,x]}
    = H_{x}$ and $H_{x}$ is a finite subgroup of $H$ it follows that
    $G_{[g,x]}\in \Ffin(G)$.  Therefore it follows that
    $\frakF(Z_{H})\subset \Ffin(G) \subset \frakF$ and there exists a
    $G$-map $f_{H}\: Z_{H}\to E$ by the universal property of $E$.
    
    Furthermore, we set
    \begin{equation*}
        X_{H}\defeq  G/H,
    \end{equation*}
    which is a discrete transitive $G$-set.  There exists a
    $G$-equivariant projection $\pi_{H}\: Z_{H} \to X_{H}$ which maps
    $[g,x]$ to $gH$ and an $H$-equivariant inclusion $i_{H}\: E_{H}\to
    Z_{H}$ given by $i_{H}(x) \defeq [1,x]$.  Denote by $V_{gH} \defeq
    \pi_{H}^{-1}(gH)$.  Clearly~$i_{H}(E_{H}) \subset V_{H}$.  On the
    other hand, if $[h,x]\in V_{H}$, then $hx\in E_{H}$ such
    that~$i_{H}(hx) = [1,hx] = [h,x]$.  That is $i_{H}(E_{H}) \subset
    V_{H}$ and we have the equality $i_{H}(E_{H}) = V_{H}$.  Since $G$
    is discrete, it follows that $i_{H}$ is an open map and in
    particular it maps $E_{H}$ homeomorphically onto $V_{H}$.  Let
    $R_{H}$ be a complete system of representatives of the left cosets
    $G/H$.  Since $G/H$ is discrete we have that $Z_{H}$ is the
    disjoint union
    \begin{equation*}
        Z_{H} = \coprod_{g\in R_{H}} V_{gH} 
	= \coprod_{g\in R_{H}} gV_{H}
    \end{equation*}
    of contractible subspaces $gV_{H}$, $g\in R_{H}$, which are 
    permuted by the action of~$G$. 
    
    \begin{claim}	
	Let $K$ be a finite subgroup of $G$.  Then the projection
	$\pi_{H}$ induces a homotopy equivalence
	\begin{equation*}
	    \pi_{H}\:(Z_{H})^{K} \to (X_{H})^{K}.
	\end{equation*}
    \end{claim}
    
    Let $[g,x]\in (Z_{H})^{K}$.  Without any loss of generality we may
    assume that~$g\in R_{H}$.  Now $[g,x]\in (Z_{H})^{K}$ implies
    {\allowdisplaybreaks
    \begin{align*}
         & \forall k\in K\: k[g,x] = [g,x]
	 \\
	 \Longleftrightarrow \quad 
	 & \forall k\in K\: [g^{-1}kg,x] = [1,x]
	 \\
	 \Longleftrightarrow \quad
	 &  \forall k\in K\: \exists h\in H\: g^{-1}kgh^{-1} = 1 
	 \text{ and } hx=x
	 \\
	 \Longleftrightarrow \quad
	 & \forall k\in K\: \exists h\in H_{x}\: g^{-1}kg = h
	 \\
	 \Longleftrightarrow \quad
	 & K^{g}\leq H_{x}
	 \\
	 \Rightarrow \quad
	 & x\in \{ y\in E_{H} : K^{g}\leq H_{y}\} = (E_{H})^{K^{g}} 
	 \text{ and $K$ is finite}.
    \end{align*}}
    Thus $[1,x]\in (V_{H})^{K^{g}} = \{ [1,x] : x\in
    (E_{H})^{K^{g}})\}$ and therefore $[g,x] = g[1,x] \in
    g\bigl((V_{H})^{K^{g}}\bigr)$.  On the other hand, let $[1,x]\in
    (V_{H})^{K^{g}}$.  Then
    \begin{equation*}
	k[g,x] = g(g^{-1}kg)[1,x] = g [1,x] = [g,x]
    \end{equation*}
    for all $k\in K$.  Therefore $[g,x]\in
    (Z_{H})^{K}$ (and $K$ is finite as before).
    
    Altogether this shows
    \begin{equation*}
        (Z_{H})^{K} = \coprod_{g\in R_{H}} g\bigl( 
	(V_{H})^{K^{g}}\bigr)
    \end{equation*}
    is the disjoint union of the subspaces
    $g\bigl((V_{H})^{K^{g}}\bigr)$.  Since $K$ is assumed to be finite
    we have that
    \begin{enumerate}
	\item $(V_{H})^{K^{g}}$ is contractible if $K^{g}\leq H$;
    
	\item $(V_{H})^{K^{g}}=\emptyset$ otherwise.
    \end{enumerate}
    On the other hand $gH\in (G/H)^{K}$ if and only if $K^{g}\leq H$.
    It follows that $\pi_{H}$ induces map
    \begin{equation*}
        \pi_{H}\: (Z_{H})^{K} \to (X_{H})^{K}
    \end{equation*}
    which maps the contractible components $g\bigl ((V_{H})^{K^{g}})$ 
    of $Z_{H}$ in an one-to-one way onto the discrete space 
    $(X_{H})^{K} = (G/H)^{K}$. It follows that $\pi_{H}$ induces a 
    homotopy equivalence $(Z_{H})^{K} \to (X_{H})^{K}$ and the claim 
    follows.
    
    \medskip
    
    We set
    \begin{equation*}
	Z \defeq  \coprod_{H\in \calC} Z_{H}
    \end{equation*}
    and the $G$-maps $f_{H}$ and $\pi_{H}$ give rise to $G$-maps $f\:
    Z\to E$ and $\pi\: Z\to X$ with $X \defeq  \coprod_{H\in \calC} 
    X_{H}$. Note that $\pi$ induces a homotopy equivalence $Z^{K}\to 
    X^{K}$ for every finite subgroup $K$ of $G$.
       
    \begin{claim}
	\label{clm:JPL2}
	The $G$-map
	\begin{equation*}
	    (f, \pi) \: Z\to E\times X
	\end{equation*}
	given by $[g,x] \mapsto \bigl(f([g,x]), \pi([g,x])\bigr)$ is a
	$G$-homotopy equivalence.
    \end{claim}
     
    Let $K$ be a subgroup of $G$ such that $(E\times X)^{K} =
    E^{K}\times X^{K} \neq \emptyset$.  Then~$E^{K}\neq \emptyset$ and
    $X^{K}\neq \emptyset$.  The condition $E^{K}\neq \emptyset$
    implies that $K\in \frakF$ and~$X^{K}\neq \emptyset$ implies that
    there exists a $H\in \calC$ and a $g\in G$ such that~$K^{g}\leq
    H$.  Thus $K^{g} = K^{g}\cap H\in \frakF\cap H \leq \Ffin(H)$.
    Therefore $K$ is a finite subgroup of $G$.  Since $\Ffin(G)\subset
    \frakF$ it follows that $E^{K}$ is contractible by the universal
    property of $E$.  Moreover, since $K$ is finite, it follows that
    $\pi$ induces a homotopy equivalence $Z^{K}\to X^{K}$.
    
    It follows that $(f,\pi)$ induces a homotopy equivalence $Z^{K}\to
    E^{K}\times X^{K} = (E\times X)^{K}$.  To see this, denote by
    $\theta$ the homotopy inverse of $\pi$ restricted to~$X^{K}$, and
    denote by $p$ the projection $E^{K}\times X^{K}\to X^{K}$.  Let
    \begin{equation*}
        \tilde f\: E^{K}\times X^{K} \to Z^{K}
    \end{equation*}
    be the composite map $\tilde f \defeq \theta\circ p$.  Then
    $\tilde f \circ (f,\pi) = \theta \circ p \circ (f,\pi) =
    \theta\circ \pi \homotop \id$.  On the other hand we have that
    $\pi\circ \theta = \id$ since $X$ is discrete.  Therefore
    \begin{equation*}
        (f,\pi) \circ \tilde f = (f\circ \theta\circ p, \pi\circ 
	\theta\circ p) =  (f\circ \theta\circ p, p)
    \end{equation*}
    maps $(x,gH)\mapsto((f\circ\theta)(gH), gH)$ for all $(x,gH)\in
    E^{K}\times X^{K}$.  Since $E^{K}$ is contractible it follows that
    $(f,\pi)\circ\tilde f\homotop \id$. Altogether this shows that 
    $(f,\pi)$ has a homotopy inverse and therefore it is a homotopy 
    equivalence.
    
    Since $(f,\pi)$ is a $G$-map and $K$ has been an arbitrary
    subgroup of $G$ such that $(E\times X)^{K}\neq \emptyset$ we can
    apply the Equivariant Whitehead Theorem~\cite[p.~36]{luck-89} and
    we get that $(f,\pi)$ is a $G$-homotopy equivalence and this
    proves Claim~\ref{clm:JPL2}.
    
    \medskip
    
    As in~\cite[p.~140]{juan-pineda-06} we can attach $Z\times [0,1]$
    to the disjoint union of $E$ and $X$, identifying $(z,0)$ with
    $f(z)\in E$ and $(z,1)$ with $\pi(z)\in X$.  Denote this space by
    $\tilde E$.  Since $Z\homotop_{G} E\times X$ it follows that
    $\tilde E$ is $G$-homotopy equivalent to the join $E*X$ of $E$ and
    $X$.  Note that the join inherits a natural~$G$-CW-complex
    structure from $E$ and $X$.
   
    \begin{claim}
        The join $E*X$ is a model for $E_{\frakG}G$.
    \end{claim}
    
    If $K\in \frakF(E*X)$, then at least one of the following cases
    does hold:
    \begin{enumerate}
	\item $K\in \frakF(E)\subset \frakF\subset \frakG$;
    
	\item $K\in \frakF(X)\subset \frakG$;
	
	\item $K = K_{1}\cap K_{2}$ with $K_{1}\in \frakF\subset
	\frakG$ and $K_{2}\in \frakG$.
    \end{enumerate}
    Since $\frakG$ is semi-full it follows that also in the last case
    $K\in \frakG$ holds.  Altogether $\frakF(E*X)\subset \frakG$. 
    
    If $K\in \frakF$ then $E^{K}$ is contractible and if $K\in
    \frakG\setminus \frakF$, then $X^{K}$ consists of a single point
    and is therefore contractible.  It follows that $(E*X)^{K}$ is
    contractible in both cases, that is for every $K\in \frakG$.
    
    Altogether $E*X$ is a model for $E_{\frakG}G$ by
    Proposition~\ref{prop:classifying-space-alt} and this proves the
    claim.
    
    \medskip
    
    Since $E*X\homotop_{G} \tilde E$ it follows that $\tilde E$ is a
    model for $E_{\frakG}G$, too.  It follows that $\tilde E$ is 
    obtained from $E$ by attaching orbits of $0$-, $1$- and $2$-cells 
    as described in the proposition.
    
    The remaining claim about the construction of a model for
    $B_{\frakG}G$ from a model for $B_{\frakF}G$ follows from the
    argument which proved Corollary~10
    in~\cite[p.~141]{juan-pineda-06}. This concludes the proof of 
    Proposition~\ref{prop:JPL}.
\end{proof}

Note that in the case $\frakG= \Fvc(G)$ and $\frakF=\Ffin(G)$ we
recover the original statement of Proposition~\ref{prop:JPL-orig}.
However we apply it to the case that $G = B\rtimes\Z$, $\frakF =
\Fvc(B)$ and $\frakG = \Fvc(G)$.  If $\Z$ acts freely by conjugation
on the set of conjugacy classes of non-trivial elements of $B$, then
Corollary~\ref{cor:main-aux} tells us that we can use
Proposition~\ref{prop:JPL} in order to construct a model for $\uu EG$
from a model for $E_{\Fvc(B)}G$.  However, in order to obtain this way
a nice model for~$\uu EG$ we need to have a nice model for
$E_{\Fvc(B)}G$ to start with.  In the next section we will give a
general construction for such a model if a nice model for $\uu EB$ is
given.

%
%

\section{Constructing a Model for $E_{\frakF}G$ from a Model for
$E_{\frakF}B$}
\label{sec:model}

We carry out the construction in a setting that is more general than
in the previous section.  Let $G\defeq  B\rtimes \Z$ be an arbitrary
infinite cyclic extension, where $\Z$ acts on $B$ via an automorphism
$\varphi\in \Aut(B)$.  Let $\frakF$ be a family of subgroups of $B$.
We assume that $\frakF$ is invariant under the automorphism $\varphi$,
that is $\varphi^{k}(H)\in \frakF$ for every $H\in \frakF$ and $k\in
\Z$.  This implies that $H\in \frakF$ if and only if
$\varphi(H)\in\frakF$ for any subgroup $H$ of~$B$.  Furthermore this
implies that $\frakF$ is not just a family of subgroups of~$B$ but
also a family of subgroups of~$G$.

We begin our construction with the assumption that we are given a
model~$X$ for $E_{\frakF}B$.  For each $k\in \Z$ let $X_{k}$ be a
copy of $X$ seen as a set. We define a $B$-action
\begin{equation*}
    \Phi_{k}\: B\times X_{k} \to X_{k}
\end{equation*}
on $X_{k}$ by $\Phi_{k}(g,x) \defeq  \varphi^{-k}(g) x$.  Note that each
$X_{k}$ is a model for $E_{\frakF}B$ since~$\frakF$ is assumed to be
invariant under the automorphism $\varphi$.

\begin{figure}[tbp]
    \begin{center}
        \begin{tikzpicture}[scale=\ratio{\textwidth}{10cm}]	
	    
        \clip  (-3.75,-1.66) rectangle (6.25,1.65);        
            
	\foreach \i in {-2,-1,0,1,2} {
	    
	    \coordinate (A1) at (2.5*\i    ,-1.3);
	    \coordinate (B1) at (2.5*\i    , 1.3);
	    \coordinate (A2) at (2.5*\i + 2.5,-1.3);
	    \coordinate (B2) at (2.5*\i + 2.5, 1.3);
	    \coordinate (a2) at (2.5*\i + 2.5,-0.95);
	    \coordinate (b2) at (2.5*\i + 2.5, 0.95);
	    
	    \filldraw [lightgray, nearly transparent]
	    (A1) -- (a2) -- (b2) -- (B1) -- cycle;
	
	    \draw [gray] (A1) -- (a2) (B1) -- (b2);
	    
	    \draw [thick] 
	    (B1) -- (A1)
	    (B2) -- (A2);
            }
	    
	    \draw
	    (-2.5,-1.3) node [below]{$X_{k-1}$}
	    (0,-1.3) node [below]{$X_{k}$}
	    (2.5,-1.3) node [below]{$X_{k+1}$}
	    (5,-1.3) node [below]{$X_{k+2}$};
	\end{tikzpicture}
    \end{center}
    \caption{A schematic picture of the $B$-CW-complex $Y$.}
    \label{fig:model}
\end{figure}

Since $X_{0}$ and $X_{1}$ are models for $E_{\frakF}B$ there exists a
$B$-map $f\: X_{0} \to X_{1}$.  In other words $f$ is a continuous map
$f\: X\to X$ which satisfies $f(gx) = \varphi^{-1}(g)f(x)$ for every
$x\in X$ and $g\in B$.  By the equivariant Cellular Approximation
Theorem~\cite[p.~32]{luck-89} we may assume without loss of
generality that
$f$ is cellular.  Denote by $X_\infty$ the disjoint union of
$B$-spaces
\begin{equation*}
    X_{\infty} \defeq  \coprod_{k\in \Z} (X_{k} \times [0,1])
\end{equation*}
and let $Y$ be the quotient space
\begin{equation*}
    Y \defeq  X_{\infty} / \sim
\end{equation*}
under the equivalence relation generated by $(x,1) \sim (f(x), 0)$
whenever $x\in X_{k}$ and $f(x)\in X_{k+1}$ for some $k\in \Z$.  Since
$f$ is a cellular $B$-map it follows that $Y$ is a $B$-CW-complex.
Essentially, it is a mapping telescope which extends to infinity in
both directions, see Figure~\ref{fig:model}.  Note that if $X$ is
an~$n$-dimensional $B$-CW-complex, then $Y$ is $(n+1)$ dimensional
$B$-CW-complex.

\begin{lemma}
    \label{lem:pre-model}
    The $B$-CW-complex $Y$ is a model for $E_{\frakF}B$.
\end{lemma}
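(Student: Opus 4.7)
The plan is to verify the two conditions of Proposition~\ref{prop:classifying-space-alt}: namely, that $\frakF(Y) \subset \frakF$ and that $Y^{H}$ is contractible for every $H \in \frakF$.

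First I would track what the twisted action does on stabilisers and fixed point sets. Under the action $\Phi_{k}(g,x) = \varphi^{-k}(g) x$, the $B$-stabiliser of a point $x \in X_{k}$ is precisely $\varphi^{k}(B_{x})$, where $B_{x}$ denotes the stabiliser of $x$ under the original $B$-action on $X$. Since $\frakF$ is assumed to be $\varphi$-invariant, $\varphi^{k}(B_{x}) \in \frakF$, so points in the interior of each piece $X_{k} \times (0,1)$ yield stabilisers in $\frakF$. A point on the boundary is identified with a point in an adjacent $X_{k \pm 1}$, so its stabiliser is also of this form. The same computation shows that for a fixed subgroup $H \in \frakF$, a point $x \in X_{k}$ lies in $(X_{k})^{H}$ if and only if $\varphi^{-k}(g)x = x$ for every $g \in H$, that is, if and only if $x \in X^{\varphi^{-k}(H)}$. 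Hence $(X_{k})^{H} = X^{\varphi^{-k}(H)}$, and by the $\varphi$-invariance of $\frakF$ together with the assumption that $X$ is a model for $E_{\frakF} B$, each $(X_{k})^{H}$ is contractible.

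Next I would check that the glueing map $f \: X_{0} \to X_{1}$ is compatible with fixed-point sets so that passage to $H$-fixed points commutes with the construction of the telescope. Writing out the $B$-equivariance condition $f(\Phi_{0}(g,x)) = \Phi_{1}(g, f(x))$, one sees that $f$ restricts to a cellular map $X^{\varphi^{-k}(H)} \to X^{\varphi^{-(k+1)}(H)}$ for every $k \in \Z$. Consequently
\begin{equation*}
    Y^{H} = \Bigl( \coprod_{k \in \Z} (X_{k})^{H} \times [0,1] \Bigr) \Big/ \sim
\end{equation*}
is itself a two-sided infinite mapping telescope, built from the contractible CW-complexes $(X_{k})^{H}$ via the restricted cellular maps.

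The main point is then to conclude that this two-sided mapping telescope of contractible CW-complexes is contractible. For this one uses the standard argument that each finite sub-telescope $\coprod_{|k| \leq N} (X_{k})^{H} \times [0,1] / \sim$ deformation retracts onto any one of its contractible levels, and so is itself contractible; the whole of $Y^{H}$ is an ascending union of such sub-telescopes, so every compact subset lies in a contractible one. Hence $\pi_{n}(Y^{H}) = 0$ for every $n \geq 0$, and since $Y^{H}$ has the homotopy type of a CW-complex, Whitehead's theorem gives that $Y^{H}$ is contractible. Combining this with the stabiliser computation and invoking Proposition~\ref{prop:classifying-space-alt} concludes the proof. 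The one place requiring genuine care is the last step: making sure that the two-sided, rather than one-sided, telescope of contractible spaces is contractible, which is why I would phrase the argument in terms of compact subsets lying in contractible sub-telescopes rather than writing down a single explicit deformation retraction.
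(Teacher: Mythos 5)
Your proof is correct and follows essentially the same route as the paper's: identify $(X_{k})^{H}$ with a fixed-point set of $X$ for a $\varphi$-translate of $H$, observe that $Y^{H}$ is a doubly infinite mapping telescope of contractible CW-complexes, deduce weak contractibility because any sphere lands in a finite subtelescope that deformation retracts onto its (contractible) right-hand end, and conclude via the CW structure of $Y^{H}$; the only cosmetic difference is that the paper checks condition (1) by showing $Y^{H}=\emptyset$ for $H\notin\frakF$, whereas you compute point stabilisers directly, which amounts to the same thing. One small phrasing slip: a finite subtelescope deformation retracts onto its right-hand end, not onto ``any one of its levels'', but since that end is contractible your conclusion is unaffected.
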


\begin{proof}
    Let $H$ be a subgroup of $B$ such that $H\notin \frakF$ and let 
    $x\in X_{k}$ for some $k\in \Z$. Since $\frakF$ is assumed to be 
    invariant under the automorphism $\varphi$ we have 
    $\varphi^{-k}(H) \notin \frakF$. Therefore there exists a $h\in 
    H$ such that $\varphi^{-k}(h)x \neq x$. But then
    \begin{equation*}
        \Phi_{k}(h,x) = \varphi^{-k}(h)x \neq x,
    \end{equation*}
    which implies that $x\notin X_{k}^{H}$.  It follows that
    $X_{k}^{H} = \emptyset$ for all $k\in \Z$.
    Hence $Y^{H}=\emptyset$.
    
    On the other hand, consider the case that $H\in \frakF$.  We want
    to show that $Y^{H}$ is contractible.  Since the subcomplex
    $Y^{H}$ has the structure of a CW-complex it is enough to show
    that $Y^{H}$ is weakly
    contractible~\cite[pp.219ff.]{whitehead-78}.  Since the family
    $\frakF$ is assumed to be invariant under the automorphism
    $\varphi$ it follows that $\varphi^{k}(H)\in \frakF$ for every
    $k\in \Z$.  Then $X_{k}^{H} = X^{\varphi^{k}(H)}$ is contractible
    for every $k\in \Z$.  It follows that $Y^{H}$ is an infinite
    mapping telescope of the collection $X^{H}_{k}$ of contractible
    spaces.  Any image of an~$n$-sphere in $Y^{H}$ will be contained
    in a finite subtelescope.  A finite subtelescope of~$Y^{H}$
    deformation retracts onto its right-hand end space which is 
    contractible. Terefore all homotopy groups of~$Y^{H}$ are 
    trivial, that is $Y^{H}$ is weakly contractible.
\end{proof}

For every $(x,t) \in X_{k}\times [0,1]$ and $(g,r)\in
G$ set 
\begin{equation*}
    \Psi((g,r),(x,t)) \defeq  (\Phi_{k+r}(g,x), t) \in
    X_{k+r} \times [0,1].
\end{equation*}
Straight forward calculation shows that this induces a well defined
action
\begin{equation*}
    \Psi\: G\times Y \to Y
\end{equation*}
of $G$ on $Y$, which extends the already existing $B$-action on
$Y$.  If $(g,r)\in G\setminus B$, then $r\neq 0$ and therefore clearly
$\Psi((g,r),x) \neq x$ for any $x\in Y$.  Then together with
Lemma~\ref{lem:pre-model} this implies that $Y$ is an
$(n+1)$-dimensional model for~$E_{\frakF}G$.  Altogether we have then
shown the following result.

\begin{proposition}
    \label{prop:model}
    Let $G= B\rtimes \Z$ be an arbitrary infinite cyclic extension
    where $\Z$ acts on $B$ via an automorphism $\varphi\in \Aut(B)$.
    Let $\frakF$ be a family of subgroups of $B$ which is invariant
    under the automorphism $\varphi$.  If there exists an
    $n$-dimensional model for $E_{\frakF}B$ then there exists an
    $(n+1)$-dimensional model for~$E_{\frakF}G$.
    \qed
\end{proposition}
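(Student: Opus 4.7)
The plan is to construct $Y$ as an infinite mapping telescope built from countably many twisted copies of the given model $X$ for $E_{\frakF}B$. Concretely, I would let $X_{k}$ for $k\in \Z$ be a copy of $X$ whose $B$-action has been twisted to $\Phi_{k}(g,x)\defeq \varphi^{-k}(g)x$; since $\frakF$ is $\varphi$-invariant, each $X_{k}$ is itself a model for $E_{\frakF}B$. Using the universal property, I obtain a $B$-map $f\: X_{0}\to X_{1}$ which, by the equivariant Cellular Approximation Theorem, may be taken to be cellular. Then $Y$ is defined as the quotient of $\coprod_{k\in \Z} X_{k}\times [0,1]$ by identifying $(x,1)\in X_{k}\times [0,1]$ with $(f(x),0)\in X_{k+1}\times [0,1]$.

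The first task is to verify that $Y$ is an $(n+1)$-dimensional $B$-CW-complex and a model for $E_{\frakF}B$. The cellular structure follows from $f$ being cellular, so $Y$ inherits the equivariant cell structure of copies of $X\times [0,1]$ (which has dimension at most $n+1$). For the fixed-point analysis, I would distinguish two cases. If $H\not\in \frakF$ then $\varphi^{-k}(H)\not\in \frakF$ for every $k\in \Z$, and since $X_{k}^{H}$ with respect to the twisted action $\Phi_{k}$ equals $X^{\varphi^{-k}(H)}=\emptyset$, we get $Y^{H}=\emptyset$. If $H\in \frakF$, each $X_{k}^{H}=X^{\varphi^{-k}(H)}$ is contractible, so $Y^{H}$ is the infinite mapping telescope of contractible spaces; any map from a sphere factors through a finite subtelescope, which deformation retracts onto a contractible end, yielding weak contractibility, and hence contractibility since $Y^{H}$ is a CW-complex.

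Next, I would extend the $B$-action to a $G$-action by the formula $\Psi((g,r),(x,t))\defeq (\Phi_{k+r}(g,x),t)\in X_{k+r}\times [0,1]$ for $(x,t)\in X_{k}\times [0,1]$. This requires checking that $\Psi$ is well defined on the quotient $Y$, i.e.\ compatible with the identifications $(x,1)\sim (f(x),0)$, and that it actually defines a group action. The crucial observation is then that any $(g,r)\in G\setminus B$ has $r\neq 0$ and therefore sends $X_{k}\times\{t\}$ into $X_{k+r}\times\{t\}$, so cannot fix any point of $Y$. Consequently, for any subgroup $K\leq G$: if $K\subset B$ the fixed set $Y^{K}$ is the one analysed above; if $K\not\subset B$, an element of $K\setminus B$ has no fixed points, so $Y^{K}=\emptyset$ and $K$ does not appear as a stabiliser. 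Thus $\frakF(Y)\subset \frakF$ and $Y^{H}$ is contractible for every $H\in\frakF$, so $Y$ is a model for $E_{\frakF}G$ of dimension $n+1$ by Proposition~\ref{prop:classifying-space-alt}.

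The main obstacle is verifying well-definedness of $\Psi$ on the quotient $Y$: one must check that the defining identifications of the telescope are respected, which reduces to the compatibility $f(\Phi_{k}(g,x)) = \Phi_{k+1}(g,f(x))$ for all $g\in B$, $x\in X_{k}$. Unravelling definitions, this is precisely the statement $f(\varphi^{-k}(g)x)=\varphi^{-k-1}(g)f(x)$, which is the content of $f$ being a $B$-map $X_{0}\to X_{1}$ (i.e.\ $f(gy)=\varphi^{-1}(g)f(y)$ applied with $y=\Phi_{k}(g,x)$). Once this compatibility is in place, the remainder of the argument is routine.
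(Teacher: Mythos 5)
Your proposal is correct and is essentially the paper's own argument: the same twisted copies $X_{k}$ with action $\Phi_{k}$, the same doubly infinite mapping telescope $Y$, the same fixed-point analysis showing $Y$ is a model for $E_{\frakF}B$, and the same extension $\Psi$ of the action to $G$ with the observation that elements of $G\setminus B$ act without fixed points. The compatibility check $f(\Phi_{k}(g,x))=\Phi_{k+1}(g,f(x))$ that you spell out is exactly the "straightforward calculation" the paper leaves implicit.
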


%
%

\section{Examples}
\label{sec:examples}

Strictly descending HNN-extensions are a natural source for candidates
for infinite cyclic extensions $G=B\rtimes \Z$ where $\Z$ acts freely
by conjugation on the set of conjugacy classes of the non-trivial
elements of $B$.

The general setup is the following.  Let $B_{0}$ be a group and let
$\varphi\: B_{0}\to B_{0}$ a monomorphism.
Recall that the descending HNN-extension determined by this data is
the group $G$ given by the presentation
\begin{equation*}
    G \defeq  \langle B_{0}, t \mid t^{-1}xt = \varphi(x)
    \text{ for all $x\in B_{0}$}\rangle
\end{equation*}
and this group is usually denoted by $B_{0} *_{\varphi}$ in the
literature.  The group~$B_{0}$ is called the base group of the
HNN-extension.  The HNN-extension is called \emph{strictly descending}
if the monomorphism $\varphi$ is not an isomorphism.  We consider
$B_{0}$  as a subgroup of $G$ in the obvious way.

Conjugation by $t\in G$ defines an automorphism of $G$ which agrees 
on $B_{0}$ with $\varphi$ which we will therefore denote by the same 
symbol. In other words, the monomorphism $\varphi\: B_{0} \to B_{0}$ 
extends to the whole group $G$ if we set
\begin{equation*}
    \varphi\: G\to G, x\mapsto \varphi(x) \defeq  t^{-1}xt.
\end{equation*}
For each $k\in \Z$ we set $B_{k} \defeq  \varphi^{k}(B_{0})$. In this way
we 
obtain a descending sequence
\begin{equation*}
    \ldots \supset B_{-2} \supset B_{-1} \supset B_{0}
    \supset B_{1} \supset B_{2} \supset \ldots
\end{equation*}
of subgroups of $G$.  This sequence of subgroups is strictly
descending if and only if the HNN-extension is strictly descending.
We denote the directed union of all these $B_{k}$ by~$B$.  The
automorphism $\varphi$ restricts to an automorphism of $B$ which is
therefore a normal subgroup of $G$.  It is standard fact that we can
write $G$ as the semidirect product $G = B\rtimes \Z$ where $\Z$ acts
on $B$ via the automorphism~$\varphi$ restricted to~$B$.

\begin{lemma}
    \label{lem:example}
    Assume that $\varphi^{k}(x)\neq x$ for all non-trivial $x\in
    B_{0}$ and all $k\geq 1$.  Given $x\in B_{0}$, denote by $[x]$ the
    set of all elements in $B_{0}$ which are conjugate in $B_{0}$ to
    $x$.  Assume that for each $x\in B$ we are given a finite
    subset~$[x]' \subset [x]$, which only depends on the conjugacy
    class~$[x]$ of~$x$ in~$B_{0}$, such that $\varphi([x]') \subset
    [\varphi(x)]' $ for every $x\in B_{0}$.  Then $\Z$ acts freely on
    the set of conjugacy classes of non-trivial elements of $B$.
\end{lemma}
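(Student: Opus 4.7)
The plan is to argue by contradiction, using the finite distinguished sets $[x]'$ as a pigeonhole mechanism that forces some non-trivial iterate $\varphi^j$ ($j\geq 1$) to fix a non-trivial element of $B_0$, contradicting the first hypothesis.

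Suppose first that $\Z$ does not act freely on the set of conjugacy classes of non-trivial elements of $B$. Unwinding the definition, this yields a non-trivial $x\in B$, an integer $k\geq 1$ and an element $y\in B$ with $\varphi^k(x)=yxy^{-1}$. Since $B$ is the directed union of the subgroups $B_{-n}$, for some $N\geq 0$ both $\varphi^N(x)$ and $\varphi^N(y)$ lie in $B_0$; applying $\varphi^N$ to the commutation relation and replacing $x$ and $y$ by $\varphi^N(x)$ and $\varphi^N(y)$, I may assume from the start that $x,y\in B_0$ and $x\neq 1$ (injectivity of $\varphi$ preserves non-triviality). In particular $\varphi^k(x)$ is conjugate to $x$ in $B_0$, so $[\varphi^k(x)]=[x]$ as conjugacy classes of $B_0$ and hence $[\varphi^k(x)]'=[x]'$.

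Next I would iterate the hypothesis $\varphi([z]')\subset[\varphi(z)]'$. For any $w\in[x]'$ the elements $w$ and $x$ are conjugate in $B_0$, so $\varphi(w)$ is conjugate to $\varphi(x)$ in $B_1\subset B_0$; hence $[\varphi(w)]'=[\varphi(x)]'$, and the hypothesis applied at $w$ gives $\varphi(w)\in[\varphi(x)]'$. A straightforward induction then yields $\varphi^k([x]')\subset[\varphi^k(x)]'=[x]'$. Since $[x]'$ is finite and $\varphi^k$ is injective, this restriction is a bijection of $[x]'$ onto itself, and therefore some power $\varphi^{km}$ with $m\geq 1$ fixes every element of $[x]'$ pointwise. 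Picking any $z\in[x]'$ (which is non-empty in the intended applications, where $[x]'$ consists of distinguished representatives of $[x]$), we have $z\in[x]\subset B_0$ with $z\neq 1$ and $\varphi^{km}(z)=z$ with $km\geq 1$, contradicting the first hypothesis.

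The main obstacle I expect is really only bookkeeping: making sure that after applying $\varphi^N$ both $x$ and the conjugator $y$ land in $B_0$ so that the conjugacy classes $[x]$ (in $B_0$) and the distinguished finite subsets $[x]'$ can be meaningfully invoked, and that the iterated form $\varphi^k([x]')\subset[\varphi^k(x)]'$ of the assumption is correctly derived by passing through the intermediate equalities $[\varphi(w)]'=[\varphi(x)]'$ for $w\in[x]'$. Once inside $B_0$, everything reduces to an injection of a finite set into itself, and the contradiction is immediate.
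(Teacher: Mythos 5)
Your proof is correct and follows essentially the same route as the paper: assume a non-free action, push the offending element (and, in your version, the conjugator as well) into $B_{0}$ by applying a power of $\varphi$, and then use finiteness of $[x]'$ together with injectivity of $\varphi$ to produce a non-trivial element fixed by a positive power of $\varphi$, contradicting the hypothesis. The only differences are cosmetic (the paper pigeonholes the forward orbit of a single representative $x\in[x]'$ rather than arguing that $\varphi^{k}$ permutes $[x]'$), and both arguments share the same implicit non-emptiness of $[x]'$ that you correctly flag.
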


\begin{proof}
    We suppose that $\Z$ does not act freely on the set of conjugacy
    classes of non-trivial elements of $B$.  Then there exists $x\in
    B$ and $n\geq 1$ such that $\varphi^{n}(x)$ is conjugate in $B$ to
    $x$.  Without any loss of generality we may assume that $x\in
    B_{0}$ (otherwise replace $x$ by $\varphi^{k}(x)$ for a
    suitable $k\in \N$).  Furthermore, without any loss of generality
    we may assume that $x\in [x]'$.  Finally we may assume without any
    loss of generality that $\varphi^{n}(x)$ is actually conjugate in
    $B_{0}$ to $x$ (otherwise, again, replace $x$ by $\varphi^{k}(x)$
    for a suitable~$k\in \N$).
    
    Now $\varphi^{rn}(x)\in [x]'$ for any $r\geq 1$.  Since $[x]'$ is
    finite this implies that $\varphi^{rn}(x) = \varphi^{sn}(x)$ for
    some $s>r$.  Therefore $\varphi^{(s-r)n}(x) = x$ and
    since $(s-r)n>0$ we obtain a contradiction to the assumption of
    the lemma.  Therefore the action of $\Z$ on the conjugacy classes
    of non-trivial elements of~$B$ must be trivial.
\end{proof}  

\begin{example}
    \label{ex:abelian-base-group}
    Let $B_{0}$ be an abelian group and $\varphi\: B_{0}\to B_{0}$ a
    monomorphism such that $\varphi^{k}(x)\neq x$ for every
    non-trivial $x\in B_{0}$ and $k\geq 1$.  Since $B_{0}$ is abelian,
    each conjugacy class $[x]$ of elements in $B_{0}$ contains
    precisely one element and the conditions of
    Lemma~\ref{lem:example} are trivially satisfied.  Thus $\Z$ acts
    freely by conjugation on the set of non-trivial elements of $B$.
    In particular we can use Proposition~\ref{prop:JPL} to obtain a
    model for $\uu EG$ from a model for $E_{\Fvc(B)}G$.
\end{example}

Let $B_{0}$ be a free group.  An element $x\in B_{0}$ is called
\emph{cyclically reduced} if it cannot be written as $x = u^{-1}yu$
for some non-trivial $u, y\in B_{0}$. It follows 
from~\cite[pp.~33ff.]{magnus-76} that every element $x\in B_{0}$ is 
conjugate to a cyclically reduced element $x'$ and that there are 
only finitely many cyclically reduced elements in $B_{0}$ which are 
conjugate to $x$. Therefore
\begin{equation*}
    [x]' \defeq  \{ x' \in [x] : \text{$x'$ is cyclically reduced}\}
\end{equation*}
is a finite subset of $[x]$ for every $x\in B_{0}$.

\begin{example}
    \label{ex:free-base-group}
    Let $X$ be an finite non-empty set and let $B_{0} \defeq  F(X)$ be the
    free group on the basis $X$.  Let $\{\alpha_{x}\}_{x\in X}$ be a
    collection of integers such that $|\alpha_{x}|\geq 2$ for every
    $x\in X$.  Consider the endomorphism $\varphi\: B_{0}\to B_{0}$
    that maps any basis element $x$ to $x^{\alpha_{x}}$.  It follows
    that $\varphi$ is a monomorphism which satisfies the assumptions
    of Lemma~\ref{lem:example}.  Therefore we can use
    Proposition~\ref{prop:JPL} to construct a model for $\uu EG$ from
    a model for $E_{\Fvc(B)}G$.
\end{example}

\begin{example}
    Another example of a strictly descending HNN-extension (in
    disguise) is the restricted wreath product $A\wr \Z$ of an
    arbitrary group $A$ by~$\Z$ which is defined as follows.  Let
    $A_{k}$ be a copy of $A$ for each $k\in \Z$.  Let $B$ be the
    direct product of all these $A_{k}$ and let $\Z$ act on $B$ via
    $\varphi$ which maps~$A_{k}$ identically onto $A_{k+1}$ for all
    $k\in \Z$.  Then
    \begin{equation*}
        A\wr \Z \defeq  B\rtimes \Z.
    \end{equation*}
    Since each $A_{k}$ is normal in $B$ the above definition of
$\varphi$
    forces the action of $\Z$ on the set of conjugacy classes of
    non-trivial elements of $B$ to be free.  Therefore we can apply
    Proposition~\ref{prop:JPL} in this case, too.
\end{example}

%
%

\section{Dimensions}

Given a family $\frakF$ of subgroups of $G$, a model for $E_{\frakF}G$
is only defined uniquely up to $G$-homotopy.  Consider a model for
$E_{\frakF}G$.  One particular invariant of the group $G$ is called
the \emph{geometric dimension} of $G$ with respect to the family
$\frakF$, and this is defined as being the least possible dimension of
a model for $E_{\frakF}G$.  It is denoted by $\gd_{\frakF}G$ and may
be infinite.  In the case that~$\frakF =\{1\}$ we recover the
classical geometric dimension of the group $G$.  In the case that
$\frakF = \Fvc(G)$ we denote the geometric dimension by $\uugd G$.

\begin{proposition}
    \label{prop:dimension}
    Let $G = B\rtimes \Z$ and assume that $\Z$ acts freely via 
    conjugation on the conjugacy classes of non-trivial elements of 
    $B$. Then 
    \begin{equation*}
	\uugd B\leq \uugd G \leq \uugd B + 1.
    \end{equation*}
\end{proposition}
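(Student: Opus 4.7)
The plan is to prove the two inequalities separately.

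For the lower bound $\uugd B \leq \uugd G$, I would apply Proposition~\ref{prop:dim-subgroups-2} to the subgroup $B$ of $G$ and the family $\Fvc(G)$. The only thing to verify is that $\Fvc(G) \cap B = \Fvc(B)$: every virtually cyclic subgroup of $G$ contained in $B$ is obviously a virtually cyclic subgroup of $B$, and conversely any virtually cyclic subgroup of $B$ is virtually cyclic in $G$. Since $\Fvc(G)$ is full, the proposition yields $\gd_{\Fvc(G) \cap B} B = \uugd B \leq \uugd G$.

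For the upper bound $\uugd G \leq \uugd B + 1$, I would assume $n \defeq \uugd B$ is finite (otherwise there is nothing to prove) and build a model for $\uu EG$ of dimension at most $n+1$ in two steps. First, noting that the automorphism $\varphi$ of $B$ induced by conjugation with $t$ preserves $\Fvc(B)$, I would apply Proposition~\ref{prop:model} to an $n$-dimensional model for $\uu EB = E_{\Fvc(B)}B$ and obtain an $(n+1)$-dimensional model $Y$ for $E_{\Fvc(B)}G$. Second, I would apply the generalised Juan-Pineda--Leary construction (Proposition~\ref{prop:JPL}) with $\frakF \defeq \Fvc(B)$, regarded as a family of subgroups of $G$, and $\frakG \defeq \Fvc(G)$.

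The main work is verifying that the pair $(\frakG, \frakF)$ satisfies the hypotheses of Proposition~\ref{prop:JPL}. The family $\Fvc(B)$ is full as a family of subgroups of $G$ (a subgroup of a virtually cyclic subgroup of $B$ lies again in $B$ and is virtually cyclic), and $\Fvc(G)$ is semi-full. The inclusion $\Ffin(G) \subset \Fvc(B)$ follows from Lemma~\ref{lem:aux2}: any finite subgroup of $G$ not contained in $B$ would be virtually cyclic but not in $\Fvc(B)$, hence infinite cyclic, which is absurd unless it is trivial. The existence of unique maximal overgroups $H_{\max}$ with $N_G(H_{\max})=H_{\max}$ for every $H \in \Fvc(G)\setminus \Fvc(B)$, together with $\Fvc(B) \cap H = \{1\} \subset \Ffin(H)$, is precisely the content of Corollary~\ref{cor:main-aux}.

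Once these hypotheses are checked, Proposition~\ref{prop:JPL} produces a model for $\uu EG$ by attaching only $0$-, $1$- and $2$-cells to $Y$, giving a model of dimension $\max(n+1,2)$. The hardest (or rather most delicate) step is handling the edge case $n=0$: this would mean $B$ is virtually cyclic, and by Lemma~\ref{lem:aux1a} the freeness hypothesis then forces $B$ to be trivial, so that $G = \Z$ and $\uugd G = 0 \leq 1 = n+1$. When $n \geq 1$ one has $\max(n+1, 2) = n+1$, and the inequality $\uugd G \leq n+1 = \uugd B + 1$ follows.
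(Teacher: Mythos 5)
Your proposal is correct and follows essentially the same route as the paper: the lower bound by restriction of a model for $\uu EG$ to $B$, and the upper bound by combining the telescope construction of Proposition~\ref{prop:model} (for $\frakF=\Fvc(B)$) with the generalised Juan-Pineda--Leary construction of Proposition~\ref{prop:JPL} via Corollary~\ref{cor:main-aux}. Your explicit treatment of the case $\uugd B=0$ (forcing $B$ trivial via Lemma~\ref{lem:aux1a}) is in fact slightly more careful than the paper, which only invokes that lemma to conclude $B$ is not virtually cyclic and hence $\uugd B+1\geq 2$.
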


\begin{proof}
    Since (in general) a model for $\uu EG$ is always a model for $\uu
    EB$ via restriction, we have that the second inequality is the
    only non-trivial one.  If $X$ is an $n$-dimensional model for $\uu
    EB$, then the telescope construction in Section~\ref{sec:model}
    gives an $(n+1)$-dimensional model for $E_{\Fvc(B)}G$.
    
    By Lemma~\ref{lem:aux1a} the group $B$ cannot be virtually cyclic.
    Therefore $n+1 \geq 2$ and attaching cells of dimension at most
    $2$ does not increase the dimension of the resulting space.  Hence
    Proposition~\ref{prop:JPL} yields an $(n+1)$-dimensional model for
    $\uu EG$ and this concludes the proof.
\end{proof}

\begin{corollary}
    \label{cor:dimension}
    Let $G = B_{0} *_{\varphi}$ be a descending HNN-extension as in 
    Section~\ref{sec:examples}. If $G = B\rtimes \Z$ satisfies the 
    conditions of the previous proposition then
    \begin{equation*}
	\uugd B_{0} \leq \uugd G \leq  \uugd B_{0} + 2.
    \end{equation*}
\end{corollary}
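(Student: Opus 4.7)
The lower bound $\uugd B_0 \leq \uugd G$ is immediate from Proposition~\ref{prop:dim-subgroups-2} applied to the inclusion $B_0 \leq G$, since $\Fvc$ is a full family of subgroups. For the upper bound the strategy is to interpose the subgroup $B$ of $G$. By Proposition~\ref{prop:dimension}, the hypothesis of the corollary already gives $\uugd G \leq \uugd B + 1$, so the remaining task is to establish the inequality $\uugd B \leq \uugd B_0 + 1$.

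From the descending HNN-extension setup in Section~\ref{sec:examples}, $B = \bigcup_{k \geq 0} B_{-k}$ is a countable directed union of the subgroups $B_{-k} = \varphi^{-k}(B_0)$. Since $\varphi$ restricts to an \emph{automorphism} of the directed union $B$, each inclusion $B_{-k}\hookrightarrow B_{-(k+1)}$ is between isomorphic groups and, in particular, $B_{-k}\isom B_0$, so $\uugd B_{-k} = \uugd B_0$ for every $k\geq 0$.

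To bound $\uugd B$, I would construct a model for $\uu E B$ via an infinite mapping telescope, in the spirit of Section~\ref{sec:model}. Fix a model $X$ for $\uu E B_0$ of dimension $n \defeq \uugd B_0$. Regarding $X$ as a $B_{-k}$-model for $\uu E B_{-k}$ through the isomorphism $B_{-k}\isom B_0$, form the induced $B$-spaces $\tilde X_k \defeq B\times_{B_{-k}} X$. Since $\tilde X_{k+1}$, restricted to a $B_{-k}$-space, has stabilisers in $\Fvc(B_{-k})$ and has contractible fixed point sets for every $H\in \Fvc(B_{-k})$, the universal property of classifying spaces supplies a $B$-cellular map $\tilde X_k \to \tilde X_{k+1}$. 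Let $Y$ be the resulting infinite mapping telescope; it is a $B$-CW-complex of dimension at most $n+1$. Combining the three steps gives $\uugd G \leq \uugd B + 1 \leq \uugd B_0 + 2$.

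The main obstacle is the verification that $Y$ is indeed a model for $\uu E B$. The key observation that unlocks this is that every virtually cyclic subgroup of $B$ is finitely generated and hence contained in some $B_{-k_0}$: for $H\in \Fvc(B)$, one then identifies $Y^H$ with the mapping telescope of the spaces $\tilde X_k^H$ for $k\geq k_0$, each a disjoint union of contractible fixed point sets indexed by those cosets $bB_{-k}$ with $H^b\leq B_{-k}$, and these cosets merge as $k$ grows; contractibility follows as in Lemma~\ref{lem:pre-model}. For $H\notin \Fvc(B)$, emptiness of $Y^H$ follows because any fixed point in $\tilde X_k$ forces $H$ to be conjugate into $\Fvc(B_{-k})$.
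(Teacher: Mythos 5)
Your overall strategy matches the paper's: the lower bound comes from $B_0 \leq G$ together with Proposition~\ref{prop:dim-subgroups-2}, and the upper bound is obtained by interposing $B$ and combining Proposition~\ref{prop:dimension} with the estimate $\uugd B \leq \uugd B_0 + 1$. Where you diverge is in this last estimate: the paper simply cites a construction of L\"uck and Weiermann for countable directed unions, whereas you attempt to re-derive it by an explicit mapping telescope of induced spaces $\tilde X_k = B \times_{B_{-k}} X$. The idea is sound, but there is a genuine gap in your justification of the bonding maps.

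You assert that $\res_{B_{-k}} \tilde X_{k+1}$ has contractible fixed point sets for every $H \in \Fvc(B_{-k})$, and deduce from the universal property a $B_{-k}$-map $X \to \res_{B_{-k}} \tilde X_{k+1}$, hence a $B$-map $\tilde X_k \to \tilde X_{k+1}$. This is false: the fixed point set $(\tilde X_{k+1})^H$ decomposes as a disjoint union of contractible components, one for each coset $bB_{-(k+1)} \in B/B_{-(k+1)}$ with $H^b \leq B_{-(k+1)}$, and in general there are many such cosets --- already for $G = BS(1,2)$ the base group $B = \Z[1/2]$ is abelian, so the condition $H^b \leq B_{-(k+1)}$ is independent of $b$ and \emph{every} coset qualifies. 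Thus $\res_{B_{-k}} \tilde X_{k+1}$ is not a model for $\uu E B_{-k}$, and the universal property cannot be applied to it; this is exactly the defect that the mapping telescope is designed to repair, so the argument as written is circular. The fix is to apply the universal property to $X$ itself rather than to the induced space: regarding $X$ as a model for $\uu E B_{-(k+1)}$, its restriction to the subgroup $B_{-k}$ is still a model for $\uu E B_{-k}$ (this is the content of the proof of Proposition~\ref{prop:dim-subgroups-2}), so the universal property yields a $B_{-k}$-map from $X$ with its $B_{-k}$-action into this restriction, and inducing up to $B$ gives the bonding map $\tilde X_k \to \tilde X_{k+1}$. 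With that correction your telescope argument (essentially the L\"uck--Weiermann construction) can be carried through, although the appeal to Lemma~\ref{lem:pre-model} for contractibility of $Y^H$ is also loose: there each $X_k^H$ is a single contractible component, whereas here $\tilde X_k^H$ is a disjoint union of contractible components that only merge in the limit, so that verification requires its own argument about the colimit of $\pi_0$ becoming a point.
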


\begin{proof}
    As exploited previously, since $B_{0}$ is a subgroup of $G$, the
    second inequality is the only non-trivial part of the statement.
    The group $B$ is the countable direct union of the conjugates of
    $B_{0}$ in $G$.  Therefore an $n$-dimensional model for $\uu
    EB_{0}$ gives rise to an $(n+1)$-dimensional model for~$\uu EB$ by
    a construction of Lück and Weiermann~\cite[pp.~510ff.]{luck-12}.
    Now the claim follows from the previous proposition.
\end{proof}

\begin{example}
    Let $G = B_{0} *_{\varphi}$ be a descending HNN-extension with
    $B_{0}$ a free group of finite rank.  If $B_{0}$ has rank $1$,
    then $G$ is a soluble Baumslag--Solitar group and this case is
    treated below in Theorem~\ref{thrm:dim-soluble-BS}.  Thus we may
    assume that $B_{0}$ has rank at least $2$.  Free groups are
    torsion-free and act freely on a tree which is therefore a
    $1$-dimensional model for $\underline{E} B_{0}$.  Free groups of
    finite rank are Gromov-hyperbolic and therefore Proposition~9
    in~\cite{juan-pineda-06} states the existence of a $2$-dimensional
    model for $\uu E B_{0}$.  On the other hand by Remark~16
    in~\cite{juan-pineda-06} there cannot exist a model for $\uu
    EB_{0}$ less than $2$.  Therefore $\uugd B_{0} =2$.  Now the
    direct union $B$ of all conjugates of $B_{0}$ in $G$ is locally
    free and therefore does not contain a subgroup of isomorphic
    to~$\Z^{2}$.  Then Lemma~\ref{lem:aux1} states that we can apply
    Corollary~\ref{cor:dimension} if and only if $G$ does not contain
    a subgroup isomorphic to~$\Z^{2}$.  Therefore we get in this case
    the estimation $2 \leq \uugd G \leq 4$.
\end{example}

\begin{example}
    \label{ex:wreath-product}
    Consider the restricted wreath product $G = A\wr \Z$ where $A$ is
    a countable locally finite group.  Then
    \begin{equation*}
	B \defeq  \coprod_{k\in \Z} A
    \end{equation*}
    is also a countable locally finite group.  Since $B$ is not finite
    it follows that~$\uugd B=1$ by Lemma~\ref{lem:luck-1}.  We have
    seen that $G$ does satisfy the requirements of
    Proposition~\ref{prop:dimension}.  Therefore we get the estimate
    $1 \leq \uugd G \leq 2$.  We will see in the next chapter with
    Corollary~\ref{cor:case1-not-fg-1}, that $\uugd G=1$ implies
    that $G$ is locally virtually cyclic.  However $G$ is not locally
    virtually cyclic and therefore we $\uugd G\neq 1$.  Thus we have
    altogether
    \begin{equation*}
        \uugd G = 2.
    \end{equation*}
    Note that the smallest concrete example of a group of this type is
    the Lamplighter group $L = \Z_{2} \wr \Z$ where $\Z_{2}$ is the
    cyclic group of the integers modulo~$2$.
\end{example}

%
%

\section{Soluble Baumslag--Solitar Groups}
\label{sec:BS-groups}

We conclude this chapter with a complete answer to the geometric
dimension of the soluble Baumslag--Solitar groups with respect to the
family of virtually cyclic subgroups.  These groups belong to a class
of two-generator and one-relator groups introduced by Baumslag and
Solitar in~\cite{baumslag-62}. Their class contains all the groups 
\begin{equation*}
    BS(m,n) = \langle x,t \mid t^{-1}x^{m}t  = x^{n}\rangle.
\end{equation*}
where $m$ and $n$ are non-zero integers.  The soluble
Baumslag--Solitar groups are the groups of the form $BS(1,m)$, $m\neq
0$ and these groups can also be written as
\begin{equation*}
    BS(1,m) = \Z[1/m]\rtimes \Z,
\end{equation*}
where $\Z[1/m]$ is the subgroup of the rational numbers $\Q$ generated
by all powers of $1/m$ and where $\Z$ acts on $\Z[1/m]$ by
multiplication with $m$.  The group $BS(1,1)$ is $\Z^{2}$ and
$BS(1,-1)$ is the Klein bottle group $\Z\rtimes \Z$.  If~$|m|\geq 2$,
then $BS(1,m)$ belongs to the case described in
Example~\ref{ex:abelian-base-group}, as well as to the case described
in Example~\ref{ex:free-base-group}.

\begin{theorem}
    \label{thrm:dim-soluble-BS}
    Let $G=\Z[1/m]\rtimes \Z$ be a soluble Baumslag--Solitar group.
    Then
    \begin{equation*}
        \uuhd G = \uucd G = \uugd G =
	\begin{cases}
	    3 & \text{if $|m| = 1$,}  \\[1ex]
	    2 & \text{if $|m| \geq 2$.}
	\end{cases}
    \end{equation*}
\end{theorem}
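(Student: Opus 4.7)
The plan is to handle the two cases $|m|=1$ and $|m|\geq 2$ separately, reducing the first to results already in the text. When $m=1$ the group $BS(1,1)$ is $\Z^{2}$ and Proposition~\ref{prop:dimvcZ2=3} yields all three Bredon dimensions equal to $3$. When $m=-1$ the group $BS(1,-1)=\Z\rtimes_{-1}\Z$ is the fundamental group of the Klein bottle; it is torsion-free polycyclic of Hirsch length~$2$, hence virtually polycyclic with $\vcd=2$, so Proposition~\ref{prop:dimvc-polycyclic-1} again gives the value $3$.

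For $|m|\geq 2$, write $G = B \rtimes \Z$ with $B = \Z[1/m]$ and $\varphi(b) = mb$. Since $B$ is abelian and torsion-free, the equation $m^{r}b=b$ forces $b=0$ or $r=0$, so $\Z$ acts freely on conjugacy classes of non-trivial elements of $B$ and the constructions of Section~\ref{sec:model} and Proposition~\ref{prop:JPL} are available. For the upper bound, I plan to exhibit a one-dimensional model for $E_{\Fvc(B)}B$ by writing $B = \varinjlim \langle m^{-k}\rangle$ and using the direct-limit tree construction (each $\langle m^{-k}\rangle$ is already virtually cyclic, so admits the one-point space as its classifying space). This gives $\uugd B \leq 1$, and since $B$ is not virtually cyclic we conclude $\uugd B = 1$; Proposition~\ref{prop:dimension} then yields $\uugd G \leq 2$.

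For the lower bound I plan to invoke Theorem~\ref{thrm:geometric-lower-bound} and show that $H_{2}(\uu B G)\neq 0$. Applying the telescope construction of Section~\ref{sec:model} to the tree $T$ produces a two-dimensional model $Y$ for $E_{\Fvc(B)}G$, and then Proposition~\ref{prop:JPL} attaches cells of dimensions $0$, $1$, $2$. By Lemma~\ref{lem:aux2} every maximal virtually cyclic subgroup of $G$ not contained in $B$ is infinite cyclic, hence orientable, so $\calC_{o}=\calC$ and $\uu B G$ is obtained from $Y/G$ by attaching one $2$-cell per element of $\calC$. The orbit space $Y/G$ is a mapping torus of a self-map of the finite graph $T/B$, so $H_{1}(Y/G)$ is finitely generated. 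The crux is showing $|\calC|=\infty$: for each $r\geq 1$, a short computation of $\Comm_{G}(\langle (b,r)\rangle)$ inside $G$ (using the formula $(b,s)^{n} = (b\,(m^{ns}-1)/(m^{s}-1),\,ns)$) lets me choose $b_{r}\in \Z[1/m]$, for instance representing a non-zero class in $\Z[1/m]/(1+m+\cdots+m^{r-1})\Z[1/m]$, so that $\langle (b_{r},r)\rangle$ is maximal infinite cyclic with smallest positive second coordinate exactly $r$. Conjugation in $G$ preserves the second coordinate, so these subgroups lie in pairwise distinct conjugacy classes.

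Once $|\calC|=\infty$ is known, the long exact sequence of the pair $(\uu B G, Y/G)$ shows that the cokernel of the attaching map $\Z^{\calC}\to H_{1}(Y/G)$ is finitely generated, so its kernel has infinite rank and embeds into $H_{2}(\uu B G)$; hence $H_{2}(\uu B G)\neq 0$, and Theorem~\ref{thrm:geometric-lower-bound} gives $\uuhd G \geq 2$. Combined with the chain $\uuhd G \leq \uucd G \leq \uugd G \leq 2$ from the upper bound, this yields the theorem. I expect the main obstacle to be the bookkeeping in the lower bound: verifying explicitly that $Y/G$ is a mapping torus with finitely generated homology, and carrying out the arithmetic in $\Z[1/m]$ to produce enough non-conjugate maximal infinite cyclic subgroups.
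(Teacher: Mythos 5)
Your overall architecture is the same as the paper's: the case $|m|=1$ is delegated to Proposition~\ref{prop:dimvcZ2=3} and Proposition~\ref{prop:dimvc-polycyclic-1}, the upper bound for $|m|\geq 2$ comes from Proposition~\ref{prop:JPL} applied with $\frakF=\Fvc(\Z[1/m])$ and $\frakG=\Fvc(G)$, and the lower bound from $H_{2}(\uu BG)\neq 0$ via Theorem~\ref{thrm:geometric-lower-bound}. The one real difference of route is the model for $E_{\frakF}G$: the paper observes that the Bass--Serre tree $T$ of the HNN presentation is already a $1$-dimensional model for $E_{\Fvc(B)}G$ with $T/G$ a single loop, which makes both the dimension count and the homology of the orbit space immediate, whereas you go through a $1$-dimensional model for $\uu EB$ (L\"uck--Weiermann, or Proposition~\ref{prop:countable-uugdG1}) and the telescope of Section~\ref{sec:model}, obtaining a $2$-dimensional model whose orbit space is a mapping torus. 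That is legitimate, but your parenthetical description of the orbit space as a mapping torus of a self-map of a \emph{finite} graph is not what your construction produces (the quotient of the L\"uck--Weiermann model is an infinite telescope, i.e.\ a ray); what you actually need, and what is true, is only that $H_{1}(Y/G)$ is finitely generated -- it is $\Z$ by the Wang sequence, the fibre being contractible. You also supply a proof that $|\calC|=\infty$, which the paper merely asserts; that instinct is good, but this is where your argument has a genuine flaw.

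Maximality of $H=\langle (b,r)\rangle$ is governed by the commensurator: $(x,s)\in\Comm_{G}(H)$ forces $b(m^{s}-1)/(m^{r}-1)\in\Z[1/m]$, so the image of $\Comm_{G}(H)$ in $\Z$ is $r\Z$ if and only if $b\notin N_{s}\Z[1/m]$ for \emph{every} proper divisor $s$ of $r$, where $N_{s}=(m^{r}-1)/(m^{s}-1)$. Your condition $b_{r}\notin(1+m+\cdots+m^{r-1})\Z[1/m]=N_{1}\Z[1/m]$ only excludes $s=1$, and since $N_{1}=N_{s}\cdot(1+m+\cdots+m^{s-1})$ gives $N_{1}\Z[1/m]\subset N_{s}\Z[1/m]$ for $1<s<r$ with $s\mid r$, it does not exclude intermediate divisors: for $m=2$, $r=4$, $b=5$ one has $5\notin 15\,\Z[1/2]$ yet $(1,2)\in\Comm_{G}(\langle(5,4)\rangle)$ and $(5,4)=(1,2)^{2}$, so the maximal overgroup has minimal positive second coordinate $2$, not $4$. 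Moreover for $m=-2$, $r=2$ one has $N_{1}=1+m=-1$, a unit, so no admissible $b_{2}$ exists at all. Both defects vanish if you run the construction only over odd primes $r$: then $1$ is the only proper divisor, $N_{1}\equiv 1\pmod m$ is coprime to $m$ and $|N_{1}|=(|m|^{r}\pm 1)/(|m|\pm 1)>1$, so $\Z[1/m]/N_{1}\Z[1/m]\neq 0$ and any $b_{r}$ outside $N_{1}\Z[1/m]$ yields a maximal infinite cyclic subgroup whose minimal positive second coordinate is exactly $r$; infinitely many primes give pairwise non-conjugate such subgroups, and the rest of your long-exact-sequence argument, hence the theorem, goes through.
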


\begin{proof}
    The case $|m|=1$ has been answered in the previous chapter.  Thus
    we assume that $|m|\geq 2$.  In this case $G$ is the fundamental
    group of a graph $(G,Y)$ of groups in the sense of~\cite{serre-80}
    where $Y$ is a loop and where the vertex groups are all infinite
    cyclic.  Let $X$ be the Bass--Serre tree associated with this
    graph of groups.  Then $T$ is not only a model for~$\uu E \Z[1/m]$
    but also a model for~$E_{\Fvc(\Z[1/m])} G$.  We can apply
    Proposition~\ref{prop:JPL} and obtain a model $X$ for $\uu EG$ by
    attaching cells of dimension less or equal to~$2$ to~$T$.
    Therefore we get $\uugd G\leq 2$.
    
    In order to see that $\uuhd G\geq 2$ we calculate $H_{2}(\uu BG)$.
    Note that $Y = X/G$ is a model for $B_{\Fvc(B)}G$ and $Y$ consists
    of one $0$-cell and one $1$-cell.  The second part of
    Proposition~\ref{prop:JPL} states that we can obtain a model for
    $\uu BG$ by attaching $2$-cells to $Y$ indexed by the conjugacy
    classes of maximal virtually cyclic subgroups of $G$ that are not
    contained in $\Z[1/m]$.  But there are infinitely many of them.
    Therefore $H_{2}(\uu BG) \neq 0$ which implies that $\uuhd G\geq 
    2$.
    
    Altogether we get $2\leq \uuhd G \leq \uucd G \leq \uugd G\leq 2$ 
    and thus equality holds.
\end{proof}

%
%

\section{Relatively Hyperbolic Groups and Free Products}

In the literature a common strategy to construct a model for $\uu EG$ 
is to begin with a known model for $\underline EG$ and attach cells 
in order to obtain a model for $\uu EG$. One key idea in the 
construction of models for $\uu EG$ in this chapter has been to begin 
with a model for $E_{\frakF}G$ with $\Ffin(G)\subset \frakF\subset 
\Fvc(G)$ where~$\frakF$ is a family of subgroups of $G$ which in 
general is larger than $\Ffin(G)$. In what follows we give another 
example for a fruitful application of this idea.

Let $G$ be a group and $H_{\lambda}$, $\lambda\in \Lambda$, a
collection of subgroups of $G$.  Assume that $G$ is relatively
hyperbolic with respect to the subgroups $H_{\lambda}$ in the sense
of~\cite{osin-06a}.  The subgroups $H_{\lambda}$ are called the
peripheral subgroups of $G$.  Consider the set
\begin{equation}
    \frakF \defeq  \{ H^{g} : H\in \Fvc(H_{\lambda}), \lambda\in \Lambda,
    g\in G \} \cup \Ffin(G),
    \label{eq:F-rel-hyp}
\end{equation}
that is, $\frakF$ consists of all virtually cyclic subgroups which 
are subconjugate to one of the peripheral subgroups of $G$ together 
with all finite subgroups of $G$. This is clearly a full family of 
subgroups of $G$.

Lafont and Ortiz have shown in~\cite[p.~532f.]{lafont-07} using
results of Osin that if $G$ is relatively hyperbolic in the sense
of Bowditch~\cite{bowditch-99} that the following is 
true:
\begin{enumerate}
    \item  $\frakF\cap H\subset \Ffin(H)$ for every $H\in 
    \Fvc(G)\setminus \frakF$;

    \item  every $H\in \Fvc(G)\setminus \frakF$ is contained in a 
    unique maximal $H_{\max}\in \Fvc(G)$;

    \item  $N_{G}(H_{\max}) = H_{\max}$ for every $H\in 
    \Fvc(G)\setminus \frakF$.
\end{enumerate}
The definition of relative hyperbolicity in~\cite{osin-06a} extends
the definition of relative hyperbolicity in~\cite{bowditch-99}. 
Furthermore, the proof in~\cite{lafont-07} of the above result is 
also correct for relatively hyperbolic groups in the sense 
of~\cite{osin-06a}. Therefore we can apply Proposition~\ref{prop:JPL} 
in the current setting. That is, one can obtain a model for $\uu EG$ 
by attaching orbits of at most $2$\=/dimensional cells to any model
for 
$E_{\frakF}G$.

Lafont and Ortiz have constructed in~\cite{lafont-07} a model for $\uu
EG$ for relatively hyperbolic groups in the sense~\cite{bowditch-99}
by forming the join $X * Y$ where~$X$ is a model for $\underline EG$
and~$Y$ is the disjoint union of models for $\uu EH_{\lambda}$,
$\lambda\in \Lambda$, and a set of discrete points.  Their
construction is also valid for relatively hyperbolic groups in the
sense of~\cite{osin-06a} and from the join construction one obtains
\begin{align*}
    \dim (X*Y) & = \dim(X) + \dim(Y) + 1
    \\
    & = 
    \ugd G + \sup\{ \uugd H_{\lambda} : \lambda\in \Lambda\} + 1
\end{align*}
This is the lowest dimension one can achieve with Lafont and Ortiz's 
construction. There is no example known where $\uugd G>\ugd G+1$ and 
this has raised the question whether the bound $\uugd G\leq \ugd G+1$ 
for every group~$G$, see~\cite[p.~500]{luck-12}. Thus, if the 
peripheral subgroups contain groups which are not virtually cyclic, 
then the dimension of $X*Y$ is strictly larger than~$\ugd G+1$ and 
suggests that in this case $X*Y$ is not a model of minimal dimension.

However, if a nice model is known for $E_{\frakF}G$, where $\frakF$ is
as in~\eqref{eq:F-rel-hyp}, then Proposition~\ref{prop:JPL} can give a
model of minimal dimension for $\uu EG$.  We conclude with an example
where we can construct a nice model for $E_{\frakF}G$ such that we
obtain a model for $\uu EG$ of minimal dimension.

Let $G$ be a free product
\begin{equation*}
    G \defeq  H_{1} * \cdots * H_{n}
\end{equation*}
of finitely many groups $H_{i}$.  It follows straight from the
definition in~\cite{osin-06a} that $G$ is relatively hyperbolic with
respect to the factors $H_{i}$, $i=1,\ldots,n$.  For simplicity we
assume in the following that $n=2$ and in order to avoid triviality we
assume that $G$ is  not virtually cyclic.

Since $G$ is not virtually cyclic it follows that $G$ has free
subgroup of rank~$2$.  Thus $\uugd G\geq 2$ and since also $G$
contains $H_{1}$ and $H_{2}$ as subgroups we get altogether
\begin{equation*}
    \uugd G \geq \max(\uugd H_{1}, \uugd H_{2}, 2).
\end{equation*}

Similarly to Example~4.10 in~\cite[p.~290]{luck-05} we construct a
$G$-CW-complex $X$ which is obtained from the Bass--Serre tree $T$ 
associated with the free product $H_{1}*H_{2}$ by replacing the 
vertices $v$ of $T$ equivariantly by models for $\uu EH_{1}$ and $\uu 
EH_{2}$. More precisely, for $i=1,2$ choose once and for all 
$x_{i}\in X_{i}$ where $X_{i}$ is a model for $\uu EH_{i}$ and define 
$G$-equivariant maps
\begin{align*}
    F_{i}\: & G\to G \times_{H_{i}} X_{i},
    \\
    & g\mapsto [g,x_{i}],
\end{align*}
where $G\times_{H_{i}} X_{i}$ denotes the $G$-space induced from the
$H_{i}$-space $X_{i}$.  We obtain $X$ as a $G$-equivariant cellular
pushout
\begin{equation*}
    \dgARROWLENGTH=1.8cm
    \begin{diagram}
        \node{G\times \{0,1\}}
	\arrow{e,t}{F_{1} \coprod F_{2}}
	\arrow{s}
	\node{\bigl( G \times_{H_{1}} X_{1}\bigr) {\textstyle \coprod}
	\bigl(G\times_{H_{2}} X_{2}\bigr)}
	\arrow{s}
	\\
	\node{G\times [0,1]}
	\arrow{e}
	\node{X}
    \end{diagram}
\end{equation*}
It follows that $X$ is a model for $E_{\frakF}G$ where
\begin{equation*}
    \frakF\defeq  \{ H^{g} : H\in \Fvc(H_{1})\cup \Fvc(H_{2}) \text{ and }
    g\in G \},
\end{equation*}
that is $\frakF$ is the family of all virtually cyclic subgroups of
$G$ which are subconjugate to one of the on of the factors $H_{1}$ or
$H_{2}$.  Since any finite subgroup of $G$ is conjugate to one of the
factors $H_{1}$ or $H_{2}$ we have that $\frakF$ includes all finite
subgroups of $G$~\cite[p.~36]{serre-80}.  Thus this family agrees with
the family defined in~\eqref{eq:F-rel-hyp}.  By construction we have
\begin{equation*}
    \dim X = \max(\uugd H_{1}, \uugd H_{2}, 1).
\end{equation*}
Now we can apply Proposition~\ref{prop:JPL} to obtain a model $Z$ for
$\uu EG$ by attaching to $X$ orbits of cells in dimension~$2$ and
less.
Thus
\begin{equation*}
    \uugd G \leq \dim Z = \max(\uugd H_{1}, \uugd H_{2}, 2).
\end{equation*}

\begin{theorem}
    Let $G \defeq  H_{1} * \cdots * H_{n}$ be a free product of finitely
    many groups.  If $G$ is not virtually cyclic then
    \begin{equation*}
        \uugd G = \max(\uugd H_{1}, \ldots, \uugd H_{n}, 2)
    \end{equation*}
\end{theorem}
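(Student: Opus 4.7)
The plan is to adapt the cellular pushout construction that the text gives for $n=2$. For the lower bound, each factor $H_i$ is a subgroup of~$G$, so $\uugd G \geq \uugd H_i$ by Proposition~\ref{prop:dim-subgroups-2}; moreover, since $G$ is a non-trivial free product that is not virtually cyclic, a standard ping--pong argument yields a free subgroup of rank~$2$ in~$G$, whence $\uugd G \geq 2$ by Proposition~\ref{prop:dim-subgroups-2} together with Corollary~\ref{cor:dimvcF=2}. This gives the lower bound $\uugd G \geq \max(\uugd H_1,\ldots,\uugd H_n, 2)$.

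For the upper bound, I first observe that $G$ is hyperbolic relative to the collection $\{H_1,\ldots,H_n\}$ in the sense of~\cite{osin-06a}, as is immediate from the definition of a free product. By the discussion preceding the theorem, the family
\[
\frakF \defeq \{H^g : H\in\Fvc(H_i),\ i=1,\ldots,n,\ g\in G\}
\]
satisfies the hypotheses of Proposition~\ref{prop:JPL} with $\frakG = \Fvc(G)$; here the Kurosh subgroup theorem ensures $\Ffin(G) \subset \frakF$ automatically. I would then build a model $X$ for $E_{\frakF}G$ of dimension $\max(\uugd H_1,\ldots,\uugd H_n, 1)$ by a cellular $G$-pushout generalising the $n=2$ construction: choose minimal models $X_i$ for $\uu EH_i$ with basepoints $x_i\in X_i$, let~$S$ be the star graph with central vertex~$v_0$ and $n$ leaves $v_1,\ldots,v_n$ (with trivial $G$-action on $S$), and form the pushout of the inclusion $G\times\{v_1,\ldots,v_n\} \hookrightarrow G\times S$ along the $G$-map $G\times\{v_1,\ldots,v_n\} \to \coprod_{i=1}^n G\times_{H_i} X_i$ sending $(g,v_i)\mapsto [g,x_i]$.

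The main technical step is verifying that $X^H$ is contractible for every $H\in\frakF$ and empty for every $H\in\Fvc(G)\setminus\frakF$. This requires two standard properties of free products: each factor $H_i$ is malnormal in~$G$, and conjugates of distinct factors intersect trivially. Together these imply that for non-trivial $H\leq H_j$ and $[g,x]\in G\times_{H_i} X_i$ fixed by~$H$, one must have $i=j$ and $g\in H_i$; accordingly $X^H$ deformation retracts onto $X_i^H$, which is contractible because $X_i$ is a model for $\uu EH_i$ and $H\in\Fvc(H_i)$. An infinite virtually cyclic $H\in\Fvc(G)\setminus\frakF$ cannot fix a vertex of the Bass--Serre tree of the free product (otherwise $H$ would be conjugate into some $H_i$) and therefore acts hyperbolically, so $X^H = \emptyset$.

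Finally, applying Proposition~\ref{prop:JPL} attaches orbits of cells in dimensions at most~$2$ to~$X$, producing a model for $\uu EG$ of dimension $\max(\dim X, 2) = \max(\uugd H_1,\ldots,\uugd H_n, 2)$, which matches the lower bound. The main obstacle is the fixed-point analysis in the previous paragraph; this is routine once malnormality of the factors and the elliptic/hyperbolic dichotomy on the Bass--Serre tree are invoked, but it is the point at which the combinatorial structure of the free product enters in an essential way.
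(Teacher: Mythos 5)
Your argument is correct and is essentially the paper's own: the paper's proof of this theorem consists precisely of the instruction to adapt the $n=2$ pushout construction to general $n$ (or, alternatively, to induct on $n$ using $H_{1}*\cdots*H_{n} \cong (H_{1}*\cdots*H_{n-1})*H_{n}$), and your star-graph pushout, the malnormality-based fixed-point analysis, and the final application of Proposition~\ref{prop:JPL} carry out exactly that adaptation. The lower bound via the subgroups $H_{i}$ and a rank-$2$ free subgroup likewise matches the discussion preceding the theorem in the paper.
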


\begin{proof}
    This statement follows either by adapting the above construction
    to general values of~$n$.  Alternatively one can prove it by
    induction on $n$ and using the fact $ H_{1} * \cdots * H_{n}
    \isom (H_{1} * \cdots * H_{n-1}) * H_{n}.  $
\end{proof}

%
%

\chapter{Groups with Low Bredon Dimension
for~the~Family~$\Fvc$}
\label{ch:low-dimensions}

%
%

\section{Groups $G$ with $\protect\uu{\gd} G=0$}

The classification of groups with $\uugd G=0$ is a straight forward
consequence of Proposition~\ref{prop:gdG=0} and
Proposition~\ref{prop:cdG=0} applied to the family~$\frakF = \Fvc(G)$.

\begin{proposition}
    \label{prop:case0}
    Let $G$ be a group. Then $\uugd G=0$ if and only if $\uucd G = 
    0$ if and only if $G$ is virtually cyclic.\qed
\end{proposition}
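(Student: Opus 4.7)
The plan is to derive Proposition~\ref{prop:case0} directly from the general results proved earlier, namely Proposition~\ref{prop:gdG=0} and Proposition~\ref{prop:cdG=0}, by specialising to the family $\frakF = \Fvc(G)$. The only preliminary observation needed is that $\Fvc(G)$ is a full (and in particular semi-full) family of subgroups of $G$: it is closed under conjugation (virtual cyclicity is preserved by conjugation) and closed under taking subgroups (a subgroup of a virtually cyclic group is virtually cyclic), so in particular it is closed under finite intersections.

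First, I would apply Proposition~\ref{prop:gdG=0} with $\frakF = \Fvc(G)$, which is semi-full by the above. This immediately gives the equivalence $\uugd G = 0 \iff G \in \Fvc(G)$, and by definition $G \in \Fvc(G)$ precisely means that $G$ is virtually cyclic.

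Next, I would invoke Proposition~\ref{prop:cdG=0} with $\frakF = \Fvc(G)$. One direction of that proposition says that if $G \in \frakF$ then $\cd_{\frakF} G = 0$, which yields the implication ``$G$ virtually cyclic $\Rightarrow \uucd G = 0$''. For the converse, using that $\Fvc(G)$ is semi-full, the second statement of Proposition~\ref{prop:cdG=0} gives ``$\uucd G = 0 \Rightarrow G \in \Fvc(G)$'', i.e.\ $G$ is virtually cyclic.

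Combining these two equivalences closes the chain ``$\uugd G = 0 \iff G$ virtually cyclic $\iff \uucd G = 0$'', which is precisely the statement of the proposition. There is no genuine obstacle here; the entire content of the proof is the verification that $\Fvc(G)$ satisfies the semi-fullness hypothesis of Proposition~\ref{prop:cdG=0}, after which both equivalences follow formally.
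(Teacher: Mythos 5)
Your proposal is correct and matches the paper's own argument: the paper states this proposition as a straightforward consequence of Proposition~\ref{prop:gdG=0} and Proposition~\ref{prop:cdG=0} applied to the family $\Fvc(G)$, exactly as you do. Your explicit check that $\Fvc(G)$ is full (hence semi-full) is the only verification needed, and it is right.
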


%
%

\section{Groups $G$ with $\protect\uu{\gd} G=1$}

\begin{proposition}
    \label{prop:case1-not-fg}
    Let $G$ be a group with $\uugd G=1$. Then $G$ is not finitely 
    generated.
\end{proposition}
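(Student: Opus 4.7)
The plan is to argue by contradiction. Suppose $G$ is finitely generated with generating set $\{g_{1},\ldots,g_{n}\}$ and $\uugd G=1$. By Proposition~\ref{prop:case0} the group $G$ is not virtually cyclic. Fix a $1$-dimensional model $T$ for $\uu EG$. Since $T$ is a $1$-dimensional contractible $G$-CW-complex it is a tree, and the $G$-CW-structure forces cell stabilisers to coincide with point stabilisers and guarantees that the action is without inversions. By construction all cell stabilisers are virtually cyclic, and for every virtually cyclic $H\le G$ the fixed set $T^{H}$ is a non-empty contractible subtree.

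The first step is to observe that every element $g\in G$ acts elliptically on $T$: since $\langle g\rangle$ is virtually cyclic, $\mathrm{Fix}(g)=T^{\langle g\rangle}\ne\emptyset$. I would then invoke the standard bridge lemma from Bass--Serre theory: if $g,h\in G$ are elliptic on the tree $T$ and $gh$ is also elliptic, then $\mathrm{Fix}(g)\cap\mathrm{Fix}(h)\ne\emptyset$. The contrapositive is the usual geometric fact that if the fixed subtrees of $g$ and $h$ are disjoint, then the bridge between them has some positive length $d$, and $gh$ translates this bridge by $2d$ and so is hyperbolic. Applied to every pair of generators, this yields that the finite family $\{\mathrm{Fix}(g_{i})\}_{i=1}^{n}$ is pairwise intersecting.

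The next step is to apply Helly's theorem for trees---finitely many pairwise intersecting subtrees of a tree have a non-empty common intersection---to find a vertex $v\in T$ fixed by every generator $g_{i}$, and therefore by all of $G$. Then $G\le G_{v}$, which is virtually cyclic, contradicting the standing assumption that $G$ is not virtually cyclic.

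The step that will require the most care is the passage from ``every element of $G$ is elliptic on $T$'' to ``$G$ fixes a vertex of $T$''. This is precisely where finite generation enters the argument, through the combination of the bridge lemma with Helly's theorem for trees; without finite generation one cannot pass from pairwise intersection to global intersection, which is consistent with the existence of groups having $\uugd G=1$ that are not virtually cyclic (and hence necessarily infinitely generated). Both the bridge lemma and Helly's theorem for trees are standard in the theory of group actions on trees, so no real obstacle is expected; the only work is to spell out the bridge computation carefully enough to keep the proof self-contained.
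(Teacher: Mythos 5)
Your proof is correct and follows essentially the same route as the paper: both reduce to the observation that every element of $G$ is elliptic on the tree $T$ (since cyclic subgroups lie in $\Fvc(G)$) and then use the fact that a finitely generated group all of whose elements have fixed points in a tree has a global fixed point, forcing $G$ into a virtually cyclic point stabiliser and contradicting Proposition~\ref{prop:case0}. The only difference is that the paper cites this fixed-point result directly (Corollary~3 to Proposition~25 in Serre), whereas you reprove it via the bridge lemma and Helly's theorem for subtrees.
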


\begin{proof}
    By assumption $G$ has a tree $T$ as a model for $\uu E G$.  Assume
    towards a contradiction that $G$ is finitely generated.  For every
    cyclic subgroup~$\langle g\rangle$ of $G$ the fixed point set
    $T^{\langle g\rangle} \neq \emptyset$ since $T$ is a model for
    $\uu E G$.  Hence every element of $G$ has fixed points and
    Corollary~3 to Proposition~25 in~\cite[pp.~64f.]{serre-80} implies
    that $T^{G}\neq \emptyset$.  This can only happen if $G$ is
    virtually cyclic.  Then Proposition~\ref{prop:case0} implies that
    $\uugd G=0$, which is a contradiction to the assumption that
    $\uugd G=1$.  Therefore $G$ cannot be finitely generated.
\end{proof}

\begin{corollary}
    \label{cor:case1-not-fg-1}
    A group $G$ with $\uugd G=1$ is locally virtually cyclic.
\end{corollary}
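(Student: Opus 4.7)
The plan is to combine Proposition~\ref{prop:case1-not-fg} with the monotonicity of the Bredon geometric dimension under subgroups. First I would take an arbitrary finitely generated subgroup $H$ of $G$ and apply Proposition~\ref{prop:dim-subgroups-2}, using the fact that $\Fvc$ is a full family of subgroups, to conclude
\begin{equation*}
    \uugd H \;=\; \gd_{\Fvc(G)\cap H} H \;\leq\; \uugd G \;=\; 1,
\end{equation*}
where one also uses that $\Fvc(G)\cap H = \Fvc(H)$ since virtual cyclicity is inherited by subgroups.

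Next I would split into two cases according to the value of $\uugd H\in\{0,1\}$. If $\uugd H=1$, then Proposition~\ref{prop:case1-not-fg} applied to $H$ itself would force $H$ not to be finitely generated, contradicting our choice of $H$. Hence $\uugd H=0$, and Proposition~\ref{prop:case0} immediately gives that $H$ is virtually cyclic.

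Since every finitely generated subgroup of $G$ is thus virtually cyclic, $G$ is locally virtually cyclic by definition, completing the argument. There is no real obstacle here: the proof is a one-line deduction from the two named propositions, and the only subtlety worth spelling out is the identification $\Fvc(G)\cap H=\Fvc(H)$, which is what makes Proposition~\ref{prop:dim-subgroups-2} directly applicable.
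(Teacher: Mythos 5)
Your argument is correct and is essentially identical to the paper's proof: both use the subgroup inequality $\uugd H\leq \uugd G$ (Proposition~\ref{prop:dim-subgroups-2}), rule out $\uugd H=1$ via Proposition~\ref{prop:case1-not-fg}, and conclude $H$ is virtually cyclic from Proposition~\ref{prop:case0}. Your only addition is to spell out the identification $\Fvc(G)\cap H=\Fvc(H)$, which the paper leaves implicit.
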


\begin{proof}
    If $H$ is a finitely generated subgroup of $G$ then $\uugd H\leq
    \uugd G = 1$.  Then Proposition~\ref{prop:case1-not-fg} implies
    $\uugd H\neq 1$ and therefore we must have $\uugd H=0$.  Hence 
    $H$ is virtually cyclic by Proposition~\ref{prop:case0}.
\end{proof}

\begin{corollary}
    \label{cor:case1-not-fg-2}
    If $G$ is a group with $\uugd G=1$, then $\uucd G = 1$ and 
    $\uuhd G =0$.
\end{corollary}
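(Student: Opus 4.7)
The plan is to assemble three results already established in the thesis. Fix a group $G$ with $\uugd G=1$.

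First I would establish $\uuhd G=0$. From Corollary~\ref{cor:case1-not-fg-1} we already know that $G$ is locally virtually cyclic. Therefore the second statement of Corollary~\ref{cor:dim-locally-F} applies directly (note that the $\uuhd$-part of that corollary does not require countability of $G$), yielding $\uuhd G=0$ immediately.

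Next I would establish $\uucd G=1$ in two steps. For the upper bound, the general inequality $\cd_{\frakF}G\leq \gd_{\frakF}G$ (proved in Section~\ref{sec:cd-vs-hd-vs-gd} for any semi-full family $\frakF$) specialises to $\uucd G\leq \uugd G=1$. For the lower bound, I would argue by contradiction: if $\uucd G=0$, then by Proposition~\ref{prop:cdG=0} (applied to the semi-full family $\Fvc(G)$) one would have $G\in \Fvc(G)$, i.e.~$G$ virtually cyclic, which by Proposition~\ref{prop:case0} forces $\uugd G=0$, contradicting the hypothesis. Hence $\uucd G\geq 1$, and combining the two bounds gives $\uucd G=1$.

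There is no real obstacle here: the entire argument is a direct assembly of Corollary~\ref{cor:case1-not-fg-1}, Corollary~\ref{cor:dim-locally-F}, the comparison $\cd_{\frakF}G\leq \gd_{\frakF}G$, and Propositions~\ref{prop:cdG=0} and~\ref{prop:case0}. The only small point to double-check is that the $\uuhd$ conclusion in Corollary~\ref{cor:dim-locally-F} is indeed stated without a countability hypothesis on $G$, so that no extra cardinality assumption is smuggled into the statement of Corollary~\ref{cor:case1-not-fg-2}.
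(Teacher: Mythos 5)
Your proof is correct, and for the cohomological part it takes a genuinely more careful route than the paper. The paper disposes of this corollary in one line, citing only Corollary~\ref{cor:dim-locally-F} (after Corollary~\ref{cor:case1-not-fg-1} has identified $G$ as locally virtually cyclic); your treatment of $\uuhd G=0$ is exactly that argument. For $\uucd G=1$, however, you deviate in two useful ways. First, you get the upper bound from the general comparison $\cd_{\frakF}G\leq\gd_{\frakF}G$ for semi-full families rather than from Corollary~\ref{cor:dim-locally-F}; this matters, because the $\uucd$-part of that corollary carries a countability hypothesis which the statement of Corollary~\ref{cor:case1-not-fg-2} does not impose, so the paper's citation is only fully justified if one either assumes countability or silently falls back on $\cd_{\frakF}G\leq\gd_{\frakF}G$ as you do explicitly. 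Second, you make the lower bound $\uucd G\geq 1$ explicit via Proposition~\ref{prop:cdG=0} and Proposition~\ref{prop:case0}; the paper's phrase ``this is true for every locally virtually cyclic group'' cannot be read literally for the equality $\uucd G=1$ (a virtually cyclic group is locally virtually cyclic and has $\uucd=0$), so the non-virtually-cyclic input coming from $\uugd G=1$ is genuinely needed, and you supply it. Both routes reach the same conclusion; yours is marginally longer but self-contained and free of the hidden countability assumption.
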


\begin{proof}
    This is true for every locally virtually cyclic group by
    Corollary~\ref{cor:dim-locally-F}.
\end{proof}

Using a result of Lück and Weiermann, we can now prove the following
classification of countable groups $G$ with $\uugd G=1$.

\begin{proposition}
    \label{prop:countable-uugdG1}
    Let $G$ be a countable group. Then $\uugd G =1$ if and only 
    if~$G$ is locally virtually cyclic but not virtually cyclic.
\end{proposition}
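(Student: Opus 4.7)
The plan is to prove both directions of the equivalence, with the forward direction being an immediate consequence of earlier results in the chapter and the reverse direction requiring a geometric direct limit construction due to L\"uck and Weiermann.

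For the forward direction, suppose $\uugd G = 1$. Then Corollary~\ref{cor:case1-not-fg-1} gives that $G$ is locally virtually cyclic. Moreover, $G$ cannot itself be virtually cyclic, since otherwise Proposition~\ref{prop:case0} would force $\uugd G = 0$, contradicting the assumption. This direction is therefore essentially free.

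For the reverse direction, suppose $G$ is countable, locally virtually cyclic, but not virtually cyclic. First, since $G$ is not virtually cyclic, Proposition~\ref{prop:case0} gives $\uugd G \geq 1$, so it suffices to construct a $1$-dimensional model for $\uu E G$. The idea is to write $G$ as the directed union of its finitely generated subgroups $G_{\lambda}$, $\lambda \in \Lambda$, which by local virtual cyclicity satisfy $\uugd G_{\lambda} = 0$ via Proposition~\ref{prop:case0}; since $G$ is countable, the directed set $\Lambda$ may be chosen countable as well. I would then invoke the geometric direct limit construction of L\"uck and Weiermann, which produces from a countable directed system of groups $G_\lambda$ with $n$-dimensional models for $\uu E G_\lambda$ an $(n+1)$-dimensional model for $\uu E G$ where $G = \varinjlim G_\lambda$. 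Applied here with $n = 0$, this yields a $1$-dimensional model for $\uu E G$, so $\uugd G \leq 1$, combining with the lower bound to give equality.

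The main obstacle is ensuring that the direct limit machinery genuinely produces a $1$-dimensional \emph{geometric} model rather than merely an algebraic upper bound on $\uucd G$ (as already supplied by Corollary~\ref{cor:dim-locally-F}). The algebraic estimate $\uucd G \leq 1$ is not by itself enough, because Proposition~\ref{prop:luck-meintrup} only turns projective resolutions of length $n$ into $n$-dimensional models when $n \geq 3$, so the jump from $\uucd G \leq 1$ to $\uugd G \leq 1$ is not automatic. The geometric construction of L\"uck and Weiermann circumvents this by producing the model for $\uu EG$ directly as a kind of infinite mapping telescope built from the $0$-dimensional models $\{*\}$ for each $\uu E G_\lambda$, with the countability hypothesis on $\Lambda$ being used in an essential way to keep the telescope $1$-dimensional. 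Once this model is in hand, combining $\uugd G \leq 1$ with $\uugd G \geq 1$ completes the argument.
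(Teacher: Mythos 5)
Your proof is correct and follows essentially the same route as the paper: the forward direction via Corollary~\ref{cor:case1-not-fg-1} together with Proposition~\ref{prop:case0}, and the reverse direction by expressing the countable, locally virtually cyclic group as a countable directed union of virtually cyclic subgroups and invoking the L\"uck--Weiermann telescope construction (Lemma~4.2 and Theorem~4.3 of~\cite{luck-12}) to obtain a $1$-dimensional model, with Proposition~\ref{prop:case0} supplying the lower bound. Your remark that the algebraic bound $\uucd G\leq 1$ alone would not suffice, since Proposition~\ref{prop:luck-meintrup} only applies for $n\geq 3$, correctly identifies why the geometric construction is needed.
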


\begin{proof}
    The ``only if'' part is covered by
    Corollary~\ref{cor:case1-not-fg-1}.
    
    Conversely, assume that $G$ is locally virtually cyclic.  Since
    $G$ is countable, it has only countably many virtually cyclic
    subgroups and the claim follows from Lemma~4.2 and Theorem~4.3
    in~\cite[pp.~511f.]{luck-12} together with
    Proposition~\ref{prop:case0}.
\end{proof}

A natural question which arises is the following: does $\uucd G=1$
imply $\uugd G = 1$?  If not, under which conditions on the group
$G$ does this implication hold?

\begin{theorem}
    \label{thrm:classification}
    Let $G$ be a countable, torsion-free, soluble group. Then
    \begin{equation*}
        \uucd G = 1 \iff \uugd G = 1.
    \end{equation*}
\end{theorem}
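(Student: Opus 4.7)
My plan is to reduce both directions to Proposition~\ref{prop:countable-uugdG1}, which characterises countable groups of Bredon geometric dimension one as being locally virtually cyclic but not virtually cyclic.

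The direction $\uugd G = 1 \Rightarrow \uucd G = 1$ is essentially immediate: one has $\uucd G \leq \uugd G = 1$, and $\uucd G = 0$ would force $G$ to be virtually cyclic by Proposition~\ref{prop:case0}, contradicting Proposition~\ref{prop:countable-uugdG1}. Hence $\uucd G = 1$.

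For the converse $\uucd G = 1 \Rightarrow \uugd G = 1$, my first move is to note that $G$ is not virtually cyclic (again by Proposition~\ref{prop:case0}), so by Proposition~\ref{prop:countable-uugdG1} it suffices to show that $G$ is locally virtually cyclic. I would therefore fix an arbitrary finitely generated subgroup $H \leq G$ and argue that $H$ is virtually cyclic. Such an $H$ is countable, torsion-free, soluble, and satisfies $\uucd H \leq \uucd G = 1$ by Proposition~\ref{prop:dim-subgroups-1}. Because $H$ is torsion-free, $\Ffin(H) = \{1\}$ and so $\ucd H = \cd H$; combining this with the inequality $\ucd H \leq \uucd H + 1$ from the example following Theorem~\ref{thrm:hdF-G-vs-hdG-G} yields $\cd H \leq 2$.

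I would then split into cases on $\cd H$. If $\cd H \leq 1$ then by Stallings--Swan $H$ is free, and since any soluble free group has rank at most one, $H$ is either trivial or infinite cyclic, hence virtually cyclic. The critical case is $\cd H = 2$, where I would invoke Gildenhuys' classification~\cite{gildenhuys-79}: every finitely generated soluble group of cohomological dimension two is isomorphic to $\Z^{2}$ or to a soluble Baumslag--Solitar group $BS(1,m)$ for some $m \in \Z \setminus \{0\}$. Each of these possibilities is excluded by $\uucd H \leq 1$, since $\uucd \Z^{2} = 3$ by Proposition~\ref{prop:dimvcZ2=3} and $\uucd BS(1,m) \in \{2,3\}$ for every $m \neq 0$ by Theorem~\ref{thrm:dim-soluble-BS}. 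The main obstacle is simply the correct application of Gildenhuys' classification in exactly this form; once that is secured the rest is routine bookkeeping with results already established in the thesis, and Proposition~\ref{prop:countable-uugdG1} then delivers $\uugd G = 1$.
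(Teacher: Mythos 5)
Your proof is correct, and both directions ultimately rest on the same pillars as the thesis: the inequality $\ucd \leq \uucd +1$, the equality $\ucd=\cd$ for torsion-free groups, Stallings--Swan, Gildenhuys' classification, the computations $\uucd \Z^{2}=3$ and $\uucd BS(1,m)\geq 2$ (Proposition~\ref{prop:dimvcZ2=3}, Theorem~\ref{thrm:dim-soluble-BS}), and finally Proposition~\ref{prop:countable-uugdG1}. The genuine difference is where the classification is applied. The thesis runs the dimension inequalities for $G$ itself, concludes $\cd G=2$ (after excluding $\cd G\leq 1$), and then invokes Gildenhuys' classification of \emph{all} soluble groups of cohomological dimension two -- including the non-finitely-generated case -- so that after ruling out $BS(1,m)$ the group $G$ is identified as a non-cyclic subgroup of $\Q$, which is locally virtually cyclic; your argument instead localises, running the same inequalities for an arbitrary finitely generated subgroup $H\leq G$ and using only the finitely generated case of the classification, where the subgroups-of-$\Q$ alternative disappears (every finitely generated subgroup of $\Q$ is cyclic, of $\cd\leq 1$), so each $H$ is forced to be trivial or infinite cyclic and $G$ is locally virtually cyclic. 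Your route thus needs only a weaker (finitely generated) form of Gildenhuys' theorem, at the small cost of repeating the bookkeeping subgroup-wise; note also that $\Z^{2}=BS(1,1)$, so your two alternatives in the $\cd H=2$ case are not really distinct and the exclusion could be done with Theorem~\ref{thrm:dim-soluble-BS} alone. The only point to phrase carefully is the exact form of the classification you quote: as stated in the thesis it covers all soluble groups of $\cd=2$ (namely $BS(1,m)$ or non-cyclic subgroups of $\Q$), and your finitely generated version should be presented as the immediate consequence of that statement rather than as a separate theorem.
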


\begin{proof}
    ``$\Leftarrow$'': \quad This is
Corollary~\ref{cor:case1-not-fg-2}.
    
    \smallskip
    
    ``$\Rightarrow$'': Theorem~\ref{thrm:cdF-G-vs-cdG-G} implies that
    $\ucd G\leq \uucd G + 1 = 2$.  Since $G$ is assumed to be torsion
    free, we have that $\cd G = \ucd G$ and thus $\cd G\leq 2$.
    
    Now $\cd G=0$ if and only if $G$ is trivial, and in this case
    $\uucd G=0$ which is a contradiction.  Furthermore $\cd G=1$ if
    and only if $G$ is a free group.  Since free groups of rank
    greater or equal to two are not soluble, $G$ must necessarily be
    cyclic and in this case we obtain the contradiction $\uucd G=0$.
    Thus we must have that $\cd G=2$.
        
    By the classification of soluble groups of cohomological dimension
    $2$ due to Gildenhuys~\cite{gildenhuys-79} lists the following
    possibilities for $G$:
    \begin{enumerate}
	\item $G\isom BS(1,m)$ for some integer $m\neq 0$;
	
	\item  $G$ is isomorphic to a non-cyclic subgroup of $\Q$.
    \end{enumerate}
    
    In the first case we have $\uucd G\geq 2 $ by
    Theorem~\ref{thrm:dim-soluble-BS}.  However, this contradict the
    assumption $\uucd G=1$.  Thus $G$ must be isomorphic to a
    non-cyclic subgroup of $\Q$.  In this case $G$ is locally
    virtually cyclic but not virtually cyclic.  Thus $\uugd G=1$ by
    Proposition~\ref{prop:countable-uugdG1}.
\end{proof}

%
%

\section{Groups $G$ with $\protect{\uugd} G=2$ or $\protect{\uugd} 
G=3$}

There is not much known about which groups $G$ have $\uugd G=2$, and
even less about which groups $G$ have $\uugd G=3$.  We conclude with a
summary of the results obtained in this thesis for groups which 
belong to this class of groups:

\begin{enumerate}
    \item Let $G$ be a Gromov-hyperbolic group with $\ugd G\leq 2$.
    If $G$ is not virtually cyclic, then 
    \begin{equation*}
	\uuhd G = \uucd G = \uugd G = 2
    \end{equation*}
    by Proposition~\ref{prop:dim-gromov-hyperbolic-groups}.  In
    particular this includes the cases where $G$ is a free group of
    rank at least~$2$ (Corollary~\ref{cor:dimvcF=2}) and where $G$ is
    the fundamental group of a finite graph of finite groups
    (Proposition~\ref{prop:dimvc-finite-graphs}).
    
    \item If $G \isom \Z[1/m]\rtimes \Z$ is a soluble
    Baumslag--Solitar group, $|m|\neq 1$, then we have by
    Theorem~\ref{thrm:dim-soluble-BS}
    \begin{equation*}
        \uuhd G = \uucd G = \uugd G = 2.
    \end{equation*}

    \item For any virtually polycyclic group $G$ with $\vcd G = 2$ we
    have
    \begin{equation*}
        \uuhd G = \uucd G = \uugd G = 3
    \end{equation*}
    by Proposition~\ref{prop:dimvc-polycyclic-1}.  In particular this
    holds for $\Z^{2}$ and $\Z\rtimes \Z$.
\end{enumerate}

In particular the above cases are not counter examples for the 
Eilenberg--Ganea Conjecture for Bredon cohomology with respect to the 
family of virtually cyclic subgroups 
(cf.~Section~\ref{sec:cd-vs-hd-vs-gd} in 
Chapter~\ref{ch:bredon-dimensions}).
    
Furthermore, if $G$ is the restricted wreath product $A\wr \Z$ where
$A$ is a non-trivial, countable, locally finite group, then we have
seen in Example~\ref{ex:wreath-product} in the previous chapter that
\begin{equation*}
    \uugd G = 2.
\end{equation*}
In particular this is true for the Lamplighter group $L = \Z_{2}\wr 
\Z$.


\cleardoublepage

\bibliographystyle{alpha}
\bibliography{math}

\begin{thebibliography}{KMPN10}

\bibitem[BCH94]{baum-94}
Paul Baum, Alain Connes, and Nigel Higson.
\newblock Classifying space for proper actions and {$K$}-theory of group {$C\sp
  \ast$}-algebras.
\newblock In {\em $C\sp \ast$-algebras: 1943--1993 (San Antonio, TX, 1993)},
  volume 167 of {\em Contemp. Math.}, pages 240--291. Amer. Math. Soc.,
  Providence, RI, 1994.

\bibitem[Bie81]{bieri-81}
Robert Bieri.
\newblock {\em Homological dimension of discrete groups}.
\newblock Queen Mary College Mathematical Notes. Queen Mary College Department
  of Pure Mathematics, London, second edition, 1981.

\bibitem[BK01]{berrick-01}
A.~J. Berrick and P.~H. Kropholler.
\newblock Groups with infinite homology.
\newblock In {\em Cohomological methods in homotopy theory ({B}ellaterra,
  1998)}, volume 196 of {\em Progr. Math.}, pages 27--33. Birkh{\"a}user,
  Basel, 2001.

\bibitem[BLN01]{brady-01}
Noel Brady, Ian~J. Leary, and Brita E.~A. Nucinkis.
\newblock On algebraic and geometric dimensions for groups with torsion.
\newblock {\em J. London Math. Soc. (2)}, 64(2):489--500, 2001.

\bibitem[Bou98]{bourbaki-98}
Nicolas Bourbaki.
\newblock {\em Algebra {I}. {C}hapters 1--3}.
\newblock Elements of Mathematics (Berlin). Springer-Verlag, Berlin, 1998.

\bibitem[Bow99]{bowditch-99}
B.~H. Bowditch.
\newblock Relatively hyperbolic groups.
\newblock Preprint, 1999.

\bibitem[Bre67]{bredon-67}
Glen~E. Bredon.
\newblock {\em Equivariant cohomology theories}.
\newblock Lecture Notes in Mathematics, No. 34. Springer-Verlag, Berlin, 1967.

\bibitem[Bro82]{brown-82}
Kenneth~S. Brown.
\newblock {\em Cohomology of groups}, volume~87 of {\em Graduate Texts in
  Mathematics}.
\newblock Springer-Verlag, New York, 1982.

\bibitem[BS62]{baumslag-62}
Gilbert Baumslag and Donald Solitar.
\newblock Some two-generator one-relator non-{H}opfian groups.
\newblock {\em Bull. Amer. Math. Soc.}, 68:199--201, 1962.

\bibitem[CFH06]{connolly-06}
Frank Connolly, Benjamin Fehrman, and Michael Hartglass.
\newblock On the dimension of the virtually cyclic classifying space of a
  crystallographic group.
\newblock 2006.

\bibitem[DD89]{dicks-89}
Warren Dicks and M.~J. Dunwoody.
\newblock {\em Groups acting on graphs}, volume~17 of {\em Cambridge Studies in
  Advanced Mathematics}.
\newblock Cambridge University Press, Cambridge, 1989.

\bibitem[Dun79]{dunwoody-79}
Martin~J. Dunwoody.
\newblock Accessibility and groups of cohomological dimension one.
\newblock {\em Proc. London Math. Soc. (3)}, 38(2):193--215, 1979.

\bibitem[EG57]{eilenberg-57}
Samuel Eilenberg and Tudor Ganea.
\newblock On the {L}usternik-{S}chnirelmann category of abstract groups.
\newblock {\em Ann. of Math. (2)}, 65:517--518, 1957.

\bibitem[Far10]{farley-10}
Daniel Farley.
\newblock Constructions of {$E_{\mathcal{VC}}$} and {$E_{\mathcal{FBC}}$} for
  groups acting on {${\rm CAT}(0)$} spaces.
\newblock {\em Algebr. Geom. Topol.}, 10(4):2229--2250, 2010.

\bibitem[FJ93]{farrell-93}
F.~T. Farrell and L.~E. Jones.
\newblock Isomorphism conjectures in algebraic {$K$}-theory.
\newblock {\em J. Amer. Math. Soc.}, 6(2):249--297, 1993.

\bibitem[FN05]{flores-05}
Ram{\'o}n~J. Flores and Brita E.~A. Nucinkis.
\newblock On {B}redon homology of elementary amenable groups.
\newblock {\em Proc. Amer. Math. Soc.}, 135(1):5--11 (electronic), 2005.

\bibitem[Fre64]{freyd-64}
Peter Freyd.
\newblock {\em Abelian categories. {A}n introduction to the theory of
  functors}.
\newblock Harper's Series in Modern Mathematics. Harper \& Row Publishers, New
  York, 1964.

\bibitem[GdlH90]{ghys-90}
{{\'E}}. Ghys and P.~de~la Harpe, editors.
\newblock {\em Sur les groupes hyperboliques d'apr{\`e}s {M}ikhael {G}romov},
  volume~83 of {\em Progress in Mathematics}.
\newblock Birkh{\"a}user Boston Inc., Boston, MA, 1990.
\newblock Papers from the Swiss Seminar on Hyperbolic Groups held in Bern,
  1988.

\bibitem[Geo08]{geoghegan-08}
Ross Geoghegan.
\newblock {\em Topological methods in group theory}, volume 243 of {\em
  Graduate Texts in Mathematics}.
\newblock Springer, New York, 2008.

\bibitem[Gil79]{gildenhuys-79}
Dion Gildenhuys.
\newblock Classification of soluble groups of cohomological dimension two.
\newblock {\em Math. Z.}, 166(1):21--25, 1979.

\bibitem[Hil91]{hillman-91}
Jonathan~A. Hillman.
\newblock Elementary amenable groups and {$4$}-manifolds with {E}uler
  characteristic {$0$}.
\newblock {\em J. Austral. Math. Soc. Ser. A}, 50(1):160--170, 1991.

\bibitem[HL92]{hillman-92}
Jonathan~A. Hillman and P.~A. Linnell.
\newblock Elementary amenable groups of finite {H}irsch length are
  locally-finite by virtually-solvable.
\newblock {\em J. Austral. Math. Soc. Ser. A}, 52(2):237--241, 1992.

\bibitem[Ill72]{illman-72}
S{{\"o}}ren Illman.
\newblock {\em Equivariant Algebraic Topology}.
\newblock PhD thesis, Princeton University, 1972.

\bibitem[JPL06]{juan-pineda-06}
Daniel Juan-Pineda and Ian~J. Leary.
\newblock On classifying spaces for the family of virtually cyclic subgroups.
\newblock In {\em Recent developments in algebraic topology}, volume 407 of
  {\em Contemp. Math.}, pages 135--145. Amer. Math. Soc., Providence, RI, 2006.

\bibitem[Kaw91]{kawakubo-91}
Katsuo Kawakubo.
\newblock {\em The Theory of Transformation Groups}.
\newblock Oxford University Press, 1991.

\bibitem[KLM88]{kropholler-88}
P.~H. Kropholler, P.~A. Linnell, and J.~A. Moody.
\newblock Applications of a new {$K$}-theoretic theorem to soluble group rings.
\newblock {\em Proc. Amer. Math. Soc.}, 104(3):675--684, 1988.

\bibitem[KMPN09]{kropholler-09}
P.~H. Kropholler, Conchita Mart{\'{\i}}nez-P{\'e}rez, and Brita E.~A. Nucinkis.
\newblock Cohomological finiteness conditions for elementary amenable groups.
\newblock {\em J. Reine Angew. Math.}, 637:49--62, 2009.

\bibitem[KMPN10]{kochloukova-10}
D.~H. Kochloukova, Conchita Mart{\'{\i}}nez-P{\'e}rez, and Brita E.~A.
  Nucinkis.
\newblock Cohomological finiteness conditions in {B}redon cohomology.
\newblock {\em Bull. London Math. Soc.}, 2010.

\bibitem[Laz69]{lazard-69}
Daniel Lazard.
\newblock Autour de la platitude.
\newblock {\em Bull. Soc. Math. France}, 97:81--128, 1969.

\bibitem[LM00]{luck-00}
Wolfgang L{\"u}ck and David Meintrup.
\newblock On the universal space for group actions with compact isotropy.
\newblock In {\em Geometry and topology: Aarhus (1998)}, volume 258 of {\em
  Contemp. Math.}, pages 293--305. Amer. Math. Soc., Providence, RI, 2000.

\bibitem[LO07]{lafont-07}
Jean-Fran{\c{c}}ois Lafont and Ivonne~J. Ortiz.
\newblock Relative hyperbolicity, classifying spaces, and lower algebraic
  {$K$}-theory.
\newblock {\em Topology}, 46(6):527--553, 2007.

\bibitem[LR04]{lennox-04}
John~C. Lennox and Derek J.~S. Robinson.
\newblock {\em The theory of infinite soluble groups}.
\newblock Oxford Mathematical Monographs. The Clarendon Press Oxford University
  Press, Oxford, 2004.

\bibitem[LR05]{luck-05a}
Wolfgang L{\"u}ck and Holger Reich.
\newblock The {B}aum--{C}onnes and the {F}arrell--{J}ones conjectures in {$K$}-
  and {$L$}-theory.
\newblock In {\em Handbook of {$K$}-theory. {V}ol. 1, 2}, pages 703--842.
  Springer, Berlin, 2005.

\bibitem[L{\"u}c89]{luck-89}
Wolfgang L{\"u}ck.
\newblock {\em Transformation groups and algebraic {$K$}-theory}, volume 1408
  of {\em Lecture Notes in Mathematics}.
\newblock Springer-Verlag, Berlin, 1989.
\newblock Mathematica Gottingensis.

\bibitem[L{\"u}c00]{luck-00a}
Wolfgang L{\"u}ck.
\newblock The type of the classifying space for a family of subgroups.
\newblock {\em J. Pure Appl. Algebra}, 149(2):177--203, 2000.

\bibitem[L{\"u}c05]{luck-05}
Wolfgang L{\"u}ck.
\newblock Survey on classifying spaces for families of subgroups.
\newblock In {\em Infinite groups: geometric, combinatorial and dynamical
  aspects}, volume 248 of {\em Progr. Math.}, pages 269--322. Birkh\"auser,
  Basel, 2005.

\bibitem[L{\"u}c09]{luck-09}
Wolfgang L{\"u}ck.
\newblock On the classifying space of the family of virtually cyclic subgroups
  for {$\rm CAT(0)$}-groups.
\newblock {\em M\"unster J. Math.}, 2:201--214, 2009.

\bibitem[LW12]{luck-12}
Wolfgang L{\"u}ck and Michael Weiermann.
\newblock On the classifying space of the family of virtually cyclic subgroups.
\newblock {\em Pure App. Math. Q.}, 8(2):479--555, 2012.

\bibitem[Mas91]{massey-91}
William~S. Massey.
\newblock {\em A Basic Course in Algebraic Topology}.
\newblock Springer--Verlag, 1991.

\bibitem[Mat71]{matumoto-71}
Takao Matumoto.
\newblock On {$G$}-{${\rm CW}$} complexes and a theorem of {J}. {H}. {C}.
  {W}hitehead.
\newblock {\em J. Fac. Sci. Univ. Tokyo Sect. IA Math.}, 18:363--374, 1971.

\bibitem[Mit65]{mitchell-65}
Barry Mitchell.
\newblock {\em Theory of categories}.
\newblock Pure and Applied Mathematics, Vol. XVII. Academic Press, New York,
  1965.

\bibitem[MKS76]{magnus-76}
Wilhelm Magnus, Abraham Karrass, and Donald Solitar.
\newblock {\em Combinatorial group theory}.
\newblock Dover Publications Inc., New York, revised edition, 1976.
\newblock Presentations of groups in terms of generators and relations.

\bibitem[ML98]{mac-lane-98}
Saunders Mac~Lane.
\newblock {\em Categories for the Working Mathematician}, volume~5 of {\em
  Graduate Texts in Mathematics}.
\newblock Springer-Verlag, New York, second edition, 1998.

\bibitem[MP02]{martinez-perez-02}
Conchita Mart{\'{\i}}nez-P{\'e}rez.
\newblock A spectral sequence in {B}redon (co)homology.
\newblock {\em J. Pure Appl. Algebra}, 176(2-3):161--173, 2002.

\bibitem[MPP08]{manion-08}
Andrew Manion, Lisa Pham, and Jonathan Poelhuis.
\newblock The virtually cyclic classifying space of the {H}eisenberg {G}roup.
\newblock 2008.

\bibitem[MS02]{meintrup-02}
David Meintrup and Thomas Schick.
\newblock A model for the universal space for proper actions of a hyperbolic
  group.
\newblock {\em New York J. Math.}, 8:1--7 (electronic), 2002.

\bibitem[MV03]{mislin-03}
Guido Mislin and Alain Valette.
\newblock {\em Proper group actions and the {B}aum-{C}onnes conjecture}.
\newblock Advanced Courses in Mathematics. CRM Barcelona. Birkh\"auser Verlag,
  Basel, 2003.

\bibitem[Neu29]{neumann-29}
J.~von Neumann.
\newblock Zur allgemeinen {T}heorie des {M}asses.
\newblock {\em Fund. Math.}, 13:73--116, 1929.

\bibitem[Nuc04]{nucinkis-04}
Brita E.~A. Nucinkis.
\newblock On dimensions in {B}redon homology.
\newblock {\em Homology Homotopy Appl.}, 6(1):33--47 (electronic), 2004.

\bibitem[Osi06]{osin-06a}
Denis~V. Osin.
\newblock Elementary subgroups of relatively hyperbolic groups and bounded
  generation.
\newblock {\em Internat. J. Algebra Comput.}, 16(1):99--118, 2006.

\bibitem[Rob96]{robinson-96}
Derek J.~S. Robinson.
\newblock {\em A course in the theory of groups}, volume~80 of {\em Graduate
  Texts in Mathematics}.
\newblock Springer-Verlag, New York, second edition, 1996.

\bibitem[Rot79]{rotman-79}
Joseph~J. Rotman.
\newblock {\em An introduction to homological algebra}, volume~85 of {\em Pure
  and Applied Mathematics}.
\newblock Academic Press Inc. [Harcourt Brace Jovanovich Publishers], New York,
  1979.

\bibitem[Sch70a]{schubert-70}
Horst Schubert.
\newblock {\em Kategorien {I}}, volume~66 of {\em Heidelberger
  Taschenb{\"u}cher}.
\newblock Springer-Verlag, Berlin, 1970.

\bibitem[Sch70b]{schubert-70a}
Horst Schubert.
\newblock {\em Kategorien {II}}, volume~66 of {\em Heidelberger
  Taschenb{\"u}cher}.
\newblock Springer-Verlag, Berlin, 1970.

\bibitem[Seg83]{segal-83}
Daniel Segal.
\newblock {\em Polycyclic groups}, volume~82 of {\em Cambridge Tracts in
  Mathematics}.
\newblock Cambridge University Press, Cambridge, 1983.

\bibitem[Ser80]{serre-80}
Jean-Pierre Serre.
\newblock {\em Trees}.
\newblock Springer-Verlag, Berlin, 1980.
\newblock Translated from the French by John Stillwell.

\bibitem[Spa66]{spanier-66}
Edwin~H. Spanier.
\newblock {\em Algebraic topology}.
\newblock McGraw-Hill Book Co., New York, 1966.

\bibitem[Sta68]{stallings-68a}
John~R. Stallings.
\newblock Groups of dimension 1 are locally free.
\newblock {\em Bull. Amer. Math. Soc.}, 74:361--364, 1968.

\bibitem[Ste67]{steenrod-67}
N.~E. Steenrod.
\newblock A convenient category of topological spaces.
\newblock {\em Michigan Math. J.}, 14:133--152, 1967.

\bibitem[Swa69]{swan-69}
Richard~G. Swan.
\newblock Groups of cohomological dimension one.
\newblock {\em J. Algebra}, 12:585--610, 1969.

\bibitem[Sym05]{symonds-05}
Peter Symonds.
\newblock The {B}redon cohomology of subgroup complexes.
\newblock {\em J. Pure Appl. Algebra}, 199(1--3):261--298, 2005.

\bibitem[tD87]{dieck-87}
Tammo tom Dieck.
\newblock {\em Transformation groups}, volume~8 of {\em de Gruyter Studies in
  Mathematics}.
\newblock Walter de Gruyter \& Co., Berlin, 1987.

\bibitem[Weh73]{wehrfritz-73}
B.~A.~F. Wehrfritz.
\newblock {\em Infinite linear groups. {A}n account of the group-theoretic
  properties of infinite groups of matrices}.
\newblock Springer-Verlag, New York, 1973.
\newblock Ergebnisse der Matematik und ihrer Grenzgebiete, Band 76.

\bibitem[Weh74]{wehrfritz-74}
B.~A.~F. Wehrfritz.
\newblock On the holomorphs of soluble groups of finite rank.
\newblock {\em J. Pure Appl. Algebra}, 4:55--69, 1974.

\bibitem[Weh95]{wehrfritz-95}
B.~A.~F. Wehrfritz.
\newblock On elementary amenable groups of finite {H}irsch number.
\newblock {\em J. Austral. Math. Soc. Ser. A}, 58(2):219--221, 1995.

\bibitem[Wei94]{weibel-94}
Charles~A. Weibel.
\newblock {\em An introduction to homological algebra}, volume~38 of {\em
  Cambridge Studies in Advanced Mathematics}.
\newblock Cambridge University Press, Cambridge, 1994.

\bibitem[Whi49]{whitehead-49}
J.~H.~C. Whitehead.
\newblock Combinatorial homotopy. {I}.
\newblock {\em Bull. Amer. Math. Soc.}, 55:213--245, 1949.

\bibitem[Whi78]{whitehead-78}
George~W. Whitehead.
\newblock {\em Elements of Homotopy Theory}, volume~61 of {\em Graduate Texts
  in Mathematics}.
\newblock Springer-Verlag, New York, 1978.

\end{thebibliography}

\end{document}